\newtheorem{theorem}{Theorem}[subsection]
\newtheorem{conditionnumbertheorem}[theorem]{Condition Number Theorem}
\newtheorem{corollary}[theorem]{Corollary}
\newtheorem{proposition}[theorem]{Proposition}
\newtheorem{lemma}[theorem]{Lemma}
\theoremstyle{definition}
\newtheorem{definition}[theorem]{Definition}
\newtheorem*{convention}{Important remark on notations}
\newtheorem{problem}{Problem}
\newtheorem{newproblem}{Problem}
\theoremstyle{remark}
\newtheorem{remark}[theorem]{Remark}
\newtheorem{example}[theorem]{Example}
\definecolor{absolutezero}{rgb}{0.0,0.28,0.73}
\definecolor{darkgreen}{rgb}{0.0,0.38,0.0}
\newcommand{\changed}[1]{#1}
\newcommand{\secrev}[1]{#1}
\newcommand{\thirdrev}[1]{#1}
\newcommand{\fourthrev}[1]{#1}
\newcommand{\aboutsampling}[2]{#1} 
\renewcommand{\Re}{\mathrm{Re}}
\renewcommand{\Im}{\mathrm{Im}}
\newcommand{\codim}{\mathrm{codim}}
\DeclareMathOperator*{\bigE}{\mathbb E}
\DeclareMathOperator*{\bigProb}{\mathrm Prob}
\newcommand{\ghostrule}[1]{\rule{0cm}{#1}}
\newcommand{\expected}[2]{
\bigE_{\substack{#1}}\left({#2}\right)}
\newcommand{\probability}[2]{
\bigProb_{\substack{#1}}\left[{#2}\right]}
\newcommand{\conditional}{ \ \resizebox{!}{3ex}{$|$}\ }
\newcommand{\condexpected}[4]{
	\bigE_{\substack{#1}}\left( \ghostrule{5ex}{#2} \conditional {#3} \right.
	\\
	#4 \ghostrule{5ex} \right)}
\newcommand{\conv}[1]{\mathrm{Conv}(#1)}
\newcommand{\diam}[1]{\mathrm{diam}(#1)}
\newcommand{\Li}{\mathrm{Li}_2}
\newcommand{\vol}{\mathrm{Vol}}
\newcommand{\dd}{\,\mathrm{d}}
\newcommand{\binomial}[2]{\ensuremath{\left( \begin{matrix}#1 \\ #2 \end{matrix} \right)}}
\newcommand{\partialat}[2]{\ensuremath{
	\frac{\partial}{\partial #1}{\raisebox{-0.5ex}{$|_{\raisebox{-0.5ex}{$\scriptstyle #1 = #2$}}$}}}}
\newcommand{\partialhigh}[3]{\ensuremath{
	\frac{\partial^{#1}}{\partial #2^{#1}}{\raisebox{-0.5ex}{$|_{\raisebox{-0.5ex}{$\scriptstyle #2 = #3$}}$}}}}
\newcommand{\defun}[5]{\ensuremath{\begin{array}{lrcl}
#1:&#2 & \longrightarrow & #3\\&#4 & \longmapsto & #5\end{array}}}
\newcommand{\defeq}{\stackrel{\mathrm{def}}{=}}
\newcommand{\diag}[1]{\mathrm{diag}\left(#1\right)}
\newglossaryentry{PA}{name={\ensuremath{\mathscr P_A}},
                      description={Space of Laurent polynomials with support $A \subseteq \mathbb Z^n$}}
\newglossaryentry{Ais}{name=\ensuremath{A_i},
                       description={Support for the $i$-th equation}}
\newglossaryentry{Si}{name={\ensuremath{S_i}}, description={Input size: $2 \le S_i=\#A_i < \infty$ and $S=\sum_{i=1}^n S_i \ge 2n \ge 4$}}
\newglossaryentry{VA}{name={\ensuremath{V_A}},
                      description={Veronese embedding for support $A$}}
\newglossaryentry{Via}{name={\ensuremath{V_{i \mathbf a}}},
		      description={$\mathbf a$-th coordinate of $V_{A_i}$}}
\newglossaryentry{rhoia}{name={\ensuremath{\rho_{i \mathbf a}}},
		      description={Constant coefficient \changed{in $V_{i\mathbf a}(\mathbf z)=\rho_{i\mathbf a}e^{\mathbf a \mathbf z}$}}}
\newglossaryentry{FA}{name={\ensuremath{\mathscr F_A}},
                      description={Space of exponential sums with support $A$}}
\newglossaryentry{MM}{name={\ensuremath{\secrev{\mathscr M}}},
                      description={Domain of the main chart for the toric variety}}
\newglossaryentry{Lambda}{name={\ensuremath{\Lambda}},
		      description={Integer lattice spanned by the union of the sets $A_i - A_i$}}
\newglossaryentry{quotient}{name={\ensuremath{[\cdot]}},
                      description={Natural quotient or projection, e.g. in multi-projective space}}
\newglossaryentry{VV}{name={\ensuremath{\mathcal V}},
		      description={Toric variety associated to $(A_1, \dots, A_n)$}}
\newglossaryentry{AA}{name={\ensuremath{\mathcal A_i}},
		      description={Convex hull of $A_i$}}
\newglossaryentry{mi}{name={\ensuremath{\mathbf m_i}},
                      description={The momentum map for the $i$-th support}}
\newglossaryentry{deltaix}{name={\ensuremath{\delta_i(\mathbf x)}},
		      description={\secrev{Radius of $i$-th support around $m_i(\mathbf x)$}
		      }}
\newglossaryentry{deltai}{name={\ensuremath{\delta_i}},
		      description={\secrev{Radius of $i$-th support around $m_i(\mathbf 0)$}
		      }}
\newglossaryentry{normi}{name={\ensuremath{\|\cdot\|_{i,\mathbf x}}},
                      description={$i$-th toric metric at the point $\mathbf x \in \secrev{\mathscr M}$}}
\newglossaryentry{norm}{name={\ensuremath{\| \cdot\|_{\mathbf x}}},
                      description={Toric norm,$\| \cdot\|_{\mathbf x}^2=\sum_i \| \cdot\|_{i,\mathbf x}^2$ }}
\newglossaryentry{mu}{name={\ensuremath{\mu(\mathbf f, \mathbf x)}},
                      description={Toric condition number of $\mathbf f$ at $\mathbf x \in \secrev{\mathscr M}$}}
\newglossaryentry{M}{name={\ensuremath{M(\mathbf f, \mathbf x)}},
                      description={Unscaled condition matrix for $\mathbf f$ at $\mathbf x$}}
\newglossaryentry{nui}{name={\ensuremath{\nu_i}},
                      description={Distortion invariant for the $i$-th support}}
\newglossaryentry{nu}{name={\ensuremath{\nu}},
                      description={Distortion invariant}}
\newglossaryentry{S}{name={\ensuremath{\secrev{\mathscr S}}},
                      description={Solution variety}}
\newglossaryentry{Lold}{name={\ensuremath{\mathcal L(\mathbf f_t, \mathbf z_t;\, a,b)}}, description={Condition length of the path $(\mathbf f_t, \mathbf z_t)_{t \in [a,b]}$}}
\newglossaryentry{R}{name={\ensuremath{R_i, R}}, description={Renormalization operator}} 
\newglossaryentry{elli}{name={\ensuremath{\ell_i}},
		      description={$\mathbf u \mapsto \max_{\mathbf a \in A_i} \mathbf a \Re(\mathbf u)$}}
\newglossaryentry{alpha}{name={\ensuremath{\alpha(\mathbf f, \mathbf z)}},
                      description={Smale's alpha invariant for $\mathbf f$ at $\mathbf z$}}
\newglossaryentry{gamma}{name={\ensuremath{\gamma(\mathbf f, \mathbf z)}},
                      description={Smale's gamma invariant for $\mathbf f$ at $\mathbf z$}}
\newglossaryentry{beta}{name={\ensuremath{\beta(\mathbf f, \mathbf z)}},
                      description={Smale's beta invariant for $\mathbf f$ at $\mathbf z$}}
\newglossaryentry{N}{name={\ensuremath{N}},
                      description={Newton iteration}}
\newglossaryentry{L}{name={\ensuremath{\mathscr L(\mathbf f_t, \mathbf z_t;\, a,b)}},
		      description={Renormalized condition length for $(\mathbf f_t \cdot R(\mathbf z_t))_{t \in [a,b]}$}}
\newglossaryentry{MixedVolume}{name={\ensuremath{V=V(\mathcal A_1, \dots, \mathcal A_n)}},
		      description={Mixed volume of the tuple $(\mathcal A_1, \dots, \mathcal A_n)$}}
\newglossaryentry{MixedSurface}{name={\ensuremath{V'=V'(\mathcal A_1, \dots, \mathcal A_n)}},
		      description={Mixed \changed{area} of the tuple $(\mathcal A_1, \dots, \mathcal A_n)$}}
\newglossaryentry{Normal}{name={\ensuremath{\secrev{\mathcal N}(\mathbf f,\Sigma^2; \mathscr F)}},
		      description={Normal distribution with mean $\mathbf f$ and covariance $\Sigma^2$ in real or complex space $\mathscr F$. }}
\newglossaryentry{zeroset}{name={\ensuremath{Z(\mathbf q)}},
                      description={Zero set of $\mathbf q$}}
\newglossaryentry{boundedzeroset}{name={\ensuremath{Z_H(\mathbf q)}},
		      description={Set of zeros of $\mathbf q$ with $\max(|z_i|)\le H$}}
\newglossaryentry{sigma}{name={\ensuremath{\Sigma_i^2 = \diag{\sigma_{i\mathbf a}^2}}},
		      description={Covariance matrix of $\mathbf g_i \in \mathscr F_{A_i}$}}
\newglossaryentry{kappa}{name={\ensuremath{\kappa_{\rho_i}}},
		      description={Imbalance invariant for the coefficients $\rho_{i\mathbf a}$}}
\newglossaryentry{lambdai}{name={\ensuremath{\lambda_i}},
                      description={Minkowski support function of $A_i$}}
\newglossaryentry{Aixi}{name={\ensuremath{A_i^{\boldsymbol \xi}}},
		      description={Extremal points of $A_i$ in the direction $\mathbf \xi$}}
\newglossaryentry{cone}{name={\ensuremath{C(B_1, \dots, B_n)}},
		      description={Open cone above $B_1 \subseteq A_1$, \dots, $B_n \subseteq A_n$}}
\newglossaryentry{etas}{name={\ensuremath{\eta_i, \eta}}, description={Face gap invariants}}
\newglossaryentry{Fanj}{name={\ensuremath{\mathfrak F_j}}, description={$j$-th stratum of the fan for $(A_1, \dots, A_n)$}}
\newglossaryentry{FanR}{name={\ensuremath{\mathfrak R}}, description={set of norm-1 representatives of $\mathfrak F_0$.}}
\newglossaryentry{SigmaINF}{name={\ensuremath{\Sigma^{\infty}}},
                      description={Variety of systems with a root at toric infinity}}
\newglossaryentry{roff}{name={\ensuremath{r=r(\mathbf f)}},
		      description={Polynomial vanishing on $\Sigma^{\infty}$}}
\newglossaryentry{dr}{name={\ensuremath{d_r}},
                      description={Degree of the polynomial $r$}}
\newglossaryentry{exclusion}{name={\ensuremath{\Lambda_{\epsilon}, \Omega_H, Y_K}},
                      description={Exclusion sets.}}
\newglossaryentry{kappaf}{name={\ensuremath{\kappa_{\mathbf f}}},
		      description={Imbalance invariant for the system $\mathbf f$}}
\newglossaryentry{muf}{name={\ensuremath{\mu_{\mathbf f}}},
		      description={Maximal renormalized condition for the system $\mathbf f$}}
\newglossaryentry{Q}{name={\ensuremath{Q}},
                      description={Geometric invariant of the tuple $(A_1, \dots, A_n)$}}
\newglossaryentry{Ihat}{name={\ensuremath{I_{{\mathbf f}, \Sigma^2}}},
                        description={  
\secrev{Expectation of $\| M(\mathbf f+\mathbf g ,\mathbf z)^{-1}\|_{\mathrm F}^2$
for $\mathbf g \sim \secrev{\mathcal N}(0,\Sigma^2)$.}}}
\newglossaryentry{MixedVolume2}{name={\ensuremath{V_W(\mathcal A_1, \dots, \mathcal A_j)}},
		      description={Mixed volume of $(\mathcal A_1, \dots, \mathcal A_j)$ as subsets of $W$.}}
\author{Gregorio Malajovich}
\title
{Complexity of sparse polynomial solving 2: Renormalization}
\address{Departamento de Matemática Aplicada, Instituto de Matemática, Universidade Federal do
Rio de Janeiro. Caixa Postal 68530, Rio de Janeiro, RJ 21941-909, Brasil.}
\email{gregorio@im.ufrj.br}
\date{April 16, 2022}
\thanks{This research was partially funded by the {\em Coordenação de Aperfeiçoamento de Pessoal de Nível Superior} (CAPES), grants PROEX and PRINT, and by the {\em Fundação Carlos Chagas Filho de Amparo à Pesquisa do Estado do Rio de Janeiro} (FAPERJ),
grant E-26/211.557/2021}
\subjclass[2010]{
Primary 65H10. 
Secondary 65H20,
14M25,
14Q20.
}
\keywords{Sparse polynomials, \changed{mixed volume, mixed area}, Newton iteration, 
homotopy algorithms, 
toric varieties, toric infinity, 
renormalization, condition length}
\begin{document}
\begin{abstract}
Renormalized homotopy continuation on toric varieties is introduced as a tool for solving sparse systems of
polynomial equations, or sparse systems of exponential sums. The cost of continuation 
	depends on a renormalized condition length, \changed{defined as} a line integral of the \changed{condition
	number along all the lifted renormalized} paths.

The theory developed in this paper leads to a continuation algorithm tracking all the solutions 
	between two \changed{generic} systems \changed{with} the same structure. The algorithm is randomized, in the sense that it
	follows a random path between the two systems. \changed{The probability of success is one.}
In order to produce an expected cost bound, several invariants
depending solely of the supports of the equations are introduced.
	For instance, the mixed \changed{area} is a quermassintegral that generalizes \changed{surface area} in the same way that
	mixed volume generalizes ordinary volume. The facet gap measures \secrev{for each 1-cone in the fan and for each support polytope, how close is 
the supporting hyperplane to the nearest vertex}. 
Once the supports are fixed, the expected cost depends on the input coefficients solely through two invariants: 
the renormalized toric condition number and the imbalance of the absolute values of the coefficients. 
	This leads to a non-uniform \secrev{polynomial} complexity bound for polynomial solving in terms of those two invariants.
\end{abstract}
\maketitle
\newpage
\setcounter{tocdepth}{2}
\tableofcontents
\renewcommand{\glsglossarymark}[1]{}
\printnoidxglossary[type={symbols}, sort={def}, title={List of Notations}]
\section{Introduction}

Classical foundational results on \secrev{solving polynomial systems} refer to the {\em possibility}
of solving \secrev{those systems} by an algorithm such as elimination or homotopy.
A theory capable to explain and predict
the {\em computational cost} of solving polynomial systems over $\mathbb C$ 
using homotopy algorithms was developed over the last thirty years
\cites{Smale-algorithms,Kostlan,Shub-projective,
Bezout1,Bezout2,Bezout3,Bezout4,Bezout5,
DedieuShub,
BePa05e,BePa09,Beltran-Pardo,
Bezout6,Bezout7,BeltranShub-topology,BeltranShub2009,
BDMS1,BDMS2,
Beltran-homotopia,
BC-annals,
Dedieu-Malajovich-Shub,
ABBCS,
Lairez, Lairez-rigid}.
As explained
in the books by  \ocite{BCSS} and \ocite{BC}, most results in this theory were obtained
through the use of unitary symmetry. \changed{The reach of this theory was 
limited}  
to the realm of dense \changed{homogeneous or multi-homogeneous} polynomial systems.

This paper extends the theory of homotopy algorithms to more general sparse systems.
A common misconception is to consider sparse systems as a particular case of dense systems, 
with some vanishing coefficients. 
This is not true from the {\em algorithmic} viewpoint. The vanishing coefficients introduce 
exponentially many artifact solutions. To see that, compare
the classical Bézout bound to the mixed volume bound in 
Theorems~\ref{BKK} and ~\ref{BKK2} below.

A theory of homotopy algorithms featuring {\em toric varieties} as a replacement for 
the classical {\em projective space}
was proposed by \ocite{toric1} in a previous attempt. Unfortunately, no clear complexity bound could be
obtained independently of integrals along the homotopy path.
Much stronger results are derived here through the introduction of
another symmetry group, that I call renormalization. Essentially, renormalization lifts the 
algorithm domain from the toric variety to its tangent space. Before going further, it is
necessary to explain the basic idea of renormalization and how it replaces  
unitary invariance.  

\subsection{Symmetry and renormalization}

Solutions for systems of $n$ homogeneous polynomial equations
in $n+1$ variables are complex rays through the origin,
so the natural solution locus is projective space $\mathbb P^n$. 
The unitary group $U(n+1)$ acts transitively and isometrically 
on projective space, and this
induces an action on the space $\mathcal H_d$ of degree $d$ homogeneous 
polynomials. 

A rotation $Q \in U(n+1)$ acts on a polynomial $f$ by composition $f \circ
Q^*$, so that every pair $(f, [\mathbf X]) \in \mathcal H_d \times \mathbb P^n$ 
with $f(\mathbf X)=\mathbf 0$ is mapped to the pair $(f \circ Q^*, [Q \mathbf X])$, and 
$(f \circ Q^*)(Q \mathbf X)=f(\mathbf X)=\mathbf 0$.
For the correct choice of a Hermitian inner product in $\mathcal H_d$, the 
group $U(n+1)$ acts by isometries. As a consequence, all of the invariants 
used in the theory are $U(n+1)$-invariants.

The canonical argument \secrev{in the theory} of dense polynomial solving goes as follows: suppose that one wants
to prove a \secrev{lemma} for some system $\mathbf f = (\mathbf f_1, \dots, \mathbf f_n)$ of polynomials
$\mathbf f_i \in \mathcal H_{d_i}$, $d_i \in \mathbb N$, at some point $[\mathbf X] \in 
\mathbb P^n$. \secrev{The hypotheses and conclusions are phrased in terms of $U(n+1)$ invariants.} Then one assumes without loss of generality that 
$\mathbf X = \begin{pmatrix}1 & 0 & \dots & 0\end{pmatrix}^T$. Intricate lemmas
become simple calculations.

Early tentatives to develop a complexity theory for
solving sparse polynomial systems were hindered by the lack of a 
similar action \cites{MRMomentum, MRHigh}. For instance, the complexity 
bounds obtained by
\ocite{toric1} depend on a condition length, which is the line integral
along a path of solutions $\mathbf (\changed{\mathbf f}_t, \mathbf X_t)$ of the condition number, times
a geometric distortion invariant $\nu(\mathbf X_t)$. No bound on the expectation of
this integral is known.

It is customary in the sparse case to look at roots $\mathbf X \in \mathbb C^n$
with $X_i \ne \mathbf 0$, that is on the multiplicative group $\mathbb C^n_\times$. 
The {\em toric variety} from equation~\eqref{toric-variety} below is a convenient
closure of $\mathbb C^n_\times$.
The contributions in this paper stem from the \changed{transitive} action of the multiplicative
group $\mathbb C_{\times}^n$ onto itself, and onto spaces of sparse
polynomials. Each element $\mathbf U \in \mathbb C_{\times}^n$ acts on $\mathbf X \in
\mathbb C_{\times}^n$ by componentwise multiplication. Let $A \subseteq
\mathbb Z^n$ be finite, and let \gls{PA}
be the set of Laurent polynomials
of the form
\[
	F(\mathbf Z)=\sum_{\mathbf a \in A} f_{\mathbf a} 
	Z_1^{a_1}
	Z_2^{a_2}
	\dots 
	Z_n^{a_n}
.
\]
The element $\mathbf U \in \mathbb C_{\times}^n$ acts 
\changed{on $\mathscr P_A$} by sending \changed{$\mathbf Z \mapsto F(\mathbf Z)$} into
\changed{$\mathbf Z \mapsto F(\secrev{\mathbf U} \mathbf Z)$}. 
\changed{For short, we will use notations $F(\cdot)$ and $F(\secrev{\mathbf U}\cdot)$ respectively. This action will be used}
to send a pair $(F(\cdot), \mathbf X)$ into the pair
$(F(\secrev{\mathbf X} \, \cdot \,), \mathbf 1)$
where $\mathbf 1 = \begin{pmatrix} 1 & 1 & \dots & 1 \end{pmatrix}^{T}$ 
is the unit of $\mathbb C_{\times}^n$. 
\secrev{This is the renormalization used \changed{here}.}
One can also replace the unit of 
$\mathbb C_{\times}^n$ by an arbitrary point. 

The main results in this paper can now be \changed{informally stated}. 
They will be
formalized later, using logarithmic coordinates that make polynomials
into exponential sums. While this last formulation is sharper and more
elegant, we start with the primary results.

\subsection{Sketch of the main results}
\subsubsection{Renormalization}
\changed{The renormalized Newton iteration applied to a pair $(\mathbf F(\cdot),$ $\mathbf X)$ is essentially Newton iteration applied to $(\mathbf F(\secrev{\mathbf X} \, \cdot),\mathbf 1)$. The result is then subject to the group action for $\mathbf X$. The precise construction of renormalized Newton iteration appears 
in Section~\ref{sec:renorm} and uses logarithmic coordinates.

The renormalization operator takes the homotopy path $(\mathbf F_t(\cdot), \mathbf X_t)_{t \in [t_0,T]}$ with $\mathbf F_t(\mathbf X_t) \equiv \mathbf 0$,
into the homotopy path $(\mathbf F_t(\secrev{\mathbf X_t}\ \cdot),\mathbf 1)_{t \in [t_0,T]}$. 
Given $\mathbf F_t(\cdot)$ and an approximate solution of $\mathbf X_{t_0}$
for $\mathbf F_{t_0}$, the
Renormalized Homotopy algorithm in Definition~\ref{def-recurrence} produces
an approximate  
lifting $\mathbf X_t$ for all $\mathbf F_t$.
Theorem~\ref{th-A} bounds the computational cost of this homotopy algorithm
linearly on \secrev{an} invariant,
the {\em renormalized condition length} of the homotopy path.
This invariant
is the sum of line integrals of the {\em toric condition number} \cite{toric1}
along
the renormalized path $(\mathbf F_t(\secrev{\mathbf X_t}\, \cdot),\mathbf 1)_{t \in [t_0,T]}$ and also along $(\mathbf X_t)_{t \in [t_0,T]}$. Departing from the previous
approach, the last integral is taken for $\mathbf X_t$ 
\secrev{not in the toric variety itself, but rather in a particular coordinate chart, namely the tangent plane to the toric variety at the fixed point $\mathbf 1$.}
{\em Caveat:} all the results in this paper will be formally stated in logarithmic coordinates, so what is stated as $\mathbf 1$ in this introduction will later become $\mathbf 0=[0, \dots, 0]^T$.}

\subsubsection{Expected condition}
Let $\mathbf q$ be a Gaussian, \changed{random} sparse polynomial system and let $Z(\mathbf q)$ denote the
set of roots of $\mathbf q$ in the toric variety. 
In order to investigate the renormalized condition length, one would like to bound the average of the sum
over $Z(\mathbf q)$ of the \changed{squared
condition number after renormalization. The bounds obtained
in this paper are more technical: Theorems~\ref{E-M2} and~\ref{old-mainB} provide a conditional bound, only the roots
away from `toric infinity' are counted.} The most troubling issue is that the upper bound
does not depend solely on the {\em mixed volume}, but also on the {\em mixed \changed{area}}. This is another 
quermassintegral generalizing the \changed{surface area} of a convex body.
\changed{The ratio of the mixed area to the mixed volume can be bounded below 
by the isometric inequality. No general upper bound can possibly exist.
Please see example~\ref{isoperimetric} for a particular upper bound. 
The mixed area quermassintegral is closely related to
the mixed area measure introduced by 
\ocite{Aleksandrov}.
The precise connection is explained in Remark~\ref{rem-Aleksandrov}
below.}

\subsubsection{Toric infinity}
Will the roots close to `toric infinity' make the bound from Theorems~\ref{E-M2} and \ref{old-mainB} worthless? 
\changed{The Condition Number Theorem}~\ref{cond-num-infty} establishes a perturbation bound in terms of the distance to the locus
of sparse systems with solution at `toric infinity'. The degree of this locus is bounded
in Theorem~\ref{degenerate-locus}.
In the particular case where the supports are general enough (strongly mixed supports),
this degree is no larger than the number of \secrev{{\em rays} (1-cones)} in the {\em fan} of the tuple of supports. 
Those two results can be used in Theorem~\ref{vol-omega} 
to bound the probability that a linear homotopy path \changed{fails} 
the condition in Theorem~\ref{old-mainB}, that is the probability that it crosses the set of
systems with at least one root close to `toric infinity'.

\subsubsection{Expected condition length, conditional}

Since this is an exploratory paper, we choose for simplicity a homotopy path
of the form $\mathbf g + t \mathbf f$, where 
\secrev{$\mathbf f$ and $\mathbf g$ have the same support,} $\mathbf g$ has iid Gaussian coefficients and $\mathbf f$ 
is fixed and outside a certain variety. If the supports are strongly mixed, the only requirement is that
the coefficients of $\mathbf f$ are non-zero. 

With probability one, this homotopy path lifts to \changed{$n! V/\det(\Lambda)$} solution paths, where $V$ is Minkowski's mixed volume \changed{and $\Lambda$ is the lattice determinant, to be defined later}. 
The global cost of Renormalized Homotopy along this homotopy path, $0 \le t \le \infty$, is given by
the sum over all the solution paths of the condition length from Theorem~\ref{th-A}.
Theorem~\ref{mainD} implies that given a fixed support $(A_1, \dots, A_n)$ 
and for a generic system $\mathbf f$, 
with probability at least $3/4$, the sum of condition lengths is no more than
\begin{equation} \label{main-bound-0}
\secrev{	O\left(
	\mu_{\mathbf f}^2 \kappa_{\mathbf f}^{\frac{3}{2}}
		\text{\ and logarithmic terms}\right).
}
\end{equation}
\changed{This} bound depends \changed{on the coefficients of the target system 
$\mathbf f$ solely through the 
	{\em renormalized condition number} $\mu_{\mathbf f} =
\max_{X \in Z(\mathbf f)} \mu(\mathbf f \secrev{(\mathbf X \cdot)}, \mathbf 1)$ 
and the imbalance invariant
$\kappa_{\mathbf f} = \max(\|\mathbf f_i\|/|f_{i\mathbf a}|)$.}
The bound stated above is non-uniform. \changed{The dependency on the
support polytopes is hidden inside of the big O notation. 
The statement of Theorem~\ref{mainD} is
uniform in terms of the input size, 
the mixed volume, the mixed area, the facet gap, and of course $\mu_{\mathbf f}$
and $\kappa_{\mathbf f}$. Once again, since the results in this paper are stated in logarithmic coordinates, the point $\mathbf 1$ in the definition of
$\mu_{\mathbf f}$ must be replaced by $\mathbf 0$ and the multiplicative action becomes additive.}

\subsubsection{The cost of homotopy}
Theorem \ref{mainD} allows to solve a random system $\mathbf g$, given the set of solutions
of a suitable system \changed{$\mathbf f$} with same support. \secrev{Reversely, it} also allows to solve a suitable arbitrary system
$\mathbf f$ of same support, given
the solutions of the random system $\mathbf g$. In order to obtain a more decisive complexity bound,
we consider the problem of finding the
set of solutions of a \changed{fixed system $\mathbf f$} in terms of the set of solutions of \changed{another fixed system $\mathbf h$}. The procedure goes through
a random system $\mathbf g$, in a manner akin to the {\em Cheater's Homotopy} suggested by
\ocite{Li-Sauer-Yorke}.
\changed{For a fixed support $A_1, \dots, A_n$ and for generic systems
$\mathbf f$ and $\mathbf h$ \changed{properly scaled},
the randomized algorithm in Theorem~\ref{mainE} will perform this task with probability one and expected cost
\begin{equation}\label{main-bound-1}
O
	\left( \left(
	\secrev{\mu_{\mathbf f}^2 
	\kappa_{\mathbf f}^{\frac{3}{2}}
+
	\mu_{\mathbf h}^2
	\kappa_{\mathbf h}^{\frac{3}{2}}}
	\right) \text{\ and logarithmic terms}\right) 
.
\end{equation}
Again, this bound is non-uniform. The precise statement provides a uniform
bound, which depends on invariants of the support such as the mixed volume,
the mixed area and the facet gap.}
In particular, once one convenient start system $\mathbf h$ with small $\kappa_{\mathbf h}$ and small
condition number is known, we obtain a non-uniform complexity bound:
the cost of solving a polynomial system $\mathbf f$ with the same support as $\mathbf h$ is
is
\begin{equation}\label{main-bound-2}
	\secrev{O\left(\mu_{\mathbf f}^2 
	\kappa_{\mathbf f}^{\frac{3}{2}}
	 \text{\ and logarithmic terms}\right)
	.}
\end{equation}

\subsection{Related work}

\ocite{Bezout 6} introduced the condition length in the solution variety and related it to the number of Newton steps in a homotopy continuation method. The step selection problem 
\secrev{in terms of the condition length} 
was dealt independently by
\ocite{Beltran-homotopia} and \ocite{Dedieu-Malajovich-Shub}. 
\fourthrev{The step selection problem was also independently dealt in the
paper by \ocite{BC-annals}, in terms of another integral}. The integral bounds obtained in \fourthrev{all} those papers would
apply to any subspace of the space of dense polynomials. 
As explained before, lack of unitary invariance prevented obtaining global complexity bounds in this setting. Recently, \citeauthor{EPR}~\ycites{EPR,EPR2} introduced new techniques in the context of {\em real} polynomial solving that may overcome this difficulty.

\changed{\ocite{Verschelde-toric} and later \ocite{DTWY} suggested another 
type of homotopy in toric varieties, using Cox coordinates. This adds one
new variable for every face or for every 
direction in the 0-fan, while the approach in
this paper does not add new variables. No complexity bounds on this other
toric homotopy are known.}

\subsection{Organization of the paper}
In \changed{section \ref{sec:background}} we revisit the notations, basic definitions and
facts needed in the sequel. \changed{The renormalization operator, renormalized Newton iteration and the main related invariants are discussed in section
~\ref{sec:renorm}.
The main results are formally stated in section \ref{sec:results}.}
The proof of the main statements is postponed
to Sections~\ref{sec-homotopy} to ~\ref{sec-linear}.
The final section lists some open problems and other remaining issues.

\subsection{Acknowledgments}

Special thanks to Bernardo Freitas Paulo da Costa and to Felipe Diniz who endured an
early seminar on some of this material. 
Peter Bürgisser convinced me to rework the introduction in terms of classical
polynomial systems rather than exponential sums, which is no minor improvement.
Also,
I would like to thank Matías Bender, Paul Breiding, Alperen Ergür, Josué Tonelli Cueto 
and Nick Vannieuwenhoven for
their input and suggestions. I also thank
Mike Shub, Jean-Claude Yakoubsohn, Marianne Akian,
Stéphane Gaubert for helping to clarify some of the issues in this paper.
\changed{An anonymous referee provided \secrev{two rounds of} valuable and extensive comments,
which substantially improved the presentation of this paper.}

\section{Background and notations}
\label{sec:background}
\subsection{The toric variety}
\label{sec:toric}

This paper is built on top of the theory
of Newton iteration and homotopy on toric varieties proposed
by \ocite{toric1}. 
We review in this section the notations and results that are necessary to
formally state the main theorems of this paper. 

As in the previous work, logarithmic coordinates are used to represent 
polynomial roots, exact or approximate. 
Polynomials get replaced by exponential sums. For instance if 
\changed{
\[
	F(\mathbf X) = \sum_{\mathbf a \in A} f_{\mathbf a} X_1^{a_1} X_2^{a_2} \cdots X_n^{a_n} = \mathbf 0
\]
for a finite set $A \subseteq \mathbb Z^n$, then we write
\[
	f(\mathbf x) = \sum_{\mathbf a \in A} f_{\mathbf a} e^{\mathbf a\mathbf x} 
\hspace{1em}
\text{,}
\hspace{1em}
\changed{X_i=e^{x_i}}
\]}
so that \changed{$f=F \circ \exp$} and $f(\changed{\mathbf x}) = \mathbf 0$. 

\ocite{toric1} considered the action of the additive group 
$\mathbb R^n$ by shifting supports. This leads us to consider more
general exponential sums, where we cannot assume any more that $A
\subseteq \mathbb Z^n$. We assume instead that $A - A = \{ \mathbf a -
\mathbf a': \mathbf a, \mathbf a' \in A\}$ is a finite subset of $\mathbb Z^n$.
It obviously contains the origin.

We will actually deal with {\em systems} of equations with possibly
different supports. \secrev{Those supports are always} finite sets
$\glsdisp{Ais}{A_1, \dots, A_n} \subseteq \mathbb R^n$ such that 
$A_i-A_i \subseteq \mathbb Z^n$. \changed{We also assume through this paper
that $\gls{Si} \defeq \#A_i \ge 2$ and that $n \ge 2$.}
To each $\mathbf a \in A_i$ we associate a function
\[
	\defun{\gls{Via}}{\mathbb C^n}{\mathbb C}{\mathbf x}{V_{i \mathbf a}(\mathbf x)=\rho_{i \mathbf a} e^{\mathbf a \mathbf x}}
\]
where $\gls{rhoia} >0$ is a fixed real number. We denote by 
\glsdisp{FA}{$\mathscr F_{A_i}$}
the complex vector space of exponential sums of the form
\[
	\defun{\secrev{f_i}}{\mathbb C^n}{\mathbb C}{\mathbf x}{\sum_{\mathbf a \in A_i} 
\secrev{f_{i\mathbf a}} V_{i\mathbf a}(\mathbf x)}
\]
\secrev{with $f_{i\mathbf a} \in \mathbb C$.} Solving systems of sparse polynomial systems with support $(A_1, \dots,$ $A_n)$
is equivalent to solving systems of exponential sums in $\mathscr F = \mathscr F_{A_1}
\times \cdots \times \mathscr F_{A_n}$.
The following conventions apply: the inner product 
on $\mathscr F_{A_i}$ is the inner product that makes the basis
$(V_{i \mathbf a})_{\mathbf a \in A_i}$ orthonormal. Objects in $\mathscr F_{A_i}$ will be
represented in coordinates as `row vectors' $\mathbf f_i = (\dots f_{i \mathbf a} \dots)_{\mathbf a \in A_i}$ and objects in \secrev{the dual space} $\mathscr F_{A_i}^*$ will be
represented as column vectors. 
We denote by \glsdisp{VA}{$V_{A_i}$} the vector valued {\em \secrev{sparse}~Veronese map}
\[
	\defun{V_{A_i}}{\mathbb C^n}{\mathscr F_{A_i}^* \simeq \mathbb C^{\secrev{S_i}}}{\mathbf x}{V_{A_i}(\mathbf x)=
\begin{pmatrix} \vdots \\ V_{i\mathbf a}(\mathbf x) \\ \vdots \end{pmatrix}_{\mathbf a \in A_i}.}
\]
Then evaluation of $f_i$ at $\mathbf x$ is given by the \changed{coupling}
\changed{\begin{equation}\label{pairing1}\color{black}
	f_i(\mathbf x) = \mathbf f_i \cdot V_{A_i}(\mathbf x) =
\begin{pmatrix}
	\cdots f_{i\mathbf a} \cdots
\end{pmatrix}_{\mathbf a \in A_i}
\begin{pmatrix} \vdots \\ V_{i\mathbf a}(\mathbf x) \\ \vdots \end{pmatrix}_{\mathbf a \in A_i}
.
\end{equation}}

\begin{example}
Let $A=A_1=\{0, 1, \dots, d\} \subseteq \mathbb Z$. The roots of the polynomial
	$F(X)=\sum_{a=0}^d F_a X^a$ are of the form $X=e^{x}$, where $x$ is a solution of the exponential sum equation 
$f(x) \defeq \sum _{a=0}^d F_a e^{ax} = 0$. Notice that
if $f(x)=0$ then $f(x + 2 k \pi \sqrt{-1})=0$ for all $k \in \mathbb Z$,
so roots of $F(X)=0$ in $\mathbb C$ are in bijection with roots of $f(x)=0$
in $\mathbb C \mod 2 \pi \sqrt{-1}\ \mathbb Z$.
\end{example}

\begin{example}[Weyl metric]\label{ex:Weyl} The unitary invariant inner product introduced by \ocite{Weyl} and also known as the Bombieri inner product plays a prominent role in the theory of dense homotopy algorithms~\cite{BCSS}. 
	Let $A=\{ \mathbf a \in \mathbb N_0^n \text{ s.t. } \sum_{j=1}^n a_j \le d\}$.
	If $F, G$ are degree $d$ polynomials in $n$ variables, 
	$F(\mathbf X)=\sum_{\mathbf a \in A} F_a\mathbf X^{\mathbf a}$ and $G(\mathbf X)=\sum_{\mathbf a \in A} G_a\mathbf X^{\mathbf a}$, Weyl's inner product is by definition
\[
	\langle F, G \rangle_{\mathscr P_{d,n}} = \sum_{a_1+\dots+a_n  \le d} 
	\frac{F_{\mathbf a} \bar G_{\mathbf a}}{\binomial{d}{\mathbf a}}
\]
where the multinomial coefficient
\[
\binomial{d}{\mathbf a} = \frac{d!}{a_1!\ a_2!\ \dots a_n!\ 
	\secrev{(d-\sum_{j=1}^n a_j)} !}
\]
is the coefficient of $W_1^{a_1} W_2^{a_2} \cdots W_n^{a_n}$ in $(1+W_1+\dots+W_n)^{d}$.
We set
\[
	\rho_{\mathbf a} = \sqrt{\binomial{d}{\mathbf a}},
	\hspace{4em}
	V_{\mathbf a}(\mathbf x) = \rho_{\mathbf a} e^{ \mathbf a \mathbf x}
	,\hspace{1em} \text{ and}
	\hspace{3em}	f_{\mathbf a} = \frac{F_{\mathbf a}}{\rho_{\mathbf a}} 
.
\]
As before, $\mathbf f \cdot V_A(\mathbf x) = F(e^{\mathbf x})$. The
exponential sum
$\mathbf f$ is represented in orthonormal coordinates $f_{\mathbf a}$ 
with respect to 
	Weyl's metric.
\end{example}

Once we fixed the supports (finite sets) $A_1, \dots, A_n$ \secrev{with} $A_i - A_i \in \mathbb Z^n$, and 
picked the coefficients $\rho_{i\mathbf a}$, we would like to be able to solve
\secrev{systems of equations of the form}
\[
\mathbf f(\mathbf x) = 
\begin{pmatrix}
f_1(\mathbf x) \\
\vdots \\
f_n(\mathbf x)
\end{pmatrix}
=
0, 
\]
with $\mathbf f$ in $\mathscr F_{A_1} \times \cdots \times \mathscr F_{A_n}$. 
If $\mathbf x \in \mathbb C^n$ is a solution of $\mathbf f(\mathbf x)=\mathbf 0$,
then $\mathbf f(\mathbf x + 2 \pi \sqrt{-1} \mathbf k)=\mathbf 0$ for all $\mathbf k \in
\mathbb Z^n$. It makes sense therefore to consider solutions in $\mathbb C^n
\mod 2 \pi \sqrt{-1} \,\mathbb Z^n$ instead. 
It turns out that in many situations we can do better.  

\begin{example}[Generalized biquadratic trick]\label{ex:biquadratic} Let $A=\{0, d, 2d, \dots cd\}$
for $c,d \in \mathbb N$. The degree $cd$ polynomial
\[
	F(X) = \sum_{a \in A} F_a X^a
\]
can be solved by finding the roots of the degree $c$ polynomial equation $G(W)=\sum_{i=0}^c F_{id} W^i=0$ and then taking $d$-th roots. This is the same as solving the
exponential sum $g(w)=\sum_{i=0}^c F_{id} e^{iw}$ in $\mathbb C \mod 2 \pi \sqrt{-1}$ and dividing by $d$. Or solving
	$f(x) = \sum_{a \in A} F_{a} e^{ax}= 0$ in $\mathbb C \mod \frac{2 \pi}{d} \sqrt{-1}$.
\end{example}

There is a multi-dimensional analogous to the situation in
example~\ref{ex:biquadratic}. A lot of work can be saved by exploiting this
fact. 
After we fixed the $A_i-A_i$'s, we  
want to declare $\mathbf x$ and $\mathbf w \in \mathbb C^n \mod 2 \pi \sqrt{-1}\,\mathbb Z^n$ equivalent if for all $\mathbf f =(f_1, \dots, f_n) \in \mathscr F$,
\begin{equation}
\label{equiv-roots}
\mathbf f (\mathbf x) = \mathbf 0 
\
\Leftrightarrow
\
\mathbf f (\mathbf w) = \mathbf 0 
.
\end{equation}
To do this formally, let
\[
	\defun{[V_{A_i}]}{\mathbb C^n}{\mathbb P(\mathscr F_{A_i}^*)}{x}{[V_{A_i}(x)]}
\]
be the differentiable map induced by $V_{A_i}$.
The equivalence relation below has the properties of \eqref{equiv-roots}
\begin{equation}\label{eq-def-M}
	\mathbf x \sim \mathbf w \hspace{1em} \mathrm{iff} \hspace{1em} \forall i, \ [V_{A_i}(\mathbf x)]=[V_{A_i}(\mathbf w)].
\end{equation}
Then we quotient $\gls{MM}= \mathbb C^n/\sim$.
If the mixed volume $V(\conv{A_1}, \dots, \conv{A_n})$ is non-zero, then
$\secrev{\mathscr M}$ turns out to be $n$-dimensional \cite{toric1}*{Lemma 3.3.1 and Remark 3.3.2}. 
In general,
the natural projection $\mathbb C^n \mod 2 \pi \sqrt{-1}\, \mathbb Z^n \rightarrow \secrev{\mathscr M}$ is many-to-one, and its degree is given by the determinant
of a certain lattice.
More precisely, let $\gls{Lambda} \subseteq \mathbb Z^n$ 
be the $\mathbb Z$-module spanned by the
union of all the $A_i-A_i$. Assuming again non-zero mixed volume,
$\Lambda$ has rank $n$. This means that the linear span
of $\Lambda$ is an $n$-dimensional vector space. In example~\ref{ex:biquadratic}, we had $\Lambda=d \mathbb Z$. 
Before going further, let us recall some basic definitions about lattices. For further details,
the reader is referred to the textbook by \ocite{Lovasz}.

\begin{definition}
\begin{enumerate}[(a)]
\item A {\em full rank lattice} $\Lambda \subseteq \mathbb R^n$ is a $\mathbb Z$-module
so that there are $\mathbf u_1, \dots, \mathbf u_n \in \Lambda$ linearly independent
over $\mathbb R$, and such that 
every $\mathbf u \in \Lambda$ is an integral
linear combination of the $\mathbf u_i$. A list $(\mathbf u_1, \dots, \mathbf u_n)$
with that property is called a {\em basis} of $\Lambda$.
\item If $\Lambda \subseteq \mathbb R^n$ is a full rank lattice, then
we define its determinant as $\det \Lambda = |\det U|$ where $U$ is a matrix with 
		\changed{rows} $\mathbf u_1, \dots, \mathbf u_n$ of a basis of $U$. The determinant
does not depend on the choice of the basis.
\item
The dual of a full rank lattice $\Lambda \subseteq \mathbb R^n$ 
is the set 
\[
	\Lambda^* = \{ v \in (\mathbb R^n)^*:
	\forall \mathbf u \in \Lambda, \mathbf v(\mathbf u) \in \mathbb Z	
		\}
.
\]
\end{enumerate}
\end{definition}
It turns out that $\Lambda^*$ is also a full rank lattice. If $\Lambda$ is full rank and
a basis of $\Lambda$ is given by the \changed{rows} of a matrix $U$, then $U$ is invertible and a basis
for $\Lambda^*$ is given by the \changed{columns} of $U^{-1}$. In general, \changed{if} the \changed{rows} of $U$ are a basis for
a general lattice $\Lambda$, then the \changed{columns} of
its Moore-Penrose pseudo-inverse $U^{\dagger}$ are a basis for $\Lambda^*$.
We can now give a more precise description of $\secrev{\mathscr M}$
\secrev{as a product of $\mathbb R^n$ by a fundamental domain
of $\Lambda^*$}:

\begin{lemma}
\[
	\secrev{\mathscr M} = \mathbb C^n \mod 2 \pi \sqrt{-1}\,\Lambda^*
\]
\end{lemma}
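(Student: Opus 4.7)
The goal is to identify the equivalence classes of $\sim$ on $\mathbb C^n$ with cosets of the additive subgroup $2\pi\sqrt{-1}\,\Lambda^*$. The strategy is simply to unpack the projective equality $[V_{A_i}(\mathbf x)] = [V_{A_i}(\mathbf w)]$ for each $i$, reduce it to a condition on $\mathbf x-\mathbf w$, and then recognize that condition as membership in $2\pi\sqrt{-1}\,\Lambda^*$.

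First I would fix an index $i$ and unfold the definition: $[V_{A_i}(\mathbf x)]=[V_{A_i}(\mathbf w)]$ if and only if there is some $\lambda_i \in \mathbb C^{\times}$ with $\rho_{i\mathbf a}e^{\mathbf a\mathbf x} = \lambda_i\,\rho_{i\mathbf a}e^{\mathbf a \mathbf w}$ for every $\mathbf a \in A_i$. Since $\rho_{i\mathbf a}>0$, the factors cancel, and taking the ratio of the equations corresponding to two elements $\mathbf a, \mathbf a' \in A_i$ eliminates $\lambda_i$ and yields $e^{(\mathbf a-\mathbf a')(\mathbf x-\mathbf w)}=1$. Conversely, if $e^{(\mathbf a-\mathbf a')(\mathbf x-\mathbf w)}=1$ for all $\mathbf a,\mathbf a'\in A_i$, then $\lambda_i \defeq e^{\mathbf a_0(\mathbf x-\mathbf w)}$ (for any chosen $\mathbf a_0 \in A_i$) witnesses the projective equality. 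Running this over all $i$, the relation $\mathbf x \sim \mathbf w$ is equivalent to requiring $\mathbf b(\mathbf x-\mathbf w) \in 2\pi\sqrt{-1}\,\mathbb Z$ for every $\mathbf b$ in the union $\bigcup_i (A_i-A_i)$.

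Next I would pass from this union to the lattice $\Lambda$ it generates. Because the condition $\mathbf b(\mathbf x-\mathbf w) \in 2\pi\sqrt{-1}\,\mathbb Z$ is stable under integer linear combinations of $\mathbf b$, the union can be replaced by its $\mathbb Z$-span: $\mathbf x \sim \mathbf w$ if and only if $\mathbf b(\mathbf x-\mathbf w) \in 2\pi\sqrt{-1}\,\mathbb Z$ for all $\mathbf b \in \Lambda$. Writing $\mathbf x-\mathbf w = \mathbf u + \sqrt{-1}\,\mathbf v$ with $\mathbf u,\mathbf v \in \mathbb R^n$ and noting that each $\mathbf b\in\Lambda$ is real-valued, this splits into $\mathbf b \mathbf u = 0$ and $\mathbf b \mathbf v \in 2\pi\mathbb Z$ for all $\mathbf b \in \Lambda$. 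Under the standing hypothesis that $\Lambda$ has full rank (the setting in which $\mathscr M$ is defined as an $n$-dimensional object, per the preceding discussion), the first condition forces $\mathbf u=\mathbf 0$, and the second says exactly that $\mathbf v/(2\pi) \in \Lambda^*$.

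Combining the two directions yields $\mathbf x \sim \mathbf w$ if and only if $\mathbf x-\mathbf w \in 2\pi\sqrt{-1}\,\Lambda^*$, which is the claimed identification $\mathscr M = \mathbb C^n \mod 2\pi\sqrt{-1}\,\Lambda^*$. I do not anticipate a genuine obstacle here; the only point requiring care is the real/imaginary decomposition, where the full-rank hypothesis on $\Lambda$ is invoked to kill the real part, and the covector convention for $\Lambda$ must be matched with the column-vector convention for $\mathbf x,\mathbf w$ so that the pairing $\mathbf b\mapsto \mathbf b(\mathbf x-\mathbf w)$ is unambiguously the duality used to define $\Lambda^*$.
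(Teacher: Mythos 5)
Your proof is correct and follows essentially the same route as the paper: unfold the projective equality $[V_{A_i}(\mathbf x)]=[V_{A_i}(\mathbf w)]$, eliminate the scalar factor by comparing coordinates (the paper introduces a constant $s_i$ and subtracts; you take ratios, which is the same step), pass from the congruence on $\bigcup_i(A_i-A_i)$ to its $\mathbb Z$-span $\Lambda$, and recognize the condition $\boldsymbol\lambda(\mathbf x-\mathbf w)\in 2\pi\sqrt{-1}\,\mathbb Z$ for all $\boldsymbol\lambda\in\Lambda$ as $\mathbf x-\mathbf w\in 2\pi\sqrt{-1}\,\Lambda^*$. You make the final real/imaginary split explicit — and thereby surface the role of the full-rank hypothesis in forcing the real part to vanish — which the paper leaves implicit, but this is a refinement of the same argument rather than a different approach.
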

\begin{proof}
The relation $\mathbf x \sim \mathbf w$ in equation~\eqref{eq-def-M} 
is equivalent to:
\[
	\forall i,\ \exists s_i \in \mathbb C \setminus \{\mathbf 0\} \ 
	\text{such that}\ 
\forall \mathbf a \in A_i,\ 
\mathbf a (\mathbf x - \mathbf w) \equiv s_i \mod 2 \pi \sqrt{-1}\,\mathbb Z^n.
\]
We can eliminate the $s_i$ to obtain an equivalent statement,
\[
\forall i, \ 
\forall \mathbf a, \mathbf a' \in A_i,\ 
(\mathbf a - \mathbf a')(\mathbf x -\mathbf w)
	\equiv \mathbf 0 \mod 2 \pi \sqrt{-1}\, \secrev{\mathbb Z^n}
.
\]
This is the same as 
\[
\forall \boldsymbol \lambda \in \Lambda, \ 
\boldsymbol \lambda(\mathbf x -\mathbf w)
\equiv \mathbf 0 \mod 2 \pi \sqrt{-1} \secrev{\mathbb Z^n}
.
\]	
Thus, $\mathbf x \sim \mathbf w$ is equivalent to
\[
\mathbf x \equiv \mathbf w \mod 2 \pi \sqrt{-1}\, \Lambda^*.
\]
\end{proof}

There is a natural metric structure on $\secrev{\mathscr M}$. Recall that each
$V_{A_i}$
induces a differentiable map
\[
	\defun{[V_{A_i}]}{\secrev{\mathscr M}}{\mathbb P(\mathscr F_{A_i}^*)}{x}{[V_{A_i}(x)].}
\]
Let $\omega_{A_i}$
denote the pull-back of the Fubini-Study metric in $\mathbb P(\mathscr F_{A_i}^*)$ to $\secrev{\mathscr M}$. The Hermitian inner product associated to this
Kähler form is denoted by $\langle \cdot , \cdot \rangle_{i}$.
\begin{example} If $A=\{\mathbf 0, \mathrm e_1, \dots, \mathrm e_n\}$ and
$\rho_{a}=1$, then $\mathbb P(\mathscr F_A) = \mathbb P^n$ and 
$\langle \cdot , \cdot \rangle_{i}$ is just the pull-back of the Fubini-Study
metric.
More generally, in the setting of example~\ref{ex:Weyl}
\secrev{with $A=\{ \mathbf a \in \mathbb N_0^n, 
	\sum a_i \le d\}$},  
	we notice that 
	\[\|V_A(x)\| = \|(1,X_1, \dots, X_n)\|^d.\]
As a consequence, the inner
product is $d^2$ times the Fubini-Study
metric.
\end{example}

Let $\mathscr F = \mathscr F_{A_1} \times \cdots \times \mathscr F_{A_n}$.
Let $\mathbf V=(V_{A_1}, \dots, V_{A_n})$.
The coordinatewise coupling is denoted by
\begin{equation}\label{pairing2}
	\mathbf f \cdot \mathbf V(\mathbf x) = \begin{pmatrix} f_1 \cdot V_{A_1}(\mathbf x) \\ \vdots \\ f_n \cdot V_{A_n}(\mathbf x)
\end{pmatrix}
\end{equation}
\changed{where the coupling $\mathbf f_i \cdot V_{A_i}$ was defined in \eqref{pairing1}. \secrev{It produces an element} of $\mathbb C^n$.}
The zero-set of $\mathbf f$ is
\[
Z(\mathbf f) = \{ \mathbf x \in \secrev{\mathscr M}: \mathbf f \cdot \mathbf V(\mathbf x)=\mathbf 0 \}
\]
Assuming again that $\Lambda$ has full rank, the immersion
\[
	\defun{\glsdisp{quotient}{[\mathbf V]}}{\secrev{\mathscr M}}{\mathbb P(\mathscr F_{A_1}^*) \times \cdots
\times \mathbb P(\mathscr F_{A_n}^*)}{\mathbf x}{
\begin{pmatrix} [V_1(\mathbf x)] \\ \vdots \\ [V_n(\mathbf x)] \end{pmatrix}}
\]
turns out to be an embedding \changed{\cite{toric1}*{Lemma 3.3.1}}. The $n$-dimensional toric variety 
\begin{equation}\label{toric-variety}
	\gls{VV} = \overline{\{[\mathbf V(\mathbf x)]:\mathbf x \in \secrev{\mathscr M}\}}
\end{equation}
is the natural locus for roots of sparse polynomial systems (aka
exponential sums). Points in $\mathcal V$ that are not of the form
$[\mathbf V(\mathbf x)]$ are said to be at {\em toric infinity}.
The {\em main chart} for $\mathcal V$ is the map $[\mathbf V]: \secrev{\mathscr M} 
\rightarrow \mathcal V$. Its range contains the `finite' points of $\mathcal V$,
that is the points not at toric infinity. 

\subsection{The momentum map}
The {\em momentum map} 
\[
	\defun{\gls{mi}}{\secrev{\mathscr M}}{\glsdisp{AA}{\mathcal A_i}=\conv {A_i}}{\mathbf x}
{\mathbf m_i(\mathbf x) = \sum_{\mathbf a \in A_i} \frac{|V_{i\mathbf a}(\mathbf x)|^2}{\|V_{A_i}(\mathbf x)\|^2}\mathbf a}
\]
is a surjective volume preserving map (up to a constant) from 
$(\secrev{\mathscr M}, \langle \cdot , \cdot \rangle_i)$ into the interior 
of $\mathcal A_i$.
The constant is precisely $\pi^n=n! \vol (\mathbb P^n)$,
so that a generic $\mathbf f \in \mathscr F_{A_i}^n$ has $n! \vol_n \mathcal A_i$
roots in $\secrev{\mathscr M}$ (see \ocite{toric1} and references).
\changed{The derivative of $[V_{A_i}(\mathbf x)]$ can be expressed in terms
of the momentum map.}

\changed{\begin{lemma} 
The differential $D[V_{A_i}]: T_{\mathbf x}\secrev{\mathscr M} \rightarrow T_{[V_{A_i}](\mathbf x)}$ is precisely
	\begin{equation}\label{derivative-projection}
D [V_{A_i}](\mathbf x) \mathbf u = 
P_{V_{A_i}(\mathbf x)^{\perp}} \frac{1}{\|V_{A_i}(\mathbf x)\|^2}DV_{A_i}(\mathbf x) \mathbf u
\end{equation}
	\secrev{where the projection operator is
	$P_{V_{A_i}(\mathbf x)^{\perp}} = I - 
	\frac{1}{\|V_{A_i}(\mathbf x)\|^2}
	V_{A_i}(\mathbf x)
	V_{A_i}(\mathbf x)^*$.}
Moreover,
	\begin{equation}\label{derivative-momentum}
D [V_{A_i}](\mathbf x) \mathbf u = 
\frac{1}{\|V_{A_i}(\mathbf x)\|}
		\left( DV_{A_i}(\mathbf x) - V_{A_i}(\mathbf x) \mathbf m_i(\mathbf x) \right).
\end{equation}
\end{lemma}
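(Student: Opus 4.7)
My plan is to prove both identities by factoring $[V_{A_i}]$ as the composition of the holomorphic map $V_{A_i}\colon \mathscr M \to \mathscr F_{A_i}^*\setminus\{\mathbf 0\}$ with the canonical projection $\pi\colon \mathscr F_{A_i}^*\setminus\{\mathbf 0\}\to\mathbb P(\mathscr F_{A_i}^*)$, and then applying the chain rule. Formula \eqref{derivative-projection} is essentially just the standard expression for the differential of $\pi$: under the identification of $T_{[v]}\mathbb P(\mathscr F_{A_i}^*)$ with $v^{\perp}\subseteq\mathscr F_{A_i}^*$ induced by the Fubini--Study metric, one has $D\pi(v)\dot v=\tfrac{1}{\|v\|^2}P_{v^{\perp}}\dot v$. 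Substituting $v=V_{A_i}(\mathbf x)$ and $\dot v=DV_{A_i}(\mathbf x)\mathbf u$ gives \eqref{derivative-projection}; note that this is the same projector that appears in the standard tangent-space description of projective space used throughout condition-number theory.

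To go from \eqref{derivative-projection} to \eqref{derivative-momentum}, I would expand $P_{V_{A_i}(\mathbf x)^{\perp}} = I-\tfrac{V_{A_i}(\mathbf x) V_{A_i}(\mathbf x)^*}{\|V_{A_i}(\mathbf x)\|^2}$ and compute $V_{A_i}(\mathbf x)^* DV_{A_i}(\mathbf x)\mathbf u$ explicitly. Because $V_{i\mathbf a}(\mathbf x)=\rho_{i\mathbf a} e^{\mathbf a\mathbf x}$ depends on $\mathbf x$ only through the exponential, $DV_{i\mathbf a}(\mathbf x)\mathbf u=(\mathbf a\mathbf u)\,V_{i\mathbf a}(\mathbf x)$, and hence coordinatewise
\[
V_{A_i}(\mathbf x)^* DV_{A_i}(\mathbf x)\mathbf u = \sum_{\mathbf a\in A_i} |V_{i\mathbf a}(\mathbf x)|^2 (\mathbf a\mathbf u) = \|V_{A_i}(\mathbf x)\|^2\,\mathbf m_i(\mathbf x)\mathbf u,
\]
by the very definition of the momentum map $\mathbf m_i$. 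Substituting this identity back, the rank-one correction from $P_{V_{A_i}^{\perp}}$ collapses to $V_{A_i}(\mathbf x)\,\mathbf m_i(\mathbf x)\mathbf u$, yielding $P_{V_{A_i}^{\perp}}DV_{A_i}(\mathbf x)\mathbf u = DV_{A_i}(\mathbf x)\mathbf u - V_{A_i}(\mathbf x)\mathbf m_i(\mathbf x)\mathbf u$, which is the right-hand side of \eqref{derivative-momentum} up to the scalar prefactor.

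The only step that genuinely requires care is the identification of $T_{[v]}\mathbb P(\mathscr F_{A_i}^*)$ with $v^{\perp}$ and the attendant choice of normalization for the Kähler form $\omega_{A_i}$; this is what determines whether the scalar prefactor in front of $(DV_{A_i}-V_{A_i}\mathbf m_i)$ comes out as $\|V_{A_i}\|^{-1}$ or $\|V_{A_i}\|^{-2}$. Once the convention is pinned down and kept consistent with the one used elsewhere in the paper, the remainder is the elementary exponential identity $DV_{i\mathbf a}(\mathbf x)\mathbf u=(\mathbf a\mathbf u)V_{i\mathbf a}(\mathbf x)$ together with the tautological rewriting of a weighted average of $\mathbf a$'s as $\mathbf m_i(\mathbf x)$.
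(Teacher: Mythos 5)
Your approach is essentially the paper's: differentiate the projection to $\mathbb P(\mathscr F_{A_i}^*)$, then recognize the weighted average $\sum_{\mathbf a}|V_{i\mathbf a}(\mathbf x)|^2\,\mathbf a$ as $\|V_{A_i}(\mathbf x)\|^2\,\mathbf m_i(\mathbf x)$. The paper does not invoke the chain rule through an abstract $D\pi$; instead it chooses the unit representative $V_{A_i}(\mathbf x)/\|V_{A_i}(\mathbf x)\|$ and differentiates it directly, obtaining
\[
D[V_{A_i}](\mathbf x)\mathbf u
= \Bigl(I - \tfrac{1}{\|V_{A_i}(\mathbf x)\|^2} V_{A_i}(\mathbf x)V_{A_i}(\mathbf x)^*\Bigr)\tfrac{1}{\|V_{A_i}(\mathbf x)\|}\,DV_{A_i}(\mathbf x)\mathbf u,
\]
with prefactor $\|V_{A_i}(\mathbf x)\|^{-1}$, not $\|V_{A_i}(\mathbf x)\|^{-2}$. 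This matches \eqref{derivative-momentum} and the way the lemma is used later (e.g., in Lemma~\ref{coarse-bound-DV} and in the formula for $M(\mathbf f,\mathbf x)$). Your quoted identity $D\pi(v)\dot v=\|v\|^{-2}P_{v^{\perp}}\dot v$ is therefore off by a factor of $\|v\|$ for the convention in force here; the correct one under the unit-sphere representative is $D\pi(v)\dot v=\|v\|^{-1}P_{v^{\perp}}\dot v$. You were right to single out the normalization as the delicate point: the paper's own displayed \eqref{derivative-projection} carries $\|V_{A_i}(\mathbf x)\|^{-2}$, which is inconsistent with \eqref{derivative-momentum} and with the rest of the paper, and appears to be a typo. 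The computation $V_{A_i}(\mathbf x)^*DV_{A_i}(\mathbf x)\mathbf u=\|V_{A_i}(\mathbf x)\|^2\,\mathbf m_i(\mathbf x)\mathbf u$ and the collapse of the rank-one term are exactly as in the paper.
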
}
\changed{
\begin{proof}
	We choose 
$\frac{ V_{A_i}(\mathbf x)}{\|V_{A_i}(\mathbf x)\|}$ as the representative
	of the projective point $[V_{A_i}](\mathbf x) \in \mathbb P(\mathscr F_{A_i})$. 
Differentiating 
$\frac{ V_{A_i}(\mathbf x)}{\|V_{A_i}(\mathbf x)\|}$ one obtains
\[
		D [V_{A_i}](\mathbf x) \mathbf u = 
		\left(I - \frac{1}{\|V_{A_i}(\mathbf x)\|^2}
V_{A_i}(\mathbf x)V_{A_i}(\mathbf x)^*	
\right)\frac{1}{\|V_{A_i}(\mathbf x)\|}DV_{A_i}(\mathbf x) \mathbf u 
.
\]
The term inside the parenthesis is the projection operator
	$P_{V_{A_i}(\mathbf x)^{\perp}}$, hence \eqref{derivative-projection}. 
\secrev{Equation} \eqref{derivative-momentum} comes
	from writing the momentum map as 
\[
	\mathbf m_i(\mathbf x)=\frac{1}{\|V_{A_i}(\mathbf x)\|^2} V_{A_i}(\mathbf x)^*
DV_{A_i}(\mathbf x).
\]
\end{proof}
}

\secrev{In order to produce a coarse, although handy bound of the 
toric norm in terms of the Hermitian norm, we first introduce the {\em radius} of the support
at $\mathbf x$, viz.
\begin{equation}\label{deltaix}
	\gls{deltaix}
	\defeq \max_{\mathbf a \in A_i} \| \mathbf a - \mathbf m_i(\mathbf x)\|
.
\end{equation}
}
\begin{lemma}\label{coarse-bound-DV} Let $\mathbf x \in \secrev{\mathscr M}$ and $\mathbf u \in T_{\mathbf x} \secrev{\mathscr M} \simeq \mathbb C^n$.
	Let $\| \cdot \|$ be the canonical Hermitian norm
\changed{and let $\glsdisp{normi}{\| \mathbf u\|_{i,\mathbf x} = 
\|D[V_{A_i}](\mathbf x) \mathbf u \|}$ be the pull-back of Fubini-Study
metric on $\mathbb P(\mathscr F_{A_i}^*)$}.
	Then \changed{$\|\mathbf u\|_{i, \mathbf x} \le \max_{\mathbf a \in A_i}
	|(\mathbf a - m_i(\mathbf x))\mathbf u|$, in particular}
	\[
		\| \mathbf u\|_{i,\mathbf x}
		\le \delta_i(\mathbf x)
		\|\mathbf u\|.
	\]
	where 
	\[
		\secrev{	\frac{1}{2} \diam {\conv{A_i}} \le
	\delta_i(\mathbf x) \le \diam {\conv{A_i}}.}
	\]
\end{lemma}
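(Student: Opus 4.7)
The plan is to compute $D[V_{A_i}](\mathbf x)\mathbf u$ explicitly using equation~\eqref{derivative-momentum} and recognize its squared norm as a convex combination of the quantities $|(\mathbf a - \mathbf m_i(\mathbf x))\mathbf u|^2$, with weights given by the momentum weights. Since $V_{i\mathbf a}(\mathbf x) = \rho_{i\mathbf a}e^{\mathbf a\mathbf x}$, each coordinate of $DV_{A_i}(\mathbf x)\mathbf u$ equals $(\mathbf a\mathbf u) V_{i\mathbf a}(\mathbf x)$, and subtracting $V_{A_i}(\mathbf x)\mathbf m_i(\mathbf x)\mathbf u$ coordinatewise gives $(\mathbf a - \mathbf m_i(\mathbf x))\mathbf u \cdot V_{i\mathbf a}(\mathbf x)$. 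Dividing by $\|V_{A_i}(\mathbf x)\|$ as in~\eqref{derivative-momentum} yields
\[
\|\mathbf u\|_{i,\mathbf x}^2 = \sum_{\mathbf a \in A_i} \frac{|V_{i\mathbf a}(\mathbf x)|^2}{\|V_{A_i}(\mathbf x)\|^2}\, |(\mathbf a - \mathbf m_i(\mathbf x))\mathbf u|^2.
\]

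The weights $\frac{|V_{i\mathbf a}(\mathbf x)|^2}{\|V_{A_i}(\mathbf x)\|^2}$ are nonnegative and sum to one (this is exactly the convex combination defining $\mathbf m_i(\mathbf x)$), so the sum is at most $\max_{\mathbf a \in A_i} |(\mathbf a - \mathbf m_i(\mathbf x))\mathbf u|^2$. This gives the first inequality. Cauchy–Schwarz then yields $|(\mathbf a - \mathbf m_i(\mathbf x))\mathbf u| \le \|\mathbf a - \mathbf m_i(\mathbf x)\|\, \|\mathbf u\| \le \delta_i(\mathbf x)\,\|\mathbf u\|$, proving the second claim.

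It remains to verify the two-sided estimate on $\delta_i(\mathbf x)$. The upper bound is immediate: $\mathbf m_i(\mathbf x)$ lies in $\mathcal A_i = \conv{A_i}$ and each $\mathbf a \in A_i$ lies in $\mathcal A_i$, so $\|\mathbf a - \mathbf m_i(\mathbf x)\| \le \diam{\mathcal A_i}$. For the lower bound, I use that the diameter of a convex hull is attained on its vertices and hence on pairs from $A_i$: pick $\mathbf a, \mathbf a' \in A_i$ with $\|\mathbf a - \mathbf a'\| = \diam{\mathcal A_i}$. By the triangle inequality,
\[
\|\mathbf a - \mathbf m_i(\mathbf x)\| + \|\mathbf a' - \mathbf m_i(\mathbf x)\| \ge \|\mathbf a - \mathbf a'\| = \diam{\mathcal A_i},
\]
so at least one of the two terms is $\ge \tfrac{1}{2}\diam{\mathcal A_i}$, whence $\delta_i(\mathbf x) \ge \tfrac{1}{2}\diam{\mathcal A_i}$.

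There is no real obstacle here; the only subtle point is making sure the expression for $D[V_{A_i}](\mathbf x)\mathbf u$ from~\eqref{derivative-momentum} is interpreted coordinatewise (the product $V_{A_i}(\mathbf x)\mathbf m_i(\mathbf x)$ is an outer-product-type expression yielding a vector indexed by $\mathbf a$), which is what makes the weighted-average identity work cleanly.
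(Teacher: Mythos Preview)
Your proof is correct and follows essentially the same approach as the paper: both compute the $\mathbf a$-th coordinate of $D[V_{A_i}](\mathbf x)\mathbf u$ from~\eqref{derivative-momentum}, bound the resulting weighted sum by the maximum (you phrase this as a convex combination, the paper bounds coordinatewise and then sums---same thing), apply Cauchy--Schwarz for the $\delta_i(\mathbf x)$ bound, and use the triangle inequality on a diameter-realizing pair for the lower bound on $\delta_i(\mathbf x)$.
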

\begin{proof}[Proof of Lemma~\ref{coarse-bound-DV}]
\changed{We normalized the representative
	of $[V_{A_i}](\mathbf x)$ in such a way that the Fubini-Study norm and
	the $\mathscr F_{A_i}^*$ norm coincide.}
	The $\mathbf a$-th coordinate of \eqref{derivative-momentum} is bounded by \[
\left|
\left(D [V_{A_i}](\mathbf x) \mathbf u\right)_{\mathbf a}
\right|
	=
	\frac{|V_{\changed{i}\mathbf a}(\mathbf x)|}{\|V_{A_i}(\mathbf x)\|}
	|(\mathbf a - \mathbf m_i(\mathbf x)) \mathbf u|
\le
	\changed{
	\frac{|V_{\changed{i}\mathbf a}(\mathbf x)|}{\|V_{A_i}(\mathbf x)\|}
	\max_{\mathbf a \in A_i} |(\mathbf a - \mathbf m_i(\mathbf x)) \mathbf u|.}
	\]
	\changed{Hence,
	\[
		\| \mathbf u\|_{i, \mathbf x}
		\le
	\max_{\mathbf a \in A_i} |(\mathbf a - \mathbf m_i(\mathbf x)) \mathbf u|.\]
}
	The upper bound $\delta_i\secrev{(\mathbf x)} \le \diam {\conv{A_i}}$ is trivial. For the
lower bound, let $\mathbf a, \mathbf a' \in A_i$ maximize 
$\| \mathbf a - \mathbf a'\|=\diam {\conv{A_i}}$. 
Let $\mathbf c = \frac{1}{2}(\mathbf a + \mathbf a')$.
Assume without loss
of generality that 
$\| \mathbf m_i(\mathbf x) - \mathbf a\| \ge \| \mathbf m_i(\mathbf x) - \mathbf a'\|$. Then,
$\| \mathbf m_i(\mathbf x) - \mathbf a\| \ge \| \mathbf c - \mathbf a\| =
\frac{1}{2}\diam {\conv{A_i}}$.
	\changed{The Cauchy-Schwartz inequality for canonical inner product and
	norms yields
\[
	\|\mathbf u \|_{i,\mathbf x} 
		\le \max_{\mathbf a \in A_i} \|\mathbf a - m_{i}(\mathbf x) \|
		\|\mathbf u\| = \delta_i\secrev{(\mathbf x)} \|\mathbf u\|.
\]}
\end{proof}

We define an inner product on $\secrev{\mathscr M}$ as the pull-back of the Fubini-Study \secrev{inner product} in \changed{the toric variety} $\mathcal V$, namely
\begin{equation}\label{metricM}
	\langle \cdot , \cdot \rangle_{\mathbf x} = 
\langle \cdot , \cdot \rangle_{1,\mathbf x}  
+ \cdots
+
\langle \cdot , \cdot \rangle_{n,\mathbf x}  
.
\end{equation}
Its associated norm is denoted by $\gls{norm}$.
The previous complexity analysis by \ocite{toric1} relied on two main 
invariants.

The {\em toric condition number}
is defined for $\mathbf f \in \mathscr F$ and
$\mathbf x \in \secrev{\mathscr M}$ by
\[
	\gls{mu} = 
\left\| M(\mathbf f,\mathbf x)^{-1} \diag { \|\mathbf f_i\|}
\right\|_{\mathbf x}
\]
where 
\changed{
\[
	\gls{M} = 
\begin{pmatrix}
	\mathbf f_1 \cdot D[V_{A_1}](\mathbf x) \\
\vdots \\
	\mathbf f_n \cdot D[V_{A_n}](\mathbf x) \\
\end{pmatrix}
\]
and $\| \cdot \|_{\mathbf x}$ is the operator norm for linear maps from
$\mathbb C^n$ (canonical norm assumed) into $(\secrev{\mathscr M}, \| \cdot \|_x)$.
Conceptually, it is the norm of the derivative of the implicit function
$G: \mathbb P(\mathscr F_{A_1}) \times \cdots \times \mathbb P(\mathscr F_{A_n})\rightarrow (\mathscr M, \langle \cdot , \cdot \rangle_{\mathbf x})$ for the
equation $\mathbf f \cdot \mathbf V(\mathbf x)=\mathbf 0$. However the definition above
is more general.

We may replace formula \eqref{derivative-projection} inside the definition of
$M(\mathbf f, \mathbf x)$ to obtain the expression
\[
M(\mathbf f, \mathbf x)
=
\begin{pmatrix}
\frac{1}{\|V_{A_1}(\mathbf x)\|} \mathbf f_1 \cdot P_{V_{A_1}(\mathbf x)^{\perp}} DV_{A_1}(\mathbf x) \\
\vdots \\
\frac{1}{\|V_{A_n}(\mathbf x)\|} \mathbf f_n \cdot P_{V_{A_n}(\mathbf x)^{\perp}} DV_{A_n}(\mathbf x) \\
\end{pmatrix}
\]
Replacing by \eqref{derivative-momentum} instead, one obtains
\[
M(\mathbf f,\mathbf x) = 
\begin{pmatrix}
\frac{1}{\|V_{A_1}(\mathbf x)\|} \mathbf f_1 \cdot \left( DV_{A_1}(\mathbf x) - V_{A_1}(\mathbf x) m_1(\mathbf x)\right) \\
\vdots \\
\frac{1}{\|V_{A_n}(\mathbf x)\|} \mathbf f_n \cdot \left( DV_{A_n}(\mathbf x) - V_{A_n}(\mathbf x) m_n(\mathbf x)\right) \\
\end{pmatrix}
.
\]
}
If $\mathbf m_i(\mathbf x)=\mathbf 0$ for all $i$, or if $\mathbf x$ is a zero for $\mathbf f$, we have
just
\begin{equation}\label{M-zero}
M(\mathbf f,\mathbf x) = 
\begin{pmatrix}
\frac{1}{\|V_{A_1}(\mathbf x)\|} \cdot \mathbf f_1 DV_{A_1}(\mathbf x) \\
\vdots \\
\frac{1}{\|V_{A_n}(\mathbf x)\|} \cdot \mathbf f_n DV_{A_n}(\mathbf x) \\
\end{pmatrix}
.
\end{equation}

\secrev{The second invariant, called {\em distortion invariant},} bounds the distortion when passing from $\|\cdot\|_{i,\mathbf x}$ to $\|\cdot\|_{i, \mathbf y}$ \cite{toric1}*{Lemma 3.4.5}. 
\changed{It is defined as}
$\gls{nu}(\mathbf x) = \max_i \nu_i(\mathbf x)$ with
\secrev{
\begin{equation}\label{distortion}
	\gls{nui}(\mathbf x) \secrev{\defeq} 
	\sup_{\mathbf a \in A_i}
	\sup_{\|\mathbf u\|_{i,\mathbf x} \le 1} 
	|(\mathbf a - \mathbf m_i(\mathbf x)) \mathbf u|
	=
	\sup_{\mathbf a \in A_i}
	\|(\mathbf a - \mathbf m_i(\mathbf x)) \|_{i, \mathbf x}^*
\end{equation}}
Thus, it can also be understood as the `radius' of the support with respect to the
momentum $\mathbf m_i(\mathbf x)$, in the dual metric 
\secrev{$\| \cdot \|_{i,\mathbf x}^*$} to $\|\cdot\|_{i, \mathbf x}$. This radius can be computed effectively for a fixed $\mathbf x$.
It can also be bounded for $\mathbf x=\mathbf 0$ as in Lemma~\ref{lem-nu-kappa}
below. \secrev{For instance, if $\rho_{i\mathbf a} = 1$ for all $i, \mathbf a$, one has
$\nu_i(\mathbf 0) \le \sqrt{S_i}$.}

Expressions for both invariants can be simplified by shifting
each support $A_i$, so that $\mathbf m_i(\mathbf x) = \mathbf 0$. By shifting supports, we still obtain finite sets $A_i \subseteq \mathbb R^n$ with the property that $A_i-A_i \in \mathbb Z^n$. 

\secrev{The projective distance in $\mathscr F_{A_i}$ is defined by
$d_{\mathbb P}(\mathbf f_i, \mathbf g_i) = \inf_{\lambda \in \mathbb C} \frac{\|\mathbf f_i - \lambda \mathbf g_i\|}{\|\mathbf f_i\|}$.
It is equal to the sine of the Riemannian metric defined by Fubiny-Study in $\mathbb P(\mathscr F_{A_i})$. We will define
the {\em multiprojective distance} in $\mathscr F$
as the product metric, viz.
\begin{equation}\label{multi-projective-distance}
	d_{\mathbb P}(\mathbf f, \mathbf g) = \sqrt{\sum_i \inf_{\lambda \in \mathbb C} \frac{\|\mathbf f_i - \lambda \mathbf g_i\|^2}{\|\mathbf f_i\|^2}}
.
\end{equation}
This induces a metric in $\mathbb P(\mathscr F_{A_1}) \times \cdots \times \mathbb P(\mathscr F_{A_n})$, not to be confused with
the product Fubini-Study Riemannian distance.}

The following estimates will be needed:
\begin{proposition}\label{aggregates} 
	\label{prop-mu}
	Let $\mathbf f,\mathbf g \in \mathscr F$.
Let $\mathbf x \in \secrev{\mathscr M}$. 
	\begin{enumerate}[(a)]
		\item \label{prop-mu-a} \secrev{Unconditionally,}
\[
	1 \le \mu(\mathbf f,\mathbf x) 
.
\]
\item \label{prop-mu-b} 
			If \changed{$d_{\mathbb P}(\mathbf f, \mathbf g) \mu(\mathbf f,\mathbf x) < 1$,} then
			\secrev{we have the Lipschitz properties}
\[
	\frac{ \mu(\mathbf f,\mathbf x) }{1 + d_{\mathbb P}(\mathbf f,\mathbf g) \mu(\mathbf f,\mathbf x)}
		\le \mu(\mathbf g,\mathbf x) \le 
			\frac{ \mu(\mathbf f,\mathbf x) }{1 - d_{\mathbb P}(\mathbf f,\mathbf g) \mu(\mathbf f,\mathbf x)}
			\]
	\end{enumerate}
\end{proposition}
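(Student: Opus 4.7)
The plan for part \textit{(a)} is to observe that each row of the normalized condition matrix is a contraction in the toric metric. For $\mathbf{u} \in T_{\mathbf{x}}\secrev{\mathscr M}$, the $i$-th component of $\diag{\|\mathbf{f}_i\|^{-1}} M(\mathbf{f},\mathbf{x})\mathbf{u}$ is $(\mathbf{f}_i/\|\mathbf{f}_i\|) \cdot D[V_{A_i}](\mathbf{x}) \mathbf{u}$, whose modulus is at most $\|D[V_{A_i}](\mathbf{x})\mathbf{u}\|=\|\mathbf{u}\|_{i,\mathbf{x}}$ by Cauchy--Schwarz in $\mathscr F_{A_i}^*$. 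Summing the squares gives $\|\diag{\|\mathbf{f}_i\|^{-1}} M(\mathbf{f},\mathbf{x}) \mathbf{u}\|_2^2 \le \sum_i \|\mathbf{u}\|_{i,\mathbf{x}}^2 = \|\mathbf{u}\|_{\mathbf{x}}^2$, so the operator $\diag{\|\mathbf{f}_i\|^{-1}} M(\mathbf{f},\mathbf{x})\colon (T_{\mathbf{x}}\secrev{\mathscr M},\|\cdot\|_{\mathbf{x}}) \to (\mathbb C^n,\|\cdot\|_2)$ has norm at most $1$. Its inverse $M(\mathbf{f},\mathbf{x})^{-1}\diag{\|\mathbf{f}_i\|}$ therefore has norm at least $1$, which is precisely $\mu(\mathbf{f},\mathbf{x})$.

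The plan for part \textit{(b)} is a Neumann-series perturbation combined with the scaling invariance of $\mu$. First, $\mu$ is unchanged by replacing each $\mathbf{f}_i$ (resp.\ $\mathbf{g}_i$) with $\lambda_i \mathbf{f}_i$, $\lambda_i \in \mathbb C^{\times}$, because the $i$-th row of $M$ and the $i$-th entry of $\diag{\|\mathbf{f}_i\|}$ rescale in a way that leaves $M(\mathbf{f},\mathbf{x})^{-1}\diag{\|\mathbf{f}_i\|}$ altered only by unit-modulus column rotations. Using this freedom, I will replace each $\mathbf{g}_i$ by the unique scalar multiple $\tilde{\mathbf{g}}_i$ equal to the orthogonal projection of $\mathbf{f}_i$ onto $\mathrm{span}(\mathbf{g}_i)$; by construction $\|\tilde{\mathbf{g}}_i-\mathbf{f}_i\| = d_{\mathbb P}(\mathbf{f}_i,\mathbf{g}_i)\,\|\mathbf{f}_i\|$, and the orthogonality $\mathbf{f}_i - \tilde{\mathbf{g}}_i \perp \tilde{\mathbf{g}}_i$ forces $\|\tilde{\mathbf{g}}_i\| \le \|\mathbf{f}_i\|$.

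Setting $E = M(\tilde{\mathbf{g}},\mathbf{x}) - M(\mathbf{f},\mathbf{x})$, the same row-by-row Cauchy--Schwarz used in (a) now yields
\[
\|\diag{\|\mathbf{f}_i\|^{-1}} E\,\mathbf{u}\|_2^2 \le \sum_i d_{\mathbb P,i}^2\,\|\mathbf{u}\|_{i,\mathbf{x}}^2 \le \bigl(\max_i d_{\mathbb P,i}^2\bigr) \|\mathbf{u}\|_{\mathbf{x}}^2 \le d_{\mathbb P}(\mathbf{f},\mathbf{g})^2 \|\mathbf{u}\|_{\mathbf{x}}^2,
\]
so $\|M(\mathbf{f},\mathbf{x})^{-1} E\|_{\mathbf{x}\to\mathbf{x}} \le \epsilon := \mu(\mathbf{f},\mathbf{x}) d_{\mathbb P}(\mathbf{f},\mathbf{g}) < 1$ by hypothesis. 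Factoring $M(\tilde{\mathbf{g}},\mathbf{x}) = M(\mathbf{f},\mathbf{x})(I + M(\mathbf{f},\mathbf{x})^{-1} E)$ and applying the Neumann series, $\|(I + M(\mathbf{f},\mathbf{x})^{-1} E)^{-1}\|_{\mathbf{x}\to\mathbf{x}} \le 1/(1-\epsilon)$. Since $\|\tilde{\mathbf{g}}_i\| \le \|\mathbf{f}_i\|$, the diagonal operator $\diag{\|\tilde{\mathbf{g}}_i\|/\|\mathbf{f}_i\|}$ is a contraction, whence $\|M(\mathbf{f},\mathbf{x})^{-1} \diag{\|\tilde{\mathbf{g}}_i\|}\|_{2\to\mathbf{x}} \le \mu(\mathbf{f},\mathbf{x})$ and the upper bound $\mu(\mathbf{g},\mathbf{x}) = \mu(\tilde{\mathbf{g}},\mathbf{x}) \le \mu(\mathbf{f},\mathbf{x})/(1-\epsilon)$ follows at once.

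The lower bound is then obtained by applying the upper bound with $\mathbf{f}$ and $\mathbf{g}$ swapped, exploiting the symmetry $d_{\mathbb P}(\mathbf{f},\mathbf{g}) = d_{\mathbb P}(\mathbf{g},\mathbf{f})$; this yields $\mu(\mathbf{f},\mathbf{x}) \le \mu(\mathbf{g},\mathbf{x})/(1 - d_{\mathbb P}\mu(\mathbf{g},\mathbf{x}))$, which rearranges to the claimed $\mu(\mathbf{g},\mathbf{x}) \ge \mu(\mathbf{f},\mathbf{x})/(1 + d_{\mathbb P}\mu(\mathbf{f},\mathbf{x}))$. The swap is directly legal whenever the already-proved upper bound forces $d_{\mathbb P}\mu(\mathbf{g},\mathbf{x}) \le \epsilon/(1-\epsilon) < 1$, i.e.\ for $\epsilon < 1/2$; the remaining range $\epsilon \in [1/2,1)$ is handled by chaining the inequality along a projective path from $\mathbf{f}$ to $\mathbf{g}$ subdivided into short enough steps. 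The main obstacle I anticipate is precisely this bootstrap across the full hypothesis range, since the asymmetry introduced by the projection normalization (where $\|\tilde{\mathbf{g}}_i\| \le \|\mathbf{f}_i\|$ but not the reverse) prevents a single Neumann computation from giving both inequalities simultaneously.
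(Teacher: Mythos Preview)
The paper does not prove this proposition at all; it simply cites the earlier paper \cite{toric1} (Equation~(5) there for part~(a), and Theorem~4.3.1 with $s=0$ for part~(b)). Your explicit argument is therefore more than the paper provides. Part~(a) is correct, and so is the upper bound in~(b): the normalization $\tilde{\mathbf g}_i=P_{\mathrm{span}(\mathbf g_i)}(\mathbf f_i)$ together with the row-wise Cauchy--Schwarz estimate on $E$ yields exactly $\|M(\mathbf f,\mathbf x)^{-1}E\|_{\mathbf x\to\mathbf x}\le\epsilon$, and the contraction $\|\tilde{\mathbf g}_i\|\le\|\mathbf f_i\|$ closes the Neumann argument cleanly over the full range $\epsilon<1$.

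The difficulty you flag for the lower bound is genuine, and the chaining fix you sketch does not quite close it. Along a geodesic in the product of projective spaces subdivided into $N$ steps, the step distances satisfy $d_{\mathbb P}(\mathbf f_{k-1},\mathbf f_k)=\bigl(\sum_i\sin^2(\theta_i/N)\bigr)^{1/2}\sim N^{-1}\bigl(\sum_i\theta_i^2\bigr)^{1/2}$, so the telescoped inequality $1/\mu(\mathbf g,\mathbf x)\le 1/\mu(\mathbf f,\mathbf x)+\sum_k d_{\mathbb P}(\mathbf f_{k-1},\mathbf f_k)$ converges to $1/\mu(\mathbf f,\mathbf x)+\bigl(\sum_i\theta_i^2\bigr)^{1/2}$, i.e.\ the Riemannian arc length rather than the chord length $d_{\mathbb P}(\mathbf f,\mathbf g)=\bigl(\sum_i\sin^2\theta_i\bigr)^{1/2}$. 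Since $\theta_i\ge\sin\theta_i$, this is strictly weaker than the stated bound, and it also does not force $d_{\mathbb P}(\mathbf f,\mathbf g)\,\mu(\mathbf g,\mathbf x)<1$, so it cannot be used merely to legitimize a single swap either. Every use of the lower bound later in this paper occurs in a regime where $\epsilon$ is small (well below $1/2$), so your argument suffices for those applications; but to match the proposition exactly on the full interval $\epsilon\in[1/2,1)$ you would need either an additional idea controlling the $\|\tilde{\mathbf g}_i\|/\|\mathbf f_i\|$ asymmetry, or to import whatever extra structure the proof of Theorem~4.3.1 in \cite{toric1} exploits.
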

\secrev{Proposition \ref{aggregates}(\ref{prop-mu-a}) is 
Equation (5) in \ocite{toric1}. Proposition \ref{aggregates}(\ref{prop-mu-b}) 
is a particular case of Theorem 4.3.1 ibidem, with $s=0$.
}

\section{Renormalization}
\label{sec:renorm}
\subsection{Action}

\begin{figure}
\centerline{
	\resizebox{\textwidth}{!}{ \input{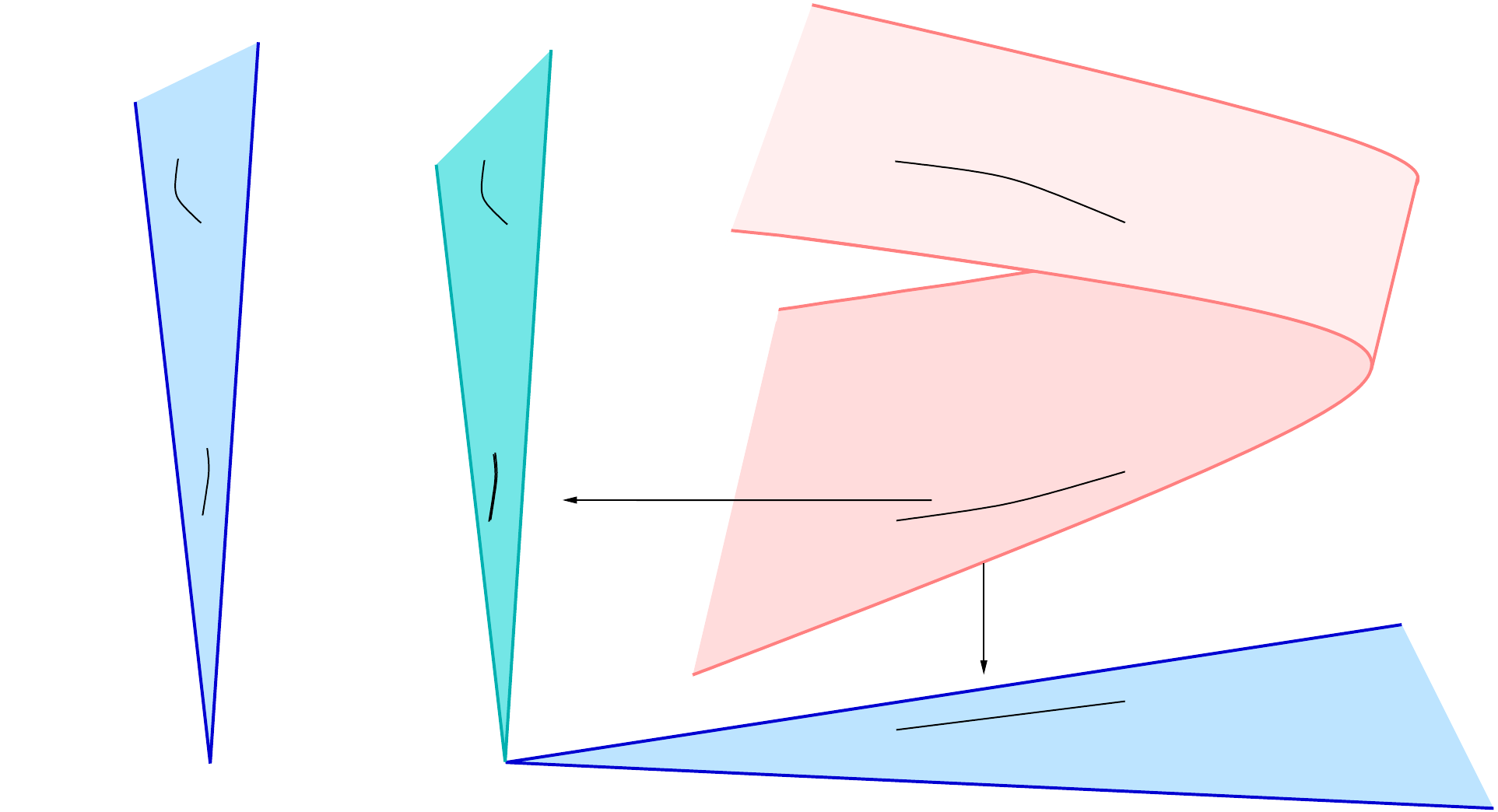_t}} }
	\caption{The solution variety. Each path in the space $\mathscr F$ of equations lifts
	onto possibly several paths. Each of those corresponds to a different 
	renormalized path in $\mathscr F_0 = \{ \mathbf f \in \mathscr F: \mathbf f \cdot V(\mathbf 0) = \mathbf 0 \}$.
\label{solution-variety}}
\end{figure}

The main result in my previous paper \cite{toric1} was a step count for path-following
in terms of \changed{the} condition length.  
Let $\mathscr F = 
\mathscr F_{A_1} \times \cdots \times \mathscr F_{A_n}$ and let
\[\gls{S} = \{ (\mathbf f,\mathbf z) \in \mathscr F
\times \secrev{\mathscr M}: \mathbf f \cdot \mathbf V(\mathbf z)=\mathbf 0\}
\] 
be the {\em solution variety} (Figure~\ref{solution-variety}).
The condition length of the path $(\mathbf f_t, \mathbf z_t)_{t \in [a,b]} \in \secrev{\mathscr S}$ 
is \secrev{defined as}
\begin{equation}\label{lold}
	\gls{Lold} \defeq
\int_a^b \sqrt{\| \dot {\mathbf f}_t \|_{\mathbf f_t}^2 + \|\dot {\mathbf z}_t\|_{\mathbf z_t}^2} 
\ \mu(\mathbf f_t, \mathbf z_t) \nu(\mathbf z_t) \ \dd t,
\end{equation}
where $\|\dot {\mathbf f}\|_{\mathbf f}$ is the \changed{Fubini-Study} norm in
\secrev{the multi-projectivization of $\mathscr F$,}
namely
$\|\dot {\mathbf f}\|_{\mathbf f}^2=
\sum \|P_{\mathbf f_i^{\perp}} \dot {\mathbf f}_i\|^2/\|\mathbf f_i\|^2$\secrev{, and the distortion invariant
$\nu=\max \nu_i$ was defined in \eqref{distortion}.} It is assumed that the
path is smooth enough for the integral to exist.
At this time, no average estimate
of $\mathcal L$ is known in the sparse setting. 
The main obstructions to obtaining
such bound seem to be the invariant $\nu(\mathbf z)$ and the dependence of the
condition number on the toric norm at the point $\mathbf z$. When $\mathbf z$ approaches
`toric infinity', that is $\mathbf m_i(\mathbf z)$ approaches $\partial \mathrm{Conv}(A_i)$
for some $i$, the invariant $\nu_i(\mathbf z)$ can become arbitrarily large. This occurs in particular during polyhedral or `cheater's' homotopy, 
where the starting system has roots at `toric infinity'.

The renormalization approach in this paper is intended to overcome
those difficulties. 
First, we fix once and for all a privileged point
in \secrev{$\secrev{\mathscr M}$}. In this paper, we take this point to be the origin $\mathbf 0$.
By shifting the supports $A_i$, we can further assume that $\mathbf m_i(\mathbf 0)=\mathbf 0$.

\begin{definition} The {\em renormalization operator} $\glsdisp{R}{R}=R(\mathbf u)$\secrev{, $u \in \secrev{\mathscr M}$,}
	is \secrev{defined as} the operator 
\[
\defun{R(\mathbf u)}{\mathscr F}{\mathscr F}{\mathbf f}{\mathbf f \cdot R(\mathbf u)
	\secrev{\defeq}
\begin{pmatrix}
f_1 \cdot R_1(\mathbf u)\\
\vdots \\
f_n \cdot R_n(\mathbf u)\\
\end{pmatrix}
}
\]
with
\[
\defun{R_i(\mathbf u)}{\mathscr F_{A_i}}{\mathscr F_{A_i}}
{\mathbf f_i=[ \dots, f_{i\mathbf a}, \dots ]} 
	{\secrev{
\mathbf f_i \cdot R_i(\mathbf u) 
=
[ \dots, f_{i\mathbf a} e^{\mathbf a\mathbf u 
}, \dots ]
.
	}} 
\]
\end{definition}
\changed{Since $R(\mathbf u)$ is an operator and not an element of $\mathscr F^*$, no confusion can arise with the coupling of $\mathscr F$ and $\mathscr F^*$
as in equations ~\eqref{pairing1} and~\eqref{pairing2}. 
\secrev{The notation above} is
consistent with the coupling, in the sense that
$\mathbf f \cdot R(\mathbf u) \cdot V(\mathbf x) \secrev{\defeq} 
(\mathbf f \cdot R(\mathbf u)) \cdot V(\mathbf x)$ is uniquely defined.
In particular, $\mathbf f_i \cdot V_{A_i}(\mathbf x) = 
\mathbf f_i \cdot R(\mathbf x) \cdot V_{A_i}(\mathbf 0)$
and also, 
\[
\mathbf f \cdot \mathbf V(\mathbf x) = 
\mathbf f \cdot R(\mathbf x) \cdot \mathbf V(\mathbf 0).
\]} 
%

\secrev{
	In order to state Theorem~\ref{th-renormalization} below, we define
\begin{equation}\label{def-elli}
	\defun{\gls{elli}}{\mathbb C^n}{\mathbb R}{\mathbf z}{\ell_i(\mathbf z)
	\defeq \max_{\mathbf a \in A_i} \mathbf a \Re(\mathbf z) .}
\end{equation}
}

\begin{remark}
	$\ell_i(\mathbf u) = \lambda_i(\Re(\mathbf u))$, where
	$\lambda_i$ is the Minkowski support function of $A_i$,
	to be defined in Equation \eqref{def-lambdai}.
\end{remark}

\begin{theorem}\label{th-renormalization}Under the notations above:
	\begin{enumerate}[(a)]
	\item
		The renormalization operator induces an action also denoted $R(\mathbf u)$ of the additive group $\mathbb C^n$ into the 
			solution variety $\secrev{\mathscr S \defeq
			\{ (\mathbf f,\mathbf z) \in \mathscr F \times \secrev{\mathscr M}: \mathbf f \cdot \mathbf V(\mathbf z)= \mathbf 0\}}$, 
			namely $(\mathbf f, \mathbf z) \mapsto \left(\mathbf f \cdot R(\mathbf u), (\mathbf z - \mathbf u)\right)$.
\item
If $\mathbf u$ is pure imaginary, then for all $\mathbf a \in A_i$,
$|e^{\mathbf a \mathbf u}|=1$. Hence,
the map $R(\mathbf u): \secrev{\mathscr S} \rightarrow \secrev{\mathscr S}$ is an isometry, as well as the coordinate maps $\mathbf f \mapsto \mathbf f \cdot R(\mathbf u)$ and $\mathbf z \mapsto \mathbf z - \mathbf u$.
\item In general, \secrev{
\[
e^{-\ell_i(-\mathbf u)} \| \mathbf f_i\| \le
\| \mathbf f_i \cdot R_i(\mathbf u) \| \le 
e^{\ell_i(\mathbf u)} \| \mathbf f_i\|
\]
and 
\[
			e^{-\max_i(\ell_i(-\mathbf u))} \| \mathbf f\| \le
 \| \mathbf f \cdot R(\mathbf u) \| \le 
			e^{\max_i(\ell_i(\mathbf u))} \| \mathbf f\| 
.\]
}
	\end{enumerate}
\end{theorem}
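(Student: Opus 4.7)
The proof is largely a matter of unwinding definitions, so I would organize it by the three parts and handle them in sequence.

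For part (a), the plan is to verify directly the three axioms of a group action and then check invariance of $\mathscr S$. The identity and additivity follow from $e^{\mathbf a \mathbf 0}=1$ and $e^{\mathbf a(\mathbf u+\mathbf v)}=e^{\mathbf a\mathbf u}e^{\mathbf a\mathbf v}$ acting coordinatewise on each $f_{i\mathbf a}$. The key algebraic identity is that for each basis element of $V_{A_i}$ one has
\[
V_{i\mathbf a}(\mathbf z-\mathbf u)=\rho_{i\mathbf a}e^{\mathbf a(\mathbf z-\mathbf u)}=e^{-\mathbf a\mathbf u}V_{i\mathbf a}(\mathbf z),
\]
so the factor $e^{\mathbf a\mathbf u}$ introduced by $R_i(\mathbf u)$ cancels exactly against $e^{-\mathbf a\mathbf u}$ in the Veronese, yielding
\[
(\mathbf f\cdot R(\mathbf u))\cdot \mathbf V(\mathbf z-\mathbf u)=\mathbf f\cdot \mathbf V(\mathbf z).
\]
This proves both that $\mathscr S$ is preserved and that the action is well defined on the quotient $\mathscr M$.

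For part (b), when $\mathbf u\in i\mathbb R^n$ and $\mathbf a\in A_i\subseteq\mathbb R^n$, the exponent $\mathbf a\mathbf u$ is pure imaginary, so $|e^{\mathbf a\mathbf u}|=1$. In the orthonormal basis $(V_{i\mathbf a})$, the map $\mathbf f_i\mapsto \mathbf f_i\cdot R_i(\mathbf u)$ is then diagonal unitary, hence an isometry of $\mathscr F_{A_i}$, and a fortiori of the multiprojective metric. For the translation $\mathbf z\mapsto \mathbf z-\mathbf u$, I would write $V_{A_i}(\mathbf z-\mathbf u)=D_{\mathbf u}V_{A_i}(\mathbf z)$ where $D_{\mathbf u}=\mathrm{diag}(e^{-\mathbf a\mathbf u})$ is unitary. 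Then $\|V_{A_i}(\mathbf z-\mathbf u)\|=\|V_{A_i}(\mathbf z)\|$, the orthogonal projector transforms as $P_{V_{A_i}(\mathbf z-\mathbf u)^{\perp}}=D_{\mathbf u}P_{V_{A_i}(\mathbf z)^{\perp}}D_{\mathbf u}^{*}$, and $DV_{A_i}(\mathbf z-\mathbf u)=D_{\mathbf u}DV_{A_i}(\mathbf z)$. Inserting into \eqref{derivative-projection} the two factors of $D_{\mathbf u}$ outside the projector combine to a unitary and drop out when taking norms, so $\|\cdot\|_{i,\mathbf z-\mathbf u}=\|\cdot\|_{i,\mathbf z}$, and summing over $i$ yields the claim for the full toric metric.

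Part (c) is a direct calculation: for any $\mathbf u\in\mathbb C^n$,
\[
\|\mathbf f_i\cdot R_i(\mathbf u)\|^2=\sum_{\mathbf a\in A_i}|f_{i\mathbf a}|^2 e^{2\mathbf a\Re(\mathbf u)},
\]
so the ratio $\|\mathbf f_i\cdot R_i(\mathbf u)\|^2/\|\mathbf f_i\|^2$ is a convex combination of the weights $e^{2\mathbf a\Re(\mathbf u)}$, $\mathbf a\in A_i$. Bounding this convex combination between its maximum and minimum over $A_i$ and invoking the definition $\ell_i(\mathbf u)=\max_{\mathbf a\in A_i}\mathbf a\Re(\mathbf u)$, together with $\min_{\mathbf a\in A_i}\mathbf a\Re(\mathbf u)=-\ell_i(-\mathbf u)$, gives the stated per-block inequalities. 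The global bound then follows by summing $\|\mathbf f\cdot R(\mathbf u)\|^2=\sum_i\|\mathbf f_i\cdot R_i(\mathbf u)\|^2$ and pulling out $e^{2\max_i\ell_i(\pm\mathbf u)}$.

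No step looks genuinely hard; the only point requiring care is verifying the isometry in part (b) for the toric metric on $\mathscr M$, where one must check that the two copies of $D_{\mathbf u}$ arising from $V_{A_i}$ and $DV_{A_i}$ really do commute through the projector, and the unitarity of $D_{\mathbf u}$ (which is exactly what fails for non-imaginary $\mathbf u$) is essential.
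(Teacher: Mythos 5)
Your proof is correct and follows essentially the same route as the paper: direct verification of the group-action axioms and cancellation in the Veronese for (a), unitarity of $\diag{e^{\mathbf a\mathbf u}}$ for (b), and coordinatewise bounds on $|e^{\mathbf a\mathbf u}|$ for (c). In part (b) you are in fact slightly more careful than the paper, since you explicitly track how the projector $P_{V_{A_i}(\mathbf x)^{\perp}}$ in \eqref{derivative-projection} conjugates under the unitary $D_{\mathbf u}$, whereas the paper abbreviates the Fubini--Study pullback norm to $\|DV_{A_i}(\cdot)\dot{\mathbf z}\|$ without writing the projection; your version closes that small gap.
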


The action of $\mathbb R^n$ by imaginary renormalization $R(\mathbf u \sqrt{-1})$
is also known as {\em toric action}. The word toric comes from the fact that this action is actually an action of $\mathbb R^n \mod 2 \pi \,\mathbb Z^n \simeq (S^1)^n$.
A more elaborate version of the real action was used by \ocite{Verschelde-toric},
with additional variables. \changed{See also \ocite{DTWY}.}

\secrev{
\begin{remark}
	The subgroup $2 \pi \sqrt{-1} \Lambda^*$ fixes
	$\mathscr S$, hence the statement of
	Theorem~\ref{th-renormalization} holds verbatim for the action
	of the quotient subgroup $\mathbb C^n \mod 2 \pi \sqrt{-1} \Lambda^*$.
\end{remark}
}

\secrev{
	\begin{proof}[Proof of Theorem~\ref{th-renormalization}]
		For item (a), assume that $\mathbf f \cdot \mathbf V(\mathbf z) = \mathbf 0$,
		then for any $\mathbf u \in \mathbb Z^n$ we have
		trivially $(\mathbf f \cdot R(\mathbf u)) \cdot \mathbf V(\mathbf z - \mathbf u) \mathbf = 0$. Therefore, the solution variety
$\secrev{\mathscr S}$ is preserved.
		In order to establish that the mapping is a group action,
		just notice that
\[
	\mathbf f_i \cdot R_i(\mathbf u + \mathbf v) 
=
		[\dots, f_{1\mathbf a} e^{\mathbf a (\mathbf u+\mathbf v)}, \dots ]
=
		(\mathbf f_i \cdot R_i(\mathbf u) ) \cdot R_i(\mathbf v) 
\]
and $\mathbf z - (\mathbf u + \mathbf v) = (\mathbf z - \mathbf u) - \mathbf v$.

		In order to prove item (b), assume that $\mathbf u$ is pure
		imaginary. 
		For all $\mathbf a \in A_i$, $|e^{\mathbf a \mathbf u}|=1$
		and $R_i(\mathbf u)$ is an isometry of the linear space
		$\mathscr F_{A_i}$. Moreover,
		for any $\dot {\mathbf z} \in T_{\mathbf z}(\secrev{\mathscr M})$,
\[
	\| \dot {\mathbf z}\|_{i,\mathbf z - \mathbf u}
	=
		\|DV_{A_i}(\mathbf z - \mathbf u) \dot {\mathbf z} \|
	=
		\|\diag{e^{\mathbf a \mathbf u}} DV_{A_i}(\mathbf z - \mathbf u) \dot {\mathbf z} \|
=
		\| DV_{A_i}(\mathbf z - \mathbf u) \dot {\mathbf z} \|
=
		\| \dot {\mathbf z}\|_{i,\mathbf z}
.
\]
		Thus, $R(\mathbf u)$ is an isometry of $\secrev{\mathscr S}$.

		Unconditionally,
\[
	\| \mathbf f_i R_i(\mathbf u) \| =
	\| (\dots, f_{i\mathbf a} e^{\mathbf a \mathbf u},
		\dots) \|
		\le \|\mathbf f_i\| \max |e^{\mathbf a \mathbf u}|
		\le \|\mathbf f_i\| e^{\ell_i(\mathbf u)},
\]
	and the lower bounds are similar. This establishes item (c).
	\end{proof}
}
\subsection{Newton renormalized}
The toric Newton operator was defined in \cite{toric1} by
\[
	\defun{\gls{N}}{\mathscr F \times \secrev{\mathscr M}}{\secrev{\mathscr M}}
{(\mathbf f,\mathbf z)}{\mathbf z - (\mathbf f \cdot P_{\mathbf V(\mathbf z)^{\perp}} D\mathbf V(\mathbf z))^{-1} (\mathbf f \cdot \mathbf V(z))}
.
\]
Notice that the Newton operator $N$ is invariant by independent scaling of
each $f_i$, so it also defines an operator in 
\secrev{$\mathbb P(\mathscr F_{A_1}) \times \dots \times
\mathbb P(\mathscr F_{A_n}) \times \secrev{\mathscr M}$}.
Its analysis \secrev{required a} toric version of
Smale's invariants $\gls{alpha}=\beta(\mathbf f, \mathbf z) \gamma(\mathbf f, \mathbf z)$.

\secrev{Before introducing those invariants, one needs to produce a `local
function' $S_{\mathbf f, \mathbf z}$ around a point
$\mathbf z$. This local function was suggested by the analysis of the
projective Newton operator and is fully described in \cite{toric1}*{Sec. 3.3}. 
In this paper we will only need a simplified expression, obtained under the
hypothesis that
$m_i(\mathbf z)=\mathbf 0$ for all $i$. In this case,  
the {\em local function} at $\mathbf z$ for the toric Newton operator is
\[
	S_{f,\mathbf z} (\dot{\mathbf z}) = 
	\begin{pmatrix}
		\mathbf f_1 \cdot \left( \frac{1}{\|V_{A_1}(\mathbf z)\|} V_{A_1}(\mathbf z + \dot {\mathbf z}) \right)
\\
\vdots
\\
		\mathbf f_n \cdot \left( \frac{1}{\|V_{A_n}(\mathbf z)\|} V_{A_n}(\mathbf z + \dot {\mathbf z}) \right)
	\end{pmatrix}
	.
\]
The toric versions of $\beta$ and $\gamma$ are the Smale invariants associated to this local function are precisely
\[
	\gls{beta} = \left\| DS_{\mathbf f,\mathbf z}(\mathbf 0)^{-1} S_{\mathbf f,\mathbf z}(\mathbf 0) \right\|_{\mathbf z} 
\]
and 
\[
	\gls{gamma} = 
\sup_{k \ge 2}
\left(\frac{1}{k!}
\left\|
		DS_{\mathbf f,\mathbf z}(\mathbf 0)^{-1}
		D^kS_{\mathbf f,\mathbf z}(\mathbf 0)
\right\|_{\mathbf z}\right)^{1/(k-1)}
.
\]
The expressions above are also invariant by independent scalings
of each $\mathbf f_i$. The expressions above require $m_i(\mathbf z)=\mathbf 0$, and a more
general definition for the toric invariants can be found in \cite{toric1}*{Sec.3.3 and 3.5}.  

In this paper, we consider the {\em renormalized} Newton operator instead.
We replace the pair $(\mathbf f, \mathbf z)$ by the pair $(\mathbf g, \mathbf 0)$ where $\mathbf g = \mathbf f \cdot R(\mathbf z)$. This way, we only need
to shift all the supports $A_i$ once, so that all centers of gravity are
at the origin. Therefore, $m_i(\mathbf 0)=\mathbf 0$ for all $i$. By doing so the points
of $A_i$ may cease to be integral, but the differences $A_i - A_i
=\{\mathbf a - \mathbf a': \mathbf a, \mathbf a' \in A_i\}$ are integral
points.

Let $\mathbf x_0$ be a point of $\secrev{\mathscr M}$.
The renormalized iterates of $\mathbf x_0$ are defined inductively 
by: 
\begin{equation}\label{eq-ren-Newton}
	\mathbf x_{i+1}=N(\mathbf f \cdot R(\mathbf x_i), \mathbf 0) +\mathbf x_i
.
\end{equation}
This can be described as a procedure: apply the group action 
to send the pair
$(\mathbf f, \mathbf x_i)$ to $(\mathbf f \cdot R(\mathbf x_i), \mathbf 0)$,
then apply one Newton iteration, then undo the group action to
move back to $(\mathbf f, \mathbf x_{i+1})$. 
Renormalized Newton iterates} can be compared to the actual Newton iterates of $\changed{\mathbf x_0}$ in 
$T_{\mathbf 0} \secrev{\mathscr M} = \mathbb C^n$ for a suitable function,
namely:
\begin{lemma}\label{toric-to-classical}
Assume $\mathbf m_i(\mathbf 0)=\mathbf 0$ for all $i$.
Let
\[
	\defun{\mathbf F}{T_{\mathbf 0}\secrev{\mathscr M}}{\mathbb C^n}{\changed{\mathbf x}}{\mathbf F(\changed{\mathbf x})= \mathbf f \cdot \mathbf V(\mathbf{\mathbf x})}.
\]
Then,
	\begin{enumerate}[(a)] 
		\item $N(\mathbf f \cdot \mathbf R(\mathbf x), \mathbf 0) = N(\mathbf F, \changed{\mathbf x})-\changed{\mathbf x}$
		\item $\beta (\mathbf f \cdot \mathbf R(\mathbf x), \mathbf 0) = \beta(\mathbf F,\changed{\mathbf x})$ 
		\item $\gamma (\mathbf f \cdot \mathbf R(\mathbf x), \mathbf 0) = \gamma(\mathbf F,\changed{\mathbf x})$ 
\end{enumerate}
where the left-hand-sides use the notations in \cite{toric1}
and the right-hand-sides are the classical Smale's invariants in $(T_{\mathbf 0}\secrev{\mathscr M},\|\cdot\|_{\mathbf 0})$.
\end{lemma}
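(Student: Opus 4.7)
The plan rests on a single algebraic identity and one observation about the momentum map. From the definition of the renormalization operator one checks
\[
(\mathbf f \cdot R(\mathbf x)) \cdot \mathbf V(\mathbf y) \;=\; \mathbf f \cdot \mathbf V(\mathbf x + \mathbf y),
\]
because the $\mathbf a$-th coordinate of $\mathbf f_i \cdot R_i(\mathbf x)$ is $f_{i\mathbf a} e^{\mathbf a \mathbf x}$ and pairing with $V_{i\mathbf a}(\mathbf y) = \rho_{i\mathbf a} e^{\mathbf a \mathbf y}$ produces $f_{i\mathbf a} \rho_{i\mathbf a} e^{\mathbf a(\mathbf x+\mathbf y)} = f_{i\mathbf a} V_{i\mathbf a}(\mathbf x + \mathbf y)$. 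Differentiating in $\mathbf y$ at $\mathbf y=\mathbf 0$ one gets $(\mathbf f \cdot R(\mathbf x)) \cdot D\mathbf V(\mathbf 0) = \mathbf f \cdot D\mathbf V(\mathbf x) = D\mathbf F(\mathbf x)$, and higher derivatives give $(\mathbf f \cdot R(\mathbf x)) \cdot D^k \mathbf V(\mathbf 0) = D^k \mathbf F(\mathbf x)$ for all $k \ge 1$. Also $(\mathbf f \cdot R(\mathbf x)) \cdot \mathbf V(\mathbf 0) = \mathbf F(\mathbf x)$.

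For part (a), I would use the hypothesis $\mathbf m_i(\mathbf 0) = \mathbf 0$ together with the identity $\mathbf m_i(\mathbf x) = \|V_{A_i}(\mathbf x)\|^{-2} V_{A_i}(\mathbf x)^* DV_{A_i}(\mathbf x)$ to conclude that $V_{A_i}(\mathbf 0)^* DV_{A_i}(\mathbf 0) = \mathbf 0$, so the projection in the definition of $N$ is trivial at $\mathbf 0$: $P_{V_{A_i}(\mathbf 0)^{\perp}} DV_{A_i}(\mathbf 0) = DV_{A_i}(\mathbf 0)$. Substituting into the definition of $N(\mathbf g, \mathbf 0)$ with $\mathbf g = \mathbf f \cdot R(\mathbf x)$ and using the algebraic identities above yields
\[
N(\mathbf f \cdot R(\mathbf x), \mathbf 0) \;=\; -\,(D\mathbf F(\mathbf x))^{-1}\mathbf F(\mathbf x) \;=\; N(\mathbf F, \mathbf x) - \mathbf x,
\]
which is exactly (a).

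For parts (b) and (c), I would plug $\mathbf g = \mathbf f \cdot R(\mathbf x)$ and the hypothesis $\mathbf m_i(\mathbf 0) = \mathbf 0$ into the simplified formula for $S_{\mathbf g, \mathbf 0}$ given just before the lemma, obtaining
\[
S_{\mathbf f \cdot R(\mathbf x), \mathbf 0}(\dot {\mathbf z}) \;=\; D \cdot \mathbf F(\mathbf x + \dot{\mathbf z}),
\qquad
D = \mathrm{diag}\!\left(1/\|V_{A_i}(\mathbf 0)\|\right).
\]
Thus the toric local function at $(\mathbf f\cdot R(\mathbf x),\mathbf 0)$ is obtained from the translate $\dot{\mathbf z} \mapsto \mathbf F(\mathbf x + \dot{\mathbf z})$ of the classical function simply by left-multiplying by the constant, invertible diagonal matrix $D$. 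Since Smale's $\beta$ involves $DS^{-1}\cdot S$ and $\gamma$ involves $DS^{-1}\cdot D^k S$, the constant factor $D$ cancels in both numerator and denominator, so $\beta$ and $\gamma$ are unchanged by such left-multiplication. Translation of the argument by $\mathbf x$ then matches the definition of the classical invariants at $\mathbf x$, and since the norm on $T_{\mathbf 0}\mathscr M = \mathbb C^n$ used on both sides is the same $\|\cdot\|_{\mathbf 0}$, conclusions (b) and (c) follow at once.

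There is essentially no hard step: the whole proof is an exercise in unwinding definitions once the identity $(\mathbf f \cdot R(\mathbf x))\cdot \mathbf V(\mathbf y) = \mathbf f \cdot \mathbf V(\mathbf x+\mathbf y)$ is in hand. The only place one could slip is forgetting that the projection $P_{\mathbf V(\mathbf 0)^{\perp}}$ in the definition of the toric Newton operator disappears precisely because we have normalized supports so that $\mathbf m_i(\mathbf 0)=\mathbf 0$; without this normalization the identification would acquire correction terms involving the momentum, and the diagonal scaling argument for $\beta,\gamma$ would not be as clean.
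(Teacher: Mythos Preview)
Your proposal is correct and follows essentially the same route as the paper's proof: both rest on the identity $(\mathbf f \cdot R(\mathbf x))\cdot D^k\mathbf V(\mathbf 0) = D^k\mathbf F(\mathbf x)$ and then read off (a), (b), (c) directly. Your version is in fact more explicit than the paper's, since you spell out why the projection $P_{\mathbf V(\mathbf 0)^{\perp}}$ disappears under the hypothesis $\mathbf m_i(\mathbf 0)=\mathbf 0$ and why the diagonal scaling $D=\mathrm{diag}(1/\|V_{A_i}(\mathbf 0)\|)$ cancels in the Smale invariants, whereas the paper simply writes the simplified expressions without comment.
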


\begin{proof}[Proof of Lemma~\ref{toric-to-classical}]
We establish first item (a):
\begin{eqnarray*}
N(\mathbf f \cdot \mathbf R(\mathbf x), \mathbf 0) &=&
- (\mathbf f \cdot \mathbf R(\mathbf x) \cdot D\mathbf V(\mathbf 0))^{-1} (\mathbf f \cdot \mathbf R(\mathbf x)\cdot \mathbf V(\mathbf 0))
\\
&=&
	\changed{- (\mathbf f \cdot D\mathbf V(\mathbf x))^{-1} (\mathbf f \cdot \mathbf V(\mathbf x))}\\
&=&
	- (D\mathbf F(\changed{\mathbf x})^{-1} \mathbf F(\changed{\mathbf x})\\
	&=& - \changed{\mathbf x} + N(\mathbf F,\changed{\mathbf x})
\end{eqnarray*}
Items (b) and (c) are similar, so we just prove item (c).
For each $k \ge 2$,
\[
\begin{split}
\frac{1}{k!}
\left\|
(\mathbf f \cdot \mathbf R(\mathbf x) \cdot D\mathbf V(\mathbf 0))^{-1}
(\mathbf f \cdot \mathbf R(\mathbf x) \cdot D^k\mathbf V(\mathbf 0))
\right\|_{\mathbf 0}
=&\\
&\hspace{-2em}\frac{1}{k!}
	\left\| D\mathbf F(\changed{\mathbf x})^{-1} D^k\mathbf F(\changed{\mathbf x})
\right\|_{\mathbf 0}
.
\end{split}
\]
Taking $k-1$-th roots and taking the sup, we obtain
\[
	\gamma(\mathbf f \cdot \mathbf R(\mathbf x) , \mathbf 0) = \gamma(\mathbf F,\changed{\mathbf x}) 
\]
as stated.
\end{proof}

Lemma~\ref{toric-to-classical} immediately implies a renormalized version of the classical Smale's theorems on quadratic convergence of Newton iteration\cites{BCSS,Malajovich-nonlinear, Malajovich-UIMP} without the necessity of dealing with different metrics at different points like \ocite{toric1}. 
See also Theorems 2.1.1, 2.1.2 and references ibidem. 
\secrev{In this context, Smale's
quadratic convergence theorems can be rephrased:}

\begin{theorem}[$\gamma$-theorem]\label{th-gamma}
Let $\boldsymbol \zeta \in \secrev{\mathscr M}$ be a non-degenerate zero of $\mathbf f
	\cdot \mathbf V(\changed{\mathbf x})=\mathbf 0$.
If $\mathbf x_0 \in \secrev{\mathscr M}$ satisfies
\[
	\|\boldsymbol \zeta - \mathbf x_0\|_{\mathbf 0} \gamma(\mathbf f \cdot R( \boldsymbol \zeta), \mathbf 0) \le \frac{3 - \sqrt{7}}{2} ,
\]
then the sequence $\mathbf x_{i+1} =  N(\mathbf f \cdot R(\mathbf x_i), \mathbf 0)
	\secrev{+\mathbf x_i}$ is well-defined
and
\[
	\| \boldsymbol \zeta - \mathbf x_i\|_{\mathbf 0} \le 2^{-2^i+1} \|\boldsymbol \zeta - \mathbf x_0\|_{\mathbf 0} . 
\]
\end{theorem}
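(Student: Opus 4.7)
The plan is to reduce this theorem to a routine application of the classical Smale $\gamma$-theorem, using Lemma~\ref{toric-to-classical} as the sole nontrivial ingredient. All of the work has already been packaged into that lemma; what remains is a verification that (i) the renormalized iteration in \eqref{eq-ren-Newton} coincides exactly with the classical Newton iteration for the map $\mathbf F(\mathbf x) = \mathbf f \cdot \mathbf V(\mathbf x)$ on $T_{\mathbf 0}\secrev{\mathscr M} = \mathbb C^n$, and (ii) the hypothesis translates into the classical $\gamma$-theorem hypothesis.

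First, observe that the constant $(3-\sqrt{7})/2$ is precisely the universal threshold appearing in Smale's point estimate for classical Newton iteration in a Banach space (see for instance the references to \cites{BCSS,Malajovich-nonlinear,Malajovich-UIMP} cited just before the statement). So the natural strategy is to rephrase the hypothesis and the iteration entirely in classical terms at the point $\boldsymbol\zeta$ for the map $\mathbf F$ in the Hermitian space $(T_{\mathbf 0}\secrev{\mathscr M},\|\cdot\|_{\mathbf 0})$.

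For the iteration, I would apply Lemma~\ref{toric-to-classical}(a) to rewrite
\[
\mathbf x_{i+1} = N(\mathbf f \cdot R(\mathbf x_i), \mathbf 0) + \mathbf x_i
= \bigl(N(\mathbf F, \mathbf x_i) - \mathbf x_i\bigr) + \mathbf x_i
= N(\mathbf F, \mathbf x_i),
\]
so the renormalized sequence is literally the classical Newton sequence for $\mathbf F$ starting at $\mathbf x_0$. Non-degeneracy of $\boldsymbol\zeta$ as a zero of $\mathbf f \cdot \mathbf V$ is equivalent to invertibility of $D\mathbf F(\boldsymbol\zeta)$, which guarantees that $\gamma(\mathbf F,\boldsymbol\zeta)$ is finite and that each Newton step is well defined in a neighborhood of $\boldsymbol\zeta$.

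For the hypothesis, I would use Lemma~\ref{toric-to-classical}(c), applied at the point $\mathbf x = \boldsymbol\zeta$, to rewrite
\[
\gamma(\mathbf f \cdot R(\boldsymbol\zeta), \mathbf 0) = \gamma(\mathbf F, \boldsymbol\zeta),
\]
so the assumed inequality becomes $\|\boldsymbol\zeta - \mathbf x_0\|_{\mathbf 0}\, \gamma(\mathbf F, \boldsymbol\zeta) \le (3-\sqrt{7})/2$. This is exactly the hypothesis of the classical $\gamma$-theorem for Newton iteration of $\mathbf F$ in the Hermitian space $(\mathbb C^n, \|\cdot\|_{\mathbf 0})$, whose conclusion is $\|\boldsymbol\zeta - \mathbf x_i\|_{\mathbf 0} \le 2^{-2^i+1}\|\boldsymbol\zeta - \mathbf x_0\|_{\mathbf 0}$. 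This yields precisely the stated quadratic convergence estimate.

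There is no real obstacle: Lemma~\ref{toric-to-classical} was set up to make exactly this reduction possible, absorbing in one place all of the complications that arose in \cite{toric1} from using different toric metrics at different points. The only thing to be careful about is that the classical $\gamma$-theorem is applied with $\gamma$ evaluated at the root $\boldsymbol\zeta$ (not at the initial point), which matches the form of the hypothesis here, and that the metric used throughout the comparison is the single Hermitian metric $\|\cdot\|_{\mathbf 0}$ on the tangent space at $\mathbf 0$, consistently with how $\beta$ and $\gamma$ of the renormalized system are defined.
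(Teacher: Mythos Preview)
Your proposal is correct and matches the paper's approach exactly: the paper does not give a separate proof of Theorem~\ref{th-gamma} but simply states (just before the theorem) that Lemma~\ref{toric-to-classical} immediately implies a renormalized version of Smale's classical $\gamma$-theorem. Your write-up faithfully spells out that reduction via parts (a) and (c) of Lemma~\ref{toric-to-classical}.
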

\begin{theorem}[$\alpha$-theorem]\label{th-alpha}
Let \[
\alpha 
\le
\alpha_0 = \frac{13 - 3 \sqrt{17}}{4},
\]
\[
	r_0 = \changed{r_0(\alpha)=}
\frac{1+\alpha-\sqrt{1-6\alpha+\alpha^2}}{4\alpha} 
\text{ and }
	r_1 =\changed{r_1(\alpha)=}
\frac{1-3\alpha-\sqrt{1-6\alpha+\alpha^2}}{4\alpha} 
.
\]
If $\mathbf x_0 \in \secrev{\mathscr M}$ satisfies 
$\alpha(\mathbf f \cdot R(\mathbf x_0),\mathbf 0) 
\defeq \beta(\mathbf f \cdot R(\mathbf x_0),\mathbf 0) \gamma(\mathbf f \cdot R(\mathbf x_0),\mathbf 0)
\le \alpha$, then the sequence defined recursively by
	$\mathbf x_{i+1} =  N(\mathbf f \cdot R(\mathbf x_i),\mathbf 0)	\secrev{+\mathbf x_i}$
is well-defined and converges to a limit $\boldsymbol \zeta$ so that
${\mathbf f} \cdot V(\boldsymbol \zeta)=\mathbf 0$. Furthermore,
\begin{enumerate}[(a)]
\item
$
		\| \mathbf x_i - \boldsymbol \zeta \|_{\mathbf 0} \le 2^{-2^i+1} \| \mathbf x_1 - \mathbf x_0 \|_{\mathbf 0}
$
\item 
$
		\| \mathbf x_0 - \boldsymbol \zeta\|_{\mathbf 0} \le r_0 \beta(\mathbf f \cdot R( \mathbf x_0),\mathbf 0)
$
\item
$
\| \mathbf x_1-\boldsymbol \zeta \|_{\mathbf 0}
		\le r_1 \beta(\mathbf f \cdot R( \mathbf x_0), \mathbf 0).
$
\end{enumerate}
\end{theorem}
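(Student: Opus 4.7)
The plan is to reduce Theorem~\ref{th-alpha} to the classical Smale $\alpha$-theorem by means of Lemma~\ref{toric-to-classical}, thereby recycling the entire existing machinery for analytic maps on Hermitian spaces. Under the standing assumption $\mathbf{m}_i(\mathbf{0})=\mathbf{0}$ for all $i$ (achieved by an initial shift of the supports $A_i$, as in Section~\ref{sec:renorm}), I would introduce the auxiliary analytic map $\mathbf{F}: T_{\mathbf{0}}\mathscr{M} = \mathbb{C}^n \to \mathbb{C}^n$ defined by $\mathbf{F}(\mathbf{x}) = \mathbf{f} \cdot \mathbf{V}(\mathbf{x})$. Lemma~\ref{toric-to-classical}(a) then rewrites the renormalized recurrence as
\[
\mathbf{x}_{i+1} = N(\mathbf{f} \cdot R(\mathbf{x}_i), \mathbf{0}) + \mathbf{x}_i = \bigl(N(\mathbf{F}, \mathbf{x}_i) - \mathbf{x}_i\bigr) + \mathbf{x}_i = N(\mathbf{F}, \mathbf{x}_i),
\]
so the renormalized sequence is literally the classical Newton sequence for $\mathbf{F}$ in the Hermitian space $(\mathbb{C}^n, \|\cdot\|_{\mathbf{0}})$.

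By Lemma~\ref{toric-to-classical}(b)--(c), $\beta(\mathbf{f} \cdot R(\mathbf{x}_0), \mathbf{0}) = \beta(\mathbf{F}, \mathbf{x}_0)$ and $\gamma(\mathbf{f} \cdot R(\mathbf{x}_0), \mathbf{0}) = \gamma(\mathbf{F}, \mathbf{x}_0)$, so the hypothesis $\alpha(\mathbf{f} \cdot R(\mathbf{x}_0), \mathbf{0}) \le \alpha_0$ translates word-for-word into the classical hypothesis $\alpha(\mathbf{F}, \mathbf{x}_0) \le \alpha_0$. The classical $\alpha$-theorem of Smale, as stated for example in \cite{BCSS} or \cite{Malajovich-nonlinear}, then delivers a zero $\boldsymbol{\zeta}$ of $\mathbf{F}$ toward which the Newton sequence converges, together with the three inequalities (a)--(c) in the norm $\|\cdot\|_{\mathbf{0}}$ and with exactly the constants $r_0(\alpha)$, $r_1(\alpha)$ written in the statement. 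Since $\mathbf{F}(\boldsymbol{\zeta}) = \mathbf{f} \cdot \mathbf{V}(\boldsymbol{\zeta}) = \mathbf{0}$, the limit $\boldsymbol{\zeta}$ is a genuine root of the original system on the toric variety.

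There is essentially no obstacle to this reduction; the substantive content sits in Lemma~\ref{toric-to-classical}, which already showed that the renormalization operator converts the toric Newton iteration into a classical one on the single tangent space $T_{\mathbf{0}}\mathscr{M}$. The only point worth emphasising is conceptual: in \cite{toric1} the analysis had to track the toric metric $\|\cdot\|_{\mathbf{x}_i}$ drifting with the iterate, whereas here all iterates, errors, and Smale invariants are measured once and for all at $\mathbf{0}$. That is the whole purpose of renormalization, and it is precisely what allows the classical Hermitian theory to be imported without modification.
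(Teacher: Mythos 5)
Your proposal is correct and follows essentially the same route as the paper. The paper itself is deliberately terse — it merely states that Lemma~\ref{toric-to-classical} ``immediately implies a renormalized version of the classical Smale's theorems'' and cites \cite{BCSS} and \cite{Malajovich-nonlinear} — so your write-up simply fills in the intended but unwritten reduction: item~(a) of Lemma~\ref{toric-to-classical} makes the renormalized iterates coincide with the classical Newton iterates of $\mathbf F(\mathbf x)=\mathbf f\cdot\mathbf V(\mathbf x)$, and items~(b)--(c) transfer the invariants, after which the classical $\alpha$-theorem with exactly these constants $\alpha_0$, $r_0$, $r_1$ applies verbatim in the single Hermitian space $(T_{\mathbf 0}\mathscr M,\|\cdot\|_{\mathbf 0})$.
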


\secrev{
	The $\gamma$ invariant is defined in terms of the higher derivatives of $\mathbf f$. Those are hard to evaluate, so it is desirable to bound $\gamma$ in terms of the condition number $\mu$. Such a bound is known in the dense, unitary invariant setting~\cite{BCSS}*{Theorem 2 p. 267}. It was generalized to
the toric setting:
\begin{theorem}
	\cite{toric1}*{Theorem 3.6.1} 
	\label{prop-mu-gamma} Let $\mathbf f \in \mathscr F$
	and $\mathbf x \in \secrev{\mathscr M}$. Then,
			$\gamma(\mathbf f,\mathbf x) \le \frac{1}{2} \mu(\mathbf f,\mathbf x) \nu(\mathbf x)$,
	\fourthrev{where $\nu(\mathbf x) = \max \nu_i(\mathbf x)$ was defined in \eqref{distortion}.}
\end{theorem}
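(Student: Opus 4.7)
The plan is to bound each term in the supremum defining $\gamma$ directly. After shifting each support so that $\mathbf m_i(\mathbf x)=\mathbf 0$ for all $i$, the simplified form of the local function $S_{\mathbf f,\mathbf x}$ stated in the excerpt applies, and \eqref{M-zero} identifies $DS_{\mathbf f,\mathbf x}(\mathbf 0)$ with the condition matrix $M(\mathbf f,\mathbf x)$. A direct computation of the higher derivatives gives, for any $\dot{\mathbf z}\in T_{\mathbf x}\secrev{\mathscr M}$ and $k\ge 2$,
\[
v_i \;\defeq\; \bigl(D^kS_{\mathbf f,\mathbf x}(\mathbf 0)\dot{\mathbf z}^k\bigr)_i \;=\; \frac{1}{\|V_{A_i}(\mathbf x)\|}\sum_{\mathbf a\in A_i} f_{i\mathbf a}\,V_{i\mathbf a}(\mathbf x)\,(\mathbf a\dot{\mathbf z})^k,
\]
and Cauchy--Schwarz in the sum over $\mathbf a$ yields $|v_i|^2\le\|\mathbf f_i\|^2\,\mathbb E_p[|\mathbf a\dot{\mathbf z}|^{2k}]$, where $p_{\mathbf a}=|V_{i\mathbf a}(\mathbf x)|^2/\|V_{A_i}(\mathbf x)\|^2$ is the probability measure realizing the momentum map.

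The crux is the split $|\mathbf a\dot{\mathbf z}|^{2k}=|\mathbf a\dot{\mathbf z}|^{2(k-1)}\cdot|\mathbf a\dot{\mathbf z}|^2$. The first factor is bounded uniformly in $\mathbf a$ by $\nu^{2(k-1)}\|\dot{\mathbf z}\|_{i,\mathbf x}^{2(k-1)}$ using \eqref{distortion} and $\mathbf m_i(\mathbf x)=\mathbf 0$; the second factor is bounded in expectation via the identity
\[
\mathbb E_p\bigl[|\mathbf a\dot{\mathbf z}|^2\bigr] \;=\; \|DV_{A_i}(\mathbf x)\dot{\mathbf z}\|^2/\|V_{A_i}(\mathbf x)\|^2 \;=\; \|\dot{\mathbf z}\|_{i,\mathbf x}^2,
\]
the last equality following from \eqref{derivative-momentum} when $\mathbf m_i=\mathbf 0$. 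Consequently $|v_i|/\|\mathbf f_i\|\le\nu^{k-1}\|\dot{\mathbf z}\|_{i,\mathbf x}^k$; summing over $i$ and using $\sum_i y_i^k\le(\sum_i y_i)^k$ for nonnegative $y_i$ and $k\ge 1$ (with $y_i=\|\dot{\mathbf z}\|_{i,\mathbf x}^2$) gives $\|\diag{\|\mathbf f_i\|^{-1}}\mathbf v\|_2\le\nu^{k-1}\|\dot{\mathbf z}\|_{\mathbf x}^k$. Since $\mu$ is by definition the operator norm of $M^{-1}\diag{\|\mathbf f_i\|}$ from $(\mathbb C^n,\|\cdot\|_2)$ to $(T_{\mathbf x}\secrev{\mathscr M},\|\cdot\|_{\mathbf x})$, this yields $\|DS^{-1}D^kS(\mathbf 0)\|_{\mathbf x}\le\mu\nu^{k-1}$ as a $k$-multilinear operator norm.

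To close, observe that $\bigl(\frac{1}{k!}\mu\nu^{k-1}\bigr)^{1/(k-1)}=\nu(\mu/k!)^{1/(k-1)}$ equals $\frac{1}{2}\mu\nu$ at $k=2$. For $k\ge 3$, Proposition~\ref{aggregates}(\ref{prop-mu-a}) gives $\mu\ge 1$, and the inequality $(\mu/k!)^{1/(k-1)}\le\mu/2$ rearranges to $2^{k-1}/k!\le\mu^{k-2}$, which holds because $k!\ge 2^{k-1}$ for $k\ge 2$ and $\mu\ge 1$. Thus the supremum defining $\gamma$ is attained at $k=2$, yielding $\gamma\le\frac{1}{2}\mu\nu$. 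The main technical subtlety is the exponent split above: the cruder pointwise bound $|\mathbf a\dot{\mathbf z}|^{2k}\le\nu^{2k}\|\dot{\mathbf z}\|_{\mathbf x}^{2k}$ would only yield $\gamma\le\frac{1}{2}\mu\nu^2$, strictly weaker in general.
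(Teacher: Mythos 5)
Your argument is correct and is the expected one: it is the toric analogue of \cite{BCSS}*{Theorem 2, p.~267}, with the binomial identities of the Weyl metric replaced by the Cauchy--Schwarz moment estimate against the measure $p_{\mathbf a}$ realizing the momentum map. The decisive step is indeed the exponent split $|\mathbf a\dot{\mathbf z}|^{2k}=|\mathbf a\dot{\mathbf z}|^{2(k-1)}\,|\mathbf a\dot{\mathbf z}|^2$, with the factor of order $k-1$ estimated pointwise via $\nu$ and the remaining factor estimated in expectation via the identity $\mathbb E_p[|\mathbf a\dot{\mathbf z}|^2]=\|\dot{\mathbf z}\|_{i,\mathbf x}^2$; this is precisely what produces $\nu^{k-1}$ in place of the wasteful $\nu^k$, and hence the constant $\frac{1}{2}$ rather than $\frac{1}{2}\nu$.

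One technical point worth tightening: you derive the bound from the diagonal restriction $D^kS(\mathbf 0)\dot{\mathbf z}^k$ and then assert that $\|DS^{-1}D^kS(\mathbf 0)\|_{\mathbf x}\le\mu\,\nu^{k-1}$ holds as a $k$-multilinear operator norm, which is the norm appearing in the definition of $\gamma$. The passage from polynomial norm to multilinear norm is legitimate here because $T_{\mathbf x}\mathscr M$ is a complex Hilbert space, where for symmetric $k$-linear maps the two norms coincide (the complex polarization constant is $1$); but you should either cite that fact or, more economically, run the same computation directly on $D^kS(\mathbf 0)(\mathbf u_1,\dots,\mathbf u_k)$: Cauchy--Schwarz and the split apply verbatim argument by argument, giving $|v_i|^2\le\|\mathbf f_i\|^2\,\nu^{2(k-1)}\prod_j\|\mathbf u_j\|_{i,\mathbf x}^2$, and the sum over $i$ is controlled by $\sum_i\prod_j y_{ij}\le\prod_j\sum_i y_{ij}$ in place of your $\sum_i y_i^k\le(\sum_i y_i)^k$. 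Everything else — the identification $DS_{\mathbf f,\mathbf x}(\mathbf 0)=M(\mathbf f,\mathbf x)$ after centering, and the elementary verification that $(\mu/k!)^{1/(k-1)}\le\mu/2$ using $\mu\ge1$ and $k!\ge2^{k-1}$ — is fine.
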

}
\secrev{
\begin{convention}
From now on, we will omit the argument $\mathbf 0$ whenever 
	it is obvious from context.
	For instance, $\beta(\mathbf f) = \beta(\mathbf f, \mathbf 0)$,
	$\gamma(\mathbf f) = \gamma(\mathbf f, \mathbf 0)$, $\mu(\mathbf f)=\mu(\mathbf f,\mathbf 0)$, $\nu=\nu(\mathbf 0)$.
\end{convention}
A particular case of Theorem~\ref{prop-mu-gamma} can be written as:
\begin{equation}\label{eq-mu-gamma}
	\gamma(\mathbf f) \le \frac{1}{2} \mu(\mathbf f) \nu .
\end{equation}
	}

\section{Statement of main results}
\label{sec:results}
\subsection{Homotopy}
\changed{We can now properly state the main results in this paper. We start
by the renormalized homotopy algorithm. The input is a path $(\mathbf q_{\tau})_{\tau \in [t_0,T]}$ of sparse exponential sums and an {\em approximate root} of the initial system $\mathbf q_{t_0}$ associated to an exact root $\mathbf z_{t_0}$. The output is an approximate root
of the target system $\mathbf q_T$ associated to the root $\mathbf z_T$ such that a continuous lifting $\mathbf z_t$, $\mathbf q_t (\mathbf z_t) \equiv \mathbf 0$
exists.}

Loosely speaking, approximate roots of $\mathbf f$ are points $\mathbf x_0$ 
satisfying the 
the conclusions of Theorem~\ref{th-gamma} (approximate roots of the {\em second kind})
or of Theorem~\ref{th-alpha}(a) (approximate roots of the {\em first kind}). 
\changed{For algorithmic purposes} it is useful to have an effective certification of the hypotheses
of either \changed{theorem. This} can be achieved by replacing the invariant 
$\secrev{\gamma(\mathbf f \cdot R(\mathbf x_0))=}
\gamma(\mathbf f \cdot R(\mathbf x_0),\mathbf 0)$ 
by its upper bound
$\secrev{\frac{1}{2} \mu(\mathbf f \cdot R(\mathbf x_0)) \secrev{\nu} =}
\frac{1}{2} \mu(\mathbf f \cdot R(\mathbf x_0),\mathbf 0) \secrev{\nu}(\mathbf 0)$ 
from
\secrev{\eqref{eq-mu-gamma}, where $\nu$ was defined in \eqref{distortion}}, \changed{whence:
\begin{definition} \label{def-approximate-root} A {\em certified approximate root} of $\mathbf f \in \mathscr F$ 
	is some $\secrev{\mathbf x} \in \secrev{\mathscr M}$ satisfying
\[
	\frac{1}{2} \secrev{\beta(\mathbf f \cdot R(\mathbf x)) \mu(\mathbf f \cdot R(\mathbf x)) \secrev{\nu}}
\le \alpha
.
\]
for some $0<\alpha \le \alpha_0$.
Let $(\mathbf x_r)$ be the sequence of renormalized Newton iterates
of $\mathbf x$, viz. $\mathbf x_0=\mathbf x$ and
$\mathbf x_{r+1} = N( \mathbf f \cdot R(\mathbf x_r), \mathbf 0) + \mathbf x_r$.
We say that $\boldsymbol \zeta = \lim_{r \rightarrow \infty} \mathbf x_r$
	is the \secrev{root associated to} $\mathbf x$.
\end{definition}}

\begin{definition}\label{def-recurrence}
Let $(\mathbf q_{\tau})_{\tau \in [t_0, T]}$ be a path in 
$\mathscr F = \mathscr F_{A_1} \times \cdots \times \mathscr F_{A_n}$ and
	$\mathbf x_0 \in \secrev{\mathscr M}$ be a \changed{certificate approximate
	root} for $\mathbf q_{t_0}$. 
	\changed{Let $\secrev{\nu}=\nu(\mathbf 0)$ be the value of the distortion invariant at the origin.}
The {\em renormalized} homotopy at the origin is given by the recurrence:
\begin{equation}\label{rec-zero}
\left\{
\begin{array}{rcl}
	\mathbf x_{j+1} &=& N( \mathbf q_{t_j} \changed{\cdot} \mathbf R(\mathbf x_{j}), \mathbf 0) + \mathbf x_j 
\\
t_{j+1} &=& \min\left(\rule{0pt}{2.6ex}T, \inf \left\{\rule{0pt}{2.2ex} t>t_j:
\frac{1}{2} 
	\secrev{\beta( \mathbf q_{t} \changed{\cdot} \mathbf R(\mathbf x_{j+1}))}
\right. \right.\\
&&\hspace{9em} 
	\left. \left.
	\times \ 
	\secrev{\mu( \mathbf q_{t} \changed{\cdot} \mathbf R(\mathbf x_{j+1}))}\ \secrev{\nu} 
\ge \alpha_* \rule{0pt}{2.2ex} \right\}\rule{0pt}{2.6ex} \right) .
\end{array}
\right.
\end{equation}
\end{definition}
\changed{This recurrence hopefully produces a mesh of points $(t_j, \mathbf x_j)$, $j=1, \dots, N$,
$t_N=T$,
approximating the values of a continuous solution path $(\mathbf z_{\tau})_{\tau \in [t_0,T]}$ to the 
homotopy path $(\mathbf q_{\tau})_{\tau \in [t_0,T]}$. The number $N$ of 
\secrev{{\em renormalized homotopy}} steps will be bounded in terms
of invariants associated to the {\em renormalized} homotopy path
$(\mathbf q_{\tau} \cdot \mathbf R(\mathbf z_{\tau}))_{\tau \in [t_0,T]}$ with a fixed solution
at the origin $\mathbf 0 \in \secrev{\mathscr M}$. The main invariant is the {\em renormalized condition length} defined
below,} \secrev{not to be confused with the {\em ordinary} condition length $\mathcal L$ as in~\eqref{lold}.}

\begin{definition}\label{def-rencondlength}
	Let $(\mathbf q_{\tau}, \mathbf z_{\tau})_{\tau \in [t, t']}$ be a path in the solution variety $\secrev{\mathscr S}$. 
Its {\em renormalized condition length} is defined by:
	\secrev{
\[
\glsdisp{L}{\mathscr L((\mathbf q_{\tau}, \mathbf z_{\tau});\, t,t')}
= \int_{t}^{t'} 
\left(
\left\| \frac{\partial}{\partial \tau} \mathbf p_{\tau} \right\|_{\mathbf p_{\tau} }
+\secrev{\nu} \|\dot {\mathbf z}_{\tau}\|_{\mathbf 0} \right)
\mu( \mathbf p_{\tau}) \dd \tau
\]
	where $\mathbf p_{\tau}= \mathbf q_{\tau} \cdot R(\mathbf z_{\tau})$ is
the renormalized path,
	\changed{multi-projective Fubini-Study metric is assumed for the first summand} 
\[
	\left\| \frac{\partial}{\partial \tau} \mathbf p_{\tau}\right\|_{\mathbf p_{\tau} }
=
	\sqrt{
	\sum_{i=1}^n
	\frac{\left\| P_{\mathbf p_{i\tau}^{\perp}} \dot {\mathbf p}_{i\tau} \right\|^2}
	{\left\| \mathbf p_{i\tau} \right\|^2 }}
\]
with $P_{\mathbf p_{i\tau}^{\perp}}$ the orthogonal projection onto 
	$\mathbf p_{i\tau}^{\perp} \subseteq \mathscr F_{A_i}$. The notation
$\|\cdot\|_{\mathbf 0}$ stands for the $T_{\mathbf 0}\secrev{\mathscr M}$-metric, and $\secrev{\nu} = \nu(\mathbf 0)$ is fixed}. 
\end{definition}
Definition \ref{def-rencondlength} makes sense for $-\infty \le t \le t' \le \infty$. For instance, we can consider a linear homotopy $\mathbf q_t = \mathbf f + t \mathbf g$ for $0 \le t \le \infty$. This line projects onto a finite path
in  $\mathbb P(\mathscr F_{A_1}) \times \cdots \times \mathbb P(\mathscr F_{A_n})$.
If the condition number $\secrev{\mu(\mathbf q_t \cdot R(\mathbf z_t))}$ is bounded for all $t$ and for all solutions
paths $\mathbf z_{t}$, then the renormalized condition length will be finite (see Section~\ref{sec-linear}).
Of course, it may happen to the condition length to be infinite.
The number of \secrev{renormalized} homotopy steps required by Definition~\ref{def-recurrence} can be bounded in terms of the condition length:

\begin{theorem}\label{th-A}
There are constants $0< \alpha_* \simeq 0.074\dots < \alpha_0$, $0<u_*=u_*(\alpha_*) \simeq 0.129\dots$,
$0< \delta_*=\delta_*(\alpha_*)\simeq0.085\dots$ 
with the following properties. Let $-\infty \le t_0 < T \le \infty$. For any path
	$(\mathbf q_t)_{t \in [t_0,T]}$ of class \changed{$\mathcal C^{1}$} in
$\mathscr F = \mathscr F_{A_1} \times \cdots \times \mathscr F_{A_n}$ and for any
$x_0 \in \mathbb C^n$, if
the pair $(\mathbf q_{t_0}, \mathbf x_{0})$ 
satisfies
\begin{equation}\label{thA-a0}
\frac{1}{2} 
	\secrev{
\beta( \mathbf q_{t_0} \cdot \mathbf R(\mathbf x_0))\ \mu( \mathbf q_{t_0} \cdot \mathbf R(\mathbf x_0) )\ 
	\nu} 
\le \alpha_*,
\end{equation}
	the recurrence \eqref{rec-zero} \secrev{for the sequence $(t_j, \mathbf x_j)$} is well-defined and there
	is a \changed{$\mathcal C^{1}$} path $(\mathbf q_t, \mathbf z_t) \in \secrev{\mathscr S}$ 
satisfying, for all $t_j \le t < t_{j+1}$, 
\begin{equation}\label{thA-ui}
u_j(t) \defeq \frac{1}{2} 
\| \mathbf z_t - \mathbf x_{j+1}\|_{\mathbf 0}\ 
	\secrev{
	\mu( \mathbf q_{t} \cdot \mathbf R(z_t) ) 
		\nu} 
\le u_*.
\end{equation}
Moreover, 
	$\mathscr L(\changed{(\mathbf q_{t}, \mathbf z_t);}\, t_j, t_{j+1}) \ge \delta_*$ whenever $t_{j+1}<T$. 
	In particular, whenever $\mathscr L(\changed{(\mathbf q_{t}, \mathbf z_t);}\,t_0,T)$ is finite, there is $N \in \mathbb N$ with
$t_N = T$ and 
\[
	N \le 1+ \frac{1}{\delta_*} \mathscr L(\changed{(\mathbf q_{t}, \mathbf z_t);}\,t_0,T).
\]
If we set 
\[
\mathbf x_{N+1} = N( \mathbf q_{T} R(\mathbf x_{N}), \mathbf 0) + \mathbf x_N 
\]
then
\begin{equation}\label{thA-aN}
\frac{1}{2} 
	\secrev{
	\beta( \mathbf q_{T} \cdot \mathbf R(\mathbf x_{N+1}))\ \mu( \mathbf q_{T} \cdot \mathbf R(\mathbf x_{N+1}))\ 
		\nu} 
\le \alpha_*,
\end{equation}
\end{theorem}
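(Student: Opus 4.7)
The plan is to adapt the classical step-size analysis of homotopy continuation to the renormalized framework, leveraging the identification of renormalized Newton iteration with classical Newton iteration provided by Lemma~\ref{toric-to-classical}, together with Theorems~\ref{th-gamma} and~\ref{th-alpha} and the crucial bound \eqref{eq-mu-gamma}. The argument proceeds by induction on the step index $j$, simultaneously producing the continuous lifting $(\mathbf z_t)$.

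For the base case, hypothesis~\eqref{thA-a0} together with $\gamma \le \tfrac12 \mu \nu$ yields $\alpha(\mathbf q_{t_0}\cdot R(\mathbf x_0)) \le \alpha_*$; choosing $\alpha_* \le \alpha_0$, Theorem~\ref{th-alpha} produces a root $\mathbf z_{t_0}$ of $\mathbf q_{t_0}\cdot\mathbf V$ with $\|\mathbf x_1 - \mathbf z_{t_0}\|_{\mathbf 0} \le r_1 \beta_0$, where $\beta_0 = \beta(\mathbf q_{t_0}\cdot R(\mathbf x_0))$. The implicit function theorem applied to $\mathbf q_t \cdot R(\boldsymbol \xi)\cdot \mathbf V(\mathbf 0) = \mathbf 0$ (the Jacobian is invertible because $\mu$ is finite) then extends $\mathbf z_{t_0}$ to a unique $\mathcal C^{1}$ lifting $t \mapsto \mathbf z_t$ as long as $\mu(\mathbf q_t\cdot R(\mathbf z_t))$ remains finite.

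For the inductive step, suppose $(t_j, \mathbf x_j)$ satisfies $\tfrac12\beta\mu\nu \le \alpha_*$ after renormalization. One Newton step yields $\|\mathbf x_{j+1} - \mathbf z_{t_j}\|_{\mathbf 0} \le r_1 \beta_j$, and the quadratic convergence estimate from Theorem~\ref{th-alpha}(a) gives $\beta(\mathbf q_{t_j}\cdot R(\mathbf x_{j+1})) = O(\beta_j^2)$, hence
\[
	\psi(t) \;\defeq\; \tfrac12\,\beta(\mathbf q_t\cdot R(\mathbf x_{j+1}))\,\mu(\mathbf q_t\cdot R(\mathbf x_{j+1}))\,\nu
\]
satisfies $\psi(t_j) = O(\alpha_*^2) \ll \alpha_*$. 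Either $t_{j+1} = T$ or, by continuity of $\psi$, the saturation $\psi(t_{j+1}) = \alpha_*$ occurs. Inequality~\eqref{thA-ui} holds on $[t_j,t_{j+1})$: for $u_* < (3-\sqrt 7)/2$ chosen small enough, Theorem~\ref{th-gamma} applied at the renormalized pair $(\mathbf q_t\cdot R(\mathbf x_{j+1}),\mathbf 0)$ ensures $\mathbf z_t$ lies within the quadratic-convergence ball around $\mathbf x_{j+1}$, and the required control on $\|\mathbf z_t-\mathbf x_{j+1}\|_{\mathbf 0}$ follows from $\psi$ via \eqref{eq-mu-gamma}.

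The core of the argument is the lower bound $\mathscr L(\mathbf q_t,\mathbf z_t;\,t_j,t_{j+1}) \ge \delta_*$ whenever $t_{j+1}<T$. It suffices to bound $|\dot\psi|$ by a constant multiple of the integrand of $\mathscr L$, then integrate. Proposition~\ref{aggregates}(b) provides the Lipschitz control of $\mu$ with respect to multi-projective motion of $\mathbf q_t\cdot R(\mathbf x_{j+1})$, and an analogous estimate for $\beta$ follows from its defining formula together with the Lipschitz variation of $M(\mathbf f,\mathbf 0)^{-1}$. Since $\mathbf x_{j+1}$ is within $O(u_*/(\mu\nu))$ of $\mathbf z_t$, the additivity $R(\mathbf x_{j+1}) = R(\mathbf z_t)\cdot R(\mathbf x_{j+1}-\mathbf z_t)$ combined with Theorem~\ref{th-renormalization}(c) lets us compare $\mathbf q_t\cdot R(\mathbf x_{j+1})$ with the genuinely renormalized $\mathbf p_t = \mathbf q_t\cdot R(\mathbf z_t)$: the time derivative of the discrepancy is bounded by a constant times $\nu\|\dot{\mathbf z}_t\|_{\mathbf 0}$, which is exactly the second summand in the integrand of $\mathscr L$, while $\|\partial_\tau\mathbf p_\tau\|_{\mathbf p_\tau}$ absorbs the genuine motion of the equation. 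Integrating yields $\int_{t_j}^{t_{j+1}}|\dot\psi|\,dt \le C\,\mathscr L(t_j,t_{j+1})$, whence $\mathscr L(t_j,t_{j+1}) \ge (\alpha_*-\psi(t_j))/C \ge \delta_*$ for an explicit $\delta_*$. The final Newton iterate at $T$ satisfies \eqref{thA-aN} by the same quadratic convergence argument. The chief obstacle is the careful bookkeeping required to pass from the algorithmic renormalization at the constant point $\mathbf x_{j+1}$ to the path-dependent renormalization at $\mathbf z_t$, so that every Lipschitz estimate aligns with the precise form of $\mathscr L$ and produces concrete, numerically verifiable constants $\alpha_*\approx 0.074$, $u_*\approx 0.129$, and $\delta_*\approx 0.085$.
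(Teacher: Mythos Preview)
Your overall architecture---induction on $j$, use of Theorems~\ref{th-alpha} and~\ref{th-gamma} via Lemma~\ref{toric-to-classical} and \eqref{eq-mu-gamma}, construction of the lifting $\mathbf z_t$---matches the paper. The divergence is in the core step, the lower bound $\mathscr L(t_j,t_{j+1})\ge\delta_*$.

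You propose to differentiate $\psi(t)=\tfrac12\beta(\mathbf q_t\cdot R(\mathbf x_{j+1}))\mu(\mathbf q_t\cdot R(\mathbf x_{j+1}))\nu$ and bound $|\dot\psi|$ by the integrand of $\mathscr L$. The paper does \emph{not} differentiate $\psi$. Instead it introduces auxiliary thresholds $u_{**}<u_{***}$ (Lemma~\ref{lem-u}) and works with the purely geometric quantity $u_j(t)=\tfrac12\|\mathbf z_t-\mathbf x_{j+1}\|_{\mathbf 0}\,\mu(\mathbf q_t\cdot R(\mathbf z_t))\,\nu$. The stopping criterion $\psi(t_{j+1})=\alpha_*$ is converted into a \emph{lower} bound $u_j(t_{j+1})\ge u_{***}$, while Theorem~\ref{th-alpha}(c) gives the sharp $u_j(t_j)\le u_{**}$. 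The gap $u_{***}-u_{**}$ forces a lower bound on the root displacement $\|\mathbf z_{t_{j+1}}-\mathbf z_{t_j}\|_{\mathbf 0}$, which is then compared to $\int\|\dot{\mathbf z}_\tau\|_{\mathbf 0}\,d\tau$ and hence to $\mathscr L$, via a separate control (Lemma~\ref{lem-rt}) of the maximal multi-projective drift $d_{\max}(t)\mu_j$ in terms of $\mathscr L$. The bridge between renormalization displacement and multi-projective distance is the technical Lemma~\ref{lem:fRdist}, $d_{\mathbb P}(\mathbf f,\mathbf f\cdot R(\mathbf x))\le\sqrt5\,\|\mathbf x\|_{\mathbf 0}\,\nu$, not Theorem~\ref{th-renormalization}(c) as you suggest.

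Your route is not wrong in principle, but it carries a genuine difficulty you gloss over: you need a Lipschitz estimate for $\beta$ along the path $t\mapsto\mathbf q_t\cdot R(\mathbf x_{j+1})$, and none is available in the paper. Since $\beta=\|M^{-1}\cdot\mathrm{ev}\|_{\mathbf 0}$, differentiating produces $M^{-1}\dot M M^{-1}$ terms whose control requires exactly the kind of bookkeeping you are trying to avoid; moreover you must compare Fubini--Study speeds at the \emph{fixed} point $\mathbf x_{j+1}$ with those at the \emph{moving} point $\mathbf z_t$, and the diagonal operator $R(\mathbf x_{j+1}-\mathbf z_t)$ genuinely distorts that metric. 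The paper sidesteps all of this: by passing from $\psi$ to $u_j$, only the Lipschitz property of $\mu$ (Proposition~\ref{aggregates}(\ref{prop-mu-b})) is needed, and the argument becomes a clean triangle-inequality estimate on $\|\mathbf z_{t_{j+1}}-\mathbf z_{t_j}\|_{\mathbf 0}$. That is what actually yields the explicit constants $\alpha_*\simeq0.074$, $u_*\simeq0.129$, $\delta_*\simeq0.085$.
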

In conclusion, the recurrence terminates after at most
\[
	1+ \frac{1}{\delta_*} \mathscr L(\changed{(\mathbf q_t, \mathbf z_t);}\, t_0,T)
\]
iterations. With an extra iteration more, we recover a {\em \secrev{certified} approximate root} of
$\mathbf q_T$, in the sense of \changed{Definition~\ref{def-approximate-root}}.

Theorem~\ref{th-A} provides a way to produce \secrev{certified} approximate roots for $\mathbf q_T$ from the knowledge of 
\secrev{certified} approximate roots of $\mathbf q_0$. \secrev{Certified} approximate roots with different associated roots
of $\mathbf q_0$ give rise to \secrev{certified} approximate roots for $\mathbf q_T$ with different associated roots.
In case the renormalized condition length associated to all homotopy paths is finite, this
allows to `approximately solve' $\mathbf q_T$ in terms of a maximal set of `\secrev{certified} approximate solutions' 
to $\mathbf q_0$. The renormalized condition length will be infinite if for instance one of the roots
is degenerate or at `toric infinity'.
Theorem~\ref{th-A} is proved in Section~\ref{sec-homotopy} below.
\begin{remark}
The word {\em renormalization} is taken here in the sense of dynamical
systems or cellular automata. Strictly speaking, the renormalization
operator should be allowed to be time-dependent as in the renormalized
Graeffe iteration by Malajovich and Zubelli \ycites{MZ1,MZ2}. 
\end{remark}
\changed{
\begin{remark}
The solution path $(\mathbf q_t, \mathbf z_t)$ in Theorem~\ref{th-A}
has the same regularity as $\mathbf q_t$. If one assumes class
$\mathcal C^{1 + \text{Lip}}$, that is a Lipschitz first derivative of
$(\mathbf q_t, \mathbf z_t)$ with respect to $t$, then $\mathbf z_t$ also turns out
to have Lipschitz first derivative. This is the case in particular
for geodesics of the Lipschitz-Riemann structure induced by the
condition metric as explained by \ocite{BDMS2}*{section 2}.
\end{remark}}

\subsection{Expectation of the renormalized condition number}

Theorem~\ref{th-A} reduces the problem of obtaining
a global complexity estimate to \changed{the problem of
bounding the
condition length from} Definition~\ref{def-rencondlength}. We will take
a random homotopy path, with one of the endpoints fixed. Before computing
its renormalized condition length, we will need to compute the expected
squared condition number for a given time $\tau$. We will actually obtain
a {\em conditional} expectation. 
This will be enough
to produce an algorithm with a bounded absolute expectation, as it
will be explained in section~\ref{sec:unconditional}.
This bound on the conditional expectation of the squared condition number depends 
on a generalization of
Minkowski's {\em mixed volume}, that we call the {\em mixed \secrev{area}}. 
We recall the definition of mixed volume first.
\secrev{Some references on mixed volume are
\cites{Bernstein, Lutwak, Khovanskii, GKZ, Malajovich-nonlinear, Malajovich-mixed, Jensen}.}

\begin{figure}
\centerline{
	\resizebox{\textwidth}{!}{ \input{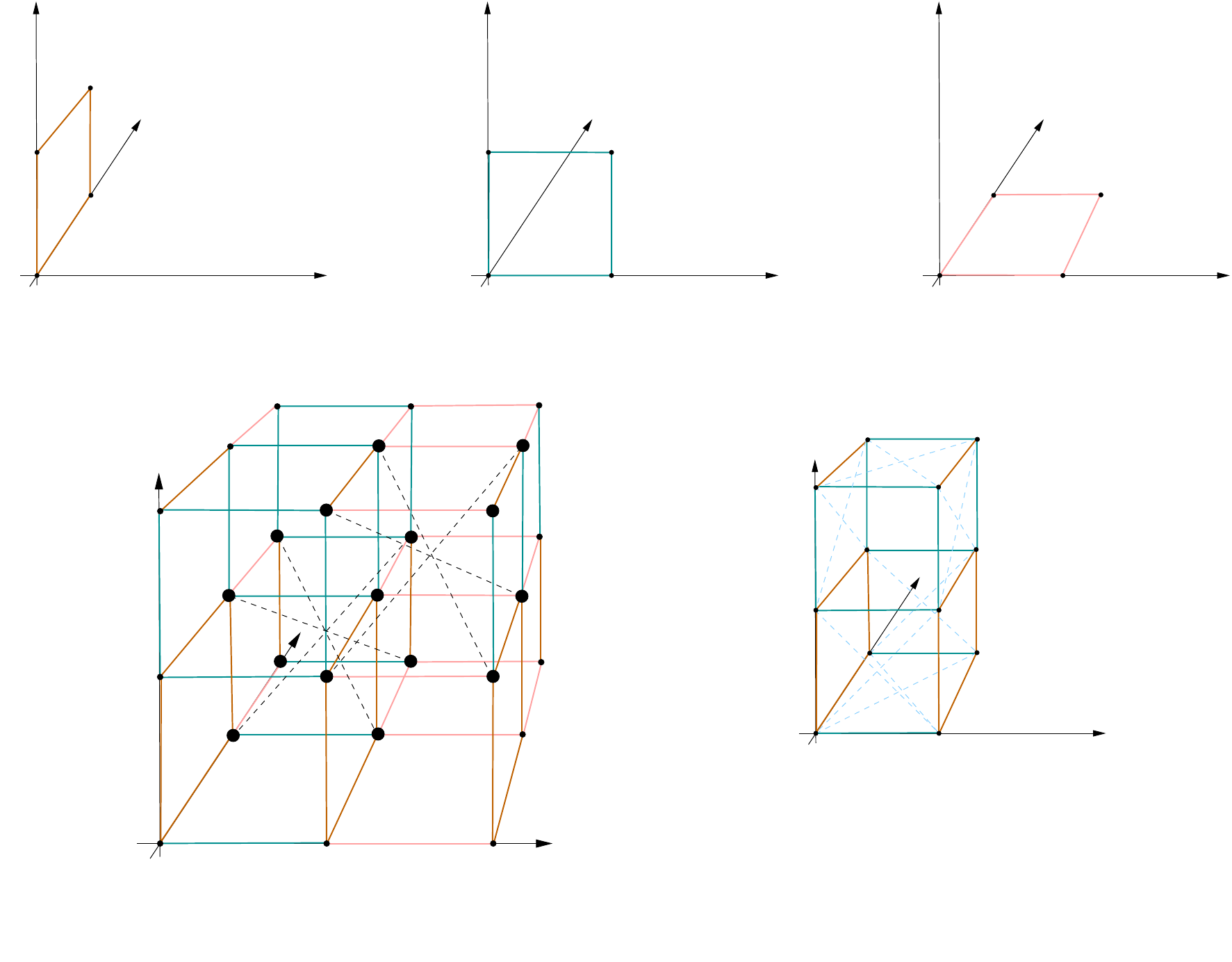_t}} }
\caption{The mixed volume and the mixed \secrev{area}. Top, the support for the system of polynomials
	$F_1(X_1, X_2, X_3) = 1 + X_2 + X_3 + X_2 X_3$,
	$F_2(X_1, X_2, X_3) = 1 + X_1 + X_3 + X_1 X_3$,
	$F_3(X_1, X_2, X_3) = 1 + X_1 + X_2 + X_1 X_2$. Each support is represented in a different
	color. Bottom left, the Minkowski linear combination
	of the supports. Only the two cubes with edges of all colors, aka `mixed cells', have a volume term in $t_1 t_2 t_3$. The mixed volume $V= \frac{1}{3!} 2 = 1/3$, and this means that if the coefficients are replaced by generic coefficients the system has 2 roots in $\mathbb C_{\times}^3$. Bottom right, the cells with surface multiple of $t_1 t_2$ are one of the parcels in the mixed \secrev{area}. In this example there are 6 of them, therefore $V(\conv{A_1}, \conv{A_2}, B^3) = 1$ and by permuting supports, the mixed \secrev{area} $V'$ is equal to $3$. 
\label{mixed-surface}}
\end{figure}

\begin{definition}\label{def-mixed} The {\em mixed volume} of an $n$-tuple 
	of compact
convex sets
$(\mathcal A_1,$ $\dots, \mathcal A_n)$ 
in $\mathbb R^n$ is
\[
	\glsdisp{MixedVolume}{V(\mathcal A_1, \dots, \mathcal A_n)}
\defeq
\frac{1}{n!}\ 
\frac{\partial^n}
{\partial t_1 \partial t_2 \cdots \partial t_n}
\
	\vol_n(t_1 \mathcal A_1 + \cdots + t_n \mathcal A_n)
\]
where $t_1, \dots, t_n \ge 0$ and the derivative is taken at
$t_1=\dots=t_n=0$. 
\end{definition}
 The normalization factor $1/n!$ ensures that
\[
\vol_n(\mathcal A) = V(\mathcal A, \dots, \mathcal A)  
.
\]
\changed{
\begin{remark}\label{properties-mixed-volume}
The mixed volume is also known to be monotonic, symmetric, translation
invariant and linear in each $\mathcal A_i$ with
respect to Minkowski linear combinations. Those five properties also
define the mixed volume. 
\end{remark}}
\secrev{Bernstein's Theorem} 
can be stated in terms of polynomials or exponential sums, we state it
here in terms of exponential sums.
\begin{theorem}[\citeauthor{Bernstein}, \citeyear{Bernstein}]
\label{BKK}
Let $A_1, \dots, A_n$ be finite subsets of $\mathbb Z^n$, 
$\mathcal A_i = \conv{A_i}$, $i=1, \dots, n$, and let
$\mathbf f_i \in \mathscr F_{A_i}$. Then, the system
\begin{eqnarray*}
\mathbf f_1 V_{A_1}(\mathbf z) &=& 0 \\
	& \vdots & \\
\mathbf f_n V_{A_n}(\mathbf z) &=& 0 \\
\end{eqnarray*}
has at most $n! V(\mathcal A_1, \dots, \mathcal A_n)$ isolated
roots in $\mathbb C^n \mod 2 \pi \sqrt{-1}\,\mathbb Z^n$
\changed{counted according to their multiplicity}. This bound
is attained for generic $\mathbf f$.
\end{theorem}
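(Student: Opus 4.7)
The plan is to identify the root count with an intersection number on the toric variety $\mathcal V$ from \eqref{toric-variety}, and then evaluate that intersection by polyhedral deformation. Each equation $\mathbf f_i \cdot V_{A_i}(\mathbf z)=0$ is the pullback under $[V_{A_i}]$ of a hyperplane in $\mathbb P(\mathscr F_{A_i}^*)$, so the isolated zeros in the main chart $[\mathbf V](\mathscr M)$ are, counted with multiplicity, the intersection on $\mathcal V$ of the $n$ pulled-back hyperplane divisors. Moving each hyperplane inside $\mathbb P(\mathscr F_{A_i}^*)$ preserves the intersection number, so the count depends only on $(A_1,\dots,A_n)$.

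To evaluate this intersection number I would run a polyhedral homotopy. Pick a generic real lifting $h_i\colon A_i \to \mathbb R$ and deform to
\[
\mathbf f_{i,t}(\mathbf z) = \sum_{\mathbf a \in A_i} f_{i\mathbf a}\, t^{h_i(\mathbf a)}\, e^{\mathbf a \mathbf z}, \qquad i=1,\dots,n,\ \ t \in (0,1].
\]
For generic $(h_1,\dots,h_n)$ the induced coherent mixed subdivision of $t_1 \mathcal A_1 + \cdots + t_n \mathcal A_n$ has mixed cells of the form $C_1 + \cdots + C_n$ with $C_i \subseteq \conv{A_i}$ and $\sum_i \dim C_i = n$. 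A rescaling $\mathbf z \mapsto \mathbf z + \mathbf u \log t$ tuned to the inner normal of a given mixed cell converts the limit system, as $t \to 0^+$, into a binomial system supported on that cell, whose number of nondegenerate solutions in $\mathscr M$ equals the normalized volume of the cell divided by $\det \Lambda$. Summing the contributions of all mixed cells, the multilinear expansion of $\vol_n(t_1 \mathcal A_1 + \cdots + t_n \mathcal A_n)$ together with Definition~\ref{def-mixed} yields exactly $n!\, V(\mathcal A_1,\dots,\mathcal A_n)$.

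The hard part will be showing that the root count in $\mathscr M$ is preserved along the deformation: no roots in the main chart may appear or disappear as $t$ ranges over $(0,1]$, and each mixed-cell limit must correspond bijectively to a genuine root of the original system. The first is a continuity statement for the proper family of hypersurfaces inside the toric compactification $\mathcal V$. The second is precisely where genericity of $\mathbf f$ enters: one excludes the discriminantal locus of systems having a root at toric infinity, and outside that locus the count at $t=1$ agrees with the sum over mixed cells at $t \to 0^+$. This establishes both the upper bound $n!\,V(\mathcal A_1,\dots,\mathcal A_n)$ and its sharpness for generic $\mathbf f$.
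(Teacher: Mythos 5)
The paper does not prove Theorem~\ref{BKK}: it is stated as a citation of Bernstein's 1975 result, and the surrounding text immediately passes to the equivalent polynomial formulation and to Theorem~\ref{BKK2}. So there is no ``paper's own proof'' to compare against; what you are proposing is one of the standard modern routes to Bernstein's theorem, roughly the Huber--Sturmfels polyhedral homotopy approach, rather than Bernstein's original Puiseux-series degeneration.

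On the merits of your sketch, there are two points to flag. First, you slide between counting roots in $\mathbb C^n \bmod 2\pi\sqrt{-1}\,\mathbb Z^n$ and counting them in $\secrev{\mathscr M}$. Theorem~\ref{BKK} is stated for the former quotient, and for $A_i \subseteq \mathbb Z^n$ each binomial system from a mixed cell contributes exactly the normalized volume of that cell, with no division by $\det\Lambda$; the factor $1/\det\Lambda$ only enters when passing to $\secrev{\mathscr M}$, which is Theorem~\ref{BKK2}. Your sentence ``whose number of nondegenerate solutions in $\mathscr M$ equals the normalized volume of the cell divided by $\det \Lambda$'' followed by the conclusion ``yields exactly $n!\, V$'' is internally inconsistent: either you count in $\mathbb C^n \bmod 2\pi\sqrt{-1}\,\mathbb Z^n$ and omit the $\det\Lambda$, or you count in $\secrev{\mathscr M}$ and your total should be $n!V/\det\Lambda$.

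Second, your opening paragraph recasts the count as an intersection number on $\mathcal V$, which by itself gives the upper bound $n!V$ (the intersection number counts all points, including those at toric infinity, so the finite ones are a subset), but it does not give sharpness. The polyhedral homotopy you then run is really an independent argument and does not actually use the intersection-theoretic setup; the two approaches are legitimate but you have not connected them, so the sketch reads as two half-arguments rather than one. The genuinely hard parts---that no roots escape to or emerge from toric infinity along the deformation for generic lifting, and that the binomial limit systems are all nondegenerate---are exactly the content of the theorem, and you defer them entirely. As a sketch of the Huber--Sturmfels strategy this is fine, but it is not yet a proof, and the $\det\Lambda$ confusion would propagate into a wrong constant if carried through.
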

\noindent
This statement is equivalent to the polynomial version, because
\[
	\defun{\exp}{\mathbb C^n \mod 2 \pi \sqrt{-1}\,\changed{\mathbb Z^n}}
	{\mathbb C_{\times}^n}{\mathbf z}{\exp(\mathbf z) = 
	\begin{pmatrix}e^{z_1} & \dots & e^{z_n}\end{pmatrix}}
\]
is a bijection.
We saw in section \ref{sec:toric} that the natural map $\mathbb C^n \mod 2 \pi \sqrt{-1}\, \mathbb Z^n
\rightarrow \secrev{\mathscr M}$ \secrev{maps} $(\det \Lambda)$ to 1. \changed{In the statement above, the multiplicity of all isolated roots is a multiple of $\det \Lambda$. We may restate Theorem~\ref{BKK} by counting roots in $\secrev{\mathscr M}$ instead, so we can drop the multiplicities}:

\begin{theorem}\label{BKK2}
Let $A_1, \dots, A_n$ be finite subsets of $\mathbb C^n$, 
$\mathcal A_i = \conv{A_i}$, $i=1, \dots, n$, and let
$\mathbf f_i \in \mathscr F_{A_i}$. Then, the system
\begin{eqnarray*}
\mathbf f_1 V_{A_1}(\mathbf z) &=& 0 \\
	& \vdots & \\
\mathbf f_n V_{A_n}(\mathbf z) &=& 0 \\
\end{eqnarray*}
has at most $n! V(\mathcal A_1, \dots, \mathcal A_n)/(\det \Lambda)$ isolated
roots in $\secrev{\mathscr M}$. This bound
is attained for $\mathbf f$ generic.
\end{theorem}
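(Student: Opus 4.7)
The plan is to deduce Theorem~\ref{BKK2} from the classical form of Bernstein's theorem (Theorem~\ref{BKK}) via the covering map identified in the preceding lemma. Since $\Lambda \subseteq \mathbb Z^n$ by definition (as $\mathbb Z$-module spanned by the differences $A_i - A_i \subseteq \mathbb Z^n$), duality of full-rank lattices gives $\mathbb Z^n \subseteq \Lambda^*$ with index $[\Lambda^* : \mathbb Z^n] = \det\Lambda$. Consequently the natural projection
\[
\pi : \mathbb C^n \mod 2\pi\sqrt{-1}\,\mathbb Z^n \;\longrightarrow\; \mathbb C^n \mod 2\pi\sqrt{-1}\,\Lambda^* \;=\; \secrev{\mathscr M}
\]
is a finite (étale) covering of degree $\det\Lambda$, whose deck transformation group $\Lambda^*/\mathbb Z^n$ acts freely.

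The first substantive step is to transport the zero locus along $\pi$. If $\mathbf w \sim \mathbf z$ then by definition of $\sim$ we have $V_{A_i}(\mathbf w) = s_i\, V_{A_i}(\mathbf z)$ for some $s_i \in \mathbb C^{\times}$ depending on $i$, so $f_i$ is multiplied by a locally nonvanishing analytic factor $s_i$; hence the zero set is a well-defined subset of $\secrev{\mathscr M}$ and, crucially, the local intersection multiplicity of any isolated root is preserved under $\pi$. Because the action of $\Lambda^*/\mathbb Z^n$ on the cover is free, every isolated root $\mathbf z \in \secrev{\mathscr M}$ of $\mathbf f$ has exactly $\det\Lambda$ pairwise distinct preimages in $\mathbb C^n\mod 2\pi\sqrt{-1}\,\mathbb Z^n$, each isolated and of the same multiplicity as $\mathbf z$.

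With this bookkeeping in place the theorem falls out in two short steps. For the upper bound, Theorem~\ref{BKK} supplies at most $n!\,V(\mathcal A_1,\dots,\mathcal A_n)$ isolated roots counted with multiplicity in the cover; grouping them into $\pi$-fibers of size $\det\Lambda$ yields at most $n!V/\det\Lambda$ isolated roots in $\secrev{\mathscr M}$. For the sharpness statement, Theorem~\ref{BKK} asserts that a generic $\mathbf f$ has $n!V$ pairwise distinct simple roots in the cover; these partition into $n!V/\det\Lambda$ orbits of the free $\Lambda^*/\mathbb Z^n$-action, each projecting to a single simple root in $\secrev{\mathscr M}$, so the bound is attained.

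The only delicate point is ensuring that two different preimages in $\mathbb C^n \mod 2\pi\sqrt{-1}\,\mathbb Z^n$ cannot collapse to a single root of $\secrev{\mathscr M}$ with higher multiplicity — which would make the covering argument over-count the cover side. This potential obstacle is disposed of by the freeness of the $\Lambda^*/\mathbb Z^n$-action; beyond that, the proof is essentially lattice bookkeeping combined with the already-established identification $\secrev{\mathscr M} = \mathbb C^n \mod 2\pi\sqrt{-1}\,\Lambda^*$.
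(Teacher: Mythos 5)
Your proof is correct and follows the same route the paper takes: the paper derives Theorem~\ref{BKK2} from Theorem~\ref{BKK} by observing that $\mathbb C^n \mod 2\pi\sqrt{-1}\,\mathbb Z^n \to \mathscr M$ is $(\det\Lambda)$-to-one and that multiplicities of isolated roots group into full fibers. You have simply made explicit the covering-space bookkeeping (index $[\Lambda^*:\mathbb Z^n]=\det\Lambda$, freeness of the translation action, and preservation of local intersection multiplicity because $V_{A_i}(\cdot+2\pi\sqrt{-1}\boldsymbol\lambda)=s_i V_{A_i}(\cdot)$ for a nonzero constant $s_i$) that the paper leaves to the reader.
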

\begin{remark}
The bound in Theorem ~\ref{BKK2} is basis invariant in the following
sense: if one replaces the $A_i$ by $A_i M$ for an arbitrary 
matrix $M$ with integer coefficients, invertible over $\mathbb Q$,
then the number $ \frac{n! V(\mathcal A_1, \cdots, \mathcal A_n)}
{\det \Lambda}$ does not change \secrev{since both numerator and denominator
	are multiplied by $\det(M)$.}
\end{remark}

\medskip
\par
The mixed volume was originally defined by \ocite{Minkowski} in connection with
surface and curvature of convex bodies. 
Assume that $\mathcal A \subseteq \mathbb R^n$ is a compact convex body with
smooth boundary. Its {\em surface} or $n-1$ dimensional volume of its boundary
$\partial A$ is given by 
\begin{equation}\label{eq-area}
	V'= V'(\mathcal A) = \partialat{\epsilon}{0}
\vol_n(\mathcal A + \epsilon B^n) =
n V(\mathcal A, \dots, \mathcal A, B^n) 
\end{equation}
where $B^n$ stands for the unit Euclidean $n$-ball.
\begin{figure}
\centerline{
	\resizebox{0.5\textwidth}{!}{ \input{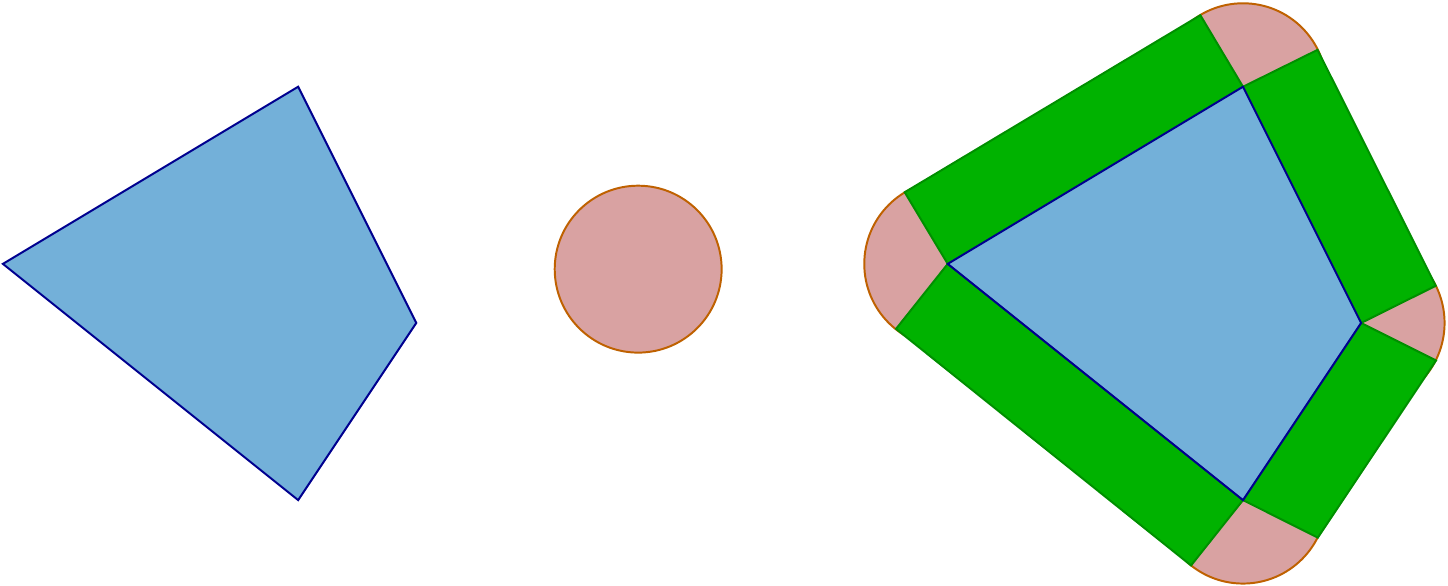_t}} }
\caption{The Steiner polynomial of a convex body $C$ is the volume of $C + \epsilon B^n$. In two dimensions,
	$\vol_2(C+ \epsilon B^2) = \vol_2(C) + 2 \epsilon \mathrm{Length}(\partial C)+ \pi \epsilon^2$. With the
	proper normalization, the coefficients of this polynomial are known as the {\em intrinsic volumes} of a convex polytope.
\label{fig-steiner}}
\end{figure}

A generalization of Minkowski's surface {\em quermassintegral} turns out
to be an important invariant for homotopy continuation in the sparse case,
namely
\begin{equation}\label{eq-mixed-area}
	\begin{split}
		\glsdisp{MixedSurface}{V'(\mathcal A_1, \dots, \mathcal A_n)}
		&= \partialat{\epsilon}{0}
V(\mathcal A_1+\epsilon B^n, \dots, \mathcal A_n+\epsilon B^n)
\\
&= 
\sum_i
V(\mathcal A_1, \dots, \stackrel{i\text{-th}}{B^n}, \dots, \mathcal A_n).
	\end{split}
\end{equation}
This quermassintegral \changed{is} called {\em mixed \changed{area}} in analogy with the
ordinary \changed{area} of the boundary of a convex set. Definitions
\eqref{eq-area} and \eqref{eq-mixed-area} coincide when
$\mathcal A_1 = \dots = \mathcal A_n = \mathcal A$. Another manifestation
of this invariant
arises 
when one of the supports in replaced by  a unit
simplex $\Delta_{n} = \{\mathbf 0, \mathrm e_1, \dots, \mathrm e_n\}$ 
as a result of eliminating one variable,
see for instance \cite{HJS} and references.
Recall that $\Delta_n$ has circumscribed radius $\sqrt{1-1/n}$ so that
\changed{$\Delta_n \subseteq \sqrt{1-1/n}\, B^n$. By the monotonicity and 
multilinearity properties of the mixed volume,} \secrev{Equation \eqref{eq-mixed-area} implies that}
\[
\sum_i V(\mathcal A_1, \dots, \stackrel{i\text{-th}}{\conv{\Delta_n}}, \dots, \mathcal A_n)
\le
\sqrt{1-\frac{1}{n}}\, V' \le V'.
\]
\changed{\begin{remark}\label{rem-Aleksandrov}
The mixed area quermassintegral is closely related to Aleksandrov's mixed
area measure \cite{Aleksandrov}. This is a measure on the unit sphere
that depends on 
	$n-1$ convex bodies. \secrev{Aleksandrov defined a volume form $\dd F_{\mathcal A_1, \dots, \mathcal A_{n-1}}$ on the unit sphere. It has the property that for every convex body $\mathcal B$ with support function
	$\lambda(\mathcal B)$,} 
	\[
		V(\mathcal A_1, \dots, \mathcal A_{n-1}, \mathcal B)
		=
		\frac{1}{n}\int_{S^{n-1}} \lambda_{\mathcal B}(\Theta)
		\ \dd F_{\mathcal A_1, \dots, \mathcal A_{n-1}}(\Theta)
.	\]
	If we set $\mathcal B$ equal to the unit ball $B^n$, the support function is constant and equal to one. The area $V'$ in \eqref{eq-area} is just
	the integral of the area measure $\secrev{\dd F}_{\mathcal A, \dots, \mathcal A}$
	while the mixed area $V'(\mathcal A_1, \dots, \mathcal A_n)$ in \eqref{eq-mixed-area} is the average over all subsets of $n-1$ convex bodies
of the integral of the appropriate mixed area measure.
References and more recent results related to mixed areas can be found in the
survey by \ocite{Lutwak}.
\end{remark}
}

\begin{example} \label{isoperimetric}Suppose that
the convex sets have the same {\em shape}, say 
$\mathcal A_i = d_i \mathcal A$ for $d_i > 0$. Then,
\[
V=V(\mathcal A_1, \dots, \mathcal A_n) = 
	\secrev{\left( \prod_{i=1}^n d_i \right)} \vol_n{\mathcal A}
\]
	and \changed{from the monotonicity property,}
\[
\left( \min_{1\le j \le n} \prod_{i \ne j} d_i \right) V'(\mathcal A)
\le
V'(\mathcal A_1, \dots, \mathcal A_n) 
\le \left( \max_{1 \le j \le n} \prod_{i \ne j} d_i \right) V'(\mathcal A)
.
\]
In this example,
\[
\frac{V(\mathcal A_1, \dots, \mathcal A_n)}
{V(\mathcal A) \max d_j}
\le
\frac{V'(\mathcal A_1, \dots, \mathcal A_n)}{V'(\mathcal A)}
\le
\frac{V(\mathcal A_1, \dots, \mathcal A_n)}
{V(\mathcal A) \min d_j}
\]
	and the isoperimetric inequality \cite{Khovanskii} $V'(\mathcal A) \ge n V(\mathcal A)^{\frac{n-1}{n}} \vol_{n}(B^n)^{\frac{1}{n}}$
provides the bound
\[
V(\mathcal A_1, \dots, \mathcal A_n) \le 
\frac{(\max d_i) \sqrt{\pi} V(\mathcal A)^{1/n}}{n \Gamma(n/2)^{1/n}}
V'(\mathcal A_1, \dots, \mathcal A_n)
\]
where $\Gamma(n/2)^{1/n} \simeq \sqrt{\frac{n}{2e}}$.

\end{example}
\begin{example}
	In the specific case when \secrev{$\mathcal A_i = d_i \conv{\Delta_n}$},
$n! V = \prod d_i$ is the Bézout number and 
$n! V' = (n^2 + n\sqrt{n+1}) \sum_i \prod_{j \ne i} d_j$.
\end{example}

\begin{example}
Let $p,q \in \mathbb N$ be relatively prime. By Euclid's algorithm there are $r,s \in \mathbb Z$ so that
$pr+qs=1$. Let $A_1=\{ [0,0], [p,q]\}$,$A_2=\{[0,0],[-s,r] \}$ and $\mathcal A_i = \conv{A_i}$. 
Then $V(\mathcal A_1, \mathcal A_2)=\frac{1}{2}$ and $V'(\mathcal A_1, \mathcal A_2)= 
\sqrt{p^2+q^2} + \sqrt{r^2+s^2}$. We see from this example that $V'/V$ can be arbitrarily large.
\end{example}

\medskip
\par

We assume from now on that the mixed volume $V(\mathcal A_1, \dots, \mathcal A_n)$ is non-zero, and in particular $\Lambda$ has rank $n$.
Then we consider a random Gaussian complex polynomial system $\mathbf q$,
$\mathbf q_i \in \mathscr F_{A_i}$. For simplicity let
$\secrev{\mathbf{f_i}} = \mathbb E(\mathbf q_i)$ be the average, $\mathbf g_i = \mathbf q_i-\secrev{\mathbf {f_i}}$ and 
$\Sigma_i^2 = \mathbb E( \mathbf g_i^* \mathbf g_i)$ be the covariance matrix. 
\changed{In this paper, the covariance matrix is assumed to be diagonal and 
positive definite.} Recall
that $\mathbf g_i$ is a covector, so $\mathbf g_i^*$ is a vector \secrev{and} $\Sigma_i^2$ is
indeed \changed{a real diagonal} matrix. In this sense, $\mathbf q_i \sim \secrev{\mathcal N}(
\secrev{\mathbf f_i}, \Sigma_i^2;
\mathscr F_{A_i})$. For short,
\[
\mathscr F = \mathscr F_{A_1} \times \cdots \times \mathscr F_{A_n},
\]
\[
	\mathbf q \sim \gls{Normal}
\hspace{2em}
\text{and}
\hspace{2em}
\mathbf g = \mathbf q-\secrev{\mathbf f} \sim \secrev{\mathcal N}(\mathbf 0, \Sigma^2; \mathscr F)
\]
\secrev{We define the zero-set
\[
	\gls{zeroset} = \{
\mathbf z \in \secrev{\mathscr M}: \mathbf q \cdot \mathbf V(\mathbf z)=\mathbf 0 \}
\]
and the set of bounded zeros
\[
	\gls{boundedzeroset} \defeq \{ \mathbf z \in Z(\mathbf q): \| \Re(\mathbf z)\|_{\infty} \le H \}
\]
depending on a parameter $H$.}
For generic $\mathbf q$, Theorem~\ref{BKK2} implies that
\[
	\# Z_H(\mathbf q) \le 
\# Z(\mathbf q) \defeq \frac{n! V(\mathcal A_1, \cdots, \mathcal A_n)}
{\det \Lambda} .
\]

\secrev{
The following result bounds the integral of a non-scaled condition number
computed at the set of {\em bounded} zeros in terms of 
the mixed \changed{area}
\[
V'= \sum_{i=1}^n V( \stackrel{1\text{-st}}{\mathcal A_1}, \cdots, \stackrel{i\text{-th}}{B^n}, \cdots, \stackrel{n\text{-th}}{\mathcal A_n})
\]
defined in \eqref{eq-mixed-area}.
}

\begin{theorem}\label{E-M2} Let $A_1, \dots, A_n \in \mathbb R^n$
be such that the $\mathbb Z$-module $\Lambda$ generated by
$\bigcup_{i=1}^n A_i-A_i$ is a subset of $\mathbb Z^n$,
and suppose that the mixed volume $V(\conv{A_1}, \dots,$ $\conv{A_n})$
is non-zero. 
Let $\mathscr F = 
\mathscr F_{A_1} \times \cdots \times \mathscr F_{A_n}$.
Let $S_i=\# A_i = \dim_{\mathbb C}(\mathscr F_{A_i}) \ge 2$.
For each $i$, let $\glsdisp{sigma}{\Sigma_i}
\changed{=\diag{\sigma_{i\mathbf a}}_{\mathbf a \in A_i}}$, \changed{$\sigma_{i\mathbf a}>0$.}
Fix some real number $H>0$, and
denote by $Z_H(\mathbf q)$ be the set of isolated roots 
of $\mathbf q \in \mathscr F$ 
	in $\secrev{\mathscr M}$ with $\| \Re(\mathbf z)\|_{\infty} \le H$.
	Then, \secrev{for all $\mathbf f \in \mathscr F$},
	\[
		\expected{\mathbf q \sim \secrev{\mathcal N}(\secrev{\mathbf f}, \Sigma^2)} { \sum_{\mathbf z \in Z_H(\mathbf q)} \| M(\mathbf q ,\mathbf z)^{-1}\|_{\mathrm F}^2 }
\le
	\frac{2H\sqrt{n}}{\det(\Lambda)}
	\frac{1}{\min_{\mathbf a} \sigma_{k \mathbf a}^2}
	\thirdrev{n!} \, V' \secrev{.}
\]
\end{theorem}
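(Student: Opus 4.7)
The starting point is a complex Kac--Rice / coarea formula on $\secrev{\mathscr M}$. Setting $U_H=\{\mathbf z\in\secrev{\mathscr M}:\|\Re\mathbf z\|_\infty\le H\}$ and observing that $\mathbf q\cdot\mathbf V$ is almost-surely transverse to $\mathbf 0$, I would write
\[
\mathbb E\!\!\sum_{\mathbf z\in Z_H(\mathbf q)}\!\!\|M(\mathbf q,\mathbf z)^{-1}\|_{\mathrm F}^2 = \int_{U_H}\!\!\mathbb E\!\left[\|M^{-1}\|_{\mathrm F}^2\,|\det D_{\mathbf z}(\mathbf q\cdot\mathbf V)|^2 \mid \mathbf q\cdot\mathbf V(\mathbf z)=\mathbf 0\right]\rho_{\mathbf z}(\mathbf 0)\,\mathrm d\vol(\mathbf z),
\]
where $\rho_{\mathbf z}(\mathbf 0)=\prod_i(\pi\|\Sigma_i V_{A_i}(\mathbf z)\|^2)^{-1}$ is the joint Gaussian density of $\mathbf q\cdot\mathbf V(\mathbf z)$ at zero. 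Independence of the $\mathbf q_i$ decouples the conditioning row by row.

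The algebraic heart of the argument is the cofactor identity
\[
\|M^{-1}\|_{\mathrm F}^2\,|\det M|^2=\sum_{i,j=1}^n|\det M_{i,j}|^2,
\]
where $M_{i,j}$ denotes the $(n-1)\times(n-1)$ minor of $M$ obtained by removing row $i$ and column $j$. By \eqref{M-zero}, the row rescalings $\|V_{A_i}(\mathbf z)\|^{-1}$ built into $M$ cancel against the $\|\Sigma_iV_{A_i}(\mathbf z)\|^{-2}$ in $\rho_{\mathbf z}(\mathbf 0)$ once the uniform lower bound $(\min_{\mathbf a}\sigma_{i\mathbf a}^2)^{-1}$ is pulled out. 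Conditional on $\mathbf q_i\cdot V_{A_i}(\mathbf z)=0$, the $i$-th row of $M$ lies in an explicit affine hyperplane of $\mathscr F_{A_i}$ and the remaining rows are independent centred Gaussians; a Cauchy--Binet expansion then rewrites each expected squared minor as a polynomial in the Veronese values $V_{j,\mathbf a}(\mathbf z)$ indexed by $(n-1)$-tuples $(\mathbf a_j)_{j\ne i}\in\prod_{j\ne i}A_j$.

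The geometric step identifies the resulting integral over $\secrev{\mathscr M}$ with a mixed volume. The Atiyah--Guillemin--Sternberg theorem states that the pushforward of the toric volume form by the momentum map $\mathbf m_j$ is, up to a constant, Lebesgue measure on $\mathcal A_j$; after integrating against this measure, the Cauchy--Binet sum matches the polarised expansion of $V(\mathcal A_1,\ldots,B^n,\ldots,\mathcal A_n)$, the Euclidean ball $B^n$ occupying slot $i$---this is precisely where the isotropic diagonal-covariance factor $(\min_{\mathbf a}\sigma_{i\mathbf a}^2)\,I$ extracted from the $i$-th row converts into the unit-ball support in the definition \eqref{eq-mixed-area} of $V'$. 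The lattice prefactor $(\det\Lambda)^{-1}$ emerges from the covering $\mathbb C^n/2\pi\sqrt{-1}\,\mathbb Z^n\to\secrev{\mathscr M}$ as in Theorem~\ref{BKK2}. Summing over $i$ yields $n!\,V'$; the remaining real-direction integration over $\Re\mathbf z\in[-H,H]^n$ contributes the prefactor $2H\sqrt n$, because after the imaginary-fibre integration the integrand depends on $\Re\mathbf z$ only through bounded Lipschitz functions of the momentum coordinates, and the relevant one-dimensional displacement is bounded by the $\ell_2$-diameter of $U_H$.

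The principal obstacle is executing the third paragraph: matching the Cauchy--Binet combinatorics with the multilinear polarisation of the mixed volume, and verifying with sharp constants that isotropic Gaussians on each $\mathscr F_{A_i}$ produce exactly the unit ball $B^n$ in the $i$-th slot of $V'$, with bookkeeping of the row rescalings carried through consistently. A secondary difficulty is the real-direction bound: replacing the naive box-volume $(2H)^n$ by a factor linear in $H$ requires either a Stokes-type integration-by-parts or a pointwise estimate showing the conditional integrand is essentially concentrated along a one-dimensional fibre of the momentum map.
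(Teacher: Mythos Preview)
Your first two steps---the coarea/Kac--Rice reduction and the cofactor identity $|\det M|^2\|M^{-1}\|_{\mathrm F}^2=\sum_{k,l}|S(M,k,\mathrm e_l)|^2$---match the paper exactly (Section~\ref{sec-integral}, through equation~\eqref{Ihatreord}). The paper also integrates out the $k$-th row $\mathbf g_k$ first, bounding the resulting ratio of determinants via an eigenvalue interlacing lemma (Lemma~\ref{det-interlacing}) to extract the factor $(\min_{\mathbf a}\sigma_{k\mathbf a}^2)^{-1}$; your ``pull out the minimum variance'' step is the same move, though you should note the rows are not centred (mean $\mathbf f_i$) and are all conditioned on the fibre, not just the $i$-th.

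Where you diverge is in evaluating the remaining integrals $I_{kl}$, and here your proposed route (Cauchy--Binet plus Atiyah--Guillemin--Sternberg plus polarisation) is not what the paper does and contains two misconceptions. The paper's key idea (Proposition~\ref{prop-subintegral}) is to recognise $I_{kl}$ as a \emph{second} root-counting integral: after deleting row $k$ and inserting $\mathrm e_l$, the integrand is precisely the normal Jacobian for the modified system $\bigl(\mathbf q_1,\ldots,\widehat{\mathbf q_k},\ldots,\mathbf q_n,\, z_l-w\bigr)$. Applying coarea and Bernstein's theorem again, $I_{kl}$ equals $\frac{4\pi H}{\det\Lambda}(n-1)!\,V\bigl(p_l(\mathcal A_1),\ldots,\widehat{p_l(\mathcal A_k)},\ldots,p_l(\mathcal A_n)\bigr)$, where $p_l$ projects onto $\mathrm e_l^\perp$. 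The factor $2H$ is thus a \emph{one}-dimensional strip integral in the auxiliary variable $w$, not an $\ell_2$-diameter estimate on an $n$-box; your worry about $(2H)^n$ versus $2H$ never arises. Likewise, $B^n$ does not enter through isotropy of the Gaussian: summing $\sum_l V(\ldots,[\mathbf 0,\mathrm e_l],\ldots)$ gives $V(\ldots,[0,1]^n,\ldots)$ by multilinearity, and the cube is then bounded by $\tfrac{\sqrt n}{2}B^n$ via monotonicity---this is the sole source of the $\sqrt n$.

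So the gap is a missing idea rather than a wrong one: the second-Bernstein trick replaces your flagged ``principal obstacle'' entirely and makes both the $2H$ and the $\sqrt n\,V'$ factors fall out mechanically.
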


\begin{remark}
The interest of varying the $\sigma_{i \mathbf a}$ while keeping
the $\rho_{i \mathbf a}$ fixed arises from the theoretical
analysis of non-linear homotopy paths. 
For instance, one can consider a Gaussian system
$\mathbf g$, and produce a non-linear homotopy by setting
	$\secrev{ q_{i \mathbf a}(t) \defeq } e^{-t b_{i\mathbf a}} g_{i \mathbf a}$
for random real coefficients $b_{i\mathbf a}$. 
This is equivalent to 
polyhedral homotopy
as described by  
\ocite{Verschelde-Verlinden-Cools} or 
\ocite{Huber-Sturmfels}. 
No {\em a priori} step count bound is known at this time.
This avenue of research will be pursued in a future paper.
\end{remark}

\begin{remark} Let $d \in \mathbb N$. If we replace each $A_i$ in Theorem~\ref{old-mainB} by $d A_i$, we will replace the mixed \changed{area} $V'$ by $d^{n-1} V'$. 
Moreover, $\det d \Lambda = d^n \det \Lambda$. To keep the same solution set,
we must replace $H$ by $H/d$. The right hand bound is therefore invariant. 
It is not, unfortunately, a lattice basis invariant.
\end{remark}

\changed{Theorem~\ref{E-M2} can be directly compared to Theorem 18.4 by \ocite{BC}. 
The left-hand-side of their formula is actually averaged
by the number of paths, that is by the Bézout number.} 
\secrev{Their result becomes:}
\begin{theorem}[\citeauthor{BC}, \citeyear{BC}]

	Assume that $A_1 =  \dots = A_n
	= \left\{ \mathbf a \in \mathbb N_0:
\sum a_j \le d\right \}$. Let $\rho_{i \mathbf a}^2 = \binomial{d}{\mathbf a}$. Let $\sigma > 0$.
Then,
\[
	\expected{\mathbf q \sim \secrev{\mathcal N}(\mathbf f, \sigma^2I)}{ \frac{
		\sum_{\mathbf z \in Z(\mathbf q)} \| M(\mathbf q ,z)^{-1}\|_2^2 }{\changed{d^n}}}
\le
	\frac{e(n+1)}{2 \sigma^2}
\]
\end{theorem}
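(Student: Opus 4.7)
My plan is to follow the unitary-invariant strategy pioneered by Shub--Smale and refined by \ocite{BCSS} and \ocite{BC}. This dense, symmetric setting with Weyl weights $\rho_{i\mathbf{a}}^2=\binom{d}{\mathbf{a}}$ is precisely the one in which $U(n+1)$ acts transitively on the ambient projective space (via the homogenization of $\mathscr{F}_{A_i}$) and isometrically on each $\mathscr{F}_{A_i}$; this is what unlocks a uniform bound independent of $\mathbf{z}$ and of the mean $\mathbf{f}$.

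I would first invoke a Kac--Rice / coarea identity to rewrite the expected sum over zeros as an integral over projective space:
\[
\mathbb{E}_{\mathbf{q}}\biggl[\sum_{\mathbf{z}\in Z(\mathbf{q})}\|M(\mathbf{q},\mathbf{z})^{-1}\|_2^2\biggr]
=\int_{\mathbb{P}^n}\mathbb{E}\bigl[\|M(\mathbf{q},\mathbf{z})^{-1}\|_2^2\,\bigm|\,\mathbf{q}(\mathbf{z})=0\bigr]\,\rho_{\mathbf{q}(\mathbf{z})}(0)\,dV(\mathbf{z}),
\]
where $\rho_{\mathbf{q}(\mathbf{z})}(0)$ is the Gaussian density of $\mathbf{q}(\mathbf{z})\in\mathbb{C}^n$ at the origin and $dV$ is the natural invariant volume. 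For the centered problem ($\mathbf{f}=\mathbf{0}$) the full integrand is $U(n+1)$-invariant in $\mathbf{z}$ and hence constant on $\mathbb{P}^n$. The non-centered case is handled by the elementary Gaussian bound $\rho_{\mathbf{q}(\mathbf{z})}(0)\le\rho_{\sigma\mathbf{g}(\mathbf{z})}(0)$: a mean shift only decreases the Gaussian density at the origin, so the centered bound dominates.

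It then suffices to evaluate the integrand at one convenient base point, which in homogeneous coordinates I would take to be $[\mathbf{e}_0]\in\mathbb{P}^n$. There only the monomial $X_0^d$ is nonzero, so conditioning on $\mathbf{q}([\mathbf{e}_0])=0$ simply annihilates one coefficient of each $\mathbf{q}_i$; a short calculation with the Weyl weights $\binom{d}{\mathbf{a}}$ shows that the rows of $M(\mathbf{q},[\mathbf{e}_0])$ become independent complex Gaussian vectors in $\mathbb{C}^n$ whose entries are i.i.d.\ of variance $d\sigma^2$. The final ingredient is $\mathbb{E}[\|G^{-1}\|_2^2]$ for such an $n\times n$ Gaussian matrix $G$, which can be bounded via Edelman's density of the smallest singular value of a standard complex Gaussian matrix. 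Combining this expectation with $\vol(\mathbb{P}^n)=\pi^n/n!$, the Bézout normalization $d^n$, and the density prefactor (this is where the factor $(n+1)$ appears, through $\|V_A([\mathbf{e}_0])\|^2$ after tracking the homogenization), the constants align to produce the stated bound $\frac{e(n+1)}{2\sigma^2}$. The main obstacle is the precise tracking of constants at this final step: any slack in the Gaussian singular-value tail integration inflates the final constant directly.
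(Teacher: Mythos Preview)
The paper does not prove this statement at all: it is quoted verbatim from \ocite{BC} (Theorem 18.4 there), purely as a benchmark to compare against the paper's own Theorem~\ref{E-M2}. The short \texttt{proof} environment that appears a few lines later is proving the displayed bound that follows (the specialization of Theorem~\ref{E-M2} to the dense case), not the B\"urgisser--Cucker theorem. So there is no ``paper's own proof'' to compare your proposal against.

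That said, your outline is a faithful sketch of the original argument in \ocite{BC}: coarea / Kac--Rice to turn the sum over zeros into an integral over $\mathbb{P}^n$, unitary invariance via the Weyl inner product to reduce to a single base point, the observation that a mean shift only decreases the Gaussian density at the origin (so the non-centered case is dominated by the centered one), and then an expectation bound for $\|G^{-1}\|_2^2$ with $G$ a complex Gaussian matrix. This is exactly the classical Shub--Smale / B\"urgisser--Cucker machinery.

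It is worth noting, for contrast, that the paper's own comparable result (Theorem~\ref{E-M2}) deliberately avoids unitary invariance, since that symmetry is unavailable in the sparse toric setting. Instead it uses the coarea formula together with the \ocite{ABBCS} trick $|\det M|^2\|M^{-1}\|_{\mathrm F}^2=\sum_{k,l}|S(M,k,\mathrm e_l)|^2$, integrates out the $k$-th row, and reinterprets each resulting integral $I_{kl}$ as a root count of an auxiliary fewnomial system bounded by Bernstein's theorem. Your unitary-invariance route is shorter and gives a cleaner constant, but it only works in the dense Weyl-metric case; the paper's route is what survives for arbitrary supports, at the price of the extra factors $H$, $V'$, and the Frobenius norm.
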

while the particular case of Theorem~\ref{E-M2} would be
\[
	\expected{\mathbf q \sim \secrev{\mathcal N}(\secrev{\mathbf f}, \sigma^2I)}{\frac{\sum_{\mathbf z \in Z_H(\mathbf q)} \| M(\mathbf q ,z)^{-1}\|_{\mathrm F}^2 }{\changed{d^n}}}
\le
\secrev{O\left( \frac{H n^{\frac{\thirdrev{5}}{2}}}{d \sigma^2}\right)}
.\]
\changed{\begin{proof}
	\secrev{The surface \fourthrev{area} is
$V'=\frac{d^{n-1}}{\changed{(n-1)!}}(n+\sqrt{n})$.}
\end{proof}}
As we see, the price to pay for greater generality is of modest
$O(H\thirdrev{n^{\frac{3}{2}}}/d)$.
\smallskip

\changed{We will use Theorem~\ref{E-M2} to bound the expectation of
the renormalized condition number \secrev{in Theorem~\ref{old-mainB} below}. This more technical statement will
allow comparison to previous results in the subject.}
\changed{Recall that the condition number}
depends on the metric we choose on the $\mathscr F_{A_i}$'s. In this paper, the metric is specified by the choice of the coefficients $\rho_{i \mathbf a}$. 

\changed{We will need }
a distortion bound for these coefficients.
Let $A_i'~\secrev{\subseteq A_i}$ denote the set of vertices of $\conv{A_i}$. 
We set
\begin{equation}\label{eq-rho}
	\gls{kappa}
	\defeq \frac{
\sqrt{ \sum_{\mathbf a \in A_i} \rho_{i,\mathbf a}^2 }
}
{\min_{\mathbf a \in A_i'} \rho_{i,\mathbf a}}.
\end{equation}
\begin{remark}\label{rem-kappa-rho}
In the particular case $\rho_{i\mathbf a}=1$ for all $\mathbf a$, 
we have $\kappa_{\rho_i}=\sqrt{S_i}$. 
\end{remark}
\begin{remark}
	In the dense case of degree $d_i$
with the
coefficients of Example~\ref{ex:Weyl}, $\kappa_{\rho_i} = (n+1)^{\frac{d_i}{2}}$ so this distortion can be much larger than $\sqrt{S_i} = \binomial{d_i+n}{n}^{\frac{1}{2}}$.
\end{remark}

\secrev{We can now give a bound for the expected value of the squared,
renormalized condition number. Recall from our previous convention that
we write $\mu(\mathbf p)$ for $\mu(\mathbf p, \mathbf 0)$. According to the same convention, we will write $\gls{deltai} \defeq \delta_i(\mathbf 0)$
for the radius around $m_i(\mathbf 0)$. Let $V'$ denote the mixed area as in \eqref{eq-mixed-area}.}
\begin{theorem}\label{old-mainB} Let $A_1, \dots, A_n \in \mathbb R^n$
be such that the $\mathbb Z$-module $\Lambda$ generated by
$\bigcup_{i=1}^n A_i-A_i$ is a subset of $\mathbb Z^n$,
and suppose that the mixed volume $V(\conv{A_1}, \dots,$
	$ \conv{A_n})$
is non-zero. 
	Let $\secrev{\mathbf f \in }\mathscr F = 
\mathscr F_{A_1} \times \cdots \times \mathscr F_{A_n}$.
	Let $S_i=\# A_i = \dim_{\mathbb C}(\mathscr F_{A_i}) \ge 2$.
For each $i$, let $\Sigma_i
\changed{=\diag{\sigma_{i\mathbf a}}_{\mathbf a \in A_i}}$, \changed{$\sigma_{i\mathbf a}>0$.}
	Let $L = \max_i \frac{\| \mathbf f_i \Sigma_i^{-1} \|}{\sqrt{S_i}}$.
Then, for any fixed real number $H>0$,
\[
\begin{split}
\expected{\mathbf q \sim \secrev{\mathcal N}(\mathbf f, \Sigma^2; \mathscr F)}{
\sum_{\mathbf z \in Z_H(\mathbf q)} 
\secrev{\mu^2(\mathbf q \cdot R(z))}}
\hspace{-5em}&\hspace{5em} \le \\
	\le& \frac{2.5 e H\sqrt{n}}{\det(\Lambda)}
\left( 1 + 3L +\sqrt{\frac{\log(n)}{\min (S_i)}+2\log(3/2)}\right)^2
\\
		& \times	\frac{\max_{i}\left( S_i \kappa_{\rho_i}^2 \max_{\mathbf a \in A_i} (\sigma_{i, \mathbf a}^2)\right)}
{\min_{i,\mathbf a} (\sigma_{i, \mathbf a}^2)}
\left( \sum \delta_i^2 \right)
	\ \thirdrev{n!}\, V'
.
\end{split}
\]
\end{theorem}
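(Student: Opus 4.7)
The plan is to reduce Theorem~\ref{old-mainB} to Theorem~\ref{E-M2} by bounding $\mu^2(\mathbf q\cdot R(\mathbf z),\mathbf 0)$ pointwise by $\|M(\mathbf q,\mathbf z)^{-1}\|_F^2$ times a factor that splits into a deterministic constant and a Gaussian quantity with controlled expectation. Setting $\mathbf p = \mathbf q\cdot R(\mathbf z)$ for $\mathbf z \in Z(\mathbf q)$, a direct calculation gives $\mathbf p_i \cdot DV_{A_i}(\mathbf 0) = \mathbf q_i \cdot DV_{A_i}(\mathbf z)$. Combined with \eqref{M-zero} applied at $\mathbf 0$ (using $\mathbf m_i(\mathbf 0) = \mathbf 0$) and at the zero $\mathbf z$, this yields
\[
M(\mathbf p, \mathbf 0) \;=\; \diag{\|V_{A_i}(\mathbf z)\|/\|V_{A_i}(\mathbf 0)\|}\,M(\mathbf q, \mathbf z).
\]
Using Lemma~\ref{coarse-bound-DV} to bound the operator norm with target $\|\cdot\|_{\mathbf 0}$ by $\sqrt{\sum_i\delta_i^2}$ times the canonical one (dominated in turn by the Frobenius norm), together with $\|AD\|_F \le \|A\|_F\|D\|_{\mathrm{op}}$ for diagonal $D$,
\[
\mu^2(\mathbf p,\mathbf 0)\;\le\;\Bigl(\sum_i\delta_i^2\Bigr)\|M(\mathbf q,\mathbf z)^{-1}\|_F^2\,\max_i\frac{\|\mathbf p_i\|^2\|V_{A_i}(\mathbf 0)\|^2}{\|V_{A_i}(\mathbf z)\|^2}.
\]

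The last factor is controlled by the key observation that the linear functional $\mathbf a \mapsto \mathbf a\Re(\mathbf z)$ on $A_i$ is maximized at a vertex $\mathbf a^* \in A_i'$, giving $\|V_{A_i}(\mathbf z)\|^2 \ge (\min_{\mathbf a\in A_i'}\rho_{i\mathbf a})^2\,\max_\mathbf a e^{2\mathbf a\Re\mathbf z}$. Combined with the trivial upper bound $\|\mathbf p_i\|^2\le\|\mathbf q_i\|^2\max_\mathbf a e^{2\mathbf a\Re\mathbf z}$, the identity $\|V_{A_i}(\mathbf 0)\|^2 = \sum_{\mathbf a}\rho_{i\mathbf a}^2$, and $\|\mathbf q_i\|^2 \le (\max_\mathbf a\sigma_{i\mathbf a}^2)\|\mathbf q_i\Sigma_i^{-1}\|^2$, we obtain
\[
\sum_{\mathbf z\in Z_H(\mathbf q)}\!\!\mu^2\;\le\;\Bigl(\sum_i\delta_i^2\Bigr)\!\Bigl(\max_i S_i\kappa_{\rho_i}^2\max_\mathbf a\sigma_{i\mathbf a}^2\Bigr)Y(\mathbf q)\!\!\sum_{\mathbf z\in Z_H(\mathbf q)}\!\!\|M(\mathbf q,\mathbf z)^{-1}\|_F^2,
\]
where $Y(\mathbf q)\defeq\max_i\|\mathbf q_i\Sigma_i^{-1}\|^2/S_i$. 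To take expectations, I revisit the coarea integration underlying Theorem~\ref{E-M2} while keeping $Y(\mathbf q)$ as an extra factor inside the slice integral, which reduces the problem to bounding $\mathbb E[Y(\mathbf q)\mid\mathbf q\cdot V(\mathbf z)=0]$ uniformly in $\mathbf z\in Z_H$. Writing $\mathbf q_i\Sigma_i^{-1}=\mathbf f_i\Sigma_i^{-1}+\mathbf g_i\Sigma_i^{-1}$ with the second summand a standard complex Gaussian in $\mathbb C^{S_i}$, the triangle inequality $\|\mathbf q_i\Sigma_i^{-1}\|/\sqrt{S_i}\le L_i+\|\mathbf g_i\Sigma_i^{-1}\|/\sqrt{S_i}$ combined with chi-type concentration and a union bound over the $n$ values of $i$ produces the stated factor $(1+3L+\sqrt{\log n/\min S_i+2\log(3/2)})^2$; the specific constants arise from optimizing a Chernoff-type moment generating function integration.

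The central obstacle is the lack of independence between $Y(\mathbf q)$ and $\sum_{\mathbf z}\|M(\mathbf q,\mathbf z)^{-1}\|_F^2$, as both depend on the same Gaussian sample. The coarea route circumvents this by yielding a slice-wise bound that absorbs $Y$ into a conditional Gaussian expectation, and the conditioning on $\mathbf q\cdot V(\mathbf z)=0$ imposes only $n$ complex linear constraints on the $\sum S_i$-dimensional Gaussian, a rank-$n$ correction of negligible effect on $Y$. Tracking the interplay between the $1/\min\sigma^2$ factor supplied by Theorem~\ref{E-M2} and the $\max\sigma^2$ factor introduced in the second paragraph, while retaining the precise constants $2.5\,e$, $3L$ and $2\log(3/2)$, is the main technical task.
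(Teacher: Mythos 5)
Your first two paragraphs essentially reprove Lemma~\ref{lemma-condition}(b) from scratch and correctly recover the pointwise bound $\mu^2(\mathbf q\cdot R(\mathbf z))\le(\sum_i\delta_i^2)\,\max_i(S_i\kappa_{\rho_i}^2\max_{\mathbf a}\sigma_{i\mathbf a}^2)\,Y(\mathbf q)\,\|M(\mathbf q,\mathbf z)^{-1}\|_F^2$; that part is fine and matches the paper's Lemma~\ref{lemma-condition} combined with Lemma~\ref{lem-V-coarse}.

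The gap is in taking the expectation. You correctly flag that $Y(\mathbf q)$ and $\sum_{\mathbf z}\|M(\mathbf q,\mathbf z)^{-1}\|_F^2$ are correlated, but the claim that ``the coarea route circumvents this by yielding a slice-wise bound that absorbs $Y$ into a conditional Gaussian expectation'' does not hold up: once you are inside the fiber $\mathscr F_{\mathbf z}$, the factors $Y(\mathbf q)$ and $|\det M|^2\|M^{-1}\|_F^2=\sum_{k,l}|S(M,k,\mathrm e_l)|^2$ remain jointly random in $\mathbf g$ and still correlated, so you cannot replace $Y$ by its slice-conditional mean without some further inequality. Worse, the Cramer-rule trick that makes Theorem~\ref{E-M2} compute (namely, that $S(M,k,\mathrm e_l)$ is independent of $\mathbf q_k$, so $\mathbf q_k$ integrates to a pure volume factor via Lemma~\ref{det-interlacing}) is destroyed by inserting $Y(\mathbf q)=\max_i\|\mathbf q_i\Sigma_i^{-1}\|^2/S_i$, since $Y$ depends on $\mathbf q_k$. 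Nothing in your proposal (chi-type concentration, a union bound, or a ``rank-$n$ correction'') produces the needed decoupling. The paper avoids this problem entirely by a different mechanism: Lemma~\ref{lem-conditional} exploits the multi-homogeneous scaling invariance of $\varphi(\mathbf q)=\sum_{\mathbf z\in Z_H(\mathbf q)}\mu^2(\mathbf q\cdot R(\mathbf z))$ in each block $\mathbf q_i$, allowing the unconditioned expectation $E_{\mathbf f,\Sigma^2}$ to be bounded (up to a factor $e$) by a supremum of conditional expectations $E_{\mathbf h,\Sigma^2,K}$ truncated to $\|(\mathbf q_i-\mathbf h_i)\Sigma_i^{-1}\|\le K\sqrt{S_i}$, on which $Y$ is bounded deterministically by $(K+L)^2$; only then is Theorem~\ref{E-M2} applied (Propositions~\ref{prop-mu2-1} and~\ref{prop-mu2-2}). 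That scaling-invariant rescaling trick is the key step your plan is missing, and it is precisely what produces the $(1+3L+t)^2$ factor (via $K=1+2L+t$ and $K+L=1+3L+t$), the factor $e$, and the $1.25=81/65$ that combine into the $2.5e$ in the stated constant.
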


\begin{remark} The choice of the coefficients $\rho_{i \mathbf a}$ allows
for more flexibility to the theory. Besides the trivial choice 
$\rho_{i \mathbf a} = 1$ and the Weyl metric $\rho_{i \mathbf a} =
\sqrt{\binomial{d}{\mathbf a}}$, another interesting possibility is
$\rho_{i \mathbf a} = |f_{i \mathbf a}|$ for a fixed
system $\mathbf f$. In this case, we recover a condition number
$\mu(\mathbf f, \mathbf z)$ (resp. a renormalized condition number
$\mu(\mathbf f \cdot \mathbf R(\mathbf z))$ with respect to
coefficientwise relative error. 
Numerical evidence supporting this choice was presented by
\ocite{MRMomentum}, in the context of the non-renormalized condition
number.
\end{remark}

\changed{If we assume
a centered,
uniform Gaussian distribution with coefficients $\rho_{i \mathbf a}=1$
instead, we obtain a
simplified statement of Theorem~\ref{old-mainB}.}
The
\secrev{expectation} grows mildly in terms of $n$ and $H$. However, it grows as
the square of the generalized degrees. 

\begin{corollary}\label{cor-mainB}
	Under the hypotheses of Theorem \ref{old-mainB} with $\rho_{i \mathbf a}=\rho_i$, \secrev{$\Sigma_i=I$ and $\| \mathbf f_i\|=\sqrt{S_i}$},
\[
\begin{split}
\expected{\mathbf g \sim \secrev{\mathcal N}(\mathbf 0, I; \mathscr F)}{
	\sum_{\mathbf z \in Z_H(\mathbf g)} \secrev{\mu^2(\mathbf g \cdot R(z))}}
\le
	\frac{2.5 e H\sqrt{n}}{\det(\Lambda)}
	\left( \thirdrev{4} +\sqrt{\frac{\log(n)}{\min (S_i)}+2\log(3/2)}\right)^2
\\
	\times \max_i (S_i^2)	
\left( \sum \delta_i^2 \right)
	\ \thirdrev{n!} V'
\end{split}
\]
\end{corollary}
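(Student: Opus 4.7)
The plan is to derive Corollary~\ref{cor-mainB} by direct substitution into Theorem~\ref{old-mainB}; the three simplifying hypotheses pin down every parameter on the right-hand side, so the work is essentially verification rather than genuine computation.

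First I would evaluate the imbalance invariant from definition~\eqref{eq-rho}. When $\rho_{i\mathbf a}$ does not depend on $\mathbf a$, the numerator equals $\sqrt{\sum_{\mathbf a \in A_i}\rho_i^2}=\sqrt{S_i}\,\rho_i$ and the denominator is just $\rho_i$, so $\kappa_{\rho_i}=\sqrt{S_i}$, exactly as in Remark~\ref{rem-kappa-rho}. Consequently $S_i\kappa_{\rho_i}^{2}=S_i^{2}$, which yields the $\max_i(S_i^{2})$ factor that appears in the corollary.

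Next I would simplify the variance and norm terms. With $\Sigma_i=I$, every $\sigma_{i\mathbf a}$ equals $1$, so both $\max_{\mathbf a}\sigma_{i\mathbf a}^{2}$ and $\min_{i,\mathbf a}\sigma_{i\mathbf a}^{2}$ are $1$ and the corresponding ratio drops out. The constant $L$ in Theorem~\ref{old-mainB} becomes
\[
L=\max_i \frac{\|\mathbf f_i \Sigma_i^{-1}\|}{\sqrt{S_i}} = \max_i \frac{\|\mathbf f_i\|}{\sqrt{S_i}} = 1,
\]
so $1+3L=4$, which is precisely the numeric constant inside the squared parenthesis of the corollary.

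Assembling the pieces, the prefactor $\frac{2.5\,eH\sqrt{n}}{\det(\Lambda)}$, the squared bracket with $4$ in place of $1+3L$, the factor $\max_i(S_i^{2})$, the geometric sum $\sum\delta_i^{2}$, and the term $n!\,V'$ all appear exactly as claimed. I do not anticipate any real obstacle: the corollary is a cosmetic specialization of Theorem~\ref{old-mainB} chosen so that $L$ and $\kappa_{\rho_i}$ take their cleanest non-trivial values, and the genuinely technical arguments sit inside the parent theorem rather than in this reduction.
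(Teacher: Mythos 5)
Your proposal is correct and follows the same route the paper (implicitly) takes: Corollary~\ref{cor-mainB} is a direct substitution into Theorem~\ref{old-mainB} with $\kappa_{\rho_i}=\sqrt{S_i}$ (so $S_i\kappa_{\rho_i}^2=S_i^2$), $\sigma_{i\mathbf a}\equiv1$, and $L=\max_i\|\mathbf f_i\|/\sqrt{S_i}=1$ giving $1+3L=4$. The only thing worth flagging is a quirk in the corollary's own statement — the expectation subscript reads $\mathbf g\sim\mathcal N(\mathbf 0,I)$ while the hypothesis $\|\mathbf f_i\|=\sqrt{S_i}$ and the constant $4$ both presuppose a non-centered Gaussian $\mathbf q\sim\mathcal N(\mathbf f,I)$ — but your substitution is exactly what reproduces the stated right-hand side, so no gap in your reasoning.
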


\begin{example}[dense case]\label{ex-dense} 
Assume that $n, d \in \mathbb N$ and $A_i = \{ \mathbf a \in \mathbb N_0^n: \sum a_i \le d\}$
	In that case $V=\frac{d^n}{n!}$, $V'=\frac{d^{n-1}}{\changed{(n-1)!}}(n+\sqrt{n})$
	and $S_i=\binomial{n+d}{n}$.
	Choosing the center
	of gravity as the origin, $\delta_i = d \left(1-\frac{n+1}{(n+1)^2}\right) \le d$.
	\secrev{From Corollary \ref{cor-mainB}, we} obtain a bound of
\[
	O\left(H n^{\thirdrev{5}/2} d^{n+1} \binomial{d+n}{n}^2\right).
\]
\end{example}

The only known results that are vaguely similar to Theorem~\ref{old-mainB} are Theorems~1 and 5 by \ocite{MRHigh}. 
The mixed case \fourthrev{there} (Theorem 5) depends on a quantity called the {\em mixed dilation}. This is equal to $1$
in the unmixed case, but the mixed dilation is hard to bound in general.
The definition of the condition number \secrev{there} is different but coincides up to
scaling, in the unmixed case, with the definition here:
what appears as $\mu$  
is actually $\sqrt{n} \mu$ in this paper.
Volumes are also differently scaled.
There is no renormalization, so there is no need for $H$. We obtain an imperfect comparison to
Theorem 1 of \ocite{MRHigh}, after rescaling:

\begin{theorem} Let $A_1 = \dots = A_n$ and $\rho_{i \mathbf a} = 1$.
Then,
\[
	\probability{\mathbf g \sim \secrev{\mathcal N}(\mathbf 0, I; \mathscr F)}{\max_{\mathbf z \in Z(\mathbf g)} \secrev{\mu(\mathbf g, \mathbf z)} > \epsilon^{-1}}
\le
	n(n+1) (S_i-1)(S_i-2)
	\ n! V \epsilon^{4}
.
\]
\end{theorem}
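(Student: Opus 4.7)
The strategy is Markov's inequality at the fourth moment, combined with the coarea/Kac--Rice identity that underlies the proof of Theorem~\ref{E-M2}.

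First, since $\max_{\mathbf z \in Z(\mathbf g)} \mu(\mathbf g,\mathbf z) > \epsilon^{-1}$ implies $\sum_{\mathbf z \in Z(\mathbf g)} \mu(\mathbf g,\mathbf z)^4 > \epsilon^{-4}$, Markov's inequality gives
\[
\probability{\mathbf g \sim \secrev{\mathcal N}(\mathbf 0, I; \mathscr F)}{\max_{\mathbf z \in Z(\mathbf g)} \mu(\mathbf g,\mathbf z) > \epsilon^{-1}} \le \epsilon^4\, \mathbb E_{\mathbf g}\Bigl[\sum_{\mathbf z \in Z(\mathbf g)} \mu(\mathbf g,\mathbf z)^4\Bigr].
\]
It therefore suffices to bound the right-hand expectation by $n(n+1)(S_i-1)(S_i-2)\,n!\,V$. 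This is the right target, since the number of isolated roots of a generic $\mathbf g$ is $n!V/\det\Lambda$, so on average we are bounding $\mathbb E[\mu^4]$ per root by $O(n^2 S_i^2)$.

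Next, reusing the double-fibration identity behind Theorem~\ref{E-M2}, one writes, for any measurable $F$ invariant under independent scalings of the $\mathbf g_i$,
\[
\mathbb E_{\mathbf g}\Bigl[\sum_{\mathbf z \in Z(\mathbf g)} F(\mathbf g,\mathbf z)\Bigr] = \int_{\mathscr M} \mathbb E_{\mathbf g}\Bigl[F(\mathbf g,\mathbf z)\, \lvert\det M(\mathbf g,\mathbf z)\rvert^2 \,\Big|\, \mathbf g \cdot \mathbf V(\mathbf z) = \mathbf 0\Bigr]\, p(\mathbf z)\, \dd\mathrm{vol}(\mathbf z),
\]
where $p(\mathbf z)$ is the density of $\mathbf g \cdot \mathbf V(\mathbf z)$ at $\mathbf 0$. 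Under the unmixed hypothesis $A_1=\cdots=A_n$ with $\rho_{i\mathbf a}=1$, the renormalization action $\mathbf g \mapsto \mathbf g \cdot R(\mathbf z)$ (for pure imaginary $\mathbf z$) is an isometry of the isotropic Gaussian law on $\mathscr F$ by Theorem~\ref{th-renormalization}(b), and the appropriate change of variable identifies the conditional integrand at $\mathbf z$ with its value at $\mathbf 0$. The $\mathbf z$-integration therefore collapses, contributing the combinatorial factor $n!\,V$ after accounting for the Jacobian $\det\Lambda$ of coordinates on $\mathscr M$.

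Finally, one must evaluate the conditional fourth moment of $\mu$ at $\mathbf z = \mathbf 0$. Conditioning on $\mathbf g_i \cdot V_A(\mathbf 0)=0$ constrains each $\mathbf g_i$ to the $(S_i-1)$-dimensional hyperplane $V_A(\mathbf 0)^{\perp}\subset \mathscr F_A$, where it remains isotropic Gaussian; after an orthonormal change of basis on this hyperplane, the rows of $M(\mathbf g,\mathbf 0)$ become independent complex Gaussian vectors in $\mathbb C^n$, so $M$ is (up to a known deterministic rescaling by $1/\|V_A(\mathbf 0)\|$) an $n\times n$ complex Ginibre matrix, while the scalar factors $\|\mathbf g_i\|^2$ in the definition of $\mu$ are independent $\chi^2$ variables with $2(S_i-1)$ degrees of freedom. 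A classical inverse-Wishart computation produces $\mathbb E[\|M^{-1}\|_F^4]$ of order $n(n+1)$, and the second moment of $\|\mathbf g_i\|^2$ contributes the factor $(S_i-1)(S_i-2)$. The main obstacle is precisely this moment calculation: the projection $P_{V_A(\mathbf 0)^{\perp}}$ and the rescaling by $\|V_A(\mathbf 0)\|$ inside the definition of $M$ must be propagated through the two moment integrals with the correct normalization, and the product of the resulting constants must match $n(n+1)(S_i-1)(S_i-2)$ exactly rather than up to a dimensional factor.
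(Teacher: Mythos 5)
The theorem you are asked to prove is not established in the paper: the paper states it as a restatement of Theorem~1 of Malajovich--Rojas \cite{MRHigh}, rescaled to the present normalization conventions, and explicitly labels the translation ``an imperfect comparison''. There is therefore no internal proof to compare against; what follows is an assessment of your plan on its own terms.

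Your argument collapses at the first step. You write
$\probability{}{\max_{\mathbf z}\mu(\mathbf g,\mathbf z)>\epsilon^{-1}}\le\epsilon^4\,\bigE{}\bigl[\sum_{\mathbf z\in Z(\mathbf g)}\mu(\mathbf g,\mathbf z)^4\bigr]$, which is a valid Markov inequality only if the fourth moment on the right is finite. It is not. The coarea identity you invoke pushes the Gaussian measure onto the solution variety through the Jacobian weight $|\det M(\mathbf g,\mathbf z)|^2$, and after the change of basis you describe, the inner expectation becomes (up to the $\mathbf z$-integral) $\bigE{}\bigl[|\det \tilde M|^2\,\mu^4\bigr]$ with $\tilde M$ an $n\times n$ complex Ginibre matrix. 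Near $\sigma_{\min}(\tilde M)\to 0$ one has $|\det\tilde M|^2\sim\sigma_{\min}^2$ and $\mu^4\sim\sigma_{\min}^{-4}$, so the integrand behaves like $\sigma_{\min}^{-2}$; since the density of $\sigma_{\min}$ of a complex Ginibre matrix vanishes linearly at the origin, this expectation is logarithmically divergent. One can also read the divergence straight out of the paper's own text: the computation immediately after the theorem shows that the $\epsilon^4$ tail gives $\bigE{}[\max_{\mathbf z}\mu^2]\le n(n+1)(S_i-1)(S_i-2)\,n!\,V\,\int_1^\infty t^{-2}\,\dd t<\infty$; the analogous integral for the fourth power is $\int_1^\infty t^{-1}\,\dd t=\infty$. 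So the $\epsilon^4$ tail is sharp and no fourth moment can recover it. The tail exponent $\epsilon^4$ must be obtained directly, e.g.\ by bounding $\probability{}{\max\mu>\epsilon^{-1}}\le\bigE{}\bigl[\#\{\mathbf z\in Z(\mathbf g):\mu(\mathbf g,\mathbf z)>\epsilon^{-1}\}\bigr]$ (Markov on the root count, not on a power of $\mu$), applying the coarea formula to this count, and estimating the Jacobian-weighted conditional \emph{tail} at a fixed $\mathbf z$, which decays like $\epsilon^4$ precisely because of the same $|\det M|^2$ weight that ruins the fourth moment.

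Two secondary remarks. First, the claimed ``classical inverse-Wishart computation'' giving $\bigE{}[\|M^{-1}\|_F^4]\sim n(n+1)$ is an infinite integral; even $\bigE{}[\|M^{-1}\|_F^2]$ diverges for an $n\times n$ Gaussian matrix, which is exactly why the paper's Theorem~\ref{E-M2} expands $|\det M|^2\|M^{-1}\|_F^2$ into cofactors $|S(M,k,\mathrm e_l)|^2$ before taking expectations rather than trying to compute these two moments separately. Second, the factor $(S_i-1)(S_i-2)$ does not come from $\bigE{}[\|\mathbf g_i\|^4]$ as you suggest: conditionally $\|\mathbf g_i\|^2$ is Gamma$(S_i-1,1)$, whose second moment is $(S_i-1)S_i$, not $(S_i-1)(S_i-2)$; the constant in the quoted result arises from the tail estimate of \cite{MRHigh} and from their distinct choice of condition number and volume normalization, which is exactly what the paper means by ``imperfect comparison''.
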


Recall that $\mu(\mathbf g \secrev{,\mathbf z}) \ge 1$. Integrating with respect to $t=\epsilon^{-2}$, we recover
\begin{eqnarray*}
\expected{\mathbf g \sim \secrev{\mathcal N}(\mathbf 0, I; \mathscr F)}{
	\max_{\mathbf z \in Z(\mathbf g)} \mu^2(\mathbf g \secrev{, \mathbf z})}
	&\le&
n(n+1) (S_i-1)(S_i-2)
	\ n! V \int_1^{\infty} t^{-2} \ \dd t
\\
	&=&
n(n+1) (S_i-1)(S_i-2)
\ n! V 
\end{eqnarray*}
and none of the bounds implies the other.

\subsection{On infinity}\label{infinity}

Theorem~\ref{old-mainB} suggests that roots with large infinity
norm are a hindrance to renormalized homotopy. There are two obvious
remedies. One of them is to change coordinates `near infinity' and use
another sort of renormalization.  
The other remedy is to show that roots with a large infinity
norm have low probability. In this paper we pursue the
latter choice.

Recall that the toric variety was defined as the Zariski closure
\[
\mathcal V = 
\overline{\{[\mathbf V(\mathbf x)]:\mathbf x \in \secrev{\mathscr M}\}}
=
\overline{\{[(V_1(\mathbf x)], \dots, [V_n(\mathbf x)]):\mathbf x \in \secrev{\mathscr M}\}}
\subseteq \mathbb P( \mathscr F_{A_1}) \times \cdots \times \mathbb P( \mathscr F_{A_n}).
\]
Points of the form $[\mathbf V(\mathbf x)]$, $\mathbf x \in \secrev{\mathscr M}$ are
deemed {\em finite}, all the other points are said to be at {\em toric infinity}.
\begin{example}[Linear case]
Assume that $A_i = \Delta_n = \{ \mathbf 0, \mathrm e_1, \dots, \mathrm e_n\}$ 
for $i=1, \dots, n$. 
Then for any nonempty proper subset $B$ of $A_i$ we set
$y_j=0$ for $j \in B$, $y_j=-1$ for $j \not \in B$. For any $1 \le j \le n$,
choose $-\pi < \omega_j \le \pi$. Define
\[
\mathbf x(t) = \begin{pmatrix} 
	(y_1 - y_0)t + \omega_1 \sqrt{-1}\\
	(y_2 - y_0)t + \omega_2 \sqrt{-1}\\
\vdots\\
	(y_n - y_0)t + \omega_n \sqrt{-1}
\end{pmatrix}
\]
	so the point $[\mathbf w]=\lim_{t \rightarrow +\infty} [\mathbf V(\mathbf x(t))]$
	is a point at toric infinity.
The reader can easily check that each choice
of $B$ defines a different set of points
	at toric infinity in \changed{$\mathcal V$}, 
with $w_{\mathbf a} \ne 0$ if and only if $\mathbf a \in B$. 
\end{example}

In order to clarify what does `toric infinity' looks like in general, 
we may introduce the \changed{Minkowski support function}
\secrev{of $\conv{A_i}$,}
\begin{equation}\label{def-lambdai}
\defun{\gls{lambdai}}{\mathbb R^n}{\mathbb R}{\boldsymbol\xi}{
\lambda_i(\boldsymbol \xi) \defeq \max_{\mathbf a \in A_i} \mathbf a \boldsymbol\xi.}
\end{equation}
Always, $\mathbf a \boldsymbol\xi - \lambda_i(\boldsymbol \xi) \le 0, \ \forall \mathbf a \in A_i$. The convex closure $\conv{A_i}$ is the intersection of all half-spaces 
$\mathbf x \boldsymbol\xi - \lambda_i(\boldsymbol \xi) \le 0$
for $\boldsymbol \xi \in S^{n-1}$. Its `supporting' facet  in the direction
$\boldsymbol \xi$ is $\conv{A_i^{\boldsymbol \xi}}$ for the subset
\[
	\gls{Aixi} 
	= \{ \mathbf a \in A_i: \lambda_i(\mathbf \xi) - \mathbf a \xi = 0\}
.
\]
We also define \changed{a `second largest value' function or {\em facet gap}}
\[
	\defun{\eta_i}{\mathbb R^n}{\mathbb R}{\boldsymbol \xi}{\eta_i(\boldsymbol \xi) \defeq \min_{\mathbf a \in A_i \setminus A_i^{\boldsymbol \xi}}
\ \lambda_i(\boldsymbol \xi)-\mathbf a \boldsymbol\xi}
\]
Each function $\eta_i$ is \secrev{upper} semi-continuous, with $\inf_{\boldsymbol \xi \in S^{n-1}} \eta_i(\boldsymbol \xi)=0$.
Yet, the $\eta_i$ will provide us with an important
invariant to assess the `quality' of a tuple of supports
$(A_1, \dots, A_n)$.

\changed{For explanatory purposes, we consider first the unmixed situation
$A=A_1=\dots = A_n$, $\eta=\eta_i$. Suppose that $\conv{A_i}$ is not
contained in \secrev{a} hyperplane. Then, let $F_0$ be the set of outer \secrev{unit} normal
vectors to the $n-1$-facets of $\conv{A}$. The number 
$\eta=\min_{\boldsymbol \xi \in F_0} \eta(\boldsymbol \xi)$ is 
the smallest distance from the tangent space to
an ($n-1$)-dimensional face 
to some $\mathbf a \in A$
not in that tangent space. In particular $\eta > 0$.}
\changed{\begin{remark}\label{eta-bounds-by-Cramer}
An anonymous referee suggested a lower bound on $\eta$ based
	on the {\em unary facet complexity} of a polytope~\cite{Straszak-Vishnoi}. This is the integer
\[
	\mathrm{fc}(\mathcal A) = \max_{\boldsymbol \xi \in F_0^{\mathbb Z}} \max_i |\xi_i|
\]
where $F_0^{\mathbb Z}$ is the set of minimal integer outer normal vectors,
that is of vectors of the form $q \boldsymbol \xi \in \mathbb Z^n$ with
	$\boldsymbol \xi \in F_0$ and $q>0$ minimal. As in \cite{B-interior}*{Prop.2.7}, 
\begin{equation}\label{fclower}
	\eta \ge \frac{1}{\sqrt{n}\, \mathrm{fc}(\mathcal A)}
.
	\end{equation}
Following~\ocite{B-interior}*{Lemma 4.2 or Theorem 4.5}, 
	one can use Cramer's rule to obtain a \secrev{lower} bound on $\mathrm{fc}(\mathcal A)$. Using Hadamard's inequality
	to estimate the determinant, one obtains a lower bound
\[
	\eta \ge \frac{1}{\sqrt{n}\max_{\mathbf a \in A} (1+\|\mathbf a\|^2)^{n/2}}
.
\]
\end{remark}}
Before \changed{generalizing this invariant to a general tuple
$A_1$, \dots, $A_n$}, we 
need to introduce the associated {\em fan}, which is the
dual structure to this tuple.
\changed{More details and references can be found in \ocite{GKZ}.}
\changed{
\begin{definition} 
	\label{defan}
	A {\em fan} is a finite collection $\mathscr F$ of
convex polyhedral cones, such that
\begin{enumerate}
	\item Every face of every cone in $\mathscr F$ belongs to $\mathscr F$
	\item The intersection of two cones from $\mathscr F$ is a face
	of both cones.
\end{enumerate}
A fan is said to be complete if the union of its cones is $\mathbb R^n$.
\end{definition}
For instance, if $A \subseteq \mathbb Z^n$ is finite and $\conv{A}$
has non-empty interior, then for every subset $B \subseteq A$ one can define 
the 
supporting cone $C(B) = \{ 
\mathbf \xi \in \mathbb R^n:
B=A^{\boldsymbol \xi}\}$ \secrev{and its topological closure $\bar C(B)$ in
$\mathbb R^n$}. 
\secrev{It may happen that $C(B)=\{\mathbf 0\}$, for instance if $B=A$ and
$\conv{A}$ has non-zero volume.}
The set of the closed cones of the form $\bar C(B)$,
where $B \subseteq A$, is known as the normal fan of $\conv{A}$. 
\secrev{It is complete if and only if $\conv{A}$ has non-zero volume.}
To construct
the fan associated to a tuple of polytopes, we define:
}
\begin{definition}
For any tuple of non-empty subsets $B_1 \subseteq A_1, \dots , B_n
\subseteq A_n$, the {\em open cone above $(B_1, \dots, B_n)$} is
\secrev{defined as}
\[
	\gls{cone} \defeq \{ 
	\boldsymbol \xi \in \mathbb R^n:
	B_i = A_i^{\boldsymbol \xi} \}.
\]
	The {\em closed cone} $\bar C(B_1, \dots, B_n)$ is the topological closure of
$C(B_1, \dots, B_n)$ in $\mathbb R^n$, namely
\[
\bar C(B_1, \dots, B_n) = \{ 
	\boldsymbol \xi \in \mathbb R^n:
	B_i \supseteq A_i^{\boldsymbol \xi} \} \secrev{.}
\]
\end{definition}
For $j=0, \dots, n-1$, let $\gls{Fanj}$ be the set of non-empty $j+1$-dimensional
closed cones of the form $\bar C(B_1, \dots, B_n)$ for $\emptyset \ne B_k \subseteq A_k$. \secrev{Also, define $\mathscr F_{-1}$ as the set containing 
uniquely the zero-dimensional cone $\{ \mathbf 0 \}$.}
\begin{definition}
	\label{defan2}
The {\em fan} associated to the tuple $(A_1, \dots, A_n)$ is the tuple
	$(\mathfrak F_{n-1},$ $\dots,\mathfrak F_{\secrev{-1}})$. 
	\thirdrev{Elements
	of $\mathfrak F_0$ will be called {\em rays}, and each ray
	in $\mathfrak F_0$ can be represented as
	$\{ t \mathbf \xi: t \ge 0\}$ with $\mathbf \xi \in S^{n-1}$.
	We define therefore 
	\begin{equation}\label{FanR}
	\gls{FanR} \defeq \{ \mathbf \xi \in S^{n-1} \cap c: c \in \mathfrak F_0\}.
	\end{equation}
	}
\end{definition}

\begin{figure}
	\centerline{\hspace{-5em}
	\resizebox{\textwidth}{!}{ \input{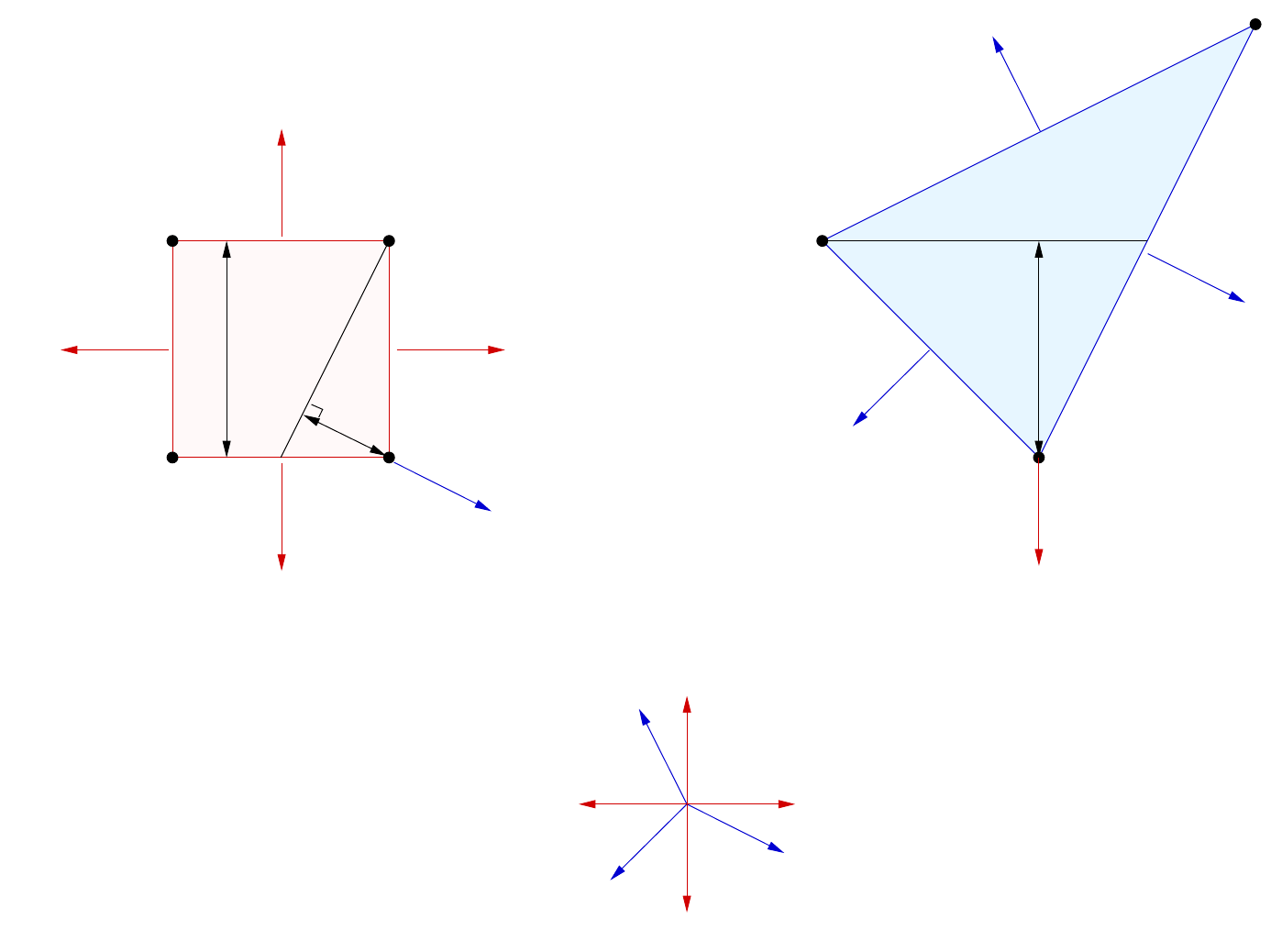_t}} }
\vspace{1ex}
\centerline{\begin{tabular}{c|ccccccc|c}
	$\mathbf \xi \in \thirdrev{\mathfrak R}$ & $\mathrm e_1$ &
$\mathrm e_2$ &
$-\mathrm e_1$ &
$-\mathrm e_2$ &
	$\mathbf a$ & $\mathbf b$ & $\mathbf c$ & Min.value \\ \hline
	$\eta_1(\mathbf \xi)$ & 
	$2$ & $2$ & $2$ & $2$ & $2\sqrt{2}$ & 
	$\frac{2 \sqrt{5}}{5}$& 	$\frac{2 \sqrt{5}}{5}$ &$\frac{2 \sqrt{5}}{5}$\\
	$\eta_2(\mathbf \xi)$ &
	$2$ & $2$ & $2$ & $2$ & $3 \sqrt{2}$ & 
	$\frac{6 \sqrt{5}}{5}$& 	$\frac{6 \sqrt{5}}{5}$ &$2$\\
\end{tabular}}
\caption{
\label{fig-eta}
	The computation of $\eta$ for the polynomial system
	$F_1(X,Y) = 1 + X^2 + Y^2 + X^2 Y^2$, $F_2(X,Y)=X^2 + Y^2 + X^4 Y^4$.
	Top line, the supports and the normal vectors. \changed{Second line the fan 
	$\mathfrak F_1=\{C_1, \dots, C_7\}$ and $\thirdrev{\mathfrak R} = \{ \pm \mathrm e_1, \pm \mathrm e_2,
	\mathbf a, \mathbf b, \mathbf c\}$. 
	Bottom line, the value of $\eta_i$ at each element of \thirdrev{$\mathfrak R$}.
	}}
\end{figure}
\secrev{If the mixed volume of $\conv{A_1}$, \dots, $\conv{A_n}$ does not
vanish,
$\mathfrak F_0$ is the set of rays $C(B_1, \dots, B_n)$
supported by
maximal subsets $B_1 \subseteq A_1$, \dots, $B_n \subseteq A_n$. In the unmixed case $A=A_1=\dots=A_n$, those are spanned by outer unit normal vectors to 
the $n-1$-dimensional faces of $\conv{A}$.} 

The quality or {\em condition} of the tuple of supports can now be measured
in terms of the facet gaps
\begin{equation}\label{defeta}
	\glsdisp{etas}{\eta_i} = \min_{\boldsymbol \xi \in \thirdrev{\mathfrak R}} \eta_i(\boldsymbol \xi)
\hspace{1em}
\text{and}
\hspace{1em}
{\eta} = \min_i \eta_i
.
\end{equation}
(See figure~\ref{fig-eta}).
\changed{Geometrically, the facet gap $\eta$ is the minimal distance over all
\secrev{rays $\boldsymbol \xi \in \mathfrak R$ from
some $\mathbf a' \in A_i \setminus A_i^{\boldsymbol \xi}$ to the tangent hyperplane $\mathbf a + \boldsymbol \xi^{\perp}$, $\mathbf a \in A_i^{\boldsymbol \xi}$.}
\begin{remark}\label{remark-on-eta}
	\secrev{The facet gap $\eta_i$ cannot be larger than the
	diameter of $\conv{A_i}$.} 
	From Lemma~\ref{coarse-bound-DV}, we immediately recover
	the estimate
\begin{equation} \label{bound-eta}
	\eta \le \eta_i \le \diam{\conv{A_i}} \le 2\delta_i 
.
\end{equation}
\end{remark}
\changed{
	It is possible to \secrev{extend} the bounds in 
Remark~\ref{eta-bounds-by-Cramer} to the mixed setting. 
\begin{proposition} 
\[
	\eta \ge \frac{1}{\sqrt{n} \left(1+ \frac{\max(\diam{A_i})^2}{2} + \frac{n}{2} \right)^{\frac{2n-1}{2}} }
.
\]
\end{proposition}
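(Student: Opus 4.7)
The plan is to produce, for each $\boldsymbol\xi \in \mathfrak{R}$, a positive integer $q$ with $q\boldsymbol\xi \in \mathbb Z^n$, and then use integrality to deduce $\eta_i(\boldsymbol\xi) \ge 1/q$. Since $\eta_i$ depends on each $A_i$ only through differences, I would first shift each $A_i$ by an integer vector so as to assume $A_i \subseteq \mathbb Z^n$, which is possible because $A_i - A_i \subseteq \mathbb Z^n$.

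Fix $\boldsymbol\xi \in \mathfrak R$, so that $\{t\boldsymbol\xi : t \ge 0\}$ is a ray $\bar C(B_1, \dots, B_n)$ in the fan, with $B_i = A_i^{\boldsymbol\xi}$. The ray is cut out in $\mathbb R^n$ by the linear equations $(\mathbf a - \mathbf a')\boldsymbol\xi = 0$ for $\mathbf a, \mathbf a' \in B_i$, $i = 1, \dots, n$. Because the cone is one-dimensional, these equations have rank exactly $n-1$, so one can extract $n-1$ linearly independent integer vectors $\mathbf v_1, \dots, \mathbf v_{n-1} \in \bigcup_i (B_i - B_i)$, each with norm $\|\mathbf v_k\| \le \diam{A_{i_k}} \le D \defeq \max_i \diam{A_i}$. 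This combinatorial extraction, which ties the bound to the individual diameters $\diam{A_i}$ rather than to arbitrary integer vectors of possibly much larger norm, is the subtle ingredient of the argument.

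Let $M$ be the $(n-1) \times n$ integer matrix with rows $\mathbf v_1, \dots, \mathbf v_{n-1}$; its kernel is one-dimensional and contains $\boldsymbol\xi$. Cramer's rule produces the integer vector $\mathbf w \in \mathbb Z^n$ with $w_j = (-1)^j \det M^{(j)}$ (the $j$-th column removed), and $\mathbf w = \pm q\boldsymbol\xi$ for $q = \|\mathbf w\|$. Hadamard's inequality applied to the Gram determinant then yields $\|\mathbf w\|^2 = \det(MM^T) \le \prod_k \|\mathbf v_k\|^2 \le D^{2(n-1)}$, so $q \le D^{n-1}$. For any $\mathbf a \in A_i \setminus A_i^{\boldsymbol\xi}$ and $\mathbf a' \in A_i^{\boldsymbol\xi}$, the integer $\pm(\mathbf a' - \mathbf a) \cdot \mathbf w = q(\lambda_i(\boldsymbol\xi) - \mathbf a\boldsymbol\xi)$ is strictly positive, hence at least $1$; therefore $\eta_i(\boldsymbol\xi) \ge 1/q \ge D^{-(n-1)}$, and taking minima gives $\eta \ge D^{-(n-1)}$.

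It remains to weaken this tighter bound to the form stated in the proposition. Since each $A_i$ contains two points differing by a nonzero integer vector, $D \ge 1$. From $(D-1)^2 \ge 0$ one gets $1 + D^2/2 \ge D$, hence $1 + D^2/2 + n/2 \ge D$; raising to the power $(2n-1)/2$ and using $D \ge 1$ together with $(2n-1)/2 \ge n-1$ gives $(1 + D^2/2 + n/2)^{(2n-1)/2} \ge D^{n-1}$, and multiplying by $\sqrt n \ge 1$ completes the argument.
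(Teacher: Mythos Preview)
Your proof is correct and takes a genuinely different, more elementary route than the paper's. The paper builds, for each ray $\boldsymbol\xi$, the Cayley matrix with rows $(-\mathrm e_i,\mathbf a)$ for $\mathbf a\in A_i^{\boldsymbol\xi}$; this lives in $\mathbb R^{2n}$, has rank $2n-1$, and its kernel is spanned by $(\boldsymbol\lambda,\boldsymbol\xi)$. Cramer's rule together with Hadamard's inequality on a $(2n-1)\times(2n-1)$ minor, with row norms $\sqrt{1+\|\mathbf a\|^2}$, then bounds an integer representative of $\boldsymbol\xi$ --- hence the exponent $(2n-1)/2$ and the need to first center each $A_i$ near the origin to control $\|\mathbf a\|$. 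You instead work directly in $\mathbb R^n$ with the difference vectors $\mathbf a-\mathbf a'\in B_i-B_i\subseteq\mathbb Z^n$; the fact that the open cone is one-dimensional forces these to have rank $n-1$, and Cramer plus Hadamard on an $(n-1)\times n$ integer matrix gives the much stronger intermediate bound $\eta\ge D^{-(n-1)}$. The Cayley construction is natural from the mixed-volume point of view (it linearises the support functions $\lambda_i$ as extra unknowns), but your argument shows that for this particular estimate it is wasteful by roughly a factor $D^{n}$.

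One cosmetic point: the shift in your first paragraph need not be by an \emph{integer} vector. Translating $A_i$ by any $\mathbf a_0\in A_i$ puts $A_i-\mathbf a_0\subseteq A_i-A_i\subseteq\mathbb Z^n$, and since both the fan and $\eta$ depend only on differences this is all you need.
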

}
\changed{
\begin{proof}
Without loss of generality, we assume that the circumscribed center of
$A_i$ is within distance $\frac{\sqrt{n}}{2}$ from the origin.
Indeed, $\eta$ is invariant by integer translations of each $A_i$.
We save for later usage the bound
\begin{equation}\label{radius-bound}
	\begin{split}
		\max_{i} \max_{\mathbf a \in A_i} 
		\secrev{\left(1 + \|\mathbf a\|^2\right)} &\le
	1 + \left(\frac{\max(\diam{A_i})}{2} + \frac{\sqrt{n}}{2}\right)^2
\\
		&\le
	1 + \frac{\max(\diam{A_i})^2}{2} + \frac{n}{2}
	\end{split}
\end{equation}

The unary complexity of the fan is
\[
	\mathrm{fc}(\mathfrak F) = \max_{\boldsymbol \xi \in \mathfrak R^{\mathbb Z}} \max_i |\xi_i|
\]
where $\mathfrak R^{\mathbb Z}$ is the set of irreducible non-zero
integral representatives of each cone in $\mathfrak F_0$,
	that is the set of vectors of the form $\mathbf 0 \ne \boldsymbol \xi \in \mathbb Z^n$ with \secrev{$\boldsymbol \xi \in c$ for some $c \in \mathfrak F_0$, and with relatively prime coefficients.}
	As before, \secrev{by \eqref{fclower}},
\[
\eta \ge \frac{1}{\sqrt{n}\, \mathrm{fc}(\mathfrak F_0)}
.
\]
	In order to estimate $\mathrm{fc}(\mathfrak F_0)$, we need to produce the Cayley matrix\secrev{.}  
For each
$\boldsymbol \xi \in \mathfrak F_0^{\mathbf Z}$,
we define the Cayley matrix $C$ for 
$A_1^{\boldsymbol \xi}, \dots, A_n^{\boldsymbol \xi}$	
as the matrix with rows $(-\mathrm e_i \ \mathbf a)$ for
	each $\mathbf a \in A_i^{\boldsymbol \xi}$, $i=1, \dots, n$. This matrix has $2n$ columns
and rank $2n-1$. The system
	\begin{eqnarray*}
C 
\begin{pmatrix} \boldsymbol \lambda \\ \boldsymbol \xi \end{pmatrix} 
&=& 0\\
\left(-\mathrm e_i, \mathbf a \right)
\begin{pmatrix} \boldsymbol \lambda \\ \boldsymbol \xi \end{pmatrix} 
&<& 0, 
		\hspace{2em} \mathbf a \in A_i \setminus A_i^{\boldsymbol \xi},\  i=1, \dots n
\end{eqnarray*}
admits a solution for $\mathbf \xi$ as above and for
$\lambda_i = \lambda_i(\mathbf \xi)$ Minkowski's support function.
(This trick is better explained in \cite{Malajovich-mixed}).
	By replacing $C$ by a full-rank, $(2n-1)\times 2n$ \secrev{submatrix} $C'$, Cramer's
rule yields an expression for a solution of the system of equations and
	inequations above: every solution $\begin{pmatrix}\boldsymbol \lambda\\ \boldsymbol \xi\end{pmatrix}$ belongs to the kernel of $C'$.
Therefore we can rescale $\boldsymbol \lambda$ and $\boldsymbol \xi$
so that $\lambda_i$ and $\xi_i$ are given up to a sign as   
$(2n-1)\times(2n-1)$ integer subdeterminants of $C'$. This expression is
not guaranteed to provide a reduced solution, but Hadamard's bound applies
anyway. Bounding the norm of each row by \eqref{radius-bound},
one obtains
\[
	\eta \ge \frac{1}{\sqrt{n} \left(1+\frac{\max \diam{A_i}}{2} + \frac{n}{2} \right)^{\frac{2n-1}{2}}}
\]
\end{proof}}

We will prove that if a system of sparse exponential sums has
a root with a large infinity norm, then it is close to the variety of
systems with a root at toric infinity.
	\secrev{As usual, assume we are given supports $A_1, \dots, A_n$ and that the
	mixed volume of $\conv{A_1}, \dots, \conv{A_n}$ does not vanish.
	Let \thirdrev{$\mathfrak R$} be the set of unit norm representatives
	of $\mathfrak F_0$,} as in Definition~\ref{defan2}. Let $\eta_i$
	be the facet gap of $A_i$ as in
	Equation \eqref{defeta}.
	}

\changed{\begin{conditionnumbertheorem}\color{black}\label{cond-num-infty}

	Let $H > 0$. Let $\mathbf q \in \mathscr F$ and suppose that there is
	$\mathbf z \in Z(\mathbf q)$ with $\|\Re(\mathbf z)\|_2 \ge H$. Then, there are
	$\boldsymbol \xi \in \thirdrev{\mathfrak R}$ and $\mathbf h \in \mathscr F$ such that
\[
	\frac{\| \mathbf h_i \|}{\| \mathbf q_i \|} \le 
\kappa_{\rho_i}
e^{-\eta_i H/n},
	\hspace{2em} i=1, \dots, n
\]
with the property that $\lim_{t \rightarrow \infty} [\mathbf V(\mathbf z + t \boldsymbol \xi)]$ is a root at infinity for $\mathbf q + \mathbf h$.
\end{conditionnumbertheorem}}
Let $\gls{SigmaINF}$ be the set
of all systems $\mathbf p \in \mathscr F$ that admit a root at toric
infinity. The condition number theorem above says that the
`reciprocal condition' of $\mathbf q$ with respect to $\Sigma^{\infty}$
is bounded above by $\kappa_{\rho_i}
e^{-\eta H/n}$.
The role of $\Sigma^{\infty}$ can be clarified by 
translating Bernstein's second theorem into the language of this paper.
\begin{theorem}[\citeauthor{Bernstein}, \citeyear{Bernstein}, Theorem B]
\label{BKK3}
Assume that the mixed volume $V(\conv{A_1}, \dots, \conv{A_n})$
does not vanish. 
	The mixed volume bound in Theorem~\ref{BKK} (resp.~\ref{BKK2}) \changed{is equal to the number of
	isolated roots counted according to their multiplicity,} if and only if 
	$\mathbf f \not \in \Sigma^{\infty}$.
\end{theorem}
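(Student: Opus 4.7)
The plan is to track roots along the linear homotopy
\[
\mathbf q_t = (1-t)\,\mathbf g + t\,\mathbf f, \qquad t \in [0,1],
\]
where $\mathbf g \in \mathscr F$ is a generic starting system. By Theorem~\ref{BKK2} and the fact that $\mathscr F \setminus \Sigma^\infty$ and the complement of the discriminant are both Zariski open and non-empty in $\mathscr F$, I may pick $\mathbf g$ so that it has exactly $N := n!\, V(\mathcal A_1,\dots,\mathcal A_n)/\det \Lambda$ simple finite roots in $\mathscr M$ and no roots at toric infinity. Compactness of the toric variety $\mathcal V$ ensures that the $N$ analytic root-paths $\mathbf z_t^{(k)}$ extend continuously (possibly via Puiseux expansions at branch points) to all $t \in [0,1]$ as paths in $\mathcal V$, with limit $\mathbf z_1^{(k)} \in \mathcal V$ being a root of $\mathbf f$ in $\mathcal V$.

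For the forward direction I argue by contrapositive. Suppose $\mathbf f \in \Sigma^\infty$, so there exists $[\mathbf w] \in \mathcal V \setminus [\mathbf V](\mathscr M)$ with $\mathbf f \cdot \mathbf V = \mathbf 0$ at $[\mathbf w]$. By upper semicontinuity of zero sets of analytic sections on the compact variety $\mathcal V$, the collection $\{\mathbf z_1^{(k)}\}_{k=1}^N$ must contain every isolated root of $\mathbf f$ in $\mathcal V$, including $[\mathbf w]$. Hence at least one path terminates at toric infinity, so strictly fewer than $N$ paths end in $[\mathbf V](\mathscr M)$, giving strictly less than $N$ finite roots of $\mathbf f$ counted with multiplicity.

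For the converse, assume $\mathbf f \not\in \Sigma^\infty$. Then every $\mathbf z_1^{(k)} \in [\mathbf V](\mathscr M)$ corresponds to a genuine finite root of $\mathbf f$. When several paths $\mathbf z_t^{(k_1)}, \dots, \mathbf z_t^{(k_r)}$ coalesce at a common finite root $\boldsymbol \zeta$, the local intersection multiplicity of $\mathbf f$ at $\boldsymbol \zeta$ in $\mathscr M$ equals $r$ by conservation of intersection number under flat deformation in the compactification $\mathcal V$. Summing over all limit points therefore yields exactly $N$ finite roots with multiplicity, matching the upper bound of Theorem~\ref{BKK2}.

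The main obstacle is the rigorous multiplicity bookkeeping used in the last paragraph. The standard route is via intersection theory on $\mathcal V$: the BKK number $N$ equals the intersection product of the Cartier divisors $H_i$ on $\mathcal V$ cut out by the sections $\mathbf f_i \cdot V_{A_i}$ of appropriate line bundles, and this intersection product is deformation-invariant under the homotopy $\mathbf q_t$. Its total equals the sum of local intersection multiplicities over all points of $\bigcap_i H_i \subseteq \mathcal V$, which splits as a sum of contributions from $[\mathbf V](\mathscr M)$ and from toric infinity. Under the hypothesis $\mathbf f \not\in \Sigma^\infty$, the latter contribution vanishes, so the $N$ units of intersection are all accounted for by finite isolated roots counted with multiplicity. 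The technical subtlety I need to verify carefully is that the line-bundle formalism on $\mathcal V$ induced by $\mathbf V$ is well-defined and that the intersection product indeed equals $N$ — this is the Khovanskii--Bernstein--Kouchnirenko content underlying Theorem~\ref{BKK2} itself.
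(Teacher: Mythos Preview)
The paper does not supply its own proof of this statement: it is quoted verbatim as Bernstein's Theorem~B from \cite{Bernstein} and used as a black box. So there is nothing in the paper to compare your argument against.

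As for your sketch itself, the overall strategy (deform from a generic $\mathbf g$, use conservation of intersection number on the compact toric variety $\mathcal V$) is the standard modern route and is essentially correct, but your forward direction has a real gap. You assert that if $\mathbf f \in \Sigma^\infty$ then the point $[\mathbf w]$ at toric infinity is an \emph{isolated} root of $\mathbf f$ in $\mathcal V$ and is therefore hit by one of the $N$ paths. Neither claim is justified: the zero set of $\mathbf f$ on $\mathcal V\setminus[\mathbf V](\mathscr M)$ can be positive-dimensional, and ``upper semicontinuity of zero sets'' does not by itself force every isolated root to be a limit of a path. What you actually need is that the total intersection number on $\mathcal V$ equals $N$ and that the contribution supported on toric infinity is strictly positive whenever $\mathbf f\in\Sigma^\infty$; when the intersection there is improper you must invoke excess intersection or a further generic perturbation to see this. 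You touch on the right machinery in your final paragraph, but the argument as written in the forward direction does not yet use it and relies instead on an unjustified isolatedness claim.
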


\changed{\begin{remark} A system $\mathbf f \not \in \Sigma^{\infty}$ can have degenerate isolated roots, for instance the exponential sum $f(x) = (e^x -1)^2$ has support $A=\{0,1,2\}$ and coefficient covector $\mathbf f=(1,-2,1) \not \in \Sigma^{\infty}=\{ \mathbf f: f_0 f_2=0\}$.
\end{remark}}

\medskip

At this point we need to mention a particular class of tuples $(A_1, \dots,
A_n)$ that guarantee that $\Sigma^{\infty}$ is empty, 
\secrev{provided $q_{i{\mathbf a}} \ne 0$ for all $\mathbf a$.} 

\secrev{
\begin{definition}\label{trop-strongly} 
	\begin{enumerate}[(a)]
	\item
	A tuple of supports  $(A_1, \dots, A_n)$
is {\em strongly mixed} if and only if, for all $\boldsymbol \xi \ne \mathbf 0$,
there is $1 \le i \le n$ and a unique $\mathbf a \in A_i$
with the property that for all $\mathbf a' \in A_i$,
$\mathbf a' \ne \mathbf a$, 
$\mathbf a' \boldsymbol \xi < 
\mathbf a \boldsymbol \xi $.
\item
	A system of $n$-variate polynomials 
$G_1, \dots, G_n$ with support $(A_1, \dots, A_n)$, viz.
\[
	G_i(\mathbf X) = \sum_{\mathbf a \in A_i} G_{i \mathbf a} \mathbf X^{\mathbf a}
	\hspace{4em} i=1, \dots, n,
\]
	with $G_{i \mathbf a} \ne 0$ for all
	$\mathbf a \in A_i$, is said to be strongly mixed if the tuple
	$(A_1, \dots, A_n)$ is strongly mixed.
\item	The same definition holds for exponential sums.
	\end{enumerate}
\end{definition}}

\begin{remark}
Let $n \ge 2$. 
A system that is {\em strongly mixed} cannot be {\em unmixed}.
It cannot have a repeated support. It cannot have a support that is
a scaled translation of another support.
\end{remark}

\secrev{There is a natural interpretation of strongly mixed supports and polynomial
systems in the language of tropical algebraic geometry.}
Recall that the {\em tropical semi-ring} is $\mathbb R \cup \{-\infty\}$,
with sum $x \boxplus y = \max(x,y)$ and product $x \boxtimes y = x+y$. 
We will keep the \changed{notations} $x^a = 
\underbrace{x \boxtimes \cdots \boxtimes x}_{a\ \text{times}}$
\changed{and $\mathbf x^{\mathbf a}=x_1^{a_1} \boxtimes \cdots \boxtimes
x_n^{a_n}$}.
\secrev{A tropical polynomial $\boxplus_{\mathbf a \in A} f_{\mathbf a} \boxtimes \mathbf X^{\mathbf a},$ $f_{\mathbf a} \in \mathbb R$, corresponds in classical language to the maximum over a set of affine functions on variables $x_1, \dots, x_n$. The set $X$ of points
where this maximum is attained for at least two values of $\mathbf a$
is known as the
{\em tropical hypersurface} associated to the tropical polynomial.}
To a system of tropical polynomial equations, one associates the
{\em tropical prevariety}, that is the intersection of the tropical surfaces
from each equation. 
\begin{example}[Dense linear case] If $A_1=\dots=A_m=\Delta_n=\{\mathbf 0, \mathrm e_1,
\dots, \mathrm e_n\}$, then the tropical prevariety is the union of all
the cones in $\mathfrak F_{n-2}$.
\end{example}
\noindent \\
\secrev{In this language,}
\secrev{
\begin{lemma}\label{lem-strongly} 
	A tuple of supports  $(A_1, \dots, A_n)$
is {\em strongly mixed} if and only if, the tropical polynomial
system
\[
H_i(\boldsymbol \xi) = 
\bigboxplus_{\mathbf a \in A_i} \boldsymbol \xi^{\mathbf a} 
\]
	has tropical prevariety $\{\mathbf 0\}$. 
\end{lemma}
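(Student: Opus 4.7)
The plan is essentially to unfold the definitions on both sides and check that they match. The statement reduces to the observation that $A_i^{\boldsymbol\xi}$ (defined earlier in the paper) is exactly the set of indices where the tropical polynomial $H_i$ attains its maximum, and a point $\boldsymbol\xi$ lies on the tropical hypersurface of $H_i$ iff $\#A_i^{\boldsymbol\xi}\ge 2$.

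First I would rewrite the tropical polynomial in classical notation: since all coefficients are the tropical unit $0$, one has $H_i(\boldsymbol\xi) = \max_{\mathbf a\in A_i}\mathbf a\boldsymbol\xi = \lambda_i(\boldsymbol\xi)$, the Minkowski support function from \eqref{def-lambdai}. Hence the $\mathbf a\in A_i$ attaining this max form precisely the set $A_i^{\boldsymbol\xi}$ introduced right after \eqref{def-lambdai}. By the very definition of a tropical hypersurface, the tropical hypersurface $T_i$ associated to $H_i$ is $\{\boldsymbol\xi\in\mathbb R^n: \#A_i^{\boldsymbol\xi}\ge 2\}$, so the tropical prevariety is
\[
\mathcal T \;=\; \bigcap_{i=1}^n T_i \;=\; \{\boldsymbol\xi\in\mathbb R^n : \#A_i^{\boldsymbol\xi}\ge 2 \text{ for every } i\}.
\]

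Next I would observe that $\mathbf 0\in\mathcal T$ always: at $\boldsymbol\xi=\mathbf 0$ every $\mathbf a\boldsymbol\xi$ vanishes, so $A_i^{\mathbf 0}=A_i$, and by the standing assumption $S_i=\#A_i\ge 2$ one has $\#A_i^{\mathbf 0}\ge 2$ for all $i$. Thus the question reduces to whether $\mathcal T\setminus\{\mathbf 0\}$ is empty.

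Now the two implications follow by direct translation. If $(A_1,\dots,A_n)$ is strongly mixed, then for every $\boldsymbol\xi\ne\mathbf 0$ there exists $i$ with a unique $\mathbf a\in A_i$ maximizing $\mathbf a\boldsymbol\xi$, i.e.\ $\#A_i^{\boldsymbol\xi}=1$; so $\boldsymbol\xi\notin T_i\supseteq\mathcal T$, giving $\mathcal T\subseteq\{\mathbf 0\}$, hence $\mathcal T=\{\mathbf 0\}$. Conversely, if $\mathcal T=\{\mathbf 0\}$, then for every $\boldsymbol\xi\ne\mathbf 0$ some index $i$ must satisfy $\#A_i^{\boldsymbol\xi}<2$; since $A_i^{\boldsymbol\xi}\ne\emptyset$ (being the argmax of a continuous function on the finite set $A_i$), this means $\#A_i^{\boldsymbol\xi}=1$, which is exactly the strongly mixed condition.

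There is no real obstacle here: the whole argument is a one-line chase once one identifies $A_i^{\boldsymbol\xi}$ with the argmax set of $H_i$. The only thing worth a sentence is the base-point check $\mathbf 0\in\mathcal T$, which uses the standing hypothesis $S_i\ge 2$ and explains why the prevariety is literally $\{\mathbf 0\}$ rather than $\emptyset$.
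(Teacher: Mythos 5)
Your proof is correct; the paper treats Lemma~\ref{lem-strongly} as an immediate unfolding of Definition~\ref{trop-strongly} and the definition of tropical hypersurface/prevariety, which is exactly what you do, so you are on the same page. Your explicit observation that $\mathbf 0$ always lies in the prevariety (because $A_i^{\mathbf 0}=A_i$ and the standing hypothesis $S_i\ge 2$ holds) is a worthwhile sentence, since it is precisely what makes the prevariety equal to $\{\mathbf 0\}$ rather than possibly empty.
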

}

We expect generic systems of $m$ tropical polynomials in $n$
variables to have $n-m$ dimensional tropical prevarieties, but the system 
\changed{$\mathbf H(\boldsymbol \xi)$ in \secrev{Lemma}~\ref{lem-strongly}} above
is not generic. All the tropical monomials have identical coefficient,
so this particular tropical prevariety is a union of polyhedral cones,
plus the origin. More precisely, the tropical prevariety is the union of the origin and
the cones of a (possibly empty) subset of $\mathfrak F_{n-2} \cup \cdots \cup \mathfrak F_0$. 
\begin{remark}
The {\em tropicalization} of the polynomial $G_i(X)$ is
usually defined as
	\[
		\mathrm{trop}(G_i) = 
	\bigboxplus_{\mathbf a \in A_i} (-v(G_{i \mathbf a}))\boxtimes \boldsymbol \xi^{\mathbf a} 
\]
where $v$ is a non-trivial valuation. This is different from the
polynomials $H_i(\boldsymbol \xi)$ above.
For more details on tropical geometry, the reader is
referred to the book by \ocite{Maclagan-Sturmfels}. 
\end{remark}

\begin{lemma}\label{tropical} 
Let $A_1, \dots, A_n \subseteq \mathbb R^n$ be finite, with $A_i - A_i \subseteq \mathbb Z^n$. The following are equivalent:
\begin{enumerate}[(a)] \item \secrev{The tuple $(A_1, \dots, A_n)$}
is strongly mixed.
\item
	For each $\boldsymbol \xi \in \thirdrev{\mathfrak R}$, 
there is $i$ such that 
$\# A_i^{\boldsymbol \xi} = 1$.
\item $\Sigma^{\infty}$ is contained in an union of $\#\mathfrak F_0$ hyperplanes
of the form
\[
\mathbf q_{i{\mathbf a}} = 0.
\]
\end{enumerate}
\end{lemma}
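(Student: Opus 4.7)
The plan is to prove the equivalences in a cycle (a)$\Rightarrow$(b)$\Rightarrow$(a)$\Rightarrow$(c)$\Rightarrow$(a), extracting as much mileage as possible from the observation that for a ray $\boldsymbol \xi \in \mathfrak R$, if one knows which coordinates of $V_{A_i}(\mathbf z + t\boldsymbol \xi)$ survive as $t\to +\infty$, then the condition to lie in $\Sigma^{\infty}$ becomes a finite linear system in the coefficients $q_{i\mathbf a}$.

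For (a)$\Leftrightarrow$(b): the direction (a)$\Rightarrow$(b) is immediate. For the converse, given an arbitrary $\boldsymbol \xi\neq \mathbf 0$, the closed cone $\bar C(A_1^{\boldsymbol \xi},\dots,A_n^{\boldsymbol \xi})$ belongs to the fan and contains some ray $c\in\mathfrak F_0$; let $\boldsymbol \xi'\in \mathfrak R$ be its unit-norm representative. Since $\boldsymbol \xi'$ lies in the closure of the open cone containing $\boldsymbol \xi$, we have $A_i^{\boldsymbol \xi'}\supseteq A_i^{\boldsymbol \xi}$ for all $i$. The index $i$ provided by (b) for $\boldsymbol \xi'$ satisfies $\#A_i^{\boldsymbol \xi'}=1$, and since $A_i^{\boldsymbol \xi}$ is a non-empty subset it must coincide with $A_i^{\boldsymbol \xi'}$, giving (a).

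For (a)$\Rightarrow$(c): for each ray $c\in\mathfrak F_0$ with representative $\boldsymbol \xi\in \mathfrak R$, use (a) to select $i=i(\boldsymbol \xi)$ with $A_i^{\boldsymbol \xi}=\{\mathbf a(\boldsymbol \xi)\}$, and set $H(c)\defeq \{\mathbf q:q_{i(\boldsymbol \xi),\mathbf a(\boldsymbol \xi)}=0\}$; this produces $\#\mathfrak F_0$ coordinate hyperplanes. Given any $\mathbf q\in\Sigma^{\infty}$, there exist $\mathbf z\in \secrev{\mathscr M}$ and a nonzero $\boldsymbol \xi$ (real, WLOG) so that $\lim_{t\to\infty}[\mathbf V(\mathbf z+t\boldsymbol \xi)]$ is a toric-infinity zero of $\mathbf q$. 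Normalizing each $V_{A_i}(\mathbf z+t\boldsymbol \xi)$ by $e^{t\lambda_i(\boldsymbol \xi)}$, the surviving coordinates are exactly those with $\mathbf a\in A_i^{\boldsymbol \xi}$, so the limit equation is $\sum_{\mathbf a\in A_i^{\boldsymbol \xi}} q_{i\mathbf a}\rho_{i\mathbf a}e^{\mathbf a\mathbf z}=0$ for every $i$. Passing from $\boldsymbol \xi$ to the representative $\boldsymbol \xi'\in \mathfrak R$ of a ray inside $\bar C(A_1^{\boldsymbol \xi},\dots,A_n^{\boldsymbol \xi})$ as in the previous paragraph, one has $A_{i(\boldsymbol \xi')}^{\boldsymbol \xi}=A_{i(\boldsymbol \xi')}^{\boldsymbol \xi'}=\{\mathbf a(\boldsymbol \xi')\}$; the $i(\boldsymbol \xi')$-th truncated equation reduces to a single nonzero monomial and forces $q_{i(\boldsymbol \xi'),\mathbf a(\boldsymbol \xi')}=0$, so $\mathbf q\in H(c)$ for the ray $c$ through $\boldsymbol \xi'$.

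For (c)$\Rightarrow$(a), I argue by contrapositive: assume $\boldsymbol \xi\in\mathfrak R$ has $\#A_i^{\boldsymbol \xi}\ge 2$ for every $i$. I construct a system $\mathbf q\in\Sigma^{\infty}$ with every coordinate $q_{i\mathbf a}$ nonzero, contradicting containment in any union of coordinate hyperplanes. Pick any $\mathbf z\in\mathbb C^n$; for each $i$ the single linear equation $\sum_{\mathbf a\in A_i^{\boldsymbol \xi}} q_{i\mathbf a}\rho_{i\mathbf a}e^{\mathbf a\mathbf z}=0$ in $\#A_i^{\boldsymbol \xi}\ge 2$ unknowns has a generic solution with all coordinates nonzero; set the remaining $q_{i\mathbf a}$ ($\mathbf a\notin A_i^{\boldsymbol \xi}$) arbitrarily nonzero. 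Because $\boldsymbol \xi$ represents a ray of the fan, the limit $\lim_{t\to\infty}[\mathbf V(\mathbf z+t\boldsymbol \xi)]$ exists as a point at toric infinity in $\mathcal V$, and the construction guarantees that it is annihilated by the resulting $\mathbf q$. The main subtlety to handle carefully is the bookkeeping in the first step: one must check that for any direction of escape to toric infinity, the reduction from a general $\boldsymbol \xi$ to a ray $\boldsymbol \xi'\in \mathfrak R$ produces a singleton $A_{i(\boldsymbol \xi')}^{\boldsymbol \xi'}$ that still appears inside the original truncated equations; the inclusion $A_i^{\boldsymbol \xi'}\supseteq A_i^{\boldsymbol \xi}$, combined with non-emptiness of $A_i^{\boldsymbol \xi}$, is precisely what makes this work.
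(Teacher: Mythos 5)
Your proof is correct and follows essentially the same route as the paper: the (a)$\Leftrightarrow$(b) reduction via boundary rays of the supporting cone, the observation that a singleton face forces a coordinate to vanish, and the construction of a fully-supported system in $\Sigma^{\infty}$ for the converse all match. The one notable presentational difference is that for (a)$\Rightarrow$(c) you assume directly that the toric-infinity zero is a one-parameter limit $\lim_{t\to\infty}[\mathbf V(\mathbf z+t\boldsymbol\xi)]$ --- a true but nontrivial fact about the orbit structure of the toric closure --- whereas the paper instead argues by contradiction, following an arbitrary path $\mathbf z(t)$ to the zero and extracting a ray from an accumulation point of the normalized directions $\Re(\mathbf z(t))/\|\Re(\mathbf z(t))\|$.
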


Before stating the next result, we introduce the polynomial
\secrev{
	\begin{equation}\label{v-poly}
	v(t) \defeq 
n!\partialat{\epsilon}{0}
V( \conv{A_1}+\epsilon B^n+t \mathcal A, \dots, \conv{A_n}+\epsilon B^n
+t \mathcal A)
	\end{equation}
}
where $B^n$ denotes the unit $n$-ball, 
$\mathcal A \secrev{\defeq \conv{A_1} + \dots + \conv{A_n}}$ and $V$ is the mixed volume
\secrev{of $\conv{A_1}, \dots, \conv{A_n}$}. 
When $A_1=\dots=A_n$, \secrev{then}
$v(t)=(1+tn)^{n-1} V'$. 
\secrev{We can also expand $v(t)$ as 
\begin{equation}\label{v-k}
	v(t) = \sum_{k=0}^n \frac{v_k}{k!} t^k .
\end{equation}}
\secrev{The coefficient $v_0$ is equal to the mixed \changed{area} $V'$.} 
The coefficients $v_k$ can be seen up to scaling as mixed, non-smooth
analogues of the curvature integrals in Weyl's tube formula \cite{BC}*{Theorem 21.9}.

Now we need to consider homotopy paths, and we pick the easiest example: set
$\mathbf q(t) = \mathbf g + t \mathbf f$, where $\mathbf f$ is
fixed and $\mathbf g$ is a Gaussian random variable. But we will
need paths avoiding $\Sigma^{\infty}$. This is possible if we take $t$
real. \secrev{\fourthrev{The} problem is to produce an equidimensional real algebraic
variety containing the paths going through
$\Sigma^{\infty}$, and to bound its degree.}
\begin{theorem}\label{degenerate-locus} Let $A_1, \dots, A_n \subseteq \mathbb Z^n$ be finite, with non-zero
	mixed volume. Under the notations above, the following hold:
	\begin{enumerate}[(a)]
\item The set $\Sigma^{\infty}$ is contained in the zero set of a polynomial
	$\glsdisp{roff}{r}$ of degree 
\[
	\gls{dr}\le 
			\frac{\changed{\max_i \diam {A_i}}}{\changed{2} \det \Lambda}
			\max_{\thirdrev{0 \le k \le n}} (\changed{\thirdrev{e^k} v_k})
			\ \# \mathfrak F_0 
\]
\item If the system is strongly mixed, then $d_r \changed{\le} \# \mathfrak F_0$ and \changed{$d_r \le S=\sum_i \#A_i$, with} $\Sigma^{\infty}=Z(r)$.
\item Assume that $r(\mathbf f) \ne 0$. Then,
the set 
\[
	\Sigma^{\infty}_{\mathbf f} \secrev{\defeq} \{ \mathbf g \in \mathscr F: \exists t \in \mathbb R, \mathbf g + t \mathbf f \in \Sigma^{\infty} \}
\]
is contained, as a subset of $\mathbb R^{2S}$,
			in the zero set of a real polynomial $s=s(\mathbf g)$ of degree at most $d_r^2$.
\end{enumerate}
\end{theorem}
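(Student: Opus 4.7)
\textbf{Proof plan for Theorem~\ref{degenerate-locus}.}

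\medskip
\emph{Part (a).} The idea is to realize $\Sigma^\infty$ as a finite union of sparse-resultant hypersurfaces, one for each ray in the fan. By Bernstein's second theorem (Theorem~\ref{BKK3}), a system $\mathbf f \in \mathscr F$ lies in $\Sigma^\infty$ iff some strict initial system has a common root in the corresponding quotient torus. Passing to limits of 1-parameter curves $\lim_{t\to \infty}[\mathbf V(\mathbf z + t \boldsymbol \xi)]$, it suffices to range over $\boldsymbol \xi \in \thirdrev{\mathfrak R}$: for each such ray, the face system $\mathbf f^{\boldsymbol\xi}=(\mathbf f_i|_{A_i^{\boldsymbol\xi}})_{i=1}^n$ is an overdetermined system of $n$ exponential sums on an $(n-1)$-dimensional torus $\boldsymbol\xi^\perp$, and its having a common root is a codimension-one algebraic condition cut out by a sparse resultant $r_{\boldsymbol\xi}(\mathbf f)$. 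Setting $r = \prod_{\boldsymbol\xi \in \thirdrev{\mathfrak R}} r_{\boldsymbol \xi}$ gives $\Sigma^\infty \subseteq Z(r)$. The degree of each $r_{\boldsymbol\xi}$ is a sum of mixed volumes of the truncated polytopes $\conv{A_j^{\boldsymbol \xi}}$ inside the lattice quotient along $\boldsymbol \xi$. To bound this uniformly, I would introduce an $\epsilon$-thickening $\conv{A_i}+\epsilon B^n$ and a $t$-dilation by $\mathcal A$, which is exactly what the generating polynomial $v(t)$ in \eqref{v-poly} encodes. Extracting the contribution of the face $\boldsymbol \xi$ via $\partial_\epsilon|_0$ and bounding by the worst coefficient $v_k$ in the expansion \eqref{v-k}, then picking up the factor $\max_i \diam{A_i}/2$ from the lattice distance along $\boldsymbol\xi$ (cf.\ Remark~\ref{remark-on-eta}) and the factor $1/\det\Lambda$ from passing to $\secrev{\mathscr M}$, yields the per-ray bound. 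Summing over the $\#\mathfrak F_0$ rays completes the estimate.

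\medskip
\emph{Part (b).} If $(A_1,\dots,A_n)$ is strongly mixed, Lemma~\ref{tropical} shows that for every $\boldsymbol\xi \in \thirdrev{\mathfrak R}$ there exists $i=i(\boldsymbol\xi)$ with $\#A_i^{\boldsymbol\xi}=1$. The initial form $\mathbf f_{i}^{\boldsymbol\xi}$ then has a single monomial, and the corresponding face system has a common root iff that monomial's coefficient $f_{i,\mathbf a}$ vanishes; thus $r_{\boldsymbol\xi}$ is a linear form in $\mathbf f$. Consequently $\deg r = \#\mathfrak F_0$, and by Lemma~\ref{tropical}(c), $\Sigma^\infty = Z(r)$. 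Distinct rays produce distinct vertex coordinates (since the pair $(i(\boldsymbol\xi),\mathbf a)$ is determined by $\boldsymbol\xi$), so $\#\mathfrak F_0 \le \sum_i \#A_i = S$.

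\medskip
\emph{Part (c).} Consider the univariate complex polynomial $P_{\mathbf g}(t) := r(\mathbf g + t\mathbf f)$. Since $r$ is homogeneous of degree $d_r$ and $r(\mathbf f)\ne 0$, the leading coefficient of $P_{\mathbf g}$ in $t$ is the nonzero constant $r(\mathbf f)$, so $P_{\mathbf g}$ has degree exactly $d_r$ in $t$. Writing $P_{\mathbf g}(t) = A(\mathbf g, t) + \sqrt{-1}\, B(\mathbf g, t)$ with $A, B \in \mathbb R[\Re \mathbf g, \Im \mathbf g][t]$, the existence of a real root $t$ is equivalent to $A$ and $B$ sharing a common root in $t$, i.e., to the vanishing of $s(\mathbf g) := \mathrm{Res}_t(A, B)$. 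The coefficient of $t^k$ in $A$ and $B$ is a real polynomial of degree $d_r - k$ in the $2S$ real coordinates of $\mathbf g$ (by homogeneity of $r$); a weighted-degree count in the Sylvester matrix — each row contributes weight $d_r$, and we have $2d_r$ rows, but the diagonal pattern saves a factor of $2$ — shows $s$ is homogeneous of degree $d_r\cdot d_r = d_r^2$ in $\mathbf g \in \mathbb R^{2S}$. Since $\Sigma^\infty_{\mathbf f} \subseteq \{\mathbf g : P_{\mathbf g}\text{ has a real root}\} \subseteq Z(s)$, the bound follows.

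\medskip
\emph{Main obstacle.} The delicate step is the degree estimate in part (a): the precise relation between the sparse resultant degree $\deg r_{\boldsymbol\xi}$ and the coefficients $v_k$ of the mixed Steiner polynomial \eqref{v-k} requires tracking a perturbation argument through the Cayley trick carefully, and the factor $\max_k(e^k v_k)$ reflects optimizing the $\epsilon$-thickening over the $n+1$ mixed-volume terms that appear. Parts (b) and (c), by contrast, are respectively a direct consequence of the tropical structure and a classical resultant computation.
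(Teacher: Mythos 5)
Your overall decomposition $\Sigma^\infty = \bigcup_{\boldsymbol\xi \in \mathfrak R}\Sigma^{\boldsymbol\xi}$ matches the paper, and parts (b) and (c) follow the paper's route closely (for (c) the paper likewise forms $p=\Re(r(\mathbf g + t\mathbf f))$ and $q=\Im(r(\mathbf g+t\mathbf f))$ and takes their Sylvester resultant, getting degree $ab\le d_r^2$; your ``diagonal saves a factor of~2'' accounting is muddled, but the conclusion is the same).

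The genuine gap is in part (a). You assert that for each ray $\boldsymbol\xi$, the existence of a common root for the face system $(\mathbf f_i|_{A_i^{\boldsymbol\xi}})_{i=1}^n$ on the quotient torus ``is a codimension-one algebraic condition cut out by a sparse resultant $r_{\boldsymbol\xi}$.'' This is false in general, and the paper goes to considerable length to address exactly this point: by Theorem~\ref{Sturmfels} (Sturmfels' codimension formula) and Example~\ref{ex-resultant}, the variety $\Sigma^{\boldsymbol\xi}$ can have codimension anywhere from $1$ to $n-1$ depending on the ranks of the lattices $\Lambda_I^{\boldsymbol\xi}$. When the codimension exceeds one, the mixed sparse resultant of the full face system (in Sturmfels' normalization) is identically $1$, so your product $r=\prod_{\boldsymbol\xi}r_{\boldsymbol\xi}$ could be a constant and would not contain $\Sigma^\infty$ in its zero set. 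The paper's fix is Theorem~\ref{sigma-xi-general}: one does \emph{not} need the ideal of $\Sigma^{\boldsymbol\xi}$, only a nonzero polynomial vanishing on it. To get one, choose a minimal index subset $I\subseteq[n]$ with $\#I - \mathrm{rank}(\Lambda_I^{\boldsymbol\xi}) = 1$, and build the polynomial from that subsystem; the degree is then estimated by introducing the auxiliary variable $t$ in the direction $\boldsymbol\xi$ (producing a balanced $(\#I)$-dimensional exponential-sum system), applying Theorem~\ref{BKK2}, and converting the resulting restricted mixed volume into the ambient mixed volume via Lemma~\ref{mixed-elementary}, which is where the diameter factor, the $\det\Lambda$, and the coefficients $v_k$ from the Steiner-like expansion~\eqref{v-k} all come in. Without that subsystem-selection step, the degree bound in (a) cannot be reached by the route you describe.

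A smaller remark on part (b): ``distinct rays produce distinct vertex coordinates'' is not true in general (two different rays can share the same unique maximizer $\mathbf a$ in the same $A_i$), but the bound $d_r\le S$ holds anyway since, after removing repeated linear factors, $r$ is a product of at most $S$ distinct coordinate functions $q_{i\mathbf a}$.
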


\changed{\begin{remark} \secrev{The set of systems with a root at toric
	infinity is not always an hypersurface or and equidimensional surface. The number $d_r$ is the degree of an hypersurface containing this locus.} In the unmixed case, $d_r$ is the sum of the degree of the $A$-resultants obtained as follows: for each $n-1$ facet \secrev{$A'$} of
	$A_1 = \dots = A_n$ \secrev{with outer normal vector $\boldsymbol \xi$, let $T$ be the linear span of the set of differences $A'-A'$ and let $\Lambda' \subset T$ be the
	lattice generated by $A' - A'$. The lattice $\Lambda'$ admits a lattice
	basis $\mathbf b_1, \dots, \mathbf b_{n-1}$. Let 
	$B$ be the $n \times (n-1)$ matrix with columns 
	$\mathbf b_1, \dots, \mathbf b_{n-1}$. As a linear map, $B$ 
	maps $\mathbb Z^{n-1}$ into
	$\Lambda$ bijectively, and induces an isomorphism of
	$\mathbb R^{n-1}$ into $T$. Let $A$ be the preimage of $A'$ by $B$.
	The set of polynomial systems with a root at $\boldsymbol \xi$-infinity
	is the zero-set 
	of the $A$-resultant for this $A$, and for the system of polynomials with coefficient $f_{i (\mathbf a B)},$ where $\mathbf a \in A$.}
\end{remark} }

When $\mathbf f$ is fixed with $r(\mathbf f) \ne 0$, the set of paths with some root of large
norm is a neighborhood of $\Sigma^{\infty}_{\mathbf f}$ in the usual topology. 
More specifically, \secrev{we introduce now a set $\Omega_{\mathbf f, T, H}$
of all $\mathbf g$ with the property that the path $\mathbf f + t \mathbf g$, $t \in [0,T]$,
will have a large zero. Namely,}
\[
	\Omega_H \secrev{\defeq} \Omega_{\mathbf f, T, H} \secrev{\defeq}
\left\{
\mathbf g \in \mathscr F: \exists t \in [0,T], 
\exists \mathbf z \in Z(\mathbf g+t \mathbf f), 
\|\Re(\mathbf z)\|_{\infty} \ge H \right \}
.
\]

At this point and in the next section, we choose always
coefficients $\rho_{i \mathbf a}=1$ and variance $\Sigma^2 = I$, in order
to keep statements short.

\begin{theorem}\label{vol-omega}
Assume that $\rho_{i,\mathbf a}=1$ for all $i, \mathbf a$.
	\secrev{Let $r$ be the polynomial of Theorem \ref{degenerate-locus}
	and let $d_r$ be its degree}. 
Suppose that $r(\mathbf f) \ne 0$. Let 
	$0 < \delta < \frac{1}{2(2d_r^2+1)S}$ and assume that
\[
H \ge \frac{n}{\eta} \log\left( 
	16 e \delta^{-1} d_r^2 S \thirdrev{\sqrt{\max_i (S_i)}}
	\left(1 + \frac{T \|\mathbf f\|}{ \sqrt[2S]{\delta} \sqrt{S}}\sqrt{e}\right) \right)
.
\]
Then,
\[
\probability{\mathbf g \sim \secrev{\mathcal N}(\mathbf 0,I; \mathscr F)}{\mathbf g \in \Omega_H}
\le
\delta
.
\]
\end{theorem}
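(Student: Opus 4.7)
My plan combines the Condition Number Theorem~\ref{cond-num-infty}, the algebraic description of $\Sigma^\infty_{\mathbf{f}}$ from Theorem~\ref{degenerate-locus}(c), and a Carbery--Wright anti-concentration estimate for polynomial maps of Gaussian vectors. If $\mathbf{g} \in \Omega_H$, pick witnesses $t \in [0,T]$ and $\mathbf{z}$ with $(\mathbf{g} + t\mathbf{f}) \cdot \mathbf{V}(\mathbf{z}) = 0$ and $\|\Re(\mathbf{z})\|_2 \ge \|\Re(\mathbf{z})\|_\infty \ge H$. Because $\rho_{i\mathbf{a}} = 1$ yields $\kappa_{\rho_i} = \sqrt{S_i} \le \sqrt{\max_j S_j}$ (Remark~\ref{rem-kappa-rho}), Theorem~\ref{cond-num-infty} produces $\mathbf{h} \in \mathscr{F}$ with $\mathbf{g} + t\mathbf{f} + \mathbf{h} \in \Sigma^\infty$ and $\|\mathbf{h}_i\| \le \sqrt{S_i}\,e^{-\eta H/n}\|\mathbf{g}_i + t\mathbf{f}_i\|$; squaring and summing gives $\|\mathbf{h}\| \le \sqrt{\max_i S_i}\,e^{-\eta H/n}\,\|\mathbf{g} + t\mathbf{f}\|$, and $\mathbf{g} + \mathbf{h} \in \Sigma^\infty_{\mathbf{f}}$.

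Next, I condition on a moderate Gaussian norm. Since $\mathbf{g}$ is standard complex Gaussian on $\mathbb{C}^S$, the variable $\|\mathbf{g}\|^2 \sim \Gamma(S,1)$ has $E[\|\mathbf{g}\|^{2S}] = \prod_{j=0}^{S-1}(S+j) \le (2S)^S$, so Markov yields $\Pr[\|\mathbf{g}\| > M] \le \delta/2$ for $M \asymp \sqrt{S/e}\,\delta^{-1/(2S)}$, which is the source of the $\sqrt[2S]{\delta}\,\sqrt{S}$ denominator in the statement. On $\{\|\mathbf{g}\| \le M\}$ the preceding step gives $\|\mathbf{h}\| \le \epsilon_0 := \sqrt{\max_i S_i}\,e^{-\eta H/n}(M + T\|\mathbf{f}\|)$. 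Theorem~\ref{degenerate-locus}(c) then supplies a real polynomial $s$ of degree $d_s \le d_r^2$ on $\mathbb{R}^{2S}$ vanishing on $\Sigma^\infty_{\mathbf{f}}$, and since $s(\mathbf{g} + \mathbf{h}) = 0$ the mean-value inequality along $[\mathbf{g},\mathbf{g}+\mathbf{h}]$ yields $|s(\mathbf{g})| \le \|\mathbf{h}\|\,\sup\|\nabla s\|$. Bounding $\|\nabla s\|$ on the ball of radius $M + \epsilon_0$ by the standard polynomial-derivative estimate $c\,d_s(1+M)^{d_s-1}\|s\|_{L^2}$ converts this into $|s(\mathbf{g})| \le \epsilon_1\|s\|_{L^2}$ with $\epsilon_1$ linear in $\epsilon_0$.

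Finally, the Carbery--Wright inequality for a real polynomial of degree $d_s$ on a standard $2S$-dimensional Gaussian gives $\Pr[|s(\mathbf{g})| \le \epsilon\|s\|_{L^2}] \le C\,d_s\,\epsilon^{1/d_s}$. Setting the right-hand side equal to $\delta/2$, solving for $\epsilon$, substituting the previous bound $\epsilon_1$, taking logarithms, and solving for $H$ produces exactly the lower bound displayed in the statement; a union bound against the event $\|\mathbf{g}\| > M$ then delivers the total probability $\le \delta$. The main obstacle is the bookkeeping in the last step: reconciling the $(1/d_s)$-exponent of Carbery--Wright with the $(1+M)^{d_s-1}$ polynomial growth of $\nabla s$ on balls, and tracking the various constants ($16e$, $\sqrt{e}$, $d_r^2$ and $\sqrt{\max_i S_i}$) through the logarithm so that they combine precisely into the stated form is mechanical but delicate.
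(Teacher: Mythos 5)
Your opening moves track the paper exactly: you correctly use Theorem~\ref{cond-num-infty} to produce a perturbation $\mathbf h$ with $\mathbf g + \mathbf h \in \Sigma^{\infty}_{\mathbf f}$ and $\|\mathbf h\|$ exponentially small in $\eta H/n$, and you correctly identify the Sylvester resultant $s$ of degree $\le d_r^2$ from Theorem~\ref{degenerate-locus}(c). The norm-conditioning step is fine as a device, although you should note that you condition on $\|\mathbf g\|$ being \emph{small}, whereas the paper conditions on $\|\mathbf g\|$ being \emph{not too small} --- the latter is what makes the ratio $\|\mathbf h\|/\|\mathbf g\|$ (the reciprocal conic condition number) directly controllable without any upper bound on $\|\mathbf g\|$.

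The genuine gap is in the last step: Carbery--Wright gives an estimate of the form $\Pr\bigl[\,|s(\mathbf g)| \le \epsilon\,\|s\|_{L^2}\,\bigr] \lesssim d_s\,\epsilon^{1/d_s}$. Setting this equal to $\delta/2$ forces $\epsilon_1 \lesssim (\delta/d_s)^{d_s}$, hence $\log(1/\epsilon_1) \gtrsim d_s \log(d_s/\delta)$. After back-substituting $\epsilon_1 \asymp \epsilon_0 \asymp e^{-\eta H/n}\cdot(\text{polynomial in } S, T)$, this yields a lower bound of the form $H \gtrsim \frac{n}{\eta}\, d_r^2 \log(d_r^2/\delta)$, which is off by a multiplicative factor $d_r^2$ from the claimed $H \ge \frac{n}{\eta}\log(16\,e\,\delta^{-1} d_r^2 S\,\cdots)$. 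The $\epsilon^{1/d}$ exponent of Carbery--Wright is sharp (witness $s(x)=x_1^{d}$), so no bookkeeping can repair this. The paper avoids it by keeping the geometric formulation: it bounds the reciprocal conic condition number $\mathscr C(\mathbf g)^{-1} = \inf_{s(\mathbf g + \mathbf h)=0} \|\mathbf h\| / \|\mathbf g\|$ directly and invokes \cite{BC}*{Theorem 21.1}, a tube-volume estimate for Gaussian measure near a real projective hypersurface, which is \emph{linear} in $\epsilon$ (with the degree $d_r^2$ appearing only as a prefactor, not as an exponent). Your detour through the mean-value inequality from ``$\mathbf g$ is at distance $\le \epsilon\|\mathbf g\|$ from $Z(s)$'' to ``$|s(\mathbf g)|$ is small'' is exactly where the information is lost, because $|s|$ can be small on a much fatter region than the geometric $\epsilon$-neighborhood of $Z(s)$ when $s$ has vanishing gradient. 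To prove the theorem as stated you must use a condition-number or tube estimate rather than a sublevel-set anti-concentration bound.

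As a secondary issue, your gradient estimate $\sup_{\|x\|\le M+\epsilon_0}\|\nabla s(x)\| \lesssim d_s(1+M)^{d_s-1}\|s\|_{L^2}$ is not correct as stated: relating the sup norm of the gradient on a Euclidean ball of radius $M$ to the Gaussian $L^2$ norm of $s$ costs a factor that grows like $(1+M)^{d_s}$ times a dimension-dependent constant, which would further degrade the final $H$ bound even under the Carbery--Wright framework.
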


\begin{remark} The coefficients $v_k$ can be bounded in terms of a more
	classical-looking Quermassintegral. Indeed, let
	\changed{
		\begin{eqnarray*}
	w(\tau) &\defeq& 
	n! V( \conv{A_1}+\tau B^n, \dots, \conv{A_n}+\tau B^n )
\\
			&=& \sum_{k=0}^n \frac{w_k}{k!} \tau^k .
		\end{eqnarray*}}
	The first terms are $w_0 = V$ and $w_1=V'$. \changed{Since
	$\mathcal A \subseteq \left(\sum_{i=0}^n \frac{\diam{A_i}}{2}\right) B^n$,
	we bound
\[
	v_k \le \left(\sum_{i=0}^n \frac{\diam{A_i}}{2}\right)^{k} w_{k+1}.
\]}
In particular,
	\[
	\log(d_r) \le O\left( 
	\log(\# \mathfrak F_0) + \thirdrev{n} + \changed{n \log\left(\sum_{i=0}^n \diam{A_i}\right) + \log (\max w_k)} \right) 
\]
\end{remark}

\changed{
	\begin{remark}\label{intrinsic}
The coefficients $w_k$ from the preceding remark are closely related to well-known
invariants in convex geometry, the {\em intrinsic volumes} 
$V_j(C)$ of a convex body $C$.
In the unmixed case $A_1 = A_2 = \cdots = A_n$, one has
\[
	\binomial{n}{j} w_{n-j} = \vol_{n-j}(B^{n-j}) V_j(\conv{A_1})
.
\]
while in the general case,
\[
	\binomial{n}{j} w_{n-j} \le \vol_{n-j}(B^{n-j}) V_j(\mathcal A)
.
\]
	so $\log (\max w_k) \le O(\log \max_j V_j(\mathcal A))$.
\end{remark}}

\subsection{Analysis of the homotopy algorithm}
\label{sec:unconditional}

The `renormalization' process comes at a cost. We will define below
three sets that must be excluded from the choice of \secrev{the random initial system} $\mathbf g$ for the
algorithm to behave well. Choices of $\mathbf g$ in one of those sets
may lead to a `failed' computation. \secrev{In this case we will start over with another random $\mathbf g$.} 

The first of those sets is easy to describe and easy to avoid. It corresponds
to paths $\mathbf q_t = \secrev{\mathbf g} + t \mathbf f$ with no known  
decent upper bound for the renormalized speed vector
\[
	\left\| \frac{\partial}{\partial t} ( \mathbf q_t \cdot R(\mathbf z_t) )
	\right\|_{\mathbf q_t \cdot R(\mathbf z_t)}
\]
\secrev{valid for all continuous solutions} $\mathbf z_t \in Z(\mathbf q_t)$. We will see that this set is
confined to 
a product of \secrev{thin} slices 
\secrev{around semi-axes $[0,-f_{i\mathbf a}\infty)$ 
in the complex plane }, one slice from each coordinate:
\[
	\glsdisp{exclusion}{\Lambda_{\epsilon}} \defeq \left\{ \mathbf g \in \mathscr F: 
\ \exists 1 \le i \le n, \ \exists \mathbf a \in A_i, \ 
|\arg \left(\frac{g_{i{\mathbf a}}}{f_{i\mathbf a}}\right)| \ge \pi-\epsilon
\right\}
.
\]
Notice that $\probability{\mathbf g \sim \secrev{\mathcal N}(\mathbf 0,I; \mathscr F)}{\mathbf g \in \Lambda_{\epsilon}} \le S \frac{\epsilon}{\pi}$, \secrev{where} $S=\dim_{\mathbb C}(\mathscr F)$. For simplicity we will take $\epsilon = \frac{\pi}{72 S}$ so that
\secrev{we have} $\probability{g \sim \secrev{\mathcal N}(\mathbf 0, I; \mathbf F)}{\mathbf g \in \Lambda_{\epsilon}}\le 1/72$.

The second exclusion set is more subtle. We want to 
remove the set
\[
	{\Omega_H} = \Omega_{\mathbf f, T, H} 
\]
from Theorem~\ref{vol-omega}. \changed{The precise value of $T$ will
be specified later}.
While we do not know {\em a priori} if $\mathbf g \in \Omega_H$, we can
execute the path-following algorithm and stop in case of failure,
that is in case $\|\Re(\mathbf z(t))\|_{\infty} \ge H$.
Failure is also likely if $\mathbf f$ has a root at infinity.
We pick $H$ so that
$\delta = \probability{g \sim \secrev{\mathcal N}(\mathbf 0, I; \mathbf F)}{\mathbf g \in \Omega_{H}}
\le \frac{1}{2 (2d_r^2+1) S}$ in order to apply
Theorem~\ref{vol-omega}. Since $S \ge 4$ and $d_r \ge \# \mathfrak F_0 \ge 2$,
we have always $\delta \le \frac{1}{72}$.
\changed{If furthermore $\mathbf f$ is scaled so that $\| \mathbf f\|=\sqrt{S}$,} we deduce that with probability $\ge 71/72$, $\mathbf g \not \in \Omega_H$
for
\begin{equation}\label{eq-H}
	H \secrev{\defeq} \frac{n}{\eta}
O\left( \log (d_r) + \log (S) + \log(T) \right)
\end{equation}

The third set to avoid is
\begin{equation}\label{eqYK}
	{Y_K} \secrev{\defeq} \{ \mathbf g \in \mathscr F: \exists i, \| \mathbf g_i \| \ge K \sqrt{S_i}\}
\end{equation}
Using the large deviations estimate, we will show \secrev{in Section~\ref{overview-linear}} that for
$K=1+\sqrt{\frac{\log(n)+\log(10)}{\min (S_i)}}$ we have $\probability{g \sim \secrev{\mathcal N}(\mathbf 0, I; \mathbf F)}{\mathbf g \in Y_K}\le 1/10$.

\secrev{Before stating the next theorem, we introduce a couple of notations.
For any system 
$\mathbf f \in \mathscr F_{A_1} \times \cdots \times \mathscr F_{A_n}$ 
we define its {\em imbalance invariant}
\begin{equation}\label{kappaf}
	\gls{kappaf} \defeq \max_{i, \mathbf a} \frac{\| \mathbf f_i \|}
	{|\mathbf f_{i\mathbf a}|} 
\end{equation}
and its {\em condition}
\begin{equation}\label{muf}
	\gls{muf} \defeq 
	\max_{\mathbf z \in Z(\mathbf f)} \mu(\mathbf f \cdot R(\mathbf z)) =
\end{equation}
}

\begin{theorem}\label{mainD}
There is a constant $C$ with the following property:
\begin{enumerate}[(a)]
	\item \label{mainD-supports} Assume that $A_1, \dots, A_n \subseteq \mathbb R^n$
are such that the $\mathbb Z$-module $\Lambda$ generated by
		$\bigcup_{i=1}^n A_i-A_i$ is a subset of $\mathbb Z^n$, and that the mixed volume \secrev{$V=V(\mathcal A_1, \dots,$ $\mathcal A_n)$} is non-zero,
		$\mathcal A_i = \conv{A_i}$. 
\item \label{mainD-spaces} Let $\mathscr F = 
\mathscr F_{A_1} \times \cdots \times \mathscr F_{A_n}$ where it is
assumed that for all $i, \mathbf a \in A_i$, $\rho_{i, \mathbf a}=1$. 
Denote by $S_i$ the complex dimension of $\mathscr F_{A_i}$,
		$S_i=\# A_i$, \fourthrev
	{$S=\sum S_i = \dim_{\mathbb C} (\mathscr F)$} and assume that \secrev{$n \ge 2$}
		and $S_i \ge 2$.
\item \label{mainD-fixed}
Let $\mathbf f \in \mathscr F$ with $r(\mathbf f) \ne 0$ where $r$ is the polynomial from Theorem~\ref{degenerate-locus}. Suppose also
that $\mathbf f$ is scaled in such a way that
$\|\mathbf f_i \| = \sqrt{S_i}$. 
\item \label{mainD-random}
Take $\mathbf g \sim \secrev{\mathcal N}(\mathbf 0,I; \mathscr F)$, and and consider the
random path
		$\mathbf q_t = \mathbf g + t \mathbf f \in \mathscr F$\secrev{, $0 \le t < \infty$.} To this path \secrev{in coefficient space,}
		associate the set $\mathscr Z(\mathbf q_t)$ \secrev{of continuous paths $z_t$ in solution space with } $\mathbf q_t \cdot \mathbf V(\mathbf z_t) \equiv \mathbf 0$
		\secrev{, for all $0 \le t < \infty$}. 
\end{enumerate}
	Then with probability 1, \secrev{there are $\frac{n! V}{\det(\Lambda)}$ paths $\mathbf z_t \in \mathscr Z(\mathbf q_{\tau})$ and all of them extend to continuous paths in $\secrev{\mathscr M}$ for $t \in [0, \infty]$.} With 
	probability $\ge 3/4$\secrev{, we have} \fourthrev{$g \not \in \Omega_H$ and}
\[
\sum_{\mathbf z_{\tau} \in \mathscr Z(\mathbf q_{\tau})} \mathscr L (\mathbf q_t, \mathbf z_t; 0, \infty)
	\le C 
	Q \changed{n^{\thirdrev{\frac{5}{2}}}} S\max_i (\thirdrev{S_i}) 
	K \thirdrev{\max(K,\sqrt{\kappa_{\mathbf f}})} 
	\kappa_{\mathbf f} \mu_{\mathbf f}^2 \secrev{\nu}
	\ \secrev{\mathrm{LOGS}_{\mathbf f}}
\]
	\secrev{where
\begin{equation}\label{mainD-K}
K \defeq \left(1+\sqrt{\frac{\log(n)+\log(10)}{\min (S_i)}}\right), 
\end{equation}
	\begin{equation}\label{mainD-Q}
	\gls{Q}	\defeq \eta^{-2} \left(\sum_{i=1}^n \delta_i^2\right) \frac {\max( n! V , \thirdrev{n!} V' \eta)}{\det \Lambda},
	\end{equation}
	\fourthrev{the distortion invariant $\nu$ was defined in \eqref{distortion},
	the radii $\delta_i=\delta_i(0)$ were defined in \eqref{deltaix},}
		the facet gap $\eta$ was defined in \eqref{defeta} and $V'$ is the mixed \changed{area as in Equation \eqref{eq-mixed-area}}.
	The notation $\mathrm {LOGS}_{\mathbf f}$ stands for
	\begin{equation}\label{mainD-LOGS}
		\secrev{\mathrm{LOGS}_{\mathbf f}} \defeq
		\log (d_r) +\log(S) + \log(\mu_{\mathbf f})+\log(\kappa_{\mathbf f}) 
	\end{equation}
	and $d_r$ is the degree of the polynomial $r$.} 
\end{theorem}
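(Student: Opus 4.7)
The plan is to establish the bound in expectation (via Theorem~\ref{old-mainB}) and then upgrade to a high-probability statement by Markov's inequality, after conditioning on the complement of the three exclusion sets from Subsection~\ref{sec:unconditional}.

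\paragraph{Step 1 (Exclusion events).} Set $\epsilon=\pi/(72S)$, $K$ as in \eqref{mainD-K}, and $H$ as in \eqref{eq-H} with $T$ large enough that the late-time contribution dominates. Using the probability estimates already sketched in Subsection~\ref{sec:unconditional},
\[
\Pr[\mathbf{g}\in\Lambda_\epsilon\cup\Omega_H\cup Y_K]\le \tfrac{1}{72}+\tfrac{1}{72}+\tfrac{1}{10}.
\]
On the complementary event $U$, every solution path $\mathbf{z}_\tau$ stays within $\|\Re\mathbf{z}_\tau\|_\infty\le H$, satisfies $\|\mathbf{g}_i\|\le K\sqrt{S_i}$, and the multi-projective line $\tau\mapsto[\mathbf q_\tau]$ has each coordinate bounded away from the antipode of $[\mathbf f_i]$ (this rules out speed blow-up when crossing the origin in some $\mathscr F_{A_i}$).

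\paragraph{Step 2 (Controlling the integrand of $\mathscr L$).} Fix a continuous lifting with $\mathbf{q}_\tau\cdot\mathbf{V}(\mathbf{z}_\tau)\equiv\mathbf 0$ and set $\mathbf p_\tau=\mathbf{q}_\tau\cdot R(\mathbf{z}_\tau)$. Differentiating $\mathbf p_\tau\cdot\mathbf V(\mathbf 0)\equiv\mathbf 0$ yields
\[
\dot{\mathbf z}_\tau=-M(\mathbf p_\tau,\mathbf 0)^{-1}\bigl(\mathbf f\cdot R(\mathbf z_\tau)\cdot\mathbf V(\mathbf 0)\bigr),
\]
so both $\|\dot{\mathbf z}_\tau\|_{\mathbf 0}$ and the tangent part of $\dot{\mathbf p}_\tau$ are controlled by $\mu(\mathbf p_\tau)\nu$ times the multi-projective Fubini--Study speed of $\tau\mapsto[\mathbf q_\tau]$. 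The total Fubini--Study length of this line is at most $\sqrt{n}\,\pi/2$. Theorem~\ref{th-renormalization}(c) is then used to convert between the Hermitian norm $\|\cdot\|$ and the renormalized $\|\cdot\|_{\mathbf p_\tau}$ norm; outside $\Lambda_\epsilon$ this conversion costs a factor comparable to $\kappa_{\mathbf f}^{1/2}$. Cauchy--Schwarz then gives
\[
\sum_{\mathbf z_\tau}\mathscr L \;\le\; C\sqrt{n}\,\nu\,\kappa_{\mathbf f}^{1/2}\sqrt{\int_0^\infty\sum_{\mathbf z_\tau}\mu^2(\mathbf p_\tau)\,\|\dot{\mathbf q}_\tau\|_{\mathbf q_\tau}\,d\tau}.
\]

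\paragraph{Step 3 (Integrated expected condition).} At each $\tau$, $\mathbf{q}_\tau\sim\mathcal N(\tau\mathbf{f},I)$, so Theorem~\ref{old-mainB} bounds $\mathbb E[\sum_{Z_H}\mu^2(\mathbf p_\tau)]$ by the geometric constant $Q$ (coming from $V$, $V'$, $\det\Lambda$, $\sum\delta_i^2$, and $\eta$) times the polynomial $(1+3L+\mathrm{log\ terms})^2$ with $L=\tau\|\mathbf f\|/\sqrt{\min S_i}$. Changing variables via the arc-length parameter $\theta$ of the projective line, the Jacobian $\|\dot{\mathbf{q}}_\tau\|_{\mathbf q_\tau}\,d\tau=d\theta$ tames the growth in $\tau$ and reduces everything to an integral over $\theta\in[0,\sqrt{n}\,\pi/2]$. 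Splitting this interval at $\theta^\ast$ close to $[\mathbf f]$, Proposition~\ref{aggregates}(b) replaces $\mu(\mathbf p_\tau)$ by $2\mu_{\mathbf f}$ on $[\theta^\ast,\sqrt{n}\,\pi/2]$, which is the source of the dominant $\mu_{\mathbf f}^2$ factor; on the complementary interval one uses the Theorem~\ref{old-mainB} bound directly.

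\paragraph{Step 4 (Assembling).} Collecting the constants, the expectation on $U$ of $\sum\mathscr L$ is bounded by $C'\,Q\,n^{5/2}S\max_i(S_i)\,K\max(K,\sqrt{\kappa_{\mathbf f}})\,\kappa_{\mathbf f}\mu_{\mathbf f}^2\nu\,\mathrm{LOGS}_{\mathbf f}$; Markov's inequality with a slack of four, together with the probability estimate $\Pr[U^c]<1/4$, promotes this to the $3/4$-probability bound claimed. The hard point is Step~2: naive bounds on the interplay of the renormalization operator with the Hermitian norm give a factor of $\kappa_{\mathbf f}^2$, and trimming this to $\kappa_{\mathbf f}^{3/2}$ demands combining Theorem~\ref{th-renormalization}(c) with the angular separation guaranteed by $\Lambda_\epsilon$ and the facet-gap estimate $\eta$ from Remark~\ref{remark-on-eta}.
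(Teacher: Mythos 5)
Your overall architecture (exclusion events, expected-condition bound, Markov) matches the paper in spirit, but there is a genuine gap where you try to control the full half-line $t\in[0,\infty)$ by a single reparameterisation.

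The core problem is your Step 3. When you apply Theorem~\ref{old-mainB} to $\mathbf q_\tau=\mathbf g+\tau\mathbf f\sim\mathcal N(\tau\mathbf f,I)$, the parameter $L$ grows like $\tau$, so the right-hand side of Theorem~\ref{old-mainB} grows like $\tau^2$. The arc-length element you invoke, $d\theta=\|\dot{\mathbf q}_\tau\|_{\mathbf q_\tau}\,d\tau$, decays like $1/\tau$ for large $\tau$ (the projective speed of the affine line). Multiplying these, the would-be integrand still behaves like $\tau\,d\tau$, so the integral over $[0,\infty)$ diverges. Your claim that the change of variables ``tames the growth in $\tau$'' does not survive this check. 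The paper avoids exactly this trap by splitting the interval at a finite $T$ chosen proportionally to $\kappa_{\mathbf f}K\sqrt{n}\sqrt{S}\mu_{\mathbf f}^2\nu$: the interval $[0,T]$ is handled by the expectation bound (Theorem~\ref{condlength}, with $T$ appearing as a multiplicative factor in the final bound), while the tail $[T,\infty)$ is handled \emph{deterministically} in Proposition~\ref{last-part} using the quadratic decay of the renormalized speed (Lemma~\ref{speed-decay}) and a Lipschitz control of $\mu$ near the limit root (Lemma~\ref{lem-delta}). Your Step~3 allusion to Proposition~\ref{aggregates}(b) ``replacing $\mu(\mathbf p_\tau)$ by $2\mu_{\mathbf f}$'' gestures at the right idea, but it requires showing both that $\mathbf z_\tau\to\boldsymbol\zeta$ fast enough and that the renormalized projective distance $d_{\mathbb P}(\mathbf q_\tau\cdot R(\mathbf z_\tau),\mathbf f\cdot R(\boldsymbol\zeta))$ is below $1/(2\mu_{\mathbf f})$ uniformly on $[T,\infty)$; that is precisely the bootstrap in Lemma~\ref{lem-delta}, which you have not supplied.

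A secondary issue is Step~2: as written, the Cauchy--Schwarz inequality does not produce the displayed formula. The quantity $\mathscr L$ on the left is not the square root of the object under the radical on the right (the units are wrong), and the $\kappa_{\mathbf f}^{1/2}$ factor is asserted rather than derived. In the paper, the $\sqrt{\kappa_{\mathbf f}}$ and $K$ factors come from the detailed analysis in Lemma~\ref{lem-C2} (integrating $(t+K)^2/|g_{i\mathbf a}/f_{i\mathbf a}+t|^2$ against the conditional Gaussian, which is where the angular exclusion $\Lambda_\epsilon$ is actually used) and from Lemma~\ref{cond-length-triv-bounds} for the $\mathscr L_2$ piece. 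You would also need the double Cauchy--Schwarz (first over $z\in Z(\mathbf q_t)$ to pull out $\sqrt{\#Z}$, then over $t$) to assemble $\sqrt{n!V}\cdot\sqrt{n!V'}$ into the single invariant $Q$; this is where the max of $n!V$ and $n!V'\eta$ in \eqref{mainD-Q} arises.
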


\begin{remark}\label{num-paths}
Remark~\ref{remark-on-eta} implies that
$\frac{1}{2} \le \frac{n}{4} \le \eta^{-2} \sum \delta_i^2$. The number of paths
satisfies with probability one:
\[
	\# \mathcal Z(\mathbf q_t) = \frac{n! V}{\secrev{\det(\Lambda)}} \le 2 Q.
\]
\end{remark}
\begin{remark}
The quantity $Q$ is invariant by
uniform integer scaling $A_i \mapsto d A_i$, $d \in \mathbb N$.
\end{remark}

\begin{remark}
	In the particular case \secrev{where} 
	$\mathbf f$ is strongly mixed, \secrev{we have}
	$d_r \le \# \mathfrak F_0$. Otherwise a bound for $d_r$ is provided by Theorem~\ref{degenerate-locus}.
\end{remark}

The proof of Theorem~\ref{mainD} is postponed to Section~\ref{sec-linear}.
\changed{In Definition~\ref{def-approximate-root} we \secrev{defined} {\em certified approximate
roots}. Next we define a global object to approximate the zero-set of a 
non-degenerate system of equations:}

\begin{definition} Assume that $\mathbf f \in \mathscr F$ is non-degenerate
with no root at infinity. 
	A {\em certified solution set} for $\mathbf f$ is a set $X \secrev{\,\subseteq \mathscr M}$ of certified
approximate roots for $\mathbf f$, with different associated roots,
	\changed{and such that}
\[
	\secrev{	\# X = \frac{n! V(\conv{A_1}, \dots, \conv{A_n})}{\det \Lambda}
	}.
\]
\end{definition}

\secrev{Theorems \ref{th-A} and \ref{mainD} combined will provide a probabilistic estimate for a randomized homotopy algorithm between two fixed $\mathbf f, \mathbf h \in \mathscr F$. We will assume that both $\mathbf f$
and $\mathbf h$ satisfy the hypothesis (\ref{mainD-fixed}) in Theorem~\ref{mainD}, viz. $r(\mathbf f) \ne 0$ and $r(\mathbf h) \ne 0$.
Given a certified solution set $X_{\mathbf h}$ for $\mathbf h$, we may 
attempt to proceed as \ocite{Li-Sauer-Yorke}:
\medskip

\begin{algorithm}[H]\label{algorithm-1}
\SetKwInput{KwInput}{Input}                
\SetKwInput{KwOutput}{Output}              
\SetKwRepeat{Repeat}{repeat}{end}
\KwInput{$\mathbf f, \mathbf h \in \mathscr F$ and a certified
	solution set $X_{\mathbf h}$ for $\mathbf h$.}
\KwOutput{A certified solution set $X_{\mathbf f}$ for $\mathbf f$.}
Pick $\mathbf g \in \secrev{\mathcal N}(\mathbf 0,I; \mathscr F)$ and 
	let $X_{\mathbf g}\fourthrev{, X_{\mathbf f}} \leftarrow \emptyset$\;
For each $\mathbf x_0 \in X_{\mathbf h}$, apply the recurrence 
\eqref{rec-zero} to the path $\mathbf q_t = \mathbf h + t \mathbf g$
	for $t \in [0,\infty]$. \fourthrev{If $\mathbf x_N$ is the last
	value obtained, store $\mathbf x_{N+1} \defeq N(\mathbf g \cdot R(\mathbf x_N))+\mathbf x_N$
	in $X_{\mathbf g}$}\;
For each $\mathbf x_0 \in X_{\mathbf g}$, apply the recurrence 
	\eqref{rec-zero} to the path 
	$\tilde{\mathbf q}_t = \mathbf g + t \mathbf f$
for $t \in [0,\infty]$.  \fourthrev{If $\mathbf x_N$ is the last
	value obtained, store $\mathbf x_{N+1} \defeq N(\mathbf f \cdot R(\mathbf x_N))+\mathbf x_N$
	in $X_{\mathbf f}$}\;

\caption{Randomized homotopy between sparse systems.}
\end{algorithm}
\medskip

\fourthrev{
	Recall from Definition~\ref{def-rencondlength} that the condition length of a path $(\mathbf q_t)$ 
is scaling invariant in each coordinate $\mathbf q_{it}$. 
The notation $(\mathbf q_t)_{t \in [0,\infty]}$ stands
for the topological closure of $([\mathbf q_{t}])_{t \in [0,\infty)}$
in the product of the $\mathbb P(\mathscr F_{A_i})$'s. The limit point
$\lim_{t \rightarrow \infty} [\mathbf q_t]$ is precisely $\mathbf g$.

If the renormalized condition length of the path
$(\mathbf q_t)_{t \in [0, \infty]}$ is finite, Theorem~\ref{th-A}
guarantees that the recurrence \eqref{rec-zero} terminates 
with
\[
	N \le 1+ \frac{1}{\delta_*} \mathscr L(\changed{(\mathbf q_{t}, \mathbf z_t);}\,0,\infty)
\]
and that each $\mathbf x_{N+1}$ produced in step 2 is a certified approximate
root for $[\mathbf q_{\infty}]=[\mathbf g]$.
If the renormalized
condition length of $(\mathbf q_t)_{t \in [0, \infty]}$ is infinite, then step 2 may not terminate.

The second path was defined by
$\tilde{\mathbf q}_t = \mathbf g + t \mathbf f$ for $t\in [0,\infty]$.
It is precisely the path $\mathbf f + \tau \mathbf g$, $\tau \in [0, \infty]$,
in the opposite sense, so the condition length is the same. 
The same notations apply, but this time, $\lim_{t \rightarrow \infty} [\tilde {\mathbf q}_t]=[\mathbf f]$.

If the condition
length is finite, then Theorem~\ref{th-A} guarantees again that 
\[
	N \le 1+ \frac{1}{\delta_*} \mathscr L(\changed{(\tilde{\mathbf q}_{t}, \mathbf z_t);}\,0,\infty)
\]
and that $\mathbf x_{N+1}$ is a certified approximate
root of $[\tilde{\mathbf q}_{\infty}] = [\mathbf f]$. But if the renormalized
condition length is infinite, step 3 may not terminate.
}

\fourthrev{The path $\mathbf q_t$ satisfies the renormalized condition length bound of Theorem~\ref{mainD} with probability 
$\ge \frac{3}{4}$. The path $\tilde{\mathbf q}_t$ satisfies the same bound with
probability $\ge \frac{3}{4}$. Hence, the probability that step 2 fails or that it takes longer than the theoretical bound from Theorem~\ref{mainD} is 
$\le 1/4$. The same is true for step 3.
Hence, with probability at least $1/2$, Algorithm~\ref{algorithm-1} 
terminates within the theoretical time bound from
the two applications of Theorem~\ref{mainD}. 
}

From Remark \ref{num-paths}, there are at most $2Q$ paths to follow for
step 2, and also for step \fourthrev{3. Each path may require up to $N+1$ renormalized Newton iterations.}
The properties of Algorithm~\ref{algorithm-1} are summarized below. 
First define
\begin{equation}\label{L}
	L_{\mathbf f} \defeq 
	 \thirdrev{\max(K,\sqrt{\kappa_{\mathbf f}})} 
 \kappa_{\mathbf f} \mu_{\mathbf f}^2 \mathrm{LOGS}_{\mathbf f}
	\hspace{1em}
	\text{and}
	\hspace{1em}
	L_{\mathbf h} \defeq \thirdrev{\max(K,\sqrt{\kappa_{\mathbf f}})} 
			\kappa_{\mathbf h} \mu_{\mathbf h}^2 \mathrm{LOGS}_{\mathbf h}
.
\end{equation}
}

\begin{proposition}\label{composed-homotopy}
	Let $\mathbf f, \mathbf h \in \mathscr F$ be given \changed{such that} $r(\mathbf f) \ne 0$,
	$r(\mathbf h) \ne 0$, together
with a certified solution set $X_{\mathbf h}$ for $\mathbf h$. 
	\changed{We assume that $\mathbf f$ and $\mathbf h$ are scaled so that
	$\|\mathbf f_i\|=\|\mathbf h_i\|=\sqrt{S_i}$.}
	Let $\mathbf g \in \secrev{\mathcal N}(\mathbf 0,I; \mathscr F)$.
	Then with probability $\ge 1/2$, \secrev{Algorithm~\ref{algorithm-1}} will produce a certified solution set 
	$X_{\mathbf f}$ for $\mathbf f$ in at most
	\secrev{
	\begin{equation}\label{N-star}
	\changed{N_*} \defeq		4Q \left(2+ \frac{C}{\delta_*} n^{\thirdrev{\frac{5}{2}}} S \max_i (\thirdrev{S_i}) 
	K \nu
			\left(  
L_{\mathbf f} 
			+  
			L_{\mathbf h}\right)\right)
	\end{equation}}
	renormalized Newton iterations, \changed{where $\delta_*$ is the constant in Theorem~\ref{th-A}}, \secrev{$\mathrm{LOGS}_{\mathbf f}$ was defined in
	\eqref{mainD-LOGS} and accordingly,}
	\[
		\secrev{\mathrm{LOGS}_{\mathbf h}= 
	\log (d_r) +\log(S) + \log(\mu_{\mathbf h}) + \log (\kappa_{\mathbf h})
	.}
		\]
\end{proposition}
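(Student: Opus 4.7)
The plan is to apply Theorem~\ref{mainD} twice, combine the two bounds on the renormalized condition length via a union bound, and then convert length into step count via Theorem~\ref{th-A}. The bookkeeping at the end must match the expression for $N_*$ in~\eqref{N-star}.

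First I would verify that Theorem~\ref{mainD} applies to each of the two homotopy paths used in Algorithm~\ref{algorithm-1}. The path in step~4, $\mathbf{q}_t = \mathbf{g} + t\mathbf{f}$ for $t \in [0,\infty]$, is exactly of the form required by Theorem~\ref{mainD}, with the fixed endpoint being $\mathbf{f}$ and the random endpoint being $\mathbf{g}$; hypothesis~(\ref{mainD-fixed}) is guaranteed by $r(\mathbf{f})\neq 0$ together with the normalization $\|\mathbf{f}_i\|=\sqrt{S_i}$. For step~2 the path is $\mathbf{q}_t = \mathbf{h} + t\mathbf{g}$, which is not literally of the form $\mathbf{g}+t\mathbf{h}$, but the renormalized condition length $\mathscr{L}$ is invariant under multi-projective reparametrization of~$\mathbf{q}_t$ (since $\mu$, the multi-projective speed, and the root set all are). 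Reparametrizing by $s=1/t$ and rescaling each component by $1/t$, the path is multi-projectively equivalent to $\mathbf{g} + s\mathbf{h}$ traversed on $s \in [0,\infty]$; hence Theorem~\ref{mainD} applies with $\mathbf{f}$ replaced by $\mathbf{h}$.

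Applying Theorem~\ref{mainD} to each of the two homotopies independently, each yields with probability $\geq 3/4$ the bound
\[
\sum_{\mathbf{z}_\tau \in \mathscr{Z}} \mathscr{L}(\mathbf{q}_t,\mathbf{z}_t;0,\infty) \;\le\; C\, Q\, n^{5/2}\, S\, \max_i(S_i)\, K\, \nu\, L_{\bullet},
\]
with $\bullet$ equal to $\mathbf{h}$ or $\mathbf{f}$ respectively. By a union bound, both bounds hold simultaneously with probability at least $1-2\cdot \tfrac14 = \tfrac12$, which is the claimed success probability.

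Next I would convert length into iteration count using Theorem~\ref{th-A}. Each of the (at most $2Q$, by Remark~\ref{num-paths}) solution paths $\mathbf{z}_t$ of a given homotopy produces a sequence of mesh points by the recurrence~\eqref{rec-zero}, and Theorem~\ref{th-A} bounds the number of such mesh points by $1 + \mathscr{L}(\mathbf{q}_t,\mathbf{z}_t;0,\infty)/\delta_*$. Summing over all paths, the total iteration count for step~2 is at most $2Q + \mathscr{L}_{\mathbf{h},\mathrm{total}}/\delta_*$ and analogously $2Q + \mathscr{L}_{\mathbf{f},\mathrm{total}}/\delta_*$ for step~4; steps~3 and~5 add $2Q$ renormalized Newton iterates each. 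Adding everything yields a total of at most
\[
8Q + \frac{CQ\, n^{5/2}\, S\, \max_i(S_i)\, K\, \nu}{\delta_*}\bigl(L_{\mathbf{f}} + L_{\mathbf{h}}\bigr),
\]
which is bounded by $N_*$ in~\eqref{N-star} (absorbing the constant). Finally, Theorem~\ref{th-A} also guarantees that the extra Newton iterates in steps~3 and~5 upgrade $\mathbf{x}_N$ into a certified approximate root of $\mathbf{g}$ and of $\mathbf{f}$ respectively, in the sense of Definition~\ref{def-approximate-root}. Because the association of approximate roots to exact roots along a homotopy is injective (Theorem~\ref{th-A} produces a continuous lifting for each starting root), the output $X_{\mathbf{f}}$ contains exactly $\#X_{\mathbf{h}} = n!V/\det\Lambda$ distinct associated roots, so it is a certified solution set for $\mathbf{f}$.

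The main obstacle, and the one to argue carefully, is the first step above: justifying that the path $\mathbf{h}+t\mathbf{g}$ falls under the scope of Theorem~\ref{mainD} despite not being written in the form $\mathbf{g}+t\mathbf{f}$. Beyond the projective reparametrization argument, one must check that the paths in $\mathscr{Z}$ do extend continuously to $t=\infty$ with probability one and that the exclusion sets $\Lambda_\epsilon$, $\Omega_H$, $Y_K$ of Section~\ref{sec:unconditional} are symmetric enough in the roles of the two endpoints to apply to both directions; this is where the scaling assumption $\|\mathbf{f}_i\|=\|\mathbf{h}_i\|=\sqrt{S_i}$ is used.
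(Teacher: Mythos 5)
Your proof is correct and follows essentially the same route as the paper: apply Theorem~\ref{mainD} to each of the two homotopies, use a union bound on the two probability-$\ge 3/4$ events to get $\ge 1/2$, and then convert condition length to iteration count via Theorem~\ref{th-A}, counting $2Q$ paths per homotopy and $2Q$ extra Newton steps at steps 3 and 5. The one place where you are more explicit than the paper is the projective reparametrization $s=1/t$ showing that the step-2 path $\mathbf h + t\mathbf g$ falls under the scope of Theorem~\ref{mainD} with $\mathbf h$ playing the role of the fixed system --- the paper takes this for granted, and your observation that $\mathscr L$ is reparametrization- and orientation-invariant (both factors in the integrand are multi-projectively scale-invariant, and the integral is of a nonnegative density) is exactly what makes it legitimate.
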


This algorithm can be modified 
to `give up' for an unlucky choice of $\mathbf g$ and start again.
However, the results in this paper would be completely useless if one
really needed to know $Q$, $d_r$ and $\mu_{\mathbf f}$ in order to produce a 
probability one algorithm for homotopy. 
\changed{This is why we do not assume $N_*$ known}.
Since the probability 1/2 procedure of Proposition~\ref{composed-homotopy}
requires at most $N_*$ renormalized Newton iterations, 
we will \secrev{modify Algorithm~\ref{algorithm-1} 
so that it counts the number of renormalized Newton iterations and fails
if this number exceeds a previously stipulated bound.}
\medskip

\secrev{
\begin{algorithm}[H]\label{algorithm-2}
\SetKwInput{KwInput}{Input}                
\SetKwInput{KwOutput}{Output}              
\SetKwRepeat{Repeat}{repeat}{end}
\KwInput{$\mathbf f, \mathbf h \in \mathscr F$ and a certified
	solution set $X_{\mathbf h}$ for $\mathbf h$.}
\KwOutput{A certified solution set $X_{\mathbf f}$ for $\mathbf f$.}
Stipulate an arbitrary value $N_0 \in \mathbb N$ and set $k=0$\;
	\Repeat{}{
	Execute the Algorithm~\ref{algorithm-1} with input $\mathbf f, 
	\mathbf h, X_{\mathbf h}$
	up to $N_k$ renormalized Newton iterations, and set $X_{\mathbf f}=
	\emptyset$ in case Algorithm~\ref{algorithm-1} does not terminate naturally\;
	\If{ $X_{\mathbf f} \ne \emptyset$}{\Return $X_{\mathbf f}$}
	Set $N_{k+1} \leftarrow \sqrt{2} N_k$\;
	Increase $k$ by one \;
	}
\caption{Probabilistic homotopy algorithm}
\end{algorithm}
}
\medskip


Eventually for some value of $k$, $N_k \le N_* < N_{k+1}$ so 
\secrev{Algorthm~\ref{algorithm-2}} 
succeeds with probability one.
The expected number of renormalized Newton iterations is 
\[
	\bar N \defeq N_0 + \dots + N_k + \frac{1}{2} N_{k+1} +
	\frac{1}{4} N_{k+2} + \dots
\]
Trivially,
\[
	N_0 + \dots + N_{k} \le 
N_* \left(1 + \frac{1}{\sqrt{2}} + \left(\frac{1}{\sqrt{2}}\right)^2 + \dots\right)
=
(2+\sqrt{2}) N_*
\]
while
\[
\frac{1}{2} N_{k+1} +
	\frac{1}{4} N_{k+2} + \dots
\le
N_* \left(\frac{1}{\sqrt{2}} + \left(\frac{1}{\sqrt{2}}\right)^2 + \dots\right)
=
(1+\sqrt{2}) N_* 
\]
It follows that 
\[
	\bar N \le (3+2\sqrt{2}) N_*.
\]
\fourthrev{The reader can check that this bound is optimal for the class of algorithms where
step 7 is replaced by $N_{k+1} \leftarrow s N_k$, $s>1$.} We proved:
\begin{theorem}\label{mainE}
There is a probability 1 algorithm with input 
$n, A_1, \dots, A_n, \mathbf f, \mathbf h,$
	$X_{\mathbf h}$ and output $X_{\mathbf f}$ with the following properties. If  
$r(\mathbf f) \ne 0$, $r(\mathbf h) \ne 0$, $N_*$ is finite and $X_{\mathbf h}$
	is a \changed{certified} solution set for $\mathbf h$, then $X_{\mathbf f}$ is a \changed{certified}
	solution set for $\mathbf f$. This algorithm will perform at most
\[
	(3+2\sqrt{2}) N_*
\]
	renormalized Newton iterations on average, \fourthrev{where $N_*$ was defined in \eqref{L}--\eqref{N-star}}.
\end{theorem}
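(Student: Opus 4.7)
The plan is to analyze Algorithm~\ref{algorithm-2} directly, showing that it is a probability-one Las Vegas wrapper around the probability-$\ge 1/2$ procedure of Proposition~\ref{composed-homotopy}, and then add up the costs of the successive attempts in a geometric series. First I would fix an arbitrary initial budget $N_0 \in \mathbb{N}$ and note that Algorithm~\ref{algorithm-2} runs Algorithm~\ref{algorithm-1} with budgets $N_0, \sqrt{2}\,N_0, 2 N_0, \dots$, aborting any run that exceeds its budget and restarting with a freshly sampled Gaussian $\mathbf{g}$.

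For correctness I would invoke Theorem~\ref{th-A}: if a run of Algorithm~\ref{algorithm-1} terminates naturally within its iteration budget, then the output $X_{\mathbf f}$ is indeed a certified solution set for $\mathbf{f}$, since every renormalized homotopy path starting from a certified approximate root of $\mathbf{h}$ is tracked to a certified approximate root of $\mathbf{g}$, and then from $\mathbf{g}$ to $\mathbf{f}$, via the two linear homotopies; the additional Newton step at each intermediate stage restores the $\alpha_*$ certification required by Definition~\ref{def-approximate-root}. The count of $2Q$ paths at each stage plus the $2Q$ intermediate Newton iterates yields the total $N_*$ appearing in~\eqref{N-star}.

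For the probabilistic analysis, the key observation is that the runs at different iterations of the Repeat loop are independent, since each draws a fresh $\mathbf{g} \sim \mathcal{N}(\mathbf{0}, I; \mathscr{F})$. By Proposition~\ref{composed-homotopy}, a single attempt with budget $\ge N_*$ succeeds with probability $\ge 1/2$. Let $k_* = \min\{k : N_k \ge N_*\}$. Then for every $k \ge k_*$, the $k$-th attempt independently succeeds with probability $\ge 1/2$, so the algorithm terminates almost surely. To bound the expected number of renormalized Newton iterations, I would split the sum as
\[
\bar N \;\le\; \sum_{k=0}^{k_*} N_k \;+\; \sum_{k \ge 1} 2^{-k} N_{k_*+k},
\]
and use $N_k = (\sqrt{2})^{\,k-k_*} N_{k_*} \le (\sqrt{2})^{\,k-k_*} N_*$ together with $\sum_{j\ge 0}(1/\sqrt 2)^j = 2+\sqrt 2$ on the first sum and $\sum_{j\ge 1}(1/\sqrt 2)^j = 1+\sqrt 2$ on the second, giving $\bar N \le (3+2\sqrt 2)\,N_*$ as stated.

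The main subtle point—and the reason the doubling schedule is needed—is that the algorithm must not assume knowledge of $N_*$, since $N_*$ depends on $\mu_{\mathbf f}$, $\mu_{\mathbf h}$, $d_r$, $Q$ and other invariants that are not effectively computable from the inputs. The exponential-with-ratio-$\sqrt 2$ doubling ensures two things simultaneously: it makes the cost of all attempts prior to reaching the critical threshold $N_*$ only a constant multiple of $N_*$, and it keeps the expected tail cost (runs that fail even when the budget is sufficient) also a constant multiple of $N_*$. Any ratio strictly between $1$ and $2$ would work; the choice $\sqrt 2$ is convenient because $1/\sqrt 2$ matches the failure probability $1/2$ in the geometric tail and yields the clean constant $3+2\sqrt 2$.
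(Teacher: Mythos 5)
Your proof reproduces the paper's own argument almost line by line: set up the geometric budget schedule, invoke Proposition~\ref{composed-homotopy} for the per-attempt success probability $\ge 1/2$ once the budget clears $N_*$, deduce almost-sure termination, and bound the expectation by the two geometric series $N_*\bigl(1 + \tfrac{1}{\sqrt 2} + \cdots\bigr) = (2+\sqrt 2)N_*$ and $N_*\bigl(\tfrac{1}{\sqrt 2} + \tfrac 12 + \cdots\bigr) = (1+\sqrt 2)N_*$. That is precisely the route in the paper; the correctness discussion via Theorem~\ref{th-A} and the doubling rationale are also the same.

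There is, however, a bookkeeping slip, and interestingly the paper makes the mirror image of the same slip. You define $k_* = \min\{k : N_k \ge N_*\}$, which gives $N_{k_*} \ge N_*$, yet the next displayed inequality uses $N_{k_*} \le N_*$. These hold simultaneously only if the schedule lands on $N_*$ exactly, which cannot be arranged since $N_*$ is not known in advance. All one actually knows is $N_{k_*} = \sqrt 2\,N_{k_*-1} < \sqrt 2\,N_*$, and feeding that into your two sums yields $\bar N < \sqrt 2\,(3+2\sqrt 2)N_* = (4+3\sqrt 2)N_*$, not $(3+2\sqrt 2)N_*$. The paper has the transposed version of the same issue: it fixes $k$ by $N_k \le N_* < N_{k+1}$ and then already puts the weight $1/2$ on $N_{k+1}$, which would require attempt $k$ to succeed with probability $\ge 1/2$ and hence $N_k \ge N_*$; combined with $N_k \le N_*$ this forces $N_k = N_*$ exactly. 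So the constant in the statement of Theorem~\ref{mainE} appears to be optimistic by a factor of $\sqrt 2$; the theorem remains true with $4 + 3\sqrt 2$ in place of $3 + 2\sqrt 2$, and the $O(N_*)$ conclusion is unaffected.
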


\secrev{To better understand what was achieved, we can bound $L_{\mathbf f}$
in \eqref{N-star} by
\[
	K 
 \kappa_{\mathbf f}^{\frac{3}{2}} \mu_{\mathbf f}^2 \mathrm{LOGS}_{\mathbf f}
\]
and a similar bound holds for $L_{\mathbf h}$.
The invariant
	$\kappa_{\rho_i}$ form Equation \eqref{eq-rho} 
can be used to
bound $\nu(\mathbf x)$ in general. We will deduce a bound for
$\nu=\nu(\mathbf 0)$ from \eqref{N-star} in terms of $\max S_i$:}
\begin{lemma}\label{lem-nu-kappa} Unconditionally \secrev{for all $\mathbf x$}, 
\[
1 \le \nu_i(\mathbf x).
\]
At the origin,
\[
\secrev{\nu_i}=\nu_i(\mathbf 0) \le \kappa_{\rho_i} .
\]
If one sets $\rho_{i,\mathbf a} \equiv 1$, then
\[
\secrev{\nu_i} \le \sqrt{S_i}.
\]
\end{lemma}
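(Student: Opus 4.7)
My plan is to exploit the explicit formula for the toric norm obtained from the derivative identity \eqref{derivative-momentum}. The $\mathbf{a}$-th coordinate of $D[V_{A_i}](\mathbf{x}) \mathbf{u}$ is $\frac{V_{i\mathbf{a}}(\mathbf{x})}{\|V_{A_i}(\mathbf{x})\|}(\mathbf{a}-\mathbf{m}_i(\mathbf{x}))\mathbf{u}$, so the squared norm is
\[
\|\mathbf{u}\|_{i,\mathbf{x}}^2 = \sum_{\mathbf{a}\in A_i} \frac{|V_{i\mathbf{a}}(\mathbf{x})|^2}{\|V_{A_i}(\mathbf{x})\|^2}\, |(\mathbf{a}-\mathbf{m}_i(\mathbf{x}))\mathbf{u}|^2 .
\]
The crucial observation is that this is a convex combination (an expectation) of the numbers $|(\mathbf{a}-\mathbf{m}_i(\mathbf{x}))\mathbf{u}|^2$ with the momentum weights.

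For part (1), the maximum of a finite collection of nonnegative reals is bounded below by any convex combination of them, so $\max_{\mathbf{a}\in A_i}|(\mathbf{a}-\mathbf{m}_i(\mathbf{x}))\mathbf{u}|^2 \ge \|\mathbf{u}\|_{i,\mathbf{x}}^2$. Taking supremum over $\mathbf{u}$ with $\|\mathbf{u}\|_{i,\mathbf{x}}\le 1$ yields $\nu_i(\mathbf{x})\ge 1$ directly from the definition \eqref{distortion}.

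For part (2), I will use two ingredients. First, at $\mathbf{x}=\mathbf{0}$ we have $|V_{i\mathbf{a}}(\mathbf{0})|=\rho_{i\mathbf{a}}$ and $\|V_{A_i}(\mathbf{0})\|^2=\sum_{\mathbf{a}}\rho_{i\mathbf{a}}^2$. Second, for fixed $\mathbf{u}$ the map $\mathbf{a}\mapsto |(\mathbf{a}-\mathbf{m}_i(\mathbf{0}))\mathbf{u}|$ is convex in $\mathbf{a}$, so its maximum over the finite set $A_i$ is attained at a vertex of $\conv{A_i}$, i.e.\ at some $\mathbf{a}_0\in A_i'$. For any such vertex $\mathbf{a}_0$, the single-term lower bound
\[
\|\mathbf{u}\|_{i,\mathbf{0}}^2 \ge \frac{\rho_{i\mathbf{a}_0}^2}{\sum_{\mathbf{a}'}\rho_{i\mathbf{a}'}^2}\, |(\mathbf{a}_0-\mathbf{m}_i(\mathbf{0}))\mathbf{u}|^2
\]
gives $|(\mathbf{a}_0-\mathbf{m}_i(\mathbf{0}))\mathbf{u}| \le \frac{\sqrt{\sum\rho_{i\mathbf{a}'}^2}}{\rho_{i\mathbf{a}_0}}\|\mathbf{u}\|_{i,\mathbf{0}}$. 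Bounding $\rho_{i\mathbf{a}_0}^{-1}\le (\min_{\mathbf{a}\in A_i'}\rho_{i\mathbf{a}})^{-1}$ and taking suprema recovers exactly the definition \eqref{eq-rho} of $\kappa_{\rho_i}$.

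For part (3), setting $\rho_{i\mathbf{a}}\equiv 1$ gives $\kappa_{\rho_i}=\sqrt{S_i}$ (Remark~\ref{rem-kappa-rho}), which combined with part (2) finishes the proof. The main (and only real) subtlety is the convexity argument reducing the sup over $A_i$ to the sup over vertices $A_i'$, without which the bound $\kappa_{\rho_i}$ could not be recovered since non-vertex coefficients $\rho_{i\mathbf{a}}$ may be arbitrarily small; everything else is routine manipulation of the explicit momentum-weighted expression for $\|\cdot\|_{i,\mathbf{x}}$.
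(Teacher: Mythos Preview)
Your proof is correct and essentially follows the paper's approach. The only minor difference is in part~(1): the paper routes the lower bound through Lemma~\ref{coarse-bound-DV} and the Euclidean ball $\{\delta_i(\mathbf{x})\|\mathbf{u}\|\le 1\}$, whereas you use the convex-combination inequality $\max_{\mathbf a}|(\mathbf a-\mathbf m_i)\mathbf u|^2\ge \|\mathbf u\|_{i,\mathbf x}^2$ directly, which is arguably cleaner; parts~(2) and~(3) match the paper almost verbatim, including the key convexity argument reducing $\sup_{\mathbf a\in A_i}$ to $\sup_{\mathbf a\in A_i'}$.
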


\secrev{The proof is postponed to the end of this section.
	From Lemma~\ref{lem-nu-kappa},
$\nu \le \max \sqrt{S_i}$. Hence, there is a bound of the form
\[
	N_* \le P Q \,
	\left(\kappa_{\mathbf f}^{\frac{3}{2}} \mu_{\mathbf f}^2 \mathrm{LOGS}_{\mathbf f} +\kappa_{\mathbf h}^{\frac{3}{2}} \mu_{\mathbf h}^2 \mathrm{LOGS}_{\mathbf h}\right) 
\]
where $P$ is a polynomial in the input size $S$. The quantity $Q$ defined in
\eqref{mainD-Q} depends only on the geometry of the tuple $(A_1, \dots, A_n)$
through the mixed volume, the mixed area, the facet gap $\eta$ and the radii $\delta_i$. This establishes the informal complexity bound \eqref{main-bound-1}. The argument for the informal complexity bound \eqref{main-bound-0} is similar.

A {\em non-uniform} algorithm is a family of algorithm, one for each possible
tuple $(A_1, \dots, A_n)$ with non-zero mixed volume. 
\fourthrev{In this context, it is enough to use a good starting system which is known to exist, but is not known explicitly}.
The non-uniform complexity bound \eqref{main-bound-2} is formalized below as a corollary of
Theorem~\ref{mainE}.

\begin{corollary} Let the tuple $(A_1, \dots, A_n)$ be fixed
and satisfy the conditions of 
Theorem~\ref{mainD}(\ref{mainD-supports}).
Then
there is a randomized algorithm that finds all the roots of $\mathbf f \in \mathscr F$,
$r(\mathbf f) \ne 0$, with probability 1 and
expected cost of 
\[
	O(\mu_{\mathbf f}^2 
	\kappa_{\mathbf f}^{\frac{3}{2}}
	(\log(\mu_{\mathbf f}) +
	\log(\kappa_{\mathbf f})) .
\]
renormalized Newton iterations.
\end{corollary}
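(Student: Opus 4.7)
The plan is to invoke Theorem~\ref{mainE} with a carefully chosen, hard-coded start system $\mathbf h$ depending only on the tuple $(A_1,\dots,A_n)$, and then to simplify the bound \eqref{N-star} on $N_*$ by absorbing everything that does not depend on $\mathbf f$ into the $O(\cdot)$ constant.

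First, since the tuple $(A_1,\dots,A_n)$ is fixed and the coefficients $\rho_{i,\mathbf a}$ are set to $1$, all of the purely geometric/combinatorial data is constant: the dimensions $n$, $S_i$, $S$; the mixed volume $V$ and mixed area $V'$; the lattice determinant $\det\Lambda$; the radii $\delta_i$; the facet gap $\eta$; the degree $d_r$ of the polynomial $r$; and the distortion $\nu=\nu(\mathbf 0)\le\max\sqrt{S_i}$ (Lemma~\ref{lem-nu-kappa}). In particular the quantity $Q$ in \eqref{mainD-Q}, the constant $K$ in \eqref{mainD-K}, and the polynomial factor $P$ in the input size are all absolute constants for this fixed tuple. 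The preprocessing step consists of selecting, once and for all, a system $\mathbf h\in\mathscr F$ with $\|\mathbf h_i\|=\sqrt{S_i}$, $r(\mathbf h)\neq 0$, finite $\mu_{\mathbf h}$ and $\kappa_{\mathbf h}$, together with a certified solution set $X_{\mathbf h}$ for $\mathbf h$. Since $\mathbf h$ depends only on the fixed tuple, the values $\mu_{\mathbf h}$, $\kappa_{\mathbf h}$ and $\mathrm{LOGS}_{\mathbf h}$ are absolute constants, hence so is $L_{\mathbf h}$ from \eqref{L}.

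Now apply the algorithm of Theorem~\ref{mainE} to the input $(\mathbf f,\mathbf h,X_{\mathbf h})$, where $\mathbf f$ is assumed scaled so that $\|\mathbf f_i\|=\sqrt{S_i}$ (this scaling can be enforced at unit cost and does not affect $\mu_{\mathbf f}$ or $\kappa_{\mathbf f}$, both of which are scale invariant). Theorem~\ref{mainE} guarantees that with probability one the procedure returns a certified solution set $X_{\mathbf f}$ for $\mathbf f$, and that the expected number of renormalized Newton iterations is at most $(3+2\sqrt{2})N_*$. Expanding $N_*$ via \eqref{N-star} and \eqref{L}, and using that everything depending on $(A_1,\dots,A_n)$ and $\mathbf h$ is absorbed into the implicit constant, the dominant $\mathbf f$-dependent term is
\[
K\,\max(K,\sqrt{\kappa_{\mathbf f}})\,\kappa_{\mathbf f}\,\mu_{\mathbf f}^2\,\mathrm{LOGS}_{\mathbf f}
\;\le\; O\!\left(\kappa_{\mathbf f}^{3/2}\mu_{\mathbf f}^2\bigl(\log\mu_{\mathbf f}+\log\kappa_{\mathbf f}\bigr)\right),
\]
since $\mathrm{LOGS}_{\mathbf f}=\log(d_r)+\log(S)+\log(\mu_{\mathbf f})+\log(\kappa_{\mathbf f})$ and the first two summands are constants for the fixed tuple.

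The main (and essentially only) obstacle is ensuring that a suitable start pair $(\mathbf h,X_{\mathbf h})$ exists for every admissible tuple $(A_1,\dots,A_n)$. Existence of some $\mathbf h$ with $r(\mathbf h)\neq 0$ is immediate because $r$ is a nonzero polynomial by Theorem~\ref{degenerate-locus}, and by Theorem~\ref{BKK2} such an $\mathbf h$ has exactly $n!V/\det\Lambda$ nondegenerate roots in $\mathscr M$; a certified solution set is obtained from these exact roots by taking $X_{\mathbf h}$ to be the roots themselves (which trivially satisfy Definition~\ref{def-approximate-root}). This makes the non-uniform hypothesis of the corollary innocuous: the algorithm for each fixed tuple hardcodes its own $(\mathbf h,X_{\mathbf h})$, and the expected cost bound follows immediately from Theorem~\ref{mainE}.
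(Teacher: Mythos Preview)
Your proof follows essentially the same approach as the paper's own proof: pick a fixed start system $\mathbf h$ with $r(\mathbf h)\neq 0$, finite $\mu_{\mathbf h}$ and $\kappa_{\mathbf h}$, take $X_{\mathbf h}=Z(\mathbf h)$, apply Theorem~\ref{mainE}, and absorb everything depending on the fixed tuple and on $\mathbf h$ into the $O(\cdot)$.

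There is one small logical slip in your existence argument. You write that ``by Theorem~\ref{BKK2} such an $\mathbf h$ has exactly $n!V/\det\Lambda$ nondegenerate roots,'' where ``such an $\mathbf h$'' refers to any $\mathbf h$ with $r(\mathbf h)\neq 0$. This is not what Theorem~\ref{BKK2} says, and it is false: as the remark following Theorem~\ref{BKK3} points out, a system with $\mathbf f\notin\Sigma^\infty$ can still have degenerate isolated roots. The correct argument (which the paper gives) is that each of the three bad sets $\{r(\mathbf h)=0\}$, $\{\mu_{\mathbf h}=\infty\}$, $\{\kappa_{\mathbf h}=\infty\}$ has measure zero in $\mathscr F$, so their complement is nonempty and any $\mathbf h$ in that complement works. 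With this correction your proof is complete and matches the paper's.
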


\begin{proof}
	Let $\mathscr F = \mathscr F_{A_1} \times \cdots \times \mathscr F_{A_n}$ be as in Theorem~\ref{mainD}(\ref{mainD-spaces}). The set of all $\mathbf h \in \mathscr F$ with $r(\mathbf h)=0$ has zero measure, as well as the set of $\mathbf h$ with $\kappa_{\mathbf h} = \infty$ or $\mu_{\mathbf h}=\infty$. Therefore there exists  
$\mathbf h \in \mathscr F$ with  
	$r(\mathbf h) \ne 0$, $\mu_{\mathbf h}< \infty$ and $\kappa_{\mathbf h}< \infty$. This $\mathbf h \in \mathscr F$ is non-degenerate, so $X_{\mathbf h}=Z(\mathbf h)$ is a certified solution set. 
According to Theorem~\ref{mainE},
	Algorithm~\ref{algorithm-2}
	with input $(\mathbf h, Z(\mathbf h), \mathbf f)$ has cost
\[
	O\left( \kappa_{\mathbf f}^{\frac{3}{2}} \mu_{\mathbf f}^2 \mathrm{LOGS}_{\mathbf f} 
	\right)
\]
where the big-$O$ hides the terms $P$, $Q$, and those depending on $\mathbf h$.
The degree $d_r$ that appears inside a logarithm in $\mathrm{LOGS}_{\mathbf f}$ is now a constant. Therefore, the cost measured in renormalized Newton iterations is bounded by   
\[
	O(\mu_{\mathbf f}^2 
	\kappa_{\mathbf f}^{\frac{3}{2}}
	(\log(\mu_{\mathbf f}) +
	\log(\kappa_{\mathbf f})) .
\]
\end{proof}

\begin{proof}[Proof of Lemma~\ref{lem-nu-kappa}]
	\secrev{From Lemma~\ref{coarse-bound-DV},
	\[
		\| \mathbf u\|_{i,\mathbf x}
		\le \delta_i(\mathbf x)
		\|\mathbf u\|.
	\]
	It follows that
	$\{ \mathbf u: \delta_i(\mathbf x) \| \mathbf u\| \le 1 \} \subseteq
	\{ \mathbf u: \| \mathbf u\|_{i,\mathbf x} \le 1\}$.}
	This provides the lower bound
\[
	1 \le 
	\sup_{\delta_i \| \mathbf u\| \le 1} \sup_{\mathbf a \in A_i}
	|(\mathbf a - \mathbf m_i(\mathbf x)) \mathbf u| \le \nu_i(\mathbf x)
	\secrev{
	=
	\sup_{\mathbf a \in A_i}
	\sup_{\| \mathbf u\|_{i,\mathbf x} \le 1} 
	|(\mathbf a - \mathbf m_i(\mathbf x)) \mathbf u| \le \nu_i(\mathbf x)
	}
	.
\]
	\secrev{Let $A_i' \subseteq A_i$ be the set of vertices of
	$\conv{A_i}$.}
	At the origin,
\[
\|\mathbf u\|_{i \mathbf 0}\, \kappa_{\rho_i} =
	\frac{ \sqrt{ \sum_{\mathbf a} (\rho_{i\mathbf a} (\mathbf a-\mathbf m_i(\mathbf 0)) \mathbf u)^2}}
	{\min_{\mathbf a \in A_i'} \rho_{i\mathbf a} }
\ge
	\max_{\mathbf a \in A_i'} |(\mathbf a-\mathbf m_i(\mathbf 0))  \mathbf u|
	=
	\max_{\mathbf a \in A_i} |(\mathbf a-\mathbf m_i(\mathbf 0))  \mathbf u|
.
\]
	Therefore, $\| \mathbf u\|_{i\mathbf 0} \le 1$ implies that 
	$\max_{\mathbf a \in A_i} |(\mathbf a-\mathbf m_i(\mathbf 0))  \mathbf u| \le \kappa_{\rho_i}$.
	Therefore, $\secrev{\nu = } \nu(\mathbf 0) \le \kappa_{\rho_i}$. If furthermore
	$\rho_{i\mathbf a} \equiv 1$, then $\kappa_{\rho_i}=\sqrt{S_i}$.
\end{proof}}

\aboutsampling{
\secrev{
\subsection{Sampling versus solving}
\label{sampling}

It is worth to revisit two open problems proposed in my previous paper \cite{toric1}. The first one was a sparse version of Smale's 17$^{\mathrm{th}}$ problem:

\begin{problem}\label{probA} Can a finite zero of a random sparse polynomial system (...) be found approximately, on the average, in time polynomial in $S=\sum_i \#A_i$ with a uniform algorithm?
\end{problem}

Suppose that the answer is yes. Then we may modify Algorithm~\ref{algorithm-2}
to find one zero of an arbitrary, fixed system $\mathbf f$
with $r(\mathbf f) \ne 0$ and finite $\kappa_{\mathbf f}$ and $\mu_{\mathbf f}$.
The modified algorithm is:
\medskip

\begin{algorithm}[H]\label{algorithm-3}
\SetKwInput{KwInput}{Input}                
\SetKwInput{KwOutput}{Output}              
\SetKwRepeat{Repeat}{repeat}{end}
\KwInput{$\mathbf f, \mathbf h \in \mathscr F$ and a certified
	solution $\mathbf x_0$ for $\mathbf h$.}
\KwOutput{A certified solution for $\mathbf f$.}
Stipulate an arbitrary value $N_0$ and set $k=0$\;
\Repeat{}{
		Find a solution $\mathbf x_0$ for a random 
		$\mathbf g \in \secrev{\mathcal N}(\mathbf 0, I; \mathscr F)$\;
		Apply the recurrence 
		\eqref{rec-zero} to the path 
		$\mathbf q_t = \mathbf g + t \mathbf f$
		for $t \in [0,\infty]$ up to 
		$N \le N_k$ renormalized Newton steps,
		and in case the recurrence did not terminate set
		$N=N_k+1$\;
	\If {$N \le N_k$}{\Return 
		$N(\mathbf f \cdot R(\mathbf x_N))+\mathbf x_N$}
		Set $N_{k+1} \leftarrow \sqrt{2} N_k$\;
		Increase $k$ by one\;
	}
\caption{Homotopy for finding just one root}
\end{algorithm}	
\medskip


This algorithm will provide a certified approximate root of $\mathbf f$. 
The cost of this procedure may be divided in two parts: the cost of solving
random systems $\mathbf g$, and the cost of solving $\mathbf f$ given a
certificed approximate solution for $\mathbf g$. We can bound the second
part by {\em half} of the bound in Theorem~\ref{mainE}, viz.
\[
	\frac{3+2\sqrt{2}}{2} N_* .
\]
Recall from \eqref{N-star} that $N_*$ depends linearly on the normalized
mixed volume $n! V$, where $V \defeq V(\conv{A_1}, \dots, \conv{A_n})$, and
also on the mixed area $V'$.

We can improve this bound by restating Problem~\ref{probA} in terms of {\em sampling} instead of {\em solving}. Recall that the solution variety is the set 
$\mathscr S \defeq 
\{(\mathbf g, \boldsymbol \zeta) \in \mathscr F \times \mathscr M:
\mathbf g \cdot V(\boldsymbol \zeta) =\mathbf  0\}$. The space $\mathscr F$ is
endowed with the unit normal Gaussian probability density $\secrev{\mathcal N}(\mathbf 0,I; 
\mathscr F)$. The pull-back of this density by the canonical projection
$\pi_1:\mathscr S \rightarrow \mathscr F$ has total mass $n! V$. From now
on, we assume that the solution variety $\mathscr S$ is endowed with the
probability measure
\[
	\frac{1}{n! V} \pi_1^* \secrev{\mathcal N}(\mathbf 0,I; \mathscr F) .
\]
{\em Sampling} means finding a random pair $(\mathbf g, \boldsymbol \zeta)$
with respect to that measure. This pair may be represented by
$(\mathbf g, \mathbf x_0)$ where $\mathbf x_0$ is a certified approximate
root for $\boldsymbol \zeta$. Now we can ask:

\begin{newproblem}\label{probA2}
Is it possible to sample in $\mathscr S$
with a uniform algorithm, in expected time polynomial in
$S$ and linear in $\frac{Q \det \Lambda}{n!V} \nu (\log(\nu) + \log(d_r))$ ?
\end{newproblem}

Suppose that the answer is yes.
Let $\mathbf q_t = \mathbf g + t \mathbf f$ as in Theorem~\ref{mainD}. With probability $3/4$,
\[
\sum_{\mathbf z_{\tau} \in \mathscr Z(\mathbf q_{\tau})} \mathscr L (\mathbf q_t, \mathbf z_t; 0, \infty)
	\le C 
	Q \changed{n^{\thirdrev{\frac{5}{2}}}} S\max_i (S_i) 
	K L_{\mathbf f} 
	\kappa_{\mathbf f} \mu_{\mathbf f}^2 \secrev{\nu}
	\ \secrev{\mathrm{LOGS}_{\mathbf f}}
.
\]
Given the bound above, Markov's inequality guarantees that
at least half of the paths satisfy
\begin{equation}\label{lucky}
\mathscr L (\mathbf q_t, \mathbf z_t; 0, \infty)
	\le 2 C 
	\frac{Q \det(\Lambda)}{n!V}\changed{n^{\thirdrev{\frac{5}{2}}}} S\max_i (S_i) 
	K L_{\mathbf f}
	\kappa_{\mathbf f} \mu_{\mathbf f}^2 \secrev{\nu}
	\ \secrev{\mathrm{LOGS}_{\mathbf f}}
.
\end{equation}
The algorithm below is essentially Algorithm~\ref{algorithm-3} with line 3 replaced
by a sampling. With probability $\ge 3/8$, it will track a path satisfying
\eqref{lucky}.
\medskip

\begin{algorithm}[H]\label{algorithm-4}
\SetKwInput{KwInput}{Input}                
\SetKwInput{KwOutput}{Output}              
\SetKwRepeat{Repeat}{repeat}{end}
\KwInput{$\mathbf f, \mathbf h \in \mathscr F$ and a certified
	solution $\mathbf x_0$ for $\mathbf h$.}
\KwOutput{A certified solution for $\mathbf f$.}
Stipulate an arbitrary value $N_0$ and set $k=0$\;
\Repeat{}{
		Sample $(\mathbf g, \mathbf x_0) \in \mathscr S$\;
		Apply the recurrence 
		\eqref{rec-zero} to the path 
		$\mathbf q_t = \mathbf g + t \mathbf f$
		for $t \in [0,\infty]$ up to 
		$N \le N_k$ renormalized Newton steps,
		and in case the recurrence did not terminate set
		$N=N_k+1$\;
	\If {$N \le N_k$}{\Return 
		$N(\mathbf f \cdot R(\mathbf x_N))+\mathbf x_N$}
		Set $N_{k+1} \leftarrow \sqrt{2} N_k$\;
		Increase $k$ by one\;
	}
\caption{Homotopy with sampling for finding just one root}
\end{algorithm}	
\medskip


Arguing as before, we obtain
\begin{corollary}\label{mainF}
	Suppose that Problem~\ref{probA2} has a positive answer.
	Then there is a randomized algorithm with input 
$n, A_1, \dots, A_n, \mathbf f \in \mathscr F$
	satisfying the following properties. If the algorithm 
	terminates, it produces 
	a certified approximate root $\mathbf x$ of $\mathbf f$.
	If 
	$A_1, \dots, A_n$ satisfy the conditions of Theorem~\ref{mainD}
	(\eqref{mainD-supports}-\eqref{mainD-spaces}) and if
	$r(\mathbf f) \ne 0$, then with probability one
	the algorithm terminates within
	expected number of renormalized Newton iterations
	polynomial on the input size and linear on 
\[
\eta^{-2} \left(\sum_{i=1}^n \delta_i^2\right) 
	\max\left(1, \frac{V'}{V} \right) 
	\kappa_{\mathbf f}^{\frac{3}{2}} \mu_{\mathbf f}^2 {\mathrm LOGS}_{\mathbf f}
.
\]
\end{corollary}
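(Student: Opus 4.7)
The plan is to analyze Algorithm~\ref{algorithm-4} in the same spirit as Algorithm~\ref{algorithm-2} and Theorem~\ref{mainE}, but exploiting the sampling oracle to pay only for \emph{one} lucky path rather than for all $n!V/\det\Lambda$ paths simultaneously. First I will fix the random system $\mathbf g\sim\mathcal N(\mathbf 0,I;\mathscr F)$ implicitly used by the sampling oracle, and consider the linear homotopy $\mathbf q_t=\mathbf g+t\mathbf f$ for $t\in[0,\infty]$. By Theorem~\ref{mainD}, with probability at least $3/4$ (over the choice of $\mathbf g$) the sum
\[
\sum_{\mathbf z_\tau\in\mathscr Z(\mathbf q_\tau)} \mathscr L(\mathbf q_t,\mathbf z_t;0,\infty)
\le C\,Q\,n^{5/2}\,S\max_i(S_i)\,K\,\nu\,L_{\mathbf f}\,\mathrm{LOGS}_{\mathbf f}
\]
is finite. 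By Markov's inequality applied to the $n!V/\det\Lambda$ summands, at least half of the solution paths $\mathbf z_\tau$ of $\mathbf q_\tau$ satisfy the individual bound~\eqref{lucky}.

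The crucial point is that, by construction of the probability measure on $\mathscr S$ in Section~\ref{sampling}, the conditional distribution of $\boldsymbol\zeta$ given $\mathbf g$ is uniform on $Z(\mathbf g)$. Hence, on the event of probability $\ge 3/4$ above, the sampled pair $(\mathbf g,\mathbf x_0)$ corresponds with conditional probability $\ge 1/2$ to a root $\boldsymbol \zeta$ whose associated homotopy path satisfies~\eqref{lucky}. A single sample therefore produces a \emph{lucky} start with probability $\ge 3/8$. For such a lucky sample, Theorem~\ref{th-A} bounds the number of renormalized Newton steps required to track the path until $t=\infty$ by
\[
1+\frac{1}{\delta_*}\mathscr L(\mathbf q_t,\mathbf z_t;0,\infty),
\]
which is bounded by the right-hand side of~\eqref{lucky}; call this bound $N_{**}$. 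One extra Newton step at the end produces a certified approximate root of $\mathbf f$ in the sense of Definition~\ref{def-approximate-root}.

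Since the value of $N_{**}$ is \emph{a priori} unknown, the algorithm uses the same doubling trick as in Theorem~\ref{mainE}: it tries $N_0,\sqrt{2}N_0,2N_0,\dots$ as caps on the number of Newton iterations, discarding a trial that does not terminate within the current cap and resampling. Each outer iteration costs (sampling cost) $+$ (Newton iterations up to the current cap); the sampling cost is polynomial in $S$ by the hypothesis on Problem~\ref{probA2}, while the Newton iteration total over all outer iterations is, by the geometric-sum argument preceding Theorem~\ref{mainE}, bounded by $(3+2\sqrt 2)/(3/8)\cdot N_{**}=O(N_{**})$ in expectation. Substituting the bound $\nu\le\max_i\sqrt{S_i}$ from Lemma~\ref{lem-nu-kappa}, the invariant $\frac{Q\det\Lambda}{n!V}=\eta^{-2}(\sum\delta_i^2)\max(1,V'\eta\,\det\Lambda/(n!V))$ from \eqref{mainD-Q}, and absorbing the $\log(d_r)+\log(S)$ pieces inside $\mathrm{LOGS}_{\mathbf f}$ and the polynomial-in-$S$ factor, one obtains the claimed bound of polynomial in input size times
\[
\eta^{-2}\!\left(\sum_{i=1}^n\delta_i^2\right)\max\!\left(1,\frac{V'}{V}\right)\kappa_{\mathbf f}^{3/2}\mu_{\mathbf f}^2\,\mathrm{LOGS}_{\mathbf f}.
\]
The main obstacle will be to justify rigorously that the sampling oracle of Problem~\ref{probA2} indeed provides $\boldsymbol\zeta$ uniformly on $Z(\mathbf g)$ once $\mathbf g$ is fixed (this is where the pull-back normalization $\pi_1^*\mathcal N/(n!V)$ is essential), so that Markov's inequality can be transferred from an average over paths into a probability of success of the sampled pair; once this is established, the rest is the same doubling/geometric-sum bookkeeping as in Theorem~\ref{mainE}.
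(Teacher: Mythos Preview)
Your proposal is correct and matches the paper's approach, which consists only of the discussion preceding the corollary plus the phrase ``arguing as before'': Markov's inequality on the $n!V/\det\Lambda$ paths from Theorem~\ref{mainD}, uniformity of the conditional law of $\boldsymbol\zeta$ given $\mathbf g$ (which follows directly from the definition of the sampling measure $\frac{1}{n!V}\pi_1^*\mathcal N$), and the doubling trick of Theorem~\ref{mainE} adapted to success probability $3/8$. One minor imprecision: the per-sample cost granted by Problem~\ref{probA2} is not purely polynomial in $S$ but also linear in $\frac{Q\det\Lambda}{n!V}\nu(\log\nu+\log d_r)$; since this factor is already dominated by the final bound and the expected number of outer iterations is $O(\log N_{**})$, your conclusion is unaffected.
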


Thus, a solution for Problem~\ref{probA2} implies a mostly efficient algorithm for finding
one solution of a fixed $\mathbf f \in \mathscr F$. It is still
linear in the 
isoperimetric ratio $V'/V$ and quadratic on the reciprocal facet gap $\delta^{-1}$. 
I also asked:

\begin{problem}\label{probB} Can every finite zero of a random polynomial system be
found approximately, on the average, in time polynomial in the input size $S$
with a uniform algorithm running in parallel, one parallel process for every
expected zero?
\end{problem}

The algorithm in Theorem~\ref{mainD} can clearly be parallelized. The methods in this paper do not seem to provide any insight for the parallel complexity of tracking each path. Yet, using the Markov inequality, an affirmative answer to problem~\ref{probB} would provide a parallel algorithm guaranteed to produce approximate solutions for at least {\em half} of the zeros 
within the bound in Corollary~\ref{mainF}.
However, if one drops
the parallel complexity assumption in Problem~\ref{probB}, it makes sense to ask:

\begin{newproblem}\label{probB2}
	Is it possible to find an approximate solution set
for a random polynomial 
with a uniform algorithm, in expected time polynomial in
$S$ and linear in $Q \log(d_r)$ ?
\end{newproblem}

A positive answer would imply an uniform algorithm for solving sparse
polynomials, within a time bound similar to the one in Theorem~\ref{mainD}.
}
}{}

\section{Renormalized homotopy}
\label{sec-homotopy}
\smallskip
\centerline{
\begin{tabular}{|r|r|r|}
\hline
\hline
\multicolumn{3}{|c|}{Constants in Theorem~\ref{th-A} and its proof}
\\
\hline
$\alpha_* = 0.074,609,958\cdots$
&
$\alpha_{**} = 0.096,917,682\cdots$
&
$\delta_* = 0.085,180,825\cdots$ 
\\
\hline
$u_* = 0.129,283,177\cdots$
&
$u_{**}= 0.007,556,641\cdots$
&
$u_{***}=0.059,668,617\cdots$
\\
\hline
\hline
\end{tabular}}
\medskip

The objective of this section is to prove Theorem~\ref{th-A}.
We first prove a technical result for later use.

\subsection{Technical Lemma}

\fourthrev{Let $\nu_i$ be the distortion invariant as defined in \eqref{distortion}, and $\nu=\max \nu_i$.}

\begin{lemma}\label{lem:fRdist}
Let $\mathbf f_i \in \mathscr F_{A_i}$ and $\mathbf x \in \mathbb C^n$.
Assume that  
$\mathbf m_i(\mathbf 0)=\mathbf 0$ for $i=0,\dots,n$.
Then for all $i=1, \dots, n$, 
\[
	d_{\mathbb P}(\mathbf f_i, \mathbf f_i \cdot R_i(\mathbf x)) \le \sqrt{5}\ \|\mathbf x\|_{i,\mathbf 0}\  \secrev{\nu_i}.
\]
Moreover,
\[
d_{\mathbb P}(\mathbf f, \mathbf f \cdot R(\mathbf x)) \le \sqrt{5}\ \|\mathbf x\|_{\mathbf 0}\  \secrev{\nu}.
\]

\end{lemma}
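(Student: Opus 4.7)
Proof plan for Lemma~\ref{lem:fRdist}.

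The plan is to bound the projective distance using the specific choice $\lambda = 1$ in the infimum defining $d_{\mathbb P}(\mathbf f_i, \mathbf f_i \cdot R_i(\mathbf x))$. By the definition of the action of $R_i$, the $\mathbf a$-th coordinate of $\mathbf f_i - \mathbf f_i \cdot R_i(\mathbf x)$ equals $f_{i\mathbf a}(1 - e^{\mathbf a \mathbf x})$. Therefore
\[
d_{\mathbb P}(\mathbf f_i, \mathbf f_i \cdot R_i(\mathbf x))^2 \le
\frac{\sum_{\mathbf a \in A_i} |f_{i\mathbf a}|^2 |1 - e^{\mathbf a \mathbf x}|^2}{\sum_{\mathbf a \in A_i} |f_{i\mathbf a}|^2}
\le \max_{\mathbf a \in A_i} |1 - e^{\mathbf a \mathbf x}|^2,
\]
so everything reduces to a pointwise estimate on the exponents $\mathbf a \mathbf x$.

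Next, I would exploit the hypothesis $\mathbf m_i(\mathbf 0) = \mathbf 0$ to use the definition of the distortion invariant \eqref{distortion} in its simplified form, namely
\[
|\mathbf a \mathbf x| = |(\mathbf a - \mathbf m_i(\mathbf 0)) \mathbf x| \le \nu_i \, \|\mathbf x\|_{i,\mathbf 0}
\qquad \text{for every } \mathbf a \in A_i.
\]
Setting $r \defeq \nu_i \|\mathbf x\|_{i,\mathbf 0}$, this gives $|\mathbf a \mathbf x| \le r$ and in turn $|1 - e^{\mathbf a \mathbf x}| \le e^{|\mathbf a \mathbf x|} - 1 \le e^{r} - 1$ by a routine estimate on the exponential series.

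The key step is then the elementary scalar inequality
\[
(e^{r} - 1)^2 \le 5 r^2
\qquad \text{whenever } r \le 1/\sqrt{5}.
\]
This holds because $r \mapsto (e^r - 1)/r$ is increasing on $(0,\infty)$, and at the endpoint $r = 1/\sqrt{5}$ one checks numerically that $e^{1/\sqrt{5}} - 1 < 1 < \sqrt{5} \cdot (1/\sqrt{5})$. Combining, I would split into two cases: if $\sqrt{5} \nu_i \|\mathbf x\|_{i,\mathbf 0} \ge 1$, the bound is trivial because $d_{\mathbb P} \le 1$ unconditionally (take $\lambda = 0$); otherwise $r < 1/\sqrt{5}$, and the scalar inequality above yields $d_{\mathbb P}(\mathbf f_i, \mathbf f_i \cdot R_i(\mathbf x)) \le \sqrt{5}\, \|\mathbf x\|_{i,\mathbf 0} \nu_i$ as desired.

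Finally, for the global statement I would apply the definition of the multi-projective distance \eqref{multi-projective-distance} and the definition $\|\mathbf x\|_{\mathbf 0}^2 = \sum_i \|\mathbf x\|_{i,\mathbf 0}^2$ from \eqref{metricM}:
\[
d_{\mathbb P}(\mathbf f, \mathbf f \cdot R(\mathbf x))^2
= \sum_{i=1}^n d_{\mathbb P}(\mathbf f_i, \mathbf f_i \cdot R_i(\mathbf x))^2
\le 5 \sum_{i=1}^n \nu_i^2 \|\mathbf x\|_{i,\mathbf 0}^2 \le 5 \nu^2 \|\mathbf x\|_{\mathbf 0}^2,
\]
using $\nu = \max_i \nu_i$. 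The expected main obstacle is just verifying cleanly the scalar inequality $(e^r - 1)^2 \le 5 r^2$ on the relevant range and handling the case $r \ge 1/\sqrt{5}$ separately; everything else is algebraic.
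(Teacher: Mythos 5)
Your proof is correct, but it follows a genuinely different route from the paper's. The paper decomposes the renormalization operator as $R_i(\mathbf x)=R_i(\Re(\mathbf x))\cdot R_i(\Im(\mathbf x))$ and treats the two factors separately: for the real shift it picks the scaling $t=e^{-\ell_i(\mathbf x)}$ and uses $0\le 1-e^{-\lambda}\le\lambda$, while for the imaginary shift it uses $t=1$ and $|1-e^{\sqrt{-1}\lambda}|\le|\lambda|$ (exploiting that the imaginary action is an isometry), then combines via the triangle inequality and an optimization of $2c+s$ over $c^2+s^2\le 1$, which is where $\sqrt{5}$ comes from. You instead keep $\lambda=1$ throughout, bound $|1-e^{z}|\le e^{|z|}-1$ for complex $z$, and split on whether $r=\nu_i\|\mathbf x\|_{i,\mathbf 0}$ is below or above $1/\sqrt{5}$ (the latter case being trivial since $d_{\mathbb P}\le 1$). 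Your argument is shorter, avoids the real/imaginary decomposition entirely, and would in fact yield the sharper constant $1/\ln 2\approx1.443$ in place of $\sqrt{5}$ if one matched the case-split threshold to $\ln 2$; you simply adopt $\sqrt{5}$ to hit the stated bound. One small slip in the write-up: at the endpoint $r=1/\sqrt{5}$ you wrote "$e^{1/\sqrt{5}}-1<1<\sqrt{5}\cdot(1/\sqrt{5})$", but $\sqrt{5}\cdot(1/\sqrt{5})=1$, so the second inequality should be equality; what you actually need (and what is true) is $(e^{1/\sqrt{5}}-1)/(1/\sqrt{5})\approx 1.26<\sqrt{5}$, together with monotonicity of $r\mapsto(e^r-1)/r$. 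Everything else is airtight, including the reduction to the maximum of $|1-e^{\mathbf a\mathbf x}|$ via the weighted-average bound and the final aggregation across $i$ using $\nu=\max_i\nu_i$.
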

\begin{proof}
Without loss of generality, let $\|\mathbf f_i\|=1$. 
\secrev{Also, recall that $\nu = \max \nu_i$.
Since we required that $m_i(\mathbf 0)=\mathbf 0$, the expression for $\nu_i$ in
\eqref{distortion} simplifies to 
\[
\nu_i = 
\sup_{\mathbf a \in A_i}
\sup_{\|\mathbf u\|_{i,\mathbf 0} \le 1} |\mathbf a \mathbf u|
.
\]
We will first prove the lemma for $\mathbf x$ real, then
for $\mathbf x$ imaginary, and finally deduce the full statement.}
\medskip

\noindent
\secrev{{\bf Part 1:} Assume} that $\mathbf x$ is a real vector. Then,
\begin{eqnarray*}
d_{\mathbb P}(\mathbf f_i, \mathbf f_i \cdot R_i(\mathbf x)) &=& 
\inf_{t \in \mathbb C} \frac{ \| \mathbf f_i - t \mathbf f_i \cdot R_i(\mathbf x)\|}
{\|\mathbf f_i\|} \\
	&\le& \sqrt{ \sum_{\mathbf a \in A_i} |f_{i,\mathbf a}|^2 |1-e^{\mathbf a\mathbf x-\changed{\ell_i}(\mathbf x)}|^2}
\end{eqnarray*}
	where we set \secrev{$t = e^{-\ell_i(\mathbf x)}$} and $\changed{\ell_i}(\mathbf x) = \max_{\mathbf a\in A_i} \mathbf a \Re(\mathbf x)$  
	\secrev{was defined in \eqref{def-elli}.}
	For all $\mathbf a \in A_i$, $\mathbf a\mathbf x-\changed{\ell_i}(\mathbf x) \le 0$.
\secrev{The inequality $0 \le 1 - e^{-\lambda} \le \lambda$ for $\lambda \ge 0$ implies that}
\secrev{
\[
d_{\mathbb P}(\mathbf f_i, \mathbf f_i \cdot R_i(\mathbf x)) \le 
	\sqrt{ \sum_{\mathbf a \in A_i} |f_{i,\mathbf a}|^2 |\mathbf a\mathbf x-\changed{\ell_i} (\mathbf x)|^2}
.
\]
Since $\|f_i\|=1$,
\begin{equation}\label{tech1}
d_{\mathbb P}(\mathbf f_i, \mathbf f_i \cdot R_i(\mathbf x)) \le 
\max_{\mathbf a\in A_i} |\mathbf a\mathbf x-\changed{\ell_i}(\mathbf x)| 
.
\end{equation}
\secrev{
\medskip

\noindent
	{\bf Part 2:} Assume} that $\mathbf x$ is pure imaginary.
Similarly, we obtain
\begin{eqnarray*}
d_{\mathbb P}(\mathbf f_i, \mathbf f_i \cdot R(\mathbf x)) 
	&=& \inf_{t \in \mathbb C} \frac{ \| \mathbf f_i - t \mathbf f_i \cdot R_i(\mathbf x)\|}
{\|\mathbf f_i\|}\\
&\le&
\inf_{t \in \mathbb C} \| \mathbf f_i - \mathbf f_i \cdot R_i(\mathbf x)\|
	\\
&\le& \sqrt{ \sum_{\mathbf a \in A_i} |\mathbf f_{i,\mathbf a}|^2 |1-e^{\mathbf a \mathbf x}|^2}
\end{eqnarray*}
	using $\| \mathbf f_i \|=1$. For all real $\lambda$, $|1-e^{i \lambda}| \le |\lambda|$ and hence,
	\begin{equation}\label{tech2}
d_{\mathbb P}(\mathbf f_i, \mathbf f_i \cdot R(\mathbf x)) 
\le \max_{\mathbf a \in A_i} |\mathbf a \mathbf x|
\end{equation}
}
\secrev{\medskip

\noindent
{\bf Part 3:}
For a general $\mathbf x \in \mathbb C^n$, we can decompose
the operator $R_i(\mathbf x) = R_i(\Re(\mathbf x))\cdot R_i(\Im(\mathbf x))$.
The triangular inequality and Theorem 
	\ref{th-renormalization}(c) imply that
\begin{eqnarray*}
d_{\mathbb P}(\mathbf f_i, \mathbf f_i \cdot R_i(\mathbf x)) 
&\le&
d_{\mathbb P}(\mathbf f_i, \mathbf f_i \cdot R_i(\Re(\mathbf x))) + \\
	&& + d_{\mathbb P}(\mathbf f_i \cdot R_i(\Re(\mathbf x)),
\mathbf f_i \cdot R_i(\Re(\mathbf x))\cdot R_i(\Im(\mathbf x))) 
.
\end{eqnarray*}
Let 
\[
\mathbf g_i = 
\frac{\mathbf f_i \cdot R_i(\Re(\mathbf x))}
{\|\mathbf f_i \cdot R_i(\Re(\mathbf x))\|}
.
\]
Since $\|\mathbf g_i\|=1$, we have now
\[
d_{\mathbb P}(\mathbf f_i, \mathbf f_i \cdot R_i(\mathbf x)) 
\le
d_{\mathbb P}(\mathbf f_i, \mathbf f_i \cdot R_i(\Re(\mathbf x))) +
d_{\mathbb P}(\mathbf g_i, \mathbf g_i \cdot R_i(\Im(\mathbf x))) 
.
\]
Replacing by \eqref{tech1} and \eqref{tech2},
\begin{eqnarray*}
d_{\mathbb P}(\mathbf f_i, \mathbf f_i \cdot R_i(\mathbf x)) 
	&\le& \max_{\mathbf a \in A_i} |\mathbf a(\Re(\mathbf x))-\ell_i(\mathbf x)| + 
	\max_{\mathbf a \in A_i} |\mathbf a (\Im(\mathbf x))|
\\
&\le& 2 \max_{\mathbf a \in A_i} |\mathbf a(\Re(\mathbf x))| + \max_{\mathbf a \in A_i} |\mathbf a (\Im(\mathbf x))|
\\
&\le&
\sqrt{5} \max_{\mathbf a \in A_i}  |\mathbf a \mathbf x|
\end{eqnarray*}
where the last inequality comes from:
\[
\max_{c^2+s^2 \le 1} 2c+s = \max_{0 \le t \le 2\pi} 2 \cos(t)+\sin(t) = \sqrt{5}
.
\]
Let $\mathbf y = \frac{1}{\|\mathbf x\|_{i, \mathbf 0}} \mathbf x$, we obtain
\[
d_{\mathbb P}(\mathbf f_i, \mathbf f_i \cdot R_i(\mathbf x)) 
\le
\sqrt{5} \| \mathbf x\|_{i,\mathbf 0} \max_{\mathbf a \in A_i} |\mathbf a \mathbf y|
=
\sqrt{5} \|\mathbf x\|_{i,\mathbf 0} \ \nu_i 
.
\]}

Finally,
\[
d_{\mathbb P}(\mathbf f, \mathbf f \cdot R(\mathbf x))^2
=
\sum_i 
d_{\mathbb P}(\mathbf f_i, \mathbf f_i \cdot R_i(\mathbf x))^2
\le
5 
\sum_i 
\secrev{\nu_i}
\|\mathbf x\|_{i,\mathbf 0}^2
\le
5 \secrev{\nu}\|\mathbf x\|_{\mathbf 0}^2
.
\]
\end{proof}
\subsection{Proof of Theorem~\ref{th-A}}

We claim first that for $\alpha_*$ small enough,
the recurrence \eqref{rec-zero} of Definition~\ref{def-recurrence} is well-defined
in the sense that given previously produced $t_j<T$ and $x_j$,
there is $t_{j+1}>t_j$
satisfying the condition in \eqref{rec-zero}.
This will follow from the intermediate value theorem after
replacing $\mathbf f$ by $\mathbf q_{t_j}$ in 
the \secrev{lemma} below.

\begin{lemma}\label{lem-well-defined}
\changed{Let $\alpha_0$ be the constant of Theorem~\ref{th-alpha}
	and define $\psi(\alpha) = 1 - 4\alpha + 2 \alpha^2$.}
Assume that
\[
	\frac{1}{2} \secrev{
\beta( \mathbf f \cdot R(\mathbf x_{j}) )\ \mu( \mathbf f \cdot R(\mathbf x_{j}) )\ \nu} 
\le \alpha \le \alpha_0
.
\]
	Moreover, set $\mathbf x_{j+1}=N(\mathbf f \changed{\cdot}R(\mathbf x_j), \mathbf 0) + \mathbf x_j$
as in \eqref{rec-zero}.
Then,
\begin{equation}\label{thA-ai2}
	\frac{1}{2} \secrev{ 
\beta( \mathbf f \cdot R(\mathbf x_{j+1}) )\ \mu( \mathbf f \cdot R(\mathbf x_{j+1}) )\ \nu} 
	\le \alpha^2 \frac{(1-\alpha)}{\psi(\alpha)(1-2 \sqrt{5} \alpha)}
\end{equation}
\end{lemma}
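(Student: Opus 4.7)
The plan is to reduce everything to the classical Smale theory for the ordinary polynomial map $\mathbf F(\mathbf x) \defeq \mathbf f \cdot \mathbf V(\mathbf x)$, using Lemma~\ref{toric-to-classical} as the bridge, and then to propagate the condition number forward using the Lipschitz estimate from Proposition~\ref{aggregates}(\ref{prop-mu-b}) combined with Lemma~\ref{lem:fRdist}. Write $\mathbf p_j = \mathbf f \cdot R(\mathbf x_j)$, $\beta_j = \beta(\mathbf p_j)$, $\mu_j = \mu(\mathbf p_j)$. Under our standing assumption $\mathbf m_i(\mathbf 0)=\mathbf 0$, Lemma~\ref{toric-to-classical} identifies the renormalized Newton iteration with the classical Newton iteration of $\mathbf F$, and identifies the toric $\beta$, $\gamma$ of $\mathbf p_j$ with the classical Smale invariants of $\mathbf F$ at $\mathbf x_j$. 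In particular $\mathbf x_{j+1} = N(\mathbf F,\mathbf x_j)$ and $\beta_j = \beta(\mathbf F,\mathbf x_j)$, $\gamma_j = \gamma(\mathbf F,\mathbf x_j)$.

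First I will verify that the classical Smale $\alpha$-invariant at $\mathbf x_j$ is small. From \eqref{eq-mu-gamma} we have $\gamma_j \le \tfrac{1}{2}\mu_j \nu$, so $\alpha(\mathbf F,\mathbf x_j) = \beta_j\gamma_j \le \tfrac{1}{2}\beta_j\mu_j\nu \le \alpha \le \alpha_0$. This puts us squarely in the hypotheses of the classical Smale $\alpha$-theorem (Theorem~\ref{th-alpha} via Lemma~\ref{toric-to-classical}). I will then invoke the standard Smale quadratic–convergence estimate at $\mathbf x_j$ in the form
\[
\beta_{j+1} \;\le\; \frac{(1-\alpha)\,\alpha}{\psi(\alpha)}\,\beta_j,
\]
using monotonicity of $\alpha\mapsto(1-\alpha)\alpha/\psi(\alpha)$ on $[0,\alpha_0]$ to replace the Smale value $\alpha(\mathbf F,\mathbf x_j)$ by the hypothesis bound $\alpha$ (note $\psi$ is decreasing on this interval, so the bound weakens correctly as $\alpha$ increases).

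Next I will propagate $\mu$ from $\mathbf p_j$ to $\mathbf p_{j+1}$. Because the renormalization is a group action (Theorem~\ref{th-renormalization}(a)), we have $\mathbf p_{j+1} = \mathbf p_j \cdot R(\mathbf x_{j+1}-\mathbf x_j)$, and by construction $\|\mathbf x_{j+1}-\mathbf x_j\|_{\mathbf 0} = \|N(\mathbf p_j,\mathbf 0)\|_{\mathbf 0} = \beta_j$. Applying Lemma~\ref{lem:fRdist} to $\mathbf p_j$ with displacement $\mathbf x_{j+1}-\mathbf x_j$ yields
\[
d_{\mathbb P}(\mathbf p_j,\mathbf p_{j+1}) \;\le\; \sqrt{5}\,\beta_j\,\nu,
\]
so that $d_{\mathbb P}(\mathbf p_j,\mathbf p_{j+1})\,\mu_j \le \sqrt{5}\,\beta_j\mu_j\nu \le 2\sqrt{5}\,\alpha < 1$ (this last inequality will dictate the choice of $\alpha_*$). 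Proposition~\ref{aggregates}(\ref{prop-mu-b}) then yields
\[
\mu_{j+1} \;\le\; \frac{\mu_j}{1-2\sqrt{5}\,\alpha}.
\]

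Finally I multiply the two estimates together:
\[
\tfrac{1}{2}\beta_{j+1}\mu_{j+1}\nu \;\le\; \frac{(1-\alpha)\alpha}{\psi(\alpha)}\cdot\frac{1}{1-2\sqrt{5}\alpha}\cdot \tfrac{1}{2}\beta_j\mu_j\nu \;\le\; \frac{\alpha^2(1-\alpha)}{\psi(\alpha)\bigl(1-2\sqrt{5}\alpha\bigr)},
\]
which is exactly \eqref{thA-ai2}. The only genuinely delicate step is pinning down the right form of the classical Smale one-step bound on $\beta$ with the sharp constant $(1-\alpha)\alpha/\psi(\alpha)$; once that is in hand, the rest is a bookkeeping combination of Lemma~\ref{lem:fRdist} and Proposition~\ref{aggregates}(\ref{prop-mu-b}).
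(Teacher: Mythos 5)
Your proof is essentially the paper's proof: reduce via Lemma~\ref{toric-to-classical} to the classical Smale picture, apply the one‐step contraction $\beta_{j+1}\le\frac{(1-\alpha)\alpha}{\psi(\alpha)}\beta_j$ (which the paper cites from Shub--Smale, Bézout I, Prop.~3), control $d_{\mathbb P}(\mathbf p_j,\mathbf p_{j+1})$ by Lemma~\ref{lem:fRdist}, and propagate $\mu$ with Proposition~\ref{aggregates}(\ref{prop-mu-b}). The only cosmetic difference is that you make explicit the monotonicity of $\alpha\mapsto(1-\alpha)\alpha/\psi(\alpha)$ to pass from $\alpha(\mathbf F,\mathbf x_j)$ to the hypothesis bound $\alpha$, a step the paper leaves implicit.
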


Numerically, the bound in the right-hand side of \eqref{thA-ai2} is smaller than $\alpha$
for all $0< \alpha \le 0.155,098\dots$ and $\alpha_{*} < 1.555$
\changed in \eqref{rec-zero}.

\begin{proof}
	\secrev{Theorem \ref{prop-mu-gamma}} 
applied to $(\mathbf f \cdot R(\mathbf x_{j}),\mathbf 0)$ yields
\[
	\secrev{
\beta(\mathbf f \cdot R(\mathbf x_{j}) )
\ 
\gamma(\mathbf f \cdot R(\mathbf x_{j}) ) 
	}\le 
\frac{1}{2} 
	\secrev{\beta(\mathbf f \cdot R(\mathbf x_{j}) )
	\mu(\mathbf f \cdot R(\mathbf x_{j}) )}
\secrev{\nu} 
\le \alpha
.
\]
Let $\mathbf y_0 = \mathbf x_j$ and $\mathbf F(\mathbf y)=\mathbf f \cdot V(\mathbf y)$. 
Lemma~\ref{toric-to-classical} implies that
\[
\beta(\mathbf F,\mathbf y_0) \ \gamma(\mathbf F,\mathbf y_0) \le \alpha \le \alpha_0
\]
so we are in the conditions of Theorem~\ref{th-alpha}.
Moreover, $\mathbf x_{j+1} = \mathbf y_1 = N(\mathbf F,\mathbf y_0)$ 
	\changed{and $\secrev{\beta(\mathbf f \cdot R(\mathbf x_{j+1}))}=\beta(F,y_1)$. According} to \ocite{Bezout1}*{Prop. 3 p.478},
\[
\beta(\mathbf F,\mathbf y_{1}) \le \frac{1-\alpha}{\psi(\alpha)} 
\alpha \beta(\mathbf F, \mathbf y_{0})
\]
	Let $\mathbf g_i = \mathbf f_i \cdot R(\mathbf x_j) = \mathbf f_i \cdot R(\mathbf y_0)$. 
	Lemma~\ref{lem:fRdist} yields
\begin{eqnarray*}
	d_{\mathbb P}( \mathbf f_i \cdot R(\mathbf x_j), \mathbf f_i \cdot R(\mathbf x_{j+1})) &=&
	d_{\mathbb P}( \mathbf g_i , \mathbf g_i \cdot R(\mathbf x_{j+1}-\mathbf x_j)) \\
	&\le&
	\sqrt{5}\, \|\mathbf x_{j+1}-\mathbf x_j\|_{i\mathbf 0}\ \secrev{\nu}.
\end{eqnarray*}
Hence,
	\[
	d_{\mathbb P}( \mathbf f \cdot R(\mathbf x_j), \mathbf f \cdot R(\mathbf x_{j+1})) \le 
	\sqrt{5} \|\mathbf x_{j+1}-\mathbf x_j\|_{\mathbf 0}\ \secrev{\nu}
=
	\sqrt{5} \secrev{\beta(	\mathbf f \cdot R(\mathbf x_j))}\ \secrev{\nu} .
	\]
	From Proposition~\ref{prop-mu}(\ref{prop-mu-b}),
\[
	\secrev{\mu( \mathbf f \cdot R(\mathbf \mathbf x_{j+1}))} \le \frac
		{	\secrev{\mu( \mathbf f \cdot R(\mathbf x_{j}))} }
		{1 - \secrev{\mu( \mathbf f \cdot R(\mathbf x_{j}))} d_{\mathbb P}(\mathbf f \cdot R(\mathbf x_j), \mathbf f \cdot R(\mathbf x_{j+1})) }
		\le
		\frac
		{	\secrev{\mu( \mathbf f \cdot R(\mathbf x_{j}))} }
		{1 - 2 \sqrt{5}\alpha}
	.
\]
Putting all together,
	\[
\frac{1}{2} 
	\secrev{\beta( \mathbf f \cdot R(\mathbf x_{j+1}))}\ \secrev{\mu( \mathbf f \cdot R(\mathbf x_{j+1}))}\ \secrev{\nu} 
	\le \alpha^2 \frac{(1-\alpha)}{\psi(\alpha)(1-2 \sqrt{5} \alpha)} .
	\]
\end{proof}

Equation \eqref{thA-ui} \changed{in Theorem~\ref{th-A} will follow}
from item (2) of the Lemma~\ref{lem-u} below. 
The other two items will be instrumental to the complexity bound. 

\begin{lemma}\label{lem-u} With the notations of Theorem~\ref{th-alpha},
let $u_0=\frac{5-\sqrt{17}}{4}$.
For $0 <\alpha<\alpha_{0}$, 
define 
\[
u_{*}=
	\frac{\alpha r_0(\alpha)}
	{1 - 2 \sqrt{5} r_0(\alpha) \alpha}
\]
\[
u_{**}=
	\frac{\alpha r_1(\alpha)}
	{1 - 2 \sqrt{5} r_1(\alpha) \alpha}
\]
and $u_{***}=\frac{\psi(u_*)}{u_*+\psi(u_*)}\alpha$.
Then,
	\begin{enumerate}[(a)]
\item for $t_j \le t \le t_{j+1}$, $u_j(t) < u_* < u_0$, 
\item $u_{j}(t_j) \le u_{**}$, and 
\item \changed{If $t_{j+1}<T$, then} $u_j(t_{j+1}) \ge u_{***}$.
\end{enumerate}
\end{lemma}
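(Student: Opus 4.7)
The three items are proved separately, all three resting on Theorem~\ref{th-alpha} (the toric $\alpha$-theorem), Proposition~\ref{aggregates}(b) (Lipschitz property of $\mu$), Lemma~\ref{lem:fRdist} (Lipschitz property of renormalization), and Proposition~\ref{prop-mu-gamma} (the $\mu$-$\gamma$ comparison), together with the recurrence~\eqref{rec-zero} and Lemma~\ref{lem-well-defined}. In what follows I write $\beta(t)=\beta(\mathbf q_t\cdot R(\mathbf x_{j+1}))$, $\mu(t)=\mu(\mathbf q_t\cdot R(\mathbf x_{j+1}))$, and $\mu^*(t)=\mu(\mathbf q_t\cdot R(\mathbf z_t))$. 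That $\mathbf z_t$ is the continuous lifting of the same solution throughout $[t_j,t_{j+1}]$ follows from the $\alpha$-theorem applied at the base point $\mathbf x_{j+1}$, which is an approximate root for every $\mathbf q_t$ on this interval by the stopping rule in~\eqref{rec-zero}.

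For item~(a), the stopping rule gives $\frac{1}{2}\beta(t)\mu(t)\nu\le \alpha_*$ for every $t\in[t_j,t_{j+1}]$, so Theorem~\ref{th-alpha}(b) applied at $\mathbf x_{j+1}$ yields $\|\mathbf z_t-\mathbf x_{j+1}\|_{\mathbf 0}\le r_0(\alpha_*)\beta(t)$. Lemma~\ref{lem:fRdist} then bounds $d_{\mathbb P}(\mathbf q_t\cdot R(\mathbf z_t),\mathbf q_t\cdot R(\mathbf x_{j+1}))\le \sqrt5\,\|\mathbf z_t-\mathbf x_{j+1}\|_{\mathbf 0}\,\nu$, and Proposition~\ref{aggregates}(b) propagates this into $\mu^*(t)\le \mu(t)/(1-2\sqrt5\,r_0(\alpha_*)\alpha_*)$. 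Multiplying the two estimates and invoking the stopping rule gives $u_j(t)\le r_0(\alpha_*)\alpha_*/(1-2\sqrt5\,r_0(\alpha_*)\alpha_*)=u_*$. The numerical choice $\alpha_*\simeq 0.0746$ is exactly calibrated so that $u_*<u_0=(5-\sqrt{17})/4$. For item~(b), at $t=t_j$ the point $\mathbf x_{j+1}$ is the \emph{first} renormalized Newton iterate of $\mathbf x_j$, so Theorem~\ref{th-alpha}(c) supplies the sharper bound $\|\mathbf z_{t_j}-\mathbf x_{j+1}\|_{\mathbf 0}\le r_1(\alpha_*)\beta(\mathbf q_{t_j}\cdot R(\mathbf x_j))$. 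Repeating the same Lipschitz chain, but using now the $\mathbf x_{j+1}$-intermediate step to keep $r_1$ rather than $r_0$ inside the Lipschitz denominator, produces the claimed $u_j(t_j)\le u_{**}$.

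Item~(c) is the delicate one. At $t_{j+1}<T$, continuity of $\beta\mu\nu$ in $t$ forces $\frac{1}{2}\beta(t_{j+1})\mu(t_{j+1})\nu=\alpha_*$ exactly. The required \emph{lower} bound on $u_j(t_{j+1})=\frac{1}{2}\|\mathbf z_{t_{j+1}}-\mathbf x_{j+1}\|_{\mathbf 0}\,\mu^*(t_{j+1})\nu$ comes from two ingredients. First, a classical Smale lower bound on the distance from an approximate root to the true root: if $\mathbf x_{j+2}'=N(\mathbf q_{t_{j+1}}\cdot R(\mathbf x_{j+1}),\mathbf 0)+\mathbf x_{j+1}$, then $\|\mathbf x_{j+2}'-\mathbf x_{j+1}\|_{\mathbf 0}=\beta(t_{j+1})$ and $\|\mathbf x_{j+2}'-\mathbf z_{t_{j+1}}\|_{\mathbf 0}\le r_1(\alpha_*)\beta(t_{j+1})$ by Theorem~\ref{th-alpha}(c), but the sharper Smale identity $\|\mathbf x-\boldsymbol\zeta\|\ge \beta\,\psi(u)/(\psi(u)+u)$ with $u=\gamma(\mathbf f,\mathbf x)\|\mathbf x-\boldsymbol\zeta\|$ yields the factor $\psi(u_*)/(u_*+\psi(u_*))$ after using item~(a) to bound $u$ by $u_*$. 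Second, Lipschitz comparison of $\mu^*$ and $\mu(t_{j+1})$ going in the reverse direction (again via Proposition~\ref{aggregates}(b) and Lemma~\ref{lem:fRdist}) shows that the ratio $\mu^*(t_{j+1})/\mu(t_{j+1})$ introduces only sub-leading corrections absorbable in the stated constants. Combining gives $u_j(t_{j+1})\ge \frac{\psi(u_*)}{u_*+\psi(u_*)}\alpha_*=u_{***}$.

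The technical core is item~(c): all the other parts are a careful but essentially mechanical concatenation of the $\alpha$-theorem with Lipschitz propagations, whereas (c) requires the sharper Smale-type lower bound of $\|\mathbf x-\boldsymbol\zeta\|$ in terms of $\beta$ and $\psi(u)$, which is precisely what engraves $\psi$ into the formula for $u_{***}$. Finally, the numerical values $\alpha_*,\alpha_{**},\delta_*,u_*,u_{**},u_{***}$ tabulated above are those that make the three bounds above mutually consistent and compatible with Lemma~\ref{lem-well-defined}, so the end of the proof is a verification of numerical inequalities among the closed-form expressions for $r_0(\alpha_*)$, $r_1(\alpha_*)$, $\psi(u_*)$, etc.
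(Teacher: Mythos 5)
Your handling of items (a) and (b) follows the paper's own route exactly: Theorem~\ref{th-alpha}(b) (resp.~(c)) bounds $\|\mathbf z_t-\mathbf x_{j+1}\|_{\mathbf 0}$ by $r_0(\alpha)\beta$ (resp.\ $r_1(\alpha)\beta$), and the chain Lemma~\ref{lem:fRdist} $\to$ Proposition~\ref{aggregates}(b) $\to$ stopping rule delivers $u_*$ (resp.\ $u_{**}$). One small slip: the ``Smale lower bound'' you invoke for (c), namely $\|\mathbf x-\boldsymbol\zeta\|\ge\beta\,\psi(u)/(\psi(u)+u)$, is exactly what the paper derives from \cite{BCSS}*{Prop.~1 p.~157} plus the triangle inequality $\beta=\|\mathbf y_0-\mathbf y_1\|\le\|\mathbf y_0-\boldsymbol\zeta\|+\|\mathbf y_1-\boldsymbol\zeta\|\le(1+u_*/\psi(u_*))\|\mathbf y_0-\boldsymbol\zeta\|$, so the first ingredient of (c) is the same as the paper's. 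Also, the $u\le u_*$ bound you need is for $\gamma(\mathbf q_{t_{j+1}}\!\cdot\! R(\mathbf x_{j+1}))\,\|\mathbf x_{j+1}-\mathbf z_{t_{j+1}}\|\le\frac12\mu(\mathbf q_{t_{j+1}}\!\cdot\! R(\mathbf x_{j+1}))\|\mathbf x_{j+1}-\mathbf z_{t_{j+1}}\|\nu$, which is the estimate appearing in the \emph{middle} of the paper's proof of (c) --- it is not literally item~(a), whose $\mu$ is evaluated at $\mathbf z_t$, not $\mathbf x_{j+1}$.

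The genuine gap is in your ``second ingredient.'' After obtaining $\alpha\le(1+u_*/\psi(u_*))\cdot\frac12\|\mathbf x_{j+1}-\mathbf z_{t_{j+1}}\|\,\mu(\mathbf q_{t_{j+1}}\!\cdot\! R(\mathbf x_{j+1}))\,\nu$, one still has to replace $\mu$ evaluated at $\mathbf x_{j+1}$ by $\mu^*$ evaluated at the root $\mathbf z_{t_{j+1}}$ to recognise $u_j(t_{j+1})$. You notice this and propose a Lipschitz comparison ``in the reverse direction,'' but Proposition~\ref{aggregates}(b) only gives $\mu^*\ge\mu(\mathbf q_{t_{j+1}}\!\cdot\! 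R(\mathbf x_{j+1}))/(1+d_{\mathbb P}\mu)$, \emph{not} $\mu^*\ge\mu$. A non-trivial Lipschitz factor would therefore \emph{shrink} the lower bound to something strictly less than $\frac{\psi(u_*)}{u_*+\psi(u_*)}\alpha$. Your claim that this is ``absorbable in the stated constants'' cannot be right as stated: $u_{***}$ is an exact closed-form expression, and the lemma claims $u_j(t_{j+1})\ge u_{***}$, not $u_j(t_{j+1})\ge u_{***}(1-\text{error})$. Either the Lipschitz factor must be carried explicitly (which would change the formula for $u_{***}$), or one needs the pointwise inequality $\mu(\mathbf q_{t_{j+1}}\!\cdot\! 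R(\mathbf x_{j+1}))\le\mu(\mathbf q_{t_{j+1}}\!\cdot\! R(\mathbf z_{t_{j+1}}))$, which is not a consequence of the cited Lipschitz lemmas. For what it is worth, the paper's own proof silently performs this substitution in its last displayed inequality, so your instinct to flag it is correct --- but invoking an ``absorbable'' correction does not close the gap, it only relabels it.
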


The graphs of $u_*$, $u_{**}$ and $u_{***}$ as functions of $\alpha$
are plotted in Figure~\ref{plot-u}. In view of Lemma~\ref{lem-u}, we
define $\alpha_{**} \simeq 0.096,917\cdots$ as the solution of $u_*(\alpha_{**}) = u_0$.
The bound in the right-hand side of \eqref{thA-ai2} is strictly smaller than $\alpha$ for all $0< \alpha \le \alpha_{**} \le 0.155,098\dots$. Also notice that $\alpha_{**}$ is smaller than the constant $\alpha_0$ from Theorem~\ref{th-alpha}.

\begin{figure}
\centerline{\resizebox{\textwidth}{!}{\includegraphics{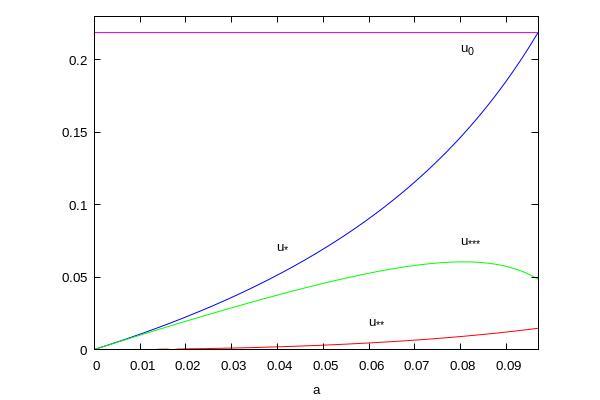}}}
\caption{The graphs of $u_0$,
$u_{*}(\alpha)$, $u_{**}(\alpha)$ and $u_{***}(\alpha)$.
\label{plot-u}}
\end{figure}

\begin{proof}
	Apply Smale's alpha-theorem (Theorem~\ref{th-alpha})
to the point $\mathbf y_1=\mathbf x_{j+1}$, for \changed{the system} $\mathbf F(\mathbf y)=\mathbf q_{t}\cdot V(\mathbf y)$, 
$t_j \le t \le t_{j+1}$.
\secrev{
	The construction of $t_{j+1}$ in
	the recurrence \eqref{rec-zero} guarantees 
	for those values of $t$ that
	\begin{equation}\label{cert-root-hypothesis}
\frac{1}{2}\secrev{\beta(\mathbf q_{t} \changed{\cdot} R(\mathbf y_{1}))}
\secrev{\mu (\mathbf q_t \cdot R(\mathbf y_{1}))} \secrev{\nu} 
\le \alpha < \alpha_0.
\end{equation}
From Lemma~\ref{toric-to-classical}, $\alpha(\mathbf F, \changed{\mathbf y_1}) = \alpha(\mathbf{f},\mathbf x_{j+1}) \le \alpha \le \alpha_0$.
	Theorem~\ref{th-alpha}(b) asserts that the Newton iterates of 
$\mathbf y_{1}$ converge to a zero $\mathbf z_t$ of $\mathbf F_t$
with
	\begin{equation}\label{y1-converges}
	\| \mathbf y_{1}- \mathbf z_t \|_{\mathbf 0} \le r_0(\alpha)\, \secrev{\beta(\mathbf q_{t} \changed{\cdot} R(\changed{\mathbf y_{1}}))} 
.
\end{equation}
%
Hence,
\begin{align*}
	\secrev{\mu (\mathbf q_t \cdot R(\mathbf z_t))} 
	&\le
	\frac{ \secrev{\mu (\mathbf q_t \cdot R(\mathbf y_1))} }
	{1 - \secrev{\mu (\mathbf q_t \cdot R(\mathbf y_1))} d_{\mathbb P}( \mathbf q_t \cdot R(\mathbf y_1), \mathbf q_t \cdot R(\mathbf z_t))}
	&& \text{by Prop. ~\ref{prop-mu}(\ref{prop-mu-b})}\\
&\le
	\frac{ \secrev{\mu (\mathbf q_t \cdot R(\mathbf y_{1}))} }
	{1 - \sqrt{5} \secrev{\mu(\mathbf q_t \cdot R(\mathbf y_{1}))} \|\mathbf y_{1}-\mathbf z_t\|_{\mathbf 0} \secrev{\nu}}
	&& \text{by Lemma~\ref{lem:fRdist}}\\
	&\le
	\changed{\frac{ \secrev{\mu (\mathbf q_t \cdot R(\mathbf y_{1}))} }
	{1 - \sqrt{5} \secrev{\mu(\mathbf q_t \cdot R(\mathbf y_{1}))} 
	r_0(\alpha) \secrev{\beta(\mathbf q_t \cdot R(\mathbf y_{1}))} \secrev{\nu}}
	}
	&&\text{by \eqref{y1-converges}}\\
&\le
	\frac{ \secrev{\mu (\mathbf q_t \cdot R(\mathbf y_{1}))} }
	{1 - 2 \sqrt{5} r_0(\alpha) \alpha} 
&&\text{by \eqref{cert-root-hypothesis}}.
\end{align*}}
Therefore,
\[
	u_j(t) =
	\frac{1}{2} \| \mathbf z_t - \mathbf x_{j+1}\| 
	\secrev{\mu (\mathbf q_t \cdot R(\mathbf z_t))} \secrev{\nu}	
	\le 
	\frac{\alpha r_0(\alpha)}
	{1 - 2 \sqrt{5} r_0(\alpha) \alpha} = u_*
.
\]
By construction $u_* < u_0$,
and equation  \eqref{thA-ui} follows.
We may obtain a sharper estimate for $u_j(t_j)$, since $\mathbf y_1=\mathbf x_{j+1}$ is the
	iterate of $\mathbf y_0=\mathbf x_j$. \secrev{The last
	item of Theorem~\ref{th-alpha} yields}
\[
	\| \mathbf y_{1}- \mathbf z_{t_j} \|_{\mathbf 0} \le r_1(\alpha) \secrev{\beta(\mathbf q_{t_j} \cdot R(\mathbf x_{j})} 
,
\]
and by the very same reasoning,
\[
u_j(t_j) \le 
	\frac{\alpha r_1(\alpha)}
	{1 - 2 \sqrt{5} r_1(\alpha) \alpha} 
= u_{**}.
\]

If $t_{j+1} \ne T$, then by construction
\[
	\frac{1}{2} \secrev{\beta( \mathbf q_{t_{j+1}} R(\mathbf x_{j+1})) \mu( \mathbf q_{t_{j+1}} R(\mathbf x_{j+1}))\ \nu} = \alpha
.
\]
Thus,
\[
\frac{1}{2} \|\mathbf x_{j+1}-\mathbf z_{t_{j+1}}\|_{\mathbf 0}
\ \secrev{\mu( \mathbf q_{t_{j+1}} R(\mathbf x_{j+1}))\ } \le u_* < u_0
.
\]
Let $\mathbf y_0=\mathbf x_{j+1}$. From \cite{BCSS}*{Proposition 1 p. 157}
the Newton iterate 
$\mathbf y_1=N(\mathbf F,\mathbf y_0)$ satisfies 
\[
\|\mathbf y_1 - \mathbf z_{t_{j+1}}\|_{\mathbf 0}
\le 
\frac{u_*}{\psi(u_*)}  \|\mathbf y_0-\mathbf z_{t_{j+1}}\|_{\mathbf 0}
.
\]
Therefore,
\begin{eqnarray*}
	\secrev{\beta( \mathbf q_{t_{j+1}} R(\mathbf x_{j+1}) )}
&=&
\|\mathbf y_0-\mathbf y_1\|_{\mathbf 0}
\\
&\le& 
\|\mathbf y_0-\mathbf z_{t_{j+1}}\|_{\mathbf 0}
+
\|\mathbf z_{t_{j+1}}-\mathbf y_1\|_{\mathbf 0}
\\
&\le&
\left( 1+ \frac{u_*}{\psi(u_*)} \right)
\|\mathbf x_{j+1}-\mathbf z_{t_{j+1}}\|_{\mathbf 0}
.
\end{eqnarray*}
It follows that 
\begin{eqnarray*}
	\alpha &\le& 
\frac{1}{2}
\left( 1+ \frac{u_*}{\psi(u_*)} \right)
\|\mathbf x_{j+1}-\mathbf z_{t_{j+1}}\|_{\mathbf 0}
	\ \secrev{\mu( \mathbf q_{t_{j+1}} R(\mathbf x_{j+1}))\ \nu} \\
&\le&
\left( 1+ \frac{u_*}{\psi(u_*)} \right) u_{j}(t_{j+1})
\end{eqnarray*}
Since $u_{***}=\frac{\psi(u_*)}{\psi(u_*)+u_*} \alpha$, 
\[
u_{***} \le u_j(t_{j+1}).
\]
\end{proof}

Towards the proof of Theorem~\ref{th-A}, let $\mu_j = \secrev{\mu(\mathbf q_{t_j} \cdot R(\mathbf z_{t_j}))}$ and let
\begin{eqnarray*}
	d_{\mathrm{max}}(t) &=& 
\max_{t_j \le \tau \le t} 
	d_{\mathbb P}( 
\mathbf q_{\tau} \cdot R(\mathbf z_{\tau}), 
	\mathbf q_{t_j} \cdot R(\mathbf z_{t_j})) 
.
\end{eqnarray*}

Clearly $d_{\mathrm{max}}(\secrev{t_j})=0$ and $d_{\mathrm{max}}(t)$ is a continuous function.

\begin{lemma}\label{lem-rt}
\[
d_{\mathrm{max}}(t_{j+1}) \mu_j \le (1+d_{\mathrm{max}}(t_{j+1}) \mu_j) \mathscr L(t_j, t_{j+1})
.\]
Furthermore if $\mathscr L(t_j, t_{j+1}) <1$,
\[
d_{\mathrm{max}}(t_{j+1}) \mu_j \le \frac{\mathscr L(t_j, t_{j+1})}{1-\mathscr L(t_j, t_{j+1})}
\]
\end{lemma}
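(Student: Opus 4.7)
The approach is to bound the projective displacement along the renormalized path by a Fubini--Study arc length, then trade the constant $\mu_j$ for the varying $\mu(\mathbf p_s)$ via Proposition~\ref{aggregates}(\ref{prop-mu-b}). Write $\mathbf p_s = \mathbf q_s \cdot R(\mathbf z_s)$ and $d(s) = d_{\mathbb P}(\mathbf p_s, \mathbf p_{t_j})$, so that $d_{\mathrm{max}}(t_{j+1}) = \sup_{s \in [t_j, t_{j+1}]} d(s)$.

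\emph{Step 1: Arc-length bound.} First I would show that, for every $\tau \in [t_j, t_{j+1}]$,
\[
d(\tau) \le \int_{t_j}^{\tau} \left\| \frac{\partial}{\partial s}\mathbf p_s \right\|_{\mathbf p_s}\, ds .
\]
In each factor $\mathbb P(\mathscr F_{A_i})$ the projective distance $d_{\mathbb P, i}$ equals $\sin(d_{FS,i})$ and is therefore dominated by the Fubini--Study Riemannian distance, which in turn is bounded by the arc length of the speed $\|P_{\mathbf p_{is}^\perp} \dot{\mathbf p}_{is}\| / \|\mathbf p_{is}\|$. Since the multi-projective distance \eqref{multi-projective-distance} is the $\ell^2$-product of these factor distances, the aggregate estimate follows from the Minkowski integral inequality $\sqrt{\sum_i (\int f_i)^2} \le \int \sqrt{\sum_i f_i^2}$.

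\emph{Step 2: Comparing $\mu_j$ and $\mu(\mathbf p_s)$.} Next I would establish unconditionally that
\[
\mu_j \le \mu(\mathbf p_s)\bigl(1 + d(s)\,\mu_j\bigr).
\]
When $d(s)\mu_j < 1$, this is just the lower estimate in Proposition~\ref{aggregates}(\ref{prop-mu-b}) applied with $\mathbf f = \mathbf p_{t_j}$ and $\mathbf g = \mathbf p_s$. When instead $d(s)\mu(\mathbf p_s) < 1$, the same proposition with $\mathbf f$ and $\mathbf g$ interchanged gives $\mu_j (1 - d(s)\mu(\mathbf p_s)) \le \mu(\mathbf p_s)$, which rearranges to the stated inequality. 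Finally, if both $d(s)\mu_j \ge 1$ and $d(s)\mu(\mathbf p_s) \ge 1$, then $\mu(\mathbf p_s)(1+d(s)\mu_j) \ge \mu(\mathbf p_s) + \mu_j \ge \mu_j$.

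\emph{Step 3: Integrating.} Multiplying Step~1 by $\mu_j$ and inserting the pointwise bound of Step~2 into the integrand yields
\begin{align*}
\mu_j\, d(\tau) &\le \int_{t_j}^{\tau} \mu_j \left\|\tfrac{\partial}{\partial s}\mathbf p_s\right\|_{\mathbf p_s} ds \\
&\le \int_{t_j}^{\tau} \bigl(1 + d(s)\mu_j\bigr)\, \mu(\mathbf p_s) \left\|\tfrac{\partial}{\partial s}\mathbf p_s\right\|_{\mathbf p_s} ds \\
&\le \bigl(1 + d_{\mathrm{max}}(t_{j+1})\mu_j\bigr)\, \mathscr L(t_j, t_{j+1}),
\end{align*}
since the non-negative term $\nu\|\dot{\mathbf z}_s\|_{\mathbf 0}\,\mu(\mathbf p_s)$ was discarded from Definition~\ref{def-rencondlength}. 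Taking the supremum over $\tau \in [t_j, t_{j+1}]$ delivers the first inequality. The second is purely algebraic: rewriting gives $d_{\mathrm{max}}(t_{j+1})\mu_j\, (1 - \mathscr L(t_j, t_{j+1})) \le \mathscr L(t_j, t_{j+1})$, and dividing under the hypothesis $\mathscr L(t_j, t_{j+1}) < 1$ yields the claim.

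The only subtle point will be verifying the unconditional nature of the bound in Step~2, since Proposition~\ref{aggregates}(\ref{prop-mu-b}) is only formally stated for $d_{\mathbb P}\mu < 1$; this is resolved by the symmetric application with the roles of $\mathbf p_{t_j}$ and $\mathbf p_s$ swapped, together with the trivial case where both $d(s)\mu_j$ and $d(s)\mu(\mathbf p_s)$ exceed $1$.
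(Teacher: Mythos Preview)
Your proof follows the same route as the paper's: bound $d_{\mathrm{max}}$ by the multi-projective arc length, replace $\mu_j$ by $\mu(\mathbf p_s)$ via the Lipschitz estimate of Proposition~\ref{aggregates}(\ref{prop-mu-b}), integrate, and rearrange. The paper simply cites Proposition~\ref{aggregates}(\ref{prop-mu-b}) to obtain $\mu_j \le (1+d_{\mathrm{max}}(t_{j+1})\mu_j)\,\mu(\mathbf p_s)$ without checking the hypothesis $d_{\mathbb P}\,\mu < 1$; your Step~2 case analysis establishes that the needed lower-bound direction in fact holds unconditionally, which is a genuine (if minor) tightening of the argument.
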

\begin{proof}
The projective distance is always smaller than the Riemannian metric,
since they share the arc length element. The Riemannian distance between two points is smaller 
or equal than the Riemannian length of an arbitrary path between those two points. We obtain the upper bound 
\begin{eqnarray*}
d_{\mathrm{max}}(t_{j+1})  
&\le&
\max_{t_j \le \tau \le t_{j+1}} \int_{t_j}^{\tau}
\left\|
	\frac{\partial}{\partial \tau} \left( \mathbf q_{\tau} \cdot R(\mathbf z_{\tau}) \right)
	\right\|_{\mathbf q_{\tau} \cdot R(\mathbf z_{\tau})} \ \dd \tau
\\
&\le&
\int_{t_j}^{t_{j+1}}
\left\|
	\frac{\partial}{\partial \tau} \left( \mathbf q_{\tau} \cdot R(\mathbf z_{\tau}) \right)
	\right\|_{\mathbf q_{\tau} \cdot R(\mathbf z_{\tau})} \ \dd \tau
.
\end{eqnarray*}
and $0\le d_{\mathrm{max}}(t) \le d_{\mathrm{max}}(t_{j+1})$ for $t_j \le t \le t_{j+1}$.
From the definition of $d_{\mathrm{max}}(t)$, we have a trivial lower bound
\begin{equation} \label{bound-upper-dP}
	d_{\mathbb P}(
	\mathbf q_{t_j} \cdot R(\mathbf z_{t_j}),
	\mathbf q_{\tau} \cdot R(\mathbf z_{\tau})) \le d_{\mathrm{max}}(t_{j+1}) .
\end{equation}
	Proposition~\ref{prop-mu}(\ref{prop-mu-b}) combined with equation
	\eqref{bound-upper-dP} yields the estimate
\begin{equation}\label{mu-bound}
\frac{\mu_j}{1+d_{\mathrm{max}}(t_{j+1}) \mu_j}
\le 
	\secrev{\mu(\mathbf q_{t} \cdot 
	R(\mathbf z_{t}))}
\le
\frac{\mu_j}{1-d_{\mathrm{max}}(t_{j+1}) \mu_j}
.
\end{equation}
We can combine the upper and lower bounds:	
\begin{eqnarray*}
	d_{\mathrm{max}}(t_{j+1}) \mu_j &\le&
\int_{t_j}^{t_{j+1}}
\mu_j \left\|
	\frac{\partial}{\partial \tau} \left( \mathbf q_{\tau} \cdot R(\mathbf z_{\tau}) \right)
	\right\|_{\mathbf q_{\tau} \cdot R(\mathbf z_{\tau})} \ \dd \tau
\\
	&\le&
(1+d_{\mathrm{max}}(t_{j+1}) \mu_j)
\int_{t_j}^{t_{j+1}}
	\mu(\mathbf q_{\tau} \cdot R(\mathbf z_{\tau})) 
\\
	&& \hspace{8em} \times 
	\left\|
	\frac{\partial}{\partial \tau} \left( \mathbf q_{\tau} \cdot R(\mathbf z_{\tau}) \right)
	\right\|_{\mathbf q_{\tau} \cdot R(\mathbf z_{\tau})} \ \dd \tau
\\
	&\changed{\le}&
 (1+ d_{\mathrm{max}}(t_{j+1})\mu_j) 
\mathscr L(t_j, t_{j+1})
.
\end{eqnarray*}
Rearranging terms under the assumption $\mathscr L(t_j, t_{j+1})<1$,
\[
d_{\mathrm{max}}(t_{j+1}) \mu_j \le \frac{\mathscr L(t_j, t_{j+1})}{1-\mathscr L(t_j, t_{j+1})}
.
\]
\end{proof}

\begin{proof}[Proof of Theorem~\ref{th-A}]
\begin{figure}
\centerline{\resizebox{\textwidth}{!}{\includegraphics{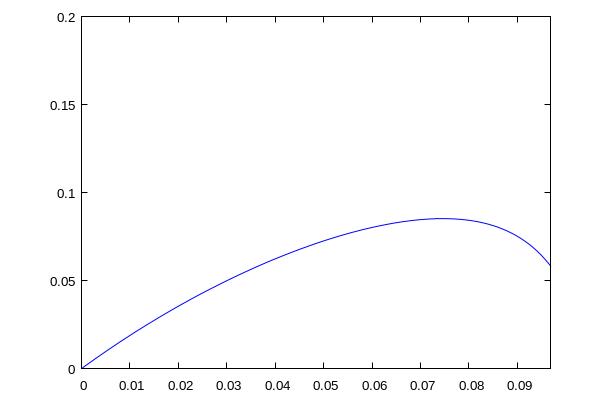}}}
\caption{The value of $\delta(\alpha)$ in function of $\alpha$ for
$0 < \alpha < \alpha_{**}$. The maximum is $\delta_* = \delta(\alpha_*)$.
\label{plot-delta}}
\end{figure}

A path $(\mathbf z_t)_{t \in [t_j, t_{j+1}]}$ can be produced as in Lemma~\ref{lem-u} for 
each value of $j$ by extending the previous definition to $t_{j+1}$: 
For each $t \in [t_j, t_{j+1}]$,
define $\mathbf y_1(t) = \mathbf x_{j+1}$ and inductively, $\mathbf y_{k+1}(t)$ as the Newton iterate of $\mathbf y_k(t)$ for the system $\mathbf F_t(\mathbf y) = \mathbf q_t \cdot V(\mathbf y)$.
Equation  \eqref{thA-ui} guarantees quadratic convergence
to a zero $\mathbf z_t$ because $u_* < u_0=\frac{3-\sqrt{7}}{2}$, so we
	can apply Theorem~\ref{th-gamma} combined with \secrev{Theorem \ref{prop-mu-gamma}}.
Moreover, for $t_j \le t \le t_{j+1}$, $\|\mathbf y_k(t) - \mathbf z_t\|_{\mathbf 0} \le 2^{-2^k+1} u_*$
so the convergence is uniform.

We claim that each $(\mathbf z_t)_{t \in [t_j, t_{j+1}]}$ is continuous. Indeed, let $\epsilon > 0$.
There is $k$ such that for all $\tau \in [t_j, t_{j+1}]$,
	$\|\mathbf y_k(\tau) - \mathbf z_{\tau}\|_{\mathbf 0} \le \epsilon/3$. Moreover, $\mathbf y_k(\tau)$ is continuous
	in $t$ so there is $\changed{\iota=\iota(t)} > 0$ with the property that for all $t' \in [t_j, t_{j+1}]$, $|t-t'|< \changed{\iota}$
	implies that $\|\mathbf y_k(t) - \mathbf y_k(t')\|_{\mathbf 0} < \epsilon/3$. Whence,
\[
	\|\mathbf z_t - \mathbf z_{t'}\|_{\mathbf 0}  \le
\|\mathbf z_t - \mathbf y_k(t)\|_{\mathbf 0}  
+
\|\mathbf y_k(t) - \mathbf y_k(t')\|_{\mathbf 0}  
+
\|\mathbf y_k(t') - \mathbf z_{t'}\|_{\mathbf 0}  
< \epsilon .
\]

To check that the constructed paths 
$(\mathbf z_t)_{t \in [t_j, t_{j+1}]}$ and
$(\mathbf z_t)_{t \in [t_{j+1}, t_{j+2}]}$
patch together, we need to compare the 
end-points at $t_{j+1}$. Recall that 
$\mathbf x_{j+2} = N(\mathbf q_{t_{j+1}} \cdot R(\mathbf x_{j+1}), \mathbf 0) + \mathbf x_{j+1} = N(\mathbf F_{t_{j+1}},\mathbf x_{j+1} )$
using Lemma~\ref{toric-to-classical}(a).  
Let $\tilde{\mathbf y}_k(t)$ denote the $k-1$-th Newton iterate 
of $\mathbf x_{j+2}$ 
for the system $\mathbf F_t(\tilde{\mathbf y}) = \mathbf q_t \cdot V(\tilde{\mathbf y})$.
By construction, $\tilde{\mathbf y}_k(t_{j+1})=\mathbf y_{k+1}$. Therefore,
\[
	\lim_{k \rightarrow \infty} \tilde{\mathbf y}_k(t_{j+1})
	=\lim_{k \rightarrow \infty} {\mathbf y}_{k+1}(t_{j+1})
\]
and the endpoints at $t_{j+1}$ are the same.
	Because of Lemma \ref{lem-u}(b), $\secrev{\mu(\mathbf q_t R(\mathbf z_t))}$ is finite and hence the implicit function theorem
guarantees that $\mathbf z_t$ has the same differentiability class than $\mathbf q_t$.

We proceed now to the lower bound
$\mathscr L(t_j, t_{j+1}) \ge \delta_*$. Assume without loss of
generality that $\delta_*<1/2$,
and that  $\delta = \mathscr L(t_j, t_{j+1}) < \delta_*$. 
Lemma~\ref{lem-rt} provides an upper bound for $d_{\mathrm{max}}(t_{j+1}) \mu_j$.
The rightmost inequality of \eqref{mu-bound} with $t=t_{j+1}$ implies that
\[
	u_j(t_{j+1}) \le \frac{1}{2} \frac{\mu_j \secrev{\nu} \|\mathbf z_{t_{j+1}}-\mathbf x_{j+1}\|_{\mathbf 0} }{1-d_{\mathrm{max}}(t_{j+1}) \mu_j}
.
\]
From Lemma~\ref{lem-u}(\changed{c}) and rearranging terms,
\[
(1-d_{\mathrm{max}}(t_{j+1}) \mu_j) u_{***}
\le
\frac{1}{2} \mu_j \secrev{\nu} \| \mathbf z_{t_{j+1}} - \mathbf x_{j+1} \|_{\mathbf 0} 
.
\]
Triangular inequality yields
\begin{eqnarray*}
\frac{1}{2} \mu_j \secrev{\nu} \| \mathbf z_{t_{j+1}} - \mathbf z_{t_j} \|_{\mathbf 0} 
&\ge&
\frac{1}{2} \mu_j \secrev{\nu} \| \mathbf z_{t_{j+1}} - \mathbf x_{j+1} \|_{\mathbf 0} 
-
\frac{1}{2} \mu_j \secrev{\nu} \| \mathbf z_{t_{j}} - \mathbf x_{j+1} \|_{\mathbf 0} 
\\
&\ge&
u_{***} (1-d_{\mathrm{max}}(t_{j+1}) \mu_j) -u_{**} 
\end{eqnarray*}
On the other hand,
\begin{eqnarray*}
\frac{1}{2} 
\mu_j \secrev{\nu} \| \mathbf z_{t_{j+1}} - \mathbf z_{t_j} \|_{\mathbf 0} 
&\le&
\frac{1}{2}
\mu_j \secrev{\nu} 
\int_{t_j}^{t_{j+1}}
	\| \dot {\mathbf z}_{t} \|_{\mathbf 0} 
\dd t
\\
&\le&
\frac{1}{2}
(1+d_{\mathrm{max}}(t_{j+1}) \mu_j)
\int_{t_j}^{t_{j+1}}
\| \dot z_{t} \|_{\mathbf 0} 
	\secrev{\mu(\mathbf q_t \cdot R(\mathbf z_t)) \nu} 
\dd t
\end{eqnarray*}
Thus,
\[
u_{***} (1-d_{\mathrm{max}}(t_{j+1}) \mu_j) -u_{**} 
\le
\frac{1}{2}
\mathscr L(t_j, t_{j+1} )(1+d_{\mathrm{max}}(t_{j+1}) \mu_j)
\]

Assume that $\delta = \mathscr L(t_j, t_{j+1}) < 1/2$. 
Lemma~\ref{lem-rt} above implies that
$(1-d_{\mathrm{max}}(t_{j+1})\mu_j) \ge 1-\frac{\delta}{1-\delta}=\frac{1-2\delta}{1-\delta}$. 
Similarly, $(1+d_{\mathrm{max}}(t_{j+1})\changed{\mu_j}) \le \frac{1}{1-\delta}$. We have
\[
\left(u_{***}\frac{1 - 2 \delta}{1-\delta} -u_{**}\right) 
\le
\frac{1}{2} \frac{\delta}{1-\delta}
.
\]
Rearranging terms,
\[
\delta \ge \frac{u_{***} - u_{**}}{2 u_{***} - u_{**} + \frac{1}{2}}
.
\]
By construction $\alpha_{**}$ satisfies $u_*(\alpha_{**})=u_0$.
The right hand side is a smooth function of $\alpha \in (0,\alpha_{**})$
(See figure~\ref{plot-delta}). Its maximum is attained for $\alpha_*
\sim 0.074,609\cdots$ with value $\delta_*=0.085,180\cdots$ 
\changed{Finally,}
Lemma~\ref{lem-well-defined} implies 
that
$x_{N+1} = $ $N( \mathbf q_{T} \cdot R(\mathbf x_{N}), \mathbf 0) + \mathbf x_N 
$ is an approximate root for $\mathbf q_T$ 
as in Equation \eqref{thA-a0}. 
\end{proof}

\section{Proof of Theorem~\ref{E-M2}}
\label{sec-integral}
\subsection{The coarea formula}
Theorem ~\ref{E-M2} claims a bound for the integral
\begin{equation}\label{Ihat}
\gls{Ihat}	
\defeq \expected{\mathbf q \sim \secrev{\mathcal N}(\secrev{\mathbf f}, \Sigma^2)} 
{ \sum_{\mathbf z \in Z_H(\mathbf q)} \| M(\mathbf q ,\mathbf z)^{-1}\|_{\mathrm F}^2 }
\end{equation}
which is independent of $\secrev{\mathbf f}$. This bound will be derived from the
coarea formula, see for instance \cite{BCSS}*{Theorem 4 p.241}. This statement is also known in other communities as the Rice formula \cite{Azais-Wschebor}
or the Crofton formula. However, it is convenient to restate this 
result in terms of complex fiber bundles (See figure~\ref{fiber-bundle}) 

\begin{figure}
\centerline{
	\resizebox{0.7 \textwidth}{!}{ \input{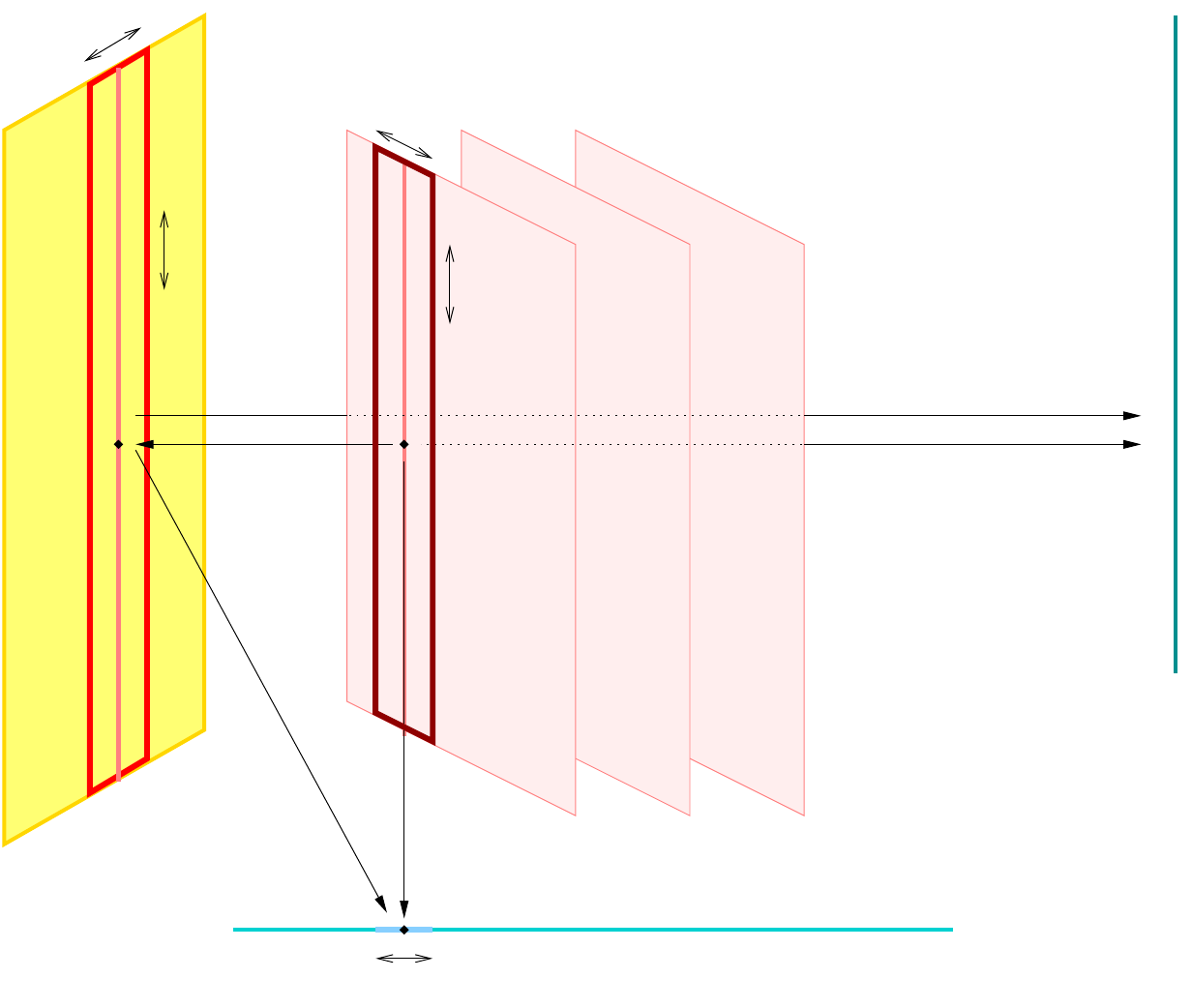_t}} }
\caption{The solution variety $\secrev{\mathscr S}$ is a linear bundle over the complex manifold $\secrev{\mathscr M}$. The fiber is an $S-n$ subspace of $\mathscr F$, where $S=\dim \secrev{\mathscr S}$ and $n= \dim \secrev{\mathscr M}$. By endowing $\secrev{\mathscr S}$ with the pull-back of the metric of $\secrev{\mathscr M}$, Theorem~\ref{coarea} becomes trivial in case the function $\phi$ vanishes outside the domain of the implicit function $G$. The full statement then follows using the trick of the partitions of unity. Notice that the fibration is over the base space $\secrev{\mathscr M}$, so that the discriminant variety (locus of singular values of $\pi_1$) plays no role in this picture.
\label{fiber-bundle}}
\end{figure}

Recall that the solution variety is
$\secrev{\mathscr S} = \{ (\mathbf q,\mathbf z) \in \mathscr F
\times \secrev{\mathscr M}: \mathbf q \cdot \mathbf V(\mathbf z)=\mathbf 0\}
$. Let $\pi_1: \secrev{\mathscr S} \rightarrow \mathscr F$ and
$\pi_2: \secrev{\mathscr S} \rightarrow \secrev{\mathscr M}$ be the canonical projections.
Then $(\secrev{\mathscr S}, \secrev{\mathscr M}, \changed{\pi_2, F})$ is a complex smooth fiber bundle, where \changed{$F = \pi_1 \circ \pi_2^{-1}(\mathbf 0) \subseteq \mathscr F$ is the fiber}.
The solution variety $\secrev{\mathscr S}$ will be endowed here with the pull-back metric $d\secrev{\mathscr S} = \pi_1^* d\mathscr F$.
\secrev{Let $\chi_{H}(\mathbf z)$ be the indicator function for the set 
$\left \{\mathbf z \in \mathscr M: \|\Re(\mathbf z)\|_{\infty} \le H \right \}$.} 
\secrev{Assume as in \eqref{Ihat} that $\mathbf q \sim \secrev{\mathcal N}(\secrev{\mathbf f}, \Sigma^2)$.
Then $\mathbf g \defeq \mathbf q - \secrev{\mathbf f}$ has zero average and variance $\Sigma^2$.
Recall that $\mathbf g_i$ is represented as a row vector, so $\mathbf h_i \defeq \mathbf g_i \Sigma_i^{-1}
\simeq \secrev{\mathcal N}(\mathbf 0, I)$. The probability density of $\mathbf h_i$ is
is $\pi^{-S_i}
\exp(-\|\mathbf h_i\|^2)$, with $S_i=
\#A_i = \dim_{\mathbb C} \mathcal F_{A_i}$
.
By change of variables, the probability density of $\mathbf g_i$ is
\[
\mathbf g_i \sim \frac{ \exp\left(\ghostrule{2.5ex}-\|\mathbf g_i \Sigma_i^{-1}\|^2\right) }
{\pi^{S_i} |\det (\Sigma_i)|^2}
\ \dd \mathscr F_{A_i}(\mathbf g) .
\]
For short, we write $\mathbf g \Sigma^{-1}$ for the row vector $(\mathbf g_1 \Sigma_1^{-1}, \dots,
\mathbf g_n \Sigma_n^{-1})$. Equation \eqref{Ihat} becomes}
\[
I_{\secrev{\mathbf f}, \Sigma^2} = 
\int_{(\secrev{\mathbf f}+\mathbf g,\mathbf z) \in \secrev{\mathscr S}}
\| M(\secrev{\mathbf f}+\mathbf g,\mathbf z) ^{-1}\|^2_{\mathrm F} \ 
\frac{ \exp\left(\ghostrule{2.5ex}-\|\mathbf g \Sigma^{-1}\|^2\right) }
{\pi^{S} \prod_i |\det \Sigma_i|^2}
\secrev{\chi_H(\mathbf z)}
\dd \secrev{\mathscr S}
\]
with $S = \dim_{\mathbb C} \mathscr F = \sum \# A_i$.
As explained by \ocite{Malajovich-nonlinear}*{Theorem 4.9}, the coarea 
formula may be restated in terms of fiber bundles:
\begin{theorem}\label{coarea}
	Let $(\secrev{\mathscr S}, \secrev{\mathscr M}, \changed{\pi_2}, F)$ be a complex smooth fiber bundle. Assume that $\secrev{\mathscr M}$ is finite dimensional. Let $\phi: \secrev{\mathscr S} \rightarrow \mathbb R_{\ge 0}$ be measurable. Then whenever the last integral exists,
\[
	\int_{\secrev{\mathscr S}} \phi(\mathbf p) \dd \secrev{\mathscr S}(\mathbf p) =
	\int_{\secrev{\mathscr M}} \dd \secrev{\mathscr M}(\mathbf z)
	\int_{\pi_2^{-1}(\mathbf z)} \det (D\pi_2(\mathbf p) D\pi_2(\mathbf p)^*)^{-1} \phi(\mathbf p) \,\dd \pi_2^{-1}(\mathbf z)(\mathbf p)
.
\]
\end{theorem}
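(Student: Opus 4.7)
My plan is to deduce this fiber-bundle form of the coarea formula from the classical coarea formula applied to the holomorphic submersion $\pi_2$. Since $(\mathscr S, \mathscr M, \pi_2, F)$ is a smooth fiber bundle with $\mathscr M$ finite-dimensional, there is a locally finite cover $\{U_\alpha\}$ of $\mathscr M$ over which $\pi_2$ trivializes; pick a subordinate partition of unity $\{\psi_\alpha\}$ on $\mathscr M$. Writing $\phi = \sum_\alpha (\psi_\alpha \circ \pi_2)\, \phi$ and invoking Tonelli's theorem (valid because $\phi \ge 0$), it suffices to prove the identity for each summand, that is, when $\phi$ is supported on $\pi_2^{-1}(U_\alpha)$ for a single $\alpha$.

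On such a trivializing piece, $\pi_2$ is a holomorphic submersion between finite-dimensional Hermitian manifolds (locally in the base). The classical coarea formula states that for any measurable $\psi \ge 0$,
\[
\int_{\mathscr S} \psi(\mathbf p)\, |NJ(\pi_2)(\mathbf p)|\, \dd \mathscr S(\mathbf p) = \int_{\mathscr M} \dd \mathscr M(\mathbf z) \int_{\pi_2^{-1}(\mathbf z)} \psi(\mathbf p)\, \dd \pi_2^{-1}(\mathbf z)(\mathbf p),
\]
where $|NJ(\pi_2)(\mathbf p)| = \sqrt{\det_{\mathbb R}(D\pi_2(\mathbf p)\, D\pi_2(\mathbf p)^*)}$ is the real normal Jacobian of $\pi_2$ as a map between real manifolds. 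Applying this with $\psi = \phi/|NJ(\pi_2)|$ produces the desired identity once this real Jacobian is identified with $\det_{\mathbb C}(D\pi_2 D\pi_2^*)$; the exceptional set where $|NJ(\pi_2)|$ vanishes is negligible because $\pi_2$ is a submersion.

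That identification is the only step requiring genuine work. Let $\sigma_1, \dots, \sigma_n \ge 0$ be the complex singular values of $D\pi_2(\mathbf p)$, so that $\det_{\mathbb C}(D\pi_2 D\pi_2^*) = \prod_k \sigma_k^2$. Under the canonical identification $\mathbb C \simeq \mathbb R^2$, multiplication by a positive real number $\sigma$ acts on $\mathbb R^2$ as $\sigma$ times the identity and therefore contributes two real singular values, both equal to $\sigma$. Consequently the real singular values of $D\pi_2(\mathbf p)$ as an $\mathbb R$-linear map are $(\sigma_1,\sigma_1,\dots,\sigma_n,\sigma_n)$, whence
\[
\det_{\mathbb R}(D\pi_2\, D\pi_2^*) = \prod_{k=1}^n \sigma_k^4 = \det_{\mathbb C}(D\pi_2\, D\pi_2^*)^2,
\]
and taking the square root gives $|NJ(\pi_2)(\mathbf p)| = \det_{\mathbb C}(D\pi_2(\mathbf p)\, D\pi_2(\mathbf p)^*)$. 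Summing the resulting local identities over $\alpha$ yields the stated formula. The main subtlety here is just this bookkeeping factor of two arising from the passage between complex and real singular values; the rest of the argument is a standard application of partitions of unity together with Federer's theorem.
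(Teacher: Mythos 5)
Your proof is correct and follows the same strategy the paper sketches (a partition of unity subordinate to a trivializing cover, then the local coarea/Fubini computation), and the key bookkeeping step---that $\sqrt{\det_{\mathbb R}(D\pi_2 D\pi_2^*)} = \det_{\mathbb C}(D\pi_2 D\pi_2^*)$ because each complex singular value $\sigma_k$ of $D\pi_2$ contributes a doubled real singular value---is carried out correctly. The only thing worth flagging is that Federer's coarea formula requires $\mathscr S$ to be finite dimensional, which the statement does not assume (only $\mathscr M$ is declared finite dimensional); this is harmless for the paper's application, where $\mathscr S \subseteq \mathscr F \times \mathscr M$ is finite dimensional, but you should state the restriction explicitly.
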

\noindent
Because the metric in $\secrev{\mathscr S}$ is the pull-back of the metric in $\mathscr F$,
this is the same as
\[
	\int_{\secrev{\mathscr S}} \phi(\mathbf p) \dd \secrev{\mathscr S}(\mathbf p) =
	\int_{\secrev{\mathscr M}} \dd \secrev{\mathscr M}(\mathbf z)
	\int_{\pi_1 \circ \pi_2^{-1}(\mathbf z)} \det (DG(\mathbf q) DG(\mathbf q)^*)^{-1} \phi(\mathbf q,\mathbf z) \dd \mathscr F(\mathbf q)
\]
with $G: \mathbf f \in U \subseteq \mathscr F \rightarrow \secrev{\mathscr M}$ the local implicit
function, that is the local branch of $\pi_2 \circ \pi_1^{-1}$.
While the integral on the left is independent of the volume form $\dd \secrev{\mathscr M}(\mathbf z)$ on $\secrev{\mathscr M}$, the value
of the determinants of $D\pi(\mathbf p) D\pi(\mathbf p)^*$ and of $DG(\mathbf q) DG(\mathbf q)^*$ depend on this volume form. 
In order to simplify computations, we choose to endow $\secrev{\mathscr M}$ with the canonical Hermitian metric of $\mathbb C^n$. We can now compute the
normal Jacobian.

\begin{lemma}\label{JacDet} Under the assumptions above,
\[
	\det (DG(\mathbf q) DG(\mathbf q)^*) = |\det M(\mathbf q,\mathbf z)|^{\thirdrev{-}2}
.
\]
\end{lemma}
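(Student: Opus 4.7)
My plan is to compute $DG$ by implicit differentiation, then evaluate the Gramian $DG\,DG^*$ block-by-block and match the result against the expression for $M$ coming from equation~\eqref{M-zero}.

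First I will simplify $M(\mathbf q,\mathbf z)$. Because $(\mathbf q,\mathbf z)\in\mathscr S$, we have $\mathbf q_i\cdot V_{A_i}(\mathbf z)=0$, so $P_{V_{A_i}(\mathbf z)^{\perp}}$ acts trivially on $\mathbf q_i$ and \eqref{M-zero} gives $M(\mathbf q,\mathbf z)=D\cdot B$, where $D=\operatorname{diag}(\|V_{A_i}(\mathbf z)\|^{-1})$ and $B$ is the complex $n\times n$ matrix whose $i$-th row is $\mathbf q_i\cdot DV_{A_i}(\mathbf z)$. Consequently
\[
|\det M(\mathbf q,\mathbf z)|^{2}=\frac{|\det B|^{2}}{\prod_{i}\|V_{A_i}(\mathbf z)\|^{2}}.
\]

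Next, I differentiate the defining identity $\mathbf q\cdot \mathbf V(G(\mathbf q))=\mathbf 0$ with respect to $\mathbf q$, obtaining $L\dot{\mathbf q}+B\cdot DG(\mathbf q)\dot{\mathbf q}=\mathbf 0$, where the linear evaluation operator $L:\mathscr F\to\mathbb C^{n}$ is defined by $(L\dot{\mathbf q})_{i}=\dot{\mathbf q}_{i}\cdot V_{A_i}(\mathbf z)$. Hence $DG(\mathbf q)=-B^{-1}L$ and
\[
DG(\mathbf q)\,DG(\mathbf q)^{*}=B^{-1}\,LL^{*}\,(B^{-1})^{*},
\]
so that $\det\bigl(DG\,DG^{*}\bigr)=|\det B|^{-2}\det(LL^{*})$.

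The final step is to evaluate $LL^{*}$. Since $L$ respects the orthogonal decomposition $\mathscr F=\bigoplus_{i}\mathscr F_{A_i}$, with each block $L_{i}:\mathscr F_{A_i}\to\mathbb C$ given (in the orthonormal basis of Section~\ref{sec:background}) by pairing with the coordinate vector $V_{A_i}(\mathbf z)$, one immediately gets $L_{i}L_{i}^{*}=\|V_{A_i}(\mathbf z)\|^{2}$. Thus $LL^{*}=\operatorname{diag}(\|V_{A_i}(\mathbf z)\|^{2})$ and $\det(LL^{*})=\prod_{i}\|V_{A_i}(\mathbf z)\|^{2}$. Substituting,
\[
\det\bigl(DG\,DG^{*}\bigr)=|\det B|^{-2}\prod_{i}\|V_{A_i}(\mathbf z)\|^{2}=|\det M(\mathbf q,\mathbf z)|^{-2},
\]
which is the claim. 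There is no genuine obstacle here: the result is essentially a bookkeeping exercise, and the only thing to be careful about is keeping the row/column and $\mathscr F_{A_i}$ vs.\ $\mathscr F_{A_i}^{*}$ conventions of Section~\ref{sec:background} consistent so that the Hermitian adjoints are computed correctly and the normalizations $\|V_{A_i}(\mathbf z)\|$ land in the right places.
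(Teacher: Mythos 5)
Your proposal is correct and uses essentially the same argument as the paper: implicit differentiation to get $DG=-B^{-1}L$, the factorization $M=\operatorname{diag}(\|V_{A_i}(\mathbf z)\|^{-1})B$, and the identity $L_iL_i^*=\|V_{A_i}(\mathbf z)\|^2$ to make the normalizations cancel in the Gramian. The only difference is cosmetic: the paper phrases the evaluation operator $L$ and the identity $L_iL_i^*=\|V_{A_i}(\mathbf z)\|^2$ in terms of the reproducing kernels $K_i(\cdot,\mathbf z)$ and the reproducing property $K_i(\mathbf z,\mathbf z)=\|V_{A_i}(\mathbf z)\|^2$, whereas you compute directly in the orthonormal basis.
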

\begin{proof}
Assume that $M(\mathbf q,\mathbf z)$ is invertible, otherwise both sides are zero.
Then we can parameterize a neighborhood of $(\mathbf q,\mathbf z) \in \secrev{\mathscr S}$
by a map $\mathbf h \mapsto (\mathbf h,G(\mathbf h))$ where the implicit function $G(\mathbf h)$ 
is defined in a neighborhood
of $\mathbf q$, satisfies $G (\mathbf q) = \mathbf z$ and $\mathbf h \cdot \mathbf V(G(\mathbf h)) \equiv \mathbf 0$.
The derivative of the implicit function can be obtained by differentiation
at $(\mathbf q, \mathbf z)$, viz.
\[
	\mathbf q \cdot D\mathbf V(\mathbf z) \dot {\mathbf z} + \dot {\mathbf q} \cdot \mathbf V(\mathbf z) = 0
.
\]
	\changed{Let $K_i(\mathbf x, \mathbf y) = V_{A_i}(y)^*\secrev{( V_{A_i}(x))}$ be
	the reproducing Kernel of $\mathscr F_{A_i}$, see
\cite{toric1}*{Section 3.1}. In terms of the
	reproducing Kernel,}
\[
	\dot {\mathbf q} \cdot \mathbf V(\mathbf z) =
\begin{pmatrix}
	\left\langle \dot {\mathbf q}_1, K_1(\cdot, \mathbf z) \right \rangle_{\mathscr F_{A_1}} \\
\vdots\\
	\left\langle \dot {\mathbf q}_n, K_n(\cdot, \mathbf z) \right \rangle_{\mathscr F_{A_n}} 
\end{pmatrix}
=
\begin{pmatrix}
K_1(\cdot, \mathbf z)^* & & \\
& \ddots & \\
& & K_n(\cdot, \mathbf z)^*
\end{pmatrix}
	\dot {\mathbf q} .
\]
It follows that
\[
	DG(\changed{\mathbf q}) = - M(\mathbf q,\mathbf z)^{-1} 
\begin{pmatrix}
\frac{1}{\|V_1(\mathbf z)\|} K_1( \cdot, \mathbf z)^* & & \\
&\ddots&\\
& & \frac{1}{\|V_n(\mathbf z)\|} K_n(\cdot, \mathbf z)^*
\end{pmatrix}
\]
	Because of the reproducing kernel property, $\|V_{A_i}(\mathbf z)\|^2=K_i(\mathbf z,\mathbf z)$ and
hence
	\begin{eqnarray*}
	\det(
		\changed{DG(\mathbf q) DG(\mathbf q)^*}) 
		&=&
		\det (M(\mathbf q, \mathbf z)^{-\changed{1}} M(\mathbf q,\mathbf z)^{-\changed{*}}) \\
		&=& |\det M(\mathbf q,\mathbf z)|^{-2} .
	\end{eqnarray*}

\end{proof}

Let $\secrev{\mathscr M_H} = \{ \mathbf z \in \secrev{\mathscr M}: \| \Re(\mathbf z) \|_{\infty} \le H\}$ and let $\chi_{H}(\mathbf z)$ be its indicator function.
We can now compute $I_{\secrev{\mathbf f}, \Sigma^2}$ by replacing, in the statement of Theorem~\ref{coarea}, 
\[
\phi(\mathbf q, \mathbf z) = 
\| M(\secrev{\mathbf f}+\mathbf g,\mathbf z) ^{-1}\|^2_{\mathrm F} \ 
\frac{ \exp\left(\ghostrule{2.5ex}-\|\mathbf g \Sigma^{-1}\|^2\right) }
{\pi^{S} \prod_i |\det \Sigma_i|^2}
\chi_{H}(\mathbf z)
.
\]
We introduce the notation $\mathscr F_{\mathbf z} = (\pi_1 \circ \pi_2^{-1}) (\mathbf z)$. \changed{Theorem~\ref{coarea}} \secrev{and Lemma~\ref{JacDet} yield}
\[
\begin{split}
		I_{\secrev{\mathbf f}, \Sigma^2}
=
\int_{\secrev{\mathscr M_H}} \dd \secrev{\mathscr M}(\mathbf z)
\int_{\mathscr F_{\mathbf z}}
|\det M(\secrev{\mathbf f}+\mathbf g,\mathbf z)|^{2} 
\| M(\secrev{\mathbf f}+\mathbf g,\mathbf z) ^{-1}\|^2_{\mathrm F} \ 
\\
\times \frac{ \exp\left(\ghostrule{2.5ex}-\|\mathbf g \Sigma^{-1}\|^2\right) }
	{\pi^{S} \prod_i |\det \Sigma_i|^2} \dd \mathscr F_{\secrev{\mathbf z}}(\mathbf g)
.
\end{split}
\]
\subsection{The integral as a root count}
\ocite{ABBCS} introduced
the following technique to integrate $|\det (M(\secrev{\mathbf f}+\mathbf g,\mathbf z))|^2 \|M(\secrev{\mathbf f}+\mathbf g,\mathbf z)^{-1}\|_{\mathrm F}^2$.
Let $S(M,k,\mathbf w)$ be the determinant of the matrix obtained
by replacing the $k$-th row of $M$ with $\mathbf w$. Cramer's
rule yields
\[
|\det M|^2
\|M^{-1}\|_{\mathrm F}^2 
=
\sum_{kl} 
|S(M,k,\mathrm e_l)|^2 .
\]
Therefore,
\[
I_{\secrev{\mathbf f},\Sigma^2}  
=
\int_{\secrev{\mathscr M_H}}
\dd\secrev{\mathscr M}(\mathbf z)
\int_{\mathscr F_{\mathbf z}} \
\sum_{k,l}
|S(M(\secrev{\mathbf f}+\mathbf g,\mathbf z),k,\mathrm e_l)|^2
	\frac{ \exp\left(\ghostrule{2.5ex}-\|\mathbf g\Sigma^{-1}\|\right) }
{\pi^S \prod_i |\det \Sigma_i|^2}
	\dd \mathscr F(\mathbf g)
\]
\ocite{ABBCS} computed that integral in the dense case.
We will proceed in a different manner.
\secrev{Rewrite
\begin{equation}\label{Ihatreord}
I_{\secrev{\mathbf f},\Sigma^2}  
=
\sum_{k,l}
\int_{\secrev{\mathscr M_H}}
\dd\secrev{\mathscr M}(\mathbf z)
\int_{\mathscr F_{\mathbf z}} \
|S(M(\secrev{\mathbf f}+\mathbf g,\mathbf z),k,\mathrm e_l)|^2
	\frac{ \exp\left(\ghostrule{2.5ex}-\|\mathbf g\Sigma^{-1}\|\right) }
{\pi^S \prod_i |\det \Sigma_i|^2}
	\dd \mathscr F(\mathbf g)
\end{equation}
and notice that for each \secrev{fixed} $k$, the 
subdeterminant $S(M(\secrev{\mathbf f}+\mathbf g,\mathbf z),k,\mathrm e_l)$
is independent of $\mathbf g_k$. For $\mathbf z$ and $k$ fixed, $\mathbf g_k$ is a Gaussian restricted to
the complex hyperplane $(\mathcal F_{A_k})_{\mathbf z}
= K_{A_k}(\cdot,\mathbf z)^{\perp} \subseteq \mathscr F_{A_k}$, the $k$-th component space of $\mathscr F_{\mathbf z}$.
Its probability density is
\[
\mathbf g_k \sim \frac{ \exp\left(\ghostrule{2.5ex}-\|\mathbf g_k \Sigma_k^{-1}\|^2\right) }
{\pi^{S_k-1} \left|\det\left( ({\Sigma_k})_{|(\mathscr F_{A_k})_{\mathbf z}}\right)\right|^2}\
\dd (\mathscr F_{A_k})_{\mathbf z}(\mathbf g) .
\]
Thus,
\[
	\int_{\mathbf g_k \in (\mathcal F_{A_k})_{\mathbf z}} 
	\frac{ \exp\left(\ghostrule{2.5ex}-\|\mathbf g_k\Sigma_k^{-1}\|\right) }
{\pi^{S_k} |\det \Sigma_k|^2}
\dd (\mathscr F_{A_k})_{\mathbf z}(\mathbf g) 
=
	\frac{\left|\det\left( ({\Sigma_k})_{|(\mathscr F_{A_k})_{\mathbf z}}\right)\right|^2}{\pi |\det (\Sigma_k)|^2  }
.
\]
Integrating out $\mathbf g_k$ as above in each summand of \eqref{Ihatreord},
we obtain}
\begin{multline*}
I_{\secrev{\mathbf f}, \Sigma^2} = 
\sum_{k,l}
\int_{\secrev{\mathscr M_H}}
	\frac{ |\det ({\Sigma_k})_{|(\mathscr F_{A_k})_{\mathbf z}}|^2 }
	{\pi  |\det (\Sigma_k)|^2  }
\dd\secrev{\mathscr M}(\mathbf z)
	\int_{
\bigoplus_{i \ne k}
	(\mathscr F_{A_i})_{\mathbf z}}
|S(M(\secrev{\mathbf f}+\mathbf g,\mathbf z),k,\mathrm e_l)|^2
	\\ 
	\frac{ |\det ({\Sigma_k})_{|(\mathscr F_{A_k})_{\mathbf z}}|^2 }
	{\pi^{\sum_{i \ne k}S_i} \prod_{i \ne k} |\det \Sigma_i|^2}
	\bigwedge_{i \ne k} \dd (\mathscr F_{A_i})_{\mathbf z}(\mathbf g) .
\end{multline*}
At this point we need the following \secrev{lemma}:
\begin{lemma}\label{det-interlacing}
Let $H$ be a Hermitian positive matrix, with eigenvalues $\sigma_1 \ge \dots \ge \sigma_n$. Let $\mathbf w = \mathbf u + i \mathbf v$ be a non-zero vector in $\mathbb C^N$, with $\mathbf u, \mathbf v \in \mathbb R^n$.
Then,
\[
	\frac{\det H}{\sigma_1} 
	\le \det H_{|\mathbf w^{\perp}} 
	\le \frac{\det H}{\sigma_N} 
\]
\end{lemma}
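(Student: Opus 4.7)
The plan is to reduce the lemma to a single scalar identity relating $\det H_{|\mathbf w^{\perp}}$ and $\det H$ via a Rayleigh-type quotient, and then use the spectral bounds on $H^{-1}$. The hypothesis that $\mathbf w = \mathbf u + i\mathbf v$ is not used beyond the fact that $\mathbf w$ is a non-zero vector of $\mathbb C^n$.

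First, I would normalize: replacing $\mathbf w$ by $\mathbf w/\|\mathbf w\|$ does not change $\mathbf w^{\perp}$, so it suffices to prove the lemma under the assumption $\|\mathbf w\|=1$. Choose an orthonormal basis of $\mathbb C^n$ whose first vector is $\mathbf w$, and the remaining $n-1$ vectors span $\mathbf w^{\perp}$. In this basis $H$ takes the block form
\[
H = \begin{pmatrix} \mathbf w^* H \mathbf w & b^* \\ b & H_0 \end{pmatrix},
\]
where $H_0$ is the matrix of $H_{|\mathbf w^{\perp}}$.

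The key identity is
\begin{equation}\label{key-identity}
\mathbf w^* H^{-1} \mathbf w = \frac{\det H_{|\mathbf w^{\perp}}}{\det H},
\end{equation}
which is an instance of Cramer's rule: the $(1,1)$ entry of $H^{-1}$ in the chosen basis is precisely $\mathbf w^* H^{-1} \mathbf w$, and it equals the $(1,1)$ cofactor of $H$ divided by $\det H$, with the cofactor equal to $\det H_0 = \det H_{|\mathbf w^{\perp}}$. (Equivalently, one can derive \eqref{key-identity} from the Schur complement formula for block matrices.)

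Once \eqref{key-identity} is in place, the proof concludes via the min-max characterization of Rayleigh quotients. Since $H$ is Hermitian positive with eigenvalues $\sigma_1 \ge \dots \ge \sigma_n > 0$, the inverse $H^{-1}$ is Hermitian positive with eigenvalues $\sigma_n^{-1} \ge \dots \ge \sigma_1^{-1}$, hence for any unit vector $\mathbf w$,
\[
\frac{1}{\sigma_1} \le \mathbf w^* H^{-1} \mathbf w \le \frac{1}{\sigma_n}.
\]
Multiplying by $\det H > 0$ and using \eqref{key-identity} yields
\[
\frac{\det H}{\sigma_1} \le \det H_{|\mathbf w^{\perp}} \le \frac{\det H}{\sigma_n},
\]
which is the claim. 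There is no real obstacle; the only point that deserves a line of justification is the Cramer-style identity \eqref{key-identity}, after which the bounds are immediate from the spectrum of $H^{-1}$.
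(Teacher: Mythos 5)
Your proof is correct, but it is genuinely different from the one in the paper. You use the scalar identity $\mathbf w^* H^{-1}\mathbf w = \det H_{|\mathbf w^\perp}/\det H$ (a Cramer/Schur-complement fact) and then apply the complex Rayleigh-quotient bounds for $H^{-1}$ directly. The paper instead realifies: it observes that the complex orthogonal complement $\mathbf w^{\perp}\subseteq\mathbb C^N$ corresponds to a real codimension-$2$ subspace $T\subseteq\mathbb R^{2N}$ (orthogonal to both $(\mathbf u,\mathbf v)$ and $(-\mathbf v,\mathbf u)$), forms the $2N\times 2N$ diagonal matrix $A$ whose eigenvalues are the $\sigma_j$ each repeated twice, and uses Courant--Fischer to establish the eigenvalue interlacing $\lambda_j \le \alpha_j \le \lambda_{j-2}$ between $A_{|T}$ and $A$ before multiplying everything. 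Your approach is shorter, avoids the realification entirely, and produces the two-sided bound in one step instead of as a product of $2N-2$ interlaced inequalities; it also makes explicit (as you note) that the decomposition $\mathbf w = \mathbf u + i\mathbf v$ plays no role. The paper's realification argument costs more machinery but is the direct route if one already has the real Courant--Fischer statement at hand, as it is quoted in the surrounding text. The only step in your proof that needs a line of justification is the identity \eqref{key-identity}, and the argument you give for it (the $(1,1)$ entry of $(U^*HU)^{-1}$ equals the cofactor over the determinant, and also equals $\mathbf w^* H^{-1}\mathbf w$) is sound.
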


It follows from the Courant-Fischer minimax theorem \cite{Demmel}*{Theorem 5.2}. 
Only the first 
part of the statement is quoted below.
Recall that the Grassmannian
$\mathrm{Gr}(k,V)$ is the set of $k$-dimensional linear subspaces in a real space $V$: 

\begin{theorem}[\changed{Courant-Fischer}]
Let $\alpha_1 \ge \dots \ge \alpha_m$ be the eigenvalues of a symmetric matrix $A$,
and let $\rho(\mathbf r, A) = \frac{\mathbf r^T A \mathbf r}{\mathbf \|\mathbf r\|^2}$ be
the Rayleigh quotient.  Then,
\[
	\max_{R \in \mathrm{Gr}(j,\mathbb R^{m})} \min_{0 \ne \mathbf r \in R} \rho(\mathbf r, A)
	= \alpha_j
	=
	\min_{S \in \mathrm{Gr}(m-j+1,\mathbb R^{m})} \min_{0 \ne \mathbf s \in S} \rho(\mathbf s, A)
.
\]
\end{theorem}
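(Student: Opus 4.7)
The plan is to prove the first equality in the Courant-Fischer statement, namely
\[
\alpha_j \;=\; \max_{R \in \mathrm{Gr}(j,\mathbb R^m)} \; \min_{0 \ne \mathbf r \in R} \rho(\mathbf r, A),
\]
by the classical minimax argument; this is the formulation that is actually quoted in the excerpt and used to prove Lemma~\ref{det-interlacing}. First I would reduce to the diagonal case. By the spectral theorem there exists an orthogonal $Q \in O(m)$ with $Q^T A Q = D = \mathrm{diag}(\alpha_1,\dots,\alpha_m)$. The assignment $\mathbf r \mapsto Q^T \mathbf r$ is a linear isometry of $\mathbb R^m$, hence induces a bijection $\mathrm{Gr}(j,\mathbb R^m) \to \mathrm{Gr}(j,\mathbb R^m)$, and the Rayleigh quotient transforms as $\rho(\mathbf r, A) = \rho(Q^T \mathbf r, D)$. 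So it suffices to treat $A = D$, in which case the orthonormal eigenvectors are the standard basis $\mathbf e_1,\dots,\mathbf e_m$ and $A \mathbf e_i = \alpha_i \mathbf e_i$.

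For the lower bound $\ge \alpha_j$, I would exhibit a single witness $j$-dimensional subspace. Take $R_* = \mathrm{span}(\mathbf e_1,\dots,\mathbf e_j)$. For any unit vector $\mathbf r = \sum_{i=1}^j r_i \mathbf e_i \in R_*$ one has $\rho(\mathbf r, A) = \sum_{i=1}^j \alpha_i r_i^2 \ge \alpha_j \sum_{i=1}^j r_i^2 = \alpha_j$, since $\alpha_i \ge \alpha_j$ for $i \le j$, with equality realized at $\mathbf r = \mathbf e_j$. Hence $\min_{0 \ne \mathbf r \in R_*} \rho(\mathbf r, A) = \alpha_j$, and the $\max$ over all $j$-dimensional subspaces is at least $\alpha_j$.

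For the matching upper bound $\le \alpha_j$, I would use a dimension count. Fix any $R \in \mathrm{Gr}(j,\mathbb R^m)$ and set $S = \mathrm{span}(\mathbf e_j, \mathbf e_{j+1},\dots,\mathbf e_m)$, which has dimension $m - j + 1$. Since $\dim R + \dim S = j + (m - j + 1) = m + 1 > m$, the intersection satisfies $\dim(R \cap S) \ge 1$, so $R \cap S$ contains a unit vector $\mathbf r$. Writing $\mathbf r = \sum_{i=j}^m r_i \mathbf e_i$ yields
\[
\rho(\mathbf r, A) \;=\; \sum_{i=j}^m \alpha_i r_i^2 \;\le\; \alpha_j \sum_{i=j}^m r_i^2 \;=\; \alpha_j,
\]
since $\alpha_i \le \alpha_j$ for $i \ge j$. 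Therefore $\min_{0 \ne \mathbf r \in R} \rho(\mathbf r, A) \le \alpha_j$ for every $R \in \mathrm{Gr}(j,\mathbb R^m)$, which gives the upper bound. Combining the two directions yields the claimed equality.

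There is no substantive obstacle here: the result is the standard Courant-Fischer minimax characterization, and the proof is a short exercise once the spectral theorem is in hand. The only delicate point is ensuring the dimension-count argument is applied with the correct complementary dimension ($m - j + 1$ rather than $m - j$), which is precisely what forces $R \cap S$ to be nontrivial and produces the sharp bound $\alpha_j$ rather than $\alpha_{j+1}$.
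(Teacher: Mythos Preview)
The paper does not actually prove this theorem; it simply quotes the statement from Demmel's textbook (Theorem~5.2) and uses it as a black box to derive the eigenvalue interlacing in Lemma~\ref{det-interlacing}. Your argument for the first equality is correct and is the standard minimax proof --- diagonalize via the spectral theorem, exhibit the witness subspace $R_* = \mathrm{span}(\mathbf e_1,\dots,\mathbf e_j)$ for the lower bound, and use the dimension count $\dim R + \dim S = m+1$ with $S = \mathrm{span}(\mathbf e_j,\dots,\mathbf e_m)$ for the upper bound --- so there is nothing to compare against.

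One minor remark: as printed in the paper the second equality reads $\min_S \min_{\mathbf s \in S} \rho(\mathbf s,A)$, which is a typo (it should be $\min_S \max_{\mathbf s \in S}$); you were right to focus on the first equality, which is the only part the paper actually invokes in the proof of Lemma~\ref{det-interlacing}.
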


\begin{proof}[Proof of Lemma \ref{det-interlacing}]
We assume without loss of generality that $W$ is diagonal and real.
A vector $\mathbf z = \mathbf x + i \mathbf y \in \mathbb C^N$ is complex-orthogonal to 
$\mathbf w = \mathbf u + i \mathbf v$ with $\mathbf x, \mathbf y, \mathbf u$ and $\mathbf v$ real,
if and only if
	$\begin{pmatrix} \mathbf x \\ \mathbf y \end{pmatrix}$ is real-\changed{orthogonal} to  
		$\begin{pmatrix} \mathbf u \\ \mathbf v \end{pmatrix}$ and \changed{to}  
$\begin{pmatrix} -\mathbf v \\ \mathbf u \end{pmatrix}$.  
We consider also the diagonal matrix 
\[
A =
\begin{pmatrix} 
\sigma_1\\
	&\ddots\\
	&&\sigma_N\\
	&&&\sigma_1\\
	&&&&\ddots\\
	&&&&&\sigma_N\\
\end{pmatrix}.  
\]
It satisfies $\det(A) = \det(W)^2$. Let $T$ be the space orthogonal
to $\begin{pmatrix} \mathbf u \\ \mathbf v \end{pmatrix}$ and  
$\begin{pmatrix} -\mathbf v \\ \mathbf u \end{pmatrix}$. 
We denote by $A_{|T}$ the restriction of $A$ to $T$, as a bilinear form. The restriction
	$A_{|T}$ is still symmetric and positive.
Let $\alpha_1=\sigma_1 \ge \alpha_2=\sigma_1 \ge \alpha_3 = \sigma_2 \ge \dots
	\ge \alpha_{2N}=\sigma_n$ be the eigenvalues of $A$, and
	$\lambda_1 \ge \dots \ge \lambda_{2N-2}$ be the eigenvalues of $\changed{A_{|T}}$. Courant-Fischer
theorem yields:
\[
\lambda_j = 
	\max_{R \in \mathrm{Gr}(j,\changed{T})} \min_{0 \ne \mathbf r \in R} \rho(\mathbf r, A_{|T})
	\le
	\max_{R \in \mathrm{Gr}(j,\mathbb R^{2N})} \min_{0 \ne \mathbf r \in R} \rho(\mathbf r, A)
	= \alpha_j
\]
and
\[
\alpha_j
=
	\min_{S \in \mathrm{Gr}(2N-j+1,\mathbb R^{2N})} \min_{0 \ne \mathbf s \in S} \rho(\mathbf s, A)
\le
	\min_{S \in \mathrm{Gr}(\changed{(2N-2)}-(j-2)+1,T)} \min_{0 \ne \mathbf s \in S} \rho(\mathbf s, A)
=
	\lambda_{j-2}
\]
	It follows that $\lambda_{j} \le \alpha_j \le \lambda_{j-2}$. Since this holds for all $j$,
	\[
\frac{\det (A)}{\sigma_1^2} =
\prod_3^{2N} \alpha_j \le
		\det A_T = \prod_1^{2N-2} \lambda_j \le \prod_1^{2N-2} \alpha_j = \frac{\det (A)}{\sigma_N^2}
	\]
and hence
	\[
\frac{\det (H)}{\sigma_1} 
\le
\det (H_{\mathbf w^{\perp}})
\le
\frac{\det (H)}{\sigma_N} 
.
\]
\end{proof}

From Lemma~\ref{det-interlacing},
\[
	\frac{|\det ({\Sigma_k})|^2}{ \max_{\mathbf a} \sigma_{k \mathbf a}^2  }
\le
|\det ({\Sigma_k})_{|(\mathscr F_{A_k})_{\mathbf z}}|^2 
\le
\frac
{|\det ({\Sigma_k})|^2}
{\min_{\mathbf a} \sigma_{k \mathbf a}^2}
.
\]
We have proved:
\begin{proposition}\label{prop-variance}
\[
I_{\secrev{\mathbf f}, \Sigma^2} \le 
\pi^{-1} \sum_{k,l=1}^n \frac{1}{\min_{\mathbf a} \sigma_{k \mathbf a}^2}
I_{kl}
\]
with
\begin{eqnarray*}
I_{kl} &\defeq&
\int_{\secrev{\mathscr M_H}}
\dd\secrev{\mathscr M}(\mathbf z)
\int_{
\bigoplus_{i \ne k}
	(\mathscr F_{A_i})_{\mathbf z}}
|\det S(M(\secrev{\mathbf f}+\mathbf g,\mathbf z),k,\mathrm e_l)|^2
\\
& & \hspace{5em}
	\frac{ \exp\left(\ghostrule{2.5ex}-\sum_{i \ne k}\|\mathbf g_i\Sigma_i^{-1}\|^2\right) }
	{\pi^{\sum_{i \ne k}S_i} \prod_{i \ne k} |\det \Sigma_i|^2}
	\bigwedge_{i \ne k} \dd (\mathscr F_{A_i})_{\mathbf z}(\mathbf g) .
\end{eqnarray*}
\end{proposition}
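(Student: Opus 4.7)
The plan is to combine the coarea formula from Theorem~\ref{coarea} with the Cramer's-rule identity $|\det M|^2 \|M^{-1}\|_{\mathrm F}^2 = \sum_{k,l} |S(M,k,\mathrm e_l)|^2$, and then observe that each summand depends on only $S-S_k$ of the $S$ coefficients of $\mathbf g$, so that the Gaussian integration in the coordinate $\mathbf g_k$ can be carried out explicitly.

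First I would apply Theorem~\ref{coarea} to the projection $\pi_2: \mathscr S \to \mathscr M$, endowing $\mathscr S$ with $\pi_1^* d\mathscr F$ and $\mathscr M$ with the canonical Hermitian metric on $\mathbb C^n$. By Lemma~\ref{JacDet} the normal Jacobian of $\pi_2$ is $|\det M(\mathbf q, \mathbf z)|^{-2}$, so taking $\phi(\mathbf q, \mathbf z)$ to be $\chi_H(\mathbf z)\, \|M(\mathbf q,\mathbf z)^{-1}\|_{\mathrm F}^2$ times the Gaussian density on $\mathbf g = \mathbf q - \mathbf f$, the definition \eqref{Ihat} becomes
\[
I_{\mathbf f,\Sigma^2} = \int_{\mathscr M_H}\!\! d\mathscr M(\mathbf z) \int_{\mathscr F_{\mathbf z}} |\det M|^2 \|M^{-1}\|_{\mathrm F}^2 \ \frac{\exp(-\|\mathbf g \Sigma^{-1}\|^2)}{\pi^S \prod_i |\det \Sigma_i|^2}\, d\mathscr F_{\mathbf z}(\mathbf g).
\]
Expanding via the Cramer identity turns the integrand into $\sum_{k,l}|S(M(\mathbf f+\mathbf g,\mathbf z),k,\mathrm e_l)|^2$ times the Gaussian density.

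Second, for each fixed pair $(k,l)$ I would exploit the key structural observation that $S(M(\mathbf f+\mathbf g,\mathbf z),k,\mathrm e_l)$ is the determinant of the matrix obtained from $M$ by replacing its $k$-th row by $\mathrm e_l$, and is therefore independent of $\mathbf g_k$. Splitting the fiber as $\mathscr F_{\mathbf z} = (\mathscr F_{A_k})_{\mathbf z} \oplus \bigoplus_{i\ne k}(\mathscr F_{A_i})_{\mathbf z}$, where $(\mathscr F_{A_k})_{\mathbf z} = K_{A_k}(\cdot,\mathbf z)^{\perp}$, the Gaussian in the $\mathbf g_k$ direction is a Gaussian on this hyperplane with covariance the restriction $(\Sigma_k)_{|(\mathscr F_{A_k})_{\mathbf z}}$. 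Integrating out $\mathbf g_k$ against the probability density produces exactly the factor
\[
\frac{|\det (\Sigma_k)_{|(\mathscr F_{A_k})_{\mathbf z}}|^2}{\pi\,|\det \Sigma_k|^2}.
\]

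Third, I would apply Lemma~\ref{det-interlacing} to the diagonal covariance $\Sigma_k^2$ and the defining vector $K_{A_k}(\cdot,\mathbf z)$ of the complex hyperplane, obtaining the bound $|\det(\Sigma_k)_{|(\mathscr F_{A_k})_{\mathbf z}}|^2 \le |\det \Sigma_k|^2/\min_{\mathbf a}\sigma_{k\mathbf a}^2$. Substituting into the decomposition of $I_{\mathbf f, \Sigma^2}$ and recognizing the remaining integral over $\bigoplus_{i\ne k}(\mathscr F_{A_i})_{\mathbf z}$ as the quantity $I_{kl}$ yields the proposition.

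The main obstacle I expect is the correct bookkeeping of metrics and Hermitian densities: one must be careful that the Gaussian density restricted to a complex hyperplane is still properly normalized (hence the $\pi^{-1}$ that emerges from one complex direction), and that the interlacing bound genuinely applies to the restriction to the real orthogonal complement of the complex hyperplane — which requires the complex-to-real doubling trick already embedded in the proof of Lemma~\ref{det-interlacing}. Everything else is a routine assembly of Fubini and Cramer's rule.
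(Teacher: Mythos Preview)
Your proposal is correct and follows essentially the same route as the paper: coarea plus Lemma~\ref{JacDet}, then the Cramer identity, then the observation that $S(M,k,\mathrm e_l)$ is independent of $\mathbf g_k$ so the Gaussian in that coordinate integrates out to the ratio of determinants, and finally Lemma~\ref{det-interlacing} to bound that ratio. The bookkeeping concerns you flag (the $\pi^{-1}$ from one complex direction, the applicability of the interlacing bound to a complex hyperplane) are exactly the ones the paper handles, and your treatment matches.
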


The integrals $I_{kl}$ of Proposition~\ref{prop-variance} do not need to be
computed. Instead, one can interpret them as the expected number
of roots of certain random mixed systems of polynomials and exponential sums. Such objects were
studied by \ocite{Malajovich-fewnomials} in greater generality. But we will obtain the following
statement by judicious use of the properties of the mixed volume:
\begin{proposition}\label{prop-subintegral}
	Let $p_l: \mathbb R^n \rightarrow \mathrm e_l^{\perp} \simeq \mathbb R^{n-1}$ be the orthogonal projection. \secrev{Let $\mathcal A_i \defeq \conv{A_i}$.} Then,
\[
I_{kl}= \frac{4\pi H}{\det \Lambda} (n-1)! V(p_l(\mathcal A_1), \dots,
	p_l(\mathcal A_{\changed{k}-1}), p_l(\mathcal A_{\changed{k}+1}), \dots, p_l(\mathcal A_n)) 
\]
where $V$ is the $n-1$-dimensional mixed volume operator.
\end{proposition}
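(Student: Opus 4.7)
The plan is to interpret the factor $|\det S(M(\mathbf f+\mathbf g,\mathbf z),k,\mathrm e_l)|^{2}$ as the squared modulus of a complex Jacobian and then apply the complex change of variables formula to rewrite $I_{kl}$ as a Bernstein root count. Let $\mathscr F^{(k)}=\prod_{i\ne k}\mathscr F_{A_i}$ and introduce the holomorphic map
\[
\Xi\colon \mathscr F^{(k)}\times \mathscr M\longrightarrow \mathbb C^{n-1}\times\mathbb C,\qquad
(\mathbf g,\mathbf z)\longmapsto\Bigl(\bigl(\tfrac{1}{\|V_{A_i}(\mathbf z)\|}(\mathbf f_i+\mathbf g_i)\cdot V_{A_i}(\mathbf z)\bigr)_{i\ne k},\;z_l\Bigr),
\]
so that the solution locus of the subsystem is $\mathscr S^{(k)}=\Xi^{-1}(\{\mathbf 0\}\times\mathbb C)$. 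Differentiating $\Xi$ in $\mathbf z$, the derivative of the normalization $\|V_{A_i}\|^{-1}$ drops out on $\mathscr S^{(k)}$ because $(\mathbf f_i+\mathbf g_i)V_{A_i}(\mathbf z)=0$ there, and the projection onto $V_{A_i}(\mathbf z)^{\perp}$ in~\eqref{M-zero} is trivial for the same reason. Hence the first $n-1$ rows of $D_{\mathbf z}\Xi$ coincide with the corresponding rows of $M(\mathbf f+\mathbf g,\mathbf z)$ and the $k$-th row is $\mathrm e_l$, so $|\det S|^{2}=|\det D_{\mathbf z}\Xi|^{2}$ at every point of $\mathscr S^{(k)}$.

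Next, I would convert the restricted Gaussian integral into an unrestricted one using a complex delta function. The linear map $T_{\mathbf z}\colon\mathbf g\mapsto\bigl((\mathbf f_i+\mathbf g_i)V_{A_i}(\mathbf z)\bigr)_{i\ne k}$ from $\mathscr F^{(k)}$ to $\mathbb C^{n-1}$ has block-diagonal Gram matrix $T_{\mathbf z}T_{\mathbf z}^{*}=\mathrm{diag}(\|V_{A_i}(\mathbf z)\|^{2})_{i\ne k}$, so integration on the kernel-slice satisfies
\[
\int_{\mathscr F^{(k)}_{\mathbf z}}\rho_{\mathrm{Gauss}}(\mathbf g)\psi(\mathbf g)\,d\mathscr F^{(k)}_{\mathbf z}
=\prod_{i\ne k}\|V_{A_i}(\mathbf z)\|^{2}\int_{\mathscr F^{(k)}}\rho_{\mathrm{Gauss}}(\mathbf g)\psi(\mathbf g)\,\delta\bigl((\mathbf f_i+\mathbf g_i)V_{A_i}(\mathbf z)\bigr)_{i\ne k}\,d\mathscr F^{(k)}.
\]
Rescaling each complex delta via $\delta(\|V_{A_i}\|\Xi_i)=\|V_{A_i}\|^{-2}\delta(\Xi_i)$ cancels the prefactor exactly. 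After Fubini, the holomorphic change of variables applied to $\Xi(\mathbf g,\cdot)\colon\mathscr M\to\mathbb C^n$ (whose complex Jacobian squared is $|\det D_{\mathbf z}\Xi|^{2}$) converts the $\mathbf z$-integral of $|\det S|^{2}\,\delta(\Xi_{i\ne k})$ into a preimage count, giving
\[
I_{kl}=\int_{\mathscr F^{(k)}}\rho_{\mathrm{Gauss}}(\mathbf g)\,d\mathscr F^{(k)}(\mathbf g)\int_{\mathbb C}\#\bigl\{\mathbf z\in\mathscr M_H:\Xi(\mathbf g,\mathbf z)=(\mathbf 0,u)\bigr\}\,du.
\]

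For the final step, I would pass to the $\det\Lambda$-sheeted cover $\widetilde{\mathscr M}=\mathbb C^n/2\pi\sqrt{-1}\,\mathbb Z^n\to\mathscr M$. On $\widetilde{\mathscr M}$ the coordinate $z_l$ lives in $\mathbb C/2\pi\sqrt{-1}\,\mathbb Z$, so the admissible strip $\{\Re(u)\in[-H,H]\}$ has Lebesgue area exactly $4\pi H$. For generic $(\mathbf g,u)$, freezing $z_l=u$ and absorbing the constant $e^{a_lu}$ factors into the coefficients reduces the subsystem to $n-1$ exponential sums in the remaining $n-1$ variables with supports $p_l(A_i)$, $i\ne k$. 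Theorem~\ref{BKK2} applied on the standard quotient $\mathbb C^{n-1}/2\pi\sqrt{-1}\,\mathbb Z^{n-1}$ yields exactly $(n-1)!\,V(p_l(\mathcal A_1),\dots,\widehat{p_l(\mathcal A_k)},\dots,p_l(\mathcal A_n))$ solutions; averaging against the unit-mass Gaussian and dividing the $\widetilde{\mathscr M}$-count by the covering degree $\det\Lambda$ produces the claimed formula. The main technical obstacle is the Jacobian-versus-delta cancellation in the middle step: although it is a standard Kac-Rice mechanism, executing it cleanly in the present mixed complex-linear setting with block-diagonal Gram matrix requires careful bookkeeping. The secondary subtlety is the lattice arithmetic in the last step, which is elementary on the cover $\widetilde{\mathscr M}$ but needs some care because the hypothesis is only $A_i-A_i\subseteq\mathbb Z^n$, not $A_i\subseteq\mathbb Z^n$ itself.
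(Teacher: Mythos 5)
Your proposal is essentially the paper's proof, repackaged: where you use Kac--Rice delta-function bookkeeping (rescaling $\delta((\mathbf f_i+\mathbf g_i)V_{A_i}(\mathbf z))$ against the block-diagonal Gram matrix and then changing variables under $\Xi$), the paper runs the identical computation through its fiber-bundle coarea formula (Theorem~\ref{coarea}), computing the normal Jacobian via the implicit function and the reproducing-kernel identity $\|V_{A_i}(\mathbf z)\|^2=K_i(\mathbf z,\mathbf z)$. Both routes reduce to the same $\det\Lambda$-to-one lift, the same identification of $|\det S|^{\pm 2}$ with the normal Jacobian, the same Fubini step isolating the $z_l$-coordinate, and the same appeal to Bernstein's theorem on the eliminated $(n-1)$-variable system, so this is the same approach, not a genuinely different one.

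One point deserves more care than your sketch gives it. In the last step you write that freezing $z_l=u$ "yields exactly $(n-1)!\,V(p_l(\mathcal A_1),\dots,\widehat{p_l(\mathcal A_k)},\dots,p_l(\mathcal A_n))$ solutions", and then integrate this constant over the strip of area $4\pi H$. But the integrand in your displayed expression is $\#\{\mathbf z\in\mathscr M_H:\Xi(\mathbf g,\mathbf z)=(\mathbf 0,u)\}$, and the constraint $\|\Re(\mathbf z)\|_\infty\le H$ bounds \emph{all} coordinates, not just $z_l$: the root count of the eliminated system restricted to $|\Re(z_j)|\le H$, $j\ne l$, can be strictly below the Bernstein number for some $(\mathbf g,u)$. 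What your argument actually delivers is $I_{kl}\le \frac{4\pi H}{\det\Lambda}(n-1)!\,V(\dots)$, which is also what the paper's own proof concludes (its final line reads $I_{nn}\le\cdots$, despite the proposition being stated with an equality sign). Since the proposition is only ever used as an upper bound, this is harmless, but you should state the inequality rather than assert exactness.
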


\begin{proof}[Proof of Proposition~\ref{prop-subintegral}]
	We only need to prove Proposition~\ref{prop-subintegral}
	for $k=l=n$. If $\mathbf q \in \mathscr F_{\mathbf z}$,
	then $\mathbf z \in Z(\mathbf q)$. Recall \changed{from \eqref{M-zero}} that in that case,
\[
M(\mathbf q,\mathbf z) = 
\begin{pmatrix}
\frac{1}{\|V_{A_1}(\mathbf z)\|} \mathbf q_1 \cdot DV_{A_1}(\mathbf z) \\
\vdots \\
\frac{1}{\|V_{A_n}(\mathbf z)\|} \mathbf q_n \cdot DV_{A_n}(\mathbf z) \\
\end{pmatrix}
\]
and
\[
DV_{A_i}(\mathbf z) = \diag{V_{A_i}(\mathbf z)} A_i
\]
where on the right, $A_i$ stands for the matrix with rows $\mathbf a \in A_i$.
The rows of matrix $S(M(\mathbf q, \mathbf z), n, \mathrm e_n)$ are:
\[
S(M(\mathbf q, \mathbf z), n, \mathrm e_n) =
	\changed{\begin{pmatrix}
\hspace{\stretch{1}}
	\frac{1}{\|V_{A_1}(\mathbf z)\|}\hspace{2.5em} 
\left(\ghostrule{2.5ex}  \dots,q_{1\mathbf a} V_{1\mathbf a}(\mathbf z),\dots \right)_{\mathbf a \in A_1}\hspace{1em} 
	\hspace{1em} A_1 \hspace{1em}\\
	\vdots \\
	\frac{1}{\|V_{A_{n-1}}(\mathbf z)\|} 
	\left(\ghostrule{2.5ex}\dots,q_{n-1,\mathbf a} V_{n-1,\mathbf a}(\mathbf z),\dots\right)_{\mathbf a \in A_{n-1}}  
	A_{n-1} \\
	\mathrm e_n^T
	\end{pmatrix}}
\]
We claim that $|\det
S(M(\mathbf q, \mathbf z), n, \mathrm e_n)|^{-2}$
is the normal Jacobian for a certain system of fewnomial sums.
	We previously defined $\mathscr F_{A_1}$, \dots, $\mathscr F_{A_{n-1}}$
as spaces of fewnomials over the complex manifold $\secrev{\mathscr M} =
\mathbb C^n \mod 2 \pi \sqrt{-1} \, \Lambda^*$, where $\Lambda$ is
the lattice generated by the $A_i-A_i, 1 \le i \le n$ and $\Lambda^*$ is
its dual. 

Let $\mathscr N \subseteq \mathbb C$ be the strip $-\pi < \Im(z) \le \pi$.
Each point of $\mathbb C^n \mod 2 \pi \sqrt{-1}\, \mathbb Z^n$ is represented
by a unique point in $\mathscr N^n$. Moreover, the natural projection
\[
\mathscr N^n \rightarrow \secrev{\mathscr M}
\]
is a $\det \Lambda$-to-1 local isometry. 
	In the same spirit, we define $\mathscr N_H \subseteq \mathscr N$ as the domain $-H \le \Re(z) \le H, -\pi < \Im(z) \le \pi$. Now each point of $\mathbb C^n \mod 2 \pi \sqrt{-1}\, \mathbb Z^n$ with $\| \Re(\mathbf z) \|_{\infty} \le H$
is represented by a unique point in $\mathscr N_H^n$. The natural projection
\[
\mathscr N_H^n \rightarrow \secrev{\mathscr M_H}
\]
is again a $\det \Lambda$-to-1 local isometry.
	
We extend all our spaces $\mathscr F_{A_i}$ to spaces of functions on $\mathscr N^n$, and write
\begin{eqnarray*}
	I_{nn} &=& \frac{1}{\det \Lambda}
	\int_{\mathscr N_H^n}
\dd \mathbb C^n(\mathbf z)
\int_{
\bigoplus_{i < n}
	(\mathscr F_{A_i})_{\mathbf z}}
|\det S(M(\mathbf q,\mathbf z),\thirdrev{n},\mathrm e_n)|^2
\\
& & \hspace{5em}
	\frac{ \exp\left(\ghostrule{2.5ex}-\sum_{i \ne n}\|(\mathbf q_i-\secrev {\mathbf f}_i)\Sigma_i^{-1}\|^2\right) }
	{\pi^{\sum_{i \ne n}S_i} \prod_{i \ne n} |\det \Sigma_i|^2}
	\bigwedge_{i \ne n} \dd (\mathscr F_{A_i})_{\mathbf z}(\mathbf q) .
\end{eqnarray*}

We will recognize in the formula above the average 
number of zeros 
\secrev{in $\mathscr N_H^n$}
of a certain system of fewnomial equations. This will be
done through direct application of Theorem~\ref{coarea} (coarea formula).
We need first a fiber bundle.

Define $\mathscr H = \mathscr F_{A_1} \times \cdots \times \mathscr F_{A_{n-1}} \times \mathscr N_H$, and endow this space with the product metric (the space 
$\secrev{ \mathscr N_H \subseteq \mathbb C}$ is endowed with the canonical metric). The solution variety $\secrev{\mathscr  S_H} \subseteq
\mathscr H \times \mathscr N_H^n$ will be
\[
\begin{split}
	\secrev{\mathscr  S_H} =
\left\{
(\mathbf q_1, \dots, \mathbf q_{n-1}, w; \mathbf z) \in \mathscr H \times \mathscr N_H^n:
	\mathbf q_1 \cdot V_{A_1}(\mathbf z) =
\dots \right. 
	\hspace{3em}	
	\\ \left. 
	=
\mathbf q_{n-1} \cdot V_{A_{n-1}}(\mathbf z) = z_n -w = 0
\right\}
\end{split}
\]
with canonical projections $\pi_1: \secrev{\mathscr  S_H} \rightarrow \mathscr H$
and $\pi_2: \secrev{\mathscr  S_H} \rightarrow \mathscr N_H^n$.
The inner product in $\secrev{\mathscr  S_H}$ is the pull-back of the inner product
of $\mathscr H$ by $\pi_1$. Then the bundle $(\secrev{\mathscr  S_H}, \mathscr N_H^n,
\pi, F')$ is a fiber bundle with fiber $F' = (\mathscr F_{A_1})_{\mathbf 0} \times
\cdots \times (\mathscr F_{A_{n-1}})_{\mathbf 0} \times \thirdrev{\{ \mathbf 0 \}}$.

In order to compute the normal Jacobian, we differentiate the implicit function
$\tilde G$ for
\[
	\Phi(\mathbf q_1, \dots, \mathbf q_{n-1},w; \mathbf z) = 
\begin{pmatrix}
\mathbf q_1 
\cdot 
	V_{A_1}(\mathbf z) 
\\
\vdots\\
\mathbf q_{n-1} 
\cdot 
	V_{A_{n-1}}(\mathbf z) 
\\
z_n -w
\end{pmatrix} = \mathbf 0
.
\]
Let $\tilde {\mathbf q} = (\mathbf q_1, \dots, \mathbf q_{n-1}, w)$. We obtain:
\begin{eqnarray*}
	D\tilde G(\tilde {\mathbf q}; \mathbf z)&=&
	-D_{\mathbf z} \Phi(\tilde {\mathbf q}; \mathbf z) ^{-1} D_{\tilde {\mathbf q}}\Phi(\tilde {\mathbf q}; \mathbf z)
\\
	&=&		-S(M(\mathbf q, \mathbf z), \mathrm e_n, n) 	
\\
	&&\hspace{4ex}\times \begin{pmatrix}
	\frac{1}{\|V_{A_1}(\mathbf z)\|} K_{A_1}( \cdot, \mathbf z)^* & & & \\
&\ddots& &\\
	& & \frac{1}{\|V_{A_{n-1}}(\mathbf z)\|} K_{A_{n-1}}(\cdot, \mathbf z)^* & \\
&&&-1
\end{pmatrix}
\end{eqnarray*}
\changed{The reproducing kernel property yields}  $\| V_{A_i}(\mathbf z) \|^2 = K_{A_i}(\mathbf z, \mathbf z)$. The Normal Jacobian is therefore
\[
	NJ^{2}=|\det D\tilde G(\tilde {\mathbf q};\mathbf z) D\tilde G(\tilde {\mathbf q};\mathbf z)^*|
= 
|S(M(\mathbf q,\mathbf z),n, \mathrm e_n)|^{-2} 
.
\]
The coarea formula (Theorem~\ref{coarea}) yields
\begin{eqnarray*}
	I_{nn} &=& \frac{1}{\det \Lambda}
	\int_{\mathscr N_H^n}
\dd \mathbb C^n(\mathbf z)
\int_{ \mathscr H_{\mathbf z}}
	NJ^{-2}
	\frac{ \exp\left(\ghostrule{2.5ex}-\sum_{i \ne n}\|(\mathbf q_i-\secrev {\mathbf f}_i)\Sigma_i^{-1}\|^2\right) }
	{\pi^{\sum_{i \ne n}S_i} \prod_{i \ne n} |\det \Sigma_i|^2}
\\
	& & \hspace{20em}\bigwedge_{i \ne n} \dd (\mathscr F_i)_{\mathbf z}(\mathbf q) 
\\
&=&
\frac{1}{\det \Lambda}
\int_{\secrev{\mathscr  S_H}}
	\frac{ \exp\left(\ghostrule{2.5ex}-\sum_{i \ne n}\|(\mathbf q_i-\secrev{\mathbf f}_i)\Sigma_i^{-1}\|^2\right) }
	{\pi^{\sum_{i \ne n}S_i} \prod_{i \ne n} |\det \Sigma_i|^2}
	\pi_1^* \dd \secrev{\mathscr  S_H}(\tilde{\mathbf q}, \mathbf z)
\\
	&=&
\frac{1}{\det \Lambda}
\int_{\mathscr H}
	\# \pi_1^{-1}(\tilde{ \mathbf q}) 
	\frac{ \exp\left(\ghostrule{2.5ex}-\sum_{i \ne n}\|(\mathbf q_i-\secrev{\mathbf f}_i)\Sigma_i^{-1}\|^2\right) }
	{\pi^{\sum_{i \ne n}S_i} \prod_{i \ne n} |\det \Sigma_i|^2}
	\pi_1^* \dd \mathscr H(\tilde{\mathbf q})
\end{eqnarray*}
The integral is the average number of roots 
\secrev{in $\mathscr N_H^n$}
of the fewnomial system
$\Phi(\tilde {\mathbf q}, \mathbf z) = \mathbf 0$. The value of the variable $z_n$
at a solution is precisely $w$. If we eliminate this variable
from the other equations, we obtain 
a system of exponential sums with support  
$p_n(A_i)$, $1 \le i \le n-1$. Theorem~\ref{BKK} implies that
those systems have generically (and at most) 
$(n-1)! V( \thirdrev{p_n}(\mathcal A_1), \dots, \thirdrev{p_{n}}(\mathcal A_{n-1}))$
isolated roots in $\mathscr N^{\thirdrev{n-1}}$, whence at most
$(n-1)! V( \thirdrev{p_{n}}(\mathcal A_1), \dots, \thirdrev{p_{n}}(\mathcal A_{n-1}))$
isolated roots in $\mathscr N_H^{\thirdrev{n-1}}$. 
Thus, we integrate for $-H \le \Re(w) \le H$ and $-\pi < \Im(w) \le \pi$ to obtain:
\[
I_{nn}
\le
\frac{4 \pi H}{\det \Lambda}
\ (n-1)!\ V( \thirdrev{p_n}(\mathcal A_1), \dots, \thirdrev{p_{n}}(\mathcal A_{n-1}))
.
\]
\end{proof}

\begin{proof}[Proof of Theorem~\ref{E-M2}]
Combining propositions \ref{prop-variance} and \ref{prop-subintegral},
\[
I_{\secrev{\mathbf f},\Sigma^2} 
\le
\frac{4H}{\det(\Lambda)}
	\sum_{k,l=1}^n
	\frac{1}{\min_{\mathbf a} \sigma_{k \mathbf a}^2}
(n-1)! V(p_l(\mathcal A_1, \dots,
\widehat{p_l(\mathcal A_k}), \dots, p_l(\mathcal A_n))).
\]
	\thirdrev{For any convex polytope $\mathcal B$, 
	\[
		\frac{\partial}{\partial t}|_{t=0} \vol_n \left(\mathcal B + t 
	[\mathbf 0 \mathrm e_l]\right) = \vol_n(p_l(\mathcal B)+
	[\mathbf 0 \mathrm e_l])= \vol_{n-1}(p_l(\mathcal B)).
\]
	From definition~\ref{def-mixed} and elementary calculus, 
	it follows that}
\[
	\thirdrev{(n-1)!}
	V(p_l(\mathcal A_1), \dots,
\widehat{p_l(\mathcal A_k}), \dots, p_l(\mathcal A_n))
=
\thirdrev{n!}
	V(\mathcal A_1, \dots, [\mathbf 0 \mathrm e_l], \dots, \mathcal A_n))
.
\] 
	\changed{\thirdrev{The linearity of the mixed volume with respect
	to Minkowski linear combinations and then the monotonicity property}
	(remark \ref{properties-mixed-volume}) imply that} 
	\begin{eqnarray*}
	\thirdrev{(n-1)!}
\sum_l
		V(p_l(\mathcal A_1), \dots,
\widehat{p_l(\mathcal A_k}), \dots, p_l(\mathcal A_n)))
		&=&
\thirdrev{n!}
V(\mathcal A_1, \dots, [0,1]^n, \dots, \mathcal A_n))
\\
&\le&
\thirdrev{n!}
\frac{\sqrt{n}}{2} V(\mathcal A_1, \dots, B^n, \dots, \mathcal A_n))
\end{eqnarray*}
and
	hence \eqref{eq-mixed-area} yields
\[
I_{\secrev{\mathbf f},\Sigma^2} 
\le
	\frac{2H\sqrt{n}}{\det(\Lambda)}
	\frac{1}{\min_{\mathbf a} \sigma_{k \mathbf a}^2}
\thirdrev{n!}
V'.
\]
\end{proof}

\section{The expectation of the squared condition number}

\subsection{On conditional and unconditional Gaussians}

The condition number used in this paper and the previous one \cite{toric1}
is invariant under independent scalings of each coordinate polynomial. 
This multi-homogeneous invariance introduced by
\ocites{MRHigh} breaks with the tradition 
in dense polynomial systems 
~\cites{BCSS, BC}.
This richer invariance will be strongly exploited in this section. 

Recall that $Z(\mathbf q)$ denotes the set of isolated roots for a system
of exponential sums $\mathbf q \in \mathscr F$. It would be 
desirable here to bound the \secrev{expectation} of $\sum_{\mathbf z \in Z(\mathbf q)} \secrev{\mu(\mathbf q \cdot R(\mathbf z))}^2$ where $\mathbf q \sim \secrev{\mathcal N}(\mathbf f, \Sigma^2)$. \secrev{The} author was unable to compute \secrev{this expectation}. There is no
reason to believe at this point that this \secrev{expectation} is finite. Instead,
let $H > 0$ be arbitrary. Recall that
$Z_H(\mathbf q) = \{ \mathbf z \in Z(\mathbf q): \| \Re(\mathbf z)\|_{\infty} \le H\}$. We will \changed{use Theorem~\ref{E-M2} to bound}  
the \secrev{expectation} of $\sum_{\mathbf z \in Z_H(\mathbf q)} \secrev{\mu(\mathbf q \cdot R(\mathbf z))}^2$ where $\mathbf q \sim \secrev{\mathcal N}(\mathbf f, \Sigma^2)$. 
More precisely, let
\begin{equation}\label{eq-EfSigma}
	E_{\mathbf f, \Sigma^2} \defeq \expected{\mathbf q \sim \secrev{\mathcal N}(\mathbf f, \Sigma^2)}{\sum_{\mathbf z \in Z_H(\mathbf q)} \secrev{\mu(\mathbf q \cdot R(\mathbf z))}^2}
.
\end{equation}
The conditional \secrev{expectation} below will turn out to be easier to bound.
\secrev{It depends on an auxiliary system $\mathbf h$ to be constructed later.}
\begin{equation}\label{eq-EfSigmaK}
\begin{split}
	E_{\secrev{\mathbf {h}}, \Sigma^2,K} \defeq 
	\condexpected{\mathbf q \sim \secrev{\mathcal N}(\secrev{\mathbf {h}}, \Sigma^2)}
	{ \sum_{\mathbf z \in Z_H(\mathbf q)} \secrev{\mu(\mathbf q \cdot R(\mathbf z))}}
	{\|(\mathbf q_i - \secrev{\mathbf {h_i}})\Sigma^{-1}\|\le K \sqrt{S_i},}
	{ \left. i=1, \dots, n }
\end{split}
\end{equation}
with $S_i=\# A_i = \dim_{\mathbb C} \mathscr F_{A_i}$. 
\changed{This is similar to the truncated Gaussian distributions
used by \ocite{BC-annals}, with the difference that we truncate
the tail of the distribution of each $\mathbf f_i$ instead of the tail of the
distribution of $\mathbf f$.}
When $\mathbf f=\mathbf 0$, conditional and unconditional \secrev{expectations} coincide,
due to scaling invariance:
\[
	E_{\mathbf 0, \Sigma^2} = E_{\mathbf 0, \Sigma^2, K}
\]
for any $K>0$. \secrev{When $\mathbf h = \mathbf f$}, 
\[
E_{f,\Sigma^2,K} \le
\frac{E_{f,\Sigma^2}}
{
\probability{\mathbf q \sim \secrev{\mathcal N}(\mathbf f, \Sigma^2)}{
	\|(\mathbf q_i - \mathbf f_i) \Sigma_i^{-1}\|\le K \sqrt{S_i},\ 
	i=1, \dots, n}
}
.
\]
The reciprocal inequality for a non-centered Gaussian probability distribution is more elusive. 
In the next section we prove:
\begin{proposition}\label{prop-mu2-1} Let $S_i=\# A_i$ and  
	$L > 0$. Suppose that $K\ge1+2L + \sqrt{\frac{\log(n)}{\min (S_i)}+2\log(3/2)}$. 
	\secrev{Then
\[
\sup_{\| \mathbf f_i \Sigma_i^{-1}\| \le L \sqrt{S_i}}
E_{\mathbf f, \Sigma^2}
\le
e
	\sup_{\|\secrev{\mathbf h}_i \Sigma^{-1}\| \le L\sqrt{S_i}}
	E_{\secrev{\mathbf h}, \Sigma^2,K}
\]
	where it is assumed that $\mathbf f_i, \secrev {\mathbf h_i} \in \mathscr F_{A_i}$.}
\end{proposition}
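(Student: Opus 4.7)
The plan rests on the fact that $\phi(\mathbf q) \defeq \sum_{\mathbf z \in Z_H(\mathbf q)} \mu^2(\mathbf q \cdot R(\mathbf z))$ is multi-projective-invariant: scaling any coordinate $\mathbf q_i$ by a nonzero complex number leaves $Z_H(\mathbf q)$ unchanged and leaves each condition number $\mu(\mathbf q \cdot R(\mathbf z))$ unchanged, so $\phi$ descends to a function on $\prod_i \mathbb{P}(\mathscr F_{A_i})$. This invariance is what allows one to rescale an integration variable without changing the integrand, at the cost of only modifying the Gaussian density.

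Fix $\mathbf f$ with $\|\mathbf f_i \Sigma_i^{-1}\| \le L\sqrt{S_i}$ and set $\mathbf g = \mathbf q - \mathbf f \sim \secrev{\mathcal N}(\mathbf 0, \Sigma^2)$. I plan to split the Gaussian expectation according to the event
\[
G = \left\{ \|\mathbf g_i \Sigma_i^{-1}\| \le (K-2L)\sqrt{S_i}\ \forall i \right\}.
\]
On $G$, the triangle inequality gives $\|(\mathbf q_i - \mathbf h_i)\Sigma_i^{-1}\| \le K\sqrt{S_i}$ for any admissible $\mathbf h$ within distance $2L\sqrt{S_i}$ of $\mathbf f$; taking $\mathbf h = \mathbf f$ itself (which is allowed since $\|\mathbf f_i \Sigma_i^{-1}\| \le L\sqrt{S_i}$) yields
\[
\mathbb{E}[\phi(\mathbf q)\,\chi_G] \le \mathbb{P}(G) \cdot E_{\mathbf f, \Sigma^2, K} \le \sup_{\|\mathbf h_i \Sigma_i^{-1}\|\le L\sqrt{S_i}} E_{\mathbf h,\Sigma^2,K}.
\]
The hypothesis $K \ge 1 + 2L + \sqrt{\log(n)/\min(S_i) + 2\log(3/2)}$ is tuned so that a chi-squared concentration inequality for $\|\mathbf g_i\Sigma_i^{-1}\|^2$ (which has a $\chi^2_{2S_i}$-like distribution), combined with a union bound over $i=1,\ldots,n$, makes $\mathbb{P}(G^c)$ small.

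The main obstacle is the tail contribution $\mathbb{E}[\phi(\mathbf q)\,(1-\chi_G)]$: a direct Gaussian tail estimate fails because $\phi$ is unbounded. The resolution is scale invariance. For each $\mathbf q \in G^c$, I will rescale each coordinate $\mathbf q_i$ to a representative $\mathbf q_i'$ with $\|\mathbf q_i'\Sigma_i^{-1}\|$ in the admissible range, producing an $\mathbf h = \mathbf q'$ such that $\mathbf q'$ lies in its own conditioning ball and $\phi(\mathbf q) = \phi(\mathbf q')$. A careful density comparison — tracking the Gaussian density of $\mathbf q$ along the scaling ray and integrating — should bound this contribution by at most $(e-1)\sup_{\mathbf h} E_{\mathbf h,\Sigma^2,K}$, where the factor $e$ emerges from the extremum of the Gaussian radial profile $r^{2S-1}e^{-r^2}$ against the admissible region. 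Combining the two contributions yields the desired bound
\[
E_{\mathbf f,\Sigma^2} \le e \sup_{\|\mathbf h_i\Sigma_i^{-1}\|\le L\sqrt{S_i}} E_{\mathbf h,\Sigma^2,K}.
\]
Taking the supremum over admissible $\mathbf f$ on the left concludes the argument.
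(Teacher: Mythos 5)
Your proposal correctly identifies the key ingredients (multi-projective scaling invariance of $\varphi$, a good/bad split controlled by concentration, and the hypothesis on $K$ tuned to a union bound over $i=1,\dots,n$), and the treatment of the good event is fine. But the treatment of the tail contribution has a genuine gap.

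First, the manipulation ``rescale each coordinate $\mathbf q_i$ to $\mathbf q_i'$ \dots producing an $\mathbf h = \mathbf q'$'' conflates the integration variable with the center of the Gaussian. In $E_{\mathbf h, \Sigma^2, K}$, the vector $\mathbf h$ is a \emph{fixed} parameter (the mean of the Gaussian and the center of the conditioning ball), not something you may set equal to a data-dependent rescaling of the integration variable and still interpret as a conditional expectation. What the paper actually does is decompose $\mathbb{C}^S$ into $2^n$ cells $V_J$ indexed by subsets $J \subseteq [n]$ according to \emph{which coordinates} $\mathbf v_i = \mathbf q_i - \mathbf f_i$ are large (not just the binary event ``some coordinate is large''), and then, \emph{on each cell} $V_J$, uses a fixed auxiliary center $\hat{\mathbf u}$ that sets $\hat{\mathbf u}_i = 0$ for $i \in J$ and keeps $\hat{\mathbf u}_i = \mathbf u_i$ otherwise; the supremum over $\mathbf h$ in the statement is there precisely because a different $\hat{\mathbf u}$ is used on each cell. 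Your two-way split cannot see this, and there is no single fixed $\mathbf h$ that works uniformly on $G^c$ — different ``bad'' coordinates require zeroing out different components of the mean.

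Second, the attribution of the factor $e$ to the maximum of the radial profile $r^{2S-1}e^{-r^2}$ is not where it comes from. In the paper it comes from the \emph{sum over all $2^n$ cells}: the cell $J$ contributes a factor $\prod_{i\in J} \sigma^{2S_i}e^{-S_i((K-L)/\sigma - 1)^2}$, and with the stated choice of $K$ each bracket is $\le 1/n$, so the total over all $J$ is $\prod_{i=1}^n(1 + 1/n) = (1+1/n)^n \le e$. This also hides a second technical step you omit: after rescaling the big coordinates by $\sigma = 1 + L/K$ to absorb the density mismatch (the inequality $-\|\mathbf v_i\|^2 < -\|\mathbf u_i+\mathbf v_i\|^2/\sigma^2$ for $\|\mathbf u_i\| \le L < K \le \|\mathbf v_i\|$), the rescaled variable lives on the \emph{complement} of a ball, not inside the conditioning ball, and one needs scaling invariance of $\varphi$ in that coordinate a second time to swap the complement for the ball at the cost of a probability ratio. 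So the essential missing idea is the $2^n$-way cell decomposition, and the density/Jacobian/domain-swap bookkeeping that it requires.
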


\subsection{The truncated non-centered Gaussian}
\label{non-centered}

\ocite{BC-annals} developed a truncated Gaussian technique to bound the expected value of the squared condition number for the dense case. This technique will be generalized to the toric setting in order to prove
Proposition~\ref{prop-mu2-1}. It should be stressed that we do not have unitary invariance and that the condition number in this paper is multi-homogeneous invariant.  
The following result will be used:

\begin{lemma}\ 
\begin{enumerate}[(a)]
	\label{lem-conditional}
\item
	Let $\varphi: 
	\mathbb R^{S_1} \times \cdots \times 	\mathbb R^{S_n}  \rightarrow \mathbb R \cup \{\infty\}$ be measurable, positive and scaling invariant: for all $\mathbf 0 \ne \boldsymbol \lambda \in \mathbb R^n$,
\[
\varphi(\lambda_1 \mathbf w_1, \dots, \mathbf \lambda_n \mathbf w_n) = 
\varphi(\mathbf w_1, \dots, \mathbf w_n).
\]
		Let $\mathbf u \in \mathbb C^{S_1} \times \cdots \times 	\mathbb C^{S_n}$. Let $L=\max(\|\mathbf u_i\|/\sqrt{S_i})$ and $K \ge 1+2L+\sqrt{\frac{\log(n)}{\min (S_i)}+2\log(3/2)}$. Write $S=\sum S_i$. Then,
\[
		\expected{\mathbf w \sim \secrev{\mathcal N}(\mathbf u, I; \mathbb R^S)}{\varphi(\mathbf w)}
\le
\sqrt{e}
	\sup_{\|\hat{\mathbf u}_i\| \le \sqrt{S_i}L}
G_{\hat{\mathbf u}, K} 
\]
with
\[
G_{\hat{\mathbf u}, K} \defeq
\expected{\mathbf w \sim \secrev{\mathcal N}(\hat{\mathbf u}, I; \mathbb R^S)}
		{\varphi(\mathbf w) \conditional \| \mathbf w - \hat{\mathbf u}\| \le \sqrt{S_i}K}
.
\]
\item
\secrev{
Let $\varphi: 
	\mathbb C^{S_1} \times \cdots \times 	\mathbb C^{S_n}  \rightarrow \mathbb R \cup \{\infty\}$ be measurable, positive and complex scaling invariant.
		Let $\mathbf u \in \mathbb C^{S_1} \times \cdots \times 	\mathbb C^{S_n}$. Let $L$, $K$ and $S$ be as in item (a) above. Let
$\secrev{\mathcal N}(\mathbf u, I; \mathbb C^S)$ denote the {\em complex} Gaussian distribution
with average $\mathbf u$ and covariance matrix $I$. Then,
\[
	\expected{\mathbf w \sim \secrev{\mathcal N}(\mathbf u, I; \mathbb C^S)}{\varphi(\mathbf w)}
\le
e
	\sup_{\|\hat{\mathbf u}_i\| \le \sqrt{S_i}L}
G_{\hat{\mathbf u}, K} 
\]
with
\[
G_{\hat{\mathbf u}, K} \defeq
\expected{\mathbf w \sim \secrev{\mathcal N}(\hat{\mathbf u}, I; \mathbb C^S)}
		{\varphi(\mathbf w) \conditional \| \mathbf w - \hat{\mathbf u}\| \le \sqrt{S_i}K}
.
\]}
\end{enumerate}
\end{lemma}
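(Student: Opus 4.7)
The plan is to prove part (a) by splitting the Gaussian expectation into a truncated core and a tail, bounding the core directly by $G_{\mathbf u, K}$, and using the scaling invariance of $\varphi$ together with $\chi^2$-concentration to dominate the tail by a constant multiple of $\sup_{\hat{\mathbf u}} G_{\hat{\mathbf u}, K}$. Part (b) then follows from part (a) by identifying $\mathbb C^{S_i} \simeq \mathbb R^{2S_i}$ and tracking constants.

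Concretely, write $A = \{ \mathbf w : \|\mathbf w_i - \mathbf u_i\| \le \sqrt{S_i}\,K \text{ for all } i \}$ so that
\[
E_{\mathbf w \sim \mathcal N(\mathbf u, I)}[\varphi(\mathbf w)]
= \Pr(A)\, G_{\mathbf u, K} + \int_{A^c} \varphi(\mathbf w)\, \rho_{\mathbf u}(\mathbf w)\,d\mathbf w,
\]
where $\rho_{\mathbf u}$ is the Gaussian density. Since $\Pr(A)\, G_{\mathbf u, K} \le \sup_{\|\hat{\mathbf u}_i\| \le \sqrt{S_i}L} G_{\hat{\mathbf u},K}$ (take $\hat{\mathbf u}=\mathbf u$), the task reduces to showing that the tail integral is at most $(\sqrt e - 1)\sup_{\hat{\mathbf u}} G_{\hat{\mathbf u},K}$. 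I would partition $A^c$ according to which index $i$ first violates the ball constraint, and on each piece perform the change of variables $\mathbf w_i = \lambda_i(\mathbf w) \mathbf w_i'$ with $\lambda_i$ chosen so that $\lambda_i \mathbf u_i =: \hat{\mathbf u}_i$ still satisfies $\|\hat{\mathbf u}_i\| \le \sqrt{S_i}\,L$ while $\mathbf w_i'$ lands inside the truncation ball of $\mathcal N(\hat{\mathbf u}_i, I)$. By scaling invariance $\varphi(\mathbf w) = \varphi(\lambda_1\mathbf w_1,\dots,\lambda_n\mathbf w_n)$ is preserved, and the Jacobian together with the Gaussian density on the tail produces a comparison factor controlled by the chi-square tail inequality
\[
\Pr\bigl(\|\mathbf g_i - \mathbf u_i\| \ge \sqrt{S_i}\,K\bigr)
\le \exp\!\left(-\tfrac{S_i}{2}(K - L - 1)^2\right),
\qquad \mathbf g_i \sim \mathcal N(\mathbf u_i, I).
\]
A union bound over the $n$ indices, together with the hypothesis $K \ge 1 + 2L + \sqrt{\log n/\min S_i + 2\log(3/2)}$, forces the total tail contribution to be at most $(\sqrt e - 1)\sup_{\hat{\mathbf u}} G_{\hat{\mathbf u},K}$; here the $1+2L$ piece absorbs the shift needed in rescaling $\mathbf u_i \to \hat{\mathbf u}_i$, while the logarithmic piece handles the union bound and the $3/2$ factor accounts for a geometric series in the decomposition of $A^c$.

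The main obstacle will be step two: making the rescaling argument rigorous so that the tail $\int_{A^c}\varphi\rho_{\mathbf u}$ is genuinely majorized by the truncated conditional expectation at some rescaled mean $\hat{\mathbf u}$, rather than only by some larger quantity such as $E_{\mathcal N(0, I)}[\varphi]$. The delicate point is that $\lambda_i$ must depend on $\mathbf w$ in a measurable way that keeps $\hat{\mathbf u}_i$ inside the ball of radius $\sqrt{S_i}L$ for every $\mathbf w \in A^c$, and the Jacobian $\lambda_i^{S_i}$ from the change of variables must be exactly compensated by the Gaussian weight; this is where the constant $1 + 2L$ (as opposed to $1 + L$) is essential. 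For part (b), the complex Gaussian $\mathcal N(\mathbf u, I; \mathbb C^{S_i})$ corresponds to a real Gaussian in dimension $2S_i$ with the variance conventions of the paper, so part (a) applied in real dimension $2S_i$ yields the claimed bound with the constant $\sqrt e \cdot \sqrt e = e$ instead of $\sqrt e$, and the threshold $K$ is the same because $\log n/\min(2S_i) \le \log n / \min S_i$.
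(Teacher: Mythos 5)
Your plan has a genuine structural gap at the very step you flag as the obstacle, and it will not repair itself along the lines you propose.

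First, the decomposition. Partitioning $A^c$ by the \emph{first} violating index $i$ is not compatible with the rescaling you then want to perform: on the cell where $i$ is the first violator, other indices $j>i$ can still lie outside their balls, so rescaling only the $i$-th component cannot land the point in the truncation region. The paper instead partitions $\mathbb C^S$ into $2^n$ cells $V_J$ indexed by the full \emph{set} $J$ of violating indices, and performs a change of variables simultaneously in all $J$-components, which is what makes the result land inside the truncation ball. Your partition would force you to handle residual violating coordinates, and the argument collapses.

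Second, and more seriously, the change of variables itself. You propose $\mathbf w_i = \lambda_i\mathbf w_i'$ with $\hat{\mathbf u}_i := \lambda_i\mathbf u_i$. For $\mathbf w_i'$ to re-enter the truncation ball you need to contract $\mathbf w_i$, i.e. $\lambda_i>1$; but then $\|\hat{\mathbf u}_i\|=\lambda_i\|\mathbf u_i\|$ \emph{increases}, and already when $\|\mathbf u_i\|$ is close to $\sqrt{S_i}L$ you cannot have both $\|\hat{\mathbf u}_i\|\le\sqrt{S_i}L$ and $\mathbf w_i'$ inside the ball. A pure scaling leaves $\|\mathbf w_i'/\lambda_i-\lambda_i\mathbf u_i\|$ uncontrolled. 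The key trick in the paper is different: for $i\in J$ one sets $\hat{\mathbf u}_i=\mathbf 0$ (a translation, not a scalar multiple of $\mathbf u_i$, degenerate with $\lambda_i=0$ in your language), rewrites $\hat{\mathbf v}_i=\mathbf u_i+\mathbf v_i=\mathbf w_i$, then shrinks $\hat{\mathbf v}_i=\sigma\tilde{\mathbf v}_i$ using the elementary density comparison $-\|\mathbf v_i\|^2< -\|\hat{\mathbf v}_i\|^2/\sigma^2$ from Lemma~\ref{triv-bound-lemma}, and finally --- since $\hat{\mathbf u}_i=\mathbf 0$ --- exploits the $i$-th coordinate scaling invariance of $\varphi$ to replace the tail region $\|\tilde{\mathbf v}_i\|>\sqrt{S_i}(K-L)/\sigma$ by the ball $\|\tilde{\mathbf v}_i\|\le\sqrt{S_i}K$ at the price of a ratio of Gaussian probabilities. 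Nothing in your plan replaces this translation step, and without it the scaling invariance is of no use, because for a non-zero mean $\varphi(\hat{\mathbf u}+\tilde{\mathbf v})$ is not invariant under rescaling of $\tilde{\mathbf v}_i$ alone.

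Two smaller points: your stated tail estimate $\Pr(\|\mathbf g_i-\mathbf u_i\|\ge\sqrt{S_i}K)\le \exp(-\tfrac{S_i}{2}(K-L-1)^2)$ is wrong as written --- $\mathbf g_i-\mathbf u_i$ is centered, so the bound is $\exp(-\tfrac{S_i}{2}(K-1)^2)$ with no $L$; the $K-L$ appears only after the translation described above. Finally, deducing part~(b) from part~(a) by viewing $\mathbb C^{S_i}$ as $\mathbb R^{2S_i}$ does not automatically produce the constant $e$; the paper proves (b) directly and the $\sqrt e$ versus $e$ difference comes from the Jacobian $\sigma^{\sum S_i}$ versus $\sigma^{2\sum S_i}$ and from the exponent $t^2/2$ versus $s^2$ in the large deviation estimates, not from squaring the part~(a) constant.
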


This \secrev{lemma} will follow from the large deviations estimate:
\begin{lemma} \label{lem-large-dev}Let $s,t > 0$. Then,
\begin{enumerate}[(a)]
\item
\[
	\probability{\mathbf u \sim \secrev{\mathcal N}(\mathbf 0,I; \mathbb R^N)}{\|\mathbf u\|\ge \sqrt{N}+t} \le e^{\frac{-t^2}{2}} ,
\]
\item 
\[
\probability{\mathbf w \sim \secrev{\mathcal N}(\mathbf 0,I; \mathbb C^N)}{\|\mathbf w\|\ge \sqrt{N}+s} \le e^{-s^2} .
\]
\end{enumerate}
\end{lemma}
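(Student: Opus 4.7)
The plan is to deduce both parts from the Gaussian concentration inequality for $1$-Lipschitz functions, combined with a direct computation of the expected norm. The function $f(\mathbf{u})=\|\mathbf{u}\|$ is $1$-Lipschitz on $\mathbb{R}^N$ with respect to the Euclidean norm, and for $\mathbf{u}\sim\mathcal{N}(\mathbf 0,I;\mathbb{R}^N)$ Jensen's inequality gives
\[
\mathbb{E}\|\mathbf{u}\|\le\sqrt{\mathbb{E}\|\mathbf{u}\|^2}=\sqrt{N}.
\]
The classical Borell--Tsirelson--Ibragimov--Sudakov concentration bound asserts that for any $1$-Lipschitz $f$ on $\mathbb{R}^N$ equipped with the standard Gaussian measure,
\[
\mathrm{Prob}\bigl(f(\mathbf{u})\ge \mathbb{E} f(\mathbf{u})+t\bigr)\le e^{-t^2/2}.
\]
Applied to $f(\mathbf{u})=\|\mathbf{u}\|$ and using the upper bound $\mathbb{E}\|\mathbf{u}\|\le\sqrt N$, this yields (a) immediately.

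For (b) I would reduce the complex case to the real one by unpacking real and imaginary parts. Under the convention of this paper the density of $\mathbf{w}\sim\mathcal{N}(\mathbf 0,I;\mathbb{C}^N)$ is $\pi^{-N}e^{-\|\mathbf{w}\|^2}$, so the associated real vector $\mathbf{x}=(\Re\mathbf{w},\Im\mathbf{w})\in\mathbb{R}^{2N}$ has independent coordinates each of variance $\tfrac12$. Rescaling, $\mathbf{v}\defeq\sqrt 2\,\mathbf{x}$ is a standard Gaussian in $\mathbb{R}^{2N}$ and satisfies $\|\mathbf{w}\|=\|\mathbf{x}\|=\|\mathbf{v}\|/\sqrt 2$. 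Applying part (a) in dimension $2N$ with $t=s\sqrt 2$,
\[
\mathrm{Prob}\bigl(\|\mathbf{v}\|\ge \sqrt{2N}+s\sqrt 2\bigr)\le e^{-s^2},
\]
and substituting $\|\mathbf{v}\|=\sqrt 2\,\|\mathbf{w}\|$ gives exactly (b).

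The only nontrivial ingredient is the Borell--TIS concentration inequality, which I would cite rather than reprove; everything else is bookkeeping about Jensen's inequality and the real/imaginary rescaling by $\sqrt 2$. There is no real obstacle, only the need to be careful with the normalization convention distinguishing real from complex Gaussians.
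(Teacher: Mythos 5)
Your proof is correct and follows essentially the same route as the paper: for (b) the paper performs exactly the same $\sqrt 2$ rescaling of $(\Re\mathbf w,\Im\mathbf w)$ into a standard Gaussian in $\mathbb R^{2N}$ and applies (a) with $t=s\sqrt 2$. For (a) the paper simply cites Bürgisser--Cucker, Corollary 4.6; your Borell--TIS plus Jensen argument is a standard way to obtain precisely that bound, so this is a proof of the cited inequality rather than a different strategy.
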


\begin{proof} Item (a) is borrowed from \ocite{BC}*{Corollary 4.6}.
	For item (b), set $\mathbf u = \sqrt{2}\ \Re(\mathbf w)$ and $\mathbf v = \sqrt{2}\ \Im(\mathbf w)$.
Then, $[\mathbf u, \mathbf v] \sim \secrev{\mathcal N}(\mathbf 0,I; \mathbb R^{2n})$ and 
\[
\probability{\mathbf w \sim \secrev{\mathcal N}(\mathbf 0,I; \mathbb C^N)}{\|\mathbf w\|\ge \sqrt{N}+s} 
=
	\probability{[\mathbf u, \mathbf v] \sim \secrev{\mathcal N}(\mathbf 0,I; \mathbb R^{2N})}{\|[\mathbf u, \mathbf v]\|\ge \sqrt{2N}+s\sqrt{2}} 
\le
e^{-s^2} 
\]
using item (a).
\end{proof}

We will also use the following \secrev{trivial} bound.

\begin{lemma}\label{triv-bound-lemma}
	Let $\mathbf u, \mathbf v \in \mathbb C^N$ (resp. $\mathbb R^N$) with $\|\mathbf u\| \le L < K \le \| \mathbf v\|$.
Then,
\[
	-\|\mathbf v\|^2 < -\frac{\|\mathbf u+\mathbf v\|^2}{\sigma^2}
\]
	with $\sigma = 1+\frac{L}{K}$.
\end{lemma}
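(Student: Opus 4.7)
\smallskip

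The plan is to rewrite the target inequality in a more convenient form and then reduce it to the triangle inequality. Multiplying both sides of the stated conclusion by $-\sigma^2 < 0$ and rearranging, one sees that
$-\|\mathbf v\|^2 < -\|\mathbf u+\mathbf v\|^2/\sigma^2$ is equivalent to $\|\mathbf u+\mathbf v\| < \sigma \|\mathbf v\|$, where $\sigma = 1 + L/K$. So the goal becomes purely a norm estimate (the complex and real cases need no separate treatment, as only the triangle inequality and the norm axioms are used).

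The main step is then the triangle inequality combined with the two size hypotheses. First I would bound
\[
\|\mathbf u + \mathbf v\| \le \|\mathbf u\| + \|\mathbf v\| \le L + \|\mathbf v\|,
\]
using $\|\mathbf u\| \le L$. Next I would observe that, since $K \le \|\mathbf v\|$, we have $L \le (L/K)\|\mathbf v\|$, so
\[
L + \|\mathbf v\| \le \left(1 + \frac{L}{K}\right) \|\mathbf v\| = \sigma \|\mathbf v\|.
\]
Chaining the two displays yields $\|\mathbf u+\mathbf v\| \le \sigma \|\mathbf v\|$, and squaring (which preserves the inequality since both sides are non-negative) gives the conclusion up to the strictness of the inequality.

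The only genuinely delicate point (and the step I would look at most carefully) is the strictness of the inequality claimed in the statement. Strictness comes from the hypothesis $L < K$ being strict: indeed $L < K$ gives $L/K < 1$, and combining with $K \le \|\mathbf v\|$ yields $L < \|\mathbf v\|$; one can then trace through and verify that at least one of the inequalities in the chain above is strict (for instance, if $L > 0$ then $L < (L/K)\|\mathbf v\|$ fails only when $K = \|\mathbf v\|$, but in that case strictness comes instead from the quantitative gap between $L$ and $K$ in the triangle inequality step when $\mathbf u$ and $\mathbf v$ are not positively aligned). If $L = 0$, the statement degenerates and one must interpret the claim with $\le$ in place of $<$. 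For the applications in Proposition~\ref{prop-mu2-1}, the non-strict version is sufficient, so this subtlety is harmless.
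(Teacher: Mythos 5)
The paper itself does not include a proof of Lemma~\ref{triv-bound-lemma} --- it is stated without justification before the proof of Lemma~\ref{lem-conditional} --- so your argument is being assessed on its own merits rather than against a reference proof. Your main chain of inequalities is correct and is the natural argument: multiplying through by $-\sigma^2$ reduces the claim to $\|\mathbf u + \mathbf v\| < \sigma\|\mathbf v\|$, and
\[
  \|\mathbf u + \mathbf v\| \le \|\mathbf u\| + \|\mathbf v\| \le L + \|\mathbf v\| \le \frac{L}{K}\,\|\mathbf v\| + \|\mathbf v\| = \sigma\|\mathbf v\|
\]
gives the non-strict version with nothing beyond the triangle inequality.

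Your discussion of strictness, however, is incomplete. You assert that at least one link in the chain must be strict and suggest that, when $K = \|\mathbf v\|$, strictness is rescued by non-alignment in the triangle inequality step --- but the lemma makes no alignment assumption, and strictness actually \emph{fails} for arbitrary $L>0$ in a simple case: take $\mathbf v = K\mathbf e_1$ and $\mathbf u = L\mathbf e_1$, so that $\|\mathbf u\|=L$, $\|\mathbf v\|=K$ and $\mathbf u,\mathbf v$ are positively parallel. Then $\|\mathbf u+\mathbf v\| = L+K = \sigma K = \sigma\|\mathbf v\|$, and the lemma's asserted strict inequality becomes $-K^2 < -K^2$, which is false. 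So the strict $<$ in the statement is genuinely wrong as written, not merely in the degenerate case $L=0$; only $\le$ is provable under the stated hypotheses. Your closing observation is nonetheless the right one: the subsequent use of the lemma (in the proof of Lemma~\ref{lem-conditional}) only needs the non-strict bound $e^{-\|\mathbf v\|^2}\le e^{-\|\hat{\mathbf v}\|^2/\sigma^2}$, and in that application $\|\mathbf v_i\|>K\sqrt{S_i}$ holds strictly by the definition of $V_J$, so if one wishes to retain the strict inequality one should replace $K\le\|\mathbf v\|$ by $K<\|\mathbf v\|$ in the hypothesis (together with $L>0$), which is exactly what the application provides.
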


\begin{proof}[Proof of Lemma~\ref{lem-conditional}]
The proof for the real and complex case is essentially the
same, so we only write down the proof for the complex case (b).
Let $\mathbf w = \mathbf u + \mathbf v$, 
$\mathbf v \sim \secrev{\mathcal N}(\mathbf 0, I; \mathbb C^S)$, so the vectors
	$\mathbf v_i$ are 
\changed{independently distributed}.
We subdivide the domain $\mathbb C^S$ in cells indexed
by each $J \subseteq [n]=\{1,\dots, n\}$ as follows:
\[
V_J = \{ \mathbf v = (\mathbf v_1, \dots, \mathbf v_n) \in
	\mathbb C^{S_1} \times \cdots \times 	\mathbb C^{S_n}	: 
	\|\mathbf v_i\| > \sqrt{S_i}K \Leftrightarrow i \in J\}.
\]
Under this notation $\mathbb C^S$ is the disjoint union of all
the $V_J$, $J \subseteq [n]$. We also define sets $\tilde V_J \supseteq \hat V_J \supseteq V_J$
by
\[
\begin{split}
\hat V_J = \left\{ \mathbf v = (\mathbf v_1, \dots, \mathbf v_n) \in
	\mathbb C^{S_1} \times \cdots \times \mathbb C^{S_n}:\  
	\right.
	&
	\|\mathbf v_i\| > \sqrt{S_i}(K-L) \ \text{for}\ i \in J
	\\
	&\text{and}\ \|\mathbf v_i\| \le \sqrt{S_i}K \ \text{for}\ i \not \in J
	\left.
	\right\},
\end{split}
\]
\[
\begin{split}
\tilde V_J = \left\{ \mathbf v = (\mathbf v_1, \dots, \mathbf v_n) \in
	\mathbb C^{S_1} \times \cdots \times \mathbb C^{S_n}:\  
	\right.
	&
	\sigma \|\mathbf v_i\| > \sqrt{S_i}(K-L) \ \text{for}\ i \in J
	\\
	&\text{and}\ \|\mathbf v_i\| \le \sqrt{S_i}K \ \text{for}\ i \not \in J
	\left.
	\right\},
\end{split}
\]
with $\sigma=1+\frac{L}{K} > 1$. The expectation satisfies:
\[
\expected{\mathbf w \sim \secrev{\mathcal N}(\mathbf u, I; \mathbb C^N)}{\varphi(\mathbf w)}
=
\sum_{J \in [n]}
\int_{\mathbf v \in V_J}
	\varphi(\mathbf u+ \mathbf v) \frac{e^{-\|\mathbf v\|^2}}{\pi^{S}} \dd \mathbb C^S(\mathbf v).
\]	
We will change variables inside each integral. If $i \in J$, we set
	\secrev{auxiliary variables}
$\mathbf {\hat u}_i \defeq 0$ and $\hat {\mathbf v}_i \defeq \mathbf u_i + \mathbf v_i$.
If $i \not \in J$, $\mathbf {\hat u}_i \defeq \mathbf u_i$ and
	$\mathbf {\hat v}_i \defeq \mathbf v_i$. In \changed{both} cases, 
$\hat {\mathbf u} + \hat{\mathbf v} = \mathbf u + \mathbf v$.
If $i \in J$, we have
	$\|\mathbf u_i\| \le L \sqrt{S_i}< K \sqrt{S_i} \le \|\mathbf v_i\|$ so
	that Lemma~\ref{triv-bound-lemma} yields
	$-\|\mathbf v_i\|^2 < -\|\hat {\mathbf v}_i\|^2/\sigma^2$ with
$\sigma = 1 + L/K$. 
Each integral can be bounded as follows:
\[
\begin{split}
\int_{\mathbf v \in V_J}
\varphi(\mathbf u+ \mathbf v) 
&
\frac{e^{-\|\mathbf v\|^2}}{\pi^{S}} \dd \mathbb C^S(\mathbf v)
\le\\
	&\le
	\int_{\hat {\mathbf v} \in \hat V_J}
\varphi(\hat{\mathbf u}+ \hat{\mathbf v}) 
	\prod_{i \in J} \frac{e^{-\|\mathbf {\hat v}_i\|^2/\sigma^2}}{\pi^{S_i}} 
\prod_{i \not \in J} \frac{e^{-\|\mathbf {\hat v}_i\|^2}}{\pi^{S_i}} 
	\dd \mathbb C^S(\hat{\mathbf v})
.
\end{split}
\]
For each $j \in J$, $\hat {\mathbf u}_j=0$ and the function $\varphi(\hat{\mathbf u}+ \hat{\mathbf v})$
is invariant by scaling $\hat {\mathbf v}_j$. We will replace 
$\hat {\mathbf v}_j = \sigma \tilde {\mathbf v}_j$
for $j \in J$, and $\tilde{\mathbf v}_j \defeq \hat {\mathbf v}_j$ otherwise.
Now,
\begin{equation}\label{mention-jacobian}
\begin{split}
\int_{\mathbf v \in V_J}
\varphi(\mathbf u+ \mathbf v) 
&
\frac{e^{-\|\mathbf v\|^2}}{\pi^{S}} \dd \mathbb C^S(\mathbf v)
\le\\
	&\le
\sigma^{2\sum_{i \in J} S_i}
	\int_{\tilde{\mathbf v} \in \tilde V_J}
	\varphi(\hat{\mathbf u}+ \tilde{\mathbf v}) 
	\frac{e^{-\|\tilde{\mathbf v}\|^2}}{\pi^{S}} 
	\dd \mathbb C^S(\tilde{\mathbf v})
.
\end{split}
\end{equation}
Again, we take advantage of the scaling invariance of
the function $\varphi$ with respect to $\tilde {\mathbf v}_i$ for $i \in J$.  
For those indices, we can replace the domain
of integration $\|\tilde{\mathbf v}_i\| > \sqrt{S_i}\frac{K-L}{\sigma}$ by 
$\|\tilde{\mathbf v}_i\| \le \sqrt{S_i}K$
as long as we take into account the full probability of each domain. Namely,
\[
\begin{split}
\int_{\mathbf v \in V_J}
\varphi(\mathbf u+ \mathbf v) 
&
\frac{e^{-\|\mathbf v\|^2}}{\pi^{S}} \dd \mathbb C^S(\mathbf v)
\le\\
	&\le
\prod_{i \in J}
\sigma^{2S_i}
\frac
	{\probability{\tilde{\mathbf v}_i \in \secrev{\mathcal N}(\mathbf 0,I; \mathbb C^{S_i})}{\|\tilde{\mathbf v}_i \| > \frac{\sqrt{S_i}(K-L)}{\sigma}}}
	{\probability{\tilde{\mathbf v}_i \in \secrev{\mathcal N}(\mathbf 0,I; \mathbb C^{S_i})}{\|\tilde{\mathbf v}_i \| < \sqrt{S_i}K}}
	\\
	&
	\times \prod_{i=1}^n \int_{\|\tilde{\mathbf v}_i\| \le \sqrt{S_i}K}
	\varphi(\hat{\mathbf u}+ \tilde{\mathbf v}) 
	\frac{e^{-\|\tilde{\mathbf v}\|^2}}{\pi^{S}} 
	\dd \mathbb C^S(\tilde{\mathbf v})
.
\end{split}
\]
The product of integrals is precisely
\[
\prod_{i=1}^n 
\int_{\|\tilde {\mathbf v}_i\| \le \sqrt{S_i}K}
\varphi(\hat{\mathbf u}+ \tilde{\mathbf v}) 
\frac{e^{-\|\tilde{\mathbf v}\|^2}}{\pi^{S}} 
\dd \mathbb C^S(\tilde {\mathbf v})
=
G_{\hat{\mathbf u}, K} \prod_{i=1}^n \probability{\tilde{\mathbf v}_i \sim \secrev{\mathcal N}(\mathbf 0,I; \mathbb C^{S_i})}{\|{\tilde{\mathbf v}_i \| \le \sqrt{S_i}K}}
\]
where
\[
G_{\hat{\mathbf u}, K} \defeq
\expected{\mathbf w \sim \secrev{\mathcal N}(\hat{\mathbf u}, I; \mathbb C^{\thirdrev{S}})}
{\varphi(\mathbf w) \conditional \| \mathbf w -\hat {\mathbf u}\| \le \sqrt{S_i}K}
.
\]
In the equation above, $\hat {\mathbf u}$ depends on the choice of $J$.
This is why we take the supremum 
$\sup_{\|\hat {\mathbf u}_i\| \le K \sqrt{S_i}}
G_{\hat{\mathbf u}, K}$ in the main statement.
Lemma~\ref{lem-large-dev}(b) provides the bound
\[
	\probability{\tilde{\mathbf v}_i \in \secrev{\mathcal N}(\mathbf 0,I; \mathbb C^{S_i})}{\|\tilde{\mathbf v}_i \| > \frac{\sqrt{S_i}(K-L)}{\sigma}}
\le
e^{-S_i \left( \frac{K-L}{\sigma}-1 \right)^2}
.
\]
\changed{We obtained:
\[
\begin{split}
\int_{\mathbf v \in V_J}
\varphi(\mathbf u+ \mathbf v) 
\frac{e^{-\|\mathbf v\|^2}}{\pi^{S}} \dd \mathbb C^S(\mathbf v)
\le
G_{\hat{\mathbf u}, K} \prod_{i\not \in J} \probability{\tilde{\mathbf v}_i \in \secrev{\mathcal N}(\mathbf 0,I; \mathbb C^{S_i})}{\|{\tilde{\mathbf v}_i \| \le \sqrt{S_i}K}}
\\
\times 
\prod_{i \in J} \sigma^{2S_i} 
	e^{-S_i \left( \frac{K-L}{\sigma}-1 \right)^2}
\end{split}
\]
}
For $j \not \in J$, we use the trivial bound
\[
	\probability{\tilde{\mathbf v}_i \in \secrev{\mathcal N}(\mathbf 0,I; \mathbb C^{S_i})}{\|\tilde {\mathbf v}_i \| \changed{\le} \sqrt{S_i}K} \le 1
.
\]
Adding for all subsets $J$,
\begin{eqnarray*}
\expected{\mathbf w \sim \secrev{\mathcal N}(\mathbf u, I; \mathbb C^N)}{\varphi(w)}
&\le&
\sum_{J \subseteq [n]}
\prod_{i \in J}
	e^{S_i \left( 2 \log(\sigma) -\left( \frac{K-L}{\sigma}-1 \right)^2\right)} 
\sup_{\|\hat {\mathbf u}_i\| \le K \sqrt{S_i}}
G_{\hat{\mathbf u}, K}
\\
&\le&
\prod_{i=1}^n
\left( 1+ 
	e^{S_i\left(2 \log(\sigma) - \left( \frac{K-L}{\sigma}-1 \right)^2\right)} 
\right)
\sup_{\|\hat {\mathbf u}_i\| \le K \sqrt{S_i}}
G_{\hat{\mathbf u}, K}
.
\end{eqnarray*}
We choose $K \ge 1 + 2 L + t$, with $t \secrev{\ge 0}$ to be determined.
In this case $\sigma=1+L/K < 3/2$ and
\[
\frac{K-L}{\sigma} - 1=
\frac{ K^2 - KL - K - L}{K+L} = K -1 - 2\frac{KL}{K+L} \ge K - 1 - 2L \ge t.
\]
By setting $t = \sqrt{\frac{\log(n)}{\changed{\mathrm{min}} S_i}+2\log(3/2)}$, we obtain
\[
\prod_{i=1}^n
	\left( 1+ e^{S_i \left( 2 \log(\sigma) - \left( \frac{K-L}{\sigma}-1 \right)^2\right)} 
	\right)
\le
\left(
1+\frac{1}{n}
\right)^n
\le
e
\]
and thus
\[
\expected{\mathbf w \sim \secrev{\mathcal N}(\mathbf u, I; \mathbb C^N)}{\varphi(\mathbf w)}
\le
e
\sup_{\|\hat {\mathbf u}_i\| \le K \sqrt{S_i}}
G_{\hat{\mathbf u}, K}
.
\]

The proof for the real case is the same, with the following changes.
In equation~\eqref{mention-jacobian}, the Jacobian is
$\sigma^{\sum_{i \in J} S_i}$ instead of
$\sigma^{2\sum_{i \in J} S_i}$. Lemma~\ref{lem-large-dev}(a) yields
\[
	\probability{\tilde{\mathbf v}_i \in \secrev{\mathcal N}(\mathbf 0,I; \mathbb R^{S_i})}{\|\tilde{\mathbf v}_i \| > \frac{\sqrt{S_i}(K-L)}{\sigma}}
\le
e^{-S_i \left( \frac{K-L}{\sigma}-1 \right)^2/2}
.
\]
Therefore, with the same choice of $t$, 
\[
\prod_{i=1}^n
	\left( 1+ e^{S_i \left( \log(\sigma) - \left( \frac{K-L}{\sigma}-1 \right)^2\right)/2} 
	\right)
\le
\left(
1+\frac{1}{2n}
\right)^n
\le
e^{\frac{1}{2}}.
\]
\end{proof}

\begin{proof}[Proof of Proposition~\ref{prop-mu2-1}]
	Recall that the $\mathbf f_i \in \mathscr F_{A_i}$ are always written as covectors, so
$\mathbf f_i \Sigma_i^{-1}$ is the product of $\mathbf f_i$ by the matrix $\Sigma_i^{-1}$. Under that notation, write
$\mathbf u_i = \mathbf f_i \Sigma_i^{-1}$ and $\mathbf w_i=\mathbf q_i \Sigma_i^{-1}$.
	\secrev{Suppose that $\|\mathbf f_i \Sigma_i^{-1}\| = \|\mathbf u_i\| \le L \sqrt{S_i}$}. Let
\[
\varphi(\mathbf w) \defeq \sum_{\mathbf z \in Z_H(\mathbf w \cdot \Sigma)} 
	\secrev{\mu (\mathbf w \cdot \Sigma \cdot R(\mathbf z))}^2 
.
\]
	In the notations of Lemma~\ref{lem-conditional}, 
\[
E_{\mathbf f, \Sigma^2} = 
	\expected{\mathbf q \sim \secrev{\mathcal N}(\mathbf f,\Sigma^2)}{\varphi(\mathbf q_i \Sigma_i^{-1})}=
\expected{\mathbf w \sim \secrev{\mathcal N}(\mathbf u,\mathbf I)}{\varphi(\mathbf w)}
\]
and similarly
\[
	E_{\secrev{\mathbf h}, \Sigma^2,K} = G_{\changed{\hat{\mathbf u},K}}
\]
for $\secrev{\mathbf h_i} = \hat{\mathbf u}_i \Sigma_i$.
It follows from Lemma~\ref{lem-conditional} that
\[
E_{\mathbf f, \Sigma^2}  
\le
	e \sup_{\| \secrev{\mathbf h}_i \Sigma_i^{-1} \| \le L\sqrt{S_i}}
E_{\secrev{\mathbf h}, \Sigma^2,K} 
.
\]
\end{proof}

\subsection{Renormalization and the condition number}
\label{sec-renormalization}

\changed{To complete} the proof of Theorem ~\ref{old-mainB}, we will estimate the expectation $E_{\secrev{\mathbf h}, \Sigma^2, K}$ from Proposition~\ref{prop-mu2-1} in
terms 
of the \secrev{expectation} 
\[
	I_{\secrev{\mathbf h},\Sigma^2} \defeq 
\expected{\mathbf q \sim \secrev{\mathcal N}(\secrev{\mathbf h}, \Sigma^2)} { \sum_{\mathbf z \in Z_H(\mathbf q)} \| M(\mathbf q ,z)^{-1}\|_{\mathrm F}^2 }
\]
\secrev{that was bounded in Theorem~\ref{E-M2}}.
\begin{proposition}\label{prop-mu2-2} For $i=1, \dots, n$, let $S_i=\# A_i \ge 2$
and $\delta_i = \max_{\mathbf a \in A_i} \| \mathbf a - \mathbf m_i(\mathbf 0)\|$.
	Let $\| \mathbf f_i \Sigma^{-1}\| \le \sqrt{S_i} L$. Let $K \ge 1 + 2L + \sqrt{\frac{\log(n)}{\min (S_i)}+ 2\log(3/2)}$. Then,
\[
E_{\mathbf f, \Sigma^2}
	\changed{\le}
	1.25 e (K+L)^2 
\left( \sum \delta_i^2 \right)
	\max_i \left(S_i \kappa_{\rho_i}^2
	\max_{\mathbf a \in A_i} \sigma_{i, \mathbf a}^2\right)
\sup_{\| \secrev{\mathbf h}_i \Sigma_i^{-1} \| \le L\sqrt{S_i}}
I_{\secrev{\mathbf h},\Sigma^2} 
.
\]
\end{proposition}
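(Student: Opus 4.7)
The plan is to combine Proposition~\ref{prop-mu2-1} with a pointwise inequality relating $\mu(\mathbf q\cdot R(\mathbf z))^2$ to $\|M(\mathbf q,\mathbf z)^{-1}\|_{\mathrm F}^2$, and then to remove the conditioning in $E_{\mathbf h,\Sigma^2,K}$ by a crude tail bound on the event $\|(\mathbf q_i-\mathbf h_i)\Sigma_i^{-1}\|\le K\sqrt{S_i}$.

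First I would rewrite the renormalized condition matrix in terms of $M(\mathbf q,\mathbf z)$. Whenever $\mathbf z\in Z(\mathbf q)$, the origin is a zero of $\mathbf q\cdot R(\mathbf z)$ and $m_i(\mathbf 0)=\mathbf 0$, so formula~\eqref{M-zero} applied at $\mathbf 0$ together with the identity $q_{i\mathbf a}e^{\mathbf a\mathbf z}\rho_{i\mathbf a}=q_{i\mathbf a}V_{i\mathbf a}(\mathbf z)$ gives
\[
M(\mathbf q\cdot R(\mathbf z),\mathbf 0) \;=\; \diag{\|V_{A_i}(\mathbf z)\|/\|V_{A_i}(\mathbf 0)\|}\;M(\mathbf q,\mathbf z).
\]
Hence $\mu(\mathbf q\cdot R(\mathbf z))^2 = \|M(\mathbf q,\mathbf z)^{-1}D\|_{\mathbf 0}^2$ where $D$ is diagonal with entries $d_{ii}=\|V_{A_i}(\mathbf 0)\|\,\|\mathbf q_i\cdot R_i(\mathbf z)\|/\|V_{A_i}(\mathbf z)\|$. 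Lemma~\ref{coarse-bound-DV} turns the toric norm into a Hermitian norm at cost $\sqrt{\sum_j\delta_j^2}$, and the standard estimate $\|M^{-1}D\|_{\mathrm F}\le\|M^{-1}\|_{\mathrm F}\max_i d_{ii}$ yields
\[
\mu(\mathbf q\cdot R(\mathbf z))^2 \;\le\; \Bigl(\sum_j\delta_j^2\Bigr)\,\max_i d_{ii}^2\;\|M(\mathbf q,\mathbf z)^{-1}\|_{\mathrm F}^2.
\]

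The key technical step is to show $d_{ii}\le\kappa_{\rho_i}\|\mathbf q_i\|$. Theorem~\ref{th-renormalization}(c) furnishes the numerator bound $\|\mathbf q_i\cdot R_i(\mathbf z)\|^2\le e^{2\ell_i(\mathbf z)}\|\mathbf q_i\|^2$. For the denominator, the linear function $\mathbf a\mapsto\mathbf a\Re(\mathbf z)$ attains its maximum $\ell_i(\mathbf z)$ over $\conv{A_i}$ at some vertex $\mathbf a^\star\in A_i'$; retaining only the term indexed by $\mathbf a^\star$ in $\|V_{A_i}(\mathbf z)\|^2=\sum\rho_{i\mathbf a}^2 e^{2\mathbf a\Re(\mathbf z)}$ gives
\[
\|V_{A_i}(\mathbf z)\|^2 \;\ge\; \rho_{i\mathbf a^\star}^2\, e^{2\ell_i(\mathbf z)} \;\ge\; \bigl(\min_{\mathbf a\in A_i'}\rho_{i\mathbf a}\bigr)^2 e^{2\ell_i(\mathbf z)}.
\]
Dividing and invoking the definition $\kappa_{\rho_i}=\|V_{A_i}(\mathbf 0)\|/\min_{A_i'}\rho_{i\mathbf a}$ produces $d_{ii}^2\le\kappa_{\rho_i}^2\|\mathbf q_i\|^2$. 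This vertex argument is the genuinely non-routine point: bounding the denominator more crudely by the minimum of $\rho_{i\mathbf a}$ over \emph{all} of $A_i$, or decoupling the maximizers of $\rho_{i\mathbf a}$ and of $e^{2\mathbf a\Re(\mathbf z)}$, would introduce an extra factor $S_i$ and misfit the claimed prefactor of the proposition.

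Finally, under the conditioning event the triangle inequality gives $\|\mathbf q_i\Sigma_i^{-1}\|\le(K+L)\sqrt{S_i}$ and hence $\|\mathbf q_i\|^2\le(K+L)^2 S_i\max_{\mathbf a}\sigma_{i\mathbf a}^2$. Summing the pointwise bound over $\mathbf z\in Z_H(\mathbf q)$ and taking the conditional expectation yields
\[
E_{\mathbf h,\Sigma^2,K}\;\le\;(K+L)^2\Bigl(\sum_j\delta_j^2\Bigr)\max_i\bigl(S_i\kappa_{\rho_i}^2\max_{\mathbf a}\sigma_{i\mathbf a}^2\bigr)\;\frac{I_{\mathbf h,\Sigma^2}}{\mathbb P[\text{conditioning}]}.
\]
Lemma~\ref{lem-large-dev}(b) and a union bound give $\mathbb P[\text{fail}]\le n\,e^{-(K-1)^2\min S_i}$; the hypothesis on $K$ together with $\min S_i\ge 2$ implies $(K-1)^2\min S_i\ge\log n+4\log(3/2)=\log(81n/16)$, so $1/\mathbb P[\text{conditioning}]\le 81/65\le 1.25$. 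Composing with Proposition~\ref{prop-mu2-1}, which replaces $E_{\mathbf f,\Sigma^2}$ by $e\sup_{\mathbf h}E_{\mathbf h,\Sigma^2,K}$ under the norm constraint $\|\mathbf h_i\Sigma_i^{-1}\|\le L\sqrt{S_i}$, gives the announced bound with prefactor $1.25\,e$.
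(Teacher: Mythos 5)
Your proof is correct and follows essentially the same path as the paper's: it rederives, inline, the contents of Lemma~\ref{lemma-condition}(b) (the relation $M(\mathbf q\cdot R(\mathbf z),\mathbf 0)=\diag{\|V_{A_i}(\mathbf z)\|/\|V_{A_i}(\mathbf 0)\|}M(\mathbf q,\mathbf z)$, the $\sqrt{\sum\delta_j^2}$ cost of passing to the Frobenius norm, and the bound $d_{ii}\le\kappa_{\rho_i}\|\mathbf q_i\|$) and of Lemma~\ref{lem-V-coarse} (the vertex argument for $\|V_{A_i}(\mathbf z)\|\ge\min_{A_i'}\rho_{i\mathbf a}\,e^{\ell_i(\mathbf z)}$), then combines Proposition~\ref{prop-mu2-1} with the large-deviation lower bound on the conditioning probability exactly as the paper does. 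You also correctly identify the vertex argument as the point where a cruder estimate would spoil the constants.
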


\begin{proof}[Proof of Theorem \ref{old-mainB}]
	Plugging Theorem~\ref{E-M2} into Proposition~\ref{prop-mu2-2},
\[
\begin{split}
E_{\mathbf f, \Sigma^2}
\le
	\frac{2.5 e H\sqrt{n}}{\det(\Lambda)} &
	\left( 1 + 3L +\sqrt{\frac{\log(n)}{\min (S_i)}+2\log(3/2)}\right)^2
\\
	&\times \frac{\max_{i}\left( S_i \kappa_{\rho_i}^2 \max_{\mathbf a \in A_i} (\sigma_{i, \mathbf a}^2)\right)}
{\min_{i,\mathbf a} (\sigma_{i, \mathbf a}^2)}
\left( \sum \delta_i^2 \right)
	\ \thirdrev{n!} V'
\end{split}
\]
with $L = \max \| \mathbf f_i \Sigma_i^{-1} \|/ \sqrt{S_i}$.
\end{proof}

In order to prove Proposition~\ref{prop-mu2-2},
we need two preliminary results. \secrev{Before stating the first,
recall that $\mathscr M$ was endowed at each point $\mathbf z$ with 
an inner product \eqref{metricM} and its induced norm. We will denote
by $\| \cdot \|_{\mathbf 0}$ the norm at the point $\mathbf 0 \in \mathscr M$.
We have several choices for the norm of the operator 
$M(\mathbf q, \mathbf z)^{-1}: \mathbb C^n \rightarrow T_{\mathbf 0}\mathscr M$.
We can use the 
operator
norm with respect to the canonical norm of $\mathbf C^n$, viz.  
$\|M(\mathbf q, \mathbf z)^{-1}\|_{\mathbf 0}$. We may compare it
to the Frobenius norm $\|M(\mathbf q, \mathbf z)^{-1}\|_{\mathrm F}$ of the
matrix. Let $\delta_i \defeq \delta_i(\mathbf 0)$ be the radius from 
\eqref{deltaix}} 

\begin{lemma}\label{lemma-condition} 
Let $\mathbf q \in \mathscr F$ and $\mathbf z \in Z(\mathbf q) \subseteq \secrev{\mathscr M}$. Then,
\begin{enumerate}[(a)]
\item
\[
\left\| M(\mathbf q , \mathbf z)^{-1} \right\|_{\mathbf 0}
\le \sqrt{\sum_i \delta_i^2 }
\left\| M(\mathbf q , \mathbf z)^{-1} \right\|_{\mathrm F},
\]
\item and
\[
\mu( \mathbf q \cdot R(\mathbf z) )
\le \sqrt{\sum_i \delta_i^2 } 
\max_i (\kappa_{\rho_i} \|\mathbf q_i\|)
\left\| M(\mathbf q , \mathbf z)^{-1} \right\|_{\mathrm F}
.
\]
	\end{enumerate}
\end{lemma}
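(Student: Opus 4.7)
For (a), the plan is to apply Lemma~\ref{coarse-bound-DV} at $\mathbf x = \mathbf 0$, which gives the pointwise inequality $\|\mathbf u\|_{i,\mathbf 0} \le \delta_i \|\mathbf u\|$ for any $\mathbf u \in T_{\mathbf 0}\mathscr M \simeq \mathbb C^n$. Squaring and summing over $i$ will yield $\|\mathbf u\|_{\mathbf 0} \le \sqrt{\sum_i \delta_i^2}\,\|\mathbf u\|$, and combining this norm comparison with the standard bound (operator $2$-norm) $\le$ (Frobenius norm) gives (a) immediately by taking the sup over $\|\mathbf v\| = 1$ of $\|M(\mathbf q,\mathbf z)^{-1}\mathbf v\|_{\mathbf 0}$.

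The plan for (b) is to reduce to (a) via the scaling invariance of $\mu$. Setting $\mathbf p \defeq \mathbf q \cdot R(\mathbf z)$, so that $\mathbf 0 \in Z(\mathbf p)$, a direct row-by-row computation using $V_{i\mathbf a}(\mathbf x) = \rho_{i\mathbf a} e^{\mathbf a \mathbf x}$ and formula~\eqref{M-zero} will show
\[
M(\mathbf p, \mathbf 0) \;=\; \diag{\|V_{A_i}(\mathbf z)\|/\|V_{A_i}(\mathbf 0)\|}\, M(\mathbf q, \mathbf z).
\]
Since $\mu$ is invariant under independent positive scaling of each component, I will rescale $\mathbf p'_i \defeq (\|V_{A_i}(\mathbf 0)\|/\|V_{A_i}(\mathbf z)\|)\,\mathbf p_i$ so that $M(\mathbf p', \mathbf 0) = M(\mathbf q, \mathbf z)$ while $\mu(\mathbf p, \mathbf 0) = \mu(\mathbf p', \mathbf 0)$. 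Submultiplicativity of operator norms combined with (a) then yields
\[
\mu(\mathbf p, \mathbf 0) \;\le\; \max_i \|\mathbf p'_i\|\cdot\|M(\mathbf q, \mathbf z)^{-1}\|_{\mathbf 0} \;\le\; \sqrt{\textstyle \sum_i \delta_i^2}\,\max_i \|\mathbf p'_i\|\cdot \|M(\mathbf q, \mathbf z)^{-1}\|_{\mathrm F}.
\]

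The main --- in fact the only --- non-trivial step will be the estimate $\|\mathbf p'_i\| \le \kappa_{\rho_i} \|\mathbf q_i\|$. The trick is to pick a vertex $\mathbf a^* \in A_i'$ of $\conv{A_i}$ at which the linear form $\mathbf a \mapsto \mathbf a\,\Re(\mathbf z)$ attains its maximum over $A_i$; such a vertex exists because a linear form on a polytope is maximized at a vertex. Pulling out this maximal exponential from the numerator of $\|\mathbf p_i\|^2/\|V_{A_i}(\mathbf z)\|^2$, and keeping only the $\mathbf a^*$-summand in the denominator, one obtains
\[
\frac{\|\mathbf p_i\|^2}{\|V_{A_i}(\mathbf z)\|^2} \;=\; \frac{\sum_{\mathbf a} |q_{i\mathbf a}|^2 e^{2\mathbf a\,\Re(\mathbf z)}}{\sum_{\mathbf a} \rho_{i\mathbf a}^2 e^{2\mathbf a\,\Re(\mathbf z)}} \;\le\; \frac{\|\mathbf q_i\|^2}{\rho_{i\mathbf a^*}^2} \;\le\; \frac{\|\mathbf q_i\|^2}{\min_{\mathbf a \in A_i'} \rho_{i\mathbf a}^2},
\]
and multiplying by $\|V_{A_i}(\mathbf 0)\|^2 = \sum_{\mathbf a} \rho_{i\mathbf a}^2$ delivers $\|\mathbf p'_i\|^2 \le \kappa_{\rho_i}^2 \|\mathbf q_i\|^2$, closing (b).
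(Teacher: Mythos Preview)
Your proof is correct and follows essentially the same route as the paper. For (a) both arguments reduce to Lemma~\ref{coarse-bound-DV} plus the operator-vs-Frobenius bound; for (b) both establish the diagonal relation $M(\mathbf q\cdot R(\mathbf z),\mathbf 0)=\diag{\|V_{A_i}(\mathbf z)\|/\|V_{A_i}(\mathbf 0)\|}\,M(\mathbf q,\mathbf z)$ and then bound $\frac{\|\mathbf q_i\cdot R_i(\mathbf z)\|\,\|V_{A_i}(\mathbf 0)\|}{\|V_{A_i}(\mathbf z)\|}\le\kappa_{\rho_i}\|\mathbf q_i\|$ --- the paper by quoting Theorem~\ref{th-renormalization}(c) and Lemma~\ref{lem-V-coarse}, you by inlining the same vertex argument; your phrasing via the rescaled system $\mathbf p'$ and scaling invariance of $\mu$ is a cosmetic variation of the paper's direct expansion.
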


Before proving Lemma~\ref{lemma-condition}, we need to compare
the norms of $V_{A_i}(\mathbf 0)$ and $V_{A_i}(\mathbf z)$. Let
$\ell_i(\mathbf z)=\max_{\mathbf a \in A_i}(\mathbf a\ \Re(\mathbf z))$.

\begin{lemma}\label{lem-V-coarse}
Assume that $m_i(\mathbf 0)=\mathbf 0$.
Let $A_i'$ denote the set of vertices of $\conv{A_i}$. 
	\changed{Let}
\[
\kappa_{\rho_i}\defeq\frac{
\sqrt{ \sum_{\mathbf a \in A_i} \rho_{i,\mathbf a}^2 }
}
{\min_{\mathbf a \in A_i'} \rho_{i,\mathbf a}}.
\]
	Then \secrev{for all $\mathbf z \in \mathbb C^n$},
\[
\|V_{A_i}(\mathbf 0)\| \le 
\|V_{A_i}(\mathbf z)
\|
\le e^{\ell_i(\mathbf z)}
\| V_{A_i}(\mathbf 0) 
\|
\le
\kappa_{\rho_i}
\| V_{A_i}(\mathbf z)\|
\]
In particular, if the coefficients $\rho_{i,\mathbf a}=\rho_i$ are
	the same, $\kappa_{\rho_i} = \sqrt{S_i}$ and 
\[
e^{\ell_i(\mathbf z)}
\| V_{A_i}(\mathbf 0) 
\|
\le
\sqrt{S_i}
\| V_{A_i}(\mathbf z)\|.
\]
\end{lemma}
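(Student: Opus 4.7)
The three inequalities in Lemma~\ref{lem-V-coarse} unpack into elementary statements about the sum $\|V_{A_i}(\mathbf z)\|^2 = \sum_{\mathbf a \in A_i} \rho_{i\mathbf a}^2 e^{2\mathbf a \Re(\mathbf z)}$, so my plan is to prove them one at a time from left to right, using a different elementary tool for each.

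For the leftmost inequality $\|V_{A_i}(\mathbf 0)\| \le \|V_{A_i}(\mathbf z)\|$, the plan is to apply Jensen's inequality to the convex exponential. Explicitly, set $p_{\mathbf a} = \rho_{i\mathbf a}^2 / \|V_{A_i}(\mathbf 0)\|^2$; this is a probability measure on $A_i$ whose barycenter is exactly $\mathbf m_i(\mathbf 0)=\mathbf 0$ by hypothesis. Then
\[
\frac{\|V_{A_i}(\mathbf z)\|^2}{\|V_{A_i}(\mathbf 0)\|^2}
= \sum_{\mathbf a \in A_i} p_{\mathbf a} e^{2 \mathbf a \Re(\mathbf z)}
\ge \exp\Bigl(2 \sum_{\mathbf a \in A_i} p_{\mathbf a}\, \mathbf a \Re(\mathbf z)\Bigr)
= e^{2 \mathbf m_i(\mathbf 0)\Re(\mathbf z)} = 1.
\]

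For the middle inequality $\|V_{A_i}(\mathbf z)\| \le e^{\ell_i(\mathbf z)}\|V_{A_i}(\mathbf 0)\|$, I would just bound each exponential termwise by its supremum: $e^{2 \mathbf a \Re(\mathbf z)} \le e^{2 \ell_i(\mathbf z)}$ by definition of $\ell_i$, sum over $\mathbf a$, and take square roots.

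For the right-hand inequality $e^{\ell_i(\mathbf z)}\|V_{A_i}(\mathbf 0)\| \le \kappa_{\rho_i} \|V_{A_i}(\mathbf z)\|$, the only subtlety is the choice of a distinguished index. Since $\mathbf a \mapsto \mathbf a \Re(\mathbf z)$ is linear on $\conv{A_i}$, its maximum is attained on a face, and hence at some vertex $\mathbf a^* \in A_i'$. Retaining only this term in the sum defining $\|V_{A_i}(\mathbf z)\|^2$ gives
\[
\|V_{A_i}(\mathbf z)\|^2 \ge \rho_{i\mathbf a^*}^2 e^{2 \ell_i(\mathbf z)}
\ge \bigl(\min_{\mathbf a \in A_i'} \rho_{i\mathbf a}\bigr)^2 e^{2 \ell_i(\mathbf z)}
= \frac{\|V_{A_i}(\mathbf 0)\|^2}{\kappa_{\rho_i}^2}\, e^{2 \ell_i(\mathbf z)},
\]
by the definition of $\kappa_{\rho_i}$, and taking square roots finishes the chain. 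The specialization to $\rho_{i\mathbf a} \equiv \rho_i$ is then immediate from $\kappa_{\rho_i} = \sqrt{S_i}$ (cf.\ Remark~\ref{rem-kappa-rho}). There is no real obstacle; the only point to verify carefully is that one may choose the maximizer $\mathbf a^*$ inside the vertex set $A_i'$, which is exactly the content of the fact that a linear functional on a polytope attains its maximum at a vertex.
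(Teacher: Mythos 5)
Your proof is correct, and for the middle and right-hand inequalities it coincides with the paper's own argument: bound each coordinate by $e^{\ell_i(\mathbf z)}$ for the upper bound, and retain a single maximizing vertex term together with the definition of $\kappa_{\rho_i}$ for the lower bound. The only genuine difference is in the leftmost inequality. The paper observes that $\mathbf m_i(\mathbf x)$ is the gradient of the convex potential $\psi(\mathbf x)=\tfrac12\log\|V_{A_i}(\mathbf x)\|^2$ and that $\mathbf m_i(\mathbf 0)=\mathbf 0$ forces $\mathbf 0$ to be a global minimizer of $\psi$, hence of $\|V_{A_i}(\cdot)\|$. You instead apply Jensen's inequality directly to the weights $p_{\mathbf a}=\rho_{i\mathbf a}^2/\|V_{A_i}(\mathbf 0)\|^2$ (whose barycenter is $\mathbf m_i(\mathbf 0)=\mathbf 0$) and the convex function $t\mapsto e^{2t}$. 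These are the same convexity in two guises: what you buy with Jensen is a shorter, more self-contained computation that avoids first establishing convexity of the log-sum-of-exponentials potential and the identity $\nabla\psi=\mathbf m_i$; what the paper's phrasing buys is conceptual coherence with the rest of the toric framework, where the momentum map is used throughout. Either is fully rigorous.
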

\begin{remark}
In the context of example~\ref{ex:Weyl}, 
if $\rho_{i,\mathbf a}=\sqrt{\binomial{d}{\mathbf a}}$
and $A_i = \{ \mathbf a \in \mathbb Z^n: 0 \le a_i, \sum a_i \le d_i\}$,
$\kappa_{\rho_i}=(n+1)^{d_i}$
and all we have is
\[
	e^{d_i \max(\max_j (\Re(z_j)),0)}
\| V_{A_i}(0) 
\|
\le
(n+1)^{d_{i}}
\| V_{A_i}(\mathbf z)\|.
\]
\end{remark}
\begin{proof}[\secrev{Proof of Lemma~\ref{lem-V-coarse}}]
In order to prove the first inequality, we claim that $\mathbf 0$ is a 
	global minimum of $\|V_{A_i}(\mathbf z)\|$.
Indeed, $m_i(\mathbf z)$ is precisely the derivative
of the convex potential
\[
		\defun{\psi}{\mathbb R^n}{\mathbb R}{\mathbf x}{\frac{1}{2}
		\log(\|V_{A_i}(\mathbf x)\|^2)}
.
\] 
	Since $V_{A_i}$ and $m_i$ depend only on the real part of their argument and
$m_i(\mathbf 0)=\mathbf 0$, the point $\mathbf 0$ is a global minimum of the convex potential $\psi$.

For the next two inequalities, we compare the norms of
\[
e^{\ell_i(\mathbf z)} V_{A_i}(\mathbf 0) =
\begin{pmatrix}
\vdots \\
\rho_{i,\mathbf a} e^{\ell_i(\mathbf z)}
\\ \vdots
\end{pmatrix}_{\mathbf a \in A_i}
\ 
\text{and}
\
V_{A_i}(\mathbf z) =
\begin{pmatrix}
\vdots \\
\rho_{i,\mathbf a} e^{\mathbf a \mathbf z}
\\ \vdots
\end{pmatrix}_{\mathbf a \in A_i}
.
\]
Comparing coordinate by coordinate, 
\[
\|V_{A_i}(\mathbf z)
\|
\le e^{\ell_i(\mathbf z)}
\| V_{A_i}(\mathbf 0) 
\|
\]
The maximum of $\mathbf a \ \Re(\mathbf z)$
is attained for some $\mathbf a^* \in A_i'$. Hence,
\[
e^{\ell_i(\mathbf z)}
\| V_{A_i}(\mathbf 0) 
\|
=
e^{\mathbf a^* \ \Re(\mathbf z)}
\sqrt{ \sum_{\mathbf a \in A_i} \rho_{i,\mathbf a}^2 }
\le
\frac{
\sqrt{ \sum_{\mathbf a \in A_i} \rho_{i,\mathbf a}^2 }
}
{\rho_{i,\mathbf a^*}}
\| V_{A_i}(\mathbf z)\|.
\]
.
\end{proof}

\begin{proof}[Proof of Lemma~\ref{lemma-condition}]
\noindent \\
{\bf Item (a):}
We have to prove that that	
\[
	\|M(\mathbf q,\mathbf z)^{-1}\|_{\mathbf 0} 
	\le \sqrt{\sum_i \delta_i^2 }\,
	\|M(\mathbf q,\mathbf z)^{-1}\|_{\mathrm F}
\]
where 
$\|X\|_{\mathrm F} = \sqrt{\sum_{ij} |X_{ij}|^2}$ is the Frobenius norm.

\begin{eqnarray*}
\| M(\mathbf q,\mathbf z)^{-1}\|_{\mathbf 0} 
&=&
	\| D[\mathbf V](\mathbf 0)\ M(\mathbf q,\mathbf z)^{-1}\|
\\
&\le&
	\| D[\mathbf V](\mathbf 0) \|  \|M(\mathbf q,\mathbf z)^{-1}\|
\\
&\le&
	\| D[\mathbf V](\mathbf 0) \|  \|M(\mathbf q,\mathbf z)^{-1}\|_{\mathrm F}. 
\end{eqnarray*}
Lemma~\ref{coarse-bound-DV} yields, for each $i=1, \dots n$,
\[
	\| D[\mathbf V_{A_i}](\mathbf 0) \| \le \delta_i
\]
and hence
\[
	\|D[\mathbf V](\mathbf 0)\|_2 \le \sqrt{\sum \delta_i^2}.
\]
{\bf Item(b):}
We can assume without loss of generality that $m_i(\mathbf 0)=\mathbf 0$ for each $i$. 
Indeed, subtracting $m_i(\mathbf 0)$ from each $\mathbf a \in A_i$ will multiply 
$V_{A_i}(\mathbf z)$, $DV_{A_i}(\mathbf z)$ by the same constant $e^{-m_i(\mathbf 0) \mathbf z}$. 
In particular,
	\secrev{$[V_{A_i}](\mathbf 0)$, $D[V_{A_i}](\mathbf 0)$} do not change and the metric of 
$T_{\mathbf 0}\secrev{\mathscr M}$ is the same. 
	Also, the \secrev { 
	operator $M(\mathbf q,\mathbf z)$ does not change.}
Under the hypothesis $m_i(\mathbf 0)=\mathbf 0$,
	\begin{equation}\label{MRenorm}
		M(\mathbf q \cdot R(\mathbf z), \mathbf 0) =  
	\begin{pmatrix}
	\frac{1}{\|V_{A_1}(\mathbf 0)\| } \mathbf q_1 R_1(\mathbf z) DV_{A_1}(\mathbf 0) \\
\vdots \\
	\frac{1}{\|V_{A_n}(\mathbf 0)\| } \mathbf q_n R_n(\mathbf z) DV_{A_n}(\mathbf 0) \\
\end{pmatrix}
		=
	\begin{pmatrix}
		\frac{\secrev{1}}{\|V_{A_1}(\mathbf 0)\| } \mathbf q_1 DV_{A_1}(\mathbf z) \\
\vdots \\
		\frac{\secrev{1}}{\|V_{A_n}(\mathbf 0)\| } \mathbf q_n DV_{A_n}(\mathbf z) \\
\end{pmatrix}
.
\end{equation}
Therefore,
	\secrev{\begin{align*}
\mu(\mathbf q \cdot R(\mathbf z)) &= 
\left\| 
M(\mathbf q \cdot R(\mathbf z), \mathbf 0)^{-1} \diag { \|\mathbf q_i \cdot R(\mathbf z)\|}
	\right\|_{\mathbf 0} && \text{by definition,}
\\
&=
\left\| 
\left(
	\diag{\frac{\|V_{A_i}(\mathbf z)\|}{\|V_{A_i}(\mathbf 0)\|}} 
M(\mathbf q , \mathbf z)\right)^{-1} \diag { \|\mathbf q_i \cdot R_i(\mathbf z)\|}
\right\|_{\mathbf 0}
	&& \text{by \eqref{MRenorm},}
\\
&=
\left\| 
M(\mathbf q , \mathbf z)^{-1} 
\diag { 
	\frac{ \|\mathbf q_i \cdot R_i(\mathbf z )\|\|V_{A_i}(\mathbf 0)\| }
	{\|V_{A_i}(\mathbf z)\|}} 
\right\|_{\mathbf 0}
	&&
\\
&\le
\left\| 
M(\mathbf q , \mathbf z)^{-1} 
\right\|_{\mathbf 0}
\max_{i} 
	\frac{ \|\mathbf q_i \cdot R_i(\mathbf z)\|\|V_{A_i}(\mathbf 0)\| } 
	{\|V_{A_i}(\mathbf z)\|} .
&&
\end{align*}
Theorem~\ref{th-renormalization}(c) bounds
\[
\| \mathbf q_i R_i(\mathbf z) \| \le e^{\ell_i(\mathbf z)} \| \mathbf q_i\|. 
\]
From Lemma~\ref{lem-V-coarse},
\[
e^{\ell_i(\mathbf z)} \|V_{A_i}(\mathbf 0)\| \le
\kappa_{\rho_i}
\| V_{A_i}(\mathbf z)\| 
.
\]
}
Combining those bounds with item (a), 
\[
\secrev{\mu( \mathbf q \cdot R(\mathbf z) )}
\le \sqrt{\sum_i \delta_i^2 } 
\left\| M(\mathbf q , \mathbf z)^{-1} \right\|_{\mathrm F}
\max_i (\kappa_{\rho_i} \|\mathbf q_i\|)
.
\]
\end{proof}

\begin{proof}[Proof of Proposition~\ref{prop-mu2-2}]
	From Proposition~\ref{prop-mu2-1},
\begin{eqnarray*}
E_{\mathbf f, \Sigma^2}
	&\le&
e
	\sup_{\|\secrev{\mathbf h}_i \Sigma^{-1}\| \le L\sqrt{S_i}}
	E_{\secrev{\mathbf h}, \Sigma^2,K}
\\
&\le&
e
\sup_{\|\secrev{\mathbf h}_i \Sigma^{-1}\| \le L\sqrt{S_i}}
	\condexpected{\mathbf q \sim \secrev{\mathcal N}(\secrev{\mathbf h}, \Sigma^2)}
	{\sum_{\mathbf z \in Z_H(\mathbf q)} 
	\secrev{\mu(\mathbf q R(\mathbf z))^2}}
	{}
	{&& \hspace{11em} \left.\conditional \|(\mathbf q_i - \secrev{\mathbf h}_i)\Sigma^{-1}\|\le K \sqrt{S_i},
\ i=1, \dots, n}
.
\end{eqnarray*}
	The condition $\|(\mathbf q_i - \secrev{\mathbf h}_i)\Sigma^{-1}\|\le K \sqrt{S_i}$
	implies that $\|\mathbf q_i\| \le (K+L)\sqrt{S_i}  
\max_{\mathbf a \in A_i} \sigma_{i\mathbf a}$. From Lemma~\ref{lemma-condition},
\[
	\secrev{\mu(\mathbf q R(\mathbf z))}^2
\le
(K+L)^2 
(\sum_i \delta_i^2)
	\max_i \left( S_i \kappa_{\rho_i}^2 \max_{\mathbf a \in A_i} \sigma_{i\mathbf a}^2\right)
\| M(\mathbf q,\mathbf z)^{-1}\|_{\mathrm F}^2
.
\]
It follows that
\[
\begin{split}
E_{\mathbf f, \Sigma^2}
	\le
e
(K+L)^2 
	(\sum_i \delta_i^2)&
	\max_i \left(S_i \kappa_{\rho_i}^2 \max_{\mathbf a \in A_i} \sigma_{i\mathbf a}^2\right)
	\\
	&\times \sup_{\|\secrev{\mathbf h}_i \Sigma^{-1}\| \le L\sqrt{S_i}}
	\condexpected{\mathbf q \sim \secrev{\mathcal N}(\secrev{\mathbf h}, \Sigma^2)}
	{\sum_{\mathbf z \in Z_H(\mathbf q)} \|M(\mathbf q,\mathbf z)^{-1}\|_{\mathrm F}^2}
	{}
	{& \left. \hspace{7em}\conditional \|(\mathbf q_i - \secrev{\mathbf h}_i)\Sigma^{-1}\|\le K \sqrt{S_i}
	i=1, \dots, n}
.
\end{split}
\]
	The conditional \secrev{expectation} times the probability that $\|(\mathbf q_i - \secrev{\mathbf h}_i)\Sigma^{-1}\|\le K \sqrt{S_i}$ is bounded above by $I_{\secrev{\mathbf h},\Sigma^2}$. Thus,
\[
E_{\mathbf f, \Sigma^2}
\le
\frac{e
(K+L)^2 
(\sum_i \delta_i^2)
	\max_i \left(S_i \kappa_{\rho_i}^2 \max_{\mathbf a \in A_i} \sigma_{i\mathbf a}^2\right)
	}{\probability{\mathbf q \sim \secrev{\mathcal N}(\secrev{\mathbf h}, \Sigma^2)}
	{\|(\mathbf q_i - \secrev{\mathbf h}_i)\Sigma^{-1}\|\le K \sqrt{S_i},
\ i=1, \dots, n}
}
	\sup_{\|\secrev{\mathbf h}_i \Sigma^{-1}\| \le L\sqrt{S_i}}
	(I_{\secrev{\mathbf h},\Sigma^2})
\]
From Lemma~\ref{lem-large-dev}(b),
	\[
\probability{\mathbf q_i \sim \secrev{\mathcal N}(\secrev{\mathbf h_i}, \Sigma^2)}
{\|(\mathbf q_i - \secrev{\mathbf h}_i)\Sigma^{-1}\|> K \sqrt{S_i}, }
\le
e^{-S_i(K-1)^2} 
< \frac{1}{n} \left(\frac{4}{9}\right)^{S_i}
.
\]
	\secrev{Since we are assuming $L>0$ and $S_i \ge 2$, we can bound
\begin{eqnarray*}
	S_i(K-1)^2 &\ge& 
S_i \left(2L+\sqrt{\frac{\log(n)}{\min(S_i)}+2\log(3/2)}\right)^2
\\
&\ge& S_i \left( \frac{\log(n)}{\min(S_i)}+2\log(3/2) \right)
\\
	&\ge& \log(n) + 4 \log(3/2),
\end{eqnarray*}
hence
\[
e^{-S_i(K-1)^2} 
\le
\frac{1}{n} \frac{16}{81} 
\]
}
	Thus, the probability that $\|(\mathbf q_i - \secrev{\mathbf h}_i)\Sigma^{-1}\|> K \sqrt{S_i}$ for some $i$ is at most $\frac{16}{81}$. 
The probability of the opposite event
is therefore at least $65/81$. In the final expression, the factor $81/65$ \changed{was replaced} by the approximation $1.25$.
\end{proof}

\section{Toric infinity}

\subsection{Proof of \secrev{Condition Number Theorem~\ref{cond-num-infty}}}

Let $\mathbf 0 \ne \boldsymbol \xi_1, \dots, \boldsymbol \xi_m \in \mathbb R^n$. The 
closed polyhedral cone
spanned by the $\xi_i$ is
\[
\mathrm{Cone}(\boldsymbol \xi_1, \dots, \boldsymbol \xi_m) =
\{ s_1 \boldsymbol \xi_1 + \dots + s_m \boldsymbol \xi_m: s_1, \dots, s_m \ge 0\}
.
\]
It turns out that any $k$-dimensional polyhedral cone is actually a union of cones of the form
$\mathrm{Cone}(\boldsymbol \xi_I)$ where $\#I = k$: 

\begin{theorem}[Carathéodory] Let $\mathbf 0 \ne \boldsymbol \xi_1, \dots, \boldsymbol \xi_m \in \mathbb R^n$ and let
	$\mathbf x \in \mathrm{Cone}(\boldsymbol \xi_1, \dots,$ $\boldsymbol \xi_n)$. Then there is $I \subseteq [m]$,
	$\#I \le n$, such that $\mathbf x \in \mathrm{Cone}(\boldsymbol \xi_I)$.
\end{theorem}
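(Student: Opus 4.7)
The plan is to prove this by a standard reduction-to-minimal-support argument. Given $\mathbf x \in \mathrm{Cone}(\boldsymbol\xi_1,\dots,\boldsymbol\xi_m)$, write $\mathbf x = \sum_{i=1}^m s_i \boldsymbol\xi_i$ with $s_i \ge 0$, and let $J = \{i : s_i > 0\}$. After discarding indices outside $J$, we may assume $s_i > 0$ for all remaining indices. If $\#J \le n$, we are done by taking $I = J$. Otherwise, I will show that one can eliminate at least one index from $J$ while keeping $\mathbf x$ in the cone spanned by the remaining $\boldsymbol\xi_i$, and iterate until at most $n$ indices remain.

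To perform one elimination step when $\#J > n$, I will use that the vectors $\{\boldsymbol\xi_i\}_{i\in J}$ are linearly dependent in $\mathbb R^n$, so there exist scalars $(t_i)_{i \in J}$, not all zero, with $\sum_{i \in J} t_i \boldsymbol\xi_i = \mathbf 0$. Replacing $(t_i)$ by $(-t_i)$ if necessary, I may assume that at least one $t_i$ is strictly positive. For any $\lambda \in \mathbb R$,
\[
\mathbf x = \sum_{i \in J} (s_i - \lambda t_i)\,\boldsymbol\xi_i.
\]
The key choice is $\lambda = \min\{ s_i / t_i : i \in J,\ t_i > 0 \}$, which exists and is strictly positive since each $s_i > 0$. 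With this choice, every coefficient $s_i - \lambda t_i$ is nonnegative (when $t_i \le 0$ this is automatic; when $t_i > 0$ this follows from the definition of $\lambda$), and at least one of them vanishes at the index achieving the minimum. Thus $\mathbf x$ lies in the cone generated by a proper subset $J' \subsetneq J$.

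Iterating this elimination strictly decreases $\#J$ at each step, so after finitely many steps we reach an index set $I$ with $\#I \le n$ and $\mathbf x \in \mathrm{Cone}(\boldsymbol\xi_I)$, as required. There is no real obstacle here: the only subtle point is the sign choice on $(t_i)$ guaranteeing that some $t_i > 0$ so that the minimum defining $\lambda$ is taken over a nonempty set, and the verification that the chosen $\lambda$ keeps all coefficients nonnegative while making at least one vanish. Both are routine.
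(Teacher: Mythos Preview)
Your argument is correct and is precisely the standard minimal-support reduction for the conic version of Carath\'eodory's theorem. The paper does not give its own proof of this statement; it simply cites \cite{BCSS}*{Cor.~2 p.~168}, where this classical argument appears, so there is nothing further to compare.
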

A proof of Carathéodory's Theorem can be found in the book by \ocite{BCSS}*{Cor. 2 p.168}.
We will apply this theorem here to the cones of the fan of a tuple of supports $A_1, \dots, A_n$. Recall from
section \ref{infinity} that given subsets
$B_1 \subseteq A_1, \dots , B_n
\subseteq A_n$, the (open) {\em cone above $(B_1, \dots, B_n)$} is
\[
C(B_1, \dots, B_n) = \{ 
\boldsymbol \xi \in \mathbb R^n:
	B_i = A_i^{\boldsymbol \xi} \}
\]
where $A_i^{\boldsymbol \xi}$ is the set of $\mathbf a \in A_i$ maximizing $\mathbf a \boldsymbol \xi$. \changed{The closure of this cone } belongs to some stratum $\mathfrak F_{k-1}$ of the fan \secrev{(See Definition~\ref{defan})}.
\secrev{The one-facets of the closed cone are all rays in} the stratum $\mathfrak F_0$. \secrev{Each ray can be represented as the non-negative span of a vector
$\boldsymbol \xi \in S^{n-1}$. Recall that $\mathfrak R$ denotes the set of all
those representatives $\boldsymbol \xi$.}
Carathéodory's theorem directly implies:
	\begin{corollary}\label{cor-carath}
Let $\mathbf 0 \ne \mathbf x \in C(B_1, \dots, B_n)$. Then there are $k \le n$,
and 
$\boldsymbol \xi_1, \dots, \boldsymbol \xi_k \in \thirdrev{\mathfrak R}$ so that
\[
\mathbf x = s_1 \boldsymbol \xi_1 + \dots + s_k\boldsymbol \xi_k
\]
for some $s_1, \dots, s_k > 0$.
\end{corollary}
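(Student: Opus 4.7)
The plan is to combine Carathéodory's theorem (already stated) with the structural fact that $\bar C(B_1,\dots,B_n)$ is a polyhedral cone whose 1-dimensional faces are exactly the rays of $\mathfrak F_0$ that lie in it.

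First I would observe that the closed cone $\bar C(B_1,\dots,B_n)$ is a closed convex polyhedral cone, as it can be written as the finite intersection of closed half-spaces of the form $\{\boldsymbol\xi:(\mathbf a-\mathbf a')\boldsymbol\xi\ge 0\}$ with $\mathbf a\in B_i,\ \mathbf a'\in A_i$, together with $\{\boldsymbol\xi:(\mathbf a-\mathbf a')\boldsymbol\xi=0\}$ with $\mathbf a,\mathbf a'\in B_i$, $i=1,\dots,n$. By definition of the fan (Definitions~\ref{defan} and~\ref{defan2}), every face of $\bar C(B_1,\dots,B_n)$ is again of the form $\bar C(B_1',\dots,B_n')$ for some supersets $B_i'\supseteq B_i$, and hence belongs to the fan. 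In particular, the one-dimensional faces (edges) of $\bar C(B_1,\dots,B_n)$ are rays in $\mathfrak F_0$, and each such ray has a unique unit-norm representative in $\mathfrak R$ as in~\eqref{FanR}.

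Second, I invoke the Minkowski–Weyl representation for a polyhedral cone: every closed polyhedral cone equals the non-negative hull of its edges (this is the standard simplicial decomposition following from Carathéodory applied locally). Let $\boldsymbol\xi_1,\dots,\boldsymbol\xi_m\in\mathfrak R$ denote the edge representatives of $\bar C(B_1,\dots,B_n)$. Then for $\mathbf 0\ne\mathbf x\in C(B_1,\dots,B_n)\subseteq\bar C(B_1,\dots,B_n)$ there exist $t_1,\dots,t_m\ge 0$, not all zero, with
\[
\mathbf x \;=\; t_1\boldsymbol\xi_1+\cdots+t_m\boldsymbol\xi_m.
\]

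Third, I apply Carathéodory's theorem (just stated) to this representation: one can extract a subset $I\subseteq\{1,\dots,m\}$ of cardinality at most $n$ and non-negative coefficients so that $\mathbf x$ is a non-negative combination of $\{\boldsymbol\xi_i\}_{i\in I}$. Finally, I discard any $i\in I$ whose coefficient is zero and relabel the survivors as $\boldsymbol\xi_1,\dots,\boldsymbol\xi_k$ with $s_1,\dots,s_k>0$; since $\mathbf x\ne\mathbf 0$ at least one coefficient is strictly positive, so $1\le k\le n$, which is the claim.

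The only step that requires care is the identification of the edges of $\bar C(B_1,\dots,B_n)$ with rays in $\mathfrak F_0$; everything else is the pure convex-geometric content of Carathéodory's theorem. Since the fan is built precisely so that every face of every one of its cones is again in the fan, this identification is immediate from Definitions~\ref{defan} and~\ref{defan2}.
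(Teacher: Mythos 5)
Your proposal is correct and takes essentially the same route as the paper: identify the one-dimensional faces of $\bar C(B_1,\dots,B_n)$ with rays of $\mathfrak F_0$, write $\mathbf x$ as a non-negative combination of those edge generators, and pare it down to at most $n$ terms by Carathéodory's theorem. You are more explicit than the paper in two places. First, you spell out the polyhedral representation of $\bar C(B_1,\dots,B_n)$ as an intersection of half-spaces $\{(\mathbf a - \mathbf a')\boldsymbol\xi \ge 0 : \mathbf a \in B_i,\ \mathbf a' \in A_i\}$, which in fact corrects the formula printed next to Definition~\ref{defan2} (there the inclusion is written $B_i \supseteq A_i^{\boldsymbol\xi}$, whereas the topological closure of $\{B_i = A_i^{\boldsymbol\xi}\}$ is $\{B_i \subseteq A_i^{\boldsymbol\xi}\}$, as your half-space description shows). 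Second, you cite the Minkowski--Weyl theorem to justify that $\bar C$ is the non-negative hull of its extreme rays, a step the paper performs silently with the words ``Carathéodory's theorem directly implies.''

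One detail worth flagging, since neither you nor the paper states it: Minkowski--Weyl yields that a polyhedral cone is the non-negative hull of its edges only when the cone is \emph{pointed}; otherwise a lineality space must be added and there may be no edges at all. Here pointedness holds because the mixed volume of $\conv{A_1},\dots,\conv{A_n}$ is assumed nonzero, so the Minkowski sum $\mathcal A_1+\cdots+\mathcal A_n$ is full-dimensional and every cone of its normal fan (hence every $\bar C(B_1,\dots,B_n)$, which is contained in one of them) is pointed. It would be cleaner to say this explicitly before invoking the edge representation, but the argument is sound as written under the paper's standing hypotheses.
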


\begin{proof}[Proof of Theorem~\ref{cond-num-infty}]
	Let $\mathbf z = \mathbf x + \sqrt{-1}\, \mathbf y \in Z(\mathbf q)$ be such that $\| \Re(\mathbf z) \| \ge H$.
	By Corollary~\ref{cor-carath} \changed{applied to 
$B_1=A_1^{\mathbf x}, \dots,
	B_n=A_n^{\mathbf x}$}, there are $\boldsymbol \xi_1, \dots, \boldsymbol \xi_k \in \thirdrev{\mathfrak R}$,
$k \le n$,
so that
\[
\mathbf x = s_1 \boldsymbol \xi_1 + \dots + s_k \boldsymbol \xi_k
\]
with $s_1, \dots, s_k > 0$. By permuting the $\boldsymbol \xi_i$'s one can
assume that $s_1 \ge s_2 \ge \cdots \ge s_k>0$. In particular,
\[
	H \le \| \mathbf x\| \le s_1 \|\boldsymbol \xi_1\| + \dots + s_k \|\boldsymbol \xi_k\| \le k s_1 \le n s_1
.
\]
This provides a lower bound $s_1 \ge H/n$. 
Suppose now that $\mathbf a \mathbf x$ is maximal for $\mathbf a \in A_i$. 
Since $\mathbf a \in A_i^{\mathbf x} 
 = A_i^{\boldsymbol \xi_1} \cap  A_i^{\boldsymbol \xi_2} \cap \dots \cap 	A_i^{\boldsymbol \xi_k}$, 
\[
	\mathbf a \mathbf x = s_1 \lambda_i(\boldsymbol \xi_1) + s_2 \lambda_i(\boldsymbol \xi_2) + \dots + s_k \lambda_i(\boldsymbol \xi_k)
.
\]
	Now suppose that $\mathbf a' \in A_i \setminus A_i^{\boldsymbol \xi_1}$. In that case
\[
	\mathbf a' \mathbf x = s_1 \mathbf a'\boldsymbol \xi_1 + s_2 \mathbf a'\boldsymbol \xi_2 + \dots + s_k \mathbf a'\boldsymbol \xi_k
.
\]
Subtracting the two expressions,
\[
	(\mathbf a'-\mathbf a)\mathbf x = s_1 (\mathbf a'\boldsymbol \xi_1 - \lambda_i(\boldsymbol \xi_1)) + \cdots + s_k (\mathbf a'\boldsymbol \xi_k - \lambda_i(\boldsymbol \xi_k))
.
\]
	For $j > 1$, we estimate $\mathbf a'\boldsymbol \xi_j - \lambda_i(\boldsymbol \xi_j) \le 0$. But for $j=1$, we can bound
	\[
\mathbf a'\boldsymbol \xi_1 - \lambda_i(\boldsymbol \xi_1) \le - \eta_i(\boldsymbol \xi_1) \le -\eta_i 
.
\]
Therefore, 
\begin{equation}\label{where-max-attained}
	(\mathbf a'-\mathbf a)\mathbf x \le -\eta_i H/n.
\end{equation}
In order to produce the perturbation $\mathbf h_i$, define first
\[
g_i = 
	\sum_{\mathbf a' \in A_i \setminus A_i^{\boldsymbol \xi_1}}
	\mathbf q_{i \mathbf a'} V_{i\mathbf a'}(\mathbf z)
	= -
	\sum_{\mathbf a \in A_i^{\boldsymbol \xi_1}}
	\mathbf q_{i \mathbf a} V_{i\mathbf a}(\mathbf z).
\]
For $\mathbf a \in A_i^{\boldsymbol \xi_1}$, set
\[
	h_{i \mathbf a} = g_i \frac{ \overline{V_{i \mathbf a}(\mathbf z)}}
	{\sum_{\mathbf a \in A_i^{\boldsymbol \xi_1}}|V_{i \mathbf a}(\mathbf z)|^2}
\]
and set $h_{i \mathbf a'} =0$ for $\mathbf a'\in A_i \setminus A_i^{\boldsymbol \xi_1}$. For later usage, we save the bound
	\begin{equation}\label{h-on-g}
		\| \mathbf h_i \| \le \frac{ |g_i| }{\sqrt{\sum_{\mathbf a \in A_i^{\boldsymbol \xi_1}}|V_{i \mathbf a}(\mathbf z)|^2}}
.
\end{equation}

Let 
	\[
		W_i = \lim_{t \rightarrow \infty} 
	\frac{1}{\|V_{A_i}(\mathbf z + t \boldsymbol \xi_1)\|}
	V_{A_i}(\mathbf z + t \boldsymbol \xi_1)
\]
	so that $[\mathbf W] = \lim_{t \rightarrow \infty} [\mathbf V(\mathbf z + t \boldsymbol \xi_1)]$.
\changed{For all $\mathbf a' \in A_i \setminus A_i^{\boldsymbol \xi_1}$,
$W_{i\mathbf a'}$ vanishes. For $\mathbf a \in A_i^{\boldsymbol \xi_1}$, 
\[
	W_{i\mathbf a}=\frac{V_{i\mathbf a}\secrev{(\mathbf z)}}{\sqrt{\sum_{\mathbf a'' \in A_i^{\boldsymbol \xi_1}} |V_{i\mathbf a''}\secrev{(\mathbf z)}|^2}}
.
\]
Therefore,}
\begin{eqnarray*}
	(\mathbf q_i + \mathbf h_i) W_i &=& 
	\sum_{\mathbf a \in A_i}
	(q_{i \mathbf a}+ h_{i \mathbf a}) W_{i\mathbf a}
\\
	&=&
	\sum_{\mathbf a \in A_i^{\boldsymbol \xi_1}}
	(q_{i \mathbf a}+ h_{i \mathbf a}) W_{i\mathbf a}
\\
	&=&
	\frac{\sum_{\mathbf a \in A_i^{\boldsymbol \xi_1}}
	q_{i \mathbf a} V_{i\mathbf a}(\mathbf z) \thirdrev{+} g_i}{
		\sqrt{ \sum_{\mathbf a \in A_i^{\boldsymbol \xi_1}}
	|V_{i \mathbf a}(\mathbf z)|^2} }\\
	&=& 0.
\end{eqnarray*}

The norm of the perturbation can be estimated \changed{in terms of
the facet gap $\eta_i$:}
for each $\mathbf a' \in A_i \setminus
A_i^{\boldsymbol \xi_1}$, $\mathbf a'-\lambda_i(\mathbf x) \le -\eta_i H/n$,
so
\[
	|e^{\mathbf a' \mathbf z}| \le e^{-\eta_i H/n + \lambda_i(\mathbf x)} 
\]
and hence
\[
	|g_i| = \left| \sum_{\mathbf a' \in A_i \setminus A_i^{\boldsymbol \xi_1}}
q_{i{\mathbf a}'} V_{i \mathbf a'}(\mathbf z)  \right|
\le
\|\mathbf q_i\| \sqrt{\sum_{\mathbf a' \in A_i \setminus A_i^{\boldsymbol \xi_1}}
\rho_{i \mathbf a'}^2 }\, e^{-\eta_i H/n + \lambda_i(\mathbf x)} 
.
\]
\secrev{By construction, $\lambda_i(\mathbf x) \defeq \secrev{\max_{\mathbf a \in A_i} \mathbf a \mathbf x} = \mathbf a^* \mathbf x$ for some
vertex $\mathbf a^*$ of $A_i$. Necessarily from \eqref{where-max-attained},
$\mathbf a^* \in A_i^{\boldsymbol \xi}$.
Introducing $V_{i\mathbf a^*}(\mathbf z)$ in the equation above,} 
\begin{eqnarray*}
	|g_i| &\le& 
\|\mathbf q_i\| 
\frac{\sqrt{\sum_{\mathbf a' \in A_i \setminus A_i^{\boldsymbol \xi_1}}
\rho_{i \mathbf a'}^2 }}{\rho_{i \mathbf a^*}}
e^{-\eta_i H/n} 
|V_{i\mathbf a^*}(\mathbf z)|
\\
&\le&
\|\mathbf q_i\| 
\kappa_{\rho_i}
e^{-\eta_i H/n}  
\secrev{|V_{i\mathbf a^*}(\mathbf z)|}
\\
&\le&
\|\mathbf q_i\| 
\kappa_{\rho_i}
e^{-\eta_i H/n}  
\sqrt{\sum_{\mathbf a \in A_i^{\boldsymbol \xi_1}}|V_{i \mathbf a}(\mathbf z)|^2}
\end{eqnarray*}
\changed{where $\kappa_{\rho_i}$ is the distortion bound defined in 
\eqref{eq-rho}.} 
	It follows \secrev{from \eqref{h-on-g}} that 
\[
\frac{\| \mathbf h_i \|}{\| \mathbf q_i \|}
\le
\kappa_{\rho_i}
e^{-\eta_i H/n}
.
\]
\end{proof}

\subsection{On \secrev{strongly mixed support tuples}}

\begin{proof}[Proof of Lemma~\ref{tropical}]
	We prove first the equivalence between $(a)$ and $(b)$.	
\secrev{
Assume that the tuple $(A_1, \dots, A_n)$ is strongly mixed.
	Let $\boldsymbol \xi \in \thirdrev{\mathfrak R}$. By definition,
$\boldsymbol \xi$ does not belong to the tropical prevariety
of $(A_1, \dots, A_n)$. Hence there is $1 \le i \le n$ so that
$\boldsymbol \xi$ does not belong to tropical hypersuface 
associated to $A_i$. This means that $\max_{\mathbf a \in A_i}
\mathbf a \boldsymbol \xi$ is attained for a unique value of $\mathbf a$,
hence $\# A_i^{\boldsymbol \xi}=1$.

Reciprocally, assume that the tuple $(A_1, \dots, A_n)$ is
not strongly mixed. 
Then there is $\mathbf 0 \ne \mathbf x \in \mathbb R^n$ belonging
to the tropical prevariety of $(A_1, \dots, A_n)$. For all
$i$, $\mathbf x$ belongs to the tropical hypersurface of $A_i$.
This means that $\# A_i^{\mathbf x} \ge 2$ for all $i$. Let 
$B_i = A_i^{\mathbf x}$, by definition $\mathbf x \in C(B_1,
\dots, B_n)$. In particular, the closed cone
	$\bar C(B_1, \dots, B_n) \ne \{\mathbf 0\}$ is a cone of the fan
$\mathfrak F$. Let
	$c$ be a facet of 
	$\bar C(B_1, \dots, B_n)$ of minimal dimension, other than the
	facet $\{\mathbf 0\}$. Then $c$ has 
	dimension $1$. By Definition~\ref{defan},
	$c$ belongs to the fan $\mathfrak F$, so
$c \in \mathfrak F_0$ is of the form
	$\{t \boldsymbol \xi : t>0\}$ with $\boldsymbol \xi \in \thirdrev{\mathfrak R}$.
For all $i$, we have by construction that
$B_i \subseteq A_i^{\boldsymbol \xi}$. Thus, we found 
	$\boldsymbol \xi \in \mathfrak R$
	with the property that $\# A_i^{\boldsymbol \xi} > 1$
	for all $i$. Therefore, $\neg (a) \Rightarrow \neg (b)$.}

To show that $(b)$ implies $(c)$, we choose for each $\boldsymbol \xi
	\in \thirdrev{\mathfrak R}$ a pair $(i, \mathbf a^*)$ with $A_i^{\boldsymbol \xi} =
	\{ \mathbf a^* \}$. \secrev{Suppose by contradiction
	that $\mathbf q \in \Sigma^{\infty}$ 
	is such that 
	} $q_{i \mathbf a^*} \ne 0$
for all those pairs. In particular, for all 
	$\changed{\boldsymbol \xi} \in \thirdrev{\mathfrak R}$, $\mathbf q \not \in \Sigma^{\boldsymbol \xi}$.
We claim that $\mathbf q \in \Sigma^{\mathbf x}$ for some $\mathbf x$.
Indeed, there is some $[\mathbf V] \in \mathscr V \setminus [\mathbf V(\secrev{\mathscr M})]$
	with $\mathbf q_i \cdot 
	V_{A_i} = \mathbf 0$ for all $i$. 
	\secrev{There is also} a path \changed{$(\mathbf z(t))_{t \in (0,1]} \subseteq \secrev{\mathscr M}$} with $\changed{\lim_{t \rightarrow 0}} [\mathbf V(\mathbf z(t))] = [\mathbf V]$. 
By compacity of the sphere $S^{n-1}$, there is
an accumulation point 
$\mathbf x \in S^{n-1}$ of 
$\frac{ \Re(\mathbf z(t))}{\|\Re(\mathbf z(t))\|}$
	\changed{for $t \rightarrow 0$}, and
$[\mathbf V] \in \Sigma^{\mathbf x}$. Let $k$ be minimal so
	that $\mathbf x$ \changed{belongs} to a \secrev{closed cone $\bar c$} in $\mathfrak F_k$. \changed{Suppose also by contradiction that $k\ne 0$.}
	If $[\mathbf V] \in \Sigma^{\mathbf x}$, $[\mathbf V] \in \Sigma^{\boldsymbol \xi}$ for any $\boldsymbol \xi \in \secrev{\bar c}$.
In particular, this holds for $\mathbf 0 \ne \boldsymbol \xi \in \partial \bar c$, 
\changed{and $k \ne 0$ cannot possibly be minimal. Therefore, $k=0$ and \secrev{$\mathbf x$} \thirdrev{spans a ray of $\mathfrak F_0$, contradiction.
Thus, $(b) \Rightarrow (c)$.}}

Now suppose that \secrev{Condition} (b) does not hold for a certain 
$\boldsymbol \xi \thirdrev{\in \mathscr R}$. \secrev{This means that
$\# A_i^{\mathbf \xi} \ge 2$ for all $i$.}
	We \secrev{can choose $\mathbf q$ with} coefficients $q_{i \mathbf a} \changed{\ne 0}$ so that
\[
	\sum_{\mathbf a \in A_i^{\boldsymbol \xi}}\rho_{i\mathbf a} q_{i \mathbf a} = 0, \ i=1, \dots, n.
\]
	The value of the \changed{remaining $q_{i\mathbf a'}$,
	$\mathbf a'\not \in A_i^{\boldsymbol \xi}$,} is irrelevant. 
	Also, let $[\mathbf V]= \lim_{t \rightarrow \infty} [\mathbf V(t \boldsymbol \xi)]$. 
	We have
\[
	\mathbf q_i \frac{1}{\|V_{i}\|} V_{A_i} = 0.
\]
	Thus, \secrev{$\mathbf q \in \Sigma^{\infty}$} and $\neg (b)$ implies $\neg (c)$.
\end{proof}

\subsection{The variety of systems with solutions at toric infinity}

\secrev{In the previous section, we represented a ray by a unit vector $\boldsymbol \xi \in S^{n-1}$. In this section, we assume instead that a ray is always represented by an integer vector} $\mathbf 0 \ne \boldsymbol \xi \in \mathbb Z^n$ 
\secrev{with}
$\gcd(\xi_i)=1$.
\secrev{As before,} $\Sigma^{\boldsymbol \xi}$ \secrev{denotes} the Zariski closure of the
set of all 
$\mathbf q \in \mathscr F$ with a root at infinity in the
direction $\boldsymbol \xi$. This means that the overdetermined
system
\[
	\sum_{\mathbf a \in A_i^{\boldsymbol \xi}} q_{i\mathbf a} V_{i,\mathbf a}(\mathbf z)
\]
has a common root in $\boldsymbol \xi^{\perp} \subseteq \mathbb C^n$, possibly
\changed{`at infinity'}. More formally, for each $i$ we can write 
$\mathscr F_{A_i} = \mathscr F_{A_i^{\boldsymbol \xi}} \times \mathscr F_{A_i \setminus A_i^{\boldsymbol \xi}}$.
Let $\Sigma_0^{\boldsymbol \xi}$ be the subvariety of systems in
$\mathscr F_{A_1^{\boldsymbol \xi}} \times \cdots \times \mathscr F_{A_n^{\boldsymbol \xi}}$ that are
solvable in $\boldsymbol \xi^{\perp}$. Then
\[
\Sigma^{\boldsymbol \xi} = \Sigma_0^{\boldsymbol \xi} \times  \mathscr F_{A_1 \setminus A_1^{\boldsymbol \xi}}
\times \cdots \times  \mathscr F_{A_n \setminus A_n^{\boldsymbol \xi}}
.
\]
It follows that
\[
	\codim (\Sigma^{\boldsymbol \xi}) = 	\codim (\Sigma_0^{\boldsymbol \xi}) 
\]
both as subvarieties of a linear space or as subvarieties of a projective space.

\ocite{Pedersen-Sturmfels} and \ocite{Sturmfels}*{Lemma 1.1} proved that the 
closure of the locus
of sparse overdetermined systems with a common root is an irreducible
variety in the product of the projectivizations of the coefficient spaces.
This variety is defined over the rationals. In the setting of this paper,
this implies that whenever $\boldsymbol \xi$ \thirdrev{spans a ray of} $\mathfrak F_0$, 
$\Sigma_0^{\boldsymbol \xi}$ is an irreducible variety in
\[
	\mathbb P( \mathbb C^{\#A_1^{\boldsymbol \xi}}) 
	\times \cdots \times
	\mathbb P( \mathbb C^{\#A_n^{\boldsymbol \xi}}) 
\]
with rational coefficients. The same is trivially true for $\Sigma^{\boldsymbol \xi}$.

\changed{When $\Sigma^{\boldsymbol \xi}$ has codimension one, its defining
polynomial is known as the {\em mixed resultant}, and its degree is known
\cite{GKZ}*{Proposition 1.6}. But in order to apply this bound, one needs to
check condition (1) p.252 in \cite{GKZ} which assures codimension 1. 
In our setting, this condition 
requires each of the $A_i^{\boldsymbol \xi}$ to \secrev{span an $n-1$-dimensional affine
space}.  
The original definition by 
\ocite{Sturmfels} avoids this restriction, but sets the mixed resultant
equal to one if the codimension is more than one.}

\changed{We shall see in example ~\ref{ex-resultant} below
that $\Sigma^{\boldsymbol \xi}$ may have arbitrary codimension between $1$
and $n-1$. Therefore,
the locus $\Sigma^{\infty}$ of systems with a root at toric infinity may
have components of codimension $1$ to $n-1$.}

\changed{This is why we need to proceed differently in this paper.} 
We do not need to find
the sparse resultant ideal $I(\Sigma^{\boldsymbol \xi})$ but just
a non-zero polynomial in it. If there is $i$ with $\#A_i^{\boldsymbol \xi} = 1$ for instance, the variety $\Sigma^{\boldsymbol \xi}$ is contained
into the hyperplane $q_{i\mathbf a} = 0$. This will be
enough to prove Theorem
~\ref{degenerate-locus} and to derive probabilistic complexity bounds.
Item (b) of the Theorem follows trivially from Lemma~\ref{tropical}(c). 
We assume from now on the
hypothesis $\#A_i^{\boldsymbol \xi} \ge 2$.

If $I \subseteq [n]$, then we denote by
$\Lambda_I$ the lattice spanned by $\bigcup_{i \in I}(A_i - A_i)$, and by $\Lambda_I^{\boldsymbol \xi}$ the lattice spanned by $\bigcup_{i \in I}(A_i^{\boldsymbol \xi} - A_i^{\boldsymbol \xi})$. 
The variety \changed{$\Sigma_0^{\boldsymbol \xi}$} is the variety of {\em solvable systems} with support $(A_i^{\boldsymbol \xi})_{i=1, \dots, n}$.

\ocite{Sturmfels}*{Theorem 1.1} computed the codimension of the variety of solvable systems.
In the particular case of $\Sigma_0^{\boldsymbol \xi}$, his bound reads:

\begin{theorem}\cite{Sturmfels}\label{Sturmfels}
\[
\mathrm{codim}(\Sigma_0^{\boldsymbol \xi}) = 
\max_{I \subseteq [n]} \left( \#I - \mathrm{rank}( \Lambda_I^{\boldsymbol \xi} ) \right)
.
\]
\end{theorem}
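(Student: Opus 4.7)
The plan is to reduce the statement to a dimension computation on an incidence variety and then bound fiber dimensions using a Hall–Rado style matroid argument on the lattices $\Lambda_I^{\boldsymbol \xi}$.

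First I would set the stage by working on the quotient torus. Since $\gcd(\xi_i)=1$, the one-parameter subgroup generated by $\boldsymbol \xi$ cuts out an $(n-1)$-dimensional quotient torus $T = (\mathbb C^*)^n/\langle \boldsymbol \xi\rangle$, and each truncation $f_i$ with support $A_i^{\boldsymbol \xi}$ descends to a Laurent polynomial on $T$ (up to multiplication by a character). Let $\mathscr F' = \mathscr F_{A_1^{\boldsymbol \xi}} \times \cdots \times \mathscr F_{A_n^{\boldsymbol \xi}}$ and form the incidence
\[
W \;=\; \{(\mathbf f, \mathbf t)\in \mathscr F'\times T : f_i(\mathbf t)=0,\ i=1,\dots,n\},
\]
with its projections $\pi_1: W\to \mathscr F'$ and $\pi_2:W\to T$. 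By definition $\overline{\pi_1(W)}=\Sigma_0^{\boldsymbol \xi}$, and for each fixed $\mathbf t$ the fiber $\pi_2^{-1}(\mathbf t)$ is the intersection of $n$ independent affine hyperplanes in $\mathscr F'$, hence has codimension exactly $n$.

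For the lower bound on $\mathrm{codim}(\Sigma_0^{\boldsymbol \xi})$ I would fix $I\subseteq[n]$ achieving the maximum $d=\#I-\mathrm{rank}(\Lambda_I^{\boldsymbol \xi})$. The subsystem $(f_i)_{i\in I}$ factors through a quotient torus $T_I$ of dimension $r=\mathrm{rank}(\Lambda_I^{\boldsymbol \xi})$, so it is a system of $\#I=r+d$ Laurent equations on an $r$-torus. A parameter count on the analogous incidence for $I$ alone shows that the solvable locus in $\prod_{i\in I}\mathscr F_{A_i^{\boldsymbol \xi}}$ has codimension at least $d$; pulling this constraint back to $\mathscr F'$ via the forgetful projection proves $\mathrm{codim}(\Sigma_0^{\boldsymbol \xi})\ge d$.

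The upper bound is the substantive direction and the main obstacle. Here I would analyze the generic fiber of $\pi_1$ over an irreducible component of $\Sigma_0^{\boldsymbol \xi}$. Given a generic root $\mathbf t\in T$ of a generic $\mathbf f\in \Sigma_0^{\boldsymbol \xi}$, the idea is that if every subsystem $I$ has its $\#I$ equations $\mathrm{rank}(\Lambda_I^{\boldsymbol \xi})$-wise independent up to a deficit of at most $k$, then $n-k$ of the $n$ equations can be chosen to form a "system of distinct representatives" with respect to independent coordinate directions on $T$; the remaining $k$ equations then define the generic component of $\Sigma_0^{\boldsymbol \xi}$, which accordingly has codimension at most $k$. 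The combinatorial engine making this precise is Rado's theorem applied to the transversal matroid associated to the lattices $\{\Lambda_I^{\boldsymbol \xi}\}$, with the Hall-type condition reading exactly $\#I\le \mathrm{rank}(\Lambda_I^{\boldsymbol \xi})$. This is the step I expect to require the most care, since one must verify that the matroidal deficit from Rado's theorem genuinely computes the generic fiber dimension of $\pi_1$, and that irreducibility of $\Sigma_0^{\boldsymbol \xi}$ from Pedersen–Sturmfels lets us pass from the open generic locus to the closure without losing codimension.
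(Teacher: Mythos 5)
The paper does not prove this statement; it cites it directly as Sturmfels, \emph{On the Newton polytope of the resultant} (1994), Theorem~1.1, specialized to the supports $A_i^{\boldsymbol\xi}$. The sentence in the paper immediately preceding the theorem says exactly this, so there is no in-paper argument to compare against, and Sturmfels' original proof is the reference to consult.

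As for your sketch itself: the incidence-variety parameter count for the lower bound is sound, but the upper bound has two genuine gaps. Rado's deficiency theorem, applied to the matroid with rank function $I \mapsto \mathrm{rank}(\Lambda_I^{\boldsymbol\xi})$, does yield a set $J \subseteq [n]$ with $\#J = n-k$ (where $k$ is the maximal deficit) such that every $I \subseteq J$ satisfies $\#I \le \mathrm{rank}(\Lambda_I^{\boldsymbol\xi})$. You then need a Bernstein--Khovanskii type statement asserting that under precisely this Hall-type condition the subsystem $(f_i)_{i \in J}$ is generically nonempty on the $(n-1)$-torus with solution set of expected dimension $(n-1)-\#J$; you neither state nor cite such a result, and it is the substantive content that Sturmfels' proof supplies. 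The second gap is the passage from ``$J$-subsystem generically solvable'' to $\mathrm{codim}(\Sigma_0^{\boldsymbol\xi}) \le k$: you must show that the remaining $k$ equations impose exactly $k$ transverse linear conditions on their coefficient spaces over a generic solution, and that this survives the passage to closure; Pedersen--Sturmfels irreducibility ensures a single component but does not by itself control the generic fiber dimension of $\pi_1$. As written, this is a plausible plan of attack rather than a proof.
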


\begin{corollary}
\[
\mathrm{codim}(\Sigma^{\boldsymbol \xi}) = 
\max_{I \subseteq [n]} \left( \#I - \mathrm{rank}( \Lambda_I^{\boldsymbol \xi} ) \right)
.
\]
\end{corollary}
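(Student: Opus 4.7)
The plan is to observe that this corollary is essentially immediate from Theorem~\ref{Sturmfels} combined with a product-space identity already established in the preceding paragraphs. Recall that the author decomposed
\[
\Sigma^{\boldsymbol \xi} = \Sigma_0^{\boldsymbol \xi} \times  \mathscr F_{A_1 \setminus A_1^{\boldsymbol \xi}}
\times \cdots \times  \mathscr F_{A_n \setminus A_n^{\boldsymbol \xi}}
\]
where $\Sigma_0^{\boldsymbol \xi} \subseteq \mathscr F_{A_1^{\boldsymbol \xi}} \times \cdots \times \mathscr F_{A_n^{\boldsymbol \xi}}$ is the subvariety of overdetermined solvable systems, and noted that adjoining the linear factors $\mathscr F_{A_i \setminus A_i^{\boldsymbol \xi}}$ leaves the codimension invariant: $\codim(\Sigma^{\boldsymbol \xi}) = \codim(\Sigma_0^{\boldsymbol \xi})$.

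Given this identity, the proof reduces to a single line: apply Theorem~\ref{Sturmfels} to rewrite $\codim(\Sigma_0^{\boldsymbol \xi})$ as $\max_{I \subseteq [n]}(\#I - \mathrm{rank}(\Lambda_I^{\boldsymbol \xi}))$, and substitute into the identity above. There is no additional obstacle; the Sturmfels formula is stated for $\Sigma_0^{\boldsymbol \xi}$ precisely in the form needed, and the lattice $\Lambda_I^{\boldsymbol \xi}$ in the corollary is by definition the same lattice appearing in Theorem~\ref{Sturmfels}. Hence the corollary is a one-line consequence, and I would write it as such rather than elaborate further.
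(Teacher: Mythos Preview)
Your proposal is correct and matches the paper's own reasoning exactly: the corollary is stated immediately after Theorem~\ref{Sturmfels} with no proof given, precisely because it is the one-line combination of that theorem with the already-noted identity $\codim(\Sigma^{\boldsymbol \xi}) = \codim(\Sigma_0^{\boldsymbol \xi})$. There is nothing to add.
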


From this result it is easy to construct an example of supports with non-zero mixed volume
and
a variety $\Sigma^{\boldsymbol \xi}$ of large codimension. 

\begin{example}\label{ex-resultant}
Choose $\boldsymbol \xi = -\mathrm e_n$.
Let $A_n$ be the hypercube 
$A_n = \{ \mathbf a: a_i \in \{0, 1\}, i=1, \dots, n \}$.
Let $0 \le k < n$ be arbitrary,
and let $\Delta_k$ be the $k$-dimensional simplex,
\[
	\Delta_k =\{\mathbf 0, \mathrm e_1, \dots, \mathrm e_k\} \secrev{\ \subseteq \mathbb R^n }
\]
and set
\[
	A_1 = \dots = A_{n-1} = \Delta_k \cup (\mathrm e_n + A_n).
\]
	For all non-empty $I \subseteq [n]$, $\mathrm{rank} \Lambda_I^{\boldsymbol \xi} = k$
	and therefore the maximum \thirdrev{from Theorem~\ref{Sturmfels}} is attained for $I \thirdrev{=} [n]$.
	\thirdrev{Hence,} the codimension of $\Sigma^{\boldsymbol \xi}$ is precisely $n-k$,
which ranges between $1$ and $n-1$.
\end{example}

\begin{theorem}\label{sigma-xi-general}
Let $\Lambda$ be the lattice 
spanned by $\bigcup_i A_i - A_i$ 
	and let $\mathcal A = \conv{A_1} + \dots + \conv{A_n}$.
	Let $\boldsymbol \xi$ span a ray of $\mathfrak F_0$.
Then the variety 
	$\Sigma^{\boldsymbol \xi}$ is contained in some \secrev{hypersurface} of the form $Z(p)$, where $p$ is an irreducible polynomial
	of degree at most 
\[
	d \le \changed{\frac{\max_i \diam{\mathcal A_i}}{2 \det \Lambda}}
	\max_{0 \le k \le n-1} 
	(\thirdrev{e^k v_k})
,
\]
	\secrev{where the coefficients $v_k$ are defined 
	in equations \eqref{v-poly} and \eqref{v-k}.}

	In the strongly mixed case, $p$ \secrev{can be taken to be} linear.

	In the unmixed case $A_1 = A_2 = \dots = A_n$, or if there is
some $A$ with $A_1 = d_1 A$, \dots, $A_n = d_n A$, then $p$
is an irreducible polynomial of degree at most
\[
	d \le \changed{\frac{\max_i \diam{\mathcal A_i}}{2 \det \Lambda}}
	\secrev{V'}
,
\]
	\secrev{where $V'=v_0$ is the mixed area.}
\end{theorem}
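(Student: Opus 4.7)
The strongly mixed case follows immediately from Lemma~\ref{tropical}(c): for each ray representative $\boldsymbol \xi \in \mathfrak R$ there is some index $i$ with $A_i^{\boldsymbol \xi} = \{\mathbf a^*\}$. Extracting the dominant exponential in direction $\boldsymbol \xi$ from the $i$-th equation forces $q_{i\mathbf a^*} = 0$ on $\Sigma^{\boldsymbol \xi}$, so $p \defeq q_{i\mathbf a^*}$ is a linear witness, giving $d \le 1$.

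For the remaining cases I would assume $\#A_i^{\boldsymbol \xi} \ge 2$ for every $i$. All faces $A_i^{\boldsymbol \xi}$ lie on parallel affine hyperplanes orthogonal to $\boldsymbol \xi$, so the restricted system $\mathbf q_i \cdot V_{A_i^{\boldsymbol \xi}}(\mathbf z) = 0$ is an $n$-equation, $(n-1)$-variable overdetermined sparse system in $\mathbb C^n / \mathbb C \boldsymbol \xi$ whose solvability locus is $\Sigma_0^{\boldsymbol \xi}$. When $\Sigma^{\boldsymbol \xi}$ has codimension one, I would bound its defining degree via a Bernstein root-count in the $(n-1)$-torus: fixing $\mathbf q_j$ generically for $j \ne i$, the subsystem indexed by $j \ne i$ has at most
\[
N_i \defeq \frac{(n-1)!\, V_{n-1}(\pi\mathcal A_1^{\boldsymbol \xi},\ldots,\widehat{\pi\mathcal A_i^{\boldsymbol \xi}},\ldots,\pi\mathcal A_n^{\boldsymbol \xi})}{\det \Lambda_{[n]\setminus\{i\}}^{\boldsymbol \xi}}
\]
common roots, where $\pi$ denotes orthogonal projection onto $\boldsymbol \xi^\perp$; each root imposes one linear condition on $\mathbf q_i$ through the $i$-th equation. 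Therefore the irreducible polynomial $p$ has degree at most $N_i$ in each coordinate block $\mathbf q_i$, and total degree at most $\sum_i N_i$.

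The Cayley trick rewrites each $N_i$ as an $n$-dimensional mixed volume with the $i$-th slot replaced by the segment $[\mathbf 0, \mathbf u_{\boldsymbol \xi}]$, where $\mathbf u_{\boldsymbol \xi}$ is the primitive integer vector spanning the ray. Enlarging $A_i^{\boldsymbol \xi}$ to $\mathcal A_i$ via monotonicity and bounding $[\mathbf 0, \mathbf u_{\boldsymbol \xi}]$ above by $\tfrac{1}{2}(\max_i \diam \mathcal A_i)\, B^n$, the sum $\sum_i N_i$ telescopes into the mixed area $V'$ with prefactor $\tfrac{\max_i \diam \mathcal A_i}{2 \det \Lambda}$, yielding the unmixed/proportional bound directly (all projected supports coincide, so the collapse is immediate). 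In the fully general case $\Sigma^{\boldsymbol \xi}$ may have codimension exceeding one (example~\ref{ex-resultant}); here I would stratify by $k = \mathrm{rank}\, \Lambda_{[n]\setminus\{i\}}^{\boldsymbol \xi}$, perform a generic affine slice of $\mathscr F$ of appropriate codimension to force codimension one, and extract the degree of the resulting hypersurface from the $k$-th coefficient of the generating polynomial $v(t)$ via a Cauchy estimate on the circle of radius $1/e$, producing the factor $\max_k (e^k v_k)$. The hard part will be reconciling the lattice bookkeeping---relating $\det \Lambda_{[n]\setminus\{i\}}^{\boldsymbol \xi}$ and $\|\mathbf u_{\boldsymbol \xi}\|$ back to $\det \Lambda$---with the codimension-reduction step in a manner uniform over all rays $\boldsymbol \xi \in \mathfrak R$, so that the final bound depends only on the global invariants $v_k$ and $\max_i \diam \mathcal A_i$.
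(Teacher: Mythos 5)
Your handling of the strongly mixed case is correct and coincides with the paper's (it is just Lemma~\ref{tropical}(c)). Your treatment of the codimension-one case is a genuinely different route: you invoke the Gelfand--Kapranov--Zelevinsky / Sturmfels degree formula for the sparse resultant (the degree in the coefficient block $\mathbf q_i$ equals the Bernstein number for the subsystem omitting $i$), whereas the paper counts intersections of the solvability locus with a generic pencil $t\mathbf f + \mathbf g$ after inserting $t$ as a new exponential variable. Both are legitimate; the paper's pencil argument has the advantage of being self-contained (it reduces directly to Theorem~\ref{BKK2}), while yours leans on the resultant-degree formula from \cite{GKZ}, which requires the very codimension-one hypothesis you are trying to establish.

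The gap is in the general (possibly higher-codimension) case, and it is substantial. First, the statement requires a \emph{hypersurface} $Z(p)$ containing $\Sigma^{\boldsymbol \xi}$, not the degree of $\Sigma^{\boldsymbol \xi}$ as a variety. Your proposal to ``perform a generic affine slice of $\mathscr F$ of appropriate codimension to force codimension one'' produces a curve or subvariety whose degree reflects $\deg \Sigma^{\boldsymbol \xi}$, but it does not manufacture a polynomial $p$ of bounded degree vanishing on all of $\Sigma^{\boldsymbol \xi}$; when $\codim \Sigma^{\boldsymbol \xi} > 1$ the sparse-resultant formula you rely on returns the trivial resultant $1$ (this is precisely Sturmfels' convention, cited in the paper). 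Second, the claim that a ``Cauchy estimate on the circle of radius $1/e$'' produces the factor $\max_k(e^k v_k)$ is not a derivation: a Cauchy estimate bounds the coefficients $v_k/k!$ by $\sup_{|t|=1/e}|v(t)|\cdot e^k$, which goes in the wrong direction and introduces a $\sup|v|$ that does not appear in the statement. The key idea you are missing, and which sidesteps both problems, is the paper's choice of a \emph{minimal} subset $I \subseteq [n]$ with $\#I - \mathrm{rank}(\Lambda_I^{\boldsymbol\xi}) = 1$ (such an $I$ always exists: rank defect changes by $0$ or $1$ when removing a single index, and $I = [n]$ has defect $\ge 1$). For that $I$, the restriction of the system to the indices in $I$ has a genuine codimension-one solvability locus in the appropriate quotient lattice; pulling it back gives the hypersurface $Z(p)$. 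The factor $e^k$ with $k = n - \#I$ then arises not from a Cauchy estimate but from the Stirling bound $k^k \le e^k$ applied when replacing a fundamental domain of the complementary lattice by $k\mathcal A$, and the prefactor $\frac{\max_i\diam{\mathcal A_i}}{2\det\Lambda}$ arises from the lattice-index bookkeeping $\det\Lambda \le \det(\Lambda_I^{\boldsymbol\xi})\cdot\vol(C')\cdot\frac{\min|\mathbf b\boldsymbol\xi|}{\|\boldsymbol\xi\|}$ combined with Lemma~\ref{mixed-elementary} --- precisely the bookkeeping you flag as unresolved.
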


\secrev{The proof of Theorem~\ref{sigma-xi-general} will use the
Lemma~\ref{mixed-elementary} below. We will need to extend
Definition~\ref{def-mixed} to compact convex subsets of a lower dimensional
space.
\begin{definition}\label{def-mixed2} 
Let $\mathcal A_1, \dots, \mathcal A_{\changed{j}} \subseteq \mathbb R^n$
be bounded convex sets with joint affine span $W$, and assume that
$W$ is $j$-dimensional. Let $\vol_W$ denote the ordinary volume in $W$.
	The {\em mixed volume} of 
$(\mathcal A_1,$ $\dots, \mathcal A_j)$ 
in $W$ is
\[
	\glsdisp{MixedVolume2}{V_W(\mathcal A_1, \dots, \mathcal A_j)}
\defeq
\frac{1}{j!}\ 
\frac{\partial^j}
{\partial t_1 \partial t_2 \cdots \partial t_j}
\
	\vol_W(t_1 \mathcal A_1 + \cdots + t_j \mathcal A_j)
\]
where $t_1, \dots, t_j \ge 0$ and the derivative is taken at
$t_1=\dots=t_j=0$. 
\end{definition}
}

\changed{
\begin{lemma}\label{mixed-elementary}
Let $\mathcal A_1, \dots, \mathcal A_{\changed{j}} \subseteq \mathbb R^n$
	be bounded convex sets with joint affine span $W$\secrev{, assumed
	to be $j$-dimensional}. Let
$\mathbf u_{{\changed{j}}+1}, \dots, \mathbf u_n$ be vectors in $W^{\perp}
\subseteq \mathbb R^n$ and let
$C=[\mathbf 0 \mathbf u_{{\changed{j}}+1}] + \dots + [\mathbf 0\mathbf u_n]$. Then,
\[
{\changed{j}}!\, V_W (\mathcal A_1, \dots, \mathcal A_{\changed{j}})
= 
	\frac{n!\, \secrev{V}(\mathcal A_1, \dots,
\mathcal A_{\changed{j}}, [\mathbf 0 \mathbf u_{{\changed{j}}+1}], \dots, [\mathbf 0 \mathbf u_n])}
	{\vol_{W^{\perp}}(C)}
\]
\end{lemma}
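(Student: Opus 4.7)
The plan is to reduce the assertion to a direct product decomposition of the Minkowski sum and then extract the right mixed coefficient from both sides of the polynomial identity afforded by Definition~\ref{def-mixed} and Definition~\ref{def-mixed2}.

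First, by the translation invariance of both mixed volumes (Remark~\ref{properties-mixed-volume}), I would shift each $\mathcal{A}_i$ so that $W$ becomes a linear subspace, i.e. I may assume the affine span of $\mathcal{A}_1\cup\cdots\cup\mathcal{A}_j$ coincides with the $j$-dimensional linear subspace $W$. Under this reduction, $\mathcal{A}_i\subseteq W$ for every $i\le j$, while $\mathbf u_{j+1},\dots,\mathbf u_n\in W^\perp$ by hypothesis. Consequently, for any non-negative reals $t_1,\dots,t_j,s_{j+1},\dots,s_n$, the Minkowski sum
\[
K(\mathbf t,\mathbf s)\;=\;t_1\mathcal A_1+\dots+t_j\mathcal A_j+s_{j+1}[\mathbf 0\mathbf u_{j+1}]+\dots+s_n[\mathbf 0\mathbf u_n]
\]
decomposes as an orthogonal direct sum of a subset of $W$ and a subset of $W^\perp$. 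Fubini's theorem then gives
\[
\vol_n\bigl(K(\mathbf t,\mathbf s)\bigr)\;=\;\vol_W\!\bigl(t_1\mathcal A_1+\dots+t_j\mathcal A_j\bigr)\;\cdot\;\vol_{W^\perp}\!\bigl(s_{j+1}[\mathbf 0\mathbf u_{j+1}]+\dots+s_n[\mathbf 0\mathbf u_n]\bigr).
\]

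Next, I would expand each factor as a polynomial. The first factor is the Steiner polynomial of $(\mathcal A_1,\dots,\mathcal A_j)$ in $W$, so by Definition~\ref{def-mixed2} the coefficient of $t_1 t_2\cdots t_j$ equals $j!\,V_W(\mathcal A_1,\dots,\mathcal A_j)$. For the second factor, the Minkowski sum of the segments $s_k[\mathbf 0\mathbf u_k]$ is the parallelepiped in $W^\perp$ with edge vectors $s_k\mathbf u_k$, whose $(n-j)$-dimensional volume is $s_{j+1}\cdots s_n\cdot\vol_{W^\perp}(C)$. Hence the coefficient of $s_{j+1}\cdots s_n$ is exactly $\vol_{W^\perp}(C)$. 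Multiplying the two expansions, the coefficient of the monomial $t_1\cdots t_j s_{j+1}\cdots s_n$ on the right-hand side is
\[
j!\,V_W(\mathcal A_1,\dots,\mathcal A_j)\cdot\vol_{W^\perp}(C).
\]

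On the other hand, Definition~\ref{def-mixed} applied to the $n$-tuple $(\mathcal A_1,\dots,\mathcal A_j,[\mathbf 0\mathbf u_{j+1}],\dots,[\mathbf 0\mathbf u_n])$ identifies the coefficient of $t_1\cdots t_j s_{j+1}\cdots s_n$ in $\vol_n(K(\mathbf t,\mathbf s))$ as $n!\,V(\mathcal A_1,\dots,\mathcal A_j,[\mathbf 0\mathbf u_{j+1}],\dots,[\mathbf 0\mathbf u_n])$. Equating the two coefficients gives the claimed formula. The main (very minor) obstacle is ensuring the Fubini step is legitimate, which is why I first invoke translation invariance to place the $\mathcal A_i$ inside the linear subspace $W$ so that the orthogonal decomposition of $K(\mathbf t,\mathbf s)$ is genuine and not merely affine.
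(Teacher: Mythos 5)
Your proof is correct and follows essentially the same route as the paper's: both hinge on the orthogonal Minkowski decomposition $\vol_n(K) = \vol_W(\cdot)\cdot\vol_{W^\perp}(\cdot)$ and then read off the appropriate multi-linear coefficient, which the paper phrases as a mixed partial derivative while you phrase it as extracting the coefficient of $t_1\cdots t_j\, s_{j+1}\cdots s_n$ — these are the same computation. Your explicit translation step to make $W$ a linear subspace is a sensible precaution that the paper leaves implicit.
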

}

\changed{\begin{proof}[Proof of Lemma~\ref{mixed-elementary}]
From Definition \ref{def-mixed},
\begin{eqnarray*}
	V &\defeq & n!\, 
	\secrev{V}(\mathcal A_1, \dots,
	\mathcal A_{\changed{j}}, [\mathbf 0 \mathbf u_{{\changed{j}}+1}], \dots, [\mathbf 0 \mathbf u_n])
	\\&=&
\frac{\partial^n}
{\partial t_1 \partial t_2 \cdots \partial t_n}
\
\vol_n \left(t_1 \mathcal A_1 + \dots + 
	t_{\changed{j}}\mathcal A_{\changed{j}} + t_{{\changed{j}}+1} [\mathbf 0 \mathbf u_{{\changed{j}}+1}] + \dots + t_n [\mathbf 0 \mathbf u_n]\right)
\end{eqnarray*}
with $t_1, \dots, t_n \ge 0$ and the derivative taken at
$t_1=\dots=t_n=0$. By orthogonality,
\begin{eqnarray*}
V &=& 
\frac{\partial^n}
{\partial t_1 \cdots \partial t_n}
\
	\secrev{\vol_W} \left(t_1 \mathcal A_1 + \dots + 
	t_{\changed{j}}\mathcal A_{\changed{j}}\right) \\
	& & \hspace{6em} \secrev{\times} \secrev{\vol_{W^{\perp}}} (t_{{\changed{j}}+1} [\mathbf 0 \mathbf u_{{\changed{j}}+1}] + \dots + t_n [\mathbf 0 \mathbf u_n])))\\
&=& 
\frac{\partial^{\changed{j}}}
{\partial t_1 \cdots \partial t_{\changed{j}}}
\
	\left(\vol_W \left(t_1 \mathcal A_1 + \dots + 
	t_{\changed{j}}\mathcal A_{\changed{j}}\right) 
	\right) 
	\\ &&\hspace{6em}  \times \left(
	\frac{\partial^{n-{\changed{j}}}}
	{\partial t_{{\changed{j}}+1} \cdots \partial t_n}
\
	\vol_{W^{\perp}} \left(
	t_{{\changed{j}}+1} [\mathbf 0 \mathbf u_{{\changed{j}}+1}] + \dots + t_n [\mathbf 0 \mathbf u_n]\right)\right)\\
\\
	&=&
{\changed{j}}!\,V_W(\mathcal A_1, \dots, \mathcal A_{\changed{j}})
	\ \vol_{W^{\perp}}(C).
\end{eqnarray*}
\end{proof}}

\begin{proof}[\changed{Proof of Theorem~\ref{sigma-xi-general}}] 
Let $I$ be minimal such that
\[
	\#I - \mathrm{rank}( \Lambda_I^{\boldsymbol \xi} )=1 < \mathrm{codim} \Sigma^{\boldsymbol \xi}
.
\]
	In the unmixed case, we suppose that $\mathrm{rank}( \Lambda_I^{\boldsymbol \xi})=n-1$ for all $I$, so the only possible choice is $I=[n]$.
The strongly mixed case has minimal $I$ with $\#I = 1$ and $p$ is therefore linear.
We consider the general case now. In order to simplify notations, we reorder the supports so that
$I=[{\changed{j}}+1]$ where ${\changed{j}}=\mathrm{rank}( \Lambda_I^{\boldsymbol \xi} )$.
Suppose that $\mathbf q \in \Sigma^{\boldsymbol \xi}$. In that case,
there is $\mathbf z$ such that
\begin{equation}\label{system-full}
	\sum_{\mathbf a \in A_i^{\boldsymbol \xi}} q_{i \mathbf a} V_{i \mathbf a}(\mathbf z) = 0, \hspace{4em} 1 \le i \le n
\end{equation}
and in particular the subsystem 
\begin{equation}\label{system-small}
	\sum_{\mathbf a \in A_i^{\boldsymbol \xi}} q_{i \mathbf a} V_{i \mathbf a}(\mathbf z) = 0, \hspace{4em} 1 \le i \le {\changed{j}}+1 
\end{equation}
admits a solution $\mathbf z \in \mathbb C^n$. 
Let $Z$ be the complex linear span of $\Lambda_I^{\boldsymbol \xi}$.
Let $Z'$ be the smallest complex space containing $Z$ and $\boldsymbol \xi$.
For later usage, let $Z_{\mathbb R}$ be the real linear
span of $\Lambda_I^{\boldsymbol \xi}$ and let $Z_{\mathbb R}'$ be
the smallest real space containing $Z_{\mathbb R}$ and $\boldsymbol \xi$.
Without loss of generality, we can assume that $\mathbf z \in Z$.

	\secrev{It turns out that the set of systems 
$\mathbf q$ for which 
 \eqref{system-small} admits a solution $\mathbf z \in Z$ is a hypersurface
of degree $d$, related to some mixed resultant (See Remark~\ref{mixed-resultant} below). 
But it is more convenient to prodceed by 
direct computation: 
	the degree $d$ of this algebraic surface
	is by definition the generic number of intersections with a generic line, that is
	} the 
number of values $t \in \mathbb C$ for which
	the system \secrev{with generic coefficients} below
	admits a solution $\mathbf z \in Z$:
	\begin{equation}\label{aug-t}
t \sum_{\mathbf a \in A_i^{\boldsymbol \xi}} f_{i \mathbf a}
\rho_{i, \mathbf a} e^{\mathbf a  
\mathbf z}
+
\sum_{\mathbf a \in A_i^{\boldsymbol \xi}} g_{i \mathbf a}
\rho_{i, \mathbf a} e^{\mathbf a \mathbf z}
= 0,
\
1 \le i \le {\changed{j}}+1
.
	\end{equation}
	We \secrev{assumed} $\mathbf f$ and $\mathbf g$ generic, \secrev{hence} there are no
	solutions for $t=0$ or for $t=\infty$. \secrev{Recall that for $\mathbf a \in A_i^{\boldsymbol \xi}$, we have $\mathbf a \boldsymbol \xi = 
	\lambda_i(\boldsymbol \xi)$.} If we set 
	$t= e^{\|\boldsymbol \xi\|^{-2}s}$ \secrev{and divide by $e^{-\|\boldsymbol \xi\|^{-2}s\lambda_i(\boldsymbol \xi)}$} , we recover a solution
$\mathbf w = \mathbf z + \|\boldsymbol \xi\|^{-2} s \boldsymbol \xi \in Z'$ 
for the system
\begin{equation}\label{augmented}
\sum_{\mathbf a \in A_i^{\boldsymbol \xi}} f_{i \mathbf a}
\rho_{i, \mathbf a} e^{(\mathbf a + 
\boldsymbol \xi^T)\mathbf w}
+
\sum_{\mathbf a \in A_i^{\boldsymbol \xi}} g_{i \mathbf a}
\rho_{i, \mathbf a} e^{\mathbf a \mathbf w}
= 0,
	\hspace{1em}
1 \le i \le {\changed{j}}+1
.
\end{equation}
Let $\Lambda'$ be the lattice spanned by $A_1^{\boldsymbol \xi}-
A_1^{\boldsymbol \xi}, \dots, A_{{\changed{j}}+1}^{\boldsymbol \xi}-
A_{{\changed{j}}+1}^{\boldsymbol \xi}$, and $\{\boldsymbol \xi^T\}$. 
The dual lattice of $\Lambda'$ is $\left(\Lambda^{\boldsymbol \xi}_{[{\changed{j}}+1]}\right)^*
+ \left \{ \frac{k}{\|\boldsymbol \xi\|^2} \boldsymbol \xi^T: k \in \mathbb Z \right \}$.
For each solution of the system \eqref{augmented} in 
$\mathbb C^n \mod 2 \pi \sqrt{-1}\, (\Lambda')^*$ there is at most one 
complex value of $t\changed{=e^{\|\boldsymbol \xi\|^{-2} s \boldsymbol \xi}}$ for which 
the system \eqref{aug-t} has a solution \changed{$\mathbf z$} in $\mathbb C^n \mod 2 \pi \sqrt{-1} \left(\Lambda^{\boldsymbol \xi}_{[{\changed{j}}+1]}\right)^*$.

We scaled $\boldsymbol \xi$ so that $\boldsymbol \xi \in \mathbb Z^n$ 
and the system \eqref{augmented} is an exponential sum with
integer coefficients.
According to Theorem~\ref{BKK2}, its generic number of solutions 
in $Z'$ is precisely

\begin{equation}\label{value-of-D}
	\changed{d}
	= \changed{({\changed{j}}+1)} !\ \frac{	V_{Z_{\mathbb R}'}\left( 
	\conv{A_1^{\boldsymbol \xi}}+[\mathbf 0, 
\boldsymbol \xi^T],
	\dots, 
	\conv{A_{{\changed{j}}+1}^{\boldsymbol \xi}}+[\mathbf 0, 
\boldsymbol \xi^T]\right)
	}{
		\| \boldsymbol \xi\| \det \Lambda_{I}^{\boldsymbol \xi}}
\end{equation}
using the identity 
$\det \Lambda'= \| \boldsymbol \xi\| \det \Lambda_{I}^{\boldsymbol \xi}$.
In the formula above, $V_{Z_{\mathbb R}'}$ denotes the mixed volume 
restricted to the ${\changed{j}}+1$-dimensional space $Z_{\mathbb R}'$. 

We assumed that $\Lambda$ was an $n$-dimensional lattice, and that
$\boldsymbol \xi$ spanned a ray of $\mathfrak F_0$. These hypotheses imply that 
	$\boldsymbol \xi$ is orthogonal to \changed{the} sublattice $
\changed{\Lambda \cap \boldsymbol \xi^{\perp}} \subseteq \Lambda$ of rank $n-1$, and we have the inclusion
\[
\Lambda_I^{\boldsymbol \xi} 
\subseteq 
\Lambda \cap Z_{\mathbb R}
\subseteq 
\Lambda \cap \boldsymbol \xi^{\perp}
\subseteq \Lambda .
\]
The first inclusion is equidimensional.
There is a lattice basis $(\mathbf u_1, \dots, \mathbf u_n)$
of $\Lambda$ with the following properties:
$\mathbf u_1, \dots, \mathbf u_{\changed{j}} \in \Lambda \cap Z_{\mathbb R}$
and $\mathbf u_{{\changed{j}}+1}, \dots, \mathbf u_{n-1} \in \Lambda \cap 
\boldsymbol \xi^{\perp}$.

Let $C_k=[\mathbf 0 \mathbf u_{1}] + \dots + [\mathbf 0 \mathbf u_{k}]$. In particular,
$C_n$ is a fundamental domain of $\Lambda$ and $C_{n-1}$ is a
a fundamental domain of $\Lambda \cap \boldsymbol \xi^{\perp}$. We denote by
$\pi$ the orthogonal projection onto $(Z_{\mathbb R}')^{\perp}$.
In particular, $C'=\pi(C_{n-1})= [\mathbf 0, \pi(\mathbf u_{{\changed{j}}+1})] + \dots + [\mathbf 0, \pi(\mathbf u_{n-1})]$ is
a fundamental domain of $\pi(\Lambda) \subseteq (Z_{\mathbb R}')^{\perp}$. 
\changed{From the lattice inclusions,
\begin{eqnarray*}
	\det(\Lambda) &=& 
\det (\Lambda \cap \boldsymbol \xi^{\perp})\,
\frac{\min_{ \mathbf b \in \Lambda, \mathbf b \boldsymbol \xi \ne \mathbf 0} |\mathbf b \boldsymbol \xi|}{\| \boldsymbol \xi\|}
\\
&=&
	\det (\Lambda \cap Z_{\mathbb R}) \,\vol_{(Z_{\mathbb R}')^{\perp}}(C')
\, 
\frac{\min_{ \mathbf b \in \Lambda, \mathbf b \boldsymbol \xi \ne \mathbf 0} |\mathbf b \boldsymbol \xi|}{\| \boldsymbol \xi\|}
\\
&\le&
	\det (\Lambda_I^{\boldsymbol \xi})\,  \vol_{(Z_{\mathbb R}')^{\perp}}(C')
\, 
\frac{\min_{ \mathbf b \in \Lambda, \mathbf b \boldsymbol \xi \ne \mathbf 0} |\mathbf b \boldsymbol \xi|}{\| \boldsymbol \xi\|}
.
\end{eqnarray*}
For some $i$, there must be $\mathbf a$, $\mathbf a' \in A_i$
so that $(\mathbf a - \mathbf a')\boldsymbol \xi \ne \mathbf 0$. 
Therefore,
\secrev{ there is $\mathbf b \in \Lambda$ with $\mathbf b \boldsymbol \xi \ne \mathbf 0$
and the minimum above is attained, with} 
$\min_{\secrev{\mathbf b \in \Lambda,}\mathbf b \boldsymbol \xi \ne \mathbf 0} |\mathbf b \boldsymbol \xi|
\le \|\boldsymbol \xi\| \max_i \diam{\mathcal A_i}$}
Equation \eqref{value-of-D} implies the bound:
\begin{equation}\label{value-of-D2}
	\changed{
\begin{split}
	d \le & 
	\max_i(\diam{\mathcal A_i})
	\, \vol_{(Z_{\mathbb R}')^{\perp}}(C') \,
	\\
	& \hspace{2em}
	\frac{	
	({\changed{j}}+1) !\, 
	V_{Z_{\mathbf R}'}\left( 
	\conv{A_1^{\boldsymbol \xi}}+[\mathbf 0, 
\boldsymbol \xi^T],
	\dots, 
	\conv{A_{{\changed{j}}+1}^{\boldsymbol \xi}}+[\mathbf 0, 
\boldsymbol \xi^T]\right)
	}{
	\|\boldsymbol \xi\| \,	\det \Lambda}
\end{split}
}
\end{equation}
The domain $C'$ is orthogonal to $\changed{Z_{\mathbb R}'} \supseteq \conv{A_i^{\boldsymbol \xi}},
[\mathbf 0 \boldsymbol \xi]$. Lemma~\ref{mixed-elementary} implies that
\[
\changed{
	\begin{split}
d
		\le \max_i(\diam{\mathcal A_i})	&\\
		&\hspace{-3em}
\times \frac{n!\, V\left( 
\conv{A_1^{\boldsymbol \xi}}+[\mathbf 0, 
		\boldsymbol \xi^{T}],
		\dots, 
\conv{A_{{\changed{j}}+1}^{\boldsymbol \xi}}+[\mathbf 0, 
	\boldsymbol \xi^{T}], 
	C', \dots, C'\right)}{\|\boldsymbol \xi\| \,\det \Lambda}.\end{split} }
\]
To simplify notations we assume now that $\mathbf 0 \in A_i$ for each $i$.
The sum $\mathcal A= \conv{A_{1}} + \dots + \conv{A_n}$ is
$n$-dimensional, so it admits a linearly independent set
of vectors $\mathbf w_l = \sum_i \mathbf a_{il}$
with $\mathbf a_{il} \in A_i, 1 \le j \le n$. At least $n-{\changed{j}}-1$
of the $\pi(\mathbf w_l)$ are linearly independent, say those are
$\pi(\mathbf w_{{\changed{j}}+1}), \dots, \pi(\mathbf w_{n-1})$. 
Since that $C'$ is a fundamental domain of $\pi(\Lambda)$,
$\vol_{(Z_{\mathbb R}')^{\perp}}(C') \le \vol_{(Z_{\mathbb R}')^{\perp}}(\pi(C''))$, for
\[
	C'' = [\mathbf 0 \mathbf w_{{\changed{j}}+1}] + \dots + [\mathbf 0 \mathbf w_{n-1}]
\]
and hence
\[
\changed{
	\begin{split}
		d
		\le \max_i(\diam{\mathcal A_i})&\\  
		& \hspace{-3em}\times
\frac{n!\, 
V\left( \conv{A_1^{\boldsymbol \xi}}+[\mathbf 0, \boldsymbol \xi^T], \dots, \conv{A_{{\changed{j}}+1}^{\boldsymbol \xi}}+[\mathbf 0, \boldsymbol \xi^T], C'', \dots, C''\right)}
	{\|\boldsymbol \xi\| \,\det \Lambda}.\end{split} }
\]
\secrev{Using the multi-linearity property (see Remark~\ref{properties-mixed-volume}),
the mixed volume above can be expanded with respect to $[0, \boldsymbol \xi^T]$.
The `constant' term
\[
V\left( \conv{A_1^{\boldsymbol \xi}}, \dots, \conv{A_{{\changed{j}}+1}^{\boldsymbol \xi}}, C'', \dots, C''\right)
\]
vanishes because all of its argument lie in affine planes orthogonal
to $\boldsymbol \xi$, so the volume in Definition~\ref{def-mixed} is uniformly
zero. `Quadratic' terms like
\[
	V\left( [0, \boldsymbol \xi^T], [0, \boldsymbol \xi^T], \conv{A_3^{\boldsymbol \xi}}, \dots, \conv{A_{{\changed{j}}+1}^{\boldsymbol \xi}}, C'', \dots, C''\right)
\]
also vanish because the volume in Definition~\ref{def-mixed} has vanishing term in $t_1 t_2 \dots t_n$. `Higher order' terms vanish for the same reason, and we are left with the linear terms. We can rewrite the previous bound as
\[
\begin{split}
d
\le \frac{ \max_i(\diam{\mathcal A_i}) n!}{\det \Lambda}
\, \partialat{\epsilon}{0}\
V\left( \conv{A_1^{\boldsymbol \xi}}+\epsilon[\mathbf 0, \frac{1}{\|\boldsymbol \xi\|}\boldsymbol \xi^T], \dots,\right.\\\left.
\conv{A_{{\changed{j}}+1}^{\boldsymbol \xi}}+ \epsilon [\mathbf 0, \frac{1}{\|\boldsymbol \xi\|}\boldsymbol \xi^T], C'', \dots, C''\right)
\end{split}
\]
}

By convexity, the simplex with vertices $\mathbf 0$, $w_{{\changed{j}}+1}$, \dots,
$w_{n-1}$ is contained in $\mathcal A$. It follows that
$C'' \subseteq (n-{\changed{j}}-1) \mathcal A$ and thus \secrev{by monotonicity,}
\secrev{
\[
\begin{split}
d
\le \frac{ \max_i(\diam{\mathcal A_i}) n!  \thirdrev{(n-j-1)^{n-j-1}}
	}{\det \Lambda}
\, \partialat{\epsilon}{0}\
V\left( \conv{A_1^{\boldsymbol \xi}}+\epsilon[\mathbf 0, \frac{1}{\|\boldsymbol \xi\|}\boldsymbol \xi^T], \dots,\right.\\\left.
\conv{A_{{\changed{j}}+1}^{\boldsymbol \xi}}+ \epsilon [\mathbf 0, \frac{1}{\|\boldsymbol \xi\|}\boldsymbol \xi^T], \mathcal A, \dots, \mathcal A\right).
\end{split}
\]
}
\secrev{Multi-linearity and monotonicity can be combined to replace
\secrev{$\conv{A_i^{\boldsymbol \xi}}$ by $\mathcal A_i \defeq \conv{A_i}$} and
$[\mathbf 0 \frac{1}{\|\boldsymbol \xi\|}\boldsymbol \xi^T]$ by $\frac{1}{2} B^n$.}
In the particular case ${\changed{j}}=n-1$, 
\[
\changed{
d
\le 
\frac{n! \, \max_i(\diam{\mathcal A_i})}  
{2 \det \Lambda}
\, \partialat{\epsilon}{0}\
V( 
\secrev{\mathcal A_1}+\epsilon B^n,
		\dots, 
\secrev{\mathcal A_{n}}+\epsilon B^n )
. }
\]
In the general case ${\changed{j}}<n-1$, let $k=n-j-1 > 0$,
\secrev{
\begin{eqnarray*}
d
	&\le&
\frac{n! \, \max_i(\diam{\mathcal A_i})}  
{2 \det \Lambda}
	\thirdrev{k^k}
\, \partialat{\epsilon}{0}\
V( 
\secrev{\mathcal A_1}+\epsilon B^n,
		\dots, 
\secrev{\mathcal A_{j+1}}+\epsilon B^n, \mathcal A, \dots, \mathcal A )
\\
&\le&
\frac{n! \, \max_i(\diam{\mathcal A_i})}  
{2 \det \Lambda}
	\frac{	\thirdrev{k^k}}{k!}
\, \partialat{\epsilon}{0}\
\partialhigh{k}{t}{0}\
V( 
\secrev{\mathcal A_1}+\epsilon B^n,
		\dots, 
\secrev{\mathcal A_{j+1}}+\epsilon B^n, \\
& & \hspace{25em}t \mathcal A, \dots, t \mathcal A )
\\
&\le&
\frac{n! \, \max_i(\diam{\mathcal A_i})}  
{2 \det \Lambda}
	\frac{	\thirdrev{k^k}}{k!}
\, \partialat{\epsilon}{0}\
\partialhigh{k}{t}{0}\
V( 
\secrev{\mathcal A_1}+\epsilon B^n + t \mathcal A,
		\dots,\\
& & \hspace{23em}
	\secrev{\mathcal A_{n}}+\epsilon B^n + t \mathcal A )
	,
\end{eqnarray*}
the last step by monotonicity. Since $k \ge 1$, we can 
using Stirling's formula to bound
 $k^k \le \frac{1}{\sqrt{2 \pi k}} e^k \le e^k$.

In both cases, using $k=n-{\changed{j}}-1$, we have
\[
d \le 
\frac{\max_i(\diam{\mathcal A_i})}{2 \det \Lambda} \thirdrev{e^k} v_k
.
\]}
\end{proof}

\secrev{
\begin{remark}\label{mixed-resultant} The hypersurface of all $\mathbf q$ so
		that  \eqref{system-small} admits a solution can be constructed as follows:
Let $\Omega$ be a $n \times j$ matrix whose columns are a basis for the lattice dual
	$(\Lambda_I^{\boldsymbol \xi})^*$. We can change variables so that $\mathbf z = 
	\Omega \mathbf y$ for $\mathbf y \in \mathbb C^j$. Equation \eqref{system-small}
is equivalent to
\begin{equation}\label{system-small-y}
	\sum_{\mathbf b \in A_i^{\boldsymbol \xi} \Omega} 
	q_{i \mathbf a} \rho_{i,\mathbf a} e^{\mathbf b \mathbf y}
 = 0, \hspace{4em} 1 \le i \le {\changed{j}}+1
.
\end{equation}
	Recall that we represent points of lattices $\Lambda_I$ and 
	$\Lambda_I^{\boldsymbol \xi}$ as row
	vectors. 
	With that notation, the lattice spanned by  $\bigcup_i A_i^{\boldsymbol \xi}\Omega - A_i^{\boldsymbol \xi}\Omega$ is $\Lambda_I^{\boldsymbol \xi} \Omega$. 
	Since $\Lambda_I^{\boldsymbol \xi}$ and $(\Lambda_I^{\boldsymbol \xi})^*$ are
	dual to each other, the canonical basis of $\mathbb R^j$ is a lattice basis
	for $\bigcup_i A_i^{\boldsymbol \xi}\Omega - A_i^{\boldsymbol \xi}\Omega$.
	By the momentum action, that is by adding an appropriate constant vector to each
	$A_i^{\boldsymbol \xi}$, one can assume without loss of generality that 
	$\mathbf 0 \in A_i^{\boldsymbol \xi}\Omega$ and hence that $A_i^{\boldsymbol \xi}\Omega
	\subseteq \mathbb Z^j$.
	System \eqref{system-small-y} admits a solution if and only if the sparse
	resultant 
	for \eqref{system-small-y}
	vanishes. Therefore, \eqref{system-small} admits a solution $\mathbf z \in Z$ 
	if and only if, $\mathbf q$ belongs to a certain algebraic hypersurface of degree
\[
	d={\changed{j}}! \sum_{
		\substack{K \subseteq[{\changed{j}}+1]\\\#K = {\changed{j}}}} 
	V_{\mathbb R^{\changed{j}}} (\conv{A_{K_1}^{\boldsymbol \xi}\Omega^{\dagger}}, 
\dots,
	\conv{A_{K_{\changed{j}}}^{\boldsymbol \xi}\Omega^{\dagger}})
\]
\end{remark}
}

\begin{proof}[Proof of Theorem \ref{degenerate-locus}]
	\thirdrev{Recall that $\mathfrak R$ contains one representative
	$\boldsymbol \xi$ for every ray in $\mathfrak F_0$.}
	We claim that
\[
	\Sigma^{\infty} = \bigcup_{\boldsymbol \xi \in \thirdrev{\mathfrak R}}
		\Sigma^{\boldsymbol \xi}
.
\]
Indeed, let $\mathbf q \in \Sigma^{\infty}$. Let $[\mathbf W] \in \mathcal V$
be the root at infinity, so $\mathbf q \cdot \mathbf W = \mathbf 0$. Let 
	$B_i = \{ \mathbf a \in A_i: W_{i\mathbf a} \ne 0\}$.
There must exist $\mathbf x \in S^{n-1}$ with $B_i \subseteq
	A_i^{\mathbf x}$, for otherwise there would exist some $\mathbf z \in \secrev{\mathscr M}$ with $[\mathbf W] = [\mathbf V(\mathbf z)]$.
The closed cone
$\bar C=\bar C(A_1^{\mathbf x}, \dots, A_n^{\mathbf x})$ is a polyhedral cone
	with \secrev{extremal rays} in $\mathfrak F_0$.
\changed{For each vertex $\boldsymbol \xi$ of $\bar C$,
$B_i \subseteq A_i^{\mathbf \xi} \subseteq A_i^{\boldsymbol \xi}$
	so that $\mathbf q \in \Sigma^{\boldsymbol \xi}$.}
Thus, $\Sigma^{\infty}$ is a finite union of $\Sigma^{\boldsymbol \xi}$
	for $\boldsymbol \xi$ in a subset of \thirdrev{$\mathfrak R$}.

Item (a) follows directly from 
Proposition \ref{sigma-xi-general}. The particular case in
Item (b) was already proved in Lemma~\ref{tropical}(c). Let's prove item (c) now.

Let $r: \mathscr F \rightarrow \mathbb C$ be the polynomial of
item (a). We produce now two real polynomials $p$ and $q$ depending
on the real and on the imaginary parts of $\mathbf g \in \mathscr F$,
and on a real parameter $t$:
\begin{eqnarray*}
p(\Re(\mathbf g), \Im(\mathbf g), t) &=& \Re( r(\mathbf g + t \mathbf f))
\\
q(\Re(\mathbf g), \Im(\mathbf g), t) &=& \Im( r(\mathbf g + t \mathbf f))
.
\end{eqnarray*}
	We also write $p(t)=p(\Re(\mathbf g), \Im(\mathbf g), t)=\sum_{i=0}^{d_r} p_i t^i$ and similarly for $q$. The coefficient $p_i$ (resp. $q_i$) is a polynomial of degree at most $d_r-i$ on the real and imaginary parts of $\mathbf g$. We assumed that $r(\mathbf f) \ne 0$, therefore the leading term
$t^{d^r} r(\mathbf f)$ of $r(\mathbf g + t \mathbf f)$ does not vanish, and
$r(\mathbf g + t \mathbf f)$ has degree $d_r$ in $t$.
We do not know the degree of $p(t)$ and $q(t)$.
We can nevertheless assume that $\deg p(t)=a$ and $\deg q(t)=b$ with
$\max(a, b)= d_r$. The Sylvester resultant is
\[
s(\mathbf g)\defeq
S(\Re(\mathbf g), \Im(\mathbf g)) = 
\det
	\left(\rule[-.3\baselineskip]{0pt}{5.5\baselineskip} \right.
	\raisebox{1ex}{$\overbrace{
	\begin{matrix} 
	p_0     &       &	\\
	p_1	&\ddots	&	\\
	\vdots	&\ddots	& p_0	\\
	\vdots	&	& p_1	\\
	p_a     &       &\vdots \\
		&\ddots	&\vdots	\\
		&	& p_a    
	\end{matrix}}^{\text{$b$ columns}}
	\hspace{1em}
	\overbrace{
	\begin{matrix} 
	q_0 	&       &       & \\
	q_1	&\ddots &       & \\
	\vdots	&\ddots &\ddots &\\
	q_b     &	&\ddots & q_0 \\
	        &\ddots &       & q_1 \\
 	        &       &\ddots&\vdots \\
                &	&      & q_b 
	\end{matrix}}^{\text{$a$ columns}}$}
	\left.\rule[-.3\baselineskip]{0pt}{5.5\baselineskip} \right)
\]
which has clearly degree $ab \le d_r^2$.
If $r(\mathbf g + t \mathbf f)\changed{=0}$ for a real, possibly infinite $t$, 
then $p(t)$ and $q(t)$ have a common real root. 
In particular, $s(\mathbf g)=0$.  
The situation $s \equiv 0$ cannot arise because $r(\mathbf f) \ne 0$.

\end{proof}
Notice also that the Sylvester resultant \changed{vanishes also} when $p(t)$ and $q(t)$ have
a common non-real root, which does not correspond to a  
$\mathbf g + t \mathbf f \in \Sigma^{\infty}$, $t \in \mathbb R$.

\subsection{Probabilistic estimates}
We start with a trivial \secrev{lemma}: 
\begin{lemma}\label{probest-lemma}  Let $c > 0$ and $S \in \mathbb N$ be arbitrary.
\[
	\probability{\mathbf x \sim \secrev{\mathcal N}(\mathbf 0,I; \mathbb C^S)}{\|x\| \le c \sqrt{S}} \le
	\frac{e^{S (1 + 2 \log(c))}}{\sqrt{2\pi S}}
.
\]
\end{lemma}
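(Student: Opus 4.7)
The plan is to reduce the statement to a standard tail/head estimate for a Gamma random variable, then bound the resulting incomplete Gamma integral by dropping the exponential factor and applying Stirling's formula.

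First I would note that if $\mathbf x \sim \secrev{\mathcal N}(\mathbf 0,I;\mathbb C^S)$, then the components $x_1,\dots,x_S$ are independent with density $\pi^{-1}e^{-|x_j|^2}$, so each $|x_j|^2$ is exponentially distributed with rate $1$, and consequently $\|\mathbf x\|^2$ has the Gamma density $\frac{t^{S-1}e^{-t}}{(S-1)!}$ on $t\ge 0$. Therefore
\[
\probability{\mathbf x \sim \secrev{\mathcal N}(\mathbf 0,I;\mathbb C^S)}{\|\mathbf x\|\le c\sqrt S}
= \int_0^{c^2 S} \frac{t^{S-1}e^{-t}}{(S-1)!}\,\dd t.
\]

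Next, I would use the crude bound $e^{-t}\le 1$ on the interval $[0,c^2 S]$ to estimate the right-hand side by
\[
\int_0^{c^2 S} \frac{t^{S-1}}{(S-1)!}\,\dd t = \frac{(c^2 S)^S}{S!}.
\]
Finally, I would invoke the standard Stirling lower bound $S!\ge \sqrt{2\pi S}\,(S/e)^S$ to obtain
\[
\frac{(c^2 S)^S}{S!} \le \frac{c^{2S} S^S}{\sqrt{2\pi S}\,(S/e)^S}
= \frac{c^{2S} e^{S}}{\sqrt{2\pi S}}
= \frac{e^{S(1+2\log c)}}{\sqrt{2\pi S}},
\]
which is the claimed inequality. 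The argument is entirely routine; there is no real obstacle, only the small bookkeeping choice of which Stirling bound to invoke (the one-sided lower bound with the explicit $\sqrt{2\pi S}$ factor is exactly what matches the target denominator).
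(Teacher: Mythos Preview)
Your proof is correct and follows essentially the same route as the paper: the paper bounds $e^{-\|\mathbf x\|^2}\le 1$ directly in the ball integral $\pi^{-S}\int_{B(\mathbf 0,c\sqrt S)}e^{-\|\mathbf x\|^2}\,\dd\mathbb C^S(\mathbf x)$ and uses $\vol_{2S}B^{2S}=\pi^S/S!$ to reach the same intermediate expression $\frac{c^{2S}S^S}{S!}$, then applies the identical Stirling lower bound. Your passage through the Gamma density of $\|\mathbf x\|^2$ is just the radial change of variables spelled out, and lands at the same place.
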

\begin{proof}
\begin{eqnarray*}
\probability{\mathbf x \sim \secrev{\mathcal N}(\mathbf 0,I; \mathbb C^S)}{\|x\| \le c \sqrt{S}} &=&
	\pi^{-S}\int_{B(\mathbf 0,c\sqrt{S};\mathbb C^S)} e^{-\| \mathbf x\|^2} \dd \mathbb C^S(\mathbf x) \\
	&\le&
	\pi^{-S} c^{2S} S^S \vol_{2S} B^{2S} \\
        &=&
	\frac{c^{2S} S^S}{S!}
        \\
	&\le&
	\frac{e^{S (1 + 2 \log(c))}}{\sqrt{2\pi S}}
\end{eqnarray*}
	using Stirling's approximation $S! \ge \sqrt{2\pi} S^{S+\frac{1}{2}}e^{-S}$.
\end{proof}

\secrev{In order to prove Theorem~\ref{vol-omega}, we introduce now a conic condition number.
We start by fixing $f \in \mathscr F$ with $r(\mathbf f) \ne 0$, where $r$ is the polynomial
from Theorem~\ref{degenerate-locus}(a). Let $s$ be the polynomial from Theorem~\ref{degenerate-locus}(c), and recall that its degree is at most $d_r^2$. The conic condition number 
is defined for $s(\mathbf g) \ne 0$ by}
\[
	\mathscr C(\mathbf g) \defeq 
\frac{ \|\mathbf g \| }{ \inf_{\mathbf h \in \mathscr F:s(\mathbf g + \mathbf h) = 0} \| \mathbf h \|}
=
\frac{ \sqrt{\sum_{i=1}^N\|\mathbf g_i \|^2} }{ \inf_{\mathbf h \in \mathscr F:s(\mathbf g + \mathbf h) = 0} \sqrt{\sum_{i=1}^N \| \mathbf h_i \|^2}}
.
\]
We stress that this is the only part of this paper where we forsake the complex multi-projective structure (and invariance) in $\mathscr F$.
At this point, we look at the zero-set of $s$ as a real algebraic variety in $\mathbb R^{2S}$ or in $S^{2S-1}$. According to
\ocite{BC}*{Theorem 21.1}, 
\begin{equation}\label{th21-1}
	\probability{ \mathbf g \sim \secrev{\mathcal N}(\mathbf 0,I; \mathscr F) }{ \mathscr C(\mathbf g) \ge \epsilon^{-1} } \le 
	\thirdrev{4} e d_r^2 \thirdrev{(2S-1)} \epsilon 
\end{equation}
for $\epsilon^{-1} \thirdrev{\ge} (2d_r^2+1)\thirdrev{(S-1)}$.

\begin{proof}[Proof of Theorem~\ref{vol-omega}]
	\secrev{Assume that } $\mathbf g \in \Omega_H$. 
	In particular there are $t \in [0,T]$, $\mathbf z \in Z(\mathbf g + t \mathbf f)$ with
	$\| \Re(\mathbf z) \|_{\infty} \ge H$. From Theorem~\ref{cond-num-infty}, there is $\mathbf h$ such that
        $\mathbf g + \mathbf h + t \mathbf f \in \Sigma^{\infty} \subseteq Z(r)$, with

\[
	\frac{\|\mathbf h_i \|}{\|\mathbf g_i + t \mathbf f_i\|} \le \kappa_{\rho_i} e^{-\eta_i H/n} \hspace{4em} i=1, \dots, n
.
\]
In particular,
\[
	\|\mathbf h\| \le \| \mathbf g + t \mathbf f\| \max_i (\kappa_{\rho_i}) e^{-\eta H/n}
.
\]
	\secrev{
Assume furthermore
		that $\|\mathbf g \|>c \sqrt{S}$ where $c$ is a number to be determined.}
We can bound $\| \mathbf g + t \mathbf f\| \le 
	\| \mathbf g \| + T \| \mathbf f\| \le 
	\|\mathbf g \| \left(1 + \frac{T \|\mathbf f\|}{c \sqrt{S}}\right)$. Therefore,
\[
	\mathscr C(\mathbf g)^{-1} \le \frac{\|\mathbf h\|}{\|\mathbf g \|} \le \epsilon \defeq 
	\left(1 + \frac{T \|\mathbf f \|}{c \sqrt{S}}\right) \max_i \kappa_{\rho_i} e^{-\eta H/n}
\]
	We can now pass to probabilities: \secrev{for $\epsilon$ as above},
\begin{eqnarray*}
	\probability{\mathbf g \sim \secrev{\mathcal N}(\mathbf 0,I; \mathscr F)}{\mathbf g \in \Omega_H}
	&\le&
	\probability{\mathbf g \sim \secrev{\mathcal N}(\mathbf 0,I; \mathscr F)}{\|\mathbf g\| \le c \sqrt{S}}
	+
	\probability{\mathbf g \sim \secrev{\mathcal N}(\mathbf 0,I; \mathscr F)}{\mathscr C(\mathbf g)^{-1} > \epsilon^{-1}} \\
	&\le&
	\frac{e^{S (1 + 2 \log(c))}}{\sqrt{2\pi S}}
+
	8 e d_r^2 S \epsilon
\\
\end{eqnarray*}
	\secrev{using Lemma~\ref{probest-lemma} and \eqref{th21-1}. Replacing
	$\epsilon$ by its value,}
\[
		\probability{\mathbf g \sim \secrev{\mathcal N}(\mathbf 0,I; \mathscr F)}{\mathbf g \in \Omega_H}
\le
	\frac{e^{S (1 + 2 \log(c))}}{\sqrt{2\pi S}}
+
	8 e d_r^2 S 
	\left(1 + \frac{T \|\mathbf f\|}{c \sqrt{S}}\right) \max_i
	\kappa_{\rho_i} e^{-\eta H/n}
\]
We want this expression smaller that an arbitrary $\delta > 0$, A non-optimal solution is
to set $c=e^{\frac{\log \delta}{2S}-\frac{1}{2}} = \frac{ \sqrt[2S]{\delta}}{\sqrt{e}}$. This guarantees that
\[
	\frac{e^{S (1 + 2 \log(c))}}{\sqrt{2\pi S}} \le \frac{\delta}{2}
.
\]
Then we can set 
\[
	H \ge \frac{n}{\eta} \log\left( 
	16 e \delta^{-1} d_r^2 S \max_i (\kappa_{\rho_i})
	\left(1 + \frac{T \|\mathbf f\|}{ \sqrt[2S]{\delta} \sqrt{S}}\sqrt{e}\right) \right)
.
\]
	Because the $\rho_{i\mathbf a} = 1$ are constant, we can replace
	$\max_i (\kappa_{\rho_i})$ by $\thirdrev{\sqrt{\max_i (S_i)}}$ \changed{inside the logarithm}.
\end{proof}

\section{Analysis of linear homotopy}
\label{sec-linear}

\subsection{Overview}\label{overview-linear}
The proof of Theorem~\ref{mainD} is long. 
Recall \secrev{from Section \ref{sec:unconditional}} that $\mathbf q_{t} = \mathbf g + t \mathbf f$, where $\mathbf f$ is assumed `fixed' and $\mathbf g$ is assumed `Gaussian',
conditional to the event
$\mathbf g \not \in \Lambda_{\epsilon} \cup \Omega_H \cup Y_K$, where
\begin{eqnarray*}
	\Lambda_{\epsilon} &=&\left\{ \mathbf g \in \mathscr F: 
\ \exists \ 1 \le i \le n, \ \exists \mathbf a \in A_i, \ 
\left| \arg \left(\frac{g_{i{\mathbf a}}}{f_{i\mathbf a}}\right)\right| \ge \pi-\epsilon,
\right\} ,
\\
	\Omega_H &=& \Omega_{\mathbf f, T, H} =
\left\{
\mathbf g \in \mathscr F: \exists t \in [0,T], 
\exists \mathbf z \in Z(\mathbf g+t \mathbf f) \subseteq \secrev{\mathscr M}, 
	\|\Re(\mathbf z)\|_{\infty} \ge H \right \} \text{and}
\\
	Y_K &=& \left\{ \mathbf g \in \mathscr F: \exists i, \| \mathbf g_i \| \ge K \sqrt{S_i}\right\} .
\end{eqnarray*}

\secrev{In section~\ref{sec:unconditional}, it was assumed that each of the
first two sets had probability $\le 1/72$}. 
The event $\mathbf g \in  \Lambda_{\epsilon} \cup \Omega_H$
has therefore probability $\le 1/36$. 
By choosing $K=1+\sqrt{\frac{\log(n)+\log(10)}{\min_i S_i}}$, Lemma~\ref{lem-large-dev}(b) implies that 
the event $\|\mathbf g_i\| \ge \changed{K\sqrt{S_i}}$ has probability $\le \frac{1}{10n}$, hence
the event $\mathbf g \in Y_K$ has probability
$\le 1/10$.
Moreover, it is independent from the two other events.
Thus the event 
$\mathbf g \not \in  \Lambda_{\epsilon} \cup \Omega_H \cup Y_K$ has probability $\ge 7/8$.
Under this condition we need to estimate the expectation of the \secrev{renormalized} condition length
\secrev{of Definition~\ref{def-rencondlength}, that is
\[
\mathscr L((\mathbf q_{\tau}, \mathbf z_{\tau}); 0,T)
= \int_{0}^{T} 
\left(
\left\| \frac{\partial}{\partial \tau} \mathbf p_{\tau} \right\|_{\mathbf p_{\tau} }
+\secrev{\nu} \|\dot {\mathbf z}_{\tau}\|_{\mathbf 0} \right)
\mu( \mathbf p_{\tau}) \dd \tau
\]
	where $\mathbf p_{\tau}= \mathbf q_{\tau} \cdot R(\mathbf z_{\tau})$ is
the renormalized path, the norm of the first summand is the Fubini-Study
multi-projective metric and the norm in the second summand is the norm
of the inner product defined in Equation \eqref{metricM}, at $\mathbf x = \mathbf 0$.
}

\begin{theorem} \label{condlength} 
There is a constant $C_0$ with the following properties.
	\secrev{Assume the hypotheses and notations of Theorem~\ref{mainD}.}
	Let $T > 2K$, and set 
\[
\secrev{\mathrm{LOGS}_T} \defeq \log (d_r) +\log(S) +\log (T)
.
\]
%
	\secrev{Let $\kappa_{\mathbf f}$ be as defined in \eqref{kappaf}.
Take 
	\[
\mathbf g \sim N\left(\mathbf 0,I; \mathscr F
\conditional	
	\mathbf g \not \in \Lambda_{\epsilon} \cup \Omega_H \cup Y_K \right).
\]}
Then with probability $\ge 6/7$, 
\[
	\sum_{\mathbf z_{\tau} \in \mathscr Z(\mathbf q_{\tau})} \mathscr L (\mathbf q_t, \mathbf z_t; 0, T)
	\le C_0
	Q
	\changed{n^{2}\sqrt{S}}\max_i (S_i) 
	\ \thirdrev{\max \left(K, \sqrt{\kappa_{\mathbf f}}\right) }
	\ T 
	\ \secrev{\mathrm{LOGS}_T}
.
\]
\end{theorem}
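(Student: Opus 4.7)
The plan is to bound $\sum_{\mathbf z}\mathscr L(\mathbf q_t,\mathbf z_t;0,T)$ by decoupling the geometric speed factor from the condition-number factor through Cauchy--Schwarz, then to bound each factor separately using results already established in the paper. Writing $\mathbf p_\tau=\mathbf q_\tau\cdot R(\mathbf z_\tau)$ and $A_{\mathbf z}(\tau)=\|\partial_\tau\mathbf p_\tau\|_{\mathbf p_\tau}+\nu\,\|\dot{\mathbf z}_\tau\|_{\mathbf 0}$, I would apply Cauchy--Schwarz in the path index and then again in $\tau$ to get
\[
\sum_{\mathbf z}\mathscr L(\mathbf q_t,\mathbf z_t;0,T)
\;\le\;
\Bigl(\int_0^T\!\sum_{\mathbf z\in Z(\mathbf q_\tau)}A_{\mathbf z}(\tau)^2\,d\tau\Bigr)^{1/2}
\Bigl(\int_0^T\!\sum_{\mathbf z\in Z(\mathbf q_\tau)}\mu(\mathbf p_\tau)^2\,d\tau\Bigr)^{1/2}.
\]
The exclusion $\mathbf g\notin\Omega_H$ guarantees $Z(\mathbf q_\tau)=Z_H(\mathbf q_\tau)$ for every $\tau\in[0,T]$, so the right-hand factor is exactly the quantity controlled by Theorem~\ref{old-mainB}.

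For the condition-number factor, note that if $\mathbf g\sim\mathcal N(\mathbf 0,I;\mathscr F)$ then $\mathbf q_\tau\sim\mathcal N(\tau\mathbf f,I;\mathscr F)$, and $\|\tau\mathbf f_i\|/\sqrt{S_i}\le T\|\mathbf f\|/\sqrt{S}$. Integrating the pointwise expectation bound of Theorem~\ref{old-mainB} over $\tau\in[0,T]$, multiplying by the at most $8/7$ correction factor due to conditioning on the good event, and plugging in $\rho_{i\mathbf a}=1$, $\Sigma=I$, $\kappa_{\rho_i}^2=S_i$, and $\sum\delta_i^2\le\eta^{-2}\sum\delta_i^2\cdot\eta^2\le 4Q\det\Lambda/(n!V'\eta)\cdot$ geometric constants, I would obtain an expectation bound of the form
\[
\mathbb{E}\!\int_0^T\!\sum_{\mathbf z}\mu(\mathbf p_\tau)^2\,d\tau
\;\lesssim\; Q\,n\max_i(S_i)\,T\,\mathrm{LOGS}_T,
\]
where $\mathrm{LOGS}_T$ absorbs the slowly-varying logarithmic contributions arising from $H$ (the exclusion width chosen in \eqref{eq-H}, which depends logarithmically on $d_r$, $S$, and $T$).

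For the speed factor, I would estimate $A_{\mathbf z}(\tau)^2$ pointwise by writing
\[
\partial_\tau(\mathbf q_\tau\cdot R(\mathbf z_\tau))
=\mathbf f\cdot R(\mathbf z_\tau)+\mathbf q_\tau\cdot\partial_\tau R(\mathbf z_\tau),
\]
the second summand being linear in $\dot{\mathbf z}_\tau$ and controlled via Lemma~\ref{lem:fRdist} by $\sqrt{5}\,\nu\|\dot{\mathbf z}_\tau\|_{\mathbf 0}$ on the quotient. The multi-projective Fubini--Study norm of the first summand is at most $\max_i\|\mathbf f_i\|/\|\mathbf q_{i\tau}\cdot R(\mathbf z_\tau)\|$, and Theorem~\ref{th-renormalization}(c) bounds the denominator below by $e^{-\ell_i(-\mathbf z_\tau)}\|\mathbf q_{i\tau}\|$. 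The exclusion $\mathbf g\notin\Lambda_\epsilon$ forces the complex numbers $g_{i\mathbf a}$ and $\tau f_{i\mathbf a}$ to make angle less than $\pi-\epsilon$, hence $|q_{i\tau,\mathbf a}|\gtrsim\min(|g_{i\mathbf a}|,\tau|f_{i\mathbf a}|)$ and summing on $\mathbf a$ yields $\|\mathbf q_{i\tau}\|\gtrsim\max(1,\tau)\|\mathbf f_i\|/\kappa_{\mathbf f}$ whenever $\tau\ge$ some threshold, with the imbalance $\kappa_{\mathbf f}$ controlling the ratio; on the short initial interval $\tau\le 2K$ we use $\|\mathbf g_i\|\le K\sqrt{S_i}$ (from $\mathbf g\notin Y_K$) instead. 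For the $\|\dot{\mathbf z}_\tau\|_{\mathbf 0}$ contribution, the implicit-function derivative bound gives $\|\dot{\mathbf z}_\tau\|_{\mathbf 0}\le\mu(\mathbf p_\tau)\cdot\|\mathbf f\cdot R(\mathbf z_\tau)\|_{\mathbf p_\tau}$ modulo the same projective normalization.

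The main obstacle will be summing $A_{\mathbf z}(\tau)^2$ over all $\le 2Q$ solution paths, because the tangent vectors $\dot{\mathbf z}_\tau$ at different roots are not independent and feed back into $A_{\mathbf z}$ through the renormalization operator. I would handle this by using the coarea argument from Section~\ref{sec-integral} a second time, now with integrand $\|M(\mathbf q,\mathbf z)^{-1}\mathbf f\|^2$ rather than $\|M(\mathbf q,\mathbf z)^{-1}\|_{\mathrm F}^2$, to produce a sum-over-roots bound that depends on $\|\mathbf f\|^2$ and mixed-volume-type quantities. This yields a deterministic (under the three exclusions) bound of the form
\[
\int_0^T\!\sum_{\mathbf z}A_{\mathbf z}(\tau)^2\,d\tau
\;\lesssim\; Q\,n^3\max_i(S_i)\,\max(K^2,\kappa_{\mathbf f})\,T\,\mathrm{LOGS}_T,
\]
the $\max(K^2,\kappa_{\mathbf f})$ factor being the square of the $\max(K,\sqrt{\kappa_{\mathbf f}})$ appearing in the statement. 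Combining the two factors via the initial Cauchy--Schwarz and applying Markov's inequality to the resulting expectation bound (absorbing another constant into $C_0$) then converts the bound into the asserted probability-$6/7$ statement.
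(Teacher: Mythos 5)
Your proposal differs from the paper's proof in a structural way that creates a genuine gap. The paper first splits $\mathscr L$ additively into $\mathscr L_1+\mathscr L_2$ (Lemma~\ref{cond-length-split}), where $\mathscr L_1$ carries the Fubini--Study speed of the renormalized system and $\mathscr L_2$ carries the $\nu\|\dot{\mathbf z}_\tau\|_{\mathbf 0}$ term, and then handles these two pieces by completely different arguments. You instead place the entire speed $A_{\mathbf z}(\tau)=\|\partial_\tau\mathbf p_\tau\|_{\mathbf p_\tau}+\nu\|\dot{\mathbf z}_\tau\|_{\mathbf 0}$ on one side of a single Cauchy--Schwarz and $\mu(\mathbf p_\tau)$ on the other. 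This is where the argument breaks down, for two compounding reasons.

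First, $A_{\mathbf z}$ is not a ``speed-only'' quantity: the term $\nu\|\dot{\mathbf z}_\tau\|_{\mathbf 0}$ is itself proportional to $\|M(\mathbf q_\tau,\mathbf z_\tau)^{-1}\|$ through the implicit-function derivative (Lemma~\ref{cond-length-triv-bounds}(a)). Consequently $\int_0^T\sum_{\mathbf z}A_{\mathbf z}^2\,d\tau$ is \emph{not} deterministically bounded under the three exclusions, contrary to what you assert: the condition number can be arbitrarily large even for non-excluded $\mathbf g$. Your proposed bound would have to be interpreted as an expectation bound, but then one needs $\mathbb E\bigl[\int_0^T(\text{projective speed})^2\sum_{\mathbf z}\|M^{-1}\|_{\mathrm F}^2\,d\tau\bigr]$, a joint expectation of two $\mathbf g$-dependent quantities that none of Theorem~\ref{E-M2}, Theorem~\ref{old-mainB}, or Lemma~\ref{lem-C2} controls. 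Pushing Cauchy--Schwarz once more to decouple them would require a fourth moment of $\|M^{-1}\|_{\mathrm F}$ (equivalently of $\mu$), which is not available and is likely infinite given the tail behavior of condition numbers. The paper avoids this precisely because in its $\mathscr L_1$ bound the speed factor is chosen to be $\mathbf z$-independent, so it factors out of the sum over roots and the two Cauchy--Schwarz factors end up being two \emph{separate} expectations (Lemma~\ref{lem-C1}); and in its $\mathscr L_2$ bound the product $\|\dot{\mathbf z}_\tau\|_{\mathbf 0}\,\mu$ is each linear in $\|M^{-1}\|$, so it is bounded directly by $\|M^{-1}\|_{\mathrm F}^2$ (Lemma~\ref{cond-length-triv-bounds}(c)) without any further Cauchy--Schwarz, and Theorem~\ref{E-M2} applies once.

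Second, your pointwise estimate of the Fubini--Study speed of $\mathbf q_\tau\cdot R(\mathbf z_\tau)$ is much weaker than the one the paper uses. You bound it via $\max_i\|\mathbf f_i\|/\|\mathbf q_{i\tau}\cdot R(\mathbf z_\tau)\|$ together with Theorem~\ref{th-renormalization}(c), which introduces a factor $e^{\ell_i(-\mathbf z_\tau)}$. Even under $\mathbf g\notin\Omega_H$ this factor can be exponential in $H$, and $H$ itself is already $\Theta(n\eta^{-1}\mathrm{LOGS}_T)$. The paper's Lemma~\ref{worst-speed-proj} gives the crucial $\mathbf z$-independent bound $\|\dot{\mathbf q}_{i\tau}\cdot R_i(\mathbf z)\|_{\mathbf q_{i\tau}\cdot R_i(\mathbf z)}\le\max_{\mathbf a}|\dot q_{i\mathbf a}|/|q_{i\mathbf a}|$, with no $e^{\ell_i}$ factor whatsoever. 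Without this cancellation, the resulting bound is not polynomial in the parameters appearing in the statement, so even apart from the coupling issue your pointwise speed estimate would not close.

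To repair the argument you would essentially need to recover the paper's split: treat the $\dot{\mathbf q}$-speed and the $\dot{\mathbf z}$-speed separately, use Lemma~\ref{worst-speed-proj} for the former, and for the latter exploit the quadratic dependence on $\|M^{-1}\|$ so that a single application of Theorem~\ref{E-M2} suffices.
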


\begin{corollary} If the conditional probability for $\mathbf g$ is replaced by
unconditional $\mathbf g \sim \secrev{\mathcal N}(\mathbf 0,I; \mathscr F)$, then the inequality above holds
	with probability $\ge \frac{6}{7} \frac{7}{8}= \frac{3}{4}$.
\end{corollary}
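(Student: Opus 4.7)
The plan is to deduce the corollary directly from Theorem~\ref{condlength} by conditioning on the ``good'' event $E \defeq \{\mathbf g \not\in \Lambda_\epsilon \cup \Omega_H \cup Y_K\}$, without redoing any of the condition length analysis. Concretely, let $\mathcal B$ denote the event that the inequality
\[
\sum_{\mathbf z_\tau \in \mathscr Z(\mathbf q_\tau)} \mathscr L(\mathbf q_t, \mathbf z_t; 0, T)
\le C_0\, Q\, n^{2}\sqrt{S}\, \max_i(S_i)\, \max(K, \sqrt{\kappa_{\mathbf f}})\, T\, \mathrm{LOGS}_T
\]
holds. Theorem~\ref{condlength} provides $\mathrm{Prob}[\mathcal B \mid E] \ge 6/7$.

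First I would quote the three probability estimates already derived in Section~\ref{sec:unconditional} and at the start of Section~\ref{overview-linear}: the choice $\epsilon = \pi/(72S)$ gives $\mathrm{Prob}[\mathbf g \in \Lambda_\epsilon] \le 1/72$; the choice of $H$ from Theorem~\ref{vol-omega} gives $\mathrm{Prob}[\mathbf g \in \Omega_H] \le 1/72$; and the choice $K = 1+\sqrt{(\log n + \log 10)/\min_i S_i}$ gives $\mathrm{Prob}[\mathbf g \in Y_K] \le 1/10$ via Lemma~\ref{lem-large-dev}(b). A union bound yields
\[
\mathrm{Prob}[\mathbf g \notin E] \le \frac{1}{72} + \frac{1}{72} + \frac{1}{10} = \frac{1}{36} + \frac{1}{10} \le \frac{1}{8},
\]
so $\mathrm{Prob}[E] \ge 7/8$.

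Then the unconditional probability of $\mathcal B$ satisfies $\mathrm{Prob}[\mathcal B] \ge \mathrm{Prob}[\mathcal B \cap E] = \mathrm{Prob}[\mathcal B \mid E]\cdot \mathrm{Prob}[E] \ge (6/7)(7/8) = 3/4$, which is the claim. The only subtlety worth flagging is that the bound asserted inside $\mathcal B$ does not depend on $\mathbf g$ (its right-hand side involves only $Q$, $n$, $S$, $S_i$, $K$, $\kappa_{\mathbf f}$, $T$, and $\mathrm{LOGS}_T$), so the same event $\mathcal B$ is meaningful under both the conditional and unconditional laws, and no rescaling of the bound is required. Thus the ``hard part'' is nonexistent: the corollary is a one-line consequence of Theorem~\ref{condlength} and the union bound on the exclusion sets.
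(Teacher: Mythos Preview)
Your overall strategy is exactly the paper's: combine the conditional bound from Theorem~\ref{condlength} with the lower bound $\mathrm{Prob}[E]\ge 7/8$ on the good event $E=\{\mathbf g\notin\Lambda_\epsilon\cup\Omega_H\cup Y_K\}$ established in Section~\ref{overview-linear}. However, your union-bound arithmetic is off. You write $\tfrac{1}{36}+\tfrac{1}{10}\le\tfrac{1}{8}$, but in fact
\[
\frac{1}{36}+\frac{1}{10}=\frac{5}{180}+\frac{18}{180}=\frac{23}{180}>\frac{22.5}{180}=\frac{1}{8},
\]
so a plain union bound only gives $\mathrm{Prob}[E]\ge 157/180$, and then $(6/7)(157/180)=157/210\approx 0.7476<3/4$. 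The inequality you want does not follow from the union bound alone.

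The paper closes this gap in Section~\ref{overview-linear} by asserting that $Y_K$ (which depends only on the norms $\|\mathbf g_i\|$) is \emph{independent} from $\Lambda_\epsilon$ and $\Omega_H$, and then multiplying: $\mathrm{Prob}[E]\ge \mathrm{Prob}[\mathbf g\notin Y_K]\cdot\mathrm{Prob}[\mathbf g\notin\Lambda_\epsilon\cup\Omega_H]\ge(9/10)(35/36)=7/8$ exactly, whence $(6/7)(7/8)=3/4$. Independence of $Y_K$ from $\Lambda_\epsilon$ is clear (moduli and arguments of a standard complex Gaussian are independent); independence from $\Omega_H$ is the paper's claim. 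To match the stated constant $3/4$ you need to invoke this multiplicative step rather than the additive union bound.
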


The first step toward the proof of Theorem~\ref{condlength} is
to break the condition length into two integrals,
\begin{eqnarray}
\label{L1}
\mathscr L_1((\mathbf q_{\tau}, \mathbf z_{\tau}); 0,T) &=& \int_{0}^{T} 
\left\| \dot{\mathbf q}_{\tau} \cdot R(\mathbf z_{\tau}) \right\|_{\mathbf q_{\tau} \cdot R(\mathbf z_{\tau})}
\mu( \mathbf q_{\tau} \cdot R(\mathbf z_{\tau}) , \mathbf 0) \dd \tau,
\\
\label{L2}
\mathscr L_2((\mathbf q_{\tau}, \mathbf z_{\tau}); 0,T) &=& 
2 \secrev{\nu} \int_{0}^{T} 
	\| \dot {\mathbf z}_{\tau} \|_{\mathbf 0}
\mu( \mathbf q_{\tau} \cdot R(\mathbf z_{\tau}) , \mathbf 0)
\dd \tau .
\end{eqnarray}

\begin{lemma}\label{cond-length-split} Assume that $\mathbf q_{\tau} \cdot \mathbf V(\mathbf z_{\tau}) \equiv \mathbf 0$
	and $\mathbf m_i(\mathbf 0)=\mathbf 0$ for all $i$. Then,
\begin{enumerate}[(a)] 
\item 
\[
	\left\| \frac{\partial}{\partial \tau} \left(\mathbf q_{\tau} \cdot R(\mathbf z_{\tau})\right) \right\|_{\mathbf q_{\tau} \cdot R(\mathbf z_{\tau})}
\le
		\left\| \left(\dot{\mathbf q}_{\tau} \cdot R(\mathbf z_{\tau})\right) \right\|_{\mathbf q_{\tau} \cdot R(\mathbf z_{\tau})}
+
		\secrev{\nu} \| \dot {\mathbf z}_{\tau}\|_{\mathbf 0}	
\]
\item
\[
\mathscr L((\mathbf q_{\tau}, \mathbf z_{\tau}); 0,T)
\le
\mathscr L_1((\mathbf q_{\tau}, \mathbf z_{\tau}); 0,T)
+
\mathscr L_2((\mathbf q_{\tau}, \mathbf z_{\tau}); 0,T)
.
\]
\end{enumerate}
\end{lemma}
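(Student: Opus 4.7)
The lemma splits cleanly into the pointwise bound (a) and the integrated bound (b), so I would dispatch (b) first. Using (a) inside the integrand of $\mathscr L$ and combining with the $\nu\|\dot{\mathbf z}_\tau\|_{\mathbf 0}\mu(\mathbf p_\tau)$ term already present in the definition produces two copies of $\nu\|\dot{\mathbf z}_\tau\|_{\mathbf 0}\mu(\mathbf p_\tau)\,d\tau$; these account for the factor of $2$ in $\mathscr L_2$, while the remaining contribution is exactly $\mathscr L_1$.

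For (a), my plan is to apply the product rule
\[
\frac{\partial}{\partial\tau}\bigl(\mathbf q_\tau\cdot R(\mathbf z_\tau)\bigr)
=\dot{\mathbf q}_\tau\cdot R(\mathbf z_\tau)+\mathbf q_\tau\cdot\dot R(\mathbf z_\tau),
\]
where $\dot R(\mathbf z_\tau)$ denotes the $\tau$-derivative of $R(\mathbf z_\tau)$ through $\mathbf z_\tau$, and then invoke the triangle inequality for the multi-projective Fubini--Study norm $\|\cdot\|_{\mathbf p_\tau}$. This reduces the proof to the single estimate
\[
\|\mathbf q_\tau\cdot\dot R(\mathbf z_\tau)\|_{\mathbf p_\tau}\le \nu\,\|\dot{\mathbf z}_\tau\|_{\mathbf 0}.
\]
Working coordinatewise, the $\mathbf a$-th entry of $\mathbf q_{i\tau}\cdot\dot R_i(\mathbf z_\tau)$ is $q_{i\mathbf a}e^{\mathbf a\mathbf z_\tau}(\mathbf a\,\dot{\mathbf z}_\tau)$, while the $\mathbf a$-th entry of $\mathbf p_{i\tau}$ is $q_{i\mathbf a}e^{\mathbf a\mathbf z_\tau}$. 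Because the projective Fubini--Study norm at $\mathbf p_{i\tau}$ equals $\inf_{c\in\mathbb C}\|\mathbf q_{i\tau}\dot R_i-c\,\mathbf p_{i\tau}\|/\|\mathbf p_{i\tau}\|$, I would simply take $c=0$ to obtain an upper bound which, after dividing by $\|\mathbf p_{i\tau}\|^2$, becomes the convex combination $\sum_{\mathbf a\in A_i}w_{\mathbf a}|\mathbf a\,\dot{\mathbf z}_\tau|^2$ with weights $w_{\mathbf a}=|q_{i\mathbf a}|^2|e^{\mathbf a\mathbf z_\tau}|^2/\|\mathbf p_{i\tau}\|^2$, hence at most $\max_{\mathbf a\in A_i}|\mathbf a\,\dot{\mathbf z}_\tau|^2$.

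The final step uses the hypothesis $\mathbf m_i(\mathbf 0)=\mathbf 0$ to rewrite $\mathbf a\,\dot{\mathbf z}_\tau=(\mathbf a-\mathbf m_i(\mathbf 0))\dot{\mathbf z}_\tau$ and invoke the definition of the distortion invariant $\nu_i$ in \eqref{distortion}, giving $\max_{\mathbf a\in A_i}|\mathbf a\,\dot{\mathbf z}_\tau|\le \nu_i\,\|\dot{\mathbf z}_\tau\|_{i,\mathbf 0}$. Summing the squared inequalities over $i$, bounding each $\nu_i$ by $\nu$, and using the additive definition $\|\dot{\mathbf z}_\tau\|_{\mathbf 0}^2=\sum_i\|\dot{\mathbf z}_\tau\|_{i,\mathbf 0}^2$ from \eqref{metricM} then closes the estimate. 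No real obstacle arises; the only point worth flagging is that the hypothesis $\mathbf m_i(\mathbf 0)=\mathbf 0$ is what converts the crude choice $c=0$ in the projective infimum into a bound controlled by $\nu_i\|\dot{\mathbf z}_\tau\|_{i,\mathbf 0}$ — without this reshift of the supports to their barycenters, the choice $c=0$ would yield a spurious $\mathbf m_i(\mathbf 0)$-dependent term.
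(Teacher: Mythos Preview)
Your proof is correct and follows essentially the same approach as the paper: product rule plus triangle inequality on each component $\mathbf p_{i\tau}$, then the crude bound $\|\mathbf r\|_{\mathbf x}\le\|\mathbf r\|/\|\mathbf x\|$ (your choice $c=0$) to reduce to $\max_{\mathbf a\in A_i}|\mathbf a\,\dot{\mathbf z}_\tau|$, followed by the distortion-invariant bound and the $\ell^2$ combination over $i$. Your derivation of (b) from (a), noting that the factor $2$ in $\mathscr L_2$ absorbs the two copies of $\nu\|\dot{\mathbf z}_\tau\|_{\mathbf 0}\mu(\mathbf p_\tau)$, is exactly what the paper declares ``trivial.''
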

\begin{proof}
	\secrev{
		Recall the notation $\mathbf p_{i \tau} \defeq
	\mathbf q_{\changed{i}\tau} \cdot R_{\changed{i}}(\mathbf z_{\tau})$.
Item (a) follows from the product rule and the triangle inequality:
	}

\secrev{
	\[
	\left\| \frac{\partial}{\partial \tau} \mathbf p_{\changed{i}\tau} \right\|_{\mathbf p_{\changed{i}\tau}}
	\le
	\left\| \dot{\mathbf q}_{\changed{i}\tau} \cdot R_{\changed{i}}(\mathbf z_{\tau})\right\|_{\mathbf p_{\changed{i}\tau}}
+
	\left\| {\mathbf p}_{\changed{i}\tau}  
	\cdot \diag{\mathbf a \dot {\mathbf z}_{\tau}}
	\right\|_{\mathbf p_{\changed{i}\tau}} 
.\]
The Fubini-Study norm is by definition
	\[
		\| \mathbf r \|_{\mathbf x} \defeq
		\frac{ \left\| P_{\mathbf x^{\perp}} (\mathbf r) \right\|}
		{\left\| \mathbf x \right\|}
		\le
		\frac{\|\mathbf r\|}{\|\mathbf x\|}
	\]
	where $P_{\mathbf x^{\perp}}$ is the orthogonal projection onto
	$\mathbf x^{\perp}$. By the triangle inequality again,
\[
	\left\| {\mathbf p}_{\changed{i}\tau}  
	\cdot \diag{\mathbf a \dot {\mathbf z}_{\tau}}
	\right\|_{\mathbf p_{\changed{i}\tau}} 
\le
	\frac{\left\| \mathbf p_{\changed{i}\tau}\right\|_{\mathscr F_{A_i}} 
	\max_{i,\mathbf a}\left(|\mathbf a \dot {\mathbf z}_{\tau}|\right)}
	{\left\|\mathbf p_{\changed{i}\tau} \right\|_{\mathscr F_{A_i}}} 
\le
	\max_{i,\mathbf a}\left(|\mathbf a \dot {\mathbf z}_{\tau}|\right)
.\]
Replacing in Equation \eqref{distortion} with $\nu_i = \nu_i(\mathbf 0)$ and $m_i(\mathbf 0)=\mathbf 0$, 
	the last term is at most 
	$\nu_i \|\dot {\mathbf z}_{\tau}\|_{i,\mathbf 0}$.
	Hence, }
\[
\changed{
\left\| \frac{\partial}{\partial \tau} 
	\left(\mathbf q_{\tau} \cdot R(\mathbf z_{\tau})\right) 
	\right\|_{\mathbf q_{\tau} \cdot R(\mathbf z_{\tau})}
\le
\left\| \dot{\mathbf q}_{\tau} \cdot R(\mathbf z_{\tau}) \right\|_{\mathbf q_{\tau} \cdot R(\mathbf z_{\tau})} 
+
	\secrev{\nu} \|\dot {\mathbf z}_{\tau}\|_{\mathbf 0}
	.}
\]
	Item (b) is now trivial.

\end{proof}

\subsection{Expectation of the condition length (part 1)}

The objective of this section is to bound the \secrev{expectation} of the integral 
\[
\mathscr L_1((\mathbf q_{\tau}, \mathbf z_{\tau}); 0,T) = \int_{0}^{T} 
\left\| {\dot{\mathbf q}}_{t} \cdot R(\mathbf z_{t}) \right\|_{\mathbf q_{t} \cdot R(\mathbf z_{t})}
\mu( \mathbf q_{t} \cdot R(\mathbf z_{t}) , \mathbf 0) \dd t.
\]

\begin{proposition}\label{prop-C1}\secrev{Let $Q$ be as defined in Equation \eqref{mainD-Q}.} Under the conditions of Theorem~\ref{condlength},
there is a constant $C_1$ such that 
\begin{eqnarray*}
\expected{\mathbf g \sim \secrev{\mathcal N}(\mathbf 0,I; \mathscr F)}
{
\sum_{\mathbf z_{\tau} \in \mathscr Z(\mathbf q_{\tau})}
	\mathscr L_1((\mathbf q_{\tau}, \mathbf z_{\tau}); 0,T)
\conditional \mathbf g \not \in \Lambda_{\epsilon} \cup \Omega_H \cup Y_K
} 
	&\le& \\
	&&
\hspace{-16em}\le\   
        C_1 Q 
	n^{\frac{1}{4}}
	\sqrt{1+\frac{K}{4}+\frac{\kappa_{\mathbf f}}{8}} \,
	S \max(S_i) 
	\ T \ 
	\secrev{\mathrm{LOGS}_T}
\end{eqnarray*}
\end{proposition}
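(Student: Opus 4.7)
\emph{Plan.} Since the homotopy is linear, $\dot{\mathbf q}_t \equiv \mathbf f$ is time-independent, so
\[
\mathscr L_1((\mathbf q_t, \mathbf z_t);0,T) = \int_0^T \|\mathbf f \cdot R(\mathbf z_t)\|_{\mathbf q_t \cdot R(\mathbf z_t)}\, \mu(\mathbf q_t \cdot R(\mathbf z_t))\, dt.
\]
I plan to decouple the two factors by a weighted Cauchy--Schwarz with weight $w(t) = (1+t)^2$, matched to their opposite $t$-scalings. After applying Cauchy--Schwarz to the integrand, summing over paths $\mathbf z_t \in \mathscr Z(\mathbf q_t)$, and applying Cauchy--Schwarz once more to the expectation (Jensen for $\sqrt{\,\cdot\,}$), the problem reduces, up to a factor $8/7$ from the conditioning, to bounding the two quantities $\mathbb E \sum_{\mathbf z_t} \int_0^T (1+t)^2\, \|\mathbf f \cdot R(\mathbf z_t)\|^2_{\mathbf q_t \cdot R(\mathbf z_t)}\, dt$ and $\mathbb E \sum_{\mathbf z_t} \int_0^T \mu^2/(1+t)^2\, dt$, each of which I expect to be $O(T)$ up to logarithms, so that the geometric mean is linear in $T$.

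For the condition-number integral I swap expectation and time by Fubini. At fixed $t$ the law of $\mathbf q_t = \mathbf g + t\mathbf f$ is $\mathcal N(t\mathbf f, I)$, and Theorem~\ref{old-mainB} applies with $\Sigma = I$ and $L = t$ once we check that every tracked root satisfies $\|\Re \mathbf z_t\|_\infty \leq H$; this is exactly the content of the hypothesis $\mathbf g \notin \Omega_H$, which forces $\mathscr Z(\mathbf q_t) \subseteq Z_H(\mathbf q_t)$ throughout $[0,T]$. The theorem yields a pointwise bound on $\mathbb E \sum_{\mathbf z_t} \mu^2$ of order $(1+3t)^2 \cdot Q\max(S_i)\, H\sqrt n$; dividing by $(1+t)^2$ and integrating over $[0,T]$ produces the required $O(T)$ bound, with the logarithmic $T$-dependence entering through $H$ from \eqref{eq-H}.

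For the speed integral I exploit the coordinate-wise identity
\[
\frac{(\mathbf f_i \cdot R_i(\mathbf z_t))_{\mathbf a}}{(\mathbf q_{it} \cdot R_i(\mathbf z_t))_{\mathbf a}} = \frac{f_{i\mathbf a}}{g_{i\mathbf a} + t f_{i\mathbf a}},
\]
in which the $\mathbf z_t$-dependent scalar $\rho_{i\mathbf a}\, e^{\mathbf a \mathbf z_t}$ cancels. The $\Lambda_\epsilon$ exclusion (with $\epsilon = \pi/(72S)$) then gives the purely deterministic inequality $|g_{i\mathbf a} + tf_{i\mathbf a}|^2 \geq \sin^2(\epsilon/2)(|g_{i\mathbf a}|^2 + t^2|f_{i\mathbf a}|^2)$ valid for every $i,\mathbf a, t\geq 0$. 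Combined with the $Y_K$ bound $\|\mathbf g_i\| \leq K\sqrt{S_i}$ and with the imbalance $\|\mathbf f_i\|/|f_{i\mathbf a}| \leq \kappa_{\mathbf f}$, this should yield a pathwise estimate of the shape $\|\mathbf f \cdot R(\mathbf z_t)\|^2_{\mathbf q_t \cdot R(\mathbf z_t)} \leq O\!\left(n(K+\kappa_{\mathbf f})/(1+t^2)\right)$; multiplying by $(1+t)^2$ and integrating gives $O(nT)$ per path, and summation over the at most $2Q$ paths (Remark~\ref{num-paths}) supplies the $Q$ factor.

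The main obstacle is exactly this pathwise speed estimate, where the $\Lambda_\epsilon$ factor $1/\sin^2(\epsilon/2) \sim S^2$ must be balanced against the time-varying renormalization; producing the sharp factor $\sqrt{1 + K/4 + \kappa_{\mathbf f}/8}$ will require carefully separating the small-$t$ regime (where $\mathbf g_i$ dominates $t\mathbf f_i$, so the $K$-dependence arises from $\|\mathbf g_i\| \leq K\sqrt{S_i}$) from the large-$t$ regime (where $t\mathbf f_i$ dominates, so the $\kappa_{\mathbf f}$-dependence arises from the worst-coordinate imbalance $|f_{i\mathbf a}| \geq \|\mathbf f_i\|/\kappa_{\mathbf f}$). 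Assembling the two square roots then produces the stated $O(T\cdot\mathrm{LOGS}_T)$ bound, with the outer $n^{1/4}$ inherited from the $\sqrt n$ in Theorem~\ref{old-mainB} under the square root, and the $S\max(S_i)$ factor assembled from $\max(S_i)$ in the same theorem together with $1/\sin^2(\epsilon/2)\sim S^2$ in the speed bound.
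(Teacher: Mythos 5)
Your high-level Cauchy--Schwarz scaffolding — first on the time integral, then on the sum over paths, then on the expectation — mirrors the paper's Lemma~\ref{lem-C1}, and your alternative of bounding $\mathbb E\sum\mu^2$ directly via Theorem~\ref{old-mainB} (rather than $\mathbb E\sum\|M^{-1}\|_{\mathrm F}^2$ via Theorem~\ref{E-M2} together with Lemma~\ref{lemma-condition}(b), as the paper does) is a legitimate variant: the $(1+3L+\cdots)^2/(1+t)^2$ ratio is indeed $O(1)$ and the $n^{1/4}$ and $\max(S_i)$ factors emerge from the same place either way. But the treatment of the speed integral, where the proof actually lives, has a genuine gap.

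The claimed \emph{pathwise} estimate $\|\mathbf f\cdot R(\mathbf z_t)\|^2_{\mathbf q_t\cdot R(\mathbf z_t)}\le O\bigl(n(K+\kappa_{\mathbf f})/(1+t^2)\bigr)$ cannot hold deterministically: at $t=0$ and for small $|g_{i\mathbf a}|$, Lemma~\ref{worst-speed-proj} gives only $\max_{\mathbf a}|f_{i\mathbf a}/g_{i\mathbf a}|^2$, and none of $\Lambda_\epsilon$, $\Omega_H$, $Y_K$ provides a lower bound on $|g_{i\mathbf a}|$ ($Y_K$ is an upper bound). More importantly, your deterministic inequality $|g_{i\mathbf a}+tf_{i\mathbf a}|^2\ge\sin^2(\epsilon/2)\bigl(|g_{i\mathbf a}|^2+t^2|f_{i\mathbf a}|^2\bigr)$ — which is correct — corresponds to taking the \emph{worst-case} angle $\theta=\arg(g_{i\mathbf a}/f_{i\mathbf a})$ over the allowed range $|\theta|\le\pi-\epsilon$; with $\epsilon=\pi/(72S)$ this costs a factor $1/\sin^2(\epsilon/2)\sim S^2$, which after the square root inflates the first Cauchy--Schwarz factor by $\sim S$, a polynomial loss against the proposition's $S\max(S_i)$. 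What saves the paper's Lemma~\ref{lem-C2} is that it does \emph{not} take the worst case: it evaluates $\int_0^T\frac{(t+K)^2}{|g/f+t|^2}\dd t$ by the change of variables $s=y\tau-x$, bounds it by arctangent/$\theta/\sin\theta$ terms depending on the angle, and then integrates over the distribution of the angle, using $\int_0^{\pi-\epsilon}\theta/\sin\theta\,\dd\theta\le\pi\log(2/\epsilon)+\epsilon^2/\sin\epsilon\sim\log S$. The singularity at $\theta\to\pm\pi$ is of order $1/(\pi-|\theta|)$, hence integrable with only a logarithmic cost — an $S\to\log S$ improvement that your worst-case bound discards.

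A secondary consequence of the same gap is that your route does not naturally produce the $K$- and $\kappa_{\mathbf f}$-dependence. In the paper it comes out of the radial integral $\int_0^\infty(|f_{i\mathbf a}|K^2+2Kr+|f_{i\mathbf a}|^{-1}r^2)e^{-r^2}\dd r$, where the $|f_{i\mathbf a}|^{-1}$ term sums to $\kappa_{\mathbf f}\sqrt{n}\sqrt{S}$; this falls out of the polar-coordinate decomposition after the $(t+K)^2$-weighted time integral is rewritten with the substitution $s=y\tau-x$, not from a small-$t$/large-$t$ case split. To repair your argument you would need to replace the worst-case $\sin^2(\epsilon/2)$ bound with the angular-integration step and carry through the $(t+K)^2$ (rather than $(1+t)^2$) weight from Lemma~\ref{lemma-condition}(b), which is where the $\|\mathbf q_{it}\|\le(t+K)\sqrt{S_i}$ growth enters.
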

We need first a preliminary result:
\begin{lemma}\label{worst-speed-proj}
	Let $\mathbf q_t \in \mathscr F$ be smooth at $t_0$, with $\|(\mathbf q_i)_{t_0}\| \ne 0$ for $i=1, \dots, n$. 
Let $\mathbf z \in \mathbb C^n$. Then for all $i$,
\[
	\| \dot {\mathbf q}_{it} \cdot R_i(\mathbf z) \|_{{\mathbf q}_{it} \cdot R_i(\mathbf z)}
\le
	\max_{\mathbf a \in A_i} \frac
	{|(\dot q_{i\mathbf a})_{t_0}|}
	{|(q_{i\mathbf a})_{t_0}|}
.
\]
In particular, if
$\mathbf q_t = t \mathbf f + \mathbf g \in \mathscr F$, then
\[
	\| \dot {\mathbf q}_{it} \cdot R_i(\mathbf z) \|_{{q}_{it} \cdot R_i(\mathbf z) }
	\le \max_{\mathbf a \in A_i} 
	\frac{1}{\left| \frac{g_{i\mathbf a}}{f_{i\mathbf a}} + t\right|}
\]
	and
\[
\| \dot {\mathbf q}_{t} \cdot R(\mathbf z) \|_{{\mathbf q}_{t} \cdot R(\mathbf z) }
	\le \sqrt{\sum_i \left( \max_{\mathbf a \in A_i} 
	\frac{1}{\left| \frac{g_{i\mathbf a}}{f_{i\mathbf a}} + t\right|}\right)^2
	}
	.
\]
\end{lemma}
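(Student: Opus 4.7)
The plan is to exploit the fact that the renormalization operator $R_i(\mathbf z)$ acts diagonally on the coordinates of $\mathscr F_{A_i}$, multiplying the $\mathbf a$-th coordinate by $e^{\mathbf a \mathbf z}$. Thus both $\dot{\mathbf q}_{it}\cdot R_i(\mathbf z)$ and $\mathbf q_{it}\cdot R_i(\mathbf z)$ have the same ``weight pattern'' $|e^{\mathbf a \mathbf z}|^2$ in their squared coordinatewise norms, and the projective ratio collapses to a weighted average of the coordinatewise logarithmic derivatives $\dot q_{i\mathbf a}/q_{i\mathbf a}$.

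First, I will recall that the Fubini–Study norm satisfies the trivial coarse bound
\[
\|\mathbf r\|_{\mathbf x} \;=\; \frac{\|P_{\mathbf x^{\perp}}\mathbf r\|}{\|\mathbf x\|} \;\le\; \frac{\|\mathbf r\|}{\|\mathbf x\|},
\]
since orthogonal projection does not increase norm. Applying this to $\mathbf r = \dot{\mathbf q}_{it}\cdot R_i(\mathbf z)$ and $\mathbf x = \mathbf q_{it}\cdot R_i(\mathbf z)$, and expanding both norms in the orthonormal basis $(V_{i\mathbf a})$, gives
\[
\|\dot{\mathbf q}_{it}\cdot R_i(\mathbf z)\|^2_{\mathbf q_{it}\cdot R_i(\mathbf z)}
\;\le\; \frac{\sum_{\mathbf a \in A_i} |\dot q_{i\mathbf a}|^2\, |e^{\mathbf a\mathbf z}|^2}
{\sum_{\mathbf a \in A_i} |q_{i\mathbf a}|^2\, |e^{\mathbf a\mathbf z}|^2}
\;=\; \frac{\sum_{\mathbf a}\left|\dfrac{\dot q_{i\mathbf a}}{q_{i\mathbf a}}\right|^2 w_{\mathbf a}}{\sum_{\mathbf a} w_{\mathbf a}},
\]
with nonnegative weights $w_{\mathbf a} = |q_{i\mathbf a}|^2 |e^{\mathbf a\mathbf z}|^2$. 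A weighted average is bounded above by the maximum of its entries, yielding the first inequality of the lemma after taking square roots.

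Next, specializing to the straight-line homotopy $\mathbf q_t = \mathbf g + t\mathbf f$, one has $\dot q_{i\mathbf a} = f_{i\mathbf a}$ and $q_{i\mathbf a}(t) = g_{i\mathbf a} + t\, f_{i\mathbf a}$, so
\[
\frac{|\dot q_{i\mathbf a}|}{|q_{i\mathbf a}|} \;=\; \frac{|f_{i\mathbf a}|}{|g_{i\mathbf a} + t f_{i\mathbf a}|} \;=\; \frac{1}{\bigl|g_{i\mathbf a}/f_{i\mathbf a} + t\bigr|},
\]
which is the second displayed inequality. Finally, the multiprojective Fubini–Study norm in Definition~\ref{def-rencondlength} is defined as the Pythagorean sum of the per-factor norms, so squaring and summing over $i$ gives the last inequality of the lemma. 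The only delicate point is that the ratio-of-norms bound is tight enough to survive the weighted-average argument; since no cancellation is expected and the projection is only used in the trivial direction, there is no real obstacle in the argument.
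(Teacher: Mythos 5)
Your proof is correct and is essentially the paper's argument: set $w_{\mathbf a}=\dot q_{i\mathbf a}/q_{i\mathbf a}$ (the renormalization weights $e^{\mathbf a\mathbf z}$ cancel in the ratio), then bound the projective speed by $\max_{\mathbf a}|w_{\mathbf a}|$. The one small difference is that you drop the cross term $|\langle\mathbf u,\mathbf v\rangle|^2$ via the coarse bound $\|\mathbf r\|_{\mathbf x}\le\|\mathbf r\|/\|\mathbf x\|$ and then use the weighted-average estimate, whereas the paper first rewrites $\|\mathbf v\|_{\mathbf u}^2$ exactly as a variance-type double sum in the $w_{\mathbf a}$ before concluding — your shortcut is perfectly valid and actually streamlines the argument.
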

\begin{proof}
Let $\mathbf u = (\mathbf q_{i})_{t_0} \cdot R_i(\mathbf z)$ and $\mathbf v= (\dot {\mathbf q}_{i})_{t_0} \cdot R_i(\mathbf z)$.
\changed{The projection of $\mathbf v$ onto $\mathbf u^{\perp}$ is
	$P_{\mathbf u^{\perp}}(\mathbf v) = \mathbf v - \frac{\langle \mathbf v, \mathbf u \rangle}{\| \mathbf u\|^2} \mathbf u$.}
We compute 
\changed{
	\begin{eqnarray*}
		\| \mathbf v\|_{\mathbf u}^2 &=& \frac{\|P_{\mathbf u^{\perp}}(\mathbf v)\|^2}{\|\mathbf u\|^2}
		\\
		&=&\frac{1}{\|\mathbf u\|^6} \left\| \| \mathbf u\|^2 \mathbf v - \langle \mathbf v,\mathbf u \rangle \mathbf u \right\|^2
\\
	&=&
	\frac{1}{\|\mathbf u\|^4} \left( \|\mathbf u\|^2 \|\mathbf v\|^2 - |\langle \mathbf u,\mathbf v \rangle|^2\right) 
	\end{eqnarray*}}
\changed{Let $w_{\mathbf a} = v_{\mathbf a} / u_{\mathbf a}$. Then,
\[
	\|\mathbf v\|_{\mathbf u}^2 = \frac{\sum_{\mathbf a, \mathbf b} 
	|u_{\mathbf a}|^2|u_{\mathbf b}|^2 w_{\mathbf a} (\bar w_{ \mathbf a} - \bar w_{ \mathbf b})}
{\sum_{\mathbf a, \mathbf b} 
	|u_{\mathbf a}|^2|u_{\mathbf b}|^2 }
=
	\frac{1}{2}
\frac{\sum_{\mathbf a, \mathbf b} 
	|u_{\mathbf a}|^2|u_{\mathbf b}|^2 |w_{ \mathbf a} - w_{ \mathbf b}|^2}
{\sum_{\mathbf a, \mathbf b} 
	|u_{\mathbf a}|^2|u_{\mathbf b}|^2 }
\]
	and so $\secrev{\|\mathbf v\|_{\mathbf u}} \le \max |w_{\mathbf a}|$}.
This proves the first part of the Lemma. The second part comes from taking absolute values of
the expression
\[
	\changed{	w_{\mathbf a}} = \frac{f_{i \mathbf a}}{g_{i \mathbf a}+t f_{i \mathbf a}} = \frac{1}{\frac{g_{i\mathbf a}}{f_{i\mathbf a}} + t}
.
\]
\end{proof}

Lemma~\ref{worst-speed-proj} yields a convenient bound for the integral
$\mathscr L_1$ \secrev{from \eqref{L1}}, viz.
\[
\mathscr L_1((\mathbf q_{\tau}, \mathbf z_{\tau}); 0,T) \le \int_{0}^{T} 
\sqrt{\sum_i \max_{\mathbf a \in A_i}\left( \frac{1}{\left|
\frac{g_{i\mathbf a}}{f_{i\mathbf a}}+t \right|}\right)^2}
\
\mu( \mathbf q_{t} \cdot R(\mathbf z_{t}) , \mathbf 0) \dd t,
\]
and we also apply Lemma~\ref{lemma-condition}(b)
with $\|(\mathbf q_i)_t\| \le (t+K) \sqrt{S_i}$ \changed{and $\kappa_{\rho_i}=\sqrt{S_i}$}. 
Adding over all paths,
\[
\begin{split}
\sum_{\mathbf z_{\tau} \in \mathscr Z(\mathbf q_{\tau})}
\mathscr L_1((\mathbf q_{\tau}, \mathbf z_{\tau}); 0,T) \le 
	c_1 & \int_{0}^{T} 
\sqrt{ \sum_i \max_{\mathbf a \in A_i} \left(\frac{t+K}{\left|
\frac{g_{i\mathbf a}}{f_{i\mathbf a}}+t \right|}\right)^2 }
\\
	&\hspace{4em} \times \sum_{\mathbf z \in Z(\mathbf q_t)} 
\| M( \mathbf q_{t}, \mathbf z)^{-1}\|_{\mathrm F} \dd t.
\end{split}
\]
with $c_1=\sqrt{\sum_i \delta_i^2 } 
\max_i (S_i)
$.
Cauchy-Schwartz inequality applied to the rightmost sum 
yields
\secrev{
\[
\begin{split}
\sum_{\mathbf z_{\tau} \in \mathscr Z(\mathbf q_{\tau})}
\mathscr L_1((\mathbf q_{\tau}, \mathbf z_{\tau}); 0,T) \le 
	c_1 & \int_{0}^{T} 
\sqrt{ \sum_i \max_{\mathbf a \in A_i} \left(\frac{t+K}{\left|
\frac{g_{i\mathbf a}}{f_{i\mathbf a}}+t \right|}\right)^2 }
\\
	&\hspace{4em} 
	\times \sqrt{\# Z(\mathbf q_t)} \sqrt{
\sum_{\mathbf z \in Z(\mathbf q_t)} 
\| M( \mathbf q_{t}, z_{t})^{-1}\|_{\mathrm F}^2  
}
\,	\dd t .
\end{split}
\]
Since $\# Z(\mathbf q_t) = \frac{n!V}{\det(\Lambda)}$, we
replace   
$c_2 \defeq c_1
\sqrt{\frac{n!V}{\det \Lambda}}$ in the integral above.
Applying Cauchy-Schwartz again,
we obtain:
\begin{eqnarray*}
\sum_{\mathbf z_{\tau} \in \mathscr Z(\mathbf q_{\tau})}
	\mathscr L_1((\mathbf q_{\tau}, \mathbf z_{\tau}); 0,T) \le \hspace{-7em}&& 
	\\
	&\le&
	c_2  \int_{0}^{T} 
\sqrt{ \sum_i \max_{\mathbf a \in A_i} \left(\frac{t+K}{\left|
\frac{g_{i\mathbf a}}{f_{i\mathbf a}}+t \right|}\right)^2 }
	\sqrt{
\sum_{\mathbf z \in Z(\mathbf q_t)} 
\| M( \mathbf q_{t}, z_{t})^{-1}\|_{\mathrm F}^2
}
\,	\dd t .
\\
	&\le&
	c_2  
\sqrt{ 
	\int_{0}^{T} 
	\sum_i \max_{\mathbf a \in A_i} \left(\frac{t+K}{\left|
\frac{g_{i\mathbf a}}{f_{i\mathbf a}}+t \right|}\right)^2 \, \dd t}
	\sqrt{
	\int_{0}^{T} 
\sum_{\mathbf z \in Z(\mathbf q_t)} 
\| M( \mathbf q_{t}, z_{t})^{-1}\|_{\mathrm F}^2
\,	\dd t 
} .
\end{eqnarray*}
}

We would like at this point to pass to the conditional \secrev{expectations}. 
Using Cauchy-Schwartz \secrev{once more}, 
\[
\begin{split}
\expected{\mathbf g \sim \secrev{\mathcal N}(\mathbf 0,I; \mathscr F)}
{
\sum_{\mathbf z_{\tau} \in \mathscr Z(\mathbf q_{\tau})}
	\mathscr L_1((\mathbf q_{\tau}, \mathbf z_{\tau}); 0,T)
\conditional \mathbf g \not \in \Lambda_{\epsilon} \cup \Omega_H \cup Y_K
} 
	&\le 
	\\
	&\hspace{-22em}\le
	c_2  
\
\sqrt{
\expected{\mathbf g \sim \secrev{\mathcal N}(\mathbf 0,I; \mathscr F)}{
\int_{0}^{T} 
	\sum_i \max_{\mathbf a \in A_i} \frac{(t+K)^2}{\left|
\frac{g_{i\mathbf a}}{f_{i\mathbf a}}+t \right|^2}
\dd t
\conditional \mathbf g \not \in \Lambda_{\epsilon} \cup \Omega_H \cup Y_K
}} 
\\
	&	\hspace{-22em}\times \sqrt{
\expected{\mathbf g \sim \secrev{\mathcal N}(\mathbf 0,I; \mathscr F)}{
\int_{0}^{T} 
\sum_{\mathbf z \in Z_H(\mathbf q_t)} 
	\|M( \mathbf q_{t},\mathbf z_{t})^{-1}\|_{\mathrm F}^2
\dd t
\conditional \mathbf g \not \in \Lambda_{\epsilon} \cup \Omega_H \cup Y_K
}}. 
\end{split}
\]
Above, we used the fact that $\mathbf g \not \in \Omega_H$ \changed{to replace
the sum over 
$Z(\mathbf q_t)$ by the sum over $Z_H(\mathbf q_t)$}.
\changed{We simplify the conditions on the choice of $\mathbf g$ as follows:}
for any positive measurable function $\phi: \mathscr F \rightarrow \mathbb R$, we have
\begin{eqnarray*}
\expected{\mathbf g \sim \secrev{\mathcal N}(\mathbf 0,I; \mathscr F)}{\phi(g) \conditional g \not \in \Lambda_{\epsilon} \cup \Omega_H \cup Y_K}
	&\le&\\
	&& \hspace{-15em}
	\le \expected{\mathbf g \sim \secrev{\mathcal N}(\mathbf 0,I; \mathscr F)}{\phi(g) \conditional g \not \in \Lambda_{\epsilon}}
\frac{ \probability{\mathbf g \sim \secrev{\mathcal N}(\mathbf 0,I; \mathscr F)}{\mathbf g \not \in \Lambda_{\epsilon}}}
{ \probability{\mathbf g \sim \secrev{\mathcal N}(\mathbf 0,I; \mathscr F)}{\mathbf g \not \in \Lambda_{\epsilon} \cup \Omega_H \cup Y_K}}.
\end{eqnarray*}
\secrev{As explained in the beginning of Section~\ref{overview-linear}, the}
\changed{exclusion sets satisfy the bounds
$\frac{71}{72} \le \probability{\mathbf g \sim \secrev{\mathcal N}(\mathbf 0,I; \mathscr F)}{\mathbf g \not \in \Lambda_{\epsilon}}\le 1$ and $\frac{7}{8} \le \probability{\mathbf g \sim \secrev{\mathcal N}(\mathbf 0,I; \mathscr F)}{\mathbf g \not \in \Lambda_{\epsilon} \cup \Omega_H \cup Y_K} \le 1$.
Therefore,}
\[
	\changed{\expected{\mathbf g \sim \secrev{\mathcal N}(\mathbf 0,I; \mathscr F)}{\phi(g) \conditional g \not \in \Lambda_{\epsilon} \cup \Omega_H \cup Y_K}
	\le
\frac{8}{7} \expected{\mathbf g \sim \secrev{\mathcal N}(\mathbf 0,I; \mathscr F)}{\phi(g)\conditional g \not \in \Lambda_{\epsilon}}
}
\]
\changed{and similarly,}
\[
\expected{\mathbf g \sim \secrev{\mathcal N}(\mathbf 0,I; \mathscr F)}{\phi(g) \conditional g \not \in \Lambda_{\epsilon} \cup \Omega_H \cup Y_K}
	\le
\frac{8}{7} \expected{\mathbf g \sim \secrev{\mathcal N}(\mathbf 0,I; \mathscr F)}{\phi(g)}
.
\]
We have proved that
\begin{lemma}\label{lem-C1} On the hypotheses above,
\[
\begin{split}
\expected{\mathbf g \sim \secrev{\mathcal N}(\mathbf 0,I; \mathscr F)}
{
\sum_{\mathbf z_{\tau} \in \mathscr Z(\mathbf q_{\tau})}
	\mathscr L_1((\mathbf q_{\tau}, \mathbf z_{\tau}); 0,T)
\conditional \mathbf g \not \in \Lambda_{\epsilon} \cup \Omega_H \cup Y_K
} 
&\le \\
	& \hspace{-16em} \le c_3
\
\sqrt{
\expected{\mathbf g \sim \secrev{\mathcal N}(\mathbf 0,I; \mathscr F)}{
\int_{0}^{T} 
	\sum_i \max_{\mathbf a \in A_i} \frac{(t+K)^2}{\left|
\frac{g_{i\mathbf a}}{f_{i\mathbf a}}+t \right|^2}
\dd t
\conditional \mathbf g \not \in \Lambda_{\epsilon}
}} 
\\
	&\hspace{-14em}\times \sqrt{
\expected{\mathbf g \sim \secrev{\mathcal N}(\mathbf 0,I; \mathscr F)}{
\int_{0}^{T} 
\sum_{\mathbf z \in Z_H(\mathbf q_t)} 
	\|M( \mathbf q_{t},\mathbf z_{t})^{-1}\|_{\mathrm F}^2
\dd t
}}. 
\end{split}
\]
	with $c_3 = \changed{\frac{8}{7}} \sqrt{\sum_i \delta_i^2 } 
	\max_i (S_i) 
\sqrt{\frac{n!V}{\det \Lambda}}$.
\end{lemma}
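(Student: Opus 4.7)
The plan is to formalize the chain of inequalities sketched in the discussion immediately preceding the statement, in four successive steps, each justified by a lemma already proved in the excerpt.

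First, I would apply Lemma~\ref{worst-speed-proj} with $\mathbf q_t = \mathbf g + t\mathbf f$, which yields the pointwise-in-$t$ bound
\[
\|\dot{\mathbf q}_t \cdot R(\mathbf z_t)\|_{\mathbf q_t \cdot R(\mathbf z_t)}
\le \sqrt{\sum_i \max_{\mathbf a \in A_i} \frac{1}{|g_{i\mathbf a}/f_{i\mathbf a} + t|^2}},
\]
independent of $\mathbf z_t$. Then I would invoke Lemma~\ref{lemma-condition}(b) to bound the renormalized condition number by $\sqrt{\sum \delta_i^2}\max_i(\kappa_{\rho_i}\|(\mathbf q_i)_t\|)\,\|M(\mathbf q_t,\mathbf z_t)^{-1}\|_{\mathrm F}$; since we impose $\mathbf g \not\in Y_K$, we have $\|(\mathbf q_i)_t\| \le (t+K)\sqrt{S_i}$, and the hypothesis $\rho_{i\mathbf a} \equiv 1$ gives $\kappa_{\rho_i} = \sqrt{S_i}$. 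This produces the prefactor $(t+K)$ inside the integrand, and contributes the constant $c_1 = \sqrt{\sum \delta_i^2}\max_i(S_i)$.

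Next, I would sum over the $n!V/\det\Lambda$ paths and apply Cauchy--Schwarz twice. The first application is to the sum over $\mathbf z \in Z(\mathbf q_t)$, extracting a factor $\sqrt{\#Z(\mathbf q_t)} = \sqrt{n!V/\det\Lambda}$ and leaving $\sqrt{\sum_{\mathbf z} \|M^{-1}\|_{\mathrm F}^2}$; this yields the constant $c_2 = c_1 \sqrt{n!V/\det\Lambda}$. The second application separates the two factors in the time integral on $[0,T]$. Since we additionally condition on $\mathbf g \not\in \Omega_H$, every root of $\mathbf q_t$ for $t \in [0,T]$ automatically satisfies $\|\Re(\mathbf z)\|_\infty \le H$, so we may replace $Z(\mathbf q_t)$ by $Z_H(\mathbf q_t)$ in the second factor.

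The final step removes the conditioning, using the general identity that for a positive measurable $\phi$ and any event $E \supseteq E'$,
\[
\mathbb E[\phi \mid E'] \le \frac{\Pr[E]}{\Pr[E']}\,\mathbb E[\phi \mid E].
\]
The lower bounds $\Pr[\mathbf g\not\in \Lambda_\epsilon] \ge 71/72$ and $\Pr[\mathbf g \not\in \Lambda_\epsilon \cup \Omega_H \cup Y_K] \ge 7/8$ established in Section~\ref{overview-linear} allow each factor to be bounded by $(8/7)$ times the corresponding weaker conditional expectation: the first factor, which depends on $\mathbf g$ only through the ratios $g_{i\mathbf a}/f_{i\mathbf a}$, needs only the condition $\mathbf g\not\in\Lambda_\epsilon$ (otherwise the integrand may blow up when some $g_{i\mathbf a}/f_{i\mathbf a} = -t$), whereas the second factor requires no condition at all. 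Combining gives the claim with $c_3 = (8/7) c_2$. The main organizational point — there is no real analytic obstacle — is getting the two Cauchy--Schwarz applications in the right order so that each resulting expectation decouples into a quantity controllable by Theorem~\ref{E-M2} (the $\|M^{-1}\|_{\mathrm F}^2$ integral) and a quantity controllable by a one-dimensional Gaussian tail estimate on the coordinates (the $1/|g_{i\mathbf a}/f_{i\mathbf a}+t|^2$ integral, to be handled in the sequel).
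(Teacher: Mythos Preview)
Your proposal is correct and follows essentially the same route as the paper. The one small omission is that you need a \emph{third} Cauchy--Schwarz application---on the conditional expectation itself---to pass from $\mathbb E\bigl[\sqrt{\int A_t\,dt}\sqrt{\int B_t\,dt}\,\big|\,\cdot\,\bigr]$ to $\sqrt{\mathbb E[\int A_t\,dt\mid\cdot\,]}\,\sqrt{\mathbb E[\int B_t\,dt\mid\cdot\,]}$ before you relax the conditioning on each factor; the paper does this explicitly (``Using Cauchy-Schwartz once more'') and it is what produces the $\sqrt{8/7}\cdot\sqrt{8/7}=8/7$ in $c_3$.
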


Now we need to bound the two expectations in the bound above.
\secrev{Theorem~\ref{E-M2}
with $\sigma_{k,\mathbf a}=1$ provides a bound for the second one:
\begin{equation}\label{E-C2-2}
	\expected{\mathbf g \sim \secrev{\mathcal N}(\mathbf 0,I; \mathscr F)}{
\int_{0}^{T} 
\sum_{\mathbf z \in Z_H(\mathbf q_t)} 
	\|M( \mathbf q_{t},\mathbf z_{t})^{-1}\|_{\mathrm F}^2
\dd t
}
\le
	\frac{2HT\sqrt{n}}{\det(\Lambda)}
	\thirdrev{n}! V'.
\end{equation}}

\begin{lemma} \label{lem-C2}
\[
\begin{split}
\expected{\mathbf g \sim \secrev{\mathcal N}(\mathbf 0,I; \mathscr F)}{
\int_{0}^{T} 
\sum_i
	\max_{\mathbf a \in A_i} \frac{(t+K)^2}{\left|
\frac{g_{i\mathbf a}}{f_{i\mathbf a}}+t \right|^2}
\dd t
\conditional \mathbf g \not \in \Lambda_{\epsilon}
} 
	&\le
	\\\hspace{10em}&\hspace{-10em}
\changed{
	\le \left( \frac{\sqrt{\pi}}{2}+1\right)}
\changed{	\left( \pi \log(2/\epsilon) + \frac{\epsilon^2}{\sin(\epsilon)} \right)
	}\changed{(K^2 + \kappa_{\mathbf f}/2) S}
+2 S T
\end{split}
\]
\end{lemma}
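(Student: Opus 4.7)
\textbf{Proof plan for Lemma~\ref{lem-C2}.}

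The strategy is to reduce the max over $A_i$ to a sum, decouple via independence of the $g_{i\mathbf a}$, and then analyze a single-variable Gaussian integral in polar coordinates.

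First, by Fubini one swaps the expectation with the $t$-integral, and the bound $\max_{\mathbf a\in A_i}\xi_{\mathbf a} \le \sum_{\mathbf a \in A_i} \xi_{\mathbf a}$ (for non-negative $\xi$) reduces the problem to estimating, term-by-term,
\[
\sum_{i,\mathbf a}\int_0^T (t+K)^2\, \mathbb{E}\!\left[\frac{1}{|g_{i\mathbf a}/f_{i\mathbf a}+t|^2} \,\Big|\, \mathbf g \notin \Lambda_\epsilon\right] dt.
\]
Since the $g_{i\mathbf a}$ are independent and $\Lambda_\epsilon$ is the union of individual conditions $|\arg(g_{i\mathbf a}/f_{i\mathbf a})| \ge \pi - \epsilon$, the conditioning factorizes and each inner expectation depends only on a single complex Gaussian $w = g/f$, conditioned on $|\arg w| < \pi - \epsilon$ (so $w \sim \mathcal N(0, 1/|f|^2; \mathbb C)$, restricted to a sector).

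Second, the decomposition $(t+K)^2 = |(t+w)+(K-w)|^2 \le 2|t+w|^2 + 2|K-w|^2$ yields
\[
\int_0^T \frac{(t+K)^2}{|w+t|^2}\, dt \le 2T + 2|K-w|^2\, J(w), \qquad J(w) := \int_0^T \frac{dt}{|w+t|^2}.
\]
Summed over $(i,\mathbf a)$, the $2T$-term contributes exactly the $2ST$ summand in the target bound. The remaining task is to control $\mathbb{E}[|K-w|^2 J(w) \mid \text{cond}]$ per coordinate.

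Third, pass to polar $w=re^{i\theta}$: conditional on $|\arg w| < \pi - \epsilon$, the variable $\theta$ is uniform on $(-\pi+\epsilon, \pi-\epsilon)$ and $r$ is Rayleigh with density $2|f|^2 r e^{-|f|^2 r^2}$, independent of $\theta$. The explicit closed form
\[
J(w) = \frac{1}{r|\sin\theta|}\left[\arctan\frac{T+r\cos\theta}{r|\sin\theta|}-\arctan\frac{r\cos\theta}{r|\sin\theta|}\right] \le \frac{\pi}{r|\sin\theta|},
\]
sharpened to $J(w) \le \pi/(2r)$ whenever $\cos\theta \ge 0$ (where $|w+t|^2 \ge r^2+t^2$), combined with the expansion $|K-w|^2 = K^2 - 2Kr\cos\theta + r^2$, lets us integrate piece-by-piece. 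The radial integrals against the Rayleigh density produce negative moments $\mathbb E[r^{-1}] = |f|\sqrt\pi$ and a finite contribution from $\mathbb E[r]=\sqrt\pi/(2|f|)$; the angular integrals split into $\cos\theta\ge 0$ and $\cos\theta<0$. On the ``bad half'' $\cos\theta<0$, the integral $\int_{\pi/2}^{\pi-\epsilon} d\theta/\sin\theta = \log\cot(\epsilon/2) \sim \log(2/\epsilon)$ produces the $\pi \log(2/\epsilon)$ factor, while $\epsilon^2/\sin\epsilon$ arises as a boundary correction at $\theta = \pm(\pi-\epsilon)$. Summing across $(i,\mathbf a)$ using $|f_{i\mathbf a}|^{-1} \le \kappa_{\mathbf f}/\|\mathbf f_i\| = \kappa_{\mathbf f}/\sqrt{S_i}$ and Cauchy--Schwarz then consolidates the radial contributions into the advertised $(K^2 + \kappa_{\mathbf f}/2)S$.

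The main obstacle is the angular integration: the crude global bound $J\le \pi/(r|\sin\theta|)$ by itself produces a divergent $\int d\theta/|\sin\theta|$ at $\theta = 0$, even though $w$ near the positive real axis is harmless because then $|w+t|$ stays uniformly bounded away from zero. One must therefore split the angular domain into $\cos\theta \ge 0$ and $\cos\theta < 0$ and use the refined estimate $|w+t|^2\ge r^2+t^2$ on the good side, suppressing the phantom singularity at $\theta=0$; it is this angular surgery that both produces the pair of constants $\pi\log(2/\epsilon)$ and $\epsilon^2/\sin\epsilon$, and correctly separates the contributions into the $K^2$ and $\kappa_{\mathbf f}/2$ parts.
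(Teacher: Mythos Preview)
Your strategy is sound and would produce a bound of the correct order, but it takes a different route from the paper and does not reproduce the stated constants.

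The paper's key observation is that your $J(w)$ admits the sharper bound $J(w) \le |\arg w|/(r|\sin\theta|)$, not merely $\pi/(r|\sin\theta|)$: writing $w = x+\sqrt{-1}\,y$ with $y>0$, one has
\[
J(w) = \frac{1}{y}\Bigl(\arctan\frac{T+x}{y} - \arctan\frac{x}{y}\Bigr) \le \frac{1}{y}\Bigl(\frac{\pi}{2} - \arctan\frac{x}{y}\Bigr) = \frac{\arg w}{y} = \frac{\theta}{r\sin\theta}.
\]
The resulting angular integrand $\theta/\sin\theta$ is continuous at $\theta=0$, so the phantom singularity you identify never appears and no surgery is needed. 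The paper then evaluates $\int_0^{\pi-\epsilon}\frac{\theta}{\sin\theta}\,d\theta$ via an explicit primitive involving the dilogarithm $\mathrm{Li}_2$, and it is precisely this primitive that produces the pair $\pi\log(2/\epsilon)$ and $\epsilon^2/\sin\epsilon$. Under your surgery, the bad-half angular integral is $\int_{\pi/2}^{\pi-\epsilon}\frac{d\theta}{\sin\theta}=\log\cot(\epsilon/2)$, which is only asymptotically $\log(2/\epsilon)$ and carries no dilogarithmic term; your description of $\epsilon^2/\sin\epsilon$ as a ``boundary correction'' is pattern-matching to the statement rather than something your integrand actually yields. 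Similarly, the radial constant $(\sqrt\pi/2+1)$ in the paper comes from $\int_0^\infty(|f|K^2 + 2Kr + |f|^{-1}r^2)e^{-r^2}\,dr$ after the paper's specific expansion, and your split with the extra factor of $2$ and the term $|K-w|^2$ would give different numerics.

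For the downstream use in Proposition~\ref{prop-C1} the constants are immediately absorbed into a generic $c_4$, so your route with angular surgery would be perfectly adequate there; it just does not prove Lemma~\ref{lem-C2} with the displayed constants.
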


\begin{proof}
We start by bounding the maximum by the sum,
\[
\int_{0}^{T} 
	\sum_i \max_{\mathbf a \in A_i} \frac{(t+K)^2}{\left|
\frac{g_{i\mathbf a}}{f_{i\mathbf a}}+t \right|^2}
\dd t
\le 
\int_{0}^{T} 
	\sum_{i} \sum_{\mathbf a \in A_i} \frac{(t+K)^2}{\left|
\frac{g_{i\mathbf a}}{f_{i\mathbf a}}+t \right|^2}
\dd t
= 
	\sum_{i} \sum_{\mathbf a \in A_i} 
\int_{0}^{T} 
	\frac{(t+K)^2}{\left|
\frac{g_{i\mathbf a}}{f_{i\mathbf a}}+t \right|^2}
\dd t
.
\]
An elementary change of variables yields
\[
\int_0^T
	\frac{(t+K)^2}{\left|
\frac{g_{i\mathbf a}}{f_{i\mathbf a}}+t \right|^2}
\dd t
=
\int_0^T
	\frac{\left( |f_{i\mathbf a}|t + K|f_{i\mathbf a}|\right)^2}{\left|
\frac{g_{i\mathbf a}}{f_{i\mathbf a}}
|f_{i\mathbf a}|
+
|f_{i\mathbf a}|
t \right|^2}
\dd t
=
|f_{i\mathbf a}|^{-1}
\int_0^{|f_{i\mathbf a}|T}
	\frac{\left(s + K|f_{i\mathbf a}|\right)^2}{|z+s|^2}\dd s
\]
	\changed{where $z =  g_{i\mathbf a} |f_{i\mathbf a}|/f_{i\mathbf a}$ is a random variable with probability distribution $\secrev{\mathcal N}(\mathbf 0,1; \mathbb C)$
	subject to the condition 
\[
	\left| \arg \left( z\right) \right| < \pi-\epsilon
\]
	because $\mathbf g \not \in \Lambda_{\epsilon}$. 
	Let $\Delta_{\epsilon} \secrev{\defeq} \{ z \in \mathbb C: 
	-\pi + \epsilon \le \arg(z) \le \pi-\epsilon\}$. For any rotationally invariant distribution, ${\mathrm {Prob}}[z \in \Delta_{\epsilon}]=\frac{\pi-\epsilon}{\pi}$.
Therefore, the probability density function of $z$ is
\[
\frac{1}{\pi-\epsilon} 
e^{-|z|^2}
.
\]}
	We need another change of variables: write $z=x+\sqrt{-1}y$ and replace $s$ \changed{by} $y \tau- x$. Assume first that $y >0$:
\begin{eqnarray*}
\int_0^T
	\frac{(t+K)^2}{\left|
\frac{g_{i\mathbf a}}{f_{i\mathbf a}}+t \right|^2}
\dd t
	&=&
|f_{i\mathbf a}|^{-1}
\frac{1}{y}
\int_{x/y}^{\left(|f_{i\mathbf a}|T + x \right)/y}
\frac{\left(y \tau -  x + K|f_{i\mathbf a}|\right)^2}{1+\tau^2} \dd\tau 
\\
&\le&
2
|f_{i\mathbf a}|^{-1}
\frac{1}{y}
\int_{x/y}^{\left(|f_{i\mathbf a}|T + x \right)/y}
	\frac{ y^2 \tau^2 + \left( x - K|f_{i\mathbf a}|\right)^2}{1+\tau^2} \dd\tau 
\end{eqnarray*}
The case $y<0$ is the same with the sign and the integration limits reversed.
The expression above can be expanded as follows:
\begin{equation}\label{three-parts}
\begin{split}
		\int_0^T
	\frac{(t+K)^2}{\left|
\frac{g_{i\mathbf a}}{f_{i\mathbf a}}+t \right|^2}
\dd t
\le
	2\left(
|f_{i\mathbf a}|
	K^2 y^{-1}
-
	2Kx y^{-1}
+ |f_{i\mathbf a}|
	^{-1} x^2 y^{-1}
	\right)	
	A_0(z)
\\
+
2 |f_{i\mathbf a}|^{-1}
	y A_2(z)
\end{split}
\end{equation}
with
\[
	A_i(z) = 
\int_{x/y}^{\left(|f_{i\mathbf a}|T+x\right)/y}
	\frac{\tau^i}{1+\tau^2}\dd \tau
.\]
We can integrate, assuming again $y>0$:
\begin{eqnarray*}
	A_0(z) &=& 
\left[ \arctan(\tau) \right]
_{\tau=xy^{-1}}^{\tau=\left(|f_{i\mathbf a}|T+x\right)y^{-1}}
\\
	&\le&
\left[ \arctan(\tau) \right]
_{\tau=xy^{-1}}^{\infty}
\\
&=& \arg(z)
\\
	A_2(z) &=& 
\left[ \tau-\arctan(\tau) \right]
_{\tau=xy^{-1}}^{\tau=\left(|f_{i\mathbf a}|T+x\right)y^{-1}}
\\
	&=& |f_{i\mathbf a}|Ty^{-1} - A_0(z)
\end{eqnarray*}
Replacing in Equation \eqref{three-parts},
\[
\int_0^T
\frac{(t+K)^2}{\left|
\frac{g_{i\mathbf a}}{f_{i\mathbf a}}+t \right|^2}
\dd t
\le
2\left(
|f_{i\mathbf a}|
	K^2 
-
	2Kx 
+ |f_{i\mathbf a}|^{-1} (x^2-y^2)
	\right)	
		\frac{A_0(z)}{y}
+
2 T
\]
Passing to polar coordinates $x=r \cos(\theta)$, $y=r \sin(\theta)$ we
can bound
\[
\int_0^T
\frac{(t+K)^2}{\left|
\frac{g_{i\mathbf a}}{f_{i\mathbf a}}+t \right|^2}
\dd t
\le
2\changed{
\left(
|f_{i\mathbf a}|
	K^2 
+
	2Kr 
+ |f_{i\mathbf a}|^{-1} r^2 
		\right)}
		\frac{\theta}{r \sin(\theta)}
+
2 T
\]
Above, we used trivial bounds $-1 \le \cos(\theta) \le 1$, $\cos^2(\theta)-\sin^2(\theta) \le 1$. Notice also that the left hand side of \eqref{three-parts} is 
symmetric
with respect to $y = \Im\left(\frac{g_{i\mathbf a}}{f_{i\mathbf a}}\right) |f_{i\mathbf a}|$. 
We need to bound
\begin{eqnarray*}
	\expected{\mathbf z \in \Delta_{\epsilon}}{\sum_{i, \mathbf a} 
\int_{0}^{T} 
	\frac{(t+K)^2}{\left|
\frac{g_{i\mathbf a}}{f_{i\mathbf a}}+t \right|^2}
	\dd t}
	\le\hspace{-12em}&&\\
&\le& \frac{2}{\pi - \epsilon}
\sum_{i, \mathbf a}
\int_{-\pi + \epsilon}^{\pi - \epsilon}
\int_{0}^{\infty}
	\changed{\left( 
|f_{i\mathbf a}|
	K^2 
+
	2Kr 
+ 
	|f_{i\mathbf a}|^{-1} r^2 
	\right)}
e^{-r^2}
\frac{\theta}{\sin(\theta)}
\dd r
\dd \theta
+ 2S T
\end{eqnarray*}
The integral above clearly splits. The integral in $r$ is
trivial:
\begin{eqnarray*}
\sum_{i, \mathbf a}
\int_{0}^{\infty}
\left(
|f_{i\mathbf a}|
	K^2 
+
	2Kr 
+ |f_{i\mathbf a}|^{-1} r^2 
	\right)	
e^{-r^2}
\dd r
	=\hspace{-10em}	&&\\
	&=&\frac{1}{2}
\sum_{i, \mathbf a}
\left(
|f_{i\mathbf a}|
	K^2 \sqrt{\pi}
+
	2K 
+ \frac{1}{2} |f_{i\mathbf a}|^{-1} \sqrt{\pi}
\right)
\\
&\le&
\frac{1}{2} \|  \mathbf f\| K^2 \sqrt{S} \sqrt{\pi}
+ KS + \frac{1}{4}	\sqrt{\pi} 
	\sum_{i,\mathbf a} |f_{i\mathbf a}|^{-1}
\\
&\le&
\frac{1}{2} K^2 S \sqrt{\pi}
	+ KS + \frac{1}{4}	\kappa_{\mathbf f}  \sqrt{n}\sqrt{S} \sqrt{\pi} 
\end{eqnarray*}
\changed{using $\|\mathbf f\|=\sqrt{S}$ and 
$\kappa_{\mathbf f}=\max_{i,\mathbf a} \frac{\|\mathbf f_i\|}{|f_{i\mathbf a}|}
=
\max_{i,\mathbf a} \frac{\sqrt{S_i}}{|f_{i\mathbf a}|}
$.}
An elementary bound of the right-hand side yields
\[\sum_{i, \mathbf a}
\int_{0}^{\infty}
\left(
|f_{i\mathbf a}|
	K^2 
+
	2Kr 
+ |f_{i\mathbf a}|^{-1} r^2 
	\right)	
e^{-r^2}
\dd r
\le
\changed{\left( \frac{\sqrt{\pi}}{2}+1\right)}	(K^2 + \kappa_{\mathbf f}/2) S
\]

\changed{We still need to bound the integral with respect to $\theta$.}
A primitive $F(\theta)$ for $\frac{\theta}{\sin(\theta)}$ is known:
\[
\begin{split}
	F(\theta) = &\theta \left(\log\left(1-e^{\sqrt{-1}\theta}\right) - \log\left(1+e^{\sqrt{-1}\theta}\right)\right)
	+ \\
	& \hspace{10em} +	\sqrt{-1} \left( 
\Li\left(-e^{\sqrt{-1}\theta}\right)  
-
\Li\left(e^{\sqrt{-1}\theta}\right)  
\right)
\end{split}
\]
where $\Li(z) = \sum_{k=1}^{\infty} \frac{z^k}{k^2}$ is the {\em polylogarithm}
or {\em Jonquière's function}, and the identity $F'(\theta)=\frac{\theta}{\sin(\theta)}$ can be deduced from the property $\Li(z)'= -\frac{\log(1-z)}{z}$. 
Since $\lim_{\theta \rightarrow 0} \Re(F(\theta))=0$, we have
\begin{eqnarray*}
\int_{0}^{\pi-\epsilon}
\frac{\theta}{\sin(\theta)}
\dd \theta
	&=&
	\Re(F(\pi-\epsilon)) \\
	&=&
	\pi \Re\left(\log\left(1+e^{-\sqrt{-1}\epsilon}\right) - \log\left(1-e^{-\sqrt{-1}\epsilon}\right)\right)-\Re(F(-\epsilon))
\\
	&\le&
	\pi \log(2/\epsilon)+ \epsilon^2/\sin(\epsilon)
\end{eqnarray*}
Putting all together and bounding $\sqrt{n} \le \sqrt{S}$,
\[
\begin{split}
	\expected{\mathbf z \in \Delta_{\epsilon}}{\sum_{i, \mathbf a} 
\int_{0}^{T} 
	\frac{(t+K)^2}{\left|
\frac{g_{i\mathbf a}}{f_{i\mathbf a}}+t \right|^2}
	\dd t}
\le
\changed{
\left( \frac{\sqrt{\pi}}{2}+1\right)
\left( \pi \log(2/\epsilon) + \frac{\epsilon^2}{\sin(\epsilon)} \right)
(K^2 + \kappa_{\mathbf f}/2) S}
\\
+2 S T
\end{split}
\]
\end{proof}

\begin{proof}[Proof of Proposition~\ref{prop-C1}]
	In Lemma~\ref{lem-C2}, we replace $\epsilon$ by \changed{its value
	$\pi/(72 S)$. We have}
	$\log (2/{\epsilon}) \le \log S + \log(\changed{144}/\pi)$. Also
	\thirdrev{for small $\epsilon$, for instance for $\epsilon \le 
	\pi/288$,}
	$\epsilon^2/\sin(\epsilon)$ is bounded by $\thirdrev{
		\frac{\epsilon}{1-\frac{1}{3!}\epsilon^2}} < \log(2/\epsilon)$
	and
\[
\pi \log (2/\epsilon) + \frac{\epsilon^2}{\sin(\epsilon)} \le c_4 \log(S)
\]
for some constant $c_4$. 
	\[
\sqrt{
\expected{\mathbf g \sim \secrev{\mathcal N}(\mathbf 0,I; \mathscr F)}{
\int_{0}^{T} 
	\max_{i, \mathbf a} \frac{(t+K)^2}{\left|
\frac{g_{i\mathbf a}}{f_{i\mathbf a}}+t \right|^2}
\dd t
\conditional \mathbf g \not \in \Lambda_{\epsilon}
}} 
\le
	\sqrt{c_5 \left(K^2+\frac{\kappa_{\mathbf f}}{2}\right) S \log(S) + 2 S T} 
\]
\changed{for $c_5=(\sqrt{\pi}/2+1)c_4$.}
	\secrev{In the statement of Theorem~\ref{condlength}, we
	required that
	$T > 2K$. Since $K \ge 1$, the expression above simplifies to}
\[
\sqrt{
\expected{\mathbf g \sim \secrev{\mathcal N}(\mathbf 0,I; \mathscr F)}{
\int_{0}^{T} 
	\max_{i, \mathbf a} \frac{(t+K)^2}{\left|
\frac{g_{i\mathbf a}}{f_{i\mathbf a}}+t \right|^2}
\dd t
\conditional \mathbf g \not \in \Lambda_{\epsilon}
}} 
	\le 
	\sqrt{2 c_5
	S \log(S) 
	\left(1+\frac{K}{4}+\frac{\kappa_{\mathbf f}}{8}\right) 
	T}.
\]
	Combining with \secrev{\eqref{E-C2-2}},
	\secrev{Lemma~\ref{lem-C1} yields:}	
	\begin{equation}\label{E1}
\begin{split}
	E_1 &\defeq \expected{\mathbf g \sim \secrev{\mathcal N}(\mathbf 0,I; \mathscr F)}
{
\sum_{\mathbf z_{\tau} \in \mathscr Z(\mathbf q_{\tau})}
	\mathscr L_1((\mathbf q_{\tau}, \mathbf z_{\tau}); 0,T)
\conditional \mathbf g \not \in \Lambda_{\epsilon} \cup \Omega_H \cup Y_K
} 
\\
	&\le 
	\changed{\frac{\thirdrev{16}}{7}} 
	n^{\frac{1}{4}}
	\max_i (S_i) 
	\sqrt{
		c_5 \changed{H}
		S \log(S) 
		\left(1+\frac{K}{4}+\frac{\kappa_{\mathbf f}}{8}\right) 
		\frac{
	(\sum_i \delta_i^2)
	(n!V) (n! V' \eta)
	}
	{
		\changed{\eta} \det(\Lambda)^2
	}} \ T. 
\end{split}
\end{equation}
We picked $H \le \frac{n}{\eta} \changed{O(\secrev{\mathrm{LOGS}_T})}$
\changed{in Equation \eqref{eq-H}}. Recall from
Remark~\ref{remark-on-eta} that $\eta_i \le 2 \delta_i$, whence
$\eta = \min \eta_i \le 2 \delta_i$ for all $i$. Thus,
\[
	\eta^2 \le \frac{4}{n}\sum_{i=1}^n \delta_i^2
\]
This allows us to bound
\[
\secrev{ H \le
\frac{n}{\eta} O(\mathrm{LOGS}_T)
\le
\frac{4}{\eta^3} 
\left(\sum_{i=1}^n \delta_i^2\right)
O(\mathrm{LOGS}_T)
}
\]
\secrev{
	Replacing $H$ in \eqref{E1} by the estimate above, we obtain
\[
E_1 \le
\frac{32}{7} 
	n^{\frac{1}{4}}
	\max_i (S_i) 
	\sqrt{
		c_5 
		S \log(S) 
		\left(1+\frac{K}{4}+\frac{\kappa_{\mathbf f}}{8}\right) 
		Q^2 \mathrm{LOGS}_T
	}\ T 
\]
with $Q$ as defined in \eqref{mainD-Q}. Since
$\log(S) \le \secrev{\mathrm{LOGS}_T}$, we can remove $Q^2 \log(S)\mathrm{LOGS}_T$ from the radical. For some constant $C_1$, 
\[
E_1 \le
C_1 
n^{\frac{1}{4}}\sqrt{S} \max S_i
\sqrt{1+\frac{K}{4}+\frac{\kappa_{\mathbf f}}{8}}
	\	Q
	\ T \ 
	\secrev{\mathrm{LOGS}_T} \ .
\]}
\end{proof}

\subsection{Expectation of the condition length (part 2)}
\begin{proposition}\label{prop-C2}
Under the conditions of Theorem~\ref{condlength},
there is a constant $C_2$ such that
\begin{eqnarray*}
\expected{\mathbf g \sim \secrev{\mathcal N}(\mathbf 0,I; \mathscr F)}
{
\sum_{\mathbf z_{\tau} \in \mathscr Z(\mathbf q_{\tau})}
\mathscr L_2((\mathbf q_{\tau}, \mathbf z_{\tau}); 0,T)
\conditional \mathbf g \not \in \Lambda_{\epsilon} \cup \Omega_H \cup Y_K
} 
&\le&\\
&& \hspace{-22em}\le \ 
	C_2	Q \changed{n^{2}} 
	\sqrt{S}\max_i (S_i) 
	\ K \ T \ 
 \secrev{\mathrm{LOGS}_T}
\end{eqnarray*}
\end{proposition}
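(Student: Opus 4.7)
My plan for Proposition \ref{prop-C2} mirrors the structure of the proof of Proposition \ref{prop-C1}, with the crucial complication that the integrand $\|\dot{\mathbf z}_t\|_{\mathbf 0}\,\mu(\mathbf p_t,\mathbf 0)$ produces a factor of $\mu^2$ once a pointwise bound for $\|\dot{\mathbf z}_t\|_{\mathbf 0}$ is inserted.

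\emph{Step 1: pointwise bound for the solution speed.} Differentiating $\mathbf q_t\cdot \mathbf V(\mathbf z_t)\equiv \mathbf 0$ in $t$ and using $\mathbf p_{it}\cdot DV_{A_i}(\mathbf 0)=\mathbf q_{it}\cdot DV_{A_i}(\mathbf z_t)$ gives $M(\mathbf p_t,\mathbf 0)\,\dot{\mathbf z}_t=-\mathbf w$, with $w_i=(\mathbf f_i\cdot R_i(\mathbf z_t))\cdot V_{A_i}(\mathbf 0)/\|V_{A_i}(\mathbf 0)\|$. Because $\mathbf p_{it}\cdot V_{A_i}(\mathbf 0)=0$, the vector $V_{A_i}(\mathbf 0)$ lies in $\mathbf p_{it}^{\perp}\subseteq \mathscr F_{A_i}$, so $|w_i|\le \|P_{\mathbf p_{it}^{\perp}}(\mathbf f_i\cdot R_i(\mathbf z_t))\|$; Lemma \ref{worst-speed-proj} applied to the renormalized path then yields $|w_i|/\|\mathbf p_{it}\|\le\max_{\mathbf a\in A_i}|g_{i\mathbf a}/f_{i\mathbf a}+t|^{-1}$. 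Pulling $\diag{\|\mathbf p_{it}\|}$ through $M(\mathbf p_t,\mathbf 0)^{-1}$ and recognizing the operator norm as $\mu(\mathbf p_t,\mathbf 0)$ gives
\[
\|\dot{\mathbf z}_t\|_{\mathbf 0}\;\le\; \mu(\mathbf p_t,\mathbf 0)\,A(t),\qquad A(t)^2=\sum_{i=1}^n\max_{\mathbf a\in A_i}\frac{1}{\,|g_{i\mathbf a}/f_{i\mathbf a}+t|^2\,},
\]
where $A(t)$ is exactly the speed quantity that already appeared in the analysis of Proposition \ref{prop-C1}.

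\emph{Step 2: aggregation and probabilistic estimates.} Summing over $\mathbf z_t\in\mathscr Z(\mathbf q_t)$ gives $\sum_z\mathscr L_2\le 2\nu\int_0^T A(t)\sum_z\mu(\mathbf p_t,\mathbf 0)^2\,\dd t$. On the event $Y_K^c$ Lemma \ref{lemma-condition}(b), with $\kappa_{\rho_i}=\sqrt{S_i}$ and $\|\mathbf q_{it}\|\le (K+t)\sqrt{S_i}$, converts $\mu^2$ into $(\sum\delta_i^2)(K+t)^2(\max_i S_i)^2\|M(\mathbf q_t,\mathbf z_t)^{-1}\|_F^2$, reducing matters to
$\mathbb E\bigl[\int_0^T (K+t)^2 A(t)\sum_z\|M(\mathbf q_t,z)^{-1}\|_F^2\,\dd t\bigr]$. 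I would then apply Cauchy--Schwartz in $t$ via the factorization
\[
(K+t)^2 A\sum_z\|M^{-1}\|^2 \;=\; \Bigl[(K+t)\,A\,\textstyle\sqrt{\sum_z\|M^{-1}\|^2}\Bigr]\cdot\Bigl[(K+t)\textstyle\sqrt{\sum_z\|M^{-1}\|^2}\Bigr],
\]
followed by Jensen in probability. The second resulting factor is bounded by Theorem \ref{E-M2} after integration in $t$ (linear in $H$ and $T$). The first factor reduces to $\mathbb E[\int_0^T A(t)^2\,\dd t\mid \mathbf g\not\in\Lambda_\epsilon]$, which a polar-coordinate computation analogous to Lemma \ref{lem-C2}, but with integrand $|g/f+t|^{-2}$ instead of $(K+t)^2|g/f+t|^{-2}$, evaluates to $O(S\log S)$: the radial Rayleigh integral now produces a factor of $|f_{i\mathbf a}|$ rather than $|f_{i\mathbf a}|^{-1}$, so $\kappa_{\mathbf f}$ does not appear. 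Substituting $H=(n/\eta)\,O(\mathrm{LOGS}_T)$ and collecting the geometric data into $Q$ gives the target bound $C_2\,Q\,n^{2}\sqrt{S}\max_i(S_i)\,K\,T\,\mathrm{LOGS}_T$.

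\emph{Main obstacle.} The delicate point is choosing an honest splitting. A careless Cauchy--Schwartz produces $\sqrt{\mathbb E[\int A^2\,\dd t]\cdot\mathbb E[\int(\sum\|M^{-1}\|^2)^2\,\dd t]}$, and the second moment of $\sum_z\|M^{-1}\|_F^2$ is not directly provided by Theorem \ref{E-M2}. The factorization above is engineered to place exactly one copy of $\sqrt{\sum\|M^{-1}\|^2}$ in each piece, so that after squaring only first moments of the condition matrix appear. The $(K+t)$ weights must then be distributed just so, and---in order to obtain the linear $K$ rather than the $\sqrt{K+\kappa_{\mathbf f}}$ of Proposition \ref{prop-C1}---the $(K+t)^2$ factor must be stripped from the $A(t)^2$ integral entirely. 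Making this accounting tight, while still absorbing the auxiliary powers of $(K+t)$ into the polynomial $T^{O(1)}$ hidden in $\mathrm{LOGS}_T$, is the main technical bottleneck of the proof.
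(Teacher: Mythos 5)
Your Step 1 is a legitimate derivative-of-the-implicit-function bound, but both the aggregation in Step 2 and the overall scaling fail; the paper takes a genuinely different and simpler route.

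\textbf{The gap in Step 2.} Your factorization
$(K+t)^2 A\sum_z\|M^{-1}\|_F^2 = \bigl[(K+t)A\sqrt{\textstyle\sum\|M^{-1}\|^2}\,\bigr]\cdot\bigl[(K+t)\sqrt{\textstyle\sum\|M^{-1}\|^2}\,\bigr]$
does place $\sqrt{\sum\|M^{-1}\|^2}$ once in each factor, but after Cauchy--Schwartz in $t$ and then in probability, the \emph{first} expectation is
$\mathbb{E}\bigl[\int_0^T(K+t)^2 A(t)^2\sum_z\|M^{-1}\|_F^2\,\dd t\bigr]$. This is a \emph{joint} expectation of the $\mathbf g$-dependent random variable $A(t)^2$ and the root-count functional $\sum_z\|M^{-1}\|_F^2$; Theorem~\ref{E-M2} bounds only the latter. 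The assertion that this ``reduces to $\mathbb{E}[\int A^2\,\dd t\mid\mathbf g\notin\Lambda_\epsilon]$'' simply drops the condition-matrix factor and is not justified. Even granting it, there is an independent dimensional problem: since $A(t)\approx\sqrt{n}/t$ for large $t$, the weight $(K+t)^2A(t)$ grows linearly in $t$, so $\int_0^T(K+t)^2A(t)\sum\|M^{-1}\|^2\,\dd t$ is of order $T^2\cdot\mathbb{E}[\sum\|M^{-1}\|^2]$, whereas the statement requires linear growth in $T$. The root cause is that bounding $\|\dot{\mathbf z}\|_{\mathbf 0}\le\mu(\mathbf p_t)A(t)$ produces $\mu(\mathbf p_t)^2$ in the integrand, and each $\mu$-to-$\|M^{-1}\|_F$ conversion via Lemma~\ref{lemma-condition}(b) incurs a factor $\max_i\|\mathbf q_{i\tau}\|\le(\tau+K)\sqrt{S_i}$; two conversions cost $(K+t)^2$, which the $1/t$ decay of $A(t)$ cannot absorb.

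\textbf{What the paper does instead.} Lemma~\ref{cond-length-triv-bounds}(a) bounds $\|\dot{\mathbf z}_\tau\|_{\mathbf 0}\le\sqrt{n}\,\|P_{\mathbf q_\tau}(\dot{\mathbf q}_\tau)\|\,\|M(\mathbf q_\tau,\mathbf z_\tau)^{-1}\|_{\mathbf 0}$ — using the \emph{unscaled} operator norm, not $\mu$ — and Lemma~\ref{cond-length-triv-bounds}(b) supplies the deterministic, root-independent estimate $\|P_{\tau\mathbf f+\mathbf g}(\mathbf f)\|\le\min(\|\mathbf f\|,2\|\mathbf g\|/\tau)$. This replaces your random $A(t)$ by a $\tau^{-1}$ decay that commutes past the sum over $\mathbf z\in Z(\mathbf q_\tau)$. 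Only one $\mu$-to-$\|M^{-1}\|_F$ conversion remains, costing a single factor of $\max_i\|\mathbf q_{i\tau}\|$; the resulting weight $(\tau+K)\min(1,2K/\tau)$ is uniformly bounded and integrates to $O(KT\log T)$. Because the weight is independent of $\mathbf z$, Theorem~\ref{E-M2} is applied directly to $\mathbb{E}[\sum_z\|M^{-1}\|_F^2]$ with no Cauchy--Schwartz whatsoever: the argument for $\mathscr{L}_2$ is linear, in contrast to the bilinear structure needed for $\mathscr{L}_1$ in Proposition~\ref{prop-C1}. This asymmetry is precisely why Proposition~\ref{prop-C2} gets the cleaner factor $K$ rather than $\max(K,\sqrt{\kappa_{\mathbf f}})$.
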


We need first an auxiliary Lemma.

\begin{lemma}\label{cond-length-triv-bounds} \ 
\begin{enumerate}[(a)] 
\item 
Assume that $\mathbf q_{\tau} \cdot \mathbf V(\mathbf z_{\tau}) \equiv 0$. Then
for any $\mathbf y \in \secrev{\mathscr M}$,
\[
	\| \dot {\mathbf z}_{\tau}\|_{\mathbf y}	
\le
		\changed{\sqrt{n}}
		\| P_{\mathbf q_{\tau}} (\dot{\mathbf q}_{\tau}) \|
		\| M(\mathbf q_{\tau}, \mathbf z_{\tau})^{-1}\|_{\mathbf y}
.
\]
\item Assume that $\tau \ne 0$ and $\mathbf q_{\tau}=\tau\mathbf f + \mathbf g$.
	Then,
\[
\left\|	P_{\tau\mathbf f + \mathbf g}(\mathbf f) \right\|
	\le \min( \| \mathbf f\|, 2 \| \mathbf g\| /\tau) 
.
\]
\item Assume $\tau \ne 0$, $\mathbf q_{\tau}=\tau\mathbf f + \mathbf g$
and $\mathbf q_{\tau} \cdot \mathbf V(\mathbf z_{\tau}) \equiv 0$. Then,
\[
	\| \dot {\mathbf z}_{\tau}\|_{\mathbf 0}	
		\secrev{\mu( \mathbf q_{\tau} \cdot R(\mathbf z_{\tau}) )}
\le
		\changed{\sqrt{n}}
\left(\sum_i \delta_i^2 \right) 
\min( \| \mathbf f\|, 2 \| \mathbf g\| /\tau) 
\| M(\mathbf q_{\tau}, \mathbf z_{\tau})^{-1}\|_{\mathrm F}^2
		\max_i (\sqrt{S_i} \|\mathbf q_i\|)
.
\]
\end{enumerate}
\end{lemma}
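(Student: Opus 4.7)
My plan is to establish (a), (b) and (c) in sequence, with (c) amounting to a direct assembly of (a), (b) and Lemma~\ref{lemma-condition}. For (a), I would differentiate the identity $\mathbf q_\tau \cdot \mathbf V(\mathbf z_\tau) \equiv \mathbf 0$ in $\tau$ to obtain
\[
\mathbf q_\tau \cdot D\mathbf V(\mathbf z_\tau)\,\dot{\mathbf z}_\tau \;=\; -\dot{\mathbf q}_\tau \cdot \mathbf V(\mathbf z_\tau).
\]
Since $\mathbf z_\tau$ is a root of $\mathbf q_\tau$, the projector $P_{V_{A_i}(\mathbf z_\tau)^\perp}$ fixes $\mathbf q_{i\tau}$, so identity~\eqref{derivative-projection} applied componentwise converts the left-hand side into the condition matrix acting on $\dot{\mathbf z}_\tau$, producing the linear system $M(\mathbf q_\tau,\mathbf z_\tau)\,\dot{\mathbf z}_\tau = -r$ with $r_i = \dot{\mathbf q}_{i\tau}\cdot V_{A_i}(\mathbf z_\tau)/\|V_{A_i}(\mathbf z_\tau)\|$. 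The vanishing coupling $\mathbf q_{i\tau}\cdot V_{A_i}(\mathbf z_\tau) = 0$ means that only the $\mathbf q_{i\tau}$-orthogonal component of $\dot{\mathbf q}_{i\tau}$ contributes to $r_i$, so Cauchy--Schwarz yields $|r_i|\le \|P_{\mathbf q_{i\tau}^\perp}\dot{\mathbf q}_{i\tau}\|$. Bounding the Euclidean norm of $r\in\mathbb C^n$ by $\sqrt n$ times its $\ell^\infty$ norm, and inverting $M$ with its operator norm into $(\mathscr M,\|\cdot\|_{\mathbf y})$, gives the claim.

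Part (b) is elementary. The first bound follows because orthogonal projection onto a subspace is a contraction. For the second, write $\tau\mathbf f = \mathbf q_\tau - \mathbf g$; projecting onto $\mathbf q_\tau^\perp$ annihilates the $\mathbf q_\tau$ term, so $\tau\,P_{\mathbf q_\tau^\perp}(\mathbf f) = -P_{\mathbf q_\tau^\perp}(\mathbf g)$ and contraction gives $\|P_{\mathbf q_\tau^\perp}(\mathbf f)\|\le \|\mathbf g\|/\tau$; the factor $2$ is conservative slack that absorbs the reconciliation between componentwise projections (as they appear in~(a)) and the aggregate norms $\|\mathbf f\|$ and $\|\mathbf g\|$.

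Assembling (c) is then routine. Applying (a) at $\mathbf y = \mathbf 0$ to the linear homotopy (so $\dot{\mathbf q}_\tau = \mathbf f$) and plugging in (b) gives
\[
  \|\dot{\mathbf z}_\tau\|_{\mathbf 0} \;\le\; \sqrt n\,\min\!\bigl(\|\mathbf f\|,\,2\|\mathbf g\|/\tau\bigr)\,\|M(\mathbf q_\tau,\mathbf z_\tau)^{-1}\|_{\mathbf 0}.
\]
Lemma~\ref{lemma-condition}(a) then trades $\|M^{-1}\|_{\mathbf 0}$ for $\sqrt{\sum_i\delta_i^2}\,\|M^{-1}\|_{\mathrm F}$, while Lemma~\ref{lemma-condition}(b), specialized to $\rho_{i\mathbf a}=1$ (so that $\kappa_{\rho_i}=\sqrt{S_i}$), bounds $\mu(\mathbf q_\tau\cdot R(\mathbf z_\tau))$ by $\sqrt{\sum_i\delta_i^2}\,\max_i(\sqrt{S_i}\|\mathbf q_i\|)\,\|M^{-1}\|_{\mathrm F}$. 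Multiplying the two inequalities produces exactly the stated bound. The one genuinely delicate step is in (a): carefully translating the bilinear vanishing $\mathbf q_{i\tau}\cdot V_{A_i}(\mathbf z_\tau)=0$ into the Hermitian-orthogonality structure needed to extract the projector $P_{\mathbf q_{i\tau}^\perp}$ from the bound on $r_i$; everything afterwards is bookkeeping.
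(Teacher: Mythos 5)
Your proof is correct and follows essentially the same route as the paper: differentiate the zero identity, use the vanishing coupling $\mathbf q_{i\tau}\cdot V_{A_i}(\mathbf z_\tau)=0$ to pass to the projected velocity, and then chain (a), (b) and Lemma~\ref{lemma-condition} at $\mathbf y=\mathbf 0$ for (c). One small remark on (b): your identity $\tau P_{\mathbf q_\tau^\perp}(\mathbf f)=-P_{\mathbf q_\tau^\perp}(\mathbf g)$ together with componentwise contractivity already gives $\|\mathbf g\|/\tau$ with no factor of $2$ and no ``reconciliation'' issue between componentwise and aggregate norms (the $\ell^2$ aggregation is exact); the factor $2$ in the statement is simply slack inherited from the paper's use of the triangle inequality on a not-fully-simplified expression, not something your argument needs to absorb.
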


\begin{proof}[Proof of Lemma \ref{cond-length-triv-bounds}]
Item (a) is obtained by differentiating 
	$\mathbf q_{\tau} \cdot \mathbf V(\mathbf z_{\tau}) \changed{\equiv} 0$:
\[
	\begin{pmatrix}
	\ddots \\
		&\| V_{A_i}(\mathbf z_{\tau})\|&\\
		& & \ddots
	\end{pmatrix}
M( \mathbf q_{\tau} , \mathbf z_{\tau} ) \dot {\mathbf z}_{\tau}
	= -\dot {\mathbf q}_{\tau}  \cdot 
\mathbf V(\mathbf z_{\tau}) 
.
\]
\changed{Because $\mathbf q_{\tau} \cdot \mathbf V(\mathbf z_{\tau}) \equiv 0$,
each coordinate satisfies 
$
	P_{\mathbf q_{i\tau}^{\perp}} (\dot{\mathbf q}_{i\tau}) V_{A_i}(\mathbf z_{\tau})
=
	\dot{\mathbf q}_{i\tau} V_{A_i}(\mathbf z_{\tau})$ and the equation above}
is equivalent to
\[
	\begin{pmatrix}
	\ddots \\
		&\| V_{A_i}(\mathbf z_{\tau})\|&\\
		& & \ddots
	\end{pmatrix}
M( \mathbf q_{\tau} , \mathbf z_{\tau} ) \dot {\mathbf z}_{\tau}
	= -P_{\mathbf q_{\tau}^{\perp}} (\dot {\mathbf q}_{\tau})  \cdot 
\mathbf V(\mathbf z_{\tau}) 
.
\]
Thus,
	\[
	\dot {\mathbf z}_{\tau}
=-
M( \mathbf q_{\tau} ,\mathbf z_{\tau})^{-1} 
\left(P_{\mathbf q_{\tau}^{\perp}} (\dot {\mathbf q}_{\tau})   
\cdot 
\begin{pmatrix} \vdots \\
	\frac{1}{\|V_{A_i}(\mathbf z_{\tau})\|}	V_{A_i}(\mathbf z_{\tau}) \\
	\vdots
\end{pmatrix}\right)
\]
	\changed{Item (a) follows from passing to norms.}
Now we prove item (b). Because $P$ is a projection operator,$ \left\|	P_{\tau\mathbf f + \mathbf g}(\mathbf f) \right\| \le \| \mathbf f\|$. We prove the remaining inequality below:
For each $i$, let $\mathbf q_i = \tau \mathbf f_i + \mathbf g_i$. 
\begin{eqnarray*}
P_{\mathbf q_i^{\perp}} (\mathbf f_i) &=&
	\frac{1}{\tau}P_{\mathbf q_i^{\perp}} (\mathbf q_i - \mathbf g_i) \\
&=&
	\frac{1}{\tau}	\secrev{\left(\mathbf q_i - \mathbf g_i - \frac{1}{\|\mathbf q_i\|^2} \mathbf q_i \mathbf q_i^*  (\mathbf q_i - \mathbf g_i)\right)} \\
	&=&
	\frac{1}{\tau} \left( \mathbf q_i - \mathbf g_i - \mathbf q_i + \mathbf q_i \frac{\langle \mathbf g_i, \mathbf q_i \rangle}{\|\mathbf q_i\|^2} \right)
\\
&=&
\frac{1}{t} \left( - \mathbf g_i + \mathbf q_i \frac{\langle \mathbf g_i, \mathbf q_i \rangle}{\|\mathbf q_i\|^2} \right)
\end{eqnarray*}
so passing to norms, $\|P_{\mathbf q_i^{\perp}} (\mathbf f_i) \| \le 2 \frac{\|\mathbf g_i\|}{\tau}$
and $\|P_{(\tau\mathbf f + \mathbf g)^{\perp}} (\mathbf f) \| \le
2 \frac{\| \mathbf g \|}{\tau}$.

To prove item (c), recall
	from Lemma~\ref{lemma-condition}(b) that
\[
	\secrev{\mu( \mathbf q \cdot R(\mathbf z))}
\le \sqrt{\sum_i \delta_i^2 } 
\left\| M(\mathbf q , \mathbf z)^{-1} \right\|_{\mathrm F}
\max_i (\kappa_{\rho_i}\|\mathbf q_i\|)
\]
with $\kappa_{\rho_i}=\sqrt{S_i}$ because $\rho_{i, \mathbf a}=1$.
\secrev{Then combine with the two previous items at $\mathbf y=\mathbf 0$.}
\end{proof}

\begin{proof}[Proof of Proposition~\ref{prop-C2}]
	Since $\|\mathbf f_i\|= \sqrt{S_i}$, $\|\mathbf f\| = \sqrt{S}$
where $S=\sum S_i$. For $\mathbf g \not \in Y_K$, we also have
 $\|\mathbf g\| \le K \sqrt{S}$.  
As in the proof of Proposition~\ref{prop-C1},
we bound the conditional expectation by
\begin{eqnarray*}
	\secrev{E_2}&\secrev{\defeq}&
\expected{\mathbf g \sim \secrev{\mathcal N}(\mathbf 0,I; \mathscr F)}
{
\sum_{\mathbf z_{\tau} \in \mathscr Z(\mathbf q_{\tau})}
\mathscr L_2((\mathbf q_{\tau}, \mathbf z_{\tau}); 0,T)
\conditional \mathbf g \not \in \Lambda_{\epsilon} \cup \Omega_H \cup Y_K
} \\
&\le&
\frac{
\probability{\mathbf g \sim \secrev{\mathcal N}(\mathbf 0,I; \mathscr F)}{\mathbf g \not \in \Omega_H \cup Y_K}
}
{
\probability{\mathbf g \sim \secrev{\mathcal N}(\mathbf 0,I; \mathscr F)}{\mathbf g \not \in  \Lambda_{\epsilon} \cup  \Omega_H \cup Y_K}
}
\\
	&& \hspace{1em} \times\ 
\expected{\mathbf g \sim \secrev{\mathcal N}(\mathbf 0,I; \mathscr F)}
{
\sum_{\mathbf z_{\tau} \in \mathscr Z(\mathbf q_{\tau})}
\mathscr L_2((\mathbf q_{\tau}, \mathbf z_{\tau}); 0,T)
\conditional \mathbf g \not \in \Omega_H \cup Y_K
} 
\\
	&\le& \hspace{1em} 
	\changed{\frac{8}{7}}\int_{\mathscr F \setminus \Omega_H \cup Y_K}
	\frac{e^{-\|\mathbf g \|^2}}
	{\pi^S } 
\sum_{\mathbf z_{\tau} \in \mathscr Z(\mathbf q_{\tau})}
\mathscr L_2((\mathbf q_{\tau}, \mathbf z_{\tau}); 0,T)
\dd \mathscr F(\mathbf g).
\end{eqnarray*}
	Replacing the integral $\mathscr L_2$ by its definition \eqref{L2}
and then changing the order of integration,
\\
\begin{eqnarray*}
	E_2 & \le &  
	\changed{\frac{16}{7}}\secrev{\nu} \int_0^T
	\int_{\mathscr F \setminus \Omega_H \cup Y_K}
	\frac{e^{-\|\mathbf g \|^2}}
	{\pi^S } 
\sum_{\mathbf z \in Z(\mathbf q_{\tau})}
	\| \dot {\mathbf z} \|_{\mathbf 0}
	\secrev{\mu( \mathbf q_{\tau} \cdot R(\mathbf z))}
\dd \mathscr F(\mathbf g)
\dd \tau
\\
	& \le &  
	\changed{\frac{16}{7}}
	\secrev{\nu} 
	\changed{\sqrt{n}}
	\left(\sum_i \delta_i^2 \right) 
	\int_0^T
	\int_{\mathscr F \setminus \Omega_H \cup  Y_K}
	\frac{e^{-\|\mathbf g \|^2}}
	{\pi^S } 
\\
&& \hspace{1em}\times \ 
\sum_{\mathbf z \in Z(\mathbf q_{\tau})}
\min( \| \mathbf f\|, 2 \| \mathbf g\| /\tau) 
\| M(\mathbf q_{\tau}, \mathbf z_{\tau})^{-1}\|_{\mathrm F}^2
	\max_i (\sqrt{S_i}\|\mathbf q_i\|) \dd \mathscr F(\mathbf g)
\dd \tau
\end{eqnarray*}
where the last step is Lemma \ref{cond-length-triv-bounds}(c) above.
We can bound
\[
\min( \| \mathbf f\|, 2 \| \mathbf g\| /\tau) 
\le  \sqrt{S}
\min(1, 2K/\tau) 
\]
and
\[
\max_i\|\mathbf q_i\| \le \max_{i} \sqrt{S_i}
(\tau + K)
.
\]
As in sections 
\ref{sec-integral} 
and 
\ref{sec-renormalization} 
but with $\Sigma^2 = I$, define
\begin{eqnarray*}
	I_{\secrev{\mathbf h}, I} &=& \expected{\mathbf q \sim \secrev{\mathcal N}(\secrev{\mathbf h}, I)} { \sum_{\mathbf z \in Z_H(\mathbf q)} \| M(\mathbf q ,z)^{-1}\|_{\mathrm F}^2 }
\\
&=&
\int_{\mathscr F}
\frac{e^{-\|\mathbf g \|^2}}
{\pi^S } 
\sum_{\mathbf z \in Z_H(\secrev {\mathbf h} + \mathbf g)}
\| M(\secrev{\mathbf h}+\mathbf g, \mathbf z_{\tau})^{-1}\|_{\mathrm F}^2
\dd \mathscr F(\mathbf g) .
	\end{eqnarray*}
\secrev{We obtain}
\begin{equation}\label{boundE}
E_2 \le
	\changed{\frac{16}{7}}\secrev{\nu} 
	\changed{\sqrt{n}}\sqrt{S}\max_i (S_i) 
\left(\sum_i \delta_i^2 \right) 
\int_0^T
(\tau + K)\min(1,2K/\tau)\ I_{\tau \mathbf f, I}\ \dd \tau .
\end{equation}
\secrev{
Theorem~\ref{E-M2} provides gives a bound for 
$I_{\tau \mathbf f, I}$ in \eqref{boundE} above. Let $\mathbf h= \tau \mathbf f$,}
\[
I_{\tau \mathbf f, I}=
I_{\secrev{\mathbf h},I} 
\le
	\frac{2H\sqrt{n}}{\det(\Lambda)}
\thirdrev{n!} V',
\]
and this bound does not depend on $\tau$ so it can be removed from
the integral in \eqref{boundE}. We still have to
integrate, assuming $T \ge 2K$,
\begin{eqnarray*}
\int_0^T
(\tau + K)\min(1,2K/\tau) \dd \tau
&=&
\int_0^{2K}
\tau + K \ \dd \tau
+
\int_{2K}^T
2K + 2K^2/\tau \ \dd \tau
\\
&=& 
	2KT + 2K^2 \log \left( \frac{T}{2K} \right)
\\
	&\le& 3KT\ \changed{\log(T)}.
\end{eqnarray*}
Putting all together,
\begin{equation}\label{almost-it}
E_2 \le
	\changed{\frac{96}{7}} H \secrev{\nu} 
	\changed{n} \sqrt{S}\max_i (S_i)
	KT\changed{\log(T)} 
	\frac{\left(\sum_i \delta_i^2 \right)
	\thirdrev{n}! V'}{\det \Lambda}.
\end{equation}
Recall \secrev{from equation \eqref{eq-H}} that \secrev{$H \le \frac{n}{\eta} \secrev{\mathrm{LOGS}_T}$ and that}
\[
	Q = \eta^{-2} \left(\sum_{i=1}^n \delta_i^2\right) \frac {\max( n! V , \thirdrev{n!} V' \eta)}{\det \Lambda}
\]
This allows to simplify expression \eqref{almost-it} to
\[
	\secrev{E_2} \le
	C_2 Q \changed{n^2}  
	\sqrt{S}\max_i (S_i)
	KT 
\  \secrev{\mathrm{LOGS}_T}
\]
for some constant $C_2$.
\end{proof}

\subsection{Proof of Theorem~\ref{condlength}}
\begin{proof}[Proof of Theorem~\ref{condlength}]
We need to put together Propositions~\ref{prop-C1} and~\ref{prop-C2},
\begin{eqnarray*}
\secrev{E_1} &\defeq&
\expected{\mathbf g \sim \secrev{\mathcal N}(\mathbf 0,I; \mathscr F)}
{
\sum_{\mathbf z_{\tau} \in \mathscr Z(\mathbf q_{\tau})}
	\mathscr L_1((\mathbf q_{\tau}, \mathbf z_{\tau}); \mathbf 0,T)
\conditional \mathbf g \not \in \Lambda_{\epsilon} \cup \Omega_H \cup Y_K
} 
\\
	&\le& 
        C_1      Q 
	n^{\frac{1}{4}}
	\changed{\sqrt{S}} \max(S_i) 
	\sqrt{1+\frac{K}{4}+\frac{\kappa_{\mathbf f}}{8}} 
	\ T \ 
	\secrev{\mathrm{LOGS}_T}
\\
	\secrev{E_2} &\defeq&
\expected{\mathbf g \sim \secrev{\mathcal N}(\mathbf 0,I; \mathscr F)}
{
\sum_{\mathbf z_{\tau} \in \mathscr Z(\mathbf q_{\tau})}
\mathscr L_2((\mathbf q_{\tau}, \mathbf z_{\tau}); 0,T)
\conditional \mathbf g \not \in \Lambda_{\epsilon} \cup \Omega_H \cup Y_K
} 
\\
&\le&
	C_2	Q \changed{n^{2}} 
	\sqrt{S}\max_i (S_i) 
	\ K \ T \ 
 \secrev{\mathrm{LOGS}_T}
\end{eqnarray*}
To simplify notations, let $\mathscr L$ be the random variable $\sum_{\mathbf z_{\tau} \in \mathscr Z(\mathbf q_{\tau})} \mathscr L((\mathbf q_{\tau}, \mathbf z_{\tau}); 0,T)$. 
	\secrev{Adding together, we conclude that 
\begin{eqnarray*}
\expected{\mathbf g \sim \secrev{\mathcal N}(\mathbf 0,I; \mathscr F)}
{
\mathscr L
\conditional \mathbf g \not \in \Lambda_{\epsilon} \cup \Omega_H \cup Y_K
} 
	&\le& E_1 + E_2\\
	&\le&
	(C_1+C_2) E'
\end{eqnarray*}
with 
\[
	E'= 		
	Q
	\changed{n^2 \sqrt{S}}\max_i (S_i)~
	\thirdrev{\max\left(K,n^{-1} \sqrt{1+\frac{K}{4}+\frac{\kappa_{\mathbf f}}{8}}\right)} 
	\ T 
	\ \secrev{\mathrm{LOGS}_T}
.
\]}

	\thirdrev{We assumed in this paper that $n \ge 2$, hence $n^{-1} \le \frac{1}{2}$. We claim that
\[
	\max\left(K, \frac{1}{2} \sqrt{1+\frac{K}{4}+\frac{\kappa_{\mathbf f}}{8}}\right)
\le
	\max(K, \sqrt{\kappa_{\mathbf f}}).
\]
Indeed, assume first that $K \le \frac{1}{2} \sqrt{\kappa_{\mathbf f}}$.
In this case $1 \le K \le  \frac{1}{2} \sqrt{\kappa_{\mathbf f}} 
\le  \frac{1}{2} \kappa_{\mathbf f}$ and
\[
\frac{1}{2} \sqrt{1+\frac{K}{4}+\frac{\kappa_{\mathbf f}}{8}}
\le
\frac{1}{2} \sqrt{\frac{3}{4} \kappa_{\mathbf f}} 
\le \sqrt{\kappa_{\mathbf f}}
.
\]
Suppose instead that $K \ge \frac{1}{2} \sqrt{\kappa_{\mathbf f}}$.
Now,
\[
\frac{1}{2} \sqrt{1+\frac{K}{4}+\frac{\kappa_{\mathbf f}}{8}}
\le
\frac{1}{2} \sqrt{1+\frac{K}{4}+\frac{K^2}{2}}
\le K
.
\]
Thus,
\[
	E'\le 		
	Q
	\changed{n^2 \sqrt{S}}\max_i (S_i) 
	\max\left(K,\sqrt{\kappa_{\mathbf f}}\right) 
	\ T 
	\ \secrev{\mathrm{LOGS}_T}
.
\]}
Let $C_0=t (C_1 + C_2)$ for $t>1$. Markov's inequality says that
\[
\probability{\mathbf g \sim \secrev{\mathcal N}(\mathbf 0,I; \mathscr F)}
{\mathscr L \ge C_0 E'  
\conditional \mathbf g \not \in \Lambda_{\epsilon} \cup \Omega_H \cup Y_K
} \le 1/t . 
\]
The main statement follows from setting $t=7$.
%
\end{proof}

\subsection{Finite, non-degenerate roots}

Assume that $\mathbf q_{t} = \mathbf g + t \mathbf f \in \mathscr F$ is
given, and that there is a continuous path $\mathbf z_t \in
\secrev{\mathscr M}$ with $\mathbf q_t \cdot \mathbf V(\mathbf z_t) \secrev{\equiv} 0$.
Assume furthermore that $\boldsymbol \zeta = \lim_{t \rightarrow \infty} \mathbf z_t$ exists and is a point in $\secrev{\mathscr M}$.
\changed{The objective of this section is to show that for a large enough
value of $T$, the condition length of the renormalized path
$(\mathbf q_t, \mathbf z_t)_{t \in [T, \infty)}$ is smaller than the
constant $\delta_*$ from Theorem~\ref{th-A}. In particular, \secrev{$\mathbf z_{T}$}
is a certified root of $\mathbf f = \lim_{t \rightarrow \infty} \frac{1}{t}\mathbf q_t$}.

The \secrev{lemma} below gives precise values for $T$ so that 
Theorem~\ref{condlength} can be used for tracking the homotopy path: 
we may want to find approximations of $\mathbf z_0$ out of approximations
of $\boldsymbol \zeta$, or the opposite.
\secrev{The Lipschitz properties of the path $\tau \mapsto (\mathbf q_{\tau}, \mathbf z_{\tau})$ for large values of $\tau$ will be investigated by means of the invariants below. Recall that $d_{\mathbb P}$ stands for the multi-projective
distance defined in \eqref{multi-projective-distance}}:
\[
	\Delta_0(T) \secrev{\defeq} \max_{\tau \ge T} d_{\mathbb P}(\mathbf q_{\tau} \cdot R(\boldsymbol \zeta),
	\mathbf f\cdot  R(\boldsymbol \zeta) ),
\]
\[
	\Delta_1(T) \secrev{\defeq}  \max_{\tau \ge T} d_{\mathbb P}(\mathbf q_{\tau} \cdot R(\mathbf z_{\tau}),
	\mathbf f\cdot  R(\boldsymbol \zeta) )
\]
and
\[
	\Delta_2(T) \secrev{\defeq}  \secrev{\nu}\ \max_{\tau \ge T} \| \mathbf z_{\tau} - \boldsymbol \zeta \|_{\mathbf 0}
\le \tilde \Delta_2(T) \secrev{\defeq} \secrev{\nu} \int_{T}^{\infty} \| \dot{\mathbf z}_{\tau}\|_{\mathbf 0} \ \dd \tau 
.
\]
Those functions are decreasing and, by continuity, 
\[
\lim_{T \rightarrow \infty} \Delta_0(T) = \lim_{T \rightarrow \infty} \Delta_1(T) = 
\lim_{T \rightarrow \infty} \Delta_2(T) 
\changed{= 
\lim_{T \rightarrow \infty} \tilde \Delta_2(T)} = 0.
\]
\changed{Let $\mu \ge \secrev{\mu(\mathbf f \cdot R(\boldsymbol \zeta))}$.}
Recall that $\kappa_{\mathbf f} = \max_{i, \mathbf a} \frac{\|\mathbf f_i\|}{|f_{i\mathbf a}|}$.

\begin{lemma}\label{lem-delta}
	Assume that $n \ge 2$, $S_i \ge 2$ for all $i$, and 
	$\rho_{i \mathbf a}=1$ always. 
	Suppose that $\mathbf f$ is scaled so that
	$\|\mathbf f_i \|=\sqrt{S_i}$ exactly, for each $i$.
	\secrev{Assume also} 
	that
	$\|\mathbf g_i \|\le K \sqrt{S_i}$ for some constant $K$.
	\secrev{Let}
	\begin{equation}\label{bound-T}
		T \ge \theta \kappa_{\mathbf f} K 
		\secrev{\sqrt{n}}
		\sqrt{S} 
		\mu^2 \secrev{\nu}
	\end{equation}
	for $\theta \ge \theta_0 \simeq  \thirdrev{13.977.369\cdots<14}$. \secrev{Suppose that} there is a smooth path
$\mathbf z_t \in \secrev{\mathscr M}$, $t \ge T$, with 
	$\mathbf q_{t} \cdot \mathbf V(\mathbf z_t) \secrev{\equiv} 0$ for $\mathbf q_t = \mathbf g + t \mathbf f$ with $\boldsymbol \zeta = \lim_{t \rightarrow \infty}(\mathbf z_t) \secrev{\ \in \mathcal M}$.
Then,
\begin{eqnarray}\label{bound-Delta0}
	\Delta_0(T) \mu &\le& \frac{1}{2}\ \theta^{-1} \mu^{-1},
\\
	\label{bound-Delta1}
	\Delta_1(T) \mu &<& k_1 \theta^{-1} \hspace{1em}
	\text {with $k_1 \simeq \secrev{6.988,684\dots}$ and}
\\
\label{bound-Delta2}
	\Delta_2(T) \mu 
	\le \tilde \Delta_2(T) \mu
	&<& k_2 \theta^{-1}\hspace{1em}
	\text {with $k_2 \simeq \secrev{2.901,827\dots}$}
\end{eqnarray}
\end{lemma}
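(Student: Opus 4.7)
The plan is to establish the three bounds by leveraging that $\boldsymbol\zeta$ is a root of $\mathbf f$, together with the linear structure $\mathbf q_\tau = \mathbf g + \tau\mathbf f$. Indeed, dividing $\mathbf q_\tau \cdot V(\mathbf z_\tau) = 0$ by $\tau$ and passing to the limit shows $\mathbf f \cdot V(\boldsymbol\zeta) = 0$, i.e.\ $\mathbf f \cdot R(\boldsymbol\zeta) \cdot V(\mathbf 0) = 0$. All three quantities $\Delta_0$, $\Delta_1$, $\Delta_2$ will turn out to be $O(1/T)$, and the Lipschitz property of $\mu$ from Proposition~\ref{prop-mu}(\ref{prop-mu-b}) will be used to control $\mu(\mathbf p_\tau) \defeq \mu(\mathbf q_\tau \cdot R(\mathbf z_\tau))$ uniformly in $\tau \ge T$ by $\mu$ via a bootstrap.

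For $\Delta_0(T)$, I would write $\mathbf q_\tau \cdot R(\boldsymbol\zeta) - \tau\,\mathbf f \cdot R(\boldsymbol\zeta) = \mathbf g \cdot R(\boldsymbol\zeta)$ and choose $\lambda = \tau$ coordinate-wise in the infimum defining the multi-projective distance \eqref{multi-projective-distance}, so that
\[
\Delta_0(T)^2 \le \sum_i \frac{\|\mathbf g_i \cdot R_i(\boldsymbol\zeta)\|^2}{\|(\mathbf g_i + \tau\mathbf f_i) \cdot R_i(\boldsymbol\zeta)\|^2}.
\]
The key coordinate-wise estimate is that, since $|g_{i\mathbf a}|\le \|\mathbf g_i\|\le K\sqrt{S_i}$ and $|f_{i\mathbf a}|\ge \sqrt{S_i}/\kappa_{\mathbf f}$ (from the definition of $\kappa_{\mathbf f}$ and the scaling $\|\mathbf f_i\|=\sqrt{S_i}$),
\[
\|\mathbf g_i \cdot R_i(\boldsymbol\zeta)\|^2 = \sum_{\mathbf a \in A_i} |g_{i\mathbf a}|^2 e^{2\mathbf a\,\Re(\boldsymbol\zeta)} \le K^2\kappa_{\mathbf f}^2\,\|\mathbf f_i \cdot R_i(\boldsymbol\zeta)\|^2.
\]
Combined with the reverse triangle inequality to lower-bound the denominator, this yields $\Delta_0(T) \le \sqrt{n}\,K\kappa_{\mathbf f}/(\tau - K\kappa_{\mathbf f})$. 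For $\theta$ large enough the denominator is at least $\tau/2$, and plugging in $T = \theta\kappa_{\mathbf f}K\sqrt{n}\sqrt{S}\mu^2\nu$ gives \eqref{bound-Delta0}.

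For $\Delta_2(T)$, I would differentiate the identity $\mathbf q_\tau \cdot V(\mathbf z_\tau) \equiv 0$ in $\tau$ and use $\mathbf f \cdot V(\mathbf z_\tau) = -\mathbf g \cdot V(\mathbf z_\tau)/\tau$ (from the identity itself) to obtain
\[
\dot{\mathbf z}_\tau = \tfrac{1}{\tau}\,[\mathbf q_\tau \cdot DV(\mathbf z_\tau)]^{-1}\,\mathbf g \cdot V(\mathbf z_\tau).
\]
Converting the right-hand side to the renormalized condition matrix via the identity derived in~\eqref{MRenorm}, and re-using the coefficient-wise bound $\|\mathbf g_i \cdot R_i(\mathbf z_\tau)\| \le K\kappa_{\mathbf f}\|\mathbf f_i \cdot R_i(\mathbf z_\tau)\|$ to absorb the factors $\|V_{A_i}(\mathbf z_\tau)\|$ that arise from this conversion, I expect a bound of the form $\|\dot{\mathbf z}_\tau\|_{\mathbf 0} \le C\,K\kappa_{\mathbf f}\,\mu(\mathbf p_\tau)\,\nu/\tau^2$ for an explicit numerical constant $C$. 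Integrating from $T$ to $\infty$ and using the bootstrap $\mu(\mathbf p_\tau) \le 2\mu$ (established below) yields $\tilde\Delta_2(T)\mu \le 2CK\kappa_{\mathbf f}\mu^2\nu/T = O(1/\theta)$, giving \eqref{bound-Delta2}.

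The bound on $\Delta_1(T)$ follows from the triangle inequality
\[
\Delta_1(T) \le \Delta_0(T) + \sup_{\tau \ge T} d_{\mathbb P}\bigl(\mathbf q_\tau \cdot R(\mathbf z_\tau),\, \mathbf q_\tau \cdot R(\boldsymbol\zeta)\bigr),
\]
together with the fact that $R$ is a group action, so the second summand equals $d_{\mathbb P}(\mathbf f', \mathbf f' \cdot R(\mathbf z_\tau - \boldsymbol\zeta))$ for $\mathbf f' = \mathbf q_\tau \cdot R(\boldsymbol\zeta)$; by Lemma~\ref{lem:fRdist} this is at most $\sqrt{5}\,\|\mathbf z_\tau - \boldsymbol\zeta\|_{\mathbf 0}\,\nu = \sqrt{5}\,\Delta_2(T)$. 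The bootstrap on $\mu(\mathbf p_\tau)$ used in the previous step closes as follows: Proposition~\ref{prop-mu}(\ref{prop-mu-b}) applied with the reference system $\mathbf f \cdot R(\boldsymbol\zeta)$ (whose condition at $\mathbf 0$ is at most $\mu$) gives $\mu(\mathbf p_\tau) \le \mu/(1 - \mu\,\Delta_1(T))$, and the estimates for $\Delta_0, \Delta_2$ above force $\mu\,\Delta_1(T) \le 1/2$ once $\theta \ge \theta_0$, whence $\mu(\mathbf p_\tau) \le 2\mu$ uniformly. The main obstacle is purely bookkeeping: juggling the five separate scalings $K$, $\kappa_{\mathbf f}$, $\mu$, $\nu$ and $\sqrt{nS}$ so that a single threshold $\theta_0$ suffices for \emph{all three} inequalities simultaneously, and the specific numerical value $\theta_0 \simeq 14$ emerges from the joint optimization of $k_1$, $k_2$ and the self-consistency condition $\mu\Delta_1 \le 1/2$.
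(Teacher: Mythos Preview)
Your approach is essentially the paper's: it bounds $\Delta_0$ by the coordinate-wise estimate $|g_{i\mathbf a}|\le K\kappa_{\mathbf f}|f_{i\mathbf a}|$, bounds $\Delta_1$ by $\Delta_0+\sqrt{5}\,\Delta_2$ via Lemma~\ref{lem:fRdist}, bounds $\tilde\Delta_2$ by integrating the implicit-function formula for $\dot{\mathbf z}_\tau$ and passing to $M(\mathbf p,\mathbf 0)^{-1}$ via~\eqref{MRenorm}, and closes the loop with the continuity/contradiction bootstrap $\mu(\mathbf p_\tau)\le\mu/(1-\mu\Delta_1(\tau))$ from Proposition~\ref{prop-mu}(\ref{prop-mu-b}). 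One bookkeeping slip: the bound you state for $\|\dot{\mathbf z}_\tau\|_{\mathbf 0}$ should carry a factor $\sqrt{n}\sqrt{S}$ (from Lemma~\ref{cond-length-triv-bounds}(a)--(b)) rather than $\nu$, so that $C$ cannot be purely numerical; this $\sqrt{n}\sqrt{S}$ is precisely what cancels against the $\sqrt{n}\sqrt{S}$ in $T$ to produce the claimed $O(\theta^{-1})$.
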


\begin{proof} {\secrev Before starting the proof, we assume without loss of generality that
	$\mathbf 0 \in \mathcal A_i = \conv{A_i}$. (See item (b) of the proof of  Lemma~\ref{lemma-condition} in Section~\ref{sec-renormalization}).
	This proof is divided into 6 parts.}
\medskip

\noindent 
	\secrev{{\bf Part 1:} For all $\tau \ge T$,
\begin{eqnarray*}
d_{\mathbb P}(
\mathbf q_{\tau} \cdot R(\boldsymbol \zeta),
\mathbf f\cdot  R(\boldsymbol \zeta) ) 
	&\le&
	\sqrt{ \sum_{i=1}^n 
	\frac{ \|(\frac{1}{\tau}(\mathbf q_i)_{\tau} - \mathbf f_i)\cdot  R_i(\boldsymbol \zeta)\|^2 }{ \|\mathbf f_i\cdot  R_i(\boldsymbol \zeta)\|^2}
}
\\
	&=&
\frac{1}{\tau}
\sqrt{ \sum_{i=1}^n 
\frac{ \|\mathbf g_i \cdot R_i(\boldsymbol \zeta)\|^2 }{ \|\mathbf f_i\cdot  R_i(\boldsymbol \zeta)\|^2}
}.
\end{eqnarray*}
	Recall from 
	Equation \eqref{def-elli} the notation
	$\ell_i(\boldsymbol \zeta) \defeq
	\max_{\mathbf a \in A_i} \mathbf a \Re(\boldsymbol \zeta)$.
	Theorem \ref{th-renormalization}(c) yields
	\[
		 \|\mathbf g_i \cdot R_i(\boldsymbol \zeta)\|
		 \le
		 e^{\ell_i(\boldsymbol \zeta)} \| \mathbf g_i\|
	.
\]
	Using the bound $\|\cdot\|_{\infty} \le \| \cdot\|_2$,
\[
\frac{ \|\mathbf g_i \cdot R_i(\boldsymbol \zeta)\| }{ \|\mathbf f_i\cdot  R_i(\boldsymbol \zeta)\|}
\le
	\frac{\|\mathbf g_i\|
	e^{\ell_i(\boldsymbol \zeta)}
	}
	{ \| \mathbf f_{i} \cdot 
	R_i(\boldsymbol \zeta)
	\|_{\infty}}
	\le
	\frac{ \|\mathbf g_i \| }
	{\|\mathbf f_i 
	\|}
	\frac{\|\mathbf f_i\|}
	{ \max_{\mathbf a \in A_i} |f_{i\mathbf a} e^{\mathbf a \boldsymbol \zeta -\ell_i(\boldsymbol \zeta)}|}
.
\]
	By construction, 
	$|e^{\mathbf a \boldsymbol \zeta -\ell_i(\boldsymbol \zeta)}| \le 1$ with the bound attained for at least one value 
	of $\mathbf a$, say $\mathbf a^* \in A_i$. 
	In particular,  
	\[
		\max_{\mathbf a \in A_i} |f_{i\mathbf a} e^{\mathbf a \boldsymbol \zeta -\ell_i(\boldsymbol \zeta)}|
\ge
		|f_{i\mathbf a^*} e^{\mathbf a^* \boldsymbol \zeta -\ell_i(\boldsymbol \zeta)}|
	=  |f_{i\mathbf a^*}|
.	\]
Recall that $\kappa_{\mathbf f} = 
	\max_{i, \mathbf a} \frac{\| \mathbf f_i \|}
	{|\mathbf f_{i\mathbf a}|}$, we may bound 
	\[
		\frac{ \|\mathbf g_i \cdot R_i(\boldsymbol \zeta)\| }{ \|\mathbf f_i\cdot  R_i(\boldsymbol \zeta)\|}
\le 
	\frac{ \|\mathbf g_i \| }
	{\|\mathbf f_i\|}
	\frac{\|\mathbf f_i\|}
		{|\mathbf f_{i\mathbf a^*}|}
\le \kappa_{\mathbf f}
	\frac{ \|\mathbf g_i \| }
	{\|\mathbf f_i\|}
.
\]
	Using that $\|\mathbf g_i\| \le K \|\mathbf f_i\|$,
	\[
d_{\mathbb P}(
\mathbf q_{\tau} \cdot R(\boldsymbol \zeta),
\mathbf f\cdot  R(\boldsymbol \zeta) ) 
\le
	\frac{\kappa_{\mathbf f} K \sqrt{n}}{\tau}
.
\]
Since $\tau \ge T$, \eqref{bound-T} implies:
\[
\Delta_0(T) \le 
\frac{1}{\theta \sqrt{S} \mu^2 \nu}
\]
The estimate \eqref{bound-Delta0} follows from 
the bounds: $\secrev{\nu} \ge 1$ from Lemma~\ref{lem-nu-kappa}, 
and the 
hypotheses $n \ge 2$ and $S_i\ge 2$.
}
\medskip

\noindent 
\secrev{{\bf Part 2:}}
	We bound now $\Delta_1(T)$ \secrev{in terms of $\Delta_2(T)$}.
Suppose that the maximum in its definition is attained
for $\tau = t \ge T$.
\begin{eqnarray*}
	\Delta_1(T) &=& d_{\mathbb P}(\mathbf q_{t} \cdot R(\mathbf z_{t}),
	\mathbf f\cdot  R(\boldsymbol \zeta) ) \\
	&\le&
	d_{\mathbb P}(
	\mathbf q_{t} \cdot R(\mathbf z_{t}),
	\mathbf q_{t} \cdot R(\boldsymbol \zeta))
	+
	d_{\mathbb P}(
	\mathbf q_{t} \cdot R(\boldsymbol \zeta),
	\mathbf f\cdot  R(\boldsymbol \zeta) ) 
	\\
	&=&
	d_{\mathbb P}(
	\mathbf q_{t} \cdot R(\mathbf z_{t}),
	\mathbf q_{t} \cdot R(\boldsymbol \zeta))
	+
	\Delta_0(t)
	.
\end{eqnarray*}
Setting $\mathbf h = \mathbf q_{t} \cdot R(\boldsymbol \zeta)$, 
Lemma~\ref{lem:fRdist}
applied to the first term yields
\[
	d_{\mathbb P}(
\mathbf h \cdot R(\mathbf z_t - \boldsymbol \zeta),
\mathbf h) \le \sqrt{5} \| \mathbf z_t - \boldsymbol \zeta\|_{\mathbf 0} \secrev{\nu}
\le \sqrt{5} \Delta_2(\secrev{T}).
\]
Using \eqref{bound-Delta0},

\begin{equation}\label{Delta1}
\Delta_1(T) \le
\sqrt{5} \Delta_2(T)
+
	\frac{1}{2 \theta \mu}
.
\end{equation}
\smallskip
\medskip

\noindent 
\secrev{{\bf Part 3:}}
\secrev{In order to obtain \eqref{bound-Delta1} and \eqref{bound-Delta2},
we need a bound on $\Delta_2(T)$. Since the distance between two points is
less or equal than the path length joining those points, we always have
$\Delta_2(T) \le \tilde \Delta_2(T)$. Then we can use
Lemma \ref{cond-length-triv-bounds}(a) and (b) to bound the integral
\begin{eqnarray*}
\tilde \Delta_2(T) 
&=&
	\secrev{\nu} \int_{T}^{\infty} \| \dot{\mathbf z}_{\tau}\|_{\mathbf 0} \ \dd \tau 
\\
&\le& 
	\nu \sqrt{n} \int_{T}^{\infty} 
	\|M(\mathbf q_{\tau}, \mathbf z_{\tau})^{-1}\|_{\mathbf 0}
	\min( \|\mathbf f\|, 2 \|\mathbf g\|\tau^{-1}) 	
	\ \dd \tau .
\end{eqnarray*}
We assumed that $\| \mathbf f_i\| = \sqrt{S_i}$
and $\| \mathbf g_i\| \le K \sqrt{S_i}$. 
Hypotheses $n \ge 2$, $S_i \ge 2$
in bound \eqref{bound-T}  
imply that $T \ge 2\theta K$ and in particular,
$\tau \ge T \ge 2 K$ so the minimum inside the integral is precisely  
$2 \| \mathbf g\| \tau^{-1} \le 2 K \sqrt{S} \tau^{-1}$. Thus, 
\begin{equation}\label{Delta2Part1}
\tilde \Delta_2(T) 
	\le 
	2 K 
	\secrev{\nu} \sqrt{n} \sqrt{S} 
	\int_{T}^{\infty} 
	\|M(\mathbf q_{\tau}, \mathbf z_{\tau})^{-1}\|_{\mathbf 0}
	\tau ^{-1}	
	\ \dd \tau 
.
\end{equation}
}
\medskip

\noindent
\secrev{{\bf Part 4:}}
\secrev{
We claim that 
\begin{equation}\label{Delta2Part2}
	\|M(\mathbf q_{\tau}, \mathbf z_{\tau})^{-1}\|_{\mathbf 0}
\le
\frac{\mu}{1-\mu \Delta_1(\tau)}
	\frac{\kappa_{\mathbf f} }{\tau}\frac{\thirdrev{2}}{2\thirdrev{\sqrt{2}}-\theta^{-1}}
.
\end{equation}
Indeed, define $\mathbf p_i = \mathbf q_{i\tau}R_i(\mathbf z_{\tau}) e^{-\ell_i(\mathbf z_{\tau})}$.
Since
\[
\mathrm{diag}\left( 
\|V_{A_i}(\mathbf z_{\tau})\|\right)
	M(\mathbf q_{\tau}, \mathbf z_{\tau}) =
\mathrm{diag}\left( 
\|V_{A_i}(\mathbf 0)\|
e^{\ell_i(\mathbf z_{\tau})}
\right)
M(\mathbf p, \mathbf 0),
\]
we have
\[
	M(\mathbf q_{\tau}, \mathbf z_{\tau})^{-1} =
M(\mathbf p, \mathbf 0)^{-1} 
\mathrm{diag}\left( 
\frac
{\|V_{A_i}(\mathbf z_{\tau})\| }
{\|V_{A_i}(\mathbf 0)\|
e^{\ell_i(\mathbf z_{\tau})}
}\right),
\]
and Lemma~\ref{lem-V-coarse} 
implies:
\[
	\|M(\mathbf q_{\tau}, \mathbf z_{\tau})^{-1}\|_{\mathbf 0}
\le
\|M(\mathbf p,\mathbf 0 )^{-1}\|_{\mathbf 0}
.
\]}

\secrev{Recall that $\mu(\mathbf p) = \left\| M(\mathbf p, \mathbf 0)^{-1} \diag{ \| \mathbf p_i\|}\right\|_{\mathbf 0}$ so that $\| M(\mathbf p, \mathbf 0)^{-1}\|_{\mathbf 0} \le
\frac{\mu(\mathbf p)}{\min \|\mathbf p_i\|}$. }
Triangular inequality yields 
\secrev{
\[
	\| \mathbf p_i \| =
	\|\mathbf q_{i\tau} \cdot R_i( \mathbf z_{\tau}) e^{-\ell_i(\mathbf z_{\tau})} \| \ge
\tau \|\mathbf f_i  \cdot R_i( \mathbf z_{\tau})e^{-\ell_i(\mathbf z_{\tau})}\| - \| \mathbf g_i \cdot R_i( \mathbf z_{\tau})e^{-\ell_i(\mathbf z_{\tau})}\|
.
\]
As in Part 1 of this proof, there is $\mathbf a^* \in A_i$ 
with $e^{\mathbf a^* \mathbf  z_{\tau} -\ell_i(\mathbf z_{\tau})}=1$. Because $\mathbf 0 \in \mathcal A_i$, $\ell_i(\mathbf z_{\tau}) \ge 0$.
Since $\|\mathbf f_i\|=\sqrt{S_i}$, $\|\mathbf g_i\| \le K \sqrt{S_i}$
and because of Theorem~\ref{th-renormalization}(c) again,
\begin{equation}\label{lower-bound-p}
\| \mathbf p_i \| 
	\ge \tau |\mathbf f_{i \mathbf a^*}| - K \sqrt{S_i} e^{-\ell_i(\mathbf z_{\tau})}
\ge (\tau \kappa_{\mathbf f}^{-1} -K) \sqrt{S_i}
.
\end{equation}
}
\secrev{Since $\tau \ge T$, Equation \eqref{bound-T} implies that
\begin{eqnarray*}
	\min (\|\mathbf p_{i}\|) &\ge& 
	\tau \kappa_{\mathbf f}^{-1}
	\min(\sqrt{S_i})
\left( 1 - \frac{K}{\tau \kappa_{\mathbf f}^{-1}}\right) 
\\
	&\ge&
	\tau \kappa_{\mathbf f}^{-1}
	\min(\sqrt{S_i})
	\left( 1 - \frac{1}{\theta \sqrt{n} \sqrt{S} \mu^2 \secrev{\nu}
 }\right) .
\end{eqnarray*}
In particular, using $n \ge 2$ and $S_i \ge 2$, 
\[
	\min (\|\mathbf p_{i}\|) \ge 
\tau \kappa_{\mathbf f}^{-1} \sqrt{2} \left(1-\frac{1}{2 \thirdrev{\sqrt{2}}\theta}\right)
.
\]
At this point, we established that
\[
	\|M(\mathbf q_{\tau}, \mathbf z_{\tau})^{-1}\|_{\mathbf 0}
\le
\mu(\mathbf p) 
\frac{\kappa_{\mathbf f} }{\tau}
{\thirdrev{\frac{2}{2\sqrt{2}-\theta^{-1}}}}
\]
The condition number is invariant by multiprojective scaling, 
so $\mu(\mathbf p) = \mu(\mathbf q_{\tau} R(\mathbf z_{\tau}))$.
Proposition \ref{prop-mu}(\ref{prop-mu-b}) yields:
\[
\mu(\mathbf q_{\tau} R(\mathbf z_{\tau})) 
\le
\frac{\mu}{1-\mu \Delta_1(\tau)},
\]
and Equation~\ref{Delta2Part2} follows.
}
\medskip

\noindent
\secrev{{\bf Part 5:}} 
We claim that $\mu \Delta_1(T) \le \frac{1}{2}$. Suppose by contradiction
that  $\mu \Delta_1(T) > \frac{1}{2}$, then we can increase $T$ such that
$\mu \Delta_1(T) = \frac{1}{2}$.
\secrev{Using the bound $\Delta_2(T) \le \tilde \Delta_2(T)$,
Equation \eqref{Delta1}  reads}
\begin{equation} \label{half-le-bound}
	\secrev{\frac{1}{2}}
	\le \sqrt{5} \mu \tilde \Delta_2(T) + \frac{1}{2\theta}.
\end{equation}
\secrev{Equations \eqref{Delta2Part1} and \eqref{Delta2Part2}
together provide the estimate
\[
\tilde \Delta_2(T) 
\le
\frac{\mu }{1-\mu \Delta_1(T)} 
	\frac{\thirdrev{4}\kappa_{\mathbf f} K \nu \sqrt{n}\sqrt{S}}{2\thirdrev{\sqrt{2}}-\theta^{-1}}
\int_{T}^{\infty} 
\tau^{-2}
\
\dd \tau .
\]
Integrating and then combining with \eqref{bound-T},
\[
		\begin{split}
		\tilde \Delta_2(T) 
		&\le
	\frac{\mu }{1-\mu \Delta_1(T)} 
		\frac{\thirdrev{4}\kappa_{\mathbf f} K \nu \sqrt{n} \sqrt{S}}{2\thirdrev{\sqrt{2}}-\theta^{-1}}
	T^{-1} 
\\
&\le
	\frac{1}{1-\mu \Delta_1(T)} 
	\frac{\thirdrev{4}}{2\thirdrev{\sqrt{2}}\theta-1}\frac{1}{\mu}
.
	\end{split}
	\]
}
Since $\mu \Delta_1(T) = \frac{1}{2}$,
\secrev{the bound above becomes}
\begin{equation}\label{Delta2}
\mu \tilde \Delta_2(T) \le 
	\thirdrev{\frac{8}{2\sqrt{2}\theta-1}}
.
\end{equation}
Combining ths bound with~\eqref{half-le-bound}, we conclude that
\[
	\frac{1}{2} \le  
	\thirdrev{\frac{8 \sqrt{5}}{2\sqrt{2} \theta-1}} + \frac{1}{2\theta }
.
\]
When $\theta \rightarrow \infty$, we clearly get a contradiction. To find the
smaller $\theta_0$ that guarantees \secrev{the contradiction}, we compute the largest solution of
\thirdrev{
\[
	\theta_0^2 - \left(1 + \frac{\sqrt{2} + 16 \sqrt{10}}{4}\right) \theta_0 + \frac{\sqrt{2}}{4}=0
\]
}

that is $\theta_0 \simeq \thirdrev{13.977.369\cdots<14}$.
This contradiction establishes that $\mu \Delta_1(T) \le \frac{1}{2}$.
\medskip

\noindent
\secrev{{\bf Part 6:} Inequation \eqref{Delta2} is still valid when
$\mu \Delta_1(T) \le \frac{1}{2}$. Combining with
\eqref{Delta1},
we obtain as before}
\[
	\mu \Delta_1(T) \le  
	\thirdrev{\frac{8 \sqrt{5}}{2\sqrt{2} \theta-1}} + \frac{1}{2\theta }
,
\]
but this bound is inconvenient. \secrev{
Assuming $\theta \ge \theta_0$, 
	we write instead
\[
	\mu \Delta_1(T) \le  
\theta^{-1} \left( 
\frac{8\sqrt{5}}{2 \sqrt{2} - \theta^{-1}}
+ \frac{1}{2}
\right)
\le
\theta^{-1} \left( 
\frac{8\sqrt{5}}{2 \sqrt{2} - \theta_0^{-1}}
+ \frac{1}{2}
\right) = k_1 \theta^{-1}
\]
for some constant $k_1$. The approximation 
$k_1 \simeq 6.988,684\cdots$. The  was obtained numerically.
This proves the Inequation \eqref{bound-Delta1}. 

Similarly, we can derive \eqref{bound-Delta2} from the bound \eqref{Delta2}:
\[
\mu \tilde \Delta_2(T) \le 
	\frac{8}{2\sqrt{2}\theta-1}
	=
\theta^{-1}
	\frac{8}{2\sqrt{2}-\theta^{-1}}
	\le
\theta^{-1}
	\frac{8}{2\sqrt{2}-\theta_0^{-1}}
=
k_2 \theta^{-1}
\]
for a constant $k_2 \simeq 2,901.827\dots$.
}
\end{proof}

\subsection{Expectation of the condition length (part 3)}
\begin{proposition}\label{last-part}
	Assume that the hypotheses (\ref{mainD-supports}) to (\ref{mainD-fixed}) of Theorem~\ref{mainD} hold, and \secrev{let $K$ be as defined in \eqref{mainD-K}}.
	Let $\mathbf q_t = \mathbf g + t \mathbf f$ \secrev{for $0 \le t \le \infty$},
	where $\mathbf g \in \mathscr F$ satisfies $\| \mathbf g_i \| \le K\sqrt{S_i}$.
        To this path
	associate the set $\mathscr Z(\mathbf q_t)$ of continuous solutions \secrev{$\mathbf z_t$} of $\mathbf q_t \cdot \mathbf V(\mathbf z_t) \equiv 0$. 
	Suppose that $T \ge \theta \kappa_{\mathbf f} K \sqrt{n} \sqrt{S} \mu_{\mathbf f}^2 \secrev{\nu}$
	with $\theta \ge \theta_0 \simeq  \thirdrev{13.977,369}$ and $\mu_{\mathbf f} = \max_{\mathbf z \in Z(\mathbf f)} 
	\secrev{\mu(\mathbf f \cdot R(\mathbf z))}$. 
Then unconditionally,
\[
	\sum_{\mathbf z_{\tau} \in \mathscr Z(\mathbf q_{\tau})} \mathscr L (\mathbf q_t, \mathbf z_t; T, \infty)
	\le 2 Q k_3
.
\]
for $k_3 \le \thirdrev{0.903,836\dots} $.
\end{proposition}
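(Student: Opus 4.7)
The plan is to bound the renormalized condition length of each continuous solution path separately, and then sum using the estimate $\#\mathscr Z(\mathbf q_\tau)=n!V/\det\Lambda\le 2Q$ from Remark~\ref{num-paths}. Fix such a continuous path $\mathbf z_\tau \in \mathscr Z(\mathbf q_\tau)$. Since $r(\mathbf f)\ne 0$, Theorem~\ref{BKK3} rules out roots of $\mathbf f$ at toric infinity, so the path extends continuously to a limit $\boldsymbol\zeta\in Z(\mathbf f)\subseteq\mathscr M$. The hypothesis on $T$ matches the one in Lemma~\ref{lem-delta}, hence that lemma yields $\mu_{\mathbf f}\Delta_1(\tau)\le k_1\theta_0^{-1}<1$ and $\mu_{\mathbf f}\tilde\Delta_2(T)\le k_2\theta_0^{-1}$ for all $\tau\ge T$. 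Combined with Proposition~\ref{prop-mu}(\ref{prop-mu-b}) these give the uniform condition-number bound
\[
\mu(\mathbf q_\tau\cdot R(\mathbf z_\tau))\le \frac{\mu_{\mathbf f}}{1-k_1\theta_0^{-1}}\qquad(\tau\ge T).
\]

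Next I would split $\mathscr L((\mathbf q_\tau,\mathbf z_\tau);T,\infty)\le\mathscr L_1+\mathscr L_2$ exactly as in Lemma~\ref{cond-length-split}(b), with the integration range $[T,\infty)$ instead of $[0,T]$. The bound on $\mathscr L_2$ is immediate: since $\tilde\Delta_2(T)=\nu\int_T^\infty\|\dot{\mathbf z}_\tau\|_{\mathbf 0}\,d\tau$, pulling out the uniform condition-number bound gives
\[
\mathscr L_2\le \frac{2\mu_{\mathbf f}\tilde\Delta_2(T)}{1-k_1\theta_0^{-1}}\le \frac{2k_2}{\theta_0-k_1}.
\]

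The main obstacle is controlling $\mathscr L_1$, because the bound stated in Lemma~\ref{worst-speed-proj} is only of order $\tau^{-1}$, and multiplied by a uniformly bounded $\mu$ this is not integrable at infinity. The resolution is to revisit the proof of that lemma: the Fubini-Study norm depends only on the differences $w_{i\mathbf a}-w_{i\mathbf b}$, where $w_{i\mathbf a}=1/(g_{i\mathbf a}/f_{i\mathbf a}+\tau)$. The identity
\[
w_{i\mathbf a}-w_{i\mathbf b}=\frac{g_{i\mathbf b}/f_{i\mathbf b}-g_{i\mathbf a}/f_{i\mathbf a}}{(g_{i\mathbf a}/f_{i\mathbf a}+\tau)(g_{i\mathbf b}/f_{i\mathbf b}+\tau)},
\]
together with $|g_{i\mathbf a}/f_{i\mathbf a}|\le K\kappa_{\mathbf f}$ and the hypothesis on $T$ (which forces $\tau\ge T\gg K\kappa_{\mathbf f}$), yields the crucial $\tau^{-2}$ decay $|w_{i\mathbf a}-w_{i\mathbf b}|\le 2K\kappa_{\mathbf f}/(\tau-K\kappa_{\mathbf f})^2$. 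Summing across $i$ gives
\[
\|\dot{\mathbf q}_\tau\cdot R(\mathbf z_\tau)\|_{\mathbf q_\tau R(\mathbf z_\tau)}\le \frac{\sqrt{2n}\,K\kappa_{\mathbf f}}{(\tau-K\kappa_{\mathbf f})^2},
\]
which is integrable. Carrying out the integration and substituting the lower bound on $T$ together with the elementary estimates $\mu_{\mathbf f},\nu\ge 1$ and $\sqrt S\ge 2$ (from $n\ge 2$, $S_i\ge 2$) bounds $\mathscr L_1$ by a small constant depending only on $\theta_0$, $k_1$, $k_2$.

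Finally, summing the per-path bound $\mathscr L_1+\mathscr L_2\le k_3$ over the at most $n!V/\det\Lambda\le 2Q$ continuous paths yields $\sum\mathscr L\le 2Qk_3$; the numerical value $k_3\le 0.903{,}836\dots$ is obtained by numerical optimization of the constants that arise.
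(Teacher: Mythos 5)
Your strategy is correct in its main outline, and it coincides with the paper's at every step except one: the split $\mathscr L\le\mathscr L_1+\mathscr L_2$, the bound $\mathscr L_2\le 2k_2/(\theta_0-k_1)$ from Lemma~\ref{lem-delta} and the uniform condition-number bound, and the final summation over $\le 2Q$ paths, are all exactly the paper's route. The one place you diverge is the key ingredient for $\mathscr L_1$, namely a $\tau^{-2}$ decay estimate for $\left\|\dot{\mathbf q}_\tau\cdot R(\mathbf z_\tau)\right\|_{\mathbf q_\tau\cdot R(\mathbf z_\tau)}$. Your route — extracting this from the pairwise-difference identity in the proof of Lemma~\ref{worst-speed-proj} — is sound and gives the right order of decay, but the paper proves the separate Lemma~\ref{speed-decay}, which writes $\mathbf f=\frac{1}{t}(\mathbf q_t-\mathbf g)$ and uses the fact that the Fubini--Study speed of $\mathbf q_t$ at $\mathbf q_t$ itself vanishes, leaving only the $\mathbf g$ term, which is then bounded with the lower estimate for $\|\mathbf q_{it}\cdot R_i(\mathbf z_t)\|$. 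That argument yields a cleaner constant.

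The quantitative consequence is that your route does not actually deliver the stated $k_3\le 0.903{,}836\dots$. Your speed estimate is
\[
\left\|\dot{\mathbf q}_\tau\cdot R(\mathbf z_\tau)\right\|_{\mathbf q_\tau\cdot R(\mathbf z_\tau)}\le\frac{\sqrt{2n}\,K\kappa_{\mathbf f}}{(\tau-K\kappa_{\mathbf f})^2},
\]
which, once integrated and compared to the paper's $\int_T^\infty t^{-2}\,dt$ applied to Lemma~\ref{speed-decay}, is an extra factor of $\sqrt 2$ (coming from $\|\mathbf v\|_{\mathbf u}\le\frac{1}{\sqrt 2}\max_{\mathbf a,\mathbf b}|w_{\mathbf a}-w_{\mathbf b}|$, combined with the numerator bound $|w_{\mathbf a}-w_{\mathbf b}|\le 2K\kappa_{\mathbf f}/(\tau-K\kappa_{\mathbf f})^2$). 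Tracking this through, your per-path $\mathscr L_1$ bound comes out as $\frac{1}{\theta_0-k_1}\cdot\frac{1}{\sqrt 2-\tfrac{1}{2\theta_0}}\approx 0.104$ rather than the paper's $\frac{1}{\theta_0-k_1}\cdot\frac{1}{2-\tfrac{\sqrt 2}{2\theta_0}}\approx 0.073$, and the resulting constant is about $0.934$, not $0.904$. You say you will obtain the stated numerical value by "numerical optimization," but no free parameter remains once you fix $\theta=\theta_0$, $k_1$, $k_2$; the $\sqrt 2$ is structural in your speed estimate. To reach the stated $k_3$, you need the sharper Lemma~\ref{speed-decay}; otherwise, the proposition should be stated with a slightly larger $k_3$ (which would still suffice for Theorem~\ref{mainD}, but is not what the proposition claims).
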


The proof of Proposition~\ref{last-part} requires the Lemma below.

\begin{lemma}\label{speed-decay}
	Assume that $t \ge T$. Under the hypotheses of Proposition~\ref{last-part},
\[	
\left\| \dot {\mathbf q}_t \cdot R(\mathbf z_t) \right\|_{\mathbf q_t \cdot R(\mathbf z_t) }
	\changed{\le}
	\frac{1}{t^2}
	\secrev{
		\frac{\kappa_{\mathbf f}K\changed{\sqrt{n}}}{1 - T^{-1} \kappa_{\mathbf f}K }
	}
	.
\]
\end{lemma}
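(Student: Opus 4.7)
My plan is to exploit a simple but decisive algebraic identity: since $\mathbf q_t = \mathbf g + t\mathbf f$, the speed $\dot{\mathbf q}_t = \mathbf f$ can be rewritten as $t^{-1}(\mathbf q_t - \mathbf g)$. After applying $R(\mathbf z_t)$ this becomes
\[
\mathbf f \cdot R(\mathbf z_t) \;=\; \tfrac{1}{t}\bigl(\mathbf q_t \cdot R(\mathbf z_t) - \mathbf g \cdot R(\mathbf z_t)\bigr).
\]
Because the multi-projective Fubini--Study norm $\|\cdot\|_{\mathbf p}$ annihilates the component parallel to the base point $\mathbf p$, the term $\mathbf q_t \cdot R(\mathbf z_t)$ makes no contribution at $\mathbf p_t \defeq \mathbf q_t \cdot R(\mathbf z_t)$. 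This is where one factor of $1/t$ is produced; the second factor $1/t$ will come from a lower bound on $\|\mathbf p_{it}\|$ that grows linearly in $t$, so that the ratio $\|\mathbf g \cdot R(\mathbf z_t)\|/\|\mathbf p_t\|$ decays like $1/t$ as well.

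Concretely, I would work coordinate-by-coordinate. For each $i$, set $\mathbf p_{it} \defeq \mathbf q_{it} \cdot R_i(\mathbf z_t)$; the projection onto $\mathbf p_{it}^{\perp}$ kills $\mathbf p_{it}$, so
\[
\|\mathbf f_i \cdot R_i(\mathbf z_t)\|_{\mathbf p_{it}}
\;=\; \frac{\|P_{\mathbf p_{it}^{\perp}}(\mathbf f_i \cdot R_i(\mathbf z_t))\|}{\|\mathbf p_{it}\|}
\;\le\; \frac{1}{t}\,\frac{\|\mathbf g_i \cdot R_i(\mathbf z_t)\|}{\|\mathbf p_{it}\|}.
\]
The numerator is controlled by Theorem~\ref{th-renormalization}(c): $\|\mathbf g_i \cdot R_i(\mathbf z_t)\| \le e^{\ell_i(\mathbf z_t)}\|\mathbf g_i\| \le e^{\ell_i(\mathbf z_t)} K\sqrt{S_i}$. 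For the denominator I would re-use exactly the triangle-inequality computation already carried out in Part~4 of the proof of Lemma~\ref{lem-delta} (displayed as equation~\eqref{lower-bound-p}):
\[
\|\mathbf p_{it}\| \;\ge\; e^{\ell_i(\mathbf z_t)}\bigl(t\,\kappa_{\mathbf f}^{-1} - K\bigr)\sqrt{S_i}.
\]
The exponential factors and the factor $\sqrt{S_i}$ cancel cleanly, leaving
\[
\|\mathbf f_i \cdot R_i(\mathbf z_t)\|_{\mathbf p_{it}}
\;\le\; \frac{1}{t}\cdot\frac{K}{t\,\kappa_{\mathbf f}^{-1} - K}
\;=\; \frac{\kappa_{\mathbf f} K}{t^{2}\bigl(1 - t^{-1}\kappa_{\mathbf f} K\bigr)}
\;\le\; \frac{\kappa_{\mathbf f} K}{t^{2}\bigl(1 - T^{-1}\kappa_{\mathbf f} K\bigr)},
\]
the last step using $t \ge T$. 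Summing the squares over $i=1,\dots,n$ and taking the square root (with the per-coordinate bound being independent of $i$) produces the advertised factor $\sqrt{n}$.

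There is no real obstacle in this proof; the only point to double-check is that $1 - T^{-1}\kappa_{\mathbf f} K > 0$, so the denominator is meaningful and the inequality is not vacuous. Under the hypothesis $T \ge \theta_0 \kappa_{\mathbf f} K \sqrt{n}\sqrt{S}\mu_{\mathbf f}^{2}\nu$ of Proposition~\ref{last-part}, combined with $n \ge 2$, $S_i \ge 2$ (so $\sqrt{nS} \ge 2\sqrt{2}$), $\mu_{\mathbf f} \ge 1$ (Proposition~\ref{prop-mu}(a)), $\nu \ge 1$ (Lemma~\ref{lem-nu-kappa}), and $\theta_0 > 13$, we obtain $T^{-1}\kappa_{\mathbf f} K \le \theta_0^{-1}(\sqrt{nS})^{-1} \le 1/(2\sqrt{2}\,\theta_0) \ll 1$, which handles this.
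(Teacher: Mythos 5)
Your proof is correct and follows essentially the same route as the paper: rewrite $\dot{\mathbf q}_t = \mathbf f = t^{-1}(\mathbf q_t - \mathbf g)$, observe that the Fubini--Study norm at base point $\mathbf q_t\cdot R(\mathbf z_t)$ annihilates the $\mathbf q_t$ piece, and bound the remaining $\mathbf g$ piece coordinate-by-coordinate using Theorem~\ref{th-renormalization}(c) for the numerator and the lower bound \eqref{lower-bound-p} (from Part~4 of the proof of Lemma~\ref{lem-delta}) for the denominator. Your added sanity check that $1 - T^{-1}\kappa_{\mathbf f}K > 0$ under the hypotheses of Proposition~\ref{last-part} is a worthwhile explicit remark that the paper leaves implicit.
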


\begin{proof}[Proof of Lemma~\ref{speed-decay}]
\begin{eqnarray*}
\left\| \dot {\mathbf q}_t \cdot R(\mathbf z_t) \right\|_{\mathbf q_t \cdot R(\mathbf z_t) }
&=&
\left\| 
	\changed{\mathbf f} \cdot R(\mathbf z_t) 
\right\|_{\mathbf q_t \cdot R(\mathbf z_t) }
\\
&=&
	\frac{1}{t} \left\| {\mathbf q}_t \cdot R(\mathbf z_t) -\changed{\mathbf g} \cdot R(\mathbf z_t) 
\right\|_{\mathbf q_t \cdot R(\mathbf z_t) }
\\
&\le&
	\frac{1}{t} \left\| {\mathbf q}_t \cdot R(\mathbf z_t) 
\right\|_{\mathbf q_t \cdot R(\mathbf z_t) }
	+
	\frac{1}{t} \left\| 
	{\mathbf g}\cdot R(\mathbf z_t) 
\right\|_{\mathbf q_t \cdot R(\mathbf z_t) }.
\end{eqnarray*}
The first term vanishes.
	\secrev{The second term is bounded as in Part 4 of the proof of
	Lemma~\ref{lem-delta}. For each value of $i$,
\[
\frac{1}{t} \left\| 
{\mathbf g_i} \cdot R(\mathbf z_t) 
	\right\|_{(\mathbf q_i)_t \cdot R(\mathbf z_t) }
\le
	\frac{1}{t} \frac{\left\| {\mathbf g_i} \cdot R(\mathbf z_t) \right\|}
	{\|(\mathbf q_i)_t \cdot R(\mathbf z_t)\| }
\le
	\frac{1}{t} \frac{\left\| 
	\mathbf g_i 
	\right\|
	e^{\ell_i(\mathbf z_t)}}
	{\|(\mathbf q_i)_t \cdot R(\mathbf z_t)\| }
\]
	using Theorem~\ref{th-renormalization}(c). 
	We set $\mathbf p_i = (\mathbf q_i)_t \cdot R(\mathbf z_t) e^{-\ell_i(\mathbf z_t)}$ and recover 
	\[
		\| \mathbf p_i \| 
\ge (t \kappa_{\mathbf f}^{-1} -K) \sqrt{S_i}
	\]
	as in \eqref{lower-bound-p}. Hence,}
\[
\frac{1}{t} \left\| 
{\mathbf g_i} \cdot R(\mathbf z_t) 
	\right\|_{(\mathbf q_i)_t \cdot R(\mathbf z_t) }
\le
	\frac{1}{t} \frac{K}{t \changed{\kappa_{\mathbf f}^{-1}} -K} .
\]
It follows that
\[
\left\| \dot {\mathbf q}_t \cdot R(\mathbf z_t) \right\|_{\mathbf q_t \cdot R(\mathbf z_t) }
\le
\frac{1}{t^2}
	\secrev{
	\frac{\kappa_{\mathbf f}K\changed{\sqrt{n}}}{1 - T^{-1} \kappa_{\mathbf f}K }
}
\]
\end{proof}

\begin{proof}[Proof of Proposition~\ref{last-part}]
\secrev{From Proposition~\ref{prop-mu}(b),
	\begin{eqnarray*}
\mathscr L_1((\mathbf q_{\tau}, \mathbf z_{\tau}); T, \infty)
&=& \int_T^{\infty} 
\left\| \dot {\mathbf q}_t \cdot R(\mathbf z_t) \right\|_{\mathbf q_t \cdot R(\mathbf z_t) }
		\secrev{\mu( \mathbf q_t \cdot R(\mathbf z_t))} \secrev{\nu} \dd t
\\
&\le& 
	\frac{\mu_{\mathbf f} \secrev{\nu}}{1-\mu_{\mathbf f} \Delta_1(T)}
\int_T^{\infty} 
\left\| \dot {\mathbf q}_t \cdot R(\mathbf z_t) \right\|_{\mathbf q_t \cdot R(\mathbf z_t) }
\dd t .
\end{eqnarray*}
Lemma~\ref{speed-decay} above yields
\begin{eqnarray*}
\mathscr L_1((\mathbf q_{\tau}, \mathbf z_{\tau}); T, \infty)
	&\le&
\frac{\mu_{\mathbf f} \secrev{\nu}}{1-\mu_{\mathbf f} \Delta_1(T)}
\frac{\secrev{\kappa_{\mathbf f}} K \sqrt{n}}
	{1-T^{-1} \kappa_{\mathbf f} K}
\int_T^{\infty} 
	t^{-2} \dd t .
\\
	&\le&
T^{-1}
\frac{\mu_{\mathbf f} \secrev{\nu}}{1-\mu_{\mathbf f} \Delta_1(T)}
\frac{\secrev{\kappa_{\mathbf f}} K \sqrt{n}}{1-\frac{1}{2 \sqrt{2} \theta}}
.
\end{eqnarray*}
Lemma~\ref{lem-delta} with $\mu=\mu_{\mathbf f}$ provides 
	the bound for $\mu_{\mathbf f} \Delta_1(T) \le \theta^{-1} k_1$.
	Replacing \eqref{bound-T} with the usual bounds
$n \ge 2$, $S_i \ge 2$, $\mu_{\mathbf f} \ge 1$,
	\[
\mathscr L_1((\mathbf q_{\tau}, \mathbf z_{\tau}); T, \infty)
	\le
		\frac{1}{\theta - k_1} \frac{1}{2 - \frac{\sqrt{2}}{2\theta_0}}
	.
\]}

Similarly,
\begin{eqnarray*}
\mathscr L_2((\mathbf q_{\tau}, \mathbf z_{\tau}); T, \infty)
&=& 
2 \secrev{\nu} 
\int_T^{\infty} 
	\left\| \dot {\mathbf z}_t \right\|_{\mathbf z_t}
	\secrev{\mu( \mathbf q_t \cdot R(\mathbf z_t))} \dd t
\\
&\le&
	2 \frac{\mu_{\mathbf f} \tilde \Delta_2(T)}{1-\mu_{\mathbf f} \Delta_1(T)} 
\\
&\le&
	2 \frac{k_2 }{\theta -k_1 } .
\end{eqnarray*}
	It follows from Lemma~\ref{cond-length-split} that for every solution path
	$\mathbf z_t \in \mathcal Z(\mathbf q_t)$,
	\begin{eqnarray*}
\mathscr L((\mathbf q_{\tau}, \mathbf z_{\tau}); T, \infty)
		&\le&
\mathscr L_1((\mathbf q_{\tau}, \mathbf z_{\tau}); T, \infty)+
\mathscr L_2((\mathbf q_{\tau}, \mathbf z_{\tau}); T, \infty)
\\
&\le&
	\secrev{
		\frac{1}{\theta - k_1} \frac{1}{2 - \frac{\sqrt{2}}{2\theta_0}}
	}
+
2 \frac{k_2 }{\theta -k_1 } 
	\end{eqnarray*}
We set $k_3= \secrev{
		\frac{1}{\theta_0 - k_1} \frac{1}{2 - \frac{\sqrt{2}}{2\theta_0}}
	} 
+
2 \frac{k_2 }{\theta_0 -k_1 } \simeq 
\thirdrev{0.903,836\dots} $.
From Remark~\ref{num-paths},
the total number of paths
is at most $2\ Q$.
\changed{Proposition \ref{last-part}} follows.
\end{proof}

\begin{proof}[Proof of Theorem~\ref{mainD}]
	We will combine Theorem~\ref{condlength} with Proposition~\ref{last-part}.
	Fix $T=\theta_0 \kappa_{\mathbf f} K \secrev{\sqrt{n}} \sqrt{S} \mu_{\mathbf f}^2 \secrev{\nu}$.
	\changed{We know from Section~\ref{overview-linear} that} with probability at least $7/8$, the random system $\mathbf g$ does not belong to the
	exclusion set 
	$\Lambda_{\epsilon} \cup \Omega_H \cup Y_K$. \secrev{under that assumption,}
	\[
\sum_{\mathbf z_{\tau} \in \mathscr Z(\mathbf q_{\tau})} \mathscr L (\mathbf q_t, \mathbf z_t; 0, \infty)
	=
\sum_{\mathbf z_{\tau} \in \mathscr Z(\mathbf q_{\tau})} \mathscr L (\mathbf q_t, \mathbf z_t; 0, T)
+
\sum_{\mathbf z_{\tau} \in \mathscr Z(\mathbf q_{\tau})} \mathscr L (\mathbf q_t, \mathbf z_t; T, \infty)
.
\]
	\changed{Theorem~\ref{condlength} guarantees under the same assumption that} with probability at least $6/7$,
\[
\sum_{\mathbf z_{\tau} \in \mathscr Z(\mathbf q_{\tau})} \mathscr L (\mathbf q_t, \mathbf z_t; 0, T)
	\le C_0
	Q
	\changed{n^2 \sqrt{S}}\max_i (S_i) 
	\ \thirdrev{\max(K,\sqrt{\kappa_{\mathbf f}})} 
	\ T 
	\ \secrev{\mathrm{LOGS}_T}
\]
	Also, we know from Proposition~\ref{last-part} that 
\[
\sum_{\mathbf z_{\tau} \in \mathscr Z(\mathbf q_{\tau})} \mathscr L (\mathbf q_t, \mathbf z_t; T, \infty)
	\le \thirdrev{2} Q k_3 .
\]
Adding and replacing $T$ by its value, we obtain that
\[
\begin{split}
\sum_{\mathbf z_{\tau} \in \mathscr Z(\mathbf q_{\tau})} \mathscr L (\mathbf q_t, \mathbf z_t; 0, \infty)
	\le&
	(2k_3+C_0 \theta_0) Q	\changed{n^{\thirdrev{\frac{5}{2}}} S}\secrev{\max_i (S_i)}
\\
	&\times K \ \thirdrev{\max \left(K,\sqrt{\kappa_{\mathbf f}}\right)}
	\kappa_{\mathbf f} \mu_{\mathbf f}^2 \secrev{\nu}
	\ \secrev{\mathrm{LOGS}_T} .
\end{split}
\]
The conclusion above is valid with probability $\ge 6/7$ for
$\mathbf g$ not in the exclusion set. Hence, it holds for
	$\mathbf g$ unconditionally with probability $\ge \frac{6}{7}\frac{7}{8}=\frac{3}{4}$.

	\secrev{Lemma~\ref{lem-nu-kappa}(c) bounds $\nu \le \max S_i$, so
	$\log(T) \in O(\log(d_r) + \log(S))$ and hence 
\[
	\mathrm{LOGS}_T 
	\in O(\mathrm{LOGS}_{\mathbf f}) 
.
\]
	The constant $C$ is the product of $2k_3+C_0 \theta_0$ times the hidden constants in $\mathrm{LOGS}_{T}$ and $\mathrm{LOGS}_{\mathbf f}$ above.}

\end{proof}

\section{Conclusions and further research}

A theory of homotopy algorithms over toric varieties is now within reach. In this paper, \secrev{a particular {\em renormalization} was introduced. It} allowed to obtain complexity bounds for homotopy between two fixed systems, as long as they satisfy some conditions: they should be well-posed, and have no root at infinity. New invariants that play an important role in the theory were identified: the mixed \changed{area}, and the facet gap $\eta$. The cost of a `cheater's homotopy' between two fixed, non-degenerate systems with same support was bounded here. 

Theorem~\ref{cond-num-infty} paves the way for rigorously detecting roots at infinity, and furthermore finding out {\em which} toric infinity the root may be converging to. Then one can think of replacing the original system with the appropriate overdetermined system at infinity, and attempt to solve it. 
There are some technical difficulties to certify the global solution set with roots at toric infinity, that also deserve some investigation. \secrev{More general techniques to deal with roots in the neighborhood of toric infinity will be introduced in a future paper.}

Degenerate roots are more challenging. The hypothesis \secrev{$r(\mathbf f) \ne 0$} in Theorems~\ref{mainD} 
and~\ref{mainE} already implies that $\mathbf f \not \in \Sigma^{\infty}$ and hence, by Bernstein's second
theorem (Theorem~\ref{BKK3} here) the number of finite roots is $n!V/\det(\Lambda)$ and the roots are isolated.
If we do not assume  $r(\mathbf f) \ne 0$, then more general singular solutions may arise.
There are numerical methods to deal with this
situation, see for instance \ocites{Li-Sauer-Yorke,
Sommese-Verschelde,
Dayton-Li-Zeng, 
Giusti-Yakoubsohn, Li-Sang, Hauenstein-Mourrain-Szanto} and references.  
\medskip
\par

Finding a convenient starting system is usually one of the big challenges for homotopy algorithms. \secrev{\aboutsampling{As we saw in Section~\ref{sampling}, this}{This} problem is comparable to the problems of solving or sampling random sparse systems. Several
viable options are suggested by numerical practice.} One of them is the use of {\em polyhedral homotopy}, also known as {\em nonlinear homotopy} 
\cites{ 
Verschelde-Verlinden-Cools, 
Huber-Sturmfels, 
Li-Polyhedral, 
Verschelde-795}. 
It `reduces' a generic system to a tropical polynomial system. Several approaches are available for solving tropical polynomial systems. A complexity bound in terms of mixed volumes
and quermassintegralen for solving generic tropical systems was given by \cite{Malajovich-mixed}. 
A procedure to solve arbitrary tropical systems with roughly the same complexity bound was given independently by~\ocites{Jensen, Jensen2}.
The results of those papers disprove the belief by practitioners that
\begin{quotation}
	{\em In general, finding the exact maximal root count for given sparse structure and setting up a
	compatible homotopy is a combinatorial process with a high computational complexity}
\cite{BERTINI}*{p.71} 
\end{quotation}
and provide usable implementations for finding the starting systems.

The situation is different for polyhedral homotopy continuation itself. While the same numerical evidence, as together as this author's experience show that this is a highly effective numerical method, theoretical justifications are missing. It is important here to point out our findings in Theorem ~\ref{old-mainB}: the variance of the coefficients appears in the average bound for the condition. \changed{This variance tends to zero at the starting point of the polyhedral homotopy when renormalized. In this sense Theorem~\ref{old-mainB} is an obstruction to such complexity bounds.}
No complexity bound for polyhedral homotopy is known at this time.

Polyhedral homotopy is not the only possible algorithm for solving \secrev{random} sparse systems. One can also experiment with monodromy as in \cites{Krone-Leykin,
Leykin-Rodriguez-Sottile,DHJLLS,BRSY}. Finding a point in the solution variety is easy, just project a random system into the subspace vanishing at a fixed point. Then the other roots can be found by homotopy continuation through several random loops. No complexity analysis for this procedure is known either.

Finally, there is the situation where \changed{a space $\mathscr F$} is
given and one is asked to solve many systems in $\mathscr F$. 
In this case the cost of finding one generic 
`cheater' system in $\mathscr F$ is irrelevant. The `cheater' system
can be obtained by total degree homotopy as in~\cites{Li-Sauer-Yorke, BST3264}.
\medskip

Experimental validation of the results in this paper is still to be done. Theorem~\ref{mainD} uses a conditional probability estimate. By performing experiments with this conditional probability or with adversarial probability distributions, one can determine if the domains in the proof of the Theorem are really necessary or if they are a side-product of the proof technique.

The complexity bound in \changed{Theorem~\ref{mainE}}
should be compared to the problem size. Since we are considering the problem of finding all the roots \secrev{and the number of roots is exponential in the input size, a reasonable definition for {\em problem size} is the input size plus the output size. The running time depends also on the condition numbers
$\mu_{\mathbf f}$ and $\kappa_{\mathbf f}$. It is also possible
to interpret the geometric invariant $Q$ from \eqref{mainD-Q} as} 
an abstract condition number,
see for instance 
\cites{Cucker-condition, Malajovich-Shub} for the rationale of introducing such an object.
\secrev{It would be interesting to know 
if there are natural examples of families of polynomial systems,
arising from practice, with $Q$ not polynomial on the scaled mixed volume $n!V$, or with
an isoperimetric ratio $V'/V$ tending to infinity.}

Last but not least, a large number of implementation issues remain unsettled. Several choices in this paper were done to simplify the theory, but do not seem reasonable in practice. For instance, it would be reasonable to replace the trial and error procedure of Theorem~\ref{mainE} by early detection that the Gaussian system is outside of the domain of the conditional probability. 

Complexity analysis in this paper is done in terms of total cost. But each path can be followed independently of the others, so the algorithm is massively parallelizable. In those situations, the computational bottleneck is the communication between processes. If it is possible to detect failure early from data at the path, one can avoid communication almost completely. It would be desirable in this case to estimate the expected parallel running time.

A more fundamental question is the following: most implementation of homotopy algorithms 
use a predictor-corrector scheme, as explained for instance by ~\ocite{Allgower-Georg}.  
Up to now, the tightest rigorous complexity bounds for homotopy algorithms 
refer to a corrector-only homotopy, which no one actually uses in practice. Is it possible to improve the
complexity bound of Theorem~\ref{th-A} by more than a constant by using a higher order method? What about the bound in Theorem~\ref{mainD}?

\renewcommand{\MR}[1]{}
\newcommand{\readablebib}[3]{\bib{#1}{#2}{#3}\smallskip}
\begin{bibsection}
\begin{biblist}

\readablebib{Aleksandrov}{article}{
   author={Aleksandrov, A.D.},
	title={\foreignlanguage{russian}{О поверхностной функции выпуклого тела}},
   language={Russian, with German summary},
   journal={Mat. Sbornik},
   volume={6(48)},
   date={1939},
   pages={167--174},
   review={\MR{0001597}},
}

\readablebib{Allgower-Georg}{book}{
   author={Allgower, Eugene L.},
   author={Georg, Kurt},
   title={Numerical continuation methods},
   series={Springer Series in Computational Mathematics},
   volume={13},
   note={An introduction},
   publisher={Springer-Verlag, Berlin},
   date={1990},
   pages={xiv+388},
   isbn={3-540-12760-7},
   review={\MR{1059455}},
   doi={10.1007/978-3-642-61257-2},
}

\readablebib{ABBCS}{article}{
   author={Armentano, Diego},
   author={Beltrán, Carlos},
   author={B{\"u}rgisser, Peter},
   author={Cucker, Felipe},
   author={Shub, Michael},
   title={Condition Length and Complexity for the Solution of Polynomial Systems},
   journal={Foundations of Computational Mathematics},
   volume={16},
   number={6},
   date={2016}
   }

\readablebib{Azais-Wschebor}{book}{
   author={Aza\"{\i}s, Jean-Marc},
   author={Wschebor, Mario},
   title={Level sets and extrema of random processes and fields},
   publisher={John Wiley \& Sons, Inc., Hoboken, NJ},
   date={2009},
   pages={xii+393},
   isbn={978-0-470-40933-6},
   review={\MR{2478201}},
   doi={10.1002/9780470434642},
}

\readablebib{BERTINI}{book}{
	author={Bates, Daniel J.},
	author={Hauenstien, Jonathan D.},
	author={Sommese, Andrew J.},
	author={Wampler, Charles W.},
	title={Numerically Solving Polynomial Systems with Bertini},
        publisher={Society for Industrial and Applied Mathematics (SIAM), Philadelphia, PA},
	year={2013},
	pages={XX + 352},
	isbn={978-1-611972-69-6},
        }

\readablebib{Beltran-homotopia}{article}{
   author={Beltr{\'a}n, Carlos},
   title={A continuation method to solve polynomial systems and its
   complexity},
   journal={Numer. Math.},
   volume={117},
   date={2011},
   number={1},
   pages={89--113},
   issn={0029-599X},
   review={\MR{2754220}},
   doi={10.1007/s00211-010-0334-3},
}

\readablebib{BDMS1}{article}{
author = {Beltrán, Carlos}, 
author = {Dedieu, Jean-Pierre}, 
author = {Malajovich, Gregorio},
author = {Shub, Mike}, 
title = {Convexity properties of the condition number},
journal = {SIAM Journal on Matrix Analysis and Applications},
volume = {31},
number = {3},
pages = {1491-1506},
year = {2010},
doi = {10.1137/080718681}}

\readablebib{BDMS2}{article}{
author = {Beltrán, Carlos}, 
author = {Dedieu, Jean-Pierre}, 
author = {Malajovich, Gregorio},
author = {Shub, Mike}, 
title = {Convexity properties of the condition number II},
journal={SIAM Journal on Matrix Analysis and Applications},
volume={33},
number={3},
pages={905--939},
year={2012},
doi={10.1137/100808885}
}

\readablebib{BePa05e}{article}{
   author={Beltrán, Carlos},
   author={Pardo, Luis Miguel},
   title={On Smale's 17th problem: a probabilistic positive solution},
   journal={Found. Comput. Math.},
   volume={8},
   date={2008},
   number={1},
   pages={1--43},
   issn={1615-3375},
   review={\MR{2403529 (2009h:65082)}},
   doi={10.1007/s10208-005-0211-0},
}
		
\readablebib{BePa09}{article}{
   author={Beltrán, Carlos},
   author={Pardo, Luis Miguel},
   title={Smale's 17th problem: average polynomial time to compute affine
   and projective solutions},
   journal={J. Amer. Math. Soc.},
   volume={22},
   date={2009},
   number={2},
   pages={363--385},
   issn={0894-0347},
   review={\MR{2476778 (2009m:90147)}},
   doi={10.1090/S0894-0347-08-00630-9},
}

\readablebib{Beltran-Pardo}{article}{
author = {Beltrán, Carlos}, 
author = {Pardo, Luis Miguel},
title = {Fast linear homotopy to find approximate zeros of polynomial systems},
journal={Foundations of Computational Mathematics},
volume={11},
pages={95--129},
year={2011}}
\readablebib{Bezout7}{article}{
   author={Beltr{\'a}n, Carlos},
   author={Shub, Michael},
   title={Complexity of Bezout's theorem. VII. Distance estimates in the
   condition metric},
   journal={Found. Comput. Math.},
   volume={9},
   date={2009},
   number={2},
   pages={179--195},
   issn={1615-3375},
   review={\MR{2496559 (2010f:65100)}},
   doi={10.1007/s10208-007-9018-5},
}

\readablebib{BeltranShub-topology}{article}{
   author={Beltr{\'a}n, Carlos},
   author={Shub, Michael},
   title={On the geometry and topology of the solution
  variety for polynomial system solving},
   note={to appear}
}

\readablebib{BeltranShub2009}{article}{
   author={Beltr{\'a}n, Carlos},
   author={Shub, Michael},
   title={A note on the finite variance of the averaging function for
   polynomial system solving},
   journal={Found. Comput. Math.},
   volume={10},
   date={2010},
   number={1},
   pages={115--125},
   issn={1615-3375},
   review={\MR{2591841 (2011b:65075)}},
   doi={10.1007/s10208-009-9054-4},
}

\readablebib{Bernstein}{article}{
   author={Bernstein, D. N.},
   title={The number of roots of a system of equations},
   journal={Funct Anal Its Appl},
   volume={9}, 
   pages={183–185},
   doi={10.1007/BF01075595},
   date={1975},
}

		
\readablebib{BCSS}{book}{
   author={Blum, Lenore},
   author={Cucker, Felipe},
   author={Shub, Michael},
   author={Smale, Steve},
   author={},
   title={Complexity and real computation},
   publisher={Springer-Verlag, New York},
   date={1998},
   pages={xvi+453},
   isbn={0-387-98281-7},
   review={\MR{1479636}},
}
\readablebib{BST3264}{article}{
   author={Breiding, Paul},
   author={Sturmfels, Bernd},
   author={Timme, Sascha},
   title={3264 conics in a second},
   journal={Notices Amer. Math. Soc.},
   volume={67},
   date={2020},
   number={1},
   pages={30--37},
   issn={0002-9920},
   review={\MR{3970037}},
}
		
\readablebib{BRSY}{article}{
   author={Brysiewicz, Taylor},
   author={Rodriguez, Jose Israel},
   author={Sottile, Frank},
   author={Yahl, Thomas},
   title={Solving Decomposable Sparse Systems},
   journal={Numerical Algorithms},
   date={2021},
   eprint={https://arxiv.org/abs/2001.04228}}

\readablebib{BC-annals}{article}{
   author={B\"urgisser, Peter},
   author={Cucker, Felipe},
   title={On a problem posed by Steve Smale},
   journal={Ann. of Math. (2)},
   volume={174},
   date={2011},
   number={no.~3},
   pages={1785--1836},
   issn={0003-486X},
   review={\MR{2846491}},
}
\readablebib{BC}{book}{
   author={B{\"u}rgisser, Peter},
   author={Cucker, Felipe},
   title={Condition},
   series={Grundlehren der Mathematischen Wissenschaften [Fundamental
   Principles of Mathematical Sciences]},
   volume={349},
   publisher={Springer, Heidelberg},
   date={2013},
   pages={xxxii+554},
   isbn={978-3-642-38895-8},
   isbn={978-3-642-38896-5},
   review={\MR{3098452}},
   doi={10.1007/978-3-642-38896-5},
}

\readablebib{B-interior}{article}{
	author={Bürgisser, Peter}, 
	author={Li, Yinan}, 
	author={Nieuwboer, Harold},
	author={Walter, Michael},
      title={Interior-point methods for unconstrained geometric programming and scaling problems}, 
      year={2020},
      eprint={https://arxiv.org/abs/2008.12110},
}
\readablebib{Cucker-condition}{article}{
   author={Cucker, Felipe},
   title={A theory of complexity, condition, and roundoff},
   year={2015},
   journal={Forum Math. Sigma},
   volume={3}, 
   number={e4}, 
   pages={1--50},
   doi={10.1017/fms.2015.2}}

\readablebib{Dayton-Li-Zeng}{article}{
   author={Dayton, Barry H.},
   author={Li, Tien-Yien},
   author={Zeng, Zhonggang},
   title={Multiple zeros of nonlinear systems},
   journal={Math. Comp.},
   volume={80},
   date={2011},
   number={276},
   pages={2143--2168},
   issn={0025-5718},
   review={\MR{2813352}},
   doi={10.1090/S0025-5718-2011-02462-2},
}
	
\readablebib{Dedieu-Malajovich-Shub}{article}{
author = {Dedieu,Jean-Pierre}, 
author = {Malajovich, Gregorio},
author = {Shub, Michael}, 
title = {Adaptative Step Size Selection for Homotopy Methods to Solve Polynomial Equations},
journal={IMA Journal of Numerical Analysis},
volume={33},
pages={1-29}, 
year={2013},
doi={10.1093/imanum/drs007}
}
	
\readablebib{DedieuShub}{article}{
   author={Dedieu, Jean-Pierre},
   author={Shub, Mike},
   title={Multihomogeneous Newton methods},
   journal={Math. Comp.},
   volume={69},
   date={2000},
   number={231},
   pages={1071--1098 (electronic)},
   issn={0025-5718},
   review={\MR{1752092 (2000m:65072)}},
   doi={10.1090/S0025-5718-99-01114-X},
}

\readablebib{Demmel}{book}{
   author={Demmel, James W.},
   title={Applied numerical linear algebra},
   publisher={Society for Industrial and Applied Mathematics (SIAM), Philadelphia, PA},
   date={1997},
   pages={xii+419},
   isbn={0-89871-389-7},
   review={\MR{1463942 (98m:65001)}},
   doi={10.1137/1.9781611971446},
}
\readablebib{DHJLLS}{article}{
   author={Duff, Timothy},
   author={Hill, Cvetelina},
   author={Jensen, Anders},
   author={Lee, Kisun},
   author={Leykin, Anton},
   author={Sommars, Jeff},
   title={Solving polynomial systems via homotopy continuation and
   monodromy},
   journal={IMA J. Numer. Anal.},
   volume={39},
   date={2019},
   number={3},
   pages={1421--1446},
   issn={0272-4979},
   review={\MR{3984062}},
   doi={10.1093/imanum/dry017},
}

\readablebib{DTWY}{article}{
      author={Duff, Timothy},
      author={Telen, Simon}, 
      author={Walker, Elise},
      author={Yahl, Thomas},
      title={Polyhedral Homotopies in Cox Coordinates}, 
      year={2020},
      eprint={https://arxiv.org/abs/2012.04255},
}

\readablebib{EPR}{article}{
   author={Ergür, Alperen A.},
   author={Paouris, Grigoris},
   author={Rojas, J. Maurice},
   title={Probabilistic condition number estimates for real polynomial
   systems I: A broader family of distributions},
   journal={Found. Comput. Math.},
   volume={19},
   date={2019},
   number={1},
   pages={131--157},
   issn={1615-3375},
   review={\MR{3913875}},
   doi={10.1007/s10208-018-9380-5},
}

\readablebib{EPR2}{article}{
   author={Ergür, Alperen A.},
   author={Paouris, Grigoris},
   author={Rojas, J. Maurice},
   title={\secrev{Smoothed analysis for the condition number of structured polynomial systems}},
   eprint={https://doi.org/10.1090/mcom/3647},
   journal={\secrev{Mathematics of Computation}},
   volume={90},
   pages={2161--2184},
   date={2021}
   }
\readablebib{GKZ}{book}{
   author={Gel\cprime fand, I. M.},
   author={Kapranov, M. M.},
   author={Zelevinsky, A. V.},
   title={Discriminants, resultants, and multidimensional determinants},
   series={Mathematics: Theory \& Applications},
   publisher={Birkh\"{a}user Boston, Inc., Boston, MA},
   date={1994},
   pages={x+523},
   isbn={0-8176-3660-9},
   review={\MR{1264417}},
   doi={10.1007/978-0-8176-4771-1},
}

\readablebib{Giusti-Yakoubsohn}{article}{
   author={Giusti, M.},
   author={Yakoubsohn, J.-C.},
   title={Multiplicity hunting and approximating multiple roots of
   polynomial systems},
   conference={
      title={Recent advances in real complexity and computation},
   },
   book={
      series={Contemp. Math.},
      volume={604},
      publisher={Amer. Math. Soc., Providence, RI},
   },
   date={2013},
   pages={105--128},
   review={\MR{3204155}},
   doi={10.1090/conm/604/12070},
}

\readablebib{Hauenstein-Mourrain-Szanto}{article}{
   author={Hauenstein, Jonathan D.},
   author={Mourrain, Bernard},
   author={Szanto, Agnes},
   title={On deflation and multiplicity structure},
   journal={J. Symbolic Comput.},
   volume={83},
   date={2017},
   pages={228--253},
   issn={0747-7171},
   review={\MR{3645652}},
   doi={10.1016/j.jsc.2016.11.013},
}

\readablebib{HJS}{article}{
   author={Herrero, Mar\'{\i}a Isabel},
   author={Jeronimo, Gabriela},
   author={Sabia, Juan},
   title={On degree bounds for the sparse Nullstellensatz},
   journal={J. Algebra},
   volume={534},
   date={2019},
   pages={531--544},
   issn={0021-8693},
   review={\MR{3987824}},
   doi={10.1016/j.jalgebra.2019.06.014},
}

\readablebib{Huber-Sturmfels}{article}{
   author={Huber, Birkett},
   author={Sturmfels, Bernd},
   title={A polyhedral method for solving sparse polynomial systems},
   journal={Math. Comp.},
   volume={64},
   date={1995},
   number={212},
   pages={1541--1555},
   issn={0025-5718},
   review={\MR{1297471 (95m:65100)}},
   doi={10.2307/2153370},
}

\readablebib{Jensen}{article}{
   author={Jensen, Anders Nedergaard},
   title={An implementation of exact mixed volume computation},
   conference={
      title={Mathematical software---ICMS 2016},
   },
   book={
      series={Lecture Notes in Comput. Sci.},
      volume={9725},
      publisher={Springer, [Cham]},
   },
   date={2016},
   pages={198--205},
   review={\MR{3662316}},
   doi={10.1007/978-3-319-42432-3\_25},
}

\readablebib{Jensen2}{article}{
   author={Jensen, Anders Nedergaard},
   title={Tropical Homotopy Continuation},
   eprint={https://arxiv.org/abs/1601.02818},
   year={2016}
}

\readablebib{Khovanskii}{article}{
author={Khovanskii, Askold},
title={Newton Polyhedra (algebra and geometry)},
conference={
	title={Proceedings of XIII All-Union School on the Operator Theory in Functional Spaces}, 
	address={Kujbishev}, 
	pages={202-221}, 
	year={1989},
	},
book={
title={Amer. Math. Soc. Transl. (2)}, 
volume={153}, 
year={1992}, 
pages={183--199},
},
eprint={https://www.math.toronto.edu/askold/alg-geom.pdf},
year={1989}
}

\readablebib{Kostlan}{article}{
   author={Kostlan, Eric},
   title={On the Distribution of Roots of Random Polynomials.},
   conference={
      title={From Topology to Computation: Proceedings of the Smalefest
      (Berkeley, CA, 1990)},
   },
   book={
      publisher={Springer},
      place={New York},
   },
   date={1993},
}

\readablebib{Krone-Leykin}{article}{
   author={Krone, Robert},
   author={Leykin, Anton},
   title={Numerical algorithms for detecting embedded components},
   journal={J. Symbolic Comput.},
   volume={82},
   date={2017},
   pages={1--18},
   issn={0747-7171},
   review={\MR{3608227}},
   doi={10.1016/j.jsc.2016.12.004},
}

\readablebib{Lairez}{article}{
   author={Lairez, Pierre},
   title={ A deterministic algorithm to compute approximate roots of polynomial systems in polynomial average time},
   journal={Foundations of Computational Mathematics},
   volume={17},
   number={5},
   pages={1265-1292},
   doi={doi:10.1007/s10208-016-9319-7},
   date={2017}
   }

\readablebib{Lairez-rigid}{article}{
   author={Lairez, Pierre},
   title={Rigid continuation paths I. Quasilinear average complexity for
   solving polynomial systems},
   journal={J. Amer. Math. Soc.},
   volume={33},
   date={2020},
   number={2},
   pages={487--526},
   issn={0894-0347},
   review={\MR{4073867}},
   doi={10.1090/jams/938},
}
\readablebib{Leykin-Rodriguez-Sottile}{article}{
   author={Leykin, Anton},
   author={Rodriguez, Jose Israel},
   author={Sottile, Frank},
   title={Trace test},
   journal={Arnold Math. J.},
   volume={4},
   date={2018},
   number={1},
   pages={113--125},
   issn={2199-6792},
   review={\MR{3810571}},
   doi={10.1007/s40598-018-0084-3},
}
	
\readablebib{Li-Sang}{article}{
   author={Li, Zhe},
   author={Sang, Haifeng},
   title={Verified error bounds for singular solutions of nonlinear systems},
   journal={Numer. Algorithms},
   volume={70},
   date={2015},
   number={2},
   pages={309--331},
   issn={1017-1398},
   review={\MR{3401035}},
   doi={10.1007/s11075-014-9948-3},
}

\readablebib{Li-Sauer-Yorke}{article}{
   author={Li, T. Y.},
   author={Sauer, Tim},
   author={Yorke, J. A.},
   title={The cheater's homotopy: an efficient procedure for solving systems
   of polynomial equations},
   journal={SIAM J. Numer. Anal.},
   volume={26},
   date={1989},
   number={5},
   pages={1241--1251},
   issn={0036-1429},
   review={\MR{1014884}},
   doi={10.1137/0726069},
}
	
\readablebib{Li-Polyhedral}{article}{
   author={Li, Tien-Yien},
   title={Solving polynomial systems by polyhedral homotopies},
   journal={Taiwanese J. Math.},
   volume={3},
   date={1999},
   number={3},
   pages={251--279},
   issn={1027-5487},
   review={\MR{1705990}},
   doi={10.11650/twjm/1500407124},
}

\readablebib{Lovasz}{book}{
   author={Lovász, László},
   title={An Algorithmic Thory of Numbers, Graphs and Convexity},
   series={CBMS-NSF Regional Conference Series in Applied Mathematics},
   volume={51},
   publisher={Society for Industrial and Applied Mathematics (SIAM), Philadelphia, PA},
   pages={91},
   date={1986},
   isbn={0-89871-203-3}}

\readablebib{Lutwak}{article}{
   author={Lutwak, Erwin},
   title={Volume of mixed bodies},
   journal={Trans. Amer. Math. Soc.},
   volume={294},
   date={1986},
   number={2},
   pages={487--500},
   issn={0002-9947},
   review={\MR{825717}},
   doi={10.2307/2000195},
}

\readablebib{Maclagan-Sturmfels}{book}{
   author={Maclagan, Diane},
   author={Sturmfels, Bernd},
   title={Introduction to tropical geometry},
   series={Graduate Studies in Mathematics},
   volume={161},
   publisher={American Mathematical Society, Providence, RI},
   date={2015},
   pages={xii+363},
   isbn={978-0-8218-5198-2},
   review={\MR{3287221}},
}

\readablebib{Malajovich-nonlinear}{book}{
   author={Malajovich, Gregorio},
   title={Nonlinear equations},
   series={Publica\c{c}\~oes Matem\'aticas do IMPA. [IMPA Mathematical
   Publications]},
   note={With an appendix by Carlos Beltr\'an, Jean-Pierre Dedieu, Luis Miguel
   Pardo and Mike Shub;
   28$^{\rm o}$ Col\'oquio Brasileiro de Matem\'atica. [28th Brazilian
   Mathematics Colloquium]},
   publisher={Instituto Nacional de Matem\'atica Pura e Aplicada (IMPA), Rio
   de Janeiro},
   date={2011},
   pages={xiv+177},
   isbn={978-85-244-0329-3},
   review={\MR{2798351}},
}
		
\readablebib{Malajovich-fewnomials}{article}{
   author={Malajovich, Gregorio},
   title={On the expected number of zeros of nonlinear equations},
   journal={Found. Comput. Math.},
   volume={13},
   pages={867-884},
   date={2013},
   doi={10.1007/s10208-013-9171-y}
   }

\readablebib{Malajovich-UIMP}{article}{
   author={Malajovich, Gregorio},
   title={Newton iteration, conditioning and zero counting},
   conference={
      title={Recent advances in real complexity and computation},
   },
   book={
      series={Contemp. Math.},
      volume={604},
      publisher={Amer. Math. Soc., Providence, RI},
   },
   date={2013},
   pages={151--185},
   review={\MR{3204157}},
}

\readablebib{Malajovich-mixed}{article}{
   author={Malajovich, Gregorio},
   title={Computing mixed volume and all mixed cells in quermassintegral
   time},
   journal={Found. Comput. Math.},
   volume={17},
   date={2017},
   number={5},
   pages={1293--1334},
   issn={1615-3375},
   review={\MR{3709333}},
   doi={10.1007/s10208-016-9320-1},
}

\readablebib{toric1}{article}{
  author={Malajovich, Gregorio},
  title={Complexity of sparse polynomial solving:
homotopy on toric varieties and the condition metric},
  journal={Foundations of Computational Mathematics},
  volume={19},
  number={1},
  pages={1--53},
doi={10.1007/s10208-018-9375-2},
  year={2019}
  }

\readablebib{MRMomentum}{article}{
   author={Malajovich, Gregorio},
   author={Rojas, J. Maurice},
   title={Polynomial systems and the momentum map},
   conference={
      title={Foundations of computational mathematics},
      address={Hong Kong},
      date={2000},
   },
   book={
      publisher={World Sci. Publ., River Edge, NJ},
   },
   date={2002},
   pages={251--266},
   review={\MR{2021984 (2004k:65090)}},
}
\readablebib{MRHigh}{article}{
   author={Malajovich, Gregorio},
   author={Rojas, J. Maurice},
   title={High probability analysis of the condition number of sparse
   polynomial systems},
   journal={Theoret. Comput. Sci.},
   volume={315},
   date={2004},
   number={2-3},
   pages={524--555},
   issn={0304-3975},
   review={\MR{2073064 (2005e:34166)}},
   doi={10.1016/j.tcs.2004.01.006},
}
\readablebib{Malajovich-Shub}{article}{
   author={Malajovich, Gregorio},
   author={Shub, Mike},
   title={A Theory of NP-completeness and Ill-conditioning for Approximate Real Computations},
   journal={J. of the ACM},
   volume={66},
   number={4},
   year={2019},
   pages={27:1--38},
   doi={10.1145/3321479}
   }

\readablebib{MZ1}{article}{
   author={Malajovich, Gregorio},
   author={Zubelli, Jorge P.},
   title={On the geometry of Graeffe iteration},
   journal={J. Complexity},
   volume={17},
   date={2001},
   number={3},
   pages={541--573},
   issn={0885-064X},
   review={\MR{1851059}},
   doi={10.1006/jcom.2001.0585},
}

\readablebib{MZ2}{article}{
   author={Malajovich, Gregorio},
   author={Zubelli, Jorge P.},
   title={Tangent Graeffe iteration},
   journal={Numer. Math.},
   volume={89},
   date={2001},
   number={4},
   pages={749--782},
   issn={0029-599X},
   review={\MR{1865511}},
   doi={10.1007/s002110100278},
}

\readablebib{Minkowski}{article}{
   author={Minkowski, Hermann},
   title={Sur les surfaces convexes fermées},
   journal={C.R. Acad.Sci., Paris},
   volume={132},
   pages={21--24},
   year={1901}
   }

\readablebib{Pedersen-Sturmfels}{article}{
   author={Pedersen, Paul},
   author={Sturmfels, Bernd},
   title={Product formulas for resultants and Chow forms},
   journal={Math. Z.},
   volume={214},
   date={1993},
   number={3},
   pages={377--396},
   issn={0025-5874},
   review={\MR{1245200}},
   doi={10.1007/BF02572411},
}

\readablebib{Shub-projective}{article}{
   author={Shub, Michael},
   title={Some remarks on Bezout's theorem and complexity theory},
   conference={
      title={From Topology to Computation: Proceedings of the Smalefest
      (Berkeley, CA, 1990)},
   },
   book={
      publisher={Springer},
      place={New York},
   },
   date={1993},
   pages={443--455},
   review={\MR{1246139 (95a:14002)}},
}

\readablebib{Bezout6}{article}{
   author={Shub, Michael},
   title={Complexity of Bezout's theorem. VI. Geodesics in the condition
   (number) metric},
   journal={Found. Comput. Math.},
   volume={9},
   date={2009},
   number={2},
   pages={171--178},
   issn={1615-3375},
   review={\MR{2496558 (2010f:65103)}},
   doi={10.1007/s10208-007-9017-6},
}

\readablebib{Bezout1}{article}{
   author={Shub, Michael},
   author={Smale, Steve},
   title={Complexity of B\'ezout's theorem. I. Geometric aspects},
   journal={J. Amer. Math. Soc.},
   volume={6},
   date={1993},
   number={no.~2},
   pages={459--501},
   issn={0894-0347},
   review={\MR{1175980}},
}

\readablebib{Bezout2}{article}{
   author={Shub, M.},
   author={Smale, S.},
   title={Complexity of Bezout's theorem. II. Volumes and probabilities},
   conference={
      title={Computational algebraic geometry},
      address={Nice},
      date={1992},
   },
   book={
      series={Progr. Math.},
      volume={109},
      publisher={Birkh\"auser Boston},
      place={Boston, MA},
   },
   date={1993},
   pages={267--285},
   review={\MR{1230872 (94m:68086)}},
}

\readablebib{Bezout3}{article}{
   author={Shub, Michael},
   author={Smale, Steve},
   title={Complexity of Bezout's theorem. III. Condition number and packing},
   note={Festschrift for Joseph F. Traub, Part I},
   journal={J. Complexity},
   volume={9},
   date={1993},
   number={1},
   pages={4--14},
   issn={0885-064X},
   review={\MR{1213484 (94g:65152)}},
   doi={10.1006/jcom.1993.1002},
}

\readablebib{Bezout4}{article}{
   author={Shub, Michael},
   author={Smale, Steve},
   title={Complexity of Bezout's theorem. IV. Probability of success;
   extensions},
   journal={SIAM J. Numer. Anal.},
   volume={33},
   date={1996},
   number={1},
   pages={128--148},
   issn={0036-1429},
   review={\MR{1377247 (97k:65310)}},
   doi={10.1137/0733008},
}

\readablebib{Bezout5}{article}{
   author={Shub, M.},
   author={Smale, S.},
   title={Complexity of Bezout's theorem. V. Polynomial time},
   note={Selected papers of the Workshop on Continuous Algorithms and
   Complexity (Barcelona, 1993)},
   journal={Theoret. Comput. Sci.},
   volume={133},
   date={1994},
   number={1},
   pages={141--164},
   issn={0304-3975},
   review={\MR{1294430 (96d:65091)}},
   doi={10.1016/0304-3975(94)90122-8},
}

\readablebib{Smale-algorithms}{article}{
   author={Smale, Steve},
   title={Algorithms for Solving Equations},
   conference={
      title={Proceedings of the International Congress of Mathematicians,
      Vol. 1, 2},
      address={Berkeley, Calif.},
      date={1986},
   },
   book={
      publisher={Amer. Math. Soc., Providence, RI},
   },
   date={1987},
   pages={172--195},
}

\readablebib{Smale-PE}{article}{
   author={Smale, Steve},
   title={Newton's method estimates from data at one point},
   conference={
      title={ computational mathematics},
      address={Laramie, Wyo.},
      date={1985},
   },
   book={
      publisher={Springer},
      place={New York},
   },
   date={1986},
   pages={185--196},
   review={\MR{870648 (88e:65076)}},
}
\readablebib{Sommese-Verschelde}{article}{
   author={Sommese, Andrew J.},
   author={Verschelde, Jan},
   title={Numerical homotopies to compute generic points on positive
   dimensional algebraic sets},
   note={Complexity theory, real machines, and homotopy (Oxford, 1999)},
   journal={J. Complexity},
   volume={16},
   date={2000},
   number={3},
   pages={572--602},
   issn={0885-064X},
   review={\MR{1787886}},
   doi={10.1006/jcom.2000.0554},
}
		
\readablebib{Straszak-Vishnoi}{collection.article}{
author={Straszak, Damian},
author={Vishnoi, Nisheeth K.},
title={Maximum Entropy Distributions: Bit Complexity and Stability}, 
year={2019}, 
booktitle={Proceedings of the Thirty-Second Conference on Learning Theory, PMLR 99},
pages={2861-2891} 
eprint={https://proceedings.mlr.press/v99/straszak19a.html},
}
	
\readablebib{Sturmfels}{article}{
   author={Sturmfels, Bernd},
   title={On the Newton polytope of the resultant},
   journal={J. Algebraic Combin.},
   volume={3},
   date={1994},
   number={2},
   pages={207--236},
   issn={0925-9899},
   review={\MR{1268576}},
   doi={10.1023/A:1022497624378},
}

\readablebib{Verschelde-795}{article}{
   author={Verschelde, Jan},
   title={Algorithm 795: PHCPack: A general-purpose solver for polynomial systems by homotopy continuation},
   journal={ACM Transactions on Mathematical Software},
   volume={25},
   number={2},
   pages={251-276},
   year={1999}}

\readablebib{Verschelde-toric}{article}{
   author={Verschelde, Jan},
   title={Toric Newton method for polynomial homotopies},
   note={Symbolic computation in algebra, analysis, and geometry (Berkeley,
   CA, 1998)},
   journal={J. Symbolic Comput.},
   volume={29},
   date={2000},
   number={4-5},
   pages={777--793},
   issn={0747-7171},
   review={\MR{1769666}},
   doi={10.1006/jsco.1999.0296},
}

\readablebib{Verschelde-Verlinden-Cools}{article}{
   author={Verschelde, Jan},
   author={Verlinden, Pierre},
   author={Cools, Ronald},
   title={Homotopies exploiting Newton polytopes for solving sparse
   polynomial systems},
   journal={SIAM J. Numer. Anal.},
   volume={31},
   date={1994},
   number={3},
   pages={915--930},
   issn={0036-1429},
   review={\MR{1275120 (94m:65084)}},
   doi={10.1137/0731049},
}  

\readablebib{Weyl}{book}{
   author={Weyl, Hermann},
   title={The theory of groups and quantum mechanics},
   note={Translated from the second (revised) German edition by H. P.
   Robertson},
   publisher={Dover Publications, Inc., New York},
   date={1931},
}
\end{biblist}
\end{bibsection}

\end{document}